\documentclass[11pt]{amsart}

\usepackage[dvipsnames,usenames]{xcolor}
\usepackage{amsmath,
amssymb,
amscd,
amsfonts,
array,
comment,
enumitem,
epsfig,
float,
graphicx,
latexsym,
longtable,
mathrsfs,
mathtools,
pdfpages,
psfrag,
rotating,
transparent,
url,
wrapfig,
xr}
\usepackage[linktoc=page]{hyperref}
\hypersetup{
colorlinks,
citecolor=blue,
filecolor=blue,
linkcolor=blue,
urlcolor=blue
}
\usepackage[all]{xy}
\usepackage{amsaddr}
\usepackage[left=1.5in,right=1.5in,top=1.5in,bottom=1.5in+13pt,footskip=0.3in+13pt,headheight=13pt,headsep=0.3in]{geometry}
\usepackage[utf8]{inputenc}
\usepackage{mathtools}
\usepackage{relsize}
\usepackage[hang,flushmargin]{footmisc}
\usepackage[margin=1cm]{caption}

\usepackage{fancyhdr}
\fancyhead{}
\fancyhead[CEH]{\textsc{Spencer Backman, Nathaniel Bottman, and Daria Poliakova}}
\fancyhead[COH]{\textsc{Higher-categorical associahedra}}

\pagestyle{fancy}

\newtheorem{theorem}{Theorem}[section]
\newtheorem{corollary}[theorem]{Corollary}

\newtheorem{proposition}[theorem]{Proposition}
\newtheorem{defprop}[theorem]{Definition-Proposition}
\newtheorem{lemma}[theorem]{Lemma}

\newtheorem{scholium}[theorem]{Scholium}
\theoremstyle{definition}
\newtheorem{definition}[theorem]{Definition}

\newtheorem{example}[theorem]{Example}
\newtheorem{expectation}[theorem]{Expectation}
\theoremstyle{remark}
\newtheorem{remark}[theorem]{Remark}

\theoremstyle{plain}
\newcommand{\thistheoremname}{}
\newtheorem{genericthm}[theorem]{\thistheoremname}

\newtheorem*{genericthm*}{\thistheoremname}
\newenvironment{namedthm*}[1]
{\renewcommand{\thistheoremname}{#1}%
\begin{genericthm*}}
{\end{genericthm*}}


\newcommand\cA{\mathcal{A}}

\newcommand\cC{\mathcal{C}}

\newcommand\cF{\mathcal{F}}
\newcommand\cG{\mathcal{G}}
\newcommand\cH{\mathcal{H}}

\newcommand\cK{\mathcal{K}}

\newcommand\cM{\mathcal{M}}

\newcommand\cP{\mathcal{P}}

\newcommand\cT{\mathcal{T}}


\newcommand{\bR}{\mathbb{R}}

\newcommand\ba{\mathbf{a}}

\newcommand\bn{\mathbf{n}}

\newcommand\bx{\mathbf{x}}

\newcommand{\sB}{\mathscr{B}}

\newcommand{\sC}{\mathscr{C}}
\newcommand{\sD}{\mathscr{D}}

\newcommand{\ccD}{\Delta_{\cT}}


\newcommand{\fX}{\mathfrak{X}}

\newcommand{\on}{\operatorname}

\newcommand{\comp}{C^2}

\renewcommand{\comp}{\text{comp}}

\newcommand{\incom}{\text{in}}

\newcommand{\inte}{{\on{int}}}

\renewcommand{\comp}{{\on{comp}}}

\newcommand{\br}{{\on{br}}}
\renewcommand{\min}{{\on{min}}}

\newcommand{\met}{{\on{met}}}

\newcommand\qu{/\kern-.7ex/} 
\newcommand\lqu{\backslash \kern-.7ex \backslash}


\newcommand{\ol}{\overline}

\newcommand{\wh}{\widehat}
\newcommand{\wt}{\widetilde}




\newcommand{\eps}{\epsilon}





\begin{document}
\sloppy

\begin{abstract}
The second author introduced 2-associahedra as a tool for investigating functoriality properties of Fukaya categories, and he conjectured that they could be realized as face posets of convex polytopes.
We introduce a family of posets called categorical $n$-associahedra, which naturally extend the second author's 2-associahedra and the classical associahedra.
Categorical $n$-associahedra give a combinatorial model for the poset of strata of a compactified real moduli space of a tree arrangement of affine coordinate subspaces.
We construct a family of complete polyhedral fans, called velocity fans, whose coordinates encode the relative velocities of pairs of colliding coordinate subspaces, and whose face posets are the categorical $n$-associahedra.
In particular, this gives the first fan realization of 2-associahedra.
In the case of the classical associahedron, the velocity fan specializes to the normal fan of Loday's realization of the associahedron.

In order to prove that the velocity fan is a fan, we first construct a cone complex of metric $n$-bracketings and then exhibit a piecewise-linear isomorphism from this complex to the velocity fan.
We demonstrate that the velocity fan, which is not simplicial, admits a canonical smooth flag triangulation on the same set of rays, and we describe a second, finer triangulation which provides a new extension of the braid arrangement.
Although $n$-associahedra are typically too large to be realized as coarsenings of the braid arrangement, we describe piecewise-unimodular maps on the velocity fan such that the image of each cone is a union of cones in the braid arrangement, and we highlight a connection to the theory of building sets and nestohedra.  We explore the local iterated fiber product structure of categorical $n$-associahedra and the extent to which this structure is realized by the velocity fan.
For the class of concentrated $n$-associahedra we exhibit generalized permutahedra having velocity fans as their normal fans recovering Loday's associahedron and Forcey's multiplihedron.
In future work we will investigate projectivity for general velocity fans.

\end{abstract}

\title{Higher\hspace{.2ex}\raisebox{.2ex}{-}\hspace{.2ex}Categorical Associahedra}
\author{Spencer Backman, Nathaniel Bottman, and Daria Poliakova}
\maketitle

\setcounter{tocdepth}{2}

\tableofcontents
\pagebreak

\section{Introduction}

Given a word of length $n$, the associahedron $\mathcal{K}_n$ is the set of parenthesizations of this word, partially ordered by inclusion.
There is a rich history of realizing $\mathcal{K}_n$ as the face poset of a convex polytope \cite{c507c782-a486-3e42-a801-97778df5e634, huguet1978structure, haimanconstruct, lee1989associahedron, gel1989newton, gel1991discriminants, shnider1993quantum,stasheff1997operads,chapoton2002polytopal,rote2003expansive,fomin2003systems,Lodayassociahedron,santos2004catalan,reading2006cambrian,hohlweg2007realizations,bergeron2007isometry,buchstaber2008lectures,postnikovgp,hohlweg2011permutahedra,stella2013polyhedral,ceballos2015many,arkani2018scattering,padrol2023associahedra,black2023polyhedral,gekhtman2024associahedra}.\footnote{See \cite{ceballos2015many} for 
a thorough account of the history up to 2015.}\,\footnote{The associahedron is often interpreted in the language of other collections of objects whose maximal elements are enumerated by the Catalan numbers, e.g.\ the subdivisions of a polygon.}

The associahedron makes fundamental appearances in several different parts of mathematics such as homotopy theory, representation theory, operad theory, category theory, Grassmannians, Coxeter groups, $A$-discriminants, cluster algebras, polytopal subdivisions, Catalan combinatorics, wonderful compactifications, scattering amplitudes, and tropical geometry.
These appearances motivate many interesting generalizations of the associahedron, which often admit polytopal realizations 
\cite{billera1992fiber,kapranov1993permutoassociahedron, gelfand1994discriminants, reiner1994coxeter,jonsson2003generalized,
troptotposgrass2005, ardila2005bergman, ardila2006positive, postnikovgp, pilaud2012brick, oppermann2012higher, escobar2014brick, devadoss2015convex, santos2017noncrossing, arkani2014amplituhedron, postnikov2018positive, arkani2018scattering, ceballos2019geometry, devadoss2020colorful, palu2021non, arkani2021cluster, galashin2021p,
damgaard2021momentum, highersecondarypolytopes}.

There is an area of mathematics where the associahedron plays a distinguished role which may be less well-known to researchers in algebraic and geometric combinatorics: the Fukaya category of a symplectic manifold. 
The associahedra form an example of an operad, and in the study of operads there is a notion of a category over an operad, where the operad controls the algebraic structure of the morphisms.
Thus, a linear $A_\infty$-category is defined as a category over the operad of cellular chains on the associahedra.
(See \cite[Prop.\ 2.20]{abouzaid_bottman}.\footnote{Note that versions of the associahedra were already used to define $A_\infty$-algebras in J.P.\ May's seminal \cite{may:book}, while H-spaces of \cite{c507c782-a486-3e42-a801-97778df5e634}, \cite{stasheff1963homotopy} can be viewed as a topological version of $A_\infty$-algebras.})
The Fukaya category of a symplectic manifold $M$ is a collection of Lagrangian submanifolds of $M$ equipped with morphisms given by Floer cochains, and this category is an important example of an $A_\infty$-category.

In 2017 the second author introduced a family of posets called 2-associahedra for investigating functoriality properties of Fukaya categories \cite{b:2-associahedra}.
He and Carmeli proved in \cite{bottman_carmeli} that that the 2-associahedra form a relative 2-operad and can be used to define $(A_\infty,2)$-categories.
As described in \cite{abouzaid_bottman}, the functoriality properties of the Fukaya category can be encoded in the construction of an $(A_\infty,2)$-category whose objects are symplectic manifolds.
In the context of Fukaya categories, the relevant interpretation of the associahedron is the poset of strata of a compactified moduli space of points on a real line, a construction due to Kapranov \cite{kapranov1993permutoassociahedron} building on ideas of Drinfeld \cite{drinfeld1989quasi,drinfeld1990quantum} (see Figure \ref{fig:K4_models}).\footnote{We note that there are other connections between associahedra and symplectic geometry which do not pass through the Fukaya category \cite{oppermann2012higher,dyckerhoff2021symplectic,gekhtman2024associahedra}.}  Similarly, the second author proved that a 2-associahedron is the poset of strata of a compactified moduli space of marked vertical lines in $\mathbb{R}^2$ \cite{bottman2017moduli} (see Figure \ref{fig:W_21}).
\begin{figure}[ht]
\centering
\def\svgwidth{1.0\columnwidth}
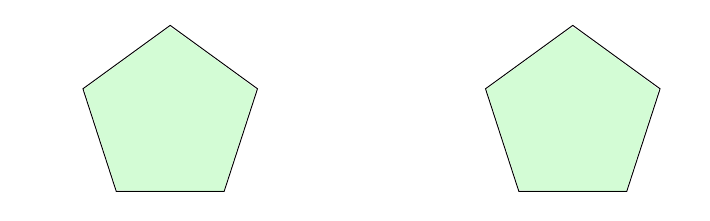
\caption{
\label{fig:K4_models}
$K_4$, presented in terms of 1-bracketings (left) and parenthesizations (right).
}
\end{figure}
\begin{figure}[ht]
\centering
\includegraphics[width=0.7\textwidth]{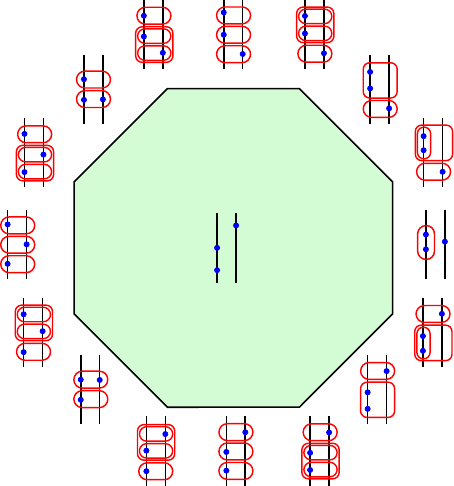}
\caption{
\label{fig:W_21}
The 2-associahedron associated to 2 lines with 2 points on the first line and 1 point on the second line.
}
\end{figure}

From this perspective, 2-associahedra are natural enough that the authors feel they could have been discovered independently of considerations of Fukaya categories, although without this strong motivation it may have been difficult to anticipate that 2-associahedra are so well-behaved.

The second author conjectured that 2-associahedra could be realized as face posets of convex polytopes and made progress towards resolving this conjecture by proving that they are abstract polytopes \cite{b:2-associahedra}, that they are Eulerian (\cite{bm}, with Mavrides), and that they are homeomorphic to closed balls (\cite{abouzaid_bottman}, with Abouzaid, relying on \cite{bo}, with Oblomkov).
We take a significant next step towards resolving this conjecture by producing the first complete fan realization of 2-associahedra, which we call \emph{the velocity fan}.
This allows us to recover all of the aforementioned partial results.
Furthermore, we introduce a natural extension of 2-associahedra and classical associahedra, which we call \emph{categorical $n$-associahedra}, and prove that our velocity fan provides a realization for this larger class of posets.
In future work, we will investigate projectivity for velocity fans, i.e.\ the existence of polytopes whose normal fans are the velocity fans.

While the precise definition of a 2-associahedron is not short, the essential idea of this construction can be conveyed very intuitively via compactified real moduli spaces as indicated above.
This perspective in turn motivates the introduction of categorical $n$-associahedra in \S\ref{s:n-associahedra}.
We begin by recalling the description of the associahedron as the poset of strata of a compactified real moduli space.
Take the space of $n$ ordered points on a line, considered up to translation and positive dilation, where points are allowed to move but cannot collide with or move past their neighboring points.
We compactify this space by allowing points to collide, and then further allowing iterated collisions of points in infinitesimal neighborhoods of other collisions.
This idea is captured combinatorially by the parenthesization of a word encoding the points, hence the compactified space obtained in this way has the associahedron as its poset of strata (see Figure \ref{fig:K4_models}).

Suppose that one has an arrangement $X$ of vertical lines in $\mathbb{R}^2$ with a possibly empty set of points on each line, although at least one line should have a point.
For describing the 2-associahedron determined by $X$, we are similarly interested in this space of points and lines, considered up to translation and dilation, where points are allowed to move vertically but they cannot collide with or move past neighboring points, and vertical lines are allowed to move horizontally but they cannot collide with or move past neighboring vertical lines.
The second author proved that 2-associahedra can be realized as the poset of strata of a Gromov compactification of this space of marked vertical lines in $\mathbb{R}^2$ \cite{bottman2017moduli}.
Because a 2-associahedron is determined by the combinatorial type of $X$, we can index each 2-associahedron $W_{\bf{n}}$ by a sequence of nonnegative integers $\bf{n}$ encoding the number of points on each line.
The elements of a $2$-associahedron are called 2-bracketings and should be understood as a 2-version of the parenthesization of a word.
In the same way that a parenthesization of a word can be considered as a union of compatible pairs of parentheses, a 2-bracketing in a 2-associahedron can always be considered as a union of compatible collisions of points and lines.
For a bracketing in the associahedron, this expression is unique, whereas a 2-bracketing can often be expressed as the union of collisions in multiple ways.
This implies that associahedra are simple, but 2-associahedra are not.

We introduce categorical $n$-associahedra as an extension of this viewpoint to $\mathbb{R}^n$.
In this work we give a combinatorial definition of categorical $n$-associahedra as a model for the poset of strata of a compactification of the real moduli space of a tree of affine initial coordinate subspaces. The formal definition of categorical $n$-associahedra takes as input a single rooted plane tree $\cT$ of depth $n$ (see Figure \ref{fig:arrangement_in_R3_and_rootedtreeexample}).
In this way, our categorical $n$-associahedra specialize to the 2-associahedra where the indexing sequence of nonnegative integers is encoded by a rooted plane tree of depth two, and to the classical associahedra where the indexing positive integer is encoded by a rooted plane tree of depth one.\footnote{It is well-known that rooted plane trees can be identified with the faces of associahedra.
Thus we have a bijection between categorical $n$-associahedra and faces of associahedra --- at the time of writing this is simply a curious observation, but it would be wonderful to have some deeper justification for this correspondence.}

\begin{figure}[ht]
\centering
\includegraphics[height=2in]{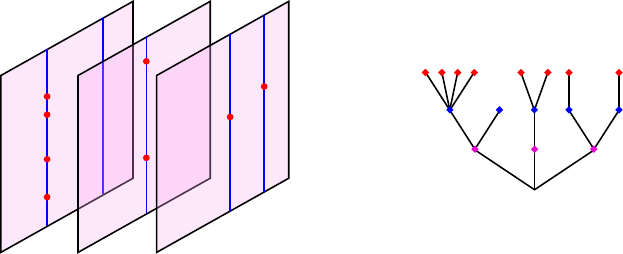}
\caption{On the left: an arrangement of affine spaces for producing a categorical 3-associahedron.
On the right: A rooted plane tree $\cT$ encoding the combinatorial type of the arrangement on the left.}
\label{fig:arrangement_in_R3_and_rootedtreeexample}
\end{figure}
\begin{figure}[ht]
\centering
\includegraphics[width=0.85\textwidth]{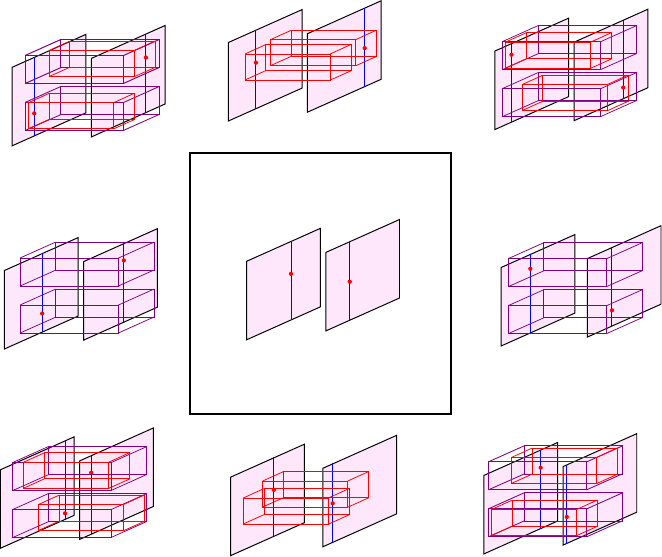}
\caption{
A 2-dimensional example of a $3$-associahedron.}
\end{figure}

Given a rooted plane tree $\cT$ and its corresponding categorical $n$-associahedron $\cK(\cT)$, we introduce a collection of polyhedral cones called the velocity fan $\cF(\cT)$.
The ray generators for the velocity fan are associated to collisions, i.e.\ the atoms of $\cK(\cT)$, and have coordinates encoding the relative velocities of consecutive pairs of affine coordinate subspaces in an arrangement during a collision.
The cones of the velocity fan are then determined by these rays generators: for a particular element of $\sB \in \cK(\cT)$ the associated cone $\tau(\sB)$ is the convex hull of all ray generators $\rho(\sC)$ for collisions $\sC \leq \sB$, plus the linear span of the all ones vector.
The following is the main result of this article:

\begin{theorem}
\label{mainthm}
(\ref{mainthmagain})
The velocity fan $\cF(\cT)$ is a complete fan whose face poset is $\cK(\cT)$.
\end{theorem}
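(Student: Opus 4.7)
The plan is to introduce an intermediate combinatorial-geometric object, the cone complex of metric $n$-bracketings $\cM(\cT)$, and then exhibit a piecewise-linear isomorphism $\cM(\cT)\to\cF(\cT)$. A metric $n$-bracketing should be an element $\sB\in\cK(\cT)$ equipped with nonnegative length data on its constituent brackets (subject to some compatibility), and $\cM(\cT)$ is to be the union of cones, one cone $\sigma(\sB)$ for each $\sB\in\cK(\cT)$, glued along the face relations of $\cK(\cT)$. By construction $\cM(\cT)$ is a cone complex whose face poset is tautologically $\cK(\cT)$, so the real work is (i) to show that $\cM(\cT)$ is globally a fan, that is, an honest embedded subcomplex of some ambient Euclidean space, and (ii) to produce a piecewise-linear isomorphism onto $\cF(\cT)$.

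First I would set up $\cM(\cT)$ as a cone complex abstractly. Since the atoms of $\cK(\cT)$ are the collisions $\sC$, I expect the generators of $\sigma(\sB)$ to be indexed by $\sC\le\sB$, with relations arising from the ways a metric on $\sB$ can be decomposed into collisions. Verifying the gluing axiom --- that $\sigma(\sB)\cap\sigma(\sB')$ is the common face corresponding to $\sB\vee\sB'$ when the join exists --- should follow from a direct combinatorial analysis of which collisions are simultaneously compatible, together with the lattice-theoretic structure of $\cK(\cT)$.

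Next I would define the velocity map $v\colon\cM(\cT)\to\cF(\cT)$ cone-wise: on $\sigma(\sB)$ a metric assigns a nonnegative rate to each collision $\sC\le\sB$, and the corresponding combination of the velocity vectors $\rho(\sC)$ produces a vector in $\tau(\sB)$. The map $v$ is linear on each cone by construction. To show $v$ restricts to a bijection on each cone, I would argue injectivity by checking that the vectors $\{\rho(\sC):\sC\le\sB\}$ are linearly independent modulo the lineality direction spanned by the all-ones vector, and surjectivity by matching the dimension of $\tau(\sB)$ with the number of parameters in the metric. Completeness of $\cF(\cT)$ then reduces to showing that $v$ is a global homeomorphism onto the ambient space: given that $\cM(\cT)$ has maximal cones of full dimension mapping into the ambient Euclidean space, an invariance-of-domain argument together with a direct check that every generic vector lies in the image of some maximal cone will close the loop. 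The face-poset claim is then automatic, since $v$ is a cone-complex map and $\cM(\cT)$ has face poset $\cK(\cT)$ by fiat.

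The main obstacle I anticipate is the global injectivity of $v$, i.e.\ that the images $\tau(\sB)$ and $\tau(\sB')$ meet only in $\tau(\sB\wedge\sB')$. Local injectivity on each cone is a linear-algebra calculation, but controlling how cones sit together requires simultaneously understanding the combinatorics of compatible collisions and the geometry of the velocity coordinates. I would address this by explicitly inverting $v$: given a vector $\mathbf{w}$, read off from its coordinates which pairs of consecutive coordinate subspaces are colliding fastest, use this data to reconstruct the outermost stratum of the corresponding metric $n$-bracketing, and then recurse on the subtrees of $\cT$. The local iterated fiber product structure of categorical $n$-associahedra highlighted in the abstract should make this inductive reconstruction tractable, even though $\cF(\cT)$ is not simplicial and the cones $\tau(\sB)$ can therefore have complicated facet structure.
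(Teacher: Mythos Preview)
Your overall architecture matches the paper's closely: construct the metric $n$-bracketing complex $\cK^{\met}(\cT)$ as an abstract cone complex with face poset $\cK(\cT)$, then exhibit a piecewise-linear isomorphism $\Gamma\colon\cK^{\met}(\cT)\to\cF(\cT)$ sending $\ell(\sC)\mapsto\rho(\sC)$, and finally argue completeness separately. The paper proves completeness by showing the fan has no boundary (every codimension-one cone lies in two chambers), rather than by invariance of domain, but that is a minor divergence. Also, the intersection $\sigma(\sB)\cap\sigma(\sB')$ should correspond to the meet $\sB\wedge\sB'$, not the join.

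There is, however, a genuine gap in your plan for injectivity on a single cone. You propose to check that the vectors $\{\rho(\sC):\sC\le\sB\}$ are linearly independent modulo the all-ones direction. This is false: the velocity fan is not simplicial, and for a general $\sB$ there are more collisions $\sC\le\sB$ than the dimension of $\tau(\sB)$. The same linear dependence already appears on the metric side, where a metric $n$-bracketing $\ell_\sB$ typically admits many expressions $\sum\lambda_i\ell(\sC_i)$ as a conical combination of collisions (see Figure~\ref{sumofcollisionsfig}). So injectivity of $\Gamma$ on $\tau^{\met}(\sB)$ cannot come from linear independence of generators; what must be shown is that the \emph{relations} among the $\ell(\sC_i)$ are exactly the relations among the $\rho(\sC_i)$.

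The paper handles this via Lemma~\ref{metriclemma}: $\sum\lambda_i\ell(\sC_i)=\sum\gamma_i\ell(\sC'_i)$ if and only if $\sum\lambda_i\rho(\sC_i)=\sum\gamma_i\rho(\sC'_i)$, with both directions nontrivial. The forward direction (well-definedness of $\Gamma$) uses a compatible shuffle $\sigma$ for a containment-minimal collision and the adjoint permutation transformation $P_\sigma^T$ to turn $\rho(\sC_i)$ into a $0$-$1$ vector whose entries are visibly invariants of $\ell_\sB$; your proposal does not anticipate this mechanism. The reverse direction (injectivity) is indeed an inversion argument in the spirit you describe: from ${\bf v}$ one extracts a canonical containment-minimal collision $\sC_{\bf v}$ via an explicit but delicate set $\Xi({\bf v})$ (Definition~\ref{techdefinition}, Lemma~\ref{technicallemma}), subtracts it off, and recurses. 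Your ``read off the fastest-colliding pairs'' heuristic is the right intuition for this step, but the actual identification of $\sC_{\bf v}$ requires induction on $n$ (using that Proposition~\ref{mainvelocitythm} already holds for $(n-1)$-associahedra to interpret $\pi({\bf v})$) and a careful case analysis separating type~1 collisions from the rest.
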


\begin{figure}[ht]
\centering
\begin{tabular}{cc}
\includegraphics[height=4.5in]{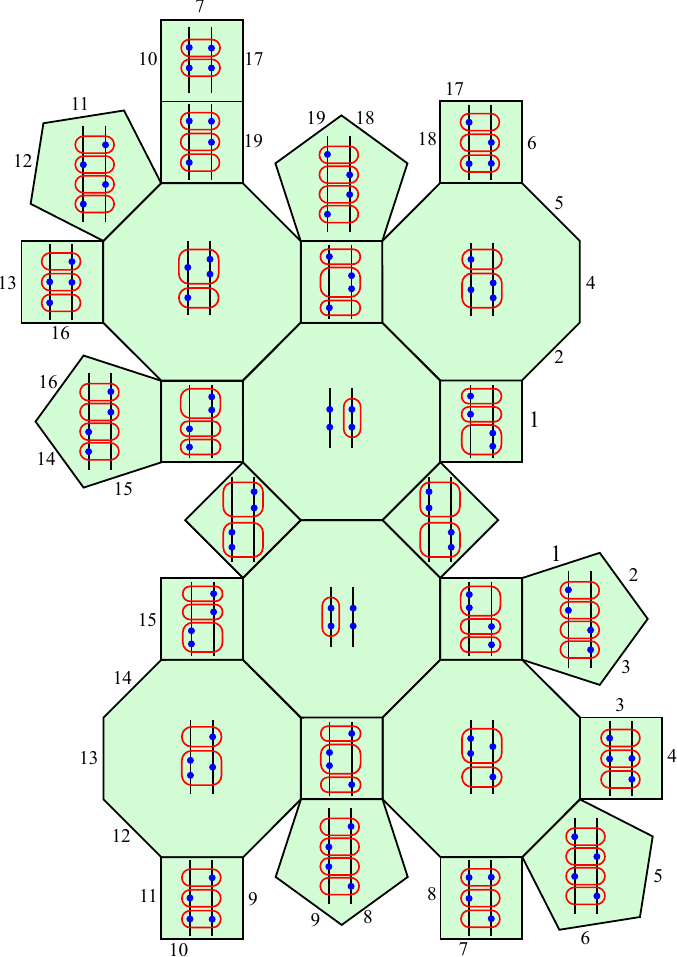}
&
\includegraphics[height=4.5in]{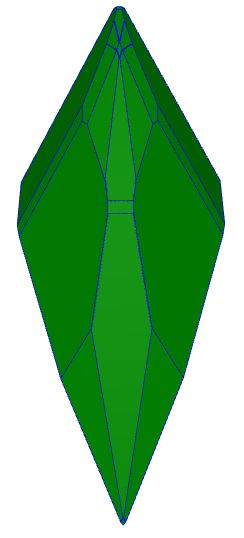}
\end{tabular}
\caption{On the left: a net depicting the 2-associahedron $W_{2,2}$.
On the right: a polytopal realization of the 2-associahedron $W_{2,2}$ with normal fan given by the velocity fan.
}
\label{W22}
\end{figure}

It requires significant work to prove that $\cF(\cT)$ is indeed a fan.
For doing so, we utilize the theory of cone complexes, which abstract and generalize the notion of a fan.
Cone complexes play a central role in modern logarithmic geometry where they have been applied in the study of moduli spaces, tropical geometry, and Berkovich spaces.
We construct a cone complex $\cK^{\met}(\cT)$ of metric $n$-bracketings and prove that this cone complex has a face poset equal to that of $\cK(\cT)$.
Theorem \ref{mainthm} (without completeness) is a direct consequence of the following result.

\begin{proposition}\label{metricisomorphismprop}
(\ref{mainvelocitythm})
There exists a piecewise-linear isomorphism
\begin{align}
\Gamma:\cK^{\met}(\cT)\rightarrow \cF(\cT)\,.
\end{align}
\end{proposition}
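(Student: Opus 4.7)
The plan is to define $\Gamma$ cone-by-cone by linear extension of the ray-generator assignment $\sC\mapsto\rho(\sC)$, and to verify it is a piecewise-linear bijection via linear independence on each cone together with compatibility along faces. For each $\sB\in\cK(\cT)$, the cone $\tau^{\met}(\sB)\subset\cK^{\met}(\cT)$ is parametrized (modulo the diagonal lineality) by non-negative length assignments $\ell$ to the atomic collisions $\sC\leq\sB$, so one sets
\[
\Gamma\bigl((\sB,\ell)\bigr)\ :=\ \sum_{\sC\leq\sB}\ell(\sC)\,\rho(\sC)\qquad\bmod\ \langle\bone\rangle,
\]
which by construction lies in the velocity cone $\tau(\sB)$.

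Second, I would check compatibility of $\Gamma$ with face inclusions: if $\sB'\leq\sB$, then the atomic collisions refining $\sB'$ form a subset of those refining $\sB$, so the restriction of $\Gamma|_{\tau^{\met}(\sB)}$ to $\tau^{\met}(\sB')$ agrees with $\Gamma|_{\tau^{\met}(\sB')}$. This produces a globally defined continuous, piecewise-linear map, and surjectivity onto $\cF(\cT)$ is immediate, since $\cF(\cT)=\bigcup_\sB\tau(\sB)=\bigcup_\sB\Gamma(\tau^{\met}(\sB))$.

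Third, I would establish injectivity in two stages: first, linear injectivity on each $\tau^{\met}(\sB)$, which is equivalent to linear independence of $\{\rho(\sC):\sC\leq\sB\}$ modulo $\langle\bone\rangle$; and second, that the images of distinct metric cones intersect only along the image of their shared face in the cone complex $\cK^{\met}(\cT)$. The latter follows from the former and the cone-complex axioms of $\cK^{\met}(\cT)$, because tracking which $\rho(\sC)$'s appear with positive coefficient in a given vector in the intersection pins down a unique face.

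The main obstacle is establishing the linear independence of the velocity rays on each top cone. I would approach this by induction on the depth of $\cT$, exploiting the iterated fiber-product structure mentioned in the introduction: truncating the tree at various depths reduces the problem to independence questions in lower-depth velocity fans. Alternatively, one can exhibit explicit dual functionals that recover the lengths $\ell(\sC)$ from $\Gamma((\sB,\ell))$ in a triangular fashion, ordered by a coarsening order on collisions refining $\sB$. Such a triangular form reflects the locality of the velocity coordinates: each collision $\sC$ primarily perturbs the relative velocity coordinates of the consecutive coordinate-subspace pairs it directly merges, so with the right ordering on collisions one can peel them off one at a time to invert $\Gamma|_{\tau^{\met}(\sB)}$.
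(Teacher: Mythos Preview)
Your proposal has a fundamental gap: you implicitly assume that the cones $\tau^{\met}(\sB)$ and $\tau(\sB)$ are simplicial, i.e.\ that each metric $n$-bracketing admits a \emph{unique} expression as a nonnegative combination of the $\ell(\sC)$ for $\sC\leq\sB$. This is false in general (the paper remarks on this explicitly and provides an illustration), and it undermines your argument at two places.

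First, the very definition of $\Gamma$ is in doubt. If $\ell_{\sB}=\sum\lambda_i\ell(\sC_i)=\sum\gamma_i\ell(\sC_i)$ are two different conical expressions, you must show that $\sum\lambda_i\rho(\sC_i)=\sum\gamma_i\rho(\sC_i)$ before you can set $\Gamma(\ell_\sB)$ equal to either. Your ``linear extension of $\sC\mapsto\rho(\sC)$'' presupposes this. The paper handles this direction by reducing to $n$-bracketings with a single fusion bracket and then applying the adjoint permutation transformation $P_\sigma^T$ associated to a compatible shuffle for a containment-minimal collision; after this transformation each coordinate of the image becomes a manifest invariant of $\ell_\sB$ (a certain sum of $\ell_\sB(A)$'s), which forces the two expressions to agree.

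Second, your injectivity step hinges on ``linear independence of $\{\rho(\sC):\sC\leq\sB\}$ modulo $\langle\bone\rangle$,'' but since the number of collisions under $\sB$ typically exceeds the dimension of $\tau(\sB)$, these vectors are linearly \emph{dependent}. What must actually be shown is the matching of relations: any linear relation among the $\rho(\sC)$'s comes from a relation among the $\ell(\sC)$'s. For global injectivity (across different cones), the paper's key device is Lemma~\ref{main}: from a vector ${\bf v}\in\cF(\cT)$ alone one extracts a containment-minimal extended collision $\sC_{\bf v}$ lying below the support of \emph{any} expression for ${\bf v}$. One then subtracts a suitable multiple of $\rho(\sC_{\bf v})$ and inducts on the size of the supports. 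Your suggestion of ``explicit dual functionals in triangular form'' is in the right spirit, but without confronting nonsimpliciality and the shuffle-based coordinate change that makes such functionals visible, the argument as written does not go through.
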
 
See Figure \ref{fig:W_21} for the 2-associahedron $W_{2,1}$, and Figure \ref{fig:W_21_fans} for its abstract fan of metric 2-bracketings and its velocity fan.
We feel that this concrete combinatorial application of cone complexes is a curious feature of this article and may be of general interest to polyhedral and logarithmic geometers.

\begin{figure}[ht]
\centering
\begin{tabular}{ccc}
\includegraphics[width=0.425\textwidth]{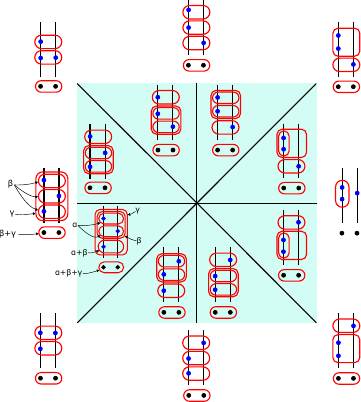}
&
\hspace{0.075\textwidth}
&
\includegraphics[width=0.425\textwidth]{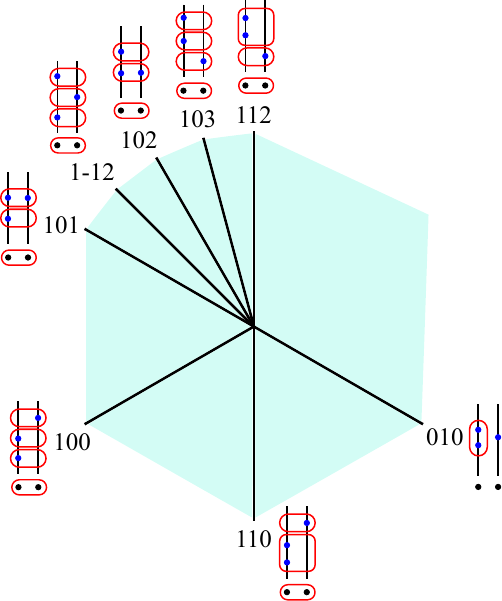}
\end{tabular}
\caption{
\label{fig:W_21_fans} 
On the left is a depiction of the cone complex $\cK^{\met}(\cT)$ of metric 2-bracketings for $W_{2,1}$; this figure should not be taken literally as $\cK^{\met}(\cT)$ is not embedded in Euclidean space.
On the right is the velocity fan $\cF(\cT)$ for $W_{2,1}$ viewed along the vector $(1,1,1)$.
See also Figure \ref{fig:maps_to_braid_example}.}
\end{figure}

During a collision, the affine spaces in a tree arrangement can be permuted --- this is an essential feature of $n$-associahedra.
For the 2-associahedra associated to a pair of lines, the permutations of points which can occur are precisely the riffle shuffle permutations, thus we refer to these permutations for general $n$-associahedra as $\cT$-shuffles.
Let $P_{\sigma}$ be the standard permutation transformation written in the basis of the positive simple roots. The adjoint permutation transformation $P^T_{\sigma}$ allows us to describe how the shuffle $\sigma$ is manifested geometrically in the velocity fan.
(See Lemmas \ref{permutationtransformationlemma1} and \ref{contractionlemma} for precise statements.)
We note that the lineality space $\langle {\bf 1}\rangle_{\mathbb{R}}$ of the velocity fan $\cF(\cT)$ is not contained in the eigenspace of $P^T_{\sigma}$, and for this reason we cannot quotient out $\cF(\cT)$ by its lineality space until argumentation involving $P^T_{\sigma}$ has been concluded.

In the case of the classical associahedra, our velocity fan specializes to the normal fan of Loday's realization of the associahedron $\cF_n$ \cite{Lodayassociahedron}.
It is well-known that $\cF_n$ is a coarsening of the braid arrangement and thus Loday's realization is a generalized permutahedron, equivalently a polymatroid base polytope.
In fact, Loday's associahedron was one of the primary motivations for Postnikov's development of the theory of generalized permutahedra where he expressed Loday's associahedron as a Minkowski sum of standard simplices \cite{postnikovgp}.\footnote{We refer the curious reader to the excellent recent survey of Loday's associahederon by Pilaud--Santos--Ziegler \cite{pilaud2023celebrating}.}
It is impossible to realize all categorical $n$-associahedra as generalized permutahedra for the simple fact that their face posets are too large.\footnote{
\label{permufootnote}
Each 2-dimensional generalized permutahedron has at most 6 sides, but the 2-associahedron $W_{2,1}$ is an octagon.
There are a few different extensions of generalized permutahedra, perhaps the most prominent constructions being the generalized bipermutahedra of Ardila--Denham--Huh \cite{lagrangian}, the type B/C-generalized permutahedra of Gelfand et al.\ \cite{ardila2020coxeter,borovik2003coxeter, gelfand1987combinatorial}, and the generalized nested permutahedra of Castillo--Liu \cite{nestedbraid}.
It is impossible to realize categorical $n$-associahedra as generalized bipermutahedra for the same reason as for the generalized permutahedra; the former cannot realize octagons.
The 2-dimensional type B/C generalized permutahedron is an octagon, but the 3-dimensional type B/C permutahedron has 26 facets, whereas the 2-associahedron $W_{2,2}$ has 27 facets (see Figure \ref{W22}), thus it is also impossible to realize $n$-associahedra in this setting.
It is not so easy to exclude the generalized nested permutahedra based on $f$-vector considerations, although we note that the rays of the velocity fan are not rays of the nested braid fan.}
On the other hand, because our velocity fan specializes to the normal fan of Loday's associahedron, as well as the normal fan of Forcey's multiplihedron, which is also a generalized permutahedron \cite{Forcey, Ardilla-Doker}, it is reasonable to ask whether there is a direct connection between general velocity fans and the braid arrangement.
We provide an affirmative answer to this question in several ways outlined below.

While the velocity fan is not simplicial, we produce a simplicial fan $\cF(\ccD)$ and establish the following result.

\begin{proposition}
(\ref{triangulationtheorem})
The fan $\cF(\ccD)$ is a canonical smooth flag triangulation of $\cF(\cT)$ on the same set of rays.\footnote{Technically, we must first quotient out these fans by their lineality space $\langle {\bf 1}\rangle_{\mathbb{R}}$ for this statement to be accurate.}
\end{proposition}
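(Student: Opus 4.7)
The plan is to exhibit $\cF(\ccD)$ as arising from a flag simplicial complex $\ccD$ on the set of rays of $\cF(\cT)$, defined by a pairwise compatibility relation among collisions, and then verify the four required properties (refinement, flag, smooth, canonical) in turn. First I would define $\ccD$ by declaring a set of atoms (collisions) $\{\sC_1,\dots,\sC_k\}$ of $\cK(\cT)$ to span a face precisely when each pair $\sC_i, \sC_j$ satisfies a compatibility condition built from the intrinsic combinatorics of $\cT$ (a join-plus-canonical-tie-break condition, in the spirit of how non-crossing or nesting conditions cut a polytope into a simplicial subdivision). Flagness is then built in by design, closure under subsets is automatic, and canonicality is manifest because the relation is defined purely from $\cT$.

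The substantive step is to check that the cones $\tau(\sigma) := \bR_{\geq 0}\langle \rho(\sC) : \sC \in \sigma\rangle + \langle \mathbf{1}\rangle_{\bR}$, as $\sigma$ ranges over faces of $\ccD$, assemble into a fan refining $\cF(\cT)$ without introducing new rays. To this end I would fix a face $\sB \in \cK(\cT)$ and work inside its velocity cone $\tau(\sB)$. Using the piecewise-linear isomorphism $\Gamma : \cK^{\met}(\cT) \to \cF(\cT)$ of Proposition~\ref{metricisomorphismprop}, I transfer the question to the metric cone complex, where the interior of $\tau(\sB)$ is naturally coordinatized by positive lengths assigned to the sub-collisions resolving $\sB$. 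The key lemma I would then prove is that the faces of $\ccD$ contained in $\tau(\sB)$ correspond bijectively to the maximal chains of a canonically defined poset of collisions refining $\sB$, and that the corresponding simplicial cones partition $\tau(\sB)$ — covering it via a sorting/pulling argument on metric coordinates, and meeting only along shared boundary faces because the compatibility relation forces a unique minimal common refinement.

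For smoothness I would check, for each maximal face $\sigma = \{\sC_1,\dots,\sC_k\}$ of $\ccD$, that the ray generators $\rho(\sC_1),\dots,\rho(\sC_k)$ together with $\mathbf{1}$ form a $\bZ$-basis of the ambient lattice. Ordering the $\sC_i$ along the chain produced in the previous step yields a triangular change of basis in which each new collision contributes exactly one new coordinate direction (corresponding to a new relative-velocity degree of freedom being activated), so the relevant determinant is $\pm 1$. Sameness of rays and canonicality are visible from the definition of $\ccD$, and flagness has already been recorded.

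The main obstacle will be the second step: proving that the simplicial cones from $\ccD$ genuinely triangulate each $\tau(\sB)$ rather than merely fit inside it. Both the covering and the non-overlap halves of this claim are most accessible in the metric cone complex, where the coordinates admit an order-theoretic interpretation; I expect the cleanest proof to proceed by inducting on the rank of $\sB$ and showing that the canonical sorting of metric coordinates gives a well-defined retraction from $\tau(\sB)$ onto the relative interior of a unique maximal simplex of $\ccD|_{\tau(\sB)}$. Once this is in place, flagness, refinement on the same set of rays, smoothness, and canonicality all follow from the construction.
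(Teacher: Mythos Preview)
Your overall architecture matches the paper's: define $\ccD$ as the flag complex on a pairwise relation among collisions (the paper uses $\sC_i \rightarrow \sC_j$ or $\sC_i \sim \sC_j$), transfer the triangulation question through $\Gamma$ to the metric complex, and then check smoothness. Flagness and canonicality are indeed immediate from this setup.

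The smoothness step, however, has a genuine gap. Your claim that ordering the $\sC_i$ along the chain gives a ``triangular change of basis in which each new collision contributes exactly one new coordinate direction'' is not correct as stated: the ray generators $\rho(\sC)$ are not $0$-$1$ vectors in general (their entries can be negative and unbounded), and no ordering of a maximal nested collection makes the matrix of ray generators triangular. The paper's proof (Lemma~\ref{smooth}) proceeds instead by induction on $|V(\cT)|$: take the containment-minimal $\sC_1$ (necessarily a minimal collision), multiply on the left by the adjoint permutation transformation $P_\sigma^T$ for a compatible shuffle $\sigma$, and use Lemmas~\ref{contractionlemma} and~\ref{refinedcontractionlemma} to identify the transformed columns with ray generators in the smaller fan $\cF(\cT/\sC_1)$. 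A further subtlety, handled by Lemma~\ref{connstructingnestedcollisionsinthequotient}, is that when $\sC_i$ shares the fusion bracket of $\sC_1$, the quotient $\sC_i/\sC_1$ is not a collision but a disjoint union of collisions, so additional column operations are needed before the Laplace expansion reduces to the inductive hypothesis. Your triangularity heuristic is correct for $n=1$ but does not survive the shuffling phenomenon present for $n\geq 2$.

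The triangulation step is also underspecified. The paper's key input is Lemma~\ref{uniqueexpress}: every metric $n$-bracketing has a \emph{unique} expression as a conical combination of a nested collection of metric collisions, with existence coming from iteratively peeling off a containment-minimal collision and uniqueness from showing that any containment-minimal collision in the support must appear in every nested expression. Your ``sorting/pulling'' and ``maximal chains of a poset'' language gestures in this direction but does not isolate the uniqueness argument, which is where the content lies.
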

The chambers of this triangulated velocity fan $\cF(\ccD)$ admit a description in terms of certain rooted binary trees whose nodes are labeled by collisions.
This generalizes a classical description of the associahedron, and we make an explicit connection with Postnikov's $B$-trees and nestohedra, an important family of generalized permutahedra arising from the theory of wonderful compactifications.
We also describe a second, finer triangulation $\cF(\mathscr{O}(\cT))$ which specializes to the triangulation of the normal fan of Loday's associahedron by the braid arrangement, and thus offers a new generalization of the braid arrangement associated to a rooted plane tree.

Generalized permutahedra can be characterized as those polytopes whose normal fans have walls which are orthogonal to the type $A$ roots $e_i-e_j$ and thus Loday's associahedron, being a generalized permutahedron, has a normal fan with this property.
We apply the triangulated velocity fan to give a recursive calculation of the normal vectors for the walls of the velocity fan.
In particular, our calculation demonstrates that
the normal vectors of the walls of $\cF(\cT)$ have unbounded support.

As a second application of the triangulated velocity fan, we prove that the map $\Gamma$ from Proposition \ref{metricisomorphismprop} restricts to a monoid isomorphism from the integral metric $n$-bracketings in a conical set of $\cK^{\met}(\cT)$ to the integer points in the corresponding cone of the velocity fan $\cF(\cT)$.

We then produce a function $\Gamma^{\rho}_{\zeta}$ on the velocity fan and prove the following result. 

\begin{theorem}
(\ref{permutahedroidthm})
The function $\Gamma^{\rho}_{\zeta}:\cF(\cT)\rightarrow \mathbb{R}^m$ is a piecewise-unimodular map which is nondegenerate on each cone, and the image of each cone is union of cones in the braid arrangement $\cA_m$.
\end{theorem}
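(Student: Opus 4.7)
The plan is to work locally on each maximal cone of $\cF(\cT)$ and use the triangulation $\cF(\ccD)$ as a scaffolding. Since every cone of $\cF(\cT)$ is a union of simplicial cones of $\cF(\ccD)$, and since the asserted properties (piecewise-unimodularity on $\cF(\cT)$, nondegeneracy on each $\cF(\cT)$-cone, and image being a union of $\cA_m$-cones) are all compatible with taking unions on the source side, I would first check piecewise-unimodularity on the simplicial refinement and then upgrade the statement back to $\cF(\cT)$. On a simplicial cone $\sigma \in \cF(\ccD)$ with ray generators $\rho(\sC_1),\dots,\rho(\sC_k)$, the restriction $\Gamma^{\rho}_{\zeta}|_\sigma$ is linear and is determined by the images $\zeta(\sC_i) \in \mathbb{Z}^m$; the goal is to show that the matrix whose columns are these vectors (completed on the lineality space $\langle {\bf 1}\rangle_{\mathbb{R}}$) is unimodular.

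The key combinatorial step is to argue that, with respect to a natural ordering of the collisions $\sC_i$ induced by the nested tree structure encoded in $\ccD$, the matrix $[\zeta(\sC_1)\,|\,\cdots\,|\,\zeta(\sC_k)]$ is triangular with $\pm 1$ diagonal entries, up to a permutation of rows; this is the source of unimodularity. The rays $\rho(\sC)$ of the velocity fan encode relative velocities of consecutive affine coordinate subspaces during a collision, and the map $\Gamma^{\rho}_{\zeta}$ should be designed to undo the $\cT$-shuffle contributions $P^T_\sigma$ recorded at each internal node of the tree, reassembling local data into a single coordinate system on $\mathbb{R}^m$. From nondegeneracy of $P^T_\sigma$ and the triangularity of the lifting from lower levels of $\cT$ to higher ones, unimodularity should follow inductively on the depth. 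Nondegeneracy on each simplicial cone is then immediate, and nondegeneracy on each cone of $\cF(\cT)$ follows because a nondegenerate linear map on a full-dimensional simplicial subcone of a cone agrees with a fixed linear map on the whole cone.

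For the final assertion, I would show that the image $\Gamma^{\rho}_{\zeta}(\tau(\sB))$ of a maximal cone $\tau(\sB)$ is exactly the closure of those $\cA_m$-chambers whose corresponding permutations are compatible with the maximal bracketing $\sB$, in a precise sense dictated by the $\cT$-shuffle structure. This requires verifying that the subset of $S_m$ arising in this way is convex in the weak Bruhat order, so that its union of braid chambers assembles into a polyhedral subset with piecewise-linear boundary lying on the reflecting hyperplanes $x_i = x_j$. For the Loday specialization this recovers the known fact that the normal fan of the associahedron coarsens the braid fan, giving a consistency check. The main obstacle is the unimodularity step, and more specifically, the inductive control over how the shuffle matrices $P^T_\sigma$ interact with the lineality direction $\mathbf{1}$: because $\mathbf{1}$ is not an eigenvector of $P^T_\sigma$, one cannot quotient by the lineality space before the calculation, and the triangularity argument must be run in the full ambient space and only afterwards descended to the quotient in which $\cA_m$ lives.
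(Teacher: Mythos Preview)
Your proposal has a genuine conceptual gap: you have misidentified what the map $\Gamma^{\rho}_{\zeta}$ does. The vector $\zeta(\sC)$ is defined directly as a $0$--$1$ vector recording whether $u^k_i$ lies in an essential $k$-bracket of $\sC$ together with some $u^k_j$ for $j>i$; no adjoint permutation transformations $P^T_\sigma$ enter its definition. Your claim that $\Gamma^{\rho}_{\zeta}$ is ``designed to undo the $\cT$-shuffle contributions $P^T_\sigma$'' conflates $\zeta$ with the entirely separate family of maps $\omega_\sigma$ from \S\ref{permutahedralatlassubsection}. Consequently, your proposed inductive mechanism---nondegeneracy of $P^T_\sigma$, care about ${\bf 1}$ not being an eigenvector of $P^T_\sigma$---is irrelevant to this theorem. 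The proof does not run through shuffles at all.

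The paper's proof also differs structurally from your plan. Rather than working with the triangulation $\cF(\ccD)$ and attempting a triangularity argument, it passes to the finer permutahedral refinement $\cF(\mathscr{O}(\cT))$. For a maximal flag $Z=\{\sD_1<\cdots<\sD_{m-1}\}$ of generalized collisions, the images $\zeta(\sD_1),\ldots,\zeta(\sD_{m-1}),{\bf 1}$ are literally the indicator vectors of a maximal chain of subsets of $[m]$: passing from $\sD_i$ to $\sD_{i+1}$ flips a single entry from $0$ to $1$. Hence $\Gamma^{\rho}_{\zeta}$ sends each permutahedral chamber to a braid chamber, giving unimodularity and the braid-union property simultaneously and for free. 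Your proposed route through $\ccD$ and Bruhat-order convexity could in principle be made to work (the images $\tau_\zeta(Y)$ are tree-poset cones, as the paper remarks), but it is substantially more laborious, and as written your sketch does not supply the actual combinatorial argument, only the hope that a triangular structure exists. The well-definedness and cone-wise injectivity of $\Gamma_\zeta$ (which you do not address) are handled separately in the paper via Lemma~\ref{zetametric}, by an argument parallel to the proof that $\Gamma$ is an isomorphism.
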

This connection between general velocity fans and the braid arrangement suggests a new extension of generalized permutahedra, which we call permutahedroids.
We then produce a different family of maps $\{\Gamma^\rho_{\sigma}\}$ defined on certain shuffle charts $\{\Gamma(U_{\sigma})\}$ of the velocity fan which are determined by a fixed shuffle order on the underlying affine spaces.
We establish the following result.
\begin{theorem}
(\ref{localPLmapsprop})
For a fixed shuffle $\sigma$, the map $\Gamma^\rho_{\sigma}: \Gamma(U_{\sigma})\rightarrow \mathbb{R}^m$ is a piecewise-unimodular isomorphism onto its image, and the image of each cone is a union of cones in the braid arrangement $\cA_m$.
Moreover, when we apply $\Gamma^\rho_{\sigma}$ to $\Gamma(U_{\sigma})$ in the triangulated velocity fan, the image is a full-dimensional subfan of a nestohedral fan.
\end{theorem}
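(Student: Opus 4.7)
The strategy is to reduce this theorem to Theorem~\ref{permutahedroidthm}, using the fact that inside a shuffle chart $U_\sigma$ the global ambiguity coming from the permutation transformation $P^T_\sigma$ (highlighted in Lemmas \ref{permutationtransformationlemma1} and \ref{contractionlemma}) disappears. I view $\Gamma^\rho_\sigma$ as obtained from $\Gamma^\rho_\zeta$ by a cone-wise composition with the unimodular matrix $P^T_\sigma$ (or its inverse), tailored so that within $\Gamma(U_\sigma)$ the map acts by a single element of $\on{GL}_m(\bZ)$ on each cone.

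First I would verify piecewise-unimodularity and cone-wise nondegeneracy. Since $\Gamma^\rho_\zeta$ is piecewise-unimodular and nondegenerate on each cone, and $P^T_\sigma \in \on{GL}_m(\bZ)$ preserves both properties, $\Gamma^\rho_\sigma$ inherits piecewise-unimodularity and cone-wise nondegeneracy on $\Gamma(U_\sigma)$. The statement that the image of each cone is a union of cones of $\cA_m$ then follows from the analogous statement for $\Gamma^\rho_\zeta$ together with the fact that the braid arrangement is invariant under the $S_m$-action, in particular under $P^T_\sigma$.

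Next I would establish that $\Gamma^\rho_\sigma$ is a global isomorphism onto its image, for which the new content beyond Theorem~\ref{permutahedroidthm} is injectivity. The obstruction to injectivity of $\Gamma^\rho_\zeta$ is precisely the $\cT$-shuffling freedom. Restricting to $U_\sigma$ prescribes a fixed shuffle order on the underlying affine coordinate subspaces, eliminating this ambiguity: two points of $\Gamma(U_\sigma)$ with the same image in $\mathbb{R}^m$ must have identical underlying collision data in the same order, and cone-wise nondegeneracy then forces them to coincide. Combined with piecewise-unimodularity, this gives the claimed PL-isomorphism onto its image.

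The remaining, and most delicate, step is to identify the image of $\cF(\ccD)|_{\Gamma(U_\sigma)}$ as a full-dimensional subfan of a nestohedral fan. I would construct from the pair $(\cT,\sigma)$ a building set $\sB_{\cT,\sigma}$ on the ground set of collisions compatible with $\sigma$, defined so that its connected subsets encode the admissible nested collections of such collisions. The chambers of $\cF(\ccD)$ within $U_\sigma$ are indexed by rooted binary trees with collision-labeled nodes, and these should match the $\sB_{\cT,\sigma}$-trees of Postnikov. The Feichtner--Sturmfels nestohedral fan of $\sB_{\cT,\sigma}$ then provides the ambient fan, and the ray-level correspondence induced by $\Gamma^\rho_\sigma$ extends to a PL-embedding of fans. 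Full-dimensionality is immediate because $\Gamma(U_\sigma)$ is full-dimensional in $\cF(\cT)$ and $\Gamma^\rho_\sigma$ is nondegenerate on its top cones. The main obstacle will be verifying the building set axioms for $\sB_{\cT,\sigma}$ and making precise the bijection with chambers of $\cF(\ccD)|_{U_\sigma}$, which requires weaving together the geometry of the velocity fan with the combinatorics of rooted plane trees, $\cT$-shuffles, and nestings.
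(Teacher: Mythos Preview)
Your reduction to Theorem~\ref{permutahedroidthm} via $P^T_\sigma$ does not work. The central claim---that $\Gamma^\rho_{\omega_\sigma}$ is obtained from $\Gamma^\rho_\zeta$ by composing cone-wise with $P^T_\sigma$---is incorrect, and even if some such factorization held, $P^T_\sigma$ does not preserve the braid arrangement: by Lemma~\ref{relevantadjoint} one has $P^T_\sigma(\mathbf{1})_i = \sigma(i+1) - \sigma(i)$, so $P^T_\sigma$ does not fix the lineality space $\langle\mathbf{1}\rangle_{\mathbb{R}}$ of $\cA_m$. This is exactly the obstruction flagged in Remark~\ref{linealityspacermk}. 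The relation that Lemma~\ref{permutationtransformationlemma1} actually gives is $\omega_\sigma(\sC) = P^T_\sigma(\rho(\sC)-\mathbf{1})+\mathbf{1}$, which is affine in $\rho(\sC)$ and does not pass through $\zeta(\sC)$; extended over a cone it involves the total coefficient $\sum_i\lambda_i$ and so is not implemented by a single element of $\on{GL}_m(\mathbb{Z})$. Hence neither unimodularity nor the braid-cone property can be imported from $\Gamma^\rho_\zeta$ in the way you propose, and your injectivity sketch (``shuffle ambiguity removed'') is not a proof either.

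The paper's route is essentially independent of the $\zeta$ story. Well-definedness and global injectivity of $\Gamma_{\omega_\sigma}$ on $U_\sigma$ are established directly by Lemma~\ref{omegametric}, a shuffle-specific analog of Lemma~\ref{metriclemma} with its own technical Definition~\ref{techdefinition2} and Lemma~\ref{technicallemma2}. For the nestohedral statement, the building set $\mathbb{B}_\sigma$ (Definition~\ref{localbuildingsetdef}) lives on the coordinate ground set $\bigsqcup_k[t_k]$, not on collisions: its elements are the supports of the $\omega_\sigma(\sC)$ together with auxiliary singletons $V_\sigma=\{e^k_i:\pi(u^k_{\sigma(i)})\neq\pi(u^k_{\sigma(i+1)})\}$, which must be adjoined for the building-set axioms to hold (Proposition~\ref{localbuildingsetprop}). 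The image of the chart is then identified with $\cF(\mathbb{B}_\sigma)$ \emph{minus} the stars of the rays in $V_\sigma$ (Lemma~\ref{nestoisom}, Proposition~\ref{nestomaps}); your outline misses this deletion entirely. Finally, the logical order is the reverse of yours: the nestohedral identification comes first, and the braid-cone and unimodularity statements are deduced from it as Corollaries~\ref{unionbraicconeslemma} and~\ref{omegasigmaunimodular}, using smoothness of nestohedral fans.
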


This result, applied in reverse, shows that the triangulated velocity fan can be constructed by taking a collection of certain nestohedral fans, deleting the stars of some of the coordinate rays, acting on the remaining cones by piecewise unimodular isomorphisms, and then gluing the resulting collection of cones together.
See Figure \ref{fig:maps_to_braid_example} for an illustration of these results.

Each face of an associahedron factors as a product of smaller associahedra --- this observation is essential for understanding the associahedron as an operad, and is referred to by some authors as the Hopf algebra or Hopf monoid structure of the associahedron (see \cite{aguiar2023hopf}).
It was shown by the second author that 2-associahedra admit a similar but more intricate recursive structure: each facet of a 2-associahedron factors as a product of two smaller objects, where the first is a smaller 2-associahedron and the second is a fiber product of 2-associahedra over an associahedron --- the second author and Carmeli used this recursive structure to establish that 2-associahedra form a relative 2-operad \cite{bottman_carmeli}.
We extend this description to categorical $n$-associahedra: each facet of an $n$-associahedron factors as a product of a smaller $n$-associahedron with an iterated fiber product of $n$-associahedra, and we leave open the problem of verifying that $n$-associahedra determine a relative $n$-operad.
The normal fan of Loday's associahedron realizes the recursive structure of the associahedron geometrically: the star of each ray factors as a product of two smaller such fans.
The following result characterizes how this result extends to the velocity fan.

\begin{theorem}
(\ref{localvelocity})
Let ${\rho}(\sC)$ be a ray of the velocity fan $\cF(\cT)$.
There exists a piecewise-unimodular isomorphism 
\begin{align}
\Theta_{\sC}:\,\,\, \cF(\cT)_{{\rho}(\sC)} \rightarrow \cF(\cT/_{\sC}) \times \cF(\cT|_{\sC})
\end{align}
such that $\Theta_{\sC}^{-1}$ restricted to $\cF(\cT/_{\sC}) \times {\bf 0}$ is linear and agrees with $(P_{\sigma}^T)^{-1}\times {\bf 0}$ restricted to $\cF(\cT/_{\sC}) \times {\bf 0}$, with $\sigma$ a compatible $\cT$-shuffle for $\sC$, and with coordinates ordered according to $\sigma$.
\end{theorem}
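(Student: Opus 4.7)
The plan is to build $\Theta_{\sC}$ by first establishing the factorization at the level of the cone complex $\cK^{\met}(\cT)$ and then transporting it to the velocity fan through the piecewise-linear isomorphism $\Gamma$ of Proposition \ref{metricisomorphismprop}. Concretely, the iterated fiber product structure of facets of $\cK(\cT)$ recalled just before the statement exhibits the star of $\sC$ in $\cK(\cT)$ as the product poset $\cK(\cT/_{\sC})\times\cK(\cT|_{\sC})$. I would lift this to $\cK^{\met}(\cT)$ by separating each metric $n$-bracketing refining $\sC$ into the data carried by sub-collisions $\sC'\leq\sC$, which assembles into a metric bracketing on $\cT|_{\sC}$, and the data carried by collisions strictly above $\sC$, which assembles into one on $\cT/_{\sC}$; the length assigned to $\sC$ itself becomes a free parameter, matching precisely the lineality that the ray $\rho(\sC)$ adjoins to the star. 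This yields an isomorphism of cone complexes $\cK^{\met}(\cT)_{\sC}\cong \cK^{\met}(\cT/_{\sC})\times\cK^{\met}(\cT|_{\sC})$.

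With the combinatorial factorization in hand, I would take $\Theta_{\sC}$ to be the composition of $\Gamma\times\Gamma$ with the lifted product isomorphism, and then verify piecewise-unimodularity cone by cone. Fix a maximal cone $\tau(\sB)\supseteq\rho(\sC)$ together with a compatible $\cT$-shuffle $\sigma$ for $\sC$. On $\tau(\sB)$ the velocity coordinates partition naturally into two blocks: the velocities between consecutive affine subspaces participating in the cluster formed by $\sC$, and the velocities between consecutive subspaces of $\cT$ as seen from outside that cluster. The first block maps linearly onto the velocity coordinates of the corresponding cone in $\cF(\cT|_{\sC})$ via the identity on the relevant basis. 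The second block maps to the velocity coordinates of a cone in $\cF(\cT/_{\sC})$, but only after re-indexing the subspaces participating in $\sC$ according to $\sigma$; the shuffle lemmas on $P_{\sigma}$ and $P_{\sigma}^{T}$ stated just before the theorem identify this re-indexing with $P_{\sigma}^{T}$. The resulting map is block-triangular with unimodular blocks (identity and $P_{\sigma}^{T}$), which establishes piecewise-unimodularity on $\tau(\sB)$ and, by taking inverses, shows that $\Theta_{\sC}^{-1}$ restricted to $\cF(\cT/_{\sC})\times\bzero$ agrees with $(P_{\sigma}^{T})^{-1}\times\bzero$.

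The main obstacle I anticipate is the gluing step: showing that the locally defined block-triangular maps, each depending on a choice of compatible shuffle $\sigma$, assemble into a single globally well-defined map on the star. Two maximal cones $\tau(\sB)$ and $\tau(\sB')$ sharing a wall in $\cF(\cT)_{\rho(\sC)}$ correspond to bracketings differing by a single flip, and this flip takes place either entirely inside the cluster of $\sC$ or entirely outside it. In the first case the $\cT/_{\sC}$-block is unchanged; in the second case the compatible shuffles differ by a permutation whose action is trivial on the coordinates of the $\cT|_{\sC}$-block, so the two presentations agree on the shared wall. The delicate part is doing this analysis systematically enough to cover all pairs of adjacent cones, particularly in checking that the various adjoint permutation matrices $P_{\sigma}^{T}$ match on intersections. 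Once this is verified, piecewise-unimodularity on each chamber together with completeness of both $\cF(\cT)_{\rho(\sC)}$ and $\cF(\cT/_{\sC})\times\cF(\cT|_{\sC})$ promotes the piecewise-linear bijection to the desired isomorphism of fans.
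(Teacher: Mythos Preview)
Your approach is essentially the paper's: define $\Theta_\sC$ globally as the composite $(\Gamma_{\cT/\sC}\times\Gamma_{\cT|_\sC})\circ\Psi^{\met}\circ\Gamma_\sC^{-1}$ through the metric $n$-bracketing complex (Lemmas \ref{localvelocitytolocalmetric} and \ref{localmetricdecomposition}), then verify the $P_\sigma^T$ statement via Lemma \ref{contractionlemma}. Two points need correction.

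First, your ``main obstacle'' is not one. Because $\Theta_\sC$ is defined globally as a composite of already-well-defined piecewise-linear isomorphisms, no gluing of chamber-by-chamber descriptions is required, and in particular there is no compatibility of different $P_\sigma^T$'s to check. The shuffle $\sigma$ enters only in the second clause of the theorem, which asserts that $\Theta_\sC^{-1}|_{\cF(\cT/_\sC)\times\bzero}$ happens to equal $(P_\sigma^T)^{-1}\times\bzero$ for \emph{any} compatible $\sigma$; this is a single computation on ray generators (Lemma \ref{linearpartof7.26lemma}), not a construction to be glued. Your wall-crossing analysis is unnecessary. Relatedly, your dichotomy ``sub-collisions of $\sC$'' versus ``collisions strictly above $\sC$'' omits the disjoint case $\sC\sim\sC'$; the paper's $\wh\Theta_\sC$ treats three cases (Lemma \ref{intervalatoms}), and the metric decomposition in Lemma \ref{localmetricdecomposition} correspondingly splits into four subcases according to the relative position of fusion brackets.

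Second, your unimodularity sketch is too optimistic. The map on a chamber is not ``block-triangular with blocks the identity and $P_\sigma^T$'': rays $\rho(\sC')$ with $\sC\to\sC'$ land in $(\rho(\sC'/\sC),\mathbf{1}|_\sC)$, so the $\cT|_\sC$ component is not trivial on them, and the restriction to the $\cT|_\sC$ factor is not ``the identity on the relevant basis'' but a map into an iterated fiber product whose smoothness is not immediate. The paper's argument (Lemma \ref{fibersmooth}) passes to the permutahedral triangulation $\cF(\mathscr{O}(\cT))_{\rho(\sC)}$, observes that the image of each simplex factors as $\tau_1\times\tau_2$, gets unimodularity of $\tau_1$ from the smoothness of $\cF(\mathscr{O}(\cT/_\sC))$, and then runs an inductive Laplace-expansion argument (parallel to Lemma \ref{smooth}) to establish unimodularity of $\tau_2$. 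You should expect to need this inductive step rather than a one-line block decomposition.
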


We conclude our article by investigating a natural class of $n$-associahedra, containing the associahedron and the multiplihedron, which we call \emph{concentrated $n$-associahedra}.
These are the $n$-associahedra determined by rooted plane trees such that each pair of vertices at depth $k$ are siblings.
For this class, there can be no shuffling of subspaces during a collision in the corresponding arrangement.
This manifests geometrically in an interesting way: the velocity fans of concentrated $n$-associahedra are coarsenings of the braid arrangement.
As an advertisement for our future work, where we will investigate projectivity for general velocity fans, we establish the following result.

\begin{theorem}
(\ref{polytopes})
 Each concentrated $n$-associahedron $\cK(\cT)$ can be realized as the face poset of an integral generalized permutahedron whose normal fan is the velocity fan $\cF(\cT)$.
\end{theorem}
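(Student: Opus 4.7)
Since the paragraph preceding the theorem establishes that for concentrated $\cT$ the velocity fan $\cF(\cT)$ coarsens the braid arrangement $\cA_m$, the task reduces to exhibiting a polytope with integer vertices whose normal fan is $\cF(\cT)$; such a polytope is automatically a generalized permutahedron, and its face poset is $\cK(\cT)$ by Theorem \ref{mainthm}. My plan is to construct this polytope as an explicit Minkowski sum of standard coordinate simplices indexed by collisions of $\cT$, and then identify its normal fan with $\cF(\cT)$ by inducting on the depth of $\cT$ via the recursive structure of concentrated trees.

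The concentrated hypothesis forces every pair of depth-$k$ vertices of $\cT$ to share a parent, so at each depth at most one vertex is internal; it follows that every collision $\sC \in \Coll(\cT)$ has support equal to a contiguous interval $S(\sC) \subseteq [m]$ of leaf-coordinates. I would therefore define
\[
P(\cT) \;:=\; \sum_{\sC \in \Coll(\cT)} \Delta_{S(\sC)},
\]
where $\Delta_S := \on{conv}\{e_i : i \in S\}$. This is an integral generalized permutahedron by Postnikov's Minkowski-sum criterion. When $\cT$ has depth one, it specializes to $\sum_{1 \leq i < j \leq m}\Delta_{[i,j]}$, which is exactly Postnikov's realization of Loday's associahedron, and when $\cT$ is the depth-two tree for the multiplihedron it specializes to the Ardila--Doker Minkowski-sum decomposition, so $P(\cT)$ is consistent with the two specializations singled out in the theorem.

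To identify the normal fan of $P(\cT)$ with $\cF(\cT)$, I would induct on the depth of $\cT$. A concentrated tree of depth $d \geq 1$ decomposes as a root with some leaf-children together with at most one internal child, itself the root of a concentrated subtree of depth $d-1$. On the geometric side, this matches both the iterated fiber product structure of $\cK(\cT)$ and the local factorization of the velocity fan at each ray established in Theorem \ref{localvelocity}. Since concentration forbids nontrivial $\cT$-shuffles, the piecewise-unimodular twists $\Theta_\sC$ of Theorem \ref{localvelocity} reduce to genuine linear isomorphisms at each star, so the inductive matching of normal fans glues cleanly along the top-level collisions.

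The main obstacle will be the non-redundancy check: verifying that no wall of $\cF(\cT)$ is strictly subdivided by the normal fan of $P(\cT)$. Adjacent maximal cones of $\cF(\cT)$ differ along walls corresponding to elementary slides of collisions, and one must show that some summand $\Delta_{S(\sC)}$ attains its maximum on the same vertex for both adjacent cones. If the uniform-coefficient sum above turns out to be too fine, I would weight the summands by positive integer coefficients $c_\sC$ determined combinatorially from $\cT$, or fall back on defining a submodular function $z : 2^{[m]} \to \mathbb{Z}$ directly from the ray generators of $\cF(\cT)$ and invoking Postnikov's characterization of generalized permutahedra by submodular functions; integrality is automatic since the ray generators of $\cF(\cT)$ can be chosen integral.
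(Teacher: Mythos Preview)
Your Minkowski sum $\sum_{\sC \in \fX(\cT)} \Delta_{S(\sC)}$ is precisely the nestohedron attached to the building set $\{S(\sC): \sC \in \wh\fX(\cT)\}$, and by Lemma~\ref{concentratednnestohedral} its normal fan is the \emph{triangulated} velocity fan $\cF(\ccD)$, not $\cF(\cT)$.  These two fans differ already for the multiplihedron $J_4$ (two lines, four points on one line, so $m=4$).  Take the maximal $2$-bracketing $\sB$ whose nontrivial $2$-brackets over the maximum $1$-bracket are $\{p_1,p_2\}$, $\{p_3,p_4\}$, $\{p_1\}$, $\{p_2\}$, $\{p_3\}$, $\{p_4\}$.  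Four type-3 collisions lie below $\sB$, namely those with partitions $\{12\}\{34\}$, $\{1\}\{2\}\{34\}$, $\{12\}\{3\}\{4\}$, $\{1\}\{2\}\{3\}\{4\}$; their ray generators satisfy $\rho_{\{12\}\{34\}}+\rho_{\{1\}\{2\}\{3\}\{4\}}=\rho_{\{1\}\{2\}\{34\}}+\rho_{\{12\}\{3\}\{4\}}$, so $\tau(\sB)$ is a genuinely nonsimplicial ``square'' cone.  In $\cF(\ccD)$ this cone is subdivided into two simplices, so your polytope has a strictly finer normal fan than $\cF(\cT)$ and therefore does \emph{not} realize $\cK(\cT)$.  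In particular your claimed specialization to the Ardila--Doker multiplihedron is false; their polytope is not the graph associahedron of the fan graph.

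Two further remarks.  First, your claim that $S(\sC)$ is always a contiguous interval is also wrong: for the collision $\{12\}\{34\}$ above one has $S(\sC)=\{x_1^1,x_1^2,x_3^2\}$, which is connected in the graph $G(\cT)$ but not an interval in any linear ordering of the coordinates.  Second, your fallback of reweighting by \emph{positive} integer coefficients $c_\sC$ cannot help: rescaling a Minkowski summand by a positive scalar never changes the normal fan, so $\sum c_\sC\,\Delta_{S(\sC)}$ is normally equivalent to the nestohedron for every choice of $c_\sC>0$.  What the paper actually does is drop summands: it indexes simplices by \emph{collision units} (Definition~\ref{collision_unit}) rather than collisions, obtaining a proper sub-sum of the nestohedron, and then proves directly that the resulting support function is supermodular on $\cF(\cT)$.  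The essential new idea you are missing is this combinatorial gadget identifying exactly which simplices to keep.
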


We give explicit facet and vertex descriptions for these polytopes generalizing Loday's realization of the associahedron and Forcey's realization of Stasheff's multiplihedron.
Additionally, we describe these polytopes as positive Minkowski sums of standard simplices generalizing a description of Loday's associahedron due to Postnikov \cite{postnikovgp} and a description of Forcey's multiplihedron dues to Ardila-Doker \cite{Ardilla-Doker}.
We also demonstrate how to recover the constrainahedra of the second and third authors: each constrainahedron is the Minkowski sum of all concentrated $n$-associahedra with a fixed profile.

\subsection{Compactified real moduli spaces}
\label{ss:compactified_moduli_spaces}

\

In this subsection, we provide some further details concerning the topological realization of $n$-associahedra as the poset of strata of compactified moduli spaces of configurations of subspaces in $\bR^n$.
(This subsection is entirely motivational.
In principle it can be skipped, but we recommend against this.)

We begin by recalling how the associahedra $K_r$ can be realized as a compactified moduli space of configurations of $r$ numbered marked points on $\bR$ \cite{kapranov1993permutoassociahedron, drinfeld1989quasi, drinfeld1990quantum, lambrechts_turchin_volic}.
(It is more conventional to consider $r+1$ marked points on $S^1$, with one marked point distinguished; this is an entirely equivalent perspective.) This realization is a necessary part of the construction of the Fukaya category of a symplectic manifold \cite[\S9f]{seidel}, because the pseudoholomorphic maps involved in the definition of the composition operation have domains that are disks with boundary marked points, one of them distinguished.

Define $\cM_r$ to be the moduli space
\begin{align}
\cM_r
\coloneqq
\bigl\{
(x_1,\ldots,x_r)
\:|\:
x_1 < \cdots < x_r
\bigr\}/\sim,
\end{align}
where the quotient indicates that we identify two configurations that differ by an overall translation and dilation $x \mapsto ax + b$, for $a \in \bR_{>0}$, $b \in \bR$.

\begin{example}
Consider the case of $\cM_4$.
The locus of points of the form $(0,1,x_3,x_4)$ with $1 < x_3 < x_4$ defines a global slice for the action of translations and dilations, so we can identify $\cM_4$ with an open simplex.
\null\hfill$\triangle$
\end{example}

We can form a Gromov-compactification of $\cM_r$ by defining $\ol\cM_r$ to be the moduli space of \emph{stable trees of configurations of points on $\bR$}.
We will not define $\ol\cM_r$ precisely, but we invite the interested reader to consult \cite[\S2]{bottman2017moduli}.\footnote{\label{bubbleofffootnote}A standard technique in symplectic geometry is to enlarge spaces of pseudoholomorphic maps by including nodal maps, to form ``Gromov-compactified moduli spaces''.
The principle is that when the gradient blows up in a sequence of maps, one rescales at the blowup point to form a ``bubble''.
This is treated in great detail in \cite[\S\S4--5, and \S4.2 in particular]{mcduff-salamon:big}.
The Gromov-compactification of $\cM_r$ that we describe here is an analogous procedure for configuration spaces, rather than spaces of maps.
In the symplectic context, these configuration spaces should be thought of as moduli spaces of domains.}
$\ol\cM_r$ is constructed by including limits of Cauchy sequences in $\cM_r$, following the instruction that when points collide, we ``zoom in'' on the collision site and remember the fashion in which the points collided by adding in an auxiliary copy of $\bR$ that carries these points. 
 The space $\ol\cM_r$ naturally has the structure of a CW complex.
In fact, it is a CW realization of $\cM_r$: the poset of cells is isomorphic to the associahedron $K_r$.
See Figure \ref{fig:K4_models} for a depiction of the $r=4$ case.

We motivate the $n$-associahedra in terms of a generalization of $\ol\cM_r$.
Specifically, we will consider moduli spaces of certain configurations of affine subspaces of $\bR^n$.
While we will not prove a relationship between these spaces and the $n$-associahedra, we expect that they are a topological, and even a smooth, realization of the $n$-associahedra, just as $\ol\cM_r$ is a topological realization of $K_r$.
(See Expectation \ref{exp:top_version}.)

We begin by defining \emph{trees of spaces}.

\begin{definition}
Let $n$ and $k$ be positive integers with $k \leq n$.
We say that $V \subset \bR^n$ is an \emph{initial coordinate subspace of dimension $k$} if there exists a vector $\ba \in \mathbb{R}^{n-k}$ such that
\begin{align}
V
=
\bigl\{
\bx \in \bR^n
\:|\:
\:
x_j = a_j
\:\forall\:
j \in [1, n-k]
\bigr\}.
\end{align}

A \emph{tree of spaces in $\bR^n$} is a nonempty finite collection $\cC = \bigcup_{k=0}^n \cC_k$ of initial coordinate subspaces in $\bR^n$, such that the following conditions hold:
\begin{enumerate}
\item
Every $A \in \cC_k$ has dimension $k$.

\item
For every $k\leq n-1$ and $A \in \cC_k$, there exists $A' \in \cC_{k+1}$ such that the containment $A \subset A'$ holds.
\end{enumerate}

\noindent
The \emph{type} of $\cC$ is the directed rooted plane tree defined like so:
\begin{enumerate}
\item
We define the vertex set by $V(\cT(\cC)) \coloneqq \cC$.
For clarity, we denote by $p_A$ the vertex corresponding to $A \in \cC$.

\item 
For $A, A' \in \cC$, there is an edge from $p_A$ to $p_{A'}$ if and only if $\dim A' = \dim A + 1$, and $A$ is contained in $A'$.

\item 
The root of $\cT(\cC)$ is $p_{\bR^n}$.

\item 
For any $p_A \in \cC_k$, we equip the set of incoming vertices of $p_A$ with the linear order induced by considering the $(n-k+1)$-st coordinate of the elements of $\cC_{k-1}$ contained in $A$.
\end{enumerate}

Suppose that $\cC$ is a tree of spaces in $\bR^n$.
We say that $\cC$ is \emph{stable} if $\cC_0$ is nonempty, and $\cT(\cC)$ has at least $n+1$ edges.
\null\hfill$\triangle$
\end{definition}

Now that we have introduced trees of spaces, we can explain the topological version of the $n$-associahedra.
Suppose that $\cT$ is a depth-$n$ rooted plane tree with at least $n+1$ edges.
We then define $\wh\cM(\cT)$ to be the collection of all stable trees of spaces in $\bR^n$ of type $\cT$.
The group $\bR^n \rtimes \bR_{>0}$ acts on $\bR^n$ by translations and dilations, which induces an action of the same group on $\wh\cM(\cT)$.
The stability condition implies that this action is free, and we now define $\cM(\cT)$ to be the quotient
\begin{align}
\cM(\cT)
\coloneqq
\wh\cM(\cT)/(\bR^n\rtimes\bR_{>0}).
\end{align}
In the following expectation, we describe a compactification of $\cM(\cT)$ that is motivated by the Gromov-compactification of the $n=2$ case of $\cM(\cT)$ performed in \cite{bottman2017moduli} and Bottman--Oblomkov's work in \cite{bo}.
This compactification is the topological version of the $n$-associahedra.
We intend to verify the following expectation in forthcoming work.
This expectation shows how $\cM(\cT)$ informs our definition of categorical $n$-associahedra in \S \ref{s:n-associahedra}.

\begin{expectation}
\label{exp:top_version}
For any depth-$n$ rooted plane tree with at least $n+1$ edges, there is a compact, contractible, and metrizable space $\ol\cM(\cT)$ that contains $\cM(\cT)$ as a dense subset.
This will reduce to the space $\ol{2\cM}_\bn$ constructed in \cite{bottman2017moduli} in the case $n=2$.
Convergence in $\ol\cM(\cT)$ will be defined in terms of the Gromov-convergence explained in \cite[\S1]{bottman2017moduli}.
Given a sequence $(\cC^{(i)})_{i=1}^\infty$ of trees of spaces in $\cM(\cT)$, here is the approximate procedure for determining the limit:
\begin{enumerate}
\item 
Identify the set of points $p \in \bR^n$ with the property that as $i\to\infty$, a collision takes place at $p$ and involves at least one marked point.

\smallskip

\item 
For each such $p$, choose a sequence of real numbers $\eps_1,\eps_2,\ldots \to 0^+$ such that if we recenter $\cC^{(i)}$ at $p$ and rescale by a factor of $\tfrac1{\eps_i}$, then we have separated the objects that are the slowest to collide at $p$ to a positive finite distance.

\smallskip

\item 
Iteratively continue this procedure until every collision has been resolved.
Record the results in a tree of trees of spaces.
\end{enumerate}
We encourage the reader to consult \cite[\S1.1]{bottman2017moduli} for a detailed and explicit example of a Gromov-convergent sequence.
The Gromov-compactification $\cM(\cT)$ will have a stratification (in fact, a CW decomposition) by combinatorial type of the trees of trees of spaces, and the poset of strata will coincide with the categorical $n$-associahedron $\cK(\cT)$.

The construction of $\ol\cM(\cT)$ is purely topological, but we intend to go further and equip it with a smooth structure --- specifically, with the structure of a manifold with generalized corners \cite{joyce}.
These are generalizations of manifolds with corners, which can be thought of as a positive-real version of toroidal varieties.
In the $n=2$ case, the first author constructed with Oblomkov in \cite{bo} a complex version of the 2-associahedra as toroidal complex varieties.
The ideas in Bottman--Oblomkov's work can be used to endow the $n=2$ case of $\ol\cM(\cT)$ with the structure of a manifold with g-corners.
We plan to generalize Bottman--Oblomkov's work to the arbitrary-$n$ case, and to show that in a precise sense, the positive-real part of the resulting varieties can be identified with $\ol\cM(\cT)$.
\null\hfill$\triangle$
\end{expectation}

\noindent

\subsection{The wonderful associahedral fan and the braid arrangement}
\label{Lodayfansubsection}

\

We give a brief self-contained presentation of the associahedron and the normal fan of Loday's realization of the associahedron, which we refer to in this article as the \emph{wonderful associahedral fan}.\footnote{We have chosen this name because the toric variety of this fan corresponds to a wonderful compactification of the torus, in the sense of De Concini-Procesi \cite{de1995wonderful}, with respect to a building set which is neither maximal nor minimal. 
 Pilaud \cite{pilaud2022pebble} calls this fan the sylvester fan --- ``sylvester'' is an old English word meaning forest.}
A \emph{bracket} of $[n]$ is a nonempty consecutive subset $A\subseteq [n]$.
\begin{definition}
A bracketing is a collection $\sB$ of brackets from $[n]$ such that \begin{enumerate}
\item $[n]\in \sB$,

\item for each $i \in [n]$, we have $\{i\}\in \sB$,

\item if $A_i, A_j \in \sB$ and 
$A_i \cap A_j \neq \emptyset$, then $A_i\subseteq A_j$ or $A_j \subseteq A_i$.
\end{enumerate}
The associahedron $\cK_n$ is the collection of all bracketings of $[n] $ ordered by containment.\footnote{The brackets in (1) and (2) are not strictly necessary.
We have chosen to include them as this definition of a bracketing is compatible with our definition of $n$-bracketings (where the maximum and singleton brackets are more useful).} 
\null\hfill$\triangle$
\end{definition}

Loday's polytopal realization of the associahedron is one of the most important realizations of the associahedron.
It is celebrated for being the simplest realization as well as for making appearances in many different parts of mathematics \cite{pilaud2023celebrating}.
Here we do not fully recall the construction of Loday's polytopal realization (it is recoverable as a special case of the results in \S \ref{s:concentrated_realization} on concentrated $n$-associahedra).
We instead give a description of the normal fan for Loday's realization of the associahedron, \emph{the wonderful associahedral fan}.
We will later observe that our velocity fan realization of categorical $n$-associahedra specializes to the wonderful associahedral fan for categorical $1$-associahedra, i.e.\ associahedra.

\begin{definition}\label{deflodayfan}
Let $n \in \mathbb{Z}_{\geq 2}$, and let $A\subseteq[n]$ 
be a bracket. 
We define the vector $\rho(A) \subset \mathbb{R}^{n-1}$ with $i$th entry
\begin{equation}
\rho(A)_i=
\begin{cases}
\,\, 1 & \text{if} \,\, i, i+1 \in A \\
\,\,0 & \text{otherwise}. \\
\end{cases}
\end{equation}

Given a bracketing $\sB$ we define the associated cone
\begin{align}
\tau(\sB)= cone\{{\rho}(A): A\in \sB \}+\langle {\bf 1}\rangle_{\mathbb{R}}.
\end{align}
The \emph{wonderful associahedral fan} is the collection of polyhedral cones
\begin{align}
\cF_n=\{\tau(\sB):\sB \in \cK_n\}.
\end{align}
\null\hfill$\triangle$
\end{definition}

\begin{definition}
\label{defbraidarrangement}
Let $n \in \mathbb{N}$.
The \emph{braid arrangement} $\mathcal{A}_n$ is the following collection of hyperplanes in $\mathbb{R}^n$:
\begin{align}
\mathcal{A}_n = \{H_{i,j}= \{{\bf x} \in \mathbb{R}^n:x_i=x_j\}: 1\leq i\neq j \leq n\}.
\end{align}

One may identify a hyperplane arrangement with the fan structure it induces on $\mathbb{R}^n$.
Thus, as an abuse of terminology and notation, we will also call the following fan, the braid arrangement.
Let ${\bf F} =\{ \emptyset = F_0 \subsetneq \ldots \subsetneq F_k = [n]\}$ denote a flag of subsets.
Let $\mathscr{F}_n$ denote the set of all such flags.
Given a set $S \subseteq [n]$, let $\chi_S$ denote the indicator vector for $S$.
Given ${\bf F} \in\mathscr{F}_n$ we associate the cone
\begin{align}
\tau_{\,{\bf F}} = \bigl\{\sum_{F_i \in {\bf F}} \lambda_i \chi_{F_i}:\lambda_i \geq 0\bigr\}+\langle {\bf 1} \rangle_{\mathbb{R}},
\end{align}
and we define \emph{the braid arrangement} to be
\begin{align}
\mathcal{A}_n = \{\tau_{\,\bf F}:{\bf F} \in \mathscr{F}_n\}.
\end{align}
\null\hfill$\triangle$
\end{definition}

\begin{definition}\label{defgeneralizedperm}
A \emph{generalized permutahedron} \cite{postnikovgp}, equivalently a \emph{polymatroid base polytope} \cite{edmonds1970submodular}, is a polytope whose normal fan coarsens the braid arrangement $\mathcal{A}_n$.
A generalized permutahedron, or polymatroid base polytope, is \emph{integral} if its vertices have integer coordinates.
\null\hfill$\triangle$
\end{definition}

\begin{theorem}\cite{ shnider1993quantum,Lodayassociahedron,postnikovgp} \text{(see}
\cite{pilaud2023celebrating}\text{)}\label{lodayfanisafan}
The wonderful associahedral fan $\cF_n$ is a complete fan whose face poset is $\cK_n$.
Moreover, $\cF_n$ is a coarsening of the braid arrangement, and $\cF_n$ is the normal fan of a polytope (an integral generalized permutahedron).
\end{theorem}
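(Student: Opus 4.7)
The plan is to prove the four assertions — that $\cF_n$ is a fan, that its face poset is $\cK_n$, that it coarsens the braid arrangement, and that it is polytopal — in a coordinated argument that leverages Loday's explicit vertex construction. Throughout, I would work in $\mathbb{R}^{n-1}$ with the convention that a bracket $[i,j]\subseteq[n]$ with $i<j$ corresponds to $\rho([i,j])=\chi_{[i,j-1]}\in\mathbb{R}^{n-1}$, so that the full bracket $[n]$ maps to $\mathbf{1}$ (lineality) and singletons map to $0$. This identifies the relevant braid arrangement as $\mathcal{A}_{n-1}$.

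First I would establish the coarsening statement by a direct combinatorial surjection from maximal flag cones of $\mathcal{A}_{n-1}$ to maximal bracketings of $[n]$. Given a permutation $\sigma$ of $[n-1]$ encoding a maximal flag, the corresponding maximal bracketing $\sB_\sigma$ is built by the standard stack-sortable / right-greedy rule: reading $\sigma(1),\sigma(2),\ldots$ in order, insert at each step the bracket whose ``gap'' is the current position, extending maximally on each side subject to the brackets already present. The content of this step is that each indicator vector $\chi_{F_k}$ of the flag is a nonnegative combination of $\{\rho(A):A\in\sB_\sigma\}$ modulo $\langle\mathbf{1}\rangle_{\mathbb{R}}$, which is a finite combinatorial check using laminarity of $\sB_\sigma$.

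Next I would identify the face poset. The key lemmas are (i) for any bracketing $\sB$, the set $\{\rho(A):A\in\sB\}$ is linearly independent modulo $\langle\mathbf{1}\rangle_{\mathbb{R}}$, proved by induction on laminar depth (peel off the outermost non-trivial bracket), and (ii) $\tau(\sB')\subseteq\tau(\sB)$ if and only if $\sB\subseteq\sB'$ as bracketings, proved by combining (i) with the observation that the generators $\rho(A)$ for $A\in\sB$ are precisely the extremal rays of $\tau(\sB)$. Together with the coarsening statement, this shows that any two cones $\tau(\sB_1),\tau(\sB_2)$ intersect in $\tau(\sB_1\cup\sB_2)$ (where the union is taken whenever the result is a valid bracketing), so $\cF_n$ is a fan, and its face poset is $\cK_n$. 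Completeness is automatic from being a coarsening of the complete fan $\mathcal{A}_{n-1}$.

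Finally, for polytopality I would exhibit Loday's integral polytope directly as the Minkowski sum
\begin{align}
P_n \;=\; \sum_{\substack{A\subseteq[n]\\|A|\geq 2}} \Delta_A,
\qquad
\Delta_A \;=\; \mathrm{conv}\{\chi_{[i,j-1]}:i,j\in A,\,i<j\},
\end{align}
equivalently as the convex hull of the Loday vertices $v_T$ indexed by binary trees $T$ with $n$ leaves, where the $i$-th coordinate of $v_T$ is the product of the sizes of the left and right subtrees at the $i$-th internal node. Any such Minkowski sum of coordinate simplices is an integral generalized permutahedron. To match normal fans, for each maximal bracketing $\sB_T$ one computes the normal cone at $v_T$ and verifies it equals $\tau(\sB_T)$; this reduces to checking that for every flip $T\rightsquigarrow T'$ across a bracket $A\in\sB_T$, the edge $v_T-v_{T'}$ is a positive multiple of the vector dual to $\rho(A)$ under the standard pairing on $\mathbb{R}^{n-1}$.

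The main obstacle is the last compatibility check: the flip-edge identity requires a careful bookkeeping of how the products $\ell_i r_i$ transform under a single rotation in the binary tree, and this is the one place where a clean combinatorial identity must be verified by hand rather than inferred from general fan-theoretic principles. Once this local computation is in place, the remaining assertions follow uniformly, and the recovery of Postnikov's Minkowski sum presentation drops out as a byproduct.
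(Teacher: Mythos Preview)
The paper does not give its own proof of this theorem: it is stated as a known result with citations to Shnider--Sternberg, Loday, Postnikov, and the survey of Pilaud--Santos--Ziegler. Your proposal is therefore not competing with any argument in the paper, and your overall strategy---coarsening of the braid arrangement via a permutation-to-bracketing surjection, linear independence of the ray generators modulo $\langle\mathbf{1}\rangle_{\mathbb R}$, and polytopality via Loday's vertices or Postnikov's Minkowski sum---is precisely the standard route taken in those references.

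That said, several details in your sketch need correction. In (ii) the containment direction is reversed: since $\tau(\sB)$ is generated by $\{\rho(A):A\in\sB\}$, one has $\tau(\sB')\subseteq\tau(\sB)$ if and only if $\sB'\subseteq\sB$, not the other way. Consequently the intersection formula should read $\tau(\sB_1)\cap\tau(\sB_2)=\tau(\sB_1\cap\sB_2)$ (the meet of bracketings), not $\tau(\sB_1\cup\sB_2)$; the union of two bracketings need not even be a bracketing, and when it is, it gives the join, hence the larger cone. Your Minkowski sum is also not the standard one: Postnikov's presentation sums the standard coordinate simplices $\Delta_I=\mathrm{conv}\{e_k:k\in I\}$ over \emph{intervals} $I\subseteq[n]$, in $\mathbb R^n$; the object you wrote---convex hulls of all sub-interval indicators $\chi_{[i,j-1]}$, summed over all subsets $A$ of size at least two---is neither a simplex in general nor indexed by the right set. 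These are fixable slips rather than structural gaps, and once corrected the flip-edge computation you flag as the main obstacle is exactly the local check carried out in Loday's original paper.
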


\

\subsection{Glossary of essential notation}

\

\label{ss:notation_and_conventions}

\bgroup
\def\arraystretch{1.5}
\begin{center}
\begin{longtable}{|m{0.24\textwidth}|m{0.57\textwidth}|m{0.09\textwidth}|}
\hline
notation & interpretation & page first defined \\
\hline
\,$\cT$ & a rooted plane tree & p.\ \pageref{rootedplanetreedef}
\\
\hline
\,$V^k(\cT)$ & the depth $k$ vertices of $\cT$ & p.\ \pageref{def:depth-k_vertices}
\\
\hline
\,$\pi^k(\cT)$ & The $k$-th truncation of $\cT$ & p.\ \pageref{def:truncation_of_tree}
\\
\hline
\,$C(u)$ & the children of $u$ & p.\ \pageref{def:children_of_vertex}
\\
\hline
\,$D(u)$ ($\overline{D}(u)$) & the (weak) descendants of $u$ & p.\ \pageref{def:decendants_of_vertex}
\\
\hline
\,$u^k_i$ & the $i$-th depth-$k$ vertex $\cT$ with respect to the plane order & p.\ \pageref{def:vertex_notation}
\\
\hline
\,$A = (A^0,\ldots,A^k)$ & a $k$-bracket & p.\ \pageref{def:k-bracket}
\\
\hline
\,$\sB = (\sB^0,\ldots,\sB^n)$ & an $n$-bracketing & p.\ \pageref{def:n-bracketing}
\\
\hline
\,$\sB_\min$ & the minimum $n$-bracketing & p.\ \pageref{def:minimal_n-bracketing}
\\
\hline
\,$\cK(\cT)$ & a categorical $n$-associahedron & p.\ \pageref{def:n-associahedron}
\\
\hline
$\cK(A)$ & the restriction of $\cK(\cT)$ to an $n$-bracket $A$ & p.\ \pageref{restrictbracket}
\\
\hline
\,$\wh\cK(\cT)$ & $\cK(\cT) \sqcup \{\star\}$, the completion of $\cK(\cT)$ & p.\ \pageref{def:poset_completion}
\\ 
\hline
\,$\sC$ & a collision & p.\ \pageref{def:collision}
\\
\hline
\,$\fX(\cT)$ & the collection of collisions & p.\ \pageref{def:set_of_collisions}
\\
\hline
\,$\wh \fX(\cT)$ & $\fX(\cT) \sqcup \{\sB_\min\}$ & p.\ \pageref{def:completed_set_of_collisions}
\\
\hline
$\psi^{\cT}_{\cT/\sC}$ & the collision map associated to $\sC$ & p.\ \pageref{def:collision_maps}
\\
\hline
$\cT/\sC$ & the quotient of a rooted plane tree by a collision & p.\ \pageref{def:quotient_of_tree_by_collision}
\\ 
\hline
$\cF, \cG$ & cone complexes, e.g.\ fans & p.\ \pageref{def:cone_complex}
\\
\hline
$\cP(\cF)$ & the face poset of a cone complex & p.\ \pageref{def:face_poset}
\\
\hline
$(\sB, \ell_\sB)$ & a metric $n$-bracketing with underlying $n$-bracketing $\sB$ & p.\ \pageref{def:metric_stable_tree-pair}
\\
\hline
$\ell(\sC)$ & the standard metric $n$-bracketing associated to a collision & p.\ \pageref{def:metric_n-bracketing_of_collision}
\\
\hline
$\cK^{\met}(\cT),\,({\overline \cK}^{\met}(\cT))$ & the (reduced) metric $n$-bracketing complex associated to $\cT$ & p.\ \pageref{def:reduced_metric_n-bracketing_complex}
\\
\hline
{\bf v} & vector notation & p.\ \pageref{def:vec_notation}
\\
\hline
\,$\bf{1}$ & the all-ones vector & p.\ \pageref{def:all-ones}
\\
\hline
${\rho}(\sC)$ & the standard ray generator for the collision $\sC$ & p.\ \pageref{def:ray_generator}
\\
\hline
$\tau(\sB)$ & the cone in the velocity fan associated to the $n$-bracketing $\sB$ & p.\ \pageref{def:cone_associated_to_n-bracketing}
\\
\hline
$\cF(\cT),\,(\overline{\cF}(\cT))$ & the (reduced) velocity fan associated to $\cT$ & p.\ \pageref{def:velocity_fan}, (p.\ \pageref{reducedvelocitydef})
\\
\hline
\,$\pi(\cK(\cT))$ & $\cK(\pi(\cT))$ & p.\ \pageref{def:proj_of_n-associahedron}
\\
\hline
$P^T_{\sigma}$ & the adjoint permutation transformation associated to $\sigma$ & p.\ \pageref{permtransdef}
\\
\hline
$\cF(\ccD),\,(\overline{\cF}(\ccD))$ & the (reduced) triangulated velocity fan associated to $\cT$ & p.\ \pageref{def:triangulated_velocity_fan}, (p.\ \pageref{def:reduced_triangulated_velocity_fan})
\\
\hline
$\cK(\cT|_\sC)$ & the fiber product associated to a collision & p.\ \pageref{fiberproductnassociahedra}
\\
\hline
\end{longtable}
\end{center}
\egroup

\subsection{Acknowledgments}

\

We thank Federico Castillo, Sergei Elizalde, Gaku Liu, Sam Molcho, Arnau Padrol, Sam Payne, Vincent Pilaud, Vic Reiner, Andrew Sack, David Speyer, Eric Stucky, Jim Stasheff, Hugh Thomas, Martin Ulirsch, Greg Warrington, and Geva Yashfe for helpful conversations.

S.B.\ was supported by a Simons Collaboration Gift \#854037 and an NSF Grant (DMS-2246967).
N.B.\ was supported by an NSF Grant (DMS-1906220).
D.P. was supported by Danish National Research Foundation grant (DNRF157).
S.B., N.B., and D.P. are grateful to the Max Planck Institute for Mathematics in Bonn for its hospitality and financial support.
S.B.\ and D.P are thankful to the Simons Center for Geometry and Physics for their hospitality and financial support during the workshop ``Combinatorics and Geometry of Convex Polyhedra".

\section{\texorpdfstring{$n$}{n}-bracketings and categorical \texorpdfstring{$n$}{n}-associahedra} 

\label{s:n-associahedra}

In this section, we will define the categorical $n$-associahedra and explore their basic structure.

\subsection{The definition of categorical \texorpdfstring{$n$}{n}-associahedra}

\

Our input for defining an $n$-associahedron will be a rooted plane tree of depth $n$.
The rooted plane trees of depth 1 are naturally in bijection with the natural numbers and index 1-associahedra, i.e.\ classical associahedra.

\begin{definition}
\label{def:depth-k_vertices}
A \emph{rooted tree} $\cT$ is a tree equipped with a distinguished vertex $u_0$, called the \emph{root}.
The of \emph{depth} of a vertex $u$ in $\cT$ is the distance from $u_0$ to $u$.
The depth of $\cT$ is the maximum depth of a vertex in $\cT$.
We denote the set of depth $k$ vertices in $\cT$ by $V^k(\cT)$.

Given a rooted tree $\cT$ with root $u_0$ and distinct $u, w \in V(\cT)$, we say that $w$ is a \emph{descendant} of $u$ if $u$ is on the unique path from $u_0$ to $w$.
\label{def:children_of_vertex}
\label{def:decendants_of_vertex}
If $w$ is a descendant of $u$ and, in addition, $(u,w)$ is an edge of $\cT$, we say that $w$ is a \emph{child} of $u$.
We denote the set of descendants of $u$ by $D(u)$, and the set of children of $u$ by $C(u)$. The set of \emph{weak descendants of $u$} is $\overline{D}(u)\coloneqq D(u) \sqcup \{u\}$.

\label{def:truncation_of_tree}
Given rooted tree $\cT$ of depth $n$, we define the \emph{k-level truncation of $\cT$} to be the tree $\pi^{n-k}(\cT)$ obtained by deleting all of the vertices of depth greater than $k$.
For simplicity, we refer to the \emph{$(n-1)$-level truncation} of a rooted plane tree of depth $n$ as simply the \emph{truncation of $\cT$}, and denote it as $\pi(\cT)$.
\null\hfill$\triangle$
\end{definition}

\begin{definition}
\label{rootedplanetreedef}
A \emph{rooted plane tree} $\cT$ will be a rooted tree $\cT$ with a total order $<_{\cT}$ on the children $C(u)$ of each vertex $u \in V(\cT)$.
\null\hfill$\triangle$
\end{definition}

\begin{definition}\label{vertexorderdef}
We can naturally encode the vertices of a rooted plane tree by positive integer vectors: suppose that $w$ is the $i$-th child of $u$, and $\textbf{v}(u)$ is the positive integer vector which encodes $u$.
The positive integer vector $\textbf{v}(w)$ encoding $w$ is obtained from ${\bf{v}}(u)$ by appending an entry equal to $i$.
The lexicographic ordering of $V^k(\cT)$ induces a total order on the vertices of $V^k(\cT)$, which we denote by by $<_\cT$ (as it extends the total order on the children of each vertex).
\label{def:vertex_notation}
We let $u^k_i \in V(\cT)$ denote the $i$th vertex of $V^k(\cT)$ with respect to $<_\cT$.  See Figure \ref{3brackex}.
\null\hfill$\triangle$
\end{definition}

\begin{definition}[$k$-brackets]
Fix $n\geq0$ and a rooted plane tree $\cT$ of depth $n$.
Fix $k$ with $0 \leq k \leq n$.
\label{def:k-bracket}
A \emph{$k$-bracket of $\cT$} is a tuple $A = (A^0,\ldots,A^k)$ where for each $1\leq i \leq k$, $A^i \subseteq V^i(\cT)$, and the following conditions hold.
\begin{enumerate}
\item
If $u \in A^i$ then
the parent of $u$ is in $A^{i-1}$.

\smallskip

\item
If $u \in A^{i-1}$, then $A^i \cap C(u)$ is a consecutive subset of $C(u)$ with respect to $<_{\cT}$.
\null\hfill$\triangle$
\end{enumerate}
\end{definition}

\begin{figure}[ht]
\centering
\def\svgwidth{0.6\columnwidth}
{\tiny
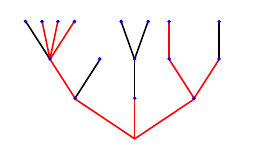}
\caption{A rooted plane tree previously depicted in Figure \ref{fig:arrangement_in_R3_and_rootedtreeexample}, and a subtree whose vertices determine a 3-bracket.}
\label{3brackex}
\end{figure}

\begin{definition}[truncation of $k$-brackets]
\label{def:truncation_of_bracket}
Suppose that $A = (A^0,\ldots,A^k)$ is a $k$-bracket of a rooted plane tree $\cT$, and suppose that $k$ is at least $1$.
Then the \emph{truncation $\pi(A)$ of $A$} is the $(k-1)$-bracket $(A^0,\ldots,A^{k-1})$.
For any $0 \leq i \leq n$, we denote by $\pi^i(A)$ the result of applying $\pi$ to $A$ $i$ times.
\null\hfill$\triangle$
\end{definition}

\begin{remark}
We will use the notation $\pi$ for several different, but compatible, notions of truncations in this paper. 
We may alternately refer to truncations as \emph{projections}. We say that an object $Y$ is a \emph{weak projection} of an object $X$, or that \emph{$X$ weakly projects to $Y$}, if there exists some $i\geq 0$ such that $\pi^i(X) = Y$.
\null\hfill$\triangle$
\end{remark}

\begin{remark}
Throughout this paper, we will use superscripts to indicate the different parts of a bracket (and later, a bracketing); we will use subscripts to index different objects.
\null\hfill$\triangle$
\end{remark}

\begin{definition}[singleton, maximum, and nontrivial $k$-brackets]
Suppose that $u$ is a depth-$k$ vertex of a rooted plane tree $\cT$, for some $k \geq 0$.
Then the \emph{singleton $k$-bracket associated to $u$}, denoted $A(u)$, is the $k$-bracket where $A(u)^\ell$ consists of the unique depth-$\ell$ vertex of $\cT$ that lies on the path between $u$ and the root of $\cT$.

For any depth-$n$ rooted plane tree $\cT$ and $k$ with $0 \leq k \leq n$, the \emph{maximum $k$-bracket of $\cT$} is the $k$-bracket $A$ such that $A^\ell = V^{\ell}(\cT)$ for $\ell\leq k$.
We say that a $k$-bracket $A$ of $\cT$ is \emph{nontrivial} if $A$ is neither a singleton $k$-bracket nor the maximum $k$-bracket.
\null\hfill$\triangle$
\end{definition}

\begin{definition}[containment of $k$-brackets]
Suppose that $A_1$ and $A_2$ are $k$-brackets of $\cT$.
We write $A_1 \subseteq A_2$ if, for every $\ell$ with $0 \leq \ell \leq k$, we have $A_1^\ell \subseteq A_2^\ell$.
\null\hfill$\triangle$
\end{definition}

We encourage the reader to refer to Figures \ref{fig:ex_2-bracketing} and \ref{fig:2-bracketing_non-examples} while parsing the following definition.
In those figures, and all future figures, we depict a $k$-bracket $A$ by drawing a bubble in a tree arrangement which encloses the elements in $A^k$, and projects to the bubble depicting $\pi(A)$.

\begin{definition}[$n$-bracketings]
\label{def:n-bracketings}
Fix $n\geq0$ and a rooted plane tree $\cT$ of depth $n$ that has at least $n+1$ edges, i.e.\ $\cT$ is not a path.
\label{def:n-bracketing}
An \emph{$n$-bracketing of $\cT$} is a pair 
\begin{align}
\sB = ((\sB^0, \ldots, \sB^n),<_{\sB}),
\end{align}
where for each $k$, $\sB^k$ is a collection of $k$-brackets of $\cT$, and $<_{\sB}$ is a partial order on $\bigsqcup_{k=0}^n\sB^k$, called \emph{the height partial order}, such that $\sB$ satisfies the following conditions.

\begin{itemize}

\item[]
{\sc (trivial $k$-brackets)}
Each $\sB^k$ contains the maximum $k$-bracket and every singleton $k$-bracket of $\cT$.

\medskip

\item[]
{\sc ($k$-bracketing projection)}
If $A \in \sB^k$ with $1\leq k\leq n$, then $\pi(A)\in \sB^{k-1}$.

\medskip

\item[]\label{nestedprop}{\sc (nested)}
If $A_1, A_2 \in \sB^k$ and $A_1^k \cap A_2^k \neq \emptyset$, then $A_1 \subseteq A_2$ or $A_2 \subseteq A_1$.

\medskip

\end{itemize}

\noindent
For any $k$ with $1 \leq k \leq n$ and any $A \in \sB^{k-1}$, write $\sB^k_{A} \coloneqq \left\{\wt A \in \sB^k \:|\: \pi(\wt A) = A\right\}$.

\begin{itemize}
\item[]\label{partition}
{\sc (partition)}
\begin{itemize}
\item
For any $k$ with $1 \leq k \leq n$ and $A \in \sB^{k-1}$, we have $\bigcup_{\wt A \in \sB_{ A}} \wt A^k = \bigcup_{u \in A} C(u)$.

\item
Fix $A_1, A_2 \in \sB^k_{A}$ with $A_2 \subsetneq A_1$.
For every $u \in A^{k-1}$ and $u' \in C(u) \cap A_1^k$, there exists $A_3 \in \sB^k_{A}$ with $A_3 \subsetneq A_1$ and $u' \in A_3^k$.

\end{itemize}

\medskip

\item[]
{\sc (height partial orders)}
The height partial order $<_\sB$ has the following properties:
\begin{itemize}
\item
Distinct $A_1, A_2 \in \sB^k$ are comparable with respect to $<_\sB$ if and only if $\pi(A_1) = \pi(A_2)$ and $A_1^k \cap A_2^k = \emptyset$.

\item
For any $u \in V^{k-1}(\cT)$ and for any $u_1,u_2 \in C(u)$ with $u_1<_{\cT}u_2$, we have
\begin{align}
A(u_1)
<_\sB
A(u_2).
\end{align}

\item
Fix $\wt A, A \in \sB^{k-1}$ with $A \subseteq \wt A$.
Fix distinct $A_1, A_2 \in \sB_{A}$ and distinct $\wt A_1, \wt A_2 \in \sB_{\wt A}$ such that $\wt A_1 \subseteq A_1$ and $\wt A_2 \subseteq A_2$, and such that $A_1$ and $A_2$, respectively \ $\wt A_1$ and $\wt A_2$, are comparable.
Then we have the equivalence
\begin{align}
A_1 <_{\sB} A_2
\iff
\wt A_1 <_{\sB} \wt A_2.
\end{align}
\end{itemize}
\end{itemize}

\label{def:minimal_n-bracketing}
We let $\sB_\min$ denote the containment-minimal $n$-bracketing, and we give an explicit description of $\sB_\min$: for any $k$ with $1 \leq k \leq n$, $\sB_\min^k$ contains the singleton $k$-brackets and maximum $k$-bracket, and for each $u \in V(\cT)$, the partial order $<_{\sB_{\min}}$ on the singleton brackets associated to $C(u)$ agrees with $<_{\cT}$ on $C(u)$.
\null\hfill$\triangle$
\end{definition}

\begin{figure}[ht]
\includegraphics[width=0.5\textwidth]{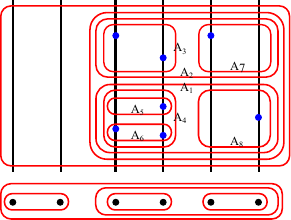}
\caption{
\label{fig:ex_2-bracketing}
A depiction of a 2-bracketing $\sB$ with bubbles representing nonsingleton brackets.
The conditions {\sc ($k$-bracketing projection)}, {\sc(nested)}, and {\sc(trivial $k$-brackets)} are easy to verify in the figure.
For the relevance of the property {\sc(partition)}, notice that the points in $A_5$ and $A_6$ partition the points in $A_4$.
Similarly, the points in $A_1$ and $A_2$ partition the points in the maximum $2$-bracket.
As the name suggests, the height partial order $<_\sB$ is compatible with the heights of the bubbles in the figure.
For the relevance of the property {\sc(height partial order)}, notice that $A_1 <_\sB A_2$ which implies that $A_4 <_\sB A_3$ and $A_8 <_\sB A_7$.
Note that there are no 2-brackets that project to the left-most 1-bracket.
Because the maximum brackets are always present in a bracketing, we may omit them in some later figures.
We will depict maximum brackets when we feel that they help illustrate some aspect of a construction or argument, e.g.\ when they appear as essential brackets in a collision, or when they are in the support of a metric $n$-bracketing.
}
\label{2bracketingex}
\end{figure}

\begin{figure}[ht]
\includegraphics[width=0.8\textwidth]{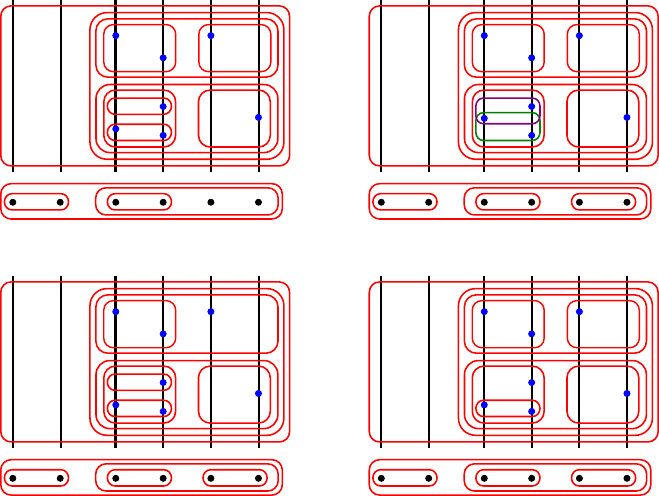}
\caption{
\label{fig:2-bracketing_non-examples}
In this figure, we illustrate four different nonexamples of 2-bracketings.
In the upper left, we have deleted the right-most nontrivial 1-bracket, so the result fails the \textsc{($k$-bracketing projection)} condition.
In the upper right, we have enlarged one of the smaller 2-brackets, so that the result fails the \textsc{(nested)} condition.
(We have colored the relevant 2-brackets purple and green, respectively, for clarity.)
In the bottom left and bottom right, we have deleted 2-brackets so that the results fail the first and second parts of the \textsc{(partition)} condition, respectively.
}
\end{figure}

\begin{definition}[categorical $n$-associahedra]
\label{def:n-associahedron}
Let $\cT$ be a rooted plane tree.
The categorical $n$-associahedron
$\cK(\cT)$ (or simply $n$-associahedron) is the collection of $n$-bracketings of $\cT$ equipped with the following partial order: $\sB_1 \leq \sB_2$ if for every $k$ with $0\leq k\leq n$ we have that $\sB_1^k \subseteq \sB_2^k$, and $<_{\sB_1}$ is a restriction of $<_{\sB_2}$.
\null\hfill$\triangle$
\end{definition}

\begin{remark}
There is a unique 0-associahedron, and it is the singleton poset.
\null\hfill$\triangle$
\end{remark}

\begin{figure}[ht]
\includegraphics[width=0.9\textwidth]{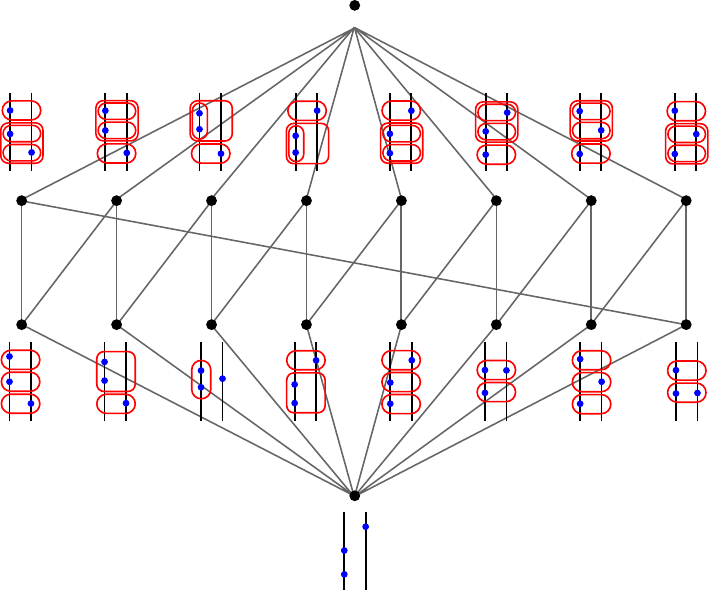}
\caption{
\label{2associahedraposet}
A 2-associahedron.
}
\end{figure}

\begin{remark}
In the case of $2$-associahedra, the poset $\cK(\cT)$ defined above is the opposite of the poset $W_\bn^\br$ defined by the second author.
We have chosen to work with $\cK(\cT)$ because it is compatible with the procedure of taking unions of $n$-bracketings, and with the facial structure of the velocity fan.
\null\hfill$\triangle$
\end{remark}

\begin{remark}
The (\textsc{trivial $k$-brackets}) condition may seem strange to some readers.
For one thing, we want to be able to apply the (\textsc{partition}) condition with $A$ the maximum $(k-1)$-bracket.
For another, as explained with the extended height partial order below, we want the partial orders to cohere with the orders on vertices, which is imposed by (\textsc{height partial orders}) in conjunction with the fact that all singleton brackets are included.
There are other reasons besides these; in general, the theory works better when we always include all trivial brackets.
\null\hfill$\triangle$
\end{remark}

We define a natural extension of the height partial order.

\begin{definition}\label{heightpartialorderremark}
Consider an $n$-bracketing $\sB$.
Fix distinct $A_1, A_2 \in \sB^k$ and  distinct $\wt A_1, \wt A_2 \in \sB^k$.  If \ $A_1<_{\sB}  A_2$, we set $A_1 \,{\wh <_{\sB}}\, A_2$. If $\wt A_1 \subseteq A_1$, $\wt A_2 \subseteq A_2$, and \ $\wt A_1<_{\sB} \wt A_2$, we set $A_1 \,{\wh <_{\sB}}\, A_2$.   If $A_1 \subseteq \wt  A_1$, $ A_2 \subseteq \wt A_2$, and \ $\wt A_1<_{\sB} \wt A_2$, we set $A_1 \,{\wh <_{\sB}}\, A_2$.
We refer to $\,{\wh <_{\sB}}\,$ as the \emph{extended height partial order}.
\null\hfill$\triangle$
\end{definition}

The {\sc (nested)} and {\sc (height partial order)} properties imply that ${\wh <_{\sB}}$ is well-defined.
Observe in Figure \ref{fig:ex_2-bracketing} that $A_8 \subseteq A_1$, $A_3 \subseteq A_2$, and $A_1 <_{\sB} A_2$, therefore $A_8 \,\,{\wh <_{\sB}}\,\, A_3$, and this agrees with the height of these brackets in the figure.

\begin{lemma}\label{heightpartialorderlemma}
The extended height partial order ${\wh <_{\sB}}$ on the singleton brackets in $\sB$ determines the height partial order $<_{\sB}$ on all brackets in $\sB$.
\end{lemma}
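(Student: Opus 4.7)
The plan is to show that, for any two brackets that are $<_\sB$-comparable, the direction of the ordering is determined by a single comparison in $\wh<_\sB$ between a pair of singleton sub-brackets. Since $<_\sB$ compares only brackets at the same level (first bullet of \textsc{(height partial orders)}), and since comparability at level $k$ is equivalent to the visible conditions $\pi(A_1)=\pi(A_2)$ and $A_1^k\cap A_2^k=\emptyset$, the content of the lemma reduces to recovering the direction of $<_\sB$ on each such comparable pair.

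First I would verify the basic containment: for any $u_i\in A_i^k$, the singleton $k$-bracket $A(u_i)$ satisfies $A(u_i)\subseteq A_i$. This is immediate, since $A(u_i)^k=\{u_i\}\subseteq A_i^k$, and for each $j<k$ the unique depth-$j$ ancestor of $u_i$ lies in $A_i^j$ by repeated application of the parent-containment clause of Definition \ref{def:k-bracket}. I would then prove the biconditional
\begin{align}
A_1<_\sB A_2 \;\iff\; A(u_1)\wh<_\sB A(u_2).
\end{align}
The forward implication is an immediate instance of the third clause of Definition \ref{heightpartialorderremark} (the one that extends a $<_\sB$-comparability \emph{downward} to contained sub-brackets), applied with outer pair $\wt A_i:=A_i$ and inner pair $A(u_i)\subseteq A_i$. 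For the reverse implication: since $A_1,A_2$ are $<_\sB$-comparable, exactly one of $A_1<_\sB A_2$ or $A_2<_\sB A_1$ holds; the second case would symmetrically yield $A(u_2)\wh<_\sB A(u_1)$ by the forward implication, contradicting antisymmetry of the partial order $\wh<_\sB$ in the presence of $A(u_1)\wh<_\sB A(u_2)$.

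The biconditional recovers the direction of $<_\sB$ on each comparable pair of brackets from $\wh<_\sB$ on singletons alone, while pairs declared incomparable by the visible data of $\sB$ remain incomparable. This completes the reconstruction of $<_\sB$. The only subtlety is notational bookkeeping: one must invoke the third, not the second, clause of Definition \ref{heightpartialorderremark}, since we are extending a known comparability to \emph{smaller} sub-brackets rather than enlarging. The antisymmetry of $\wh<_\sB$ used in the reverse implication is available from the remark, immediately preceding the lemma, that the \textsc{(nested)} and \textsc{(height partial order)} properties make $\wh<_\sB$ a well-defined partial order.
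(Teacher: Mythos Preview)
Your proof is correct and takes essentially the same approach as the paper: both establish the biconditional $A_1<_\sB A_2 \iff A(u_1)\,\wh<_\sB\,A(u_2)$ for $u_i\in A_i^k$, with the paper stating this as the key observation and your version filling in the justification (the containment $A(u_i)\subseteq A_i$, the invocation of the third clause of Definition~\ref{heightpartialorderremark} for the forward direction, and the antisymmetry argument for the reverse). The paper's proof is terser but structurally identical.
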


\begin{proof}
Suppose that $A_1, A_2 \in \sB^k$ are distinct and comparable.
Then $A_1 <_{\sB}A_2$ if and only if for any pair of vertices $u^k_i \in A_1^k$ and $u^k_j \in A_2^k$, we have $A(u^k_i) \,\, {\wh <_{\sB}} \,\,A(u^k_j)$.
\end{proof}

\begin{definition}[restriction of an $n$-associahedron to an $n$-bracket]
\label{restrictbracket}
Let $A$ be a $k$-bracket.
The $n$-bracket $A$ naturally defines a rooted plane subtree of $\cT$ depth $k$, which we denote $\cT |_{A}$.
We define $\cK(A) \coloneqq \cK(\cT|_{A})$.
\null\hfill$\triangle$
\end{definition}

\begin{definition}
Given an $n$-bracketing $\sB \in \cK(\cT)$ and a nontrivial $k$-bracket $A \in \sB^k$ for some $1\leq k \leq n$.
We say that $A$ is \emph{containment-minimal} in $\sB$ if there exists no nontrivial $A' \in \sB^k$ such that $(A')^k \subsetneq A^k$.
\null\hfill$\triangle$
\end{definition}

\begin{definition}
\label{def:compatible_n-bracketings}
Suppose that $\sB_1, \ldots, \sB_m$ are $n$-bracketings in $\cK(\cT)$.
We say that this collection is \emph{compatible} if the following conditions hold:
\begin{enumerate}
\item\label{compatibleconndition1}
$\sB_1\cup\ldots\cup\sB_m
\coloneqq
(\sB^0_1\cup\cdots\cup\sB^0_m,\ldots,\sB^n_1\cup\cdots\cup\sB^n_m)$ satisfies the \textsc{(nested)} condition in Def.\ \ref{def:n-bracketings}.

\item\label{compatibleconndition2}
There exists a partial order on $\sB_1\cup\cdots\cup\sB_m$ that satisfies the \textsc{(height partial orders)} condition in Def.\ \ref{def:n-bracketings}, and restricts to the height partial order on $\sB_i$ for every $i$.
\null\hfill$\triangle$
\end{enumerate}
\end{definition}

\begin{proposition}\label{n-bracketingunionlemma}
Suppose that $\sB_1, \ldots, \sB_m$ are a compatible collection of $n$-bracketings in $\cK(\cT)$.
Then the partial order in condition (\ref{compatibleconndition2}) of Definition \ref{def:compatible_n-bracketings} is uniquely-defined.
We may therefore interpret $\sB_1\cup\cdots\cup\sB_m$ as an $n$-bracketing in a unique way.
\null\hfill$\triangle$
\end{proposition}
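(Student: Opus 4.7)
The plan is to reduce uniqueness of the partial order $<_\sB$ on $\sB \coloneqq \sB_1 \cup \cdots \cup \sB_m$ to uniqueness of its extended version on singleton brackets via Lemma \ref{heightpartialorderlemma}, and then argue that the latter is forced by the data $\{(\sB_i, <_{\sB_i})\}_{i=1}^m$ together with the \textsc{(height partial orders)} axiom of Definition \ref{def:n-bracketings}. Concretely, if $<_\sB$ and $<'_\sB$ are two partial orders on $\sB$ each satisfying condition (\ref{compatibleconndition2}) of Definition \ref{def:compatible_n-bracketings}, then by Lemma \ref{heightpartialorderlemma} each is determined by its induced extended height order on singleton brackets, so it suffices to show that $\wh{<_\sB}$ and $\wh{<'_\sB}$ agree on every pair of singleton brackets.

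For sibling singletons $A(u_1), A(u_2)$ with $u_1, u_2 \in C(u)$, the second bullet of \textsc{(height partial orders)} directly forces $A(u_1) <_\sB A(u_2) \iff u_1 <_\cT u_2$, so the two candidate orders tautologically agree on these pairs. For non-sibling singletons $A(u_1), A(u_2)$, the goal is to prove the following characterization purely in terms of the fixed data: $A(u_1)\,\wh{<_\sB}\,A(u_2)$ if and only if there exist an index $i^*$ and a pair of $k$-brackets $\wt A_1^*, \wt A_2^* \in (\sB_{i^*})^k$ comparable in $<_{\sB_{i^*}}$, with $A(u_j) \subseteq \wt A_j^*$ and $\wt A_1^* <_{\sB_{i^*}} \wt A_2^*$. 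Because both candidate orders extend each $<_{\sB_i}$, this characterization would immediately yield $\wh{<_\sB} = \wh{<'_\sB}$.

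The backward direction is immediate. For the forward direction, unpack the definition of $\wh{<_\sB}$ to obtain witnesses $\wt A_1, \wt A_2 \in \sB^k$ comparable with $\wt A_1 <_\sB \wt A_2$ and $A(u_j) \subseteq \wt A_j$; write $A \coloneqq \pi(\wt A_1) = \pi(\wt A_2) \in \sB^{k-1}$. The first task is to show that some $\sB_{i^*}$ already contains a separating pair $\wt A_1^*, \wt A_2^* \in (\sB_{i^*})^k_A$ with $u_j \in (\wt A_j^*)^k$. The argument is by contradiction: if every $\sB_i$ grouped $u_1$ and $u_2$ into the same bracket of $(\sB_i)^k_A$, then by the \textsc{(partition)} axiom and the uniqueness in each $\sB_i$ of the bracket of $(\sB_i)^k_A$ containing $u_1$, every bracket in $\sB^k_A$ containing $u_1$ would also contain $u_2$, contradicting $u_2 \in \wt A_2^k$ together with $\wt A_1^k \cap \wt A_2^k = \emptyset$. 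The second task is then to verify that the order $\wt A_1^* <_{\sB_{i^*}} \wt A_2^*$ matches $\wt A_1 <_\sB \wt A_2$ by applying the coherence (third) bullet of \textsc{(height partial orders)}.

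The main obstacle will be this second task: the coherence axiom demands a parallel containment between the pairs $(\wt A_1, \wt A_2)$ and $(\wt A_1^*, \wt A_2^*)$, but a priori $\wt A_1$ and $\wt A_1^*$ may be nested in either direction (and similarly $\wt A_2$ and $\wt A_2^*$), producing a mixed configuration that the coherence axiom does not treat in a single step. To resolve this, the plan is to chain coherence through intermediate pairs, for instance by passing to a common refinement in $\sB$ obtained from the intersection of the two separating structures, or by inducting on the depth of $A$ in $\cT$ and reducing to a base case of a minimal $(k-1)$-bracket in which both separating pairs are aligned. Once this alignment is carried out and the order is confirmed to match $<_{\sB_{i^*}}$, the same argument applied to $\wh{<'_\sB}$ yields the required agreement of the two candidate orders on singleton brackets, completing the proof.
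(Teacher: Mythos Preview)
Your approach via Lemma~\ref{heightpartialorderlemma} and the extended height order on singletons is viable in principle, but it is considerably more circuitous than necessary, and you leave the key step unresolved. The paper's proof is direct: it never passes to singletons. Given $A \in \sB_i^k$ and $A' \in \sB_j^k$ lying over the same $(k-1)$-bracket $\wt A$ with $A^k \cap (A')^k = \emptyset$, one uses the \textsc{(nested)} and \textsc{(partition)} axioms \emph{inside $\sB_i$} to produce $A'' \in (\sB_i)_{\wt A}$ with $(A'')^k \cap (A')^k \neq \emptyset$ and $(A'')^k \cap A^k = \emptyset$. Then $A$ and $A''$ are already ordered by $<_{\sB_i}$, and a single application of the coherence (third) bullet of \textsc{(height partial orders)} transfers this to the pair $(A,A')$, since by \textsc{(nested)} either $A'' \subseteq A'$ or $A' \subseteq A''$. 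This is the whole argument; the detour through singleton brackets and the search for a separating pair \emph{both} lying in one $\sB_{i^*}$ is not needed.

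Regarding the obstacle you flag: it is real in the way you have set things up, but it dissolves with two applications of coherence rather than any induction or ``common refinement'' machinery. In the mixed configuration, say $\wt A_1^* \subseteq \wt A_1$ and $\wt A_2 \subseteq \wt A_2^*$, first apply coherence to the pairs $(\wt A_1, \wt A_2)$ and $(\wt A_1^*, \wt A_2)$ (using $\wt A_1^* \subseteq \wt A_1$ and $\wt A_2 \subseteq \wt A_2$) to get $\wt A_1^* <_\sB \wt A_2$, then apply it again to $(\wt A_1^*, \wt A_2)$ and $(\wt A_1^*, \wt A_2^*)$ (using $\wt A_1^* \subseteq \wt A_1^*$ and $\wt A_2 \subseteq \wt A_2^*$) to conclude. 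The intermediate pair is comparable because $(\wt A_1^*)^k \subseteq \wt A_1^k$ is disjoint from $\wt A_2^k$. So your proposal can be completed, but the paper's route is shorter and avoids the obstacle altogether. One further remark: your ``first task'' argument invokes ``uniqueness in each $\sB_i$ of the bracket of $(\sB_i)^k_A$ containing $u_1$,'' which is false as stated (there can be a nested chain of such brackets); what you actually need is only that \emph{every} such bracket also contains $u_2$, which is the correct contrapositive.
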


\begin{proof}
 
The only part of this definition that we need to check is that if $\sB, \sB'$ are $n$-bracketings satisfying the two bulleted conditions, then the partial orders appearing in the second bullet are determined uniquely.
Fix $A \in \sB^k$ and $A' \in \sB'^{\,k}$ that lie over the same $(k-1)$-bracket $\wt A \in \sB^{k-1} \cup (\sB')^{\,k-1}$ and that have $A^k \cap A'^{\,k} = \emptyset$.
By the {\sc (nested)} and {\sc (partition)} axioms, there exists $A'' \in \sB_{\wt A}$ with $A''^{\,k} \cap A'^{\, k} \neq \emptyset$.
$A$ and $A''$ are comparable by $<_\sB$, which determines the relation between $A$ and $A'$ by $<_{\sB\cup\sB'}$.
\end{proof}

\begin{definition}
\label{def:poset_completion}
Given a poset $\cP$ without a maximum element, we define the completion of $\cP$, denoted $\wh{\cP}$, as the poset on set of elements $\cP \sqcup\{\star\}$ having the relations from $\cP$ and the added relations $\star \geq x$ in $\wh{\cP}$ for every $x \in \cP$.
\null\hfill$\triangle$
\end{definition}

\begin{definition}
A \emph{join-semilattice} is a partially ordered set $\cP$ such that for any $x,y \in \cP$, the least upper bound for $\{x,y\}$ exists in $\cP$.
In such a case we write $x \vee y$ for this least upper bound and call this object the \emph{join} of $x$ and $y$.
Similarly, a \emph{meet-semilattice} is a partially ordered set $\cP$ such that for any $x,y \in \cP$, the greatest lower bound for $\{x,y\}$ exists in $\cP$.
In such a case we write $x \wedge y$ for this least upper bound and call this object the \emph{meet} of $x$ and $y$.
A \emph{lattice} is a poset which is both a join-semilattice and a meet-semilattice.
\null\hfill$\triangle$
\end{definition}

It is a classical fact that join-semilattice with a minimum element is also a meet-semilattice, and thus a lattice.

\begin{proposition}
The poset $\wh \cK(\cT)$ is a lattice.
\end{proposition}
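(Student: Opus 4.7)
The plan is to apply the classical fact stated just above the proposition: since $\cK(\cT)$ has the minimum element $\sB_\min$, this minimum is still the minimum of $\wh\cK(\cT)$, so it suffices to show that $\wh\cK(\cT)$ is a join-semilattice. I will construct the join of two elements explicitly and then verify that it is the least upper bound.

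The candidate joins are as follows. Set $\sB \vee \star \coloneqq \star$ for every $\sB \in \wh\cK(\cT)$, and for $\sB_1,\sB_2 \in \cK(\cT)$ set
\begin{align}
\sB_1 \vee \sB_2 \coloneqq
\begin{cases}
\sB_1 \cup \sB_2 & \text{if $\sB_1,\sB_2$ are compatible in the sense of Def.\ \ref{def:compatible_n-bracketings},} \\
\star & \text{otherwise.}
\end{cases}
\end{align}
By Proposition \ref{n-bracketingunionlemma}, $\sB_1 \cup \sB_2$ is a well-defined $n$-bracketing whenever compatibility holds, and directly from the definition of the order on $\cK(\cT)$ it satisfies $\sB_i \leq \sB_1 \cup \sB_2$ for $i = 1,2$; so in every case the candidate is an upper bound.

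The main task is to show these candidates are \emph{least} upper bounds. The key observation is that if $\sB \in \cK(\cT)$ is any common upper bound of $\sB_1$ and $\sB_2$, then by Definition \ref{def:n-associahedron} we have $\sB_i^k \subseteq \sB^k$ in each degree and each $<_{\sB_i}$ is a restriction of $<_\sB$. Hence $\sB_1 \cup \sB_2 \subseteq \sB$ componentwise, and the restriction of $<_\sB$ to $\sB_1 \cup \sB_2$ is a partial order extending both $<_{\sB_1}$ and $<_{\sB_2}$ while still satisfying the \textsc{(height partial orders)} axiom of Definition \ref{def:n-bracketings}. This forces $\sB_1,\sB_2$ to be compatible in the sense of Definition \ref{def:compatible_n-bracketings}. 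Two consequences follow: when $\sB_1,\sB_2$ are compatible the above argument shows $\sB_1 \cup \sB_2 \leq \sB$ for every upper bound $\sB$, so $\sB_1 \cup \sB_2$ is the join; when they are incompatible, no upper bound in $\cK(\cT)$ exists, and $\star$ is the unique — and therefore least — upper bound in $\wh\cK(\cT)$.

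The main obstacle I anticipate is the bookkeeping needed to check that the \textsc{(nested)} and \textsc{(height partial orders)} axioms for $\sB$ descend to the subdata $\sB_1 \cup \sB_2$ in such a way that both conditions of Definition \ref{def:compatible_n-bracketings} are verified, and to confirm that the restricted height order agrees with the individual $<_{\sB_i}$ on $\sB_i$. Both are essentially automatic from the fact that each $\sB_i$ sits inside $\sB$ and that the pertinent axioms are preserved under restriction to subcollections closed under truncation (which $\sB_1 \cup \sB_2$ is, since truncations of brackets in either $\sB_i$ lie in the corresponding lower-degree $\sB_i$). Once this verification is in place, the join-semilattice property, and hence the lattice property, is established.
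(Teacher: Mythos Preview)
Your proof is correct and follows essentially the same approach as the paper: reduce to the join-semilattice property via the existence of the minimum $\sB_\min$, then define $\sB_1 \vee \sB_2$ as $\sB_1 \cup \sB_2$ when compatible and $\star$ otherwise. The paper's proof is terser and simply asserts that $\sB_1 \cup \sB_2$ is the smallest $n$-bracketing above both when it exists; your version spells out the verification that any common upper bound $\sB$ forces compatibility (via restriction of $<_\sB$) and dominates $\sB_1 \cup \sB_2$, which is a welcome elaboration but not a different idea.
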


\begin{proof}
The poset $\wh \cK(\cT)$ has a minimum element $\sB_{\text{min}}$, hence it suffices to show that $\wh \cK(\cT)$ is a join-semilattice.
Given two $n$-bracketings $\sB$ and 
$\sB'$, $\sB \cup \sB'$ is the smallest $n$-bracketing such that $
\sB \leq \sB \cup \sB'$ and $\sB' \leq \sB \cup \sB'$ if one exists, in which case $\sB \vee \sB' = \sB \cup \sB'$.
If $\sB \cup \sB'$ does not exist, then $\sB \vee \sB' = \star$.
\end{proof}

Next we demonstrate that $n$-associahedra are flag.
While flagness is a condition typically discussed for (face posets of) simplicial complexes, and $\cK(\cT)$ is usually not simplicial, flagness exists in greater generality.
The following definition is equivalent to the one given in \cite{huang2024cycles}.

\begin{definition}
Let $\cP$ be a poset, then $\cP$ is \emph{flag} if the following condition holds:

\begin{itemize}
\item Let $x_1, \ldots, x_\ell \in \cP$ such that $x_i \vee x_j \in \cP$ for all $1\leq i,j \leq \ell$, then $\bigvee_{i=1}^\ell x_i\in \cP$.
\null\hfill$\triangle$
\end{itemize}
\end{definition}

\begin{lemma}\label{flagcondition}
The poset $\cK(\cT)$ is flag.
\end{lemma}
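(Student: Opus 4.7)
The strategy is to verify the two conditions of Def.\ \ref{def:compatible_n-bracketings} for the family $\{\sB_i\}_{i=1}^{\ell}$, which by Proposition \ref{n-bracketingunionlemma} will imply that the join $\bigvee_i \sB_i$ exists in $\cK(\cT)$ and is realized concretely as the set-theoretic union $\sU := \bigcup_i \sB_i$ equipped with a uniquely determined height partial order.

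The \textsc{(nested)} condition on $\sU$ follows immediately from the pairwise hypothesis: given intersecting $A_1, A_2 \in \sU^k$ with $A_1 \in \sB_i^k$ and $A_2 \in \sB_j^k$, the join $\sB_i \vee \sB_j \in \cK(\cT)$ is an $n$-bracketing containing both, so one contains the other.

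The main effort is constructing a height partial order $<_\sU$ on $\sU$ that restricts to $<_{\sB_i}$ for each $i$. My plan is to build $<_\sU$ by first specifying its extended restriction $\wh <_\sU$ to singleton brackets, then invoking Lemma \ref{heightpartialorderlemma} to extend to all of $\sU$. Concretely, for two singletons $A(u_1), A(u_2)$ at depth $k$, declare $A(u_1) \wh <_\sU A(u_2)$ if there is some index $i$ with $A(u_1) \wh <_{\sB_i} A(u_2)$. For sibling singletons ($\pi(u_1) = \pi(u_2)$), the order is already forced by the plane order $<_\cT$, hence consistent across all $\sB_i$.

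The main obstacle will be verifying consistency of this definition: two different indices $i, j$ might a priori disagree about the direction of $A(u_1) \wh <_? A(u_2)$. However, any such disagreement would be witnessed inside the pairwise union $\sB_i \cup \sB_j$, which by hypothesis is an $n$-bracketing carrying a single well-defined extended order restricting to both $\wh <_{\sB_i}$ and $\wh <_{\sB_j}$; this rules out disagreement. Once $\wh <_\sU$ is shown to be well-defined on singletons, the three bullets of the \textsc{(height partial orders)} condition for $<_\sU$ follow by applying each axiom inside an appropriately chosen pairwise union $\sB_i \cup \sB_j$ that contains the relevant tuple of brackets, where the axiom is already known to hold. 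The \textsc{(partition)} condition at the level of the union transfers in the same way, since any instance of the condition involving brackets from $\sB_i$ and $\sB_j$ can be verified inside $\sB_i \vee \sB_j$, which is assumed to be an $n$-bracketing.
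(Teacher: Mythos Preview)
Your treatment of \textsc{(nested)} and \textsc{(partition)} via pairwise joins is correct, and your consistency argument for the singleton relation (no two $\sB_i,\sB_j$ can disagree about the direction of $A(u_1)$ versus $A(u_2)$) is also correct.  But that argument only gives \emph{antisymmetry} of the relation $\wh<_\sU := \bigcup_i \wh<_{\sB_i}$ on singletons; it does not give transitivity, and transitivity is where the substance of this lemma lies.  Concretely: if $A(u_1)\,\wh<_{\sB_i}\,A(u_2)$ and $A(u_2)\,\wh<_{\sB_j}\,A(u_3)$, your definition requires a \emph{single} original index $m$ with $A(u_1)\,\wh<_{\sB_m}\,A(u_3)$, and there is no reason one exists---the join $\sB_i\vee\sB_j$ is not among the $\sB_m$'s.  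The same problem undercuts your last sentence: the third bullet of \textsc{(height partial orders)} involves four brackets $A_1,A_2,\wt A_1,\wt A_2$ that can live in four different $\sB_i$'s, so no pairwise union $\sB_i\cup\sB_j$ ``contains the relevant tuple of brackets''.

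What is missing is exactly the mechanism the paper supplies in its $\ell=3$ base case.  Given disjoint $A\in\sB_i$ and $A'\in\sB_j$ lying over a common $(k-1)$-bracket, one uses \textsc{(nested)} and \textsc{(partition)} \emph{inside $\sB_i$ alone} to manufacture an auxiliary $A''\in(\sB_i)_{\pi(A)}$ with $(A'')^k\cap(A')^k\neq\emptyset$ and $(A'')^k\cap A^k=\emptyset$.  The relation between $A$ and $A''$ is then fixed by $<_{\sB_i}$, and this forces the relation between $A$ and $A'$ in every larger $n$-bracketing, independent of $j$.  Once you have this one-sided determination, your singleton-based definition of $<_\sU$ can be made to work (you will find it cleaner to define $A_1<_\sU A_2$ via a single pairwise join $\sB_a\vee\sB_b$ and use the lemma for well-definedness, rather than as a raw union of relations).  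In that sense your direct framing is a legitimate alternative to the paper's induction on $\ell$, but it cannot bypass this key step.
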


\begin{proof}
We prove the claim by induction on $\ell$ with base case $\ell=3$.
Consider $n$-bracketings $\sB_1, \sB_2, \sB_3$.
The only potential issue we must consider is whether there is a valid height partial order on $\sB \coloneqq \sB_1 \cup \sB_2 \cup \sB_3$.

Fix disjoint $A, A' \in \sB^k$ which lie over the same $(k-1)$-bracket $\wt A$.
We must exclude the following representative situation:
\begin{align}
A <_{\sB_1 \cup \sB_2} A',
\qquad
A >_{\sB_1 \cup \sB_3} A'.
\end{align}
Suppose that $A$ lies in $\sB_1$ and $A'$ lies in $\sB_2$ and $\sB_3$.
(The other possibilities may be treated in a similar way.)
By the {\sc (nesting)} and {\sc (partition)} properties, we may choose $A'' \in (\sB_1)_{\wt A}$ with $(A'')^k \cap (A')^k \neq \emptyset$, and $(A'')^k \cap (A)^k = \emptyset$.
Then $A$ and $A''$ are comparable via $<_{\sB_1}$, and this determines the relation between $A$ and $A'$ with respect to both $<_{\sB_1 \cup \sB_2}$ and $<_{\sB_1 \cup \sB_3}$.

Now take $\ell \geq 4$.
By the $\ell=3$ argument above, it is the case that for every $i$, $\sB_1 \cup \sB_2$ is compatible with $\sB_i$.
The collection $\bigl(\sB_1 \cup \sB_2\bigr) \cup \bigl(\sB_i\bigr)_{3 \leq i \leq \ell}$ therefore satisfies the hypothesis of the lemma, so we may apply induction to this collection.
\end{proof}

\subsection{Collisions as atoms}\label{collisionsasatoms}

\

In this subsection, we introduce \emph{collisions}, which are the minimal elements of $\cK(\cT)\setminus\{\sB_\min\}$.
We provide a combinatorial characterization of collisions and prove that the $n$-associahedra are atomic lattices.

\begin{definition}
Let $(P,\leq)$ be a partially ordered set with a minimum element denoted $\wh{0}$.
An atom in $P$ is an element $x \in P$ such that $x$ covers $\wh{0}$.
\null\hfill$\triangle$
\end{definition}

\begin{definition}
\label{def:collision}
An $n$-bracketing $\sB$ is a \emph{collision} if it is an atom in $\cK(\cT)$.
We will typically denote a collision by $\sC$ rather than $\sB$.
\label{def:set_of_collisions}
\label{def:completed_set_of_collisions}
We let $\fX(\cT)$ denote the set of all collisions in $\cK(\cT)$, and let $\wh \fX(\cT) \coloneqq \fX(\cT) \sqcup \{\sB_\min\}$ be the set of \emph{extended collisions}. 
\null\hfill$\triangle$
\end{definition}

\begin{remark}
The term ``collision'' is related to the geometric motivation for $n$-associahedra.
Indeed, as described in \S\ref{ss:compactified_moduli_spaces}, the $n$-associahedra will be the posets of strata for a certain compactification of a configuration space of coordinate subspaces in $\bR^n$.
A collision in $\cK(\cT)$ corresponds to an indecomposable collision of subspaces, i.e.\ a codimension-1 stratum in this topological realization.
\null\hfill$\triangle$
\end{remark}

\begin{remark}
As the notation suggests, $\wh \fX(\cT)$ is naturally the completion of $\fX(\cT)$ equipped with a certain partial order described in \S\ref{triangulationsection}.
\null\hfill$\triangle$
\end{remark}

We define the root bracket and fusion brackets of an $n$-bracketing, and utilize these notions for characterizing collisions.
These results will also be used in Proposition \ref{prop:alpha_is_bijective}.

\begin{defprop}
Fix an $n$-bracketing $\sB \in \cK(\cT)\setminus\{\sB_{\min}\}$.
Define
\begin{align}
S
\coloneqq
\Bigl\{
A(u) \in \sB
\text{ a singleton bracket}
\:\Big|\:
\forall
\text{ nontrivial }
A \in \sB,
\:\exists\:
\ell\geq0
:
\pi^\ell(A) = A(u)
\Bigr\}.
\end{align}
We define the \emph{root bracket of $\sB$} to be the $k$-bracket in $S$ with the largest $k$.
\null\hfill$\triangle$
\end{defprop}

\begin{proof}
We justify the well-definedness of the root bracket of a collision $\sC$.
The set $S$ is nonempty, because it contains the unique 0-bracket.
Suppose that $k$ is the largest integer such that $S$ contains at least one $k$-bracket, and suppose that $A(u_1), A(u_2)$ are $k$-brackets in $S$.
Let $A \in \sB^j$ be a nontrivial $j$-bracket, then by the definition of $S$, we must have $\pi^{j-k}(A) = A(u_1)$ and $\pi^{j-k}(A) = A(u_2)$, so $A(u_1) = A(u_2)$. 
\end{proof}

\begin{definition}
\label{def:fusion_bracket}
Fix an $n$-bracketing $\sB \in \cK(\cT)$, and denote by $A(u)$ the root bracket of $\sB$.
We say that $A \in \sB$ is a \emph{fusion bracket of $\sB$} if it is a nonsingleton bracket which satisfies the following conditions:
\begin{enumerate}
\item
There exists a nontrivial $\wt A \in \sB$ and $\ell\geq0$ such that $\pi^\ell\bigl(\wt A\bigr) = A$.

\item
The projection $\pi(A)$ is a singleton bracket.
\null\hfill$\triangle$
\end{enumerate}
\end{definition}

\begin{proposition}\label{collisioncharacterization}
Suppose that $\sB \in \cK(\cT)$ is an $n$-bracketing.
Then $\sB$ is a collision if and only if the following two conditions hold:
\begin{enumerate}
\item
$\sB$ has a unique fusion bracket.

\item
For each $k$, there exist no nontrivial $k$-brackets $A, A' \in \sB^k$ with $A \subsetneq A'$.
\end{enumerate}
\end{proposition}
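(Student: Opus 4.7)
The plan is to prove both implications by constructing explicit smaller $n$-bracketings and invoking the (\textsc{partition}) axiom carefully.

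For the implication $\Rightarrow$, assume $\sB$ is a collision. I argue condition (2) first. If nontrivial $A \subsetneq A'$ both lie in $\sB^k$, I form $\sB_2$ by removing $A'$ together with every bracket whose $\pi$-chain contains $A'$. The key check is (\textsc{partition}) at level $k$ above $\pi(A')$: the second bullet applied to the pair $A \subsetneq A'$ in $\sB$ guarantees that $(A')^k$ is still covered by the strictly-smaller siblings $A''' \subsetneq A'$ in $\sB^k_{\pi(A')}$, and each such $A'''$ survives the removal; at deeper levels, partition over the removed $A'$ is vacuous and is unaffected elsewhere. Since $A$ is nontrivial and remains in $\sB_2$, we get $\sB_\min < \sB_2 < \sB$, contradicting atomicity. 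Existence of a fusion bracket follows by projecting any nontrivial $\wt A \in \sB$ repeatedly: the chain $\wt A, \pi(\wt A), \pi^2(\wt A), \ldots$ lies in $\sB$ by (\textsc{$k$-bracketing projection}) and terminates at the singleton $0$-bracket, so the last nonsingleton term is a fusion bracket. For uniqueness, suppose $A_1 \neq A_2$ are distinct fusion brackets. Since the projection of a singleton is a singleton, neither lies in the $\pi$-chain of the other. Form $\sB_1$ by removing from $\sB$ every bracket whose $\pi$-chain contains $A_2$ (retaining trivial brackets). A nontrivial witness $\wt A_1$ of $A_1$ has $A_1$ but not $A_2$ in its $\pi$-chain, so $\wt A_1 \in \sB_1$. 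The (\textsc{partition}) axiom at the level of $A_2$ is preserved because $\pi(A_2)$ is a singleton $A(u)$, and the singleton children $A(u')$ for $u' \in C(u)$ are always present in every $n$-bracketing and cover the vertices that $A_2$ used to cover. This yields $\sB_\min < \sB_1 < \sB$, a contradiction.

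For the implication $\Leftarrow$, assume $\sB$ satisfies (1) and (2), and let $A_0$ be its unique fusion bracket. Given $\sB_\min < \sB' \leq \sB$, take any nontrivial $B \in \sB'$; the $\pi$-chain of $B$ lies in $\sB'$, and its last nonsingleton term is a fusion bracket of $\sB'$, which is also a fusion bracket of $\sB$ and therefore equals $A_0$. Proceeding downward from $A_0$, at each level below a nonsingleton $\wt A \in \sB'$ the argument is: no singleton can appear in $\sB^{j+1}_{\wt A}$ (because singletons project only to singletons, whereas $\wt A$ is nonsingleton), so the covering in $\sB^{j+1}_{\wt A}$ must be carried entirely by nonsingleton brackets; condition (2) together with (\textsc{nested}) forces the nontrivial members of $\sB^{j+1}_{\wt A}$ to be pairwise disjoint at level $j+1$; the (\textsc{partition}) axiom applied in $\sB'$ (using the second bullet when a maximum bracket is present) then forces $\sB'^{j+1}_{\wt A} = \sB^{j+1}_{\wt A}$. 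Iterating, $\sB'$ contains every nontrivial bracket of $\sB$, so $\sB' = \sB$.

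The main obstacle is verifying the (\textsc{partition}) axiom for the constructed smaller bracketings in the forward direction, and handling edge cases in which a fusion bracket coincides with a maximum bracket. The second bullet of (\textsc{partition}) is essential to the condition (2) argument, since it alone guarantees that removing the larger of two strictly-nested siblings does not destroy covering. The trivial-bracket axiom, which ensures that singleton children of any vertex are always present in every $n$-bracketing, is what allows removal of a fusion bracket in the uniqueness argument without breaking covering at its own level. The height partial orders restrict unambiguously to every subset of brackets arising in the constructions, so that axiom requires no separate verification.
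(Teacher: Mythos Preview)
Your forward direction is correct and differs from the paper's in ways worth noting. For condition (2), the paper splits into cases and inducts on $n$ (projecting to $\pi(\sB)$ when the nested pair lies below level $n$, and deleting $A'$ alone when $A,A'$ are $n$-brackets with equal projection), whereas you uniformly delete $A'$ together with everything above it; your verification of (\textsc{partition}) via the second bullet is the right move and is more explicit than the paper. For uniqueness of the fusion bracket, the paper \emph{keeps} the brackets projecting to $A_1$, while you \emph{remove} those projecting to $A_2$; these are complementary constructions and both yield a valid strictly-smaller bracketing.

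Your reverse direction takes a genuinely different route: the paper argues the contrapositive (given $\sC<\sB$ a collision, derive $\neg(1)\lor\neg(2)$), whereas you argue directly that any $\sB'$ with $\sB_{\min}<\sB'\le\sB$ must equal $\sB$ by a downward induction from $A_0$. Your argument is correct when $\wt A$ is nontrivial: the members of $\sB^{j+1}_{\wt A}$ are all nontrivial and, by (2) plus (\textsc{nested}), pairwise disjoint at level $j{+}1$, so the first bullet of (\textsc{partition}) for $\sB'$ forces $\sB'^{\,j+1}_{\wt A}=\sB^{j+1}_{\wt A}$. However, when $\wt A$ is the maximum $j$-bracket and $\sB^{j+1}_{\wt A}$ contains nontrivial brackets, you invoke the second bullet of (\textsc{partition}) in $\sB'$, which requires that $\sB'^{\,j+1}_{\wt A}$ already contain some bracket strictly inside the maximum. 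You have not justified this: a priori, $\sB'^{\,j+1}_{\wt A}$ could consist of the maximum $(j{+}1)$-bracket alone, making the second bullet vacuous at that level. The gap is patchable: under (2), one checks that if $\sB^{j+1}_{\text{max}}$ contains a nontrivial bracket then $\sB^{j+1}_{\wt A'}=\emptyset$ for every nontrivial $\wt A'$ at level $j$ (any element there would be properly contained in a nontrivial element over the max, violating (2)); combining this with the $\pi$-chain of your chosen nontrivial $B\in\sB'$ then supplies the needed witness in $\sB'^{\,j+1}_{\text{max}}$ at the correct level. But as written, this step is asserted rather than argued.
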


\begin{proof}
Suppose that $\sB$ is a collision, but $A_1$ and $A_2$ are two distinct fusion brackets for $\sB$.
We can construct an $n$-bracketing $\sB_1$ with $\sB_1< \sB$ by taking the singleton and maximum brackets together with the brackets which project to $A_1$, and equipping $\sB_1$ with the restriction of the height partial order for $\sB$.
This contradicts the claim that $\sB$ is a collision.

Suppose that $A, A'$ are nontrivial brackets with $A \subsetneq A'$.
Suppose that $A, A' \in \pi(\sB)$, then by induction on $n$, $\pi(\sB)$ is not a collision, so let $\wt \sB$ is an $(n-1)$-bracketing with $\wt \sB<\pi(\sB)$.
We can lift $\wt \sB$ to an $n$-bracketing $\sB'<\sB$ by adding the singleton and maximum $n$-brackets to $\wt \sB$ together which the $n$-brackets in $\sB$ which project to $(n-1)$-brackets in $\wt \sB$, and taking $<_{\wt \sB}$ the restriction of $<_{\sB}$.
Thus we may assume that $A,A'$ are $n$-brackets and $\pi(A)=\pi(A')$, then we can obtain $\sB'$ from $\sB$ by deleting $A'$.
The \textsc{(nested)} and \textsc{(partition)} conditions are preserved by the deletion of $A'$, hence $\sB'$ is an $n$-bracketing with $\sB' <\sB$. 

Conversely, suppose that $\sB$ is not a collision, and $\sC$ is a collision such that $\sC <\sB$. Suppose that $\sB$ has a unique fusion bracket.
This must be the same as the fusion bracket for $\sC$.
It now follows from the \textsc{(nested)} and \textsc{(partition)} conditions that $\sB$ must have a pair of distinct nontrivial brackets $A,A'$ with $A\subsetneq A'$.

\end{proof}

\begin{lemma}
\label{lem:fusion_bracket_for_collision}
Let $\sC \in \fX(\cT)$ and take $A \in \sC$ the fusion bracket of $\sC$.
The projection $\pi(A)$ is equal to the root bracket $A(u)$ of $\sC$.
In particular, every nontrivial bracket in $\sC$ weakly projects to $A$.
\end{lemma}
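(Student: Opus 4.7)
The plan is to prove both claims of the lemma by iterating the projection operator $\pi$ on nontrivial brackets and invoking the uniqueness of the fusion bracket established in Proposition \ref{collisioncharacterization}. First I will show that every nontrivial bracket in $\sC$ weakly projects to $A$, then deduce from this that $\pi(A)$ coincides with the root bracket $A(u)$.

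For the first step, fix any nontrivial $A' \in \sC$. Since $A'$ is nontrivial, it is nonsingleton, and iterating $\pi$ eventually produces a singleton bracket (by depth $0$ at latest, since $V^0(\cT)$ is a single vertex). Let $j \geq 1$ be the smallest integer such that $\pi^j(A')$ is a singleton bracket. Then $\pi^{j-1}(A')$ is nonsingleton, its projection $\pi^j(A')$ is singleton (condition (2) of Definition \ref{def:fusion_bracket}), and condition (1) holds with $\tilde A = A'$ (nontrivial) and $\ell = j-1$. Hence $\pi^{j-1}(A')$ is a fusion bracket of $\sC$, and by the uniqueness clause of Proposition \ref{collisioncharacterization} it must equal $A$. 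So $A'$ weakly projects to $A$, which proves the second assertion of the lemma.

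For the second step, since $\pi(A)$ is a singleton bracket and every nontrivial $A' \in \sC$ weakly projects to $A$ and hence to $\pi(A)$, we have $\pi(A) \in S$. It remains to verify that $\pi(A)$ has the largest depth among elements of $S$; by the well-definedness of the root bracket, this forces $\pi(A) = A(u)$. Suppose $A(u'') \in S$ has depth $k''$, and let $k$ denote the depth of $\pi(A)$. Using condition (1) of Definition \ref{def:fusion_bracket}, choose a nontrivial $\tilde A \in \sC$ and $\ell \geq 0$ with $\pi^\ell(\tilde A) = A$ (if $A$ itself is nontrivial, take $\tilde A = A$ and $\ell = 0$). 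Since $A(u'') \in S$, there exists $m \geq 0$ with $\pi^m(\tilde A) = A(u'')$. Because $\pi^\ell(\tilde A) = A$ is nonsingleton while $\pi^{\ell+1}(\tilde A) = \pi(A)$ is singleton, we must have $m \geq \ell + 1$, so $A(u'') = \pi^{m-\ell-1}(\pi(A))$ and therefore $k'' \leq k$. This shows $\pi(A)$ is the maximum-depth element of $S$, completing the proof.

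The main obstacle, as I see it, is handling the possibility that the fusion bracket $A$ is itself trivial (i.e., a maximum bracket). One might naively want to apply the weak-projection property of $S$ directly to $A$, but the definition of $S$ quantifies only over nontrivial brackets. Condition (1) of the fusion-bracket definition is precisely what rescues the argument: it guarantees the existence of a nontrivial witness $\tilde A$ projecting onto $A$, so that the depth comparison $k'' \leq k$ can be made by tracking the projections of $\tilde A$ instead. This uniform trick also means no case split on whether $A$ is trivial is actually needed in the write-up.
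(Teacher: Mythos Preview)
Your proof is correct and uses essentially the same idea as the paper's: both hinge on the observation that iterating $\pi$ on any nontrivial bracket produces a fusion bracket, which by Proposition~\ref{collisioncharacterization} must be $A$. The paper argues by contradiction (if $\pi(A)$ were not the root bracket, some nontrivial bracket would fail to project to $\pi(A)$, yielding a second fusion bracket), whereas you prove the ``in particular'' clause first and then deduce $\pi(A)=A(u)$ directly; your version is more detailed and handles the case where $A$ is a maximum bracket more carefully, but the underlying argument is the same.
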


\begin{proof}

Suppose that $\pi(A)$ is not the root bracket of $\sC$.
By the definition of the root bracket, there exists a nontrivial bracket $\wt A \in \sB$ and $\ell \geq 0$ such that for every $\ell \geq 0$, $\pi^\ell(\wt A) \neq \pi(A)$.
It must be that $\wt A$ projects to some fusion bracket, but this contradicts the assumption that $A$ is the unique fusion bracket for $\sC$.
\end{proof}

\begin{definition}
\label{codime1char}
Let $\sC$ be a collision.
We say that:
\begin{itemize}
\item
$\sC$ is \emph{type 1} if $\sC$ contains a single nontrivial $n$-bracket $A$, and $\pi(A)$ is a singleton $(n-1)$-bracket.

\smallskip

\item
$\sC$ is \emph{type 2} if $\pi(\sC)$ is an $(n-1)$-collision.

\smallskip

\item
$\sC$ is \emph{type 3} if $\pi(\sC)$ is the minimal $(n-1)$-bracketing and $\sC$ is not type 1.
\null\hfill$\triangle$
\end{itemize}
\end{definition}

It is a direct consequence of Lemma \ref{collisioncharacterization} that each collision is either type 1, 2, or 3.

\begin{definition}\label{essentialbrackets}
Let $\sC \in \fX(\cT)$, and $A \in \sC^k$.
We say that $A$ is an \emph{essential bracket} of $\sC$ if
\begin{enumerate}
\item there exists a nontrivial bracket $A' \in \sC^l$ with $l\geq k$ such that $\pi^{l-k}(A') = A$, and
\item there exists some $j\geq 0$ such that $\pi^j(A)$ is equal to the fusion bracket of $\sC$. 
\null\hfill$\triangle$
\end{enumerate}
\end{definition}

The set of essential brackets of a collision $\sC$ is a slight enlargement of the set of nontrivial brackets in $\sC$; if the fusion bracket for $\sC$ is a maximum bracket (necessarily the maximum 1-bracket), then the maximum $k$-brackets in $\sC$ which are projections of nontrivial brackets are essential.
This subtle definition is useful, and sometimes necessary, for investigating certain constructions built from collisions.

\begin{remark}
We make the convention that the extended collision  $\sB_{\min}$ has the maximum 1-bracket as its fusion bracket, the unique 0-bracket as its root bracket, and the collection of all maximum $k$-brackets as its essential brackets.
\end{remark}

\begin{figure}[ht]
\includegraphics[width=0.7\textwidth]{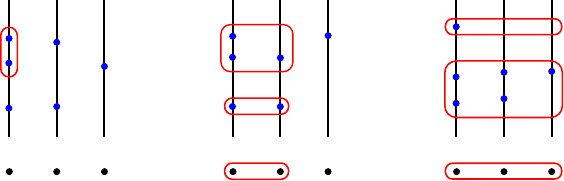}
\caption{On the left, we depict a type 1 collision.
As with any type 1 collision, its only nontrivial bracket is also its fusion bracket.
The singleton bracket corresponding to the left line is its root bracket.
In the middle is a type 2 collision.
Its nontrivial 1-bracket is its fusion bracket.
On the right is a type 3 collision.
The maximum 1-bracket is its fusion bracket. The maximum 1-bracket is an essential bracket in this collision.
This is the only essential bracket present in these three collisions which is trivial.
For the middle and right collisions, the root bracket is the singleton 0-bracket, which corresponds to the plane.}
\label{collisiontypes}
\end{figure}

\begin{figure}[ht]
\includegraphics[width=0.5\columnwidth]{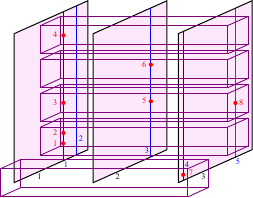}
\caption{A collision in the 3-associahedron depicted in Figures \ref{fig:arrangement_in_R3_and_rootedtreeexample} and \ref{3brackex}.}
\label{3collisionfig}
\end{figure}

\begin{definition}
A lattice is atomic if every element $x \in P$ is expressable as the join of a collection of atoms.
\null\hfill$\triangle$
\end{definition}

\begin{lemma}\label{collisioncontainingspecificbracket}
Let $A$ be a nontrivial $k$-bracket in $\sB^k$.
There exists a collision $\sC \leq \sB$ with $A \in \sC^k$.
\end{lemma}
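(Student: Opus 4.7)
The approach is to construct an explicit collision $\sC \leq \sB$ containing $A$ and then verify the collision property via Proposition \ref{collisioncharacterization}.

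I would first identify the intended fusion bracket. Since the projection chain $A, \pi(A), \pi^2(A), \ldots$ eventually reaches the unique 0-bracket (which is a singleton), let $J \geq 1$ be the smallest integer with $\pi^J(A)$ a singleton, and set $A^* := \pi^{J-1}(A)$. Then $A^*$ is nonsingleton with $\pi(A^*)$ singleton, making it the intended fusion bracket of $\sC$.

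The construction of $\sC$ proceeds iteratively. Seed $\sC$ with all singleton and maximum brackets of $\cT$ together with the chain $\{\pi^i(A) : 0 \leq i \leq J-1\}$, equipped with the restriction of $<_\sB$. Then iteratively repair any failing instance of the \textsc{(partition)} axiom by adding a minimal supplementary collection of brackets from $\sB$; the existence of such brackets is guaranteed by \textsc{(partition)} applied to $\sB$ itself. This augmentation must be carried out both within the levels $k-J+1$ through $k$ traversed by the chain of projections of $A$ and at levels above $k$ (to cover children of $A$ and of any subsequently-added supplementaries). The process terminates because each step strictly enlarges $\sC$ and $\sC \leq \sB$ is bounded.

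I would then verify that $\sC$ is a valid $n$-bracketing: \textsc{(trivial $k$-brackets)}, \textsc{($k$-bracketing projection)}, \textsc{(nested)}, and \textsc{(height partial orders)} all follow from the construction or inheritance from $\sB$, while \textsc{(partition)} holds by design. To show $\sC$ is a collision via Proposition \ref{collisioncharacterization}, the ``no strict containment of nontrivial brackets'' condition holds because at each level $l$, the nontrivial brackets in $\sC$ (the chain element $\pi^{k-l}(A)$ if nontrivial, together with the added supplementaries) have pairwise disjoint level-$l$ parts by construction. For ``unique fusion bracket'', one argues by induction on the order in which brackets are added that every nontrivial bracket in $\sC$ weakly projects to $A^*$, so the only candidate fusion bracket in $\sC$ is $A^*$ itself.

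The main obstacle is controlling the uniqueness of the fusion bracket, particularly when $A^*$ happens to coincide with a maximum bracket or when intermediate terms in the chain of projections are maximum brackets. In such cases, auxiliary maximum brackets sitting one level above a unique vertex of $\cT$ are a priori additional fusion bracket candidates (since they always have singleton projections), and a careful case analysis is needed to verify that only $A^*$ actually satisfies both defining conditions of a fusion bracket in the final $\sC$.
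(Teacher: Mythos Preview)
Your direct-construction approach is viable and can be made into a correct proof, but it is not the paper's approach and your write-up leaves real gaps.

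The paper argues by induction on $n$. For $n=1$ the collision is just the $1$-bracketing determined by $A$. For $n\geq 2$ the paper splits on whether $k<n$ or $k=n$. If $k<n$, induction gives an $(n-1)$-collision $\wt\sC \leq \pi(\sB)$ with $A \in \wt\sC^k$, and one lifts $\wt\sC$ to a type 2 collision $\sC \leq \sB$ by adjoining, over each nontrivial $(n-1)$-bracket of $\wt\sC$, the \emph{containment-minimal} $n$-brackets from $\sB$ lying over it. If $k=n$, one further splits on $\pi(A)$: if $\pi(A)$ is a singleton, take the type 1 collision with fusion bracket $A$; if $\pi(A)$ is the maximum $(n-1)$-bracket, build a type 3 collision from $A$ and the containment-minimal $n$-brackets of $\sB$ disjoint from $A$; if $\pi(A)$ is nontrivial, induct to get an $(n-1)$-collision containing $\pi(A)$ and lift as before, placing $A$ itself (rather than minimal brackets) over $\pi(A)$. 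This inductive/type-based argument is short and leaves nothing to case analysis at the end.

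Your construction is essentially this induction unrolled, but two points need tightening. First, ``minimal supplementary collection'' must be made precise: you want, over each nontrivial $B \in \sC^{l-1}$, the containment-minimal elements of $\sB_B$ whose level-$l$ part is disjoint from the chain element (when $B$ is on the chain). Without this specific choice, your claim that the nontrivial level-$l$ brackets have pairwise disjoint level-$l$ parts is not justified. Second, the fusion-bracket uniqueness is not the obstacle you suggest: once the construction is precise, every supplementary bracket lies over a nontrivial bracket already in $\sC$, so by induction on the order of insertion every nontrivial bracket weakly projects to $A^*$; the maximum-bracket edge cases you worry about do not produce additional fusion brackets, because no nontrivial bracket of $\sC$ ever weakly projects to a maximum bracket off the chain. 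The paper's induction simply sidesteps all of this bookkeeping.
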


\begin{proof}
We prove this claim by induction on $n$.
In what follows, we will discuss construction of collisions.
To avoid repetitious language, we will implicitly always be adding the necessary singleton and maximum brackets, and necessary height partial orders will assumed to be the restrictions of the height partial order $<_\sB$.
For $n=1$, we can take $\sC$ to be the 1-bracketing determined by $A$.
Next, suppose $n \geq 2$.
First suppose that $k<n$, then by induction, there exists some ${\wt \sC}$ an $(n-1)$-collision such that ${\wt \sC} \leq \pi(\sB)$ and $A \in {\wt \sC}^k$.
We can lift ${\wt \sC}$ to a type 2 collision $\sC$ with $\sC\leq \sB$ by taking, for each nontrivial $(n-1)$-bracket $A \in \sC^{n-1}$, the collection of containment-minimal $n$-brackets from $\sB$ which project to $A$. 

We will now assume that $A \in \sB^n$.
First, suppose that $\pi(A)$ is a singleton $(n-1)$-bracket.
Then we can take $\sC$ to be the type 1 collision whose fusion bracket is $A$.

Next, suppose that $\pi(A)$ is the maximum $(n-1)$-bracket.
We can define $\sC$ to be the type 3 collision whose nontrivial $n$-brackets consists of $A$ along with all containment-minimal nontrivial $n$-brackets $A' \in \sB^n$ with $A^n \cap (A')^n = \emptyset$ and $\pi(A')$ is the maximum $(n-1)$-bracket.
It follows from the (\textsc{partition}) condition that $\sC$ is a valid $n$-bracketing.

Finally, suppose that $\pi(A)$ is a nontrivial $(n-1)$-bracket.
By induction, there exists an $(n-1)$-collision $\wt\sC \leq \pi(\sB)$ such that $\bigl(\wt\sC\bigr)^{n-1}$ contains $\pi(A)$.
We will define a type 2 collision $\sC \leq \sB$ with the properties that $\pi(\sC) = \wt\sC$, and $\sC^n$ contains $A$. 
For each such $(n-1)$-bracket $\wt A \in {\wt \sC}^{n-1}$ not equal to $\pi(A)$, we add to $\sC^n$ the set of containment-minimal nontrivial $n$-brackets in $\sB_{\wt A}$.
For $\pi(A)$, we add $A$ to $\sC^n$ together with the set of all containment-minimal nontrivial $n$-brackets $A' \in \sB_{\pi(A)}$ that have $A^n \cap (A')^n = \emptyset$.
\end{proof}

\begin{proposition}
\label{lem:atomic_and_atomic}
The $n$-associahedron $\wh\cK(\cT)$ is atomic.
\end{proposition}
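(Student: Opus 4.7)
The plan is to show that every element of $\wh\cK(\cT)\setminus\{\sB_\min\}$ is the join of a collection of collisions. Fix $\sB \in \cK(\cT)\setminus\{\sB_\min\}$. For each nontrivial bracket $A$ appearing in $\sB$, Lemma \ref{collisioncontainingspecificbracket} furnishes a collision $\sC_A \leq \sB$ with $A \in \sC_A$. Since each such $\sC_A$ is bounded above by $\sB$ in $\cK(\cT)$, every pairwise join $\sC_A \vee \sC_{A'}$ exists in $\cK(\cT)$. The flagness of $\cK(\cT)$ established in Lemma \ref{flagcondition} then guarantees that $\bigvee_A \sC_A$ exists in $\cK(\cT)$.

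Next I would verify that this join equals $\sB$. The inequality $\bigvee_A \sC_A \leq \sB$ is immediate from $\sC_A \leq \sB$ for each $A$. For the reverse inequality, Proposition \ref{n-bracketingunionlemma} identifies $\bigvee_A \sC_A$ with the compatible union $\bigcup_A \sC_A$, which contains every nontrivial bracket of $\sB$ by construction (along with all singleton and maximum brackets, which belong to every $n$-bracketing). Hence the two $n$-bracketings have the same brackets at every level. For the height partial orders, the uniqueness clause of Proposition \ref{n-bracketingunionlemma} combined with the observation that $<_\sB$ itself restricts to each $<_{\sC_A}$ and satisfies the \textsc{(height partial orders)} axiom, forces $<_{\bigvee_A \sC_A}$ to coincide with $<_\sB$. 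This gives $\sB = \bigvee_A \sC_A$.

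Finally, for $\star$: the completion $\wh\cK(\cT)$ was formed precisely because $\cK(\cT)$ lacks a maximum (per Definition \ref{def:poset_completion}), so either $\cK(\cT) = \{\sB_\min\}$, in which case $\star$ is itself a collision and trivially a join of atoms, or there exist maximal $n$-bracketings $\sB_1 \neq \sB_2$ in $\cK(\cT)$. In the latter case their join does not exist in $\cK(\cT)$ and is therefore equal to $\star$ in $\wh\cK(\cT)$; since each $\sB_i$ is a join of collisions by the preceding step, so is $\star$.

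The substantive content of the argument is bundled into Lemma \ref{collisioncontainingspecificbracket} (existence of a collision through each nontrivial bracket) and Lemma \ref{flagcondition} (flagness, which promotes pairwise compatibility to joint compatibility); the remaining step I expect to require the most care is verifying that the height partial order on the join matches $<_\sB$, which is handled cleanly by the uniqueness statement in Proposition \ref{n-bracketingunionlemma}.
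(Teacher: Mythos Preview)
Your proof is correct and follows essentially the same approach as the paper's: invoke Lemma \ref{collisioncontainingspecificbracket} to produce a collision through each nontrivial bracket of $\sB$, then observe that the join of these collisions recovers $\sB$ via Proposition \ref{n-bracketingunionlemma}. You are more explicit than the paper in invoking flagness (Lemma \ref{flagcondition}) to guarantee the join exists and in separately treating $\star$; one small quibble is that in your degenerate case $\cK(\cT)=\{\sB_\min\}$, the element $\star$ would be an \emph{atom} of $\wh\cK(\cT)$ rather than a collision (collisions live in $\cK(\cT)$), and in any event this case cannot arise since $\wh\cK(\cT)$ is only formed when $\cK(\cT)$ lacks a maximum.
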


\begin{proof}

By Lemma \ref{collisioncontainingspecificbracket}, for each nontrivial bracket $A \in \sB^k$, we can construct a collision $\sC$ with $A \in \sC^k$ and $\sC\leq \sB$.
The collection of such collisions, varying over all $A$ is compatible, and thus we can take their join.
By Proposition \ref{n-bracketingunionlemma} this join is equal to $\sB$. 
\end{proof}

\subsection{Collisions as tree maps}

\

In this subsection we provide an alternate characterization of collisions via certain maps of rooted plane trees.
This characterization will later be used for defining the velocity fan.
We begin with some preparatory definitions.

\begin{definition}[morphisms of rooted plane trees]
Suppose that $\cT, \cT'$ are rooted plane trees.
A \emph{morphism of rooted plane trees $f\colon \cT \to \cT'$} is a map induced by its restriction to the vertices $f\colon V(\cT) \to V(\cT')$, which must satisfy the following conditions:
\begin{enumerate}
\item
$f$ sends the root of $\cT$ to the root of $\cT'$.

\smallskip

\item If $u,w \in V(\cT)$ and $(u,w) \in E(\cT)$, then $(f(u),f(w)) \in E(\cT')$.

\smallskip

\item
For any vertex $u$ of $\cT$, $f$ restricts to a map $C(u) \to C(f(u))$ which is weakly-order-preserving: for $w_1, w_2 \in C(u)$ with $w_1 < w_2$, we have $f(w_1) \leq f(w_2)$.
\null\hfill$\triangle$
\end{enumerate}
\end{definition}

\begin{remark}
If $f\colon \cT \to \cT'$ is a morphism of rooted plane trees, then $f$ is a graph homomorphism.
Furthermore, if we consider $\cT$ and $\cT'$ as directed graphs with all edges directed toward the roots, then $f$ is a directed graph homomorphism.
If $u$ is a depth-$k$ vertex of $\cT$, then $f(u)$ is a depth-$k$ vertex of $\cT'$.
\null\hfill$\triangle$
\end{remark}

\begin{definition}
Let $\cT$ be a rooted plane tree, $u$ a vertex of $\cT$, and $W = (w_1,\ldots,w_k)$ a tuple of consecutive elements of $C(u)$.
We let $\cT(u;W)$ denote to the rooted plane subtree of $\cT$ induced by the vertices $u, w_1, \ldots, w_k$, and the descendants of the $w_i$.
Here
$\cT(u;W)$ inherits the structure of a rooted plane tree with root $u$ from $\cT$.
\null\hfill$\triangle$
\end{definition}

\begin{definition}[collision maps]
\label{def:collision_maps}
Suppose that $\cT$ is a depth-$n$ rooted plane tree.
A \emph{local collision map $f$} consists of the following data.
\begin{itemize}
\item
A vertex $u$ of $\cT$.

\item
A collection $W$ of consecutive elements of $C(u)$, with $|W|\geq 2$.

\item
A rooted plane tree $\cT'$ with one depth-1 vertex, and a surjective morphism $f\colon \cT(u; W) \to \cT'$ of rooted plane trees.
We exclude the possibility that $\cT'$ is the path with $n+1$ vertices.
\end{itemize}

\noindent
Denote by $\wt\cT$ the result of replacing $\cT(u;W)$ by $\cT'$ in $\cT$.
That is, we delete all the nonroot vertices of $\cT(u;W)$ in $\cT$, and then identify the root of $\cT'$ with the root of $\cT(u;W)$.

The map $\cT(u;W) \to \cT'$ extends by the identity to a map $\cT \to \wt\cT$, which we call a \emph{global collision map}, or simply a \emph{collision map}.
We will denote a collision map as $\psi^{\cT}_{\wt \cT}$ or simply $\psi$ if ${\cT}$ and ${\wt \cT}$ are clear from the context.
Observe that given a collision map $\psi^{\cT}_{\wt \cT} = \psi$ we can naturally pullback brackets along this map: let $A$ be a $k$-bracket on $\wt \cT$, then we let $\psi^{-1}(A)$ denote the $k$-bracket on $\cT$ with 
$(\psi^{-1}(A))^{\ell}= \psi^{-1}(A^{\ell}).$
\null\hfill$\triangle$
\end{definition}

\begin{remark}
A local collision map can be recovered uniquely from the global collision map that it gives rise to, so these two notions of collision maps are equivalent.
\null\hfill$\triangle$
\end{remark}

\begin{figure}
\centering
\includegraphics[width=0.7\columnwidth]{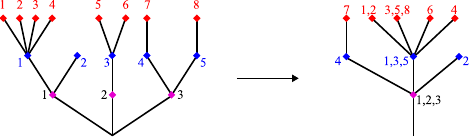}
\caption{A collision map associated to the collision depicted in Figure \ref{3collisionfig}.}
\label{collisionmapfig}
\end{figure}

We now explain how collision maps are naturally in bijection with collisions.

\begin{proposition}
\label{treemap}
Let
$\psi^{\cT}_{\wt \cT} = \psi$
be a collision map.
Then we may define a collision $\sC \eqqcolon \alpha(\psi)$ of $\cT$ in the following way:
\begin{itemize}
\item
$\sC$ contains all singleton and maximum brackets.

\smallskip

\item
Let $u' \in V^k(\wt \cT)$, and let $A(u')$ be the corresponding singleton $k$-bracket in $\wt\cT$. Then $\sC$ contains the $k$-bracket $\psi^{-1}(A(u'))$.

\smallskip

\item
For $u' \in V(\wt\cT)$ and $u_1, u_2 \in C(u')$.
Suppose that $u_1 <_{\wt\cT} u_2$, then we take $\psi^{-1}(A(u_1)) <_{\sC} \psi^{-1}(A(u_2))$.
\end{itemize}

\end{proposition}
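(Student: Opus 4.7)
The plan is first to verify that $\sC = \alpha(\psi)$ is a valid $n$-bracketing, and then to show it is an atom of $\cK(\cT)$ by applying Proposition \ref{collisioncharacterization}.

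For the first step, I would check the axioms of Definition \ref{def:n-bracketings} in turn. The {\sc (trivial $k$-brackets)} condition holds by construction. For {\sc ($k$-bracketing projection)}, if $u' \in V(\wt\cT)$ has parent $v'$, then $\pi(\psi^{-1}(A(u'))) = \psi^{-1}(A(v'))$, which lies in $\sC$. The {\sc (nested)} condition holds in a strong form: for distinct $u', u'' \in V^k(\wt\cT)$, the preimages $\psi^{-1}(u')$ and $\psi^{-1}(u'')$ are disjoint, so the depth-$k$ entries of $\psi^{-1}(A(u'))$ and $\psi^{-1}(A(u''))$ are disjoint. For {\sc (partition)}, I would use the surjectivity of $\psi$ together with its tree-morphism property: the brackets of $\sC^k$ lying over $\psi^{-1}(A(v')) \in \sC^{k-1}$ are exactly $\{\psi^{-1}(A(w')) : w' \in C(v')\}$, and the union of their depth-$k$ entries equals $\bigcup_{v \in \psi^{-1}(v')} C(v)$; the second bullet of {\sc (partition)} is vacuous since nontrivial brackets over a common parent are pairwise disjoint at depth $k$. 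For {\sc (height partial orders)}, I would combine the explicit order given by the construction (pulled back from $<_{\wt\cT}$) with the order on singleton brackets of $\cT$ forced by $<_\cT$, verifying compatibility via the weakly order-preserving property of $f$.

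For the second step, I would apply Proposition \ref{collisioncharacterization}. The fusion bracket of $\sC$ is $\psi^{-1}(A(u'_0))$, where $u'_0$ is the unique depth-$1$ vertex of $\cT'$: since $\psi^{-1}(u'_0) = W$ with $|W|\geq 2$, this bracket is nonsingleton, and its projection equals $A(u)$, which is singleton. Uniqueness is the crux of this step: for any other candidate $\psi^{-1}(A(v'))$ with $v' \neq u'_0$ in $V(\cT')$, $v'$ lies at depth at least $2$ in $\cT'$, so the projection $\pi(\psi^{-1}(A(v'))) = \psi^{-1}(A(\pi(v')))$ contains $W$ as its entry at the depth of $u'_0$; since $|W|\geq 2$, this projection is not singleton and $\psi^{-1}(A(v'))$ fails to be a fusion bracket. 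The second condition of Proposition \ref{collisioncharacterization} follows from the strengthened nesting noted above: distinct nontrivial nonsingleton brackets at the same depth have disjoint depth-$k$ entries, so no proper containment is possible.

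The main obstacle I anticipate is the careful verification of the {\sc (height partial orders)} axiom. The construction prescribes a partial order only on brackets of the form $\psi^{-1}(A(u'))$, and one must check that this extends consistently to all of $\sC$ while agreeing with the order forced on singleton brackets of $\cT$ by $<_\cT$. The remaining conditions reduce to a direct unwinding of the surjectivity and tree-morphism properties of $\psi$.
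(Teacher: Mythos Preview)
Your proposal is correct and follows essentially the same approach as the paper: verify the $n$-bracketing axioms of Definition~\ref{def:n-bracketings} directly (using surjectivity and the weakly-order-preserving property of $\psi$), then invoke Proposition~\ref{collisioncharacterization} with fusion bracket $\psi^{-1}(A(u'_0))$ (what the paper writes as $\psi^{-1}(W)$). Your discussion is in fact more detailed than the paper's on the uniqueness of the fusion bracket and on the anticipated subtlety in \textsc{(height partial orders)}, both of which the paper dispatches in a sentence.
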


\begin{proof}
Observe that if $\sC$ is an $n$-bracketing then it clearly satisfies the conditions of Proposition \ref{collisioncharacterization} with unique fusion bracket given by $\psi^{-1}(W)$. We now verify that $\sC$ is an $n$-bracketing.

The conditions
(\textsc{trivial $k$-brackets}) and 
(\textsc{$k$-bracketing projection}) are immediate. For verifying the
(\textsc{nested}) condition, suppose that $A_1, A_2$ are $k$-brackets corresponding to vertices $u_1, u_2$ in $\wt \cT$.
If $u_1 = u_2$, then $A_1$ and $A_2$ are the same $k$-brackets.
If $u_1 \neq u_2$, then $A_1^k$ and $A_2^k$ are disjoint.
The first part of the (\textsc{partition}) condition holds as $\psi$ is a surjective map of rooted plane trees.
The second part of the (\textsc{partition}) condition holds vacuously as $\sC$ does not contain any nested nontrivial brackets.

For verifying the first part of the
(\textsc{height partial orders}) condition, let $A_1, A_2$ be $k$-brackets in $\sC$ such that $\pi(A_1) = \pi(A_2)$.
This implies $A_1 = \psi^{-1}(u_1)$ and $A_2 = \psi^{-1}(u_2)$ for $u_1, u_2 \in C(u')$ for some $u' \in \wt \cT$.
The condition that $A_1, A_2$ are moreover disjoint is equivalent to $u_1 \neq u_2$.
It follows from our definition of $\sC$ that $A_1$ and $A_2$ are comparable. The second and third parts of the (\textsc{height partial orders}) condition hold as a consequence of the fact that $\psi$ is a map of rooted plane trees and thus is weakly order preserving.
\end{proof}

\begin{proposition}
\label{prop:alpha_is_bijective}
The map $\alpha$ sending collision maps on $\cT$ to collisions on $\cT$, is bijective.
\end{proposition}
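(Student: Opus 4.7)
The strategy is to exhibit an explicit inverse to $\alpha$, using the structure theorems already established in the preceding subsection (Proposition \ref{collisioncharacterization} and Lemma \ref{lem:fusion_bracket_for_collision}) to read off the data of a collision map from the combinatorics of a collision. Concretely, given a collision $\sC$ with fusion bracket $A$ and root bracket $A(u)$, the vertex $u$ is the apex of the local collision map we aim to reconstruct, and the top component $A^{d+1}$ (with $d = \mathrm{depth}(u)$) will be the tuple $W$ of consecutive children. The target tree $\cT'$ and the morphism $f:\cT(u;W)\to\cT'$ will be built from the system of essential brackets of $\sC$.

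For injectivity, I will observe that if $\sC = \alpha(\psi)$ for $\psi = \psi^\cT_{\wt\cT}$, then $\psi$ is recoverable from $\sC$: by the construction in Proposition \ref{treemap}, the unique fusion bracket of $\sC$ is $\psi^{-1}(A(v^1))$, where $v^1$ is the unique depth-1 vertex of $\wt\cT$. Its projection $A(u)$ identifies $u$ and its top component identifies $W$, hence the domain $\cT(u;W)$. For each $k\geq 1$, the nontrivial essential $k$-brackets of $\sC$ at depth $d+k$ are precisely the preimages $\psi^{-1}(A(v))$ as $v$ ranges over $V^k(\wt\cT)$; this bijective correspondence plus the height partial order $<_\sC$ (which matches $<_{\wt\cT}$ on children via Lemma \ref{heightpartialorderlemma}) reconstructs the vertex set, edges, and plane order of $\wt\cT$, along with the map $f$ itself.

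For surjectivity, given $\sC \in \fX(\cT)$ with fusion bracket $A$, I will build a collision map $\psi$ with $\alpha(\psi)=\sC$ as follows. Set $u$ so that $\pi(A)=A(u)$, let $W \coloneqq A^{d+1}$, and define $\cT'$ inductively: it has root $u_0'$ and a unique depth-1 vertex $v^1$, and for each $k\geq 2$ the set $V^k(\cT')$ is put in bijection with the nontrivial essential $(d+k)$-brackets of $\sC$. Parent-child relations are inherited from bracket projection $\pi$, and the plane order on siblings is read off from $<_\sC$. Then define $f:\cT(u;W)\to\cT'$ by sending a vertex $v$ at depth $d+k$ of $\cT(u;W)$ to the unique essential $(d+k)$-bracket of $\sC$ containing $v$; such a bracket exists and is unique by Proposition \ref{collisioncharacterization}(2) (no nested nontrivial brackets) together with the partition property applied to the projections of the containment-minimal brackets of $\sC$. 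Verifying that $\alpha(\psi)=\sC$ then amounts to unfolding definitions.

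\textbf{Main obstacle.} The delicate point is checking that the candidate inverse produces a legitimate collision map: the target $\cT'$ must be a rooted plane tree (not a path with $n+1$ vertices), and $f$ must be a weakly-order-preserving surjective morphism. Planarity and well-definedness of $<_{\cT'}$ on siblings require the {\sc(height partial orders)} axiom in the extended form of Lemma \ref{heightpartialorderlemma}, and weak order preservation of $f$ on children follows from the second and third bullets of {\sc(height partial orders)} via the extended order $\wh<_\sC$. The excluded case --- $\cT'$ being the path on $n+1$ vertices --- is exactly ruled out by the requirement that $\sC$ be a collision rather than $\sB_{\min}$, since in that degenerate case the essential brackets would all be singletons, forcing $\sC = \sB_{\min}$. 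Once these verifications are in hand, the two constructions are manifestly inverse to each other, completing the bijection.
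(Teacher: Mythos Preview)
Your proposal is correct and follows essentially the same approach as the paper: both construct the inverse $\alpha^{-1}$ explicitly by taking the root bracket to identify $u$, the fusion bracket to identify $W$, and the essential brackets of $\sC$ (ordered by the height partial order) to build $\cT'$ and the local map $f$. Your write-up supplies more of the verification details (weak order-preservation, exclusion of the path case) than the paper, which dispatches these with ``it is straightforward to verify,'' but the construction and the underlying idea are the same.
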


\begin{proof}
\label{def:quotient_of_tree_by_collision}
Given a collision $\sC$, we construct $\alpha^{-1}(\sC)=\psi^{\cT}_{\cT/\sC}$.
For describing the tree $\cT/\sC = {\wt \cT}$, we must define the tree $\cT'$.
We do so by taking the root bracket $A(u)$ of $\sC$ to be associated to the root vertex of $\cT'$.
Let the vertices of $\cT'$ correspond to the essential brackets of $\sC$.
We order the vertices in $\cT'$ according to the height partial order of $\sC$.
Let $W$ be the vertices in the fusion bracket of $\sC$.
We define the local collision map $f:\cT(u;W)\rightarrow \cT'$ to send $u^k_i \in \cT(u;W)$ to the vertex of $\cT'$ corresponding to the bracket $A \in \sC^k$ such that $u^k_i \in A^k$.
Given this local collision map $f$, we associate the global collision map $\psi^{\cT}_{\wt \cT}$. 
 It is straightforward to verify that $\psi^{\cT}_{\wt \cT}$ is indeed a collision map with $\alpha(\psi^{\cT}_{\wt \cT})=\sC$.
\end{proof}

\begin{definition}[contraction of an $n$-associahedron along a collision]
\label{contractbracketing}
Fix a collision $\sC \in \fX(\cT)$.
Let $\psi\colon \cT \to \wt \cT$ be the corresponding collision map.
We define the \emph{contraction of $\cT$ along $\sC$} to be $\wt\cT$, and we define, the \emph{contraction of $\cK(\cT)$ along $\sC$} denoted $\cK(\cT/\sC)$, to be $\cK(\wt\cT)$.
\null\hfill$\triangle$
\end{definition}

\begin{remark}
Geometrically, we can think about $\cK(\cT/\sC)$ in the following way.
Consider a collision of coordinate subspaces in $\bR^n$.
To take such a limit, we let subspaces fuse, and bubble off additional copies of $\bR^n$ wherever appropriate (see footnote \ref{bubbleofffootnote}).
If we consider only the ``base'' copy of $\bR^n$, then the combinatorial type of the arrangement of coordinate subspaces there is exactly $\wt\cT$.
\null\hfill$\triangle$
\end{remark}

\begin{definition}[contraction of an $n$-bracketing along a collision]\label{quotientbracketingdef}
Let $\sC \in \fX(\cT)$ with corresponding collision map $\psi$. 
Let $\sB \in \cK(\cT)$ which is compatible with $\sC$.
We define the quotient of $\sB$ by $\sC$, denoted $\sB/\sC$, to be the $n$-bracketing with $(\sB/\sC)^k = \{ \psi(A): A \in \sC\}$ and the height partial order inherited from $\sB \cup \sC$.
\null\hfill$\triangle$
\end{definition}

\noindent
We leave it to the reader to
verify from the definitions that $\sB/\sC$ is indeed an $n$-bracketing.

\subsection{Collision definitions and lemmas}

\

We continue to develop the the theory of collisions introducing several definitions and useful lemmas which will be employed in this article.  The reader should feel welcome to skim (or even skip) this subsection, and return to it as it is needed.

\begin{definition}\label{collisionrelations}
Fix $\sC_1,\sC_2 \in \fX(\cT)$.
We say that $\sC_1$ is \emph{contained} in $\sC_2$, and write $\sC_1 \rightarrow \sC_2$, if $\sC_1$ and $\sC_2$ are compatible and for each essential bracket $A_1 \in \sC_1^k$, there exists an essential bracket $A_2 \in \sC_2$ such that $A_1 \subseteq A_2$.
If it is not true that $\sC_1 \rightarrow \sC_2$, we may write $\sC_1 \nrightarrow \sC_2$.
\null\hfill$\triangle$
\end{definition}

\begin{remark}
One may alternately define $\sC_1 \rightarrow \sC_2$ as follows: $\sC_1$ and $\sC_2$ are compatible, and for every $k\geq 0$ such that $\pi^{k}(\sC_2)\neq \pi^{k}(\sB_\min)$,
each nontrivial $(n-k)$-bracket $A_1 \in \sC_1^{n-k}$ is contained in a nontrivial $(n-k)$-bracket $A_2 \in \sC_2^{n-k}$.
We will use this characterization in arguments below.
\null\hfill$\triangle$
\end{remark}

\begin{definition}\label{collisionrelationsdisjoint}
Let $\sC_1,\sC_2 \in \fX(\cT)$ and let $A_1$ and $A_2$ be the fusion brackets for $\sC_1$ and $\sC_2$, respectively.
Let and ${\overline D}(A_1)$ and ${\overline D}(A_2)$ denote the weak descendants of the vertices in $A_1$ and $A_2$, respectively, then $\sC_1$ and $\sC_2$ are \emph{disjoint}, written $\sC_1 \sim \sC_2$, if ${\overline D}(A_1) \cap {\overline D}(A_2) = \emptyset$.
Equivalently for any nontrivial brackets $A_1 \in \sC_1^k$ and $A_2 \in \sC_2^k$ we have $A^k_1 \cap A^k_2 = \emptyset$.
If $\sC_1$ and $\sC_2$ are not disjoint, we may write $\sC_1\nsim \sC_2$.
\null\hfill$\triangle$
\end{definition}

Note that if $\sC_1 \sim \sC_2$, then $\sC_1$ and $\sC_2$ are compatible.

\begin{remark}
Although we are employing $\sim$ notation, this binary relation is not an equivalence relation: it is reflexive and symmetric, but not transitive.
This choice is instead motivated by graph adjacency notation as is justified in \S\ref{triangulationsection}.
\null\hfill$\triangle$
\end{remark} 

We extend the above relations to $\wh \fX(\cT)$ by insisting that $\sC \rightarrow \sB_\min$ and $\sC \nsim \sB_\min$ for all $\sC \in \fX(\cT)$.
The following is immediate from the definitions. 
\begin{lemma}\label{projnestedisnested}
Let $\sC, \sC' \in \fX(\cT)$ which are both not type 1.
\begin{itemize}
\item
If $\sC \sim \sC'$, then $\pi(\sC) \sim \pi(\sC')$.

\item
If $\sC\rightarrow \sC'$ then $\pi(\sC) \rightarrow \pi(\sC')$.
\end{itemize}
\end{lemma}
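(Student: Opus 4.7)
The plan is to observe that $\pi$ is a purely ``forgetful'' operation on $n$-bracketings: the $k$-brackets of $\pi(\sC)$ for $k \leq n-1$ coincide with those of $\sC$ at the same levels (via the canonical identification $V^k(\cT) = V^k(\pi(\cT))$), and the height partial order is simply restricted. Both claims then reduce to showing that the defining conditions for $\pi(\sC), \pi(\sC')$ are obtained by restricting those for $\sC, \sC'$ to levels $\leq n-1$.

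For the first claim, I would invoke the equivalent characterization from Definition \ref{collisionrelationsdisjoint}: $\sC \sim \sC'$ iff for every $k$ and every pair of nontrivial brackets $A_1 \in \sC^k$ and $A_2 \in \sC'^k$, we have $A_1^k \cap A_2^k = \emptyset$. Since the nontrivial $k$-brackets of $\pi(\sC)$ for $k \leq n-1$ are precisely the nontrivial $k$-brackets of $\sC$ at those levels, the disjointness condition for $\pi(\sC), \pi(\sC')$ is an immediate subcase of that for $\sC, \sC'$. For the second claim, I would use the alternative characterization given in the remark following Definition \ref{collisionrelations}: compatibility of $\pi(\sC), \pi(\sC')$ is inherited from compatibility of $\sC, \sC'$ by restricting the height partial orders, and the containment clause defining $\pi(\sC) \to \pi(\sC')$ ranges over $k' \geq 0$ with $\pi^{k'+1}(\sC') \neq \pi^{k'+1}(\sB_\min)$, which is exactly the $k = k'+1 \geq 1$ subcase of the clause defining $\sC \to \sC'$.

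The main obstacle is bookkeeping for the edge cases where $\pi(\sC) = \sB_\min$ or $\pi(\sC') = \sB_\min$, which can still occur when $\sC$ or $\sC'$ is type 3 despite the hypothesis ``not type 1''. For $\to$, the alternative characterization makes both $\sC \to \sB_\min$ and $\sB_\min \to \pi(\sC')$ hold vacuously (the nontrivial-bracket side of the containment condition has empty content whenever one of the two bracketings equals $\sB_\min$), so the conclusion follows automatically from the conventions. For $\sim$, one verifies the hypothesis is vacuous in the problematic subcase: if $\sC$ is type 3 then its fusion bracket must be a maximum $k$-bracket for some $k \geq 1$, and the condition that its projection is a singleton forces $|V^j(\cT)| = 1$ for every $j < k$; hence any other collision $\sC'$ must have its fusion at some level $\geq k$, placing $A_{\sC'}$'s deepest component inside $\overline{D}(A_{\sC})$ and forcing $\sC \nsim \sC'$.
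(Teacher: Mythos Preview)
Your proposal is correct and matches the paper's own treatment, which consists entirely of the sentence ``The following is immediate from the definitions''; the unpacking you give is exactly what that phrase suppresses, and your handling of the type-3 edge case for $\sim$ is sound.

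One small caveat on the $\to$ edge case: you appeal to ``$\sB_\min \to \pi(\sC')$ holds vacuously,'' but under the paper's conventions $\sB_\min$ is a sink for $\to$ (cf.\ the extended collision mixed graph, Definition~\ref{extended}), so this relation is not literally granted when $\pi(\sC') \in \fX(\pi(\cT))$. The subcase is nonetheless vacuous for a different reason: if $\sC$ is type 3 and $\sC \to \sC'$ with $\sC'$ not type 1, then containment of the essential max brackets of $\sC$ forces $\sC'$ to share the same maximum-$k$-bracket fusion, and the partition axiom together with ``no nested nontrivial brackets'' then rules out $\sC'$ having any nontrivial bracket at level $\leq n-1$ --- so $\sC'$ is type 3 as well, and both projections are $\sB_\min$.
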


\begin{definition}
Fix $\sB\in \cK(\cT)$ and $\sC \in \fX(\cT)$ with $\sC \leq \sB$.
We say that $\sC$ is \emph{containment-minimal} in $\sB$ if each nontrivial bracket $A \in \sC^k$ is containment-minimal in $\sB$.
\null\hfill$\triangle$
\end{definition}

\begin{lemma}
\label{containmentmintocollisionminlem}
Fix $\sB\in \cK(\cT)$ and $\sC \in \fX(\cT)$.
Then $\sC$ is \emph{containment-minimal} in $\sB$ if and only if the following conditions hold:
\begin{enumerate}
\item $\sC \leq \sB$.

\smallskip

\item for every $\sC' \in \fX(\cT)$ with $\sC' \leq \sB$ either $\sC \rightarrow \sC'$ or $\sC \sim \sC'$.
\end{enumerate}
\end{lemma}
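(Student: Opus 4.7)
The plan is to prove the two implications separately, with the reverse direction being the cleaner of the two.

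For the reverse direction $(\Leftarrow)$, I would argue by contrapositive. Suppose some nontrivial $A \in \sC^l$ fails to be containment-minimal in $\sB$, so that there exists a nontrivial $A' \in \sB^l$ with $A' \subsetneq A$. By Lemma~\ref{collisioncontainingspecificbracket}, there is a collision $\sC' \leq \sB$ with $A' \in \sC'^l$. The strict containment $A' \subsetneq A$ forces $A^l \cap (A')^l \neq \emptyset$, so $\sC \nsim \sC'$, and assumption (2) yields $\sC \rightarrow \sC'$. Applying the alternate characterization of $\rightarrow$ (since $A' \in \sC'^l$ witnesses $\pi^{n-l}(\sC') \neq \pi^{n-l}(\sB_\min)$), the nontrivial bracket $A$ must be contained in some nontrivial $B \in \sC'^l$. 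But then $A' \subsetneq A \subseteq B$ exhibits two distinct nested nontrivial brackets at the same level of $\sC'$, contradicting Proposition~\ref{collisioncharacterization}.

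For the forward direction $(\Rightarrow)$, I assume $\sC$ is containment-minimal in $\sB$; condition (1) is immediate. For (2), fix $\sC' \leq \sB$ with $\sC \nsim \sC'$. Both are then compatible, and the equivalent formulation of non-disjointness produces nontrivial brackets $A_0 \in \sC^{l_0}$ and $B_0 \in \sC'^{l_0}$ at a common level with $A_0^{l_0} \cap B_0^{l_0} \neq \emptyset$. The (\textsc{nested}) condition for $\sB$ forces comparability of $A_0$ and $B_0$, and containment-minimality of $A_0$ rules out $B_0 \subsetneq A_0$, giving $A_0 \subseteq B_0$. The same pairing of nestedness and containment-minimality shows, for every nontrivial $A \in \sC^l$, that every nontrivial $B \in \sC'^l$ is either disjoint from $A$ or satisfies $A \subseteq B$. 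Hence to conclude $\sC \rightarrow \sC'$ it suffices to show that for each level $l$ at which $\sC'^l$ contains a nontrivial bracket and each nontrivial $A \in \sC^l$, some nontrivial $B \in \sC'^l$ meets $A^l$.

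To produce such an intersecting $B$, I would invoke Lemma~\ref{lem:fusion_bracket_for_collision}: both $A_0$ and $A$ weakly project to the fusion bracket $F$ of $\sC$ at some level $f$. Projecting the containment $A_0 \subseteq B_0$ down to level $f$ gives $F \subseteq B_1 := \pi^{l_0 - f}(B_0) \in \sC'^f$. Using the collision-map structure of $\sC'$ from Proposition~\ref{prop:alpha_is_bijective} together with the (\textsc{partition}) axiom iteratively at levels $f+1,\ldots,l$, one can lift this base-level containment back up to exhibit a nontrivial $B \in \sC'^l$ whose level-$l$ part intersects $A^l$, exploiting $A^l \subseteq \overline{D}(F) \cap V^l(\cT) \subseteq \overline{D}(B_1) \cap V^l(\cT)$. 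The principal obstacle will be the subtle edge case in which $B_1$ is the maximum $f$-bracket (a trivial bracket which can nonetheless arise via type-3 behavior in $\sC'$); here the straightforward upward propagation fails, and one must directly locate a nontrivial bracket of $\sC'^l$ intersecting $A^l$ by invoking the partition condition inside $\sC'$ at each intermediate level, possibly through an induction on the depth $n$ of $\cT$ reducing to $\pi(\sC)$ and $\pi(\sC')$ in $\pi(\sB)$.
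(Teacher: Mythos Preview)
Your proposal is correct and follows essentially the same path as the paper's proof. Both directions match: for $(\Leftarrow)$ you invoke Lemma~\ref{collisioncontainingspecificbracket} to manufacture a collision $\sC'$ containing the offending sub-bracket and then derive a contradiction (you spell this out via Proposition~\ref{collisioncharacterization}, while the paper simply asserts $\sC_1 \nrightarrow \sC_2$); for $(\Rightarrow)$ both arguments obtain an initial containment $A_0 \subseteq B_0$ from \textsc{(nested)} plus containment-minimality, push this down to the fusion level, and then climb back up using iterated \textsc{(partition)}. The only cosmetic difference is that the paper projects all the way to the fusion bracket of $\sC'$ (so the base of the lift is automatically nonsingleton and the resulting brackets at level $j$ are nontrivial once one exists), whereas you stop at $B_1 = \pi^{l_0 - f}(B_0)$ and then flag the maximum-bracket edge case separately—this concern dissolves if you descend one step further as the paper does.
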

\begin{proof}
Suppose that $\sC_1$ is containment-minimal in $\sB$ and $\sC_2$ is another collision with $\sC_2 \leq \sB$.
If $\sC_1 \sim \sC_2$ there is nothing to prove.
So suppose that there exists some nontrivial brackets $A_1\in \sC_1^k$ and $A_2 \in \sC_2^k$ such that $A_1^k\cap A_2^k \neq \emptyset$.
Then $A_1 \subseteq A_2$ as $A_1$ is containment-minimal in $\sB$.
Let $A_1' \in \sC_1^j$ be another nontrivial bracket and suppose $\pi^{n-j}(\sC_2) \neq \pi^{n-j}(\sB_{\min})$.
We wish to prove that there exists some nontrivial $A_2' \in \sC_2^j$ such that $A_1'\subseteq A_2'$.
Because projection of brackets respects containment, we know that the fusion bracket for $\sC_1$ weakly projects into the fusion bracket for $\sC_2$.
There is some $i\geq 0$ such that $\pi^i(A_1')$ is the fusion bracket of $\sC_1$.
For an element $u \in (A_1')^j$, repeated application of the {\sc (partition)} property applied to the fusion bracket for $\sC_2$ gives some nontrivial bracket of $A_2' \in \sC_2^j$ such that $u \in (A_2')^{j}$. The desired containment $A_1'\subseteq A_2'$ then follows by containment-minimality of $A_1'$.

Conversely, suppose that $\sC_1$ satisfies the conditions of the lemma, but is not containment-minimal, then there exists some $A \in \sC_1^k$ and some $A' \in \sB$ such that $A' \subsetneq A$.
By Lemma \ref{collisioncontainingspecificbracket}, we know that there is some collision $\sC_2 \leq \sB$ such that $A' \in \sC_2$, but then it is clear that $\sC_1 \nrightarrow \sC_2$, a contradiction.
\end{proof}

From the above argument that we have the following scholium.\footnote{We use the word \emph{scholium} to indicate a statement which follows as a consequence of a proof.  This is in contrast to a corollary, which should be a consequence of a statement of a proposition.
We have found this to be a useful term, and we hope the reader does not consider its application too pretentious.}

\begin{scholium}
\label{simplifiedcontainment-minimal condition}
Fix $\sB\in \cK(\cT)$ and $\sC \in \fX(\cT)$.
Let $\sC_1, \ldots, \sC_k \in\fX(\cT)$ such that $\bigvee_{i=1}^k\sC_i = \sB$.
Then $\sC$ is \emph{containment-minimal} in $\sB$ if and only if the following conditions hold:
\begin{enumerate}
\item $\sC \leq \sB$.

\smallskip

\item for $1\leq i\leq k$, either $\sC \rightarrow \sC_i$ or $\sC \sim \sC_i$.
\end{enumerate}   
\end{scholium}

\begin{lemma}
Let $\sB \in \cK(\cT)$.
There exists a collision $\sC \in \fX(\cT)$ such that $\sC$ is containment-minimal in $\sB$.
\end{lemma}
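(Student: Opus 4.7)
The plan is to induct on the depth $n$ of $\cT$, under the implicit assumption $\sB \neq \sB_\min$ (otherwise no $\sC \leq \sB$ exists). The base case $n=1$ is immediate: choose a containment-minimal nontrivial bracket $A \in \sB$ and let $\sC$ be the type 1 collision with fusion $A$.

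For the inductive step, let $S \subseteq \sB^n$ denote the set of nontrivial $n$-brackets $A$ with $\pi(A)$ a singleton $(n-1)$-bracket. This set governs the technical wrinkle: naively lifting a containment-minimal $(n-1)$-collision to an $n$-collision can introduce $n$-brackets that are containment-minimal in their fibers $\sB_{\wt A}$ but fail to be so in $\sB$, and the only obstruction is a strictly smaller bracket in $S$.

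If $S \neq \emptyset$, choose $A^* \in S$ minimizing $(A^*)^n$ under inclusion. If $A' \in \sB^n$ satisfied $(A')^n \subsetneq (A^*)^n$, then the \textsc{(nested)} axiom would force $A' \subseteq A^*$; since $\pi(A^*)^{n-1}$ is a singleton $\{v\}$, this entails $\pi(A')^{n-1} \subseteq \{v\}$, placing $A'$ in $S$ and contradicting minimality. So $A^*$ is containment-minimal in $\sB$, and the type 1 collision with fusion $A^*$ is the desired $\sC$.

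If $S = \emptyset$ and $\pi(\sB) \neq \pi(\sB_\min)$, apply induction to $\pi(\sB)$ to obtain an $(n-1)$-collision $\wt\sC \leq \pi(\sB)$ containment-minimal in $\pi(\sB)$, and lift to $\sC$ by setting all levels $\leq n-1$ equal to those of $\wt\sC$ and, for each nontrivial $\wt A \in \wt\sC^{n-1}$, adding to $\sC^n$ the containment-minimal nontrivial $n$-brackets of $\sB_{\wt A}$, with height partial order restricted from $\sB$. If instead $\pi(\sB) = \pi(\sB_\min)$, then every nontrivial $n$-bracket in $\sB$ projects to the maximum $(n-1)$-bracket, so choose such a containment-minimal $A^*$ and form the type 3 collision as in the proof of Lemma \ref{collisioncontainingspecificbracket}. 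The main obstacle is the $S = \emptyset$, $\pi(\sB) \neq \pi(\sB_\min)$ case: $\sC$ is a collision by Proposition \ref{collisioncharacterization}, since the added $n$-brackets are pairwise disjoint and have nontrivial $(n-1)$-projection (using $S = \emptyset$), so they contribute no new fusion-bracket candidate and the unique fusion of $\sC$ is inherited from $\wt\sC$; and each added $A \in \sB_{\wt A}$ is containment-minimal in $\sB$ because nested, containment-minimality of $\wt A$ in $\pi(\sB)$, and $S = \emptyset$ together rule out every putative smaller bracket in $\sB^n$.
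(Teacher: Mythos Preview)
Your proof is correct and follows essentially the same approach as the paper's: first handle the case where a type~1 collision exists (your set $S$ is exactly the set of fusion brackets of type~1 collisions $\leq \sB$), then lift a containment-minimal collision from $\pi(\sB)$ by adjoining containment-minimal $n$-brackets over each nontrivial $(n-1)$-bracket. The only organizational difference is that you split the $S = \emptyset$ case according to whether $\pi(\sB) = \pi(\sB_\min)$, whereas the paper handles both at once by working in $\wh{\fX}(\pi(\cT))$ and allowing $\overline{\sC} = \sB_\min$, whose lift is precisely your type~3 collision.
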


\noindent

\begin{proof}
If there exists a type 1 collision $\sC' \leq \sB$ with fusion bracket $A$, then there exists a type 1 collision $\sC$ with $\sC \rightarrow \sC'$ which is containment-minimal in $\sB$.
Therefore we may assume that there is no such $\sC'$.
Let ${\overline \sC} \in {\wh{\fX}}(\pi(\cT))$ such that ${\overline \sC}$ is containment-minimal in $\pi(\sB)$.
We can lift ${\overline \sC}$ to a collision $\sC \in \fX(\cT)$, i.e.\ $\pi(\sC)={\overline \sC}$ by defining the nontrivial $n$-brackets in $\sC^n$ to be the containment-minimal $n$-brackets in $\sB^n$ which project to some essential $(n-1)$-bracket in ${\overline \sC}^{n-1}$.
By the assumption that there is no type 1 collision $\sC'$ with $\sC' \leq \sB$, it follows that the collision $\sC$ is containment-minimal in $\sB$.
\end{proof}

As a consequence of the previous argument we have the following scholium.

\begin{scholium}
\label{lem:collisionlift}
For every $\sC' \in {\wh \fX}(\pi(\cT))$ such that $\sC' \leq \pi(\sB)$ there exists $\sC \in \wh \fX(\cT)$ with $\sC \leq \sB$ and $\pi(\sC) =\sC'$.
\end{scholium}

\begin{lemma}\label{bracketinglift}
For any $\sB' \in \cK(\pi(\cT))$ there exists some $\sB\in \cK(\cT)$ such that $\pi(\sB) = \sB'$. 
\end{lemma}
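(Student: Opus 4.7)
My plan is to construct a lift $\sB$ of $\sB'$ by taking the smallest $n$-bracketing of $\cT$ that agrees with $\sB'$ below depth $n$. Explicitly, set $\sB^k \coloneqq (\sB')^k$ for $0 \leq k \leq n-1$, and define $\sB^n$ to contain: (i) every singleton $n$-bracket of $\cT$, (ii) the maximum $n$-bracket, and (iii) for each nontrivial $A \in (\sB')^{n-1}$, the canonical lift $\wt A$ with $\wt A^j \coloneqq A^j$ for $j < n$ and $\wt A^n \coloneqq \bigcup_{u \in A^{n-1}} C(u)$. Note that $\wt A^n \cap C(u) = C(u)$ is trivially consecutive for each $u \in A^{n-1}$, so each $\wt A$ is a legitimate $n$-bracket. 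Equip $\sB$ with the partial order extending $<_{\sB'}$ in which, for each $u_0 \in V^{n-1}(\cT)$, the singleton $n$-brackets at the children of $u_0$ are ordered by $<_\cT$, and no other depth-$n$ relations are added.

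The main work is to verify that this $\sB$ satisfies the conditions of Definition \ref{def:n-bracketings}. The (\textsc{trivial $k$-brackets}) and (\textsc{$k$-bracketing projection}) conditions are immediate from the construction. For (\textsc{nested}) at depth $n$, I would observe that if two lifted brackets $\wt A_1, \wt A_2$ have overlapping depth-$n$ parts then their depth-$(n-1)$ projections $A_1, A_2$ overlap, so nestedness in $\sB'$ forces $A_1 \subseteq A_2$ (or vice versa), and this containment extends levelwise to $\wt A_1 \subseteq \wt A_2$; for a singleton $A(u)$ meeting a $\wt A$, the fact that parent$(u) \in A^{n-1}$ combined with the bracket axioms for $A$ delivers the ancestor containments $A(u)^i \subseteq A^i$ at every lower level. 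For (\textsc{partition}) at depth $n$, by construction $\sB^n_A$ has depth-$n$ union equal to $\bigcup_{u \in A^{n-1}} C(u)$ for every $A \in \sB^{n-1}$ (via singletons when $A$ is a singleton, via $\wt A$ when $A$ is nontrivial, and via the maximum $n$-bracket when $A$ is maximum), and the second clause is vacuous because $|\sB^n_A| = 1$ whenever $A$ is nontrivial or maximum.

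For (\textsc{height partial orders}), the key point is that the only pairs of depth-$n$ brackets with a common projection and disjoint top strata are singletons $A(u_1), A(u_2)$ sharing a parent $u_0$, since every nontrivial or maximum $(n-1)$-bracket receives a unique lift in $\sB^n$. The first two bullets then follow from the definition of $<_\sB$, and the third bullet is vacuous at level $k=n$: whenever the ambient $(n-1)$-bracket $A$ or $\wt A$ is nontrivial or maximum, the corresponding $\sB^n_A$ or $\sB^n_{\wt A}$ is a singleton and admits no pair of distinct elements. Finally, $\pi(\sB) = \sB'$ is immediate from the construction, completing the proof.

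I do not expect a genuine obstacle: the argument is entirely routine once the canonical lift is identified. The only subtlety is bookkeeping --- one must separately track the three cases (singleton, nontrivial, maximum) for $(n-1)$-brackets when checking each axiom, in order to confirm that no unintended comparability or containment arises at depth $n$.
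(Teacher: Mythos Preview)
Your proof is correct, but you construct a different lift than the paper does. You build the \emph{minimal} lift: for each nontrivial $A \in (\sB')^{n-1}$ you add a single ``fat'' $n$-bracket $\wt A$ with $\wt A^n = \bigcup_{u \in A^{n-1}} C(u)$, so that each nontrivial $(n-1)$-bracket has a unique preimage. The paper instead builds a much larger lift: for each nonsingleton $A' \in (\sB')^{n-1}$ and each depth-$n$ vertex $u$ with parent in $(A')^{n-1}$, it adds the ``thin'' $n$-bracket $(A', \{u\})$, and then imposes a height order on these using the extended height partial order of Lemma~\ref{heightpartialorderlemma}.

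Your approach has the advantage that several axioms become vacuous at depth $n$: since $|\sB^n_A| = 1$ for every nontrivial or maximum $A$, there are simply no pairs to compare in the third clause of \textsc{(height partial orders)} and no proper containments to check in the second clause of \textsc{(partition)}. The paper's approach requires verifying a nontrivial height order among the many thin lifts over the same base, but has the virtue that every nontrivial $n$-bracket has singleton top stratum, which meshes with the extended-height-order machinery developed earlier. Either construction suffices for the lemma.
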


\begin{proof}
We can lift $\sB'$ to $\sB$ by taking $\sB^n$ to consist of the singleton $n$-brackets, the maximum $n$-bracket, and the following nontrivial $n$-brackets.
For each $A' \in \sB'$ and $u_i^n \in V^n(\cT)$ we construct an $n$-bracket $A$ such that $\pi(A) =A'$ and $A^n =\{u_i^n\}$.
Finally we take the height partial order on the $n$-brackets which is induced by the extended height partial order on singleton brackets, as in Lemma \ref{heightpartialorderlemma}, which agrees with $<_{\cT}$.
\end{proof}

\begin{lemma}\label{projcontain}
Let $\sC \in \wh \fX(\cT)$ such that $\sC$ is not type 1, and let $\sB \in \cK(\cT)$ such that $\sC$ is containment-minimal in $\sB$, then $\pi(\sC)$ is containment-minimal in $\pi(\sB)$.
\end{lemma}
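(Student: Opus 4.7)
The plan is to reduce to a contradiction argument at the level of individual brackets, exploiting the fact that truncation $\pi$ only removes data at depth $n$. First I would dispense with the degenerate cases. If $\sC = \sB_{\min}$, or if $\sC$ is of type 3, then by the convention at the end of \S\ref{collisionsasatoms} (respectively, by the definition of type 3 in Definition \ref{codime1char}) we have $\pi(\sC) = \sB_{\min}$ as an element of $\wh{\fX}(\pi(\cT))$. Since $\sB_{\min}$ contains no nontrivial brackets at any level, containment-minimality of $\pi(\sC)$ in $\pi(\sB)$ holds vacuously, and nothing further needs checking.

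The substantive case is when $\sC$ is of type 2, so that $\pi(\sC)$ is a genuine $(n-1)$-collision. Here I would argue by contradiction. Assume there exists a nontrivial bracket $A \in \pi(\sC)^k$ and a nontrivial bracket $A' \in \pi(\sB)^k$ with $(A')^k \subsetneq A^k$, for some $k$ with $1 \leq k \leq n-1$. The key point is that $\pi(\cT)$ differs from $\cT$ only by the removal of depth-$n$ vertices, so for each such $k$ the collections of $k$-brackets of $\cT$ and of $\pi(\cT)$ are literally the same, and at these levels $\pi(\sC)^k = \sC^k$ and $\pi(\sB)^k = \sB^k$. Reinterpreting $A$ and $A'$ as $k$-brackets of $\cT$ therefore exhibits a nontrivial $A \in \sC^k$ which fails to be containment-minimal in $\sB$, contradicting the hypothesis that $\sC$ is containment-minimal in $\sB$.

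The main thing to get right is disambiguating two uses of the symbol $\pi$: the projection $\pi\colon \cK(\cT) \to \cK(\pi(\cT))$ on bracketings, which simply discards the depth-$n$ data, versus the truncation of individual $k$-brackets from Definition \ref{def:truncation_of_bracket}, which drops the last component. Once one notes that the former operation preserves each bracket of level $\leq n-1$ unchanged (it does not apply the latter to such brackets), the identifications $\pi(\sC)^k = \sC^k$ and $\pi(\sB)^k = \sB^k$ used above are immediate, and there is no further obstacle. One should also briefly note that $\sC \leq \sB$ implies $\pi(\sC) \leq \pi(\sB)$, which is clear from the definition of the partial order on $\cK(\cT)$, so that the conclusion of the lemma is even meaningful.
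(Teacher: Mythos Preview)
Your proof is correct and is essentially a careful spelling-out of the paper's one-line justification (``This is a direct consequence of the definition of a containment-minimal collision''). Your explicit case split into $\sC=\sB_{\min}$, type 3, and type 2, together with the observation that $\pi(\sB)^k=\sB^k$ and $\pi(\sC)^k=\sC^k$ for $k\leq n-1$, simply makes transparent why the definition immediately yields the conclusion.
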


\begin{proof}
This is a direct consequence of the definition of a containment-minimal collision.
\end{proof}

\begin{lemma}\label{restrictnbracketingtocollision}
Let $\sB \in \cK(\cT)$ and $\sC \in \fX(\cT)$ with $\sC \leq \sB$.
Let $\sB'$ obtained from $\sB$ by removing all nontrivial $A\in \sB^k$ for which there exists no nontrivial $A' \in \sC^k$ such that $A\subseteq A'$, and taking $<_{\sB'}$ to be the restriction of $<_{\sB}$.
Then $\sB'$ is an $n$-bracketing.
\end{lemma}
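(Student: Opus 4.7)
The plan is to verify each of the defining axioms of an $n$-bracketing (Definition \ref{def:n-bracketings}) for the candidate $\sB'$ in turn.

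First I would dispatch the easy conditions. The (\textsc{trivial $k$-brackets}) axiom is immediate, since by construction we only remove nontrivial brackets, so every singleton and maximum bracket of $\sB$ persists in $\sB'$. The (\textsc{nested}) axiom is preserved because $\sB'^k \subseteq \sB^k$ and nestedness is a property of pairs that survives restriction to a subfamily. The (\textsc{height partial orders}) axiom holds because $<_{\sB'}$ is by definition the restriction of $<_{\sB}$: the characterization of comparability (same projection and disjoint), the enforced order on singletons by $<_\cT$ (since all singletons are retained), and the coherence of the order across containment each pass through restriction cleanly.

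For the (\textsc{$k$-bracketing projection}) axiom, I would take a nontrivial $A \in \sB'^k$. By construction there is a nontrivial $A' \in \sC^k$ with $A \subseteq A'$, so $\pi(A) \subseteq \pi(A')$ and $\pi(A') \in \sC^{k-1}$. If $\pi(A)$ is trivial, then $\pi(A) \in \sB'^{k-1}$ automatically. Otherwise, I would show $\pi(A') \in \sC^{k-1}$ is itself nontrivial: if $\pi(A')$ were a singleton, then $A \subseteq A'$ would force $A$ to be a singleton, contradicting our hypothesis on $A$; the remaining trivial possibility (a maximum bracket) is excluded by the structure of the collision $\sC$, invoking Proposition \ref{collisioncharacterization} together with the fact that every nontrivial bracket in $\sC$ weakly projects to the unique fusion bracket (Lemma \ref{lem:fusion_bracket_for_collision}). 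With $\pi(A') \in \sC^{k-1}$ nontrivial and $\pi(A) \subseteq \pi(A')$, the required containment $\pi(A) \in \sB'^{k-1}$ follows from the defining rule for $\sB'$.

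For the (\textsc{partition}) axiom, I fix $A \in \sB'^{k-1}$ and verify both bullets. When $A$ is trivial, coverage of $\bigcup_{u \in A^{k-1}} C(u)$ and the refinement property are witnessed by singleton and maximum $k$-brackets, all of which are retained. When $A$ is nontrivial, so that $A \subseteq A^* \in \sC^{k-1}$ for some nontrivial $A^*$, for each $u \in A^{k-1}$ and $u' \in C(u)$ I would produce a kept $\wt A \in \sB'^k_A$ containing $u'$ by applying the (\textsc{partition}) property of $\sC$ to $A^*$ to find a nontrivial $\sC^k$ bracket over $u'$, and then intersecting with the nested witness in $\sB^k_A$ provided by (\textsc{partition}) for $\sB$. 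The second bullet is verified by a parallel witness-lifting argument: any $A_3 \subsetneq A_1$ from $\sB^k_A$ produced by the property of $\sB$ is contained in a nontrivial $\sC^k$ bracket and hence remains in $\sB'$.

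I expect the main obstacle to be the (\textsc{$k$-bracketing projection}) axiom in the delicate case where $\pi(A')$ might \emph{a priori} be the maximum $(k-1)$-bracket in $\sC$; resolving this cleanly requires exploiting the rigid essential-bracket structure of the atom $\sC$, namely that its nontrivial brackets form a coherent chain of projections descending to the fusion bracket, and that compatibility $\sC \leq \sB$ constrains the nontrivial brackets of $\sB$ lying under them.
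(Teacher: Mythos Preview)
Your treatment of \textsc{(nested)}, \textsc{(partition)}, and \textsc{(height partial orders)} is correct and far more explicit than the paper's one-line proof, which gestures only at \textsc{(nested)} and \textsc{(partition)}. You are also right that \textsc{($k$-bracketing projection)} is the crux. But your argument there has a genuine gap: the claim that $\pi(A')$ cannot be the maximum $(k-1)$-bracket is false. When the fusion bracket of $\sC$ is the maximum $1$-bracket---for instance whenever $\sC$ is of type~3---a nontrivial $A'\in\sC^k$ may have $\pi(A')$ equal to the maximum (hence trivial) $(k-1)$-bracket. Lemma~\ref{lem:fusion_bracket_for_collision} only asserts that nontrivial brackets of $\sC$ weakly project \emph{to} the fusion bracket, not that the intermediate projections are themselves nontrivial.

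This is not merely a gap in your write-up; the lemma as stated appears to be false. Take $\cT$ of depth~$2$ with three depth-$1$ vertices, each having two children. Let $\sC$ be the type~3 collision whose nontrivial $2$-brackets $A'_1,A'_2$ each project to the maximum $1$-bracket (so $\sC^1$ has no nontrivial bracket), with $(A'_j)^2$ consisting of the $j$-th child on each line. Enlarge to $\sB\geq\sC$ by adjoining the nontrivial $1$-bracket $B$ with $B^1=\{u^1_1,u^1_2\}$ together with $A_j\in\sB^2_B$ obtained by restricting $A'_j$ to those two lines; one checks directly that $\sB$ is a valid $2$-bracketing. Then $A_1\subseteq A'_1$ is retained in $\sB'$, but $\pi(A_1)=B$ is removed (there is no nontrivial bracket in $\sC^1$ to contain it), so $\sB'$ fails \textsc{($k$-bracketing projection)}. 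Since the paper's proof never addresses this axiom, the issue is shared. A plausible repair is to replace ``nontrivial $A'\in\sC^k$'' by ``essential $A'\in\sC^k$'' in the statement; then $B$ survives inside the essential maximum $1$-bracket, and your argument goes through using the observation that the projection of an essential bracket is either essential or a singleton.
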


\begin{proof}
One can check that the (\textsc{nested}) and (\textsc{partition}) properties are preserved for $\sB'$.
\end{proof}

\begin{lemma}\label{containmentlemma}
If $\sC \in \wh \fX(\cT)$ and $\sB \in \cK(\cT)$ with $\sC \leq \sB$, then there exists some $\sC' \in \wh \fX(\cT)$ which is containment-minimal in $\sB$ such that $\sC' \rightarrow \sC$.
\end{lemma}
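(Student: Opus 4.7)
The plan is to apply the preceding existence lemma not to $\sB$ itself, but to a restricted sub-bracketing $\sB' \leq \sB$ chosen so that any containment-minimal collision of $\sB'$ is automatically related to $\sC$ by $\rightarrow$.

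The case $\sC = \sB_\min$ is immediate: the preceding lemma yields a containment-minimal $\sC' \in \wh \fX(\cT)$ inside $\sB$ (taking $\sC' = \sB_\min$ if $\sB = \sB_\min$), and $\sC' \rightarrow \sB_\min$ holds by convention. For the main case $\sC \in \fX(\cT)$, I would invoke Lemma \ref{restrictnbracketingtocollision} to construct $\sB' \leq \sB$ by keeping exactly those nontrivial brackets of $\sB$ that are contained in a nontrivial bracket of $\sC$ of the same depth, and restricting the height partial order. Since each nontrivial bracket of $\sC$ is contained in itself, $\sC \leq \sB'$. The preceding existence lemma applied to $\sB'$ then produces a collision $\sC' \in \wh \fX(\cT)$ that is containment-minimal in $\sB'$.

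Two verifications complete the argument. First, $\sC' \rightarrow \sC$: the two collisions sit inside the common bracketing $\sB$, so they are compatible via the restriction of $<_\sB$; and every nontrivial bracket $A' \in \sC'^k$ lies in $\sB'$ by construction, hence is contained in a nontrivial bracket of $\sC^k$, which by the alternative characterization of $\rightarrow$ in the remark following Definition \ref{collisionrelations} gives $\sC' \rightarrow \sC$. Second, $\sC'$ is containment-minimal in $\sB$: a hypothetical nontrivial $A'' \in \sB^k$ with $A'' \subsetneq A'$ for some nontrivial $A' \in \sC'^k$ would, via a nontrivial $A^* \in \sC^k$ with $A' \subseteq A^*$, satisfy $A'' \subsetneq A^*$ and therefore lie in $\sB'$, contradicting the containment-minimality of $\sC'$ inside $\sB'$.

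The main subtlety is that containment-minimality is a property relative to $\sB$, while the chosen $\sC'$ is only a priori containment-minimal in $\sB'$; transitivity of containment through the nontrivial brackets of $\sC$ is exactly what patches this. Notably, no case analysis on the type of $\sC$ is needed, since the alternative characterization of $\rightarrow$ automatically trivializes at the levels where $\pi^{k}(\sC) = \pi^{k}(\sB_\min)$, and at such levels $\sB'$ (and hence $\sC'$) likewise has no nontrivial brackets.
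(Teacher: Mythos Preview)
Your proof is correct and follows essentially the same approach as the paper: invoke Lemma~\ref{restrictnbracketingtocollision} to form the restriction $\sB'$ of $\sB$ to $\sC$, and then take $\sC'$ containment-minimal in $\sB'$. The paper's one-sentence proof is terse (and in fact contains apparent typos swapping the roles of $\sC$ and $\sC'$), leaving both verifications you carry out---that $\sC' \rightarrow \sC$ and that containment-minimality in $\sB'$ upgrades to containment-minimality in $\sB$---entirely to the reader; your argument makes these explicit and handles them correctly.
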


\begin{proof}
By Lemma \ref{restrictnbracketingtocollision}, the restriction of $\sB$ to $\sC'$ defines an $n$-bracketing and we can take $\sC$ to be containment-minimal in this $n$-bracketing.
\end{proof}

\begin{definition}\label{minimalcollisiondef}

We say that $\sC \in \fX(\cT)$ is a \emph{minimal collision} if there exists no $\sC' \in \fX(\cT)$ such that $\sC' \rightarrow \sC$.
We say that $\sC \in \wh \fX(\cT)$ is a \emph{minimal extended collision} if $\sC$ is a minimal collision or $\sC = \sB_\min$.
\null\hfill$\triangle$
\end{definition}

The following lemma characterizes minimal collisions. 

\begin{lemma}\label{collisionminimalchar}
Suppose that $\sC \in \fX(\cT)$.
Then $\sC$ is a minimal collision if and only if 
\begin{itemize}
\item
the fusion bracket $A$ of $\sC$ is a $k$-bracket with $|A^k|=2$ and

\item
for each $j>k$, every essential $j$-bracket in ${\wt A} \in \sC^j$ has $|{\wt A}^j|=1$.
\end{itemize}
Moreover, if $\sC$ is not a minimal collision then there exists distinct $\sC_1, \sC_2 \in \wh \fX(\cT)$ such that $\sC_1\rightarrow \sC$ and $\sC_2\rightarrow \sC$.
\end{lemma}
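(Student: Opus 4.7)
The plan is to use the bijection of Proposition~\ref{prop:alpha_is_bijective} between collisions $\sC$ and their collision maps, pulling the two bulleted conditions over to properties of the local data $(u, W, f\colon \cT(u;W)\to \cT')$: the first becomes $|W|=2$, and the second says $f$ is injective on every vertex of $\cT'$ of depth at least $2$. In this language, minimality amounts to a ``non-factorizability'' of $f$.

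I would prove the contrapositive of the forward direction while simultaneously establishing the ``moreover'' clause. Assume the conditions fail, so there is an essential bracket $\wt A \in \sC^\ell$ with $\ell\geq k$ and $|\wt A^\ell|\geq 2$. For any two consecutive elements $v, v'$ of $\wt A^\ell$, with common parent $p$, I would construct a collision $\sC_{v,v'}$ whose collision map has local data $(p, \{v,v'\}, f_{v,v'})$: the map $f_{v,v'}$ identifies only $v$ with $v'$ and is injective on their descendants, with the plane order on the children of the fused vertex inherited from the plane order of $\cT'$ restricted to $f(C(v))\cup f(C(v'))$. The only nonsingleton essential bracket of $\sC_{v,v'}$ is its fusion $\{v,v'\}\subseteq \wt A$, so iterated application of the \textsc{(partition)} property of $\sC$ above $\wt A$ places each singleton essential of $\sC_{v,v'}$ inside an essential bracket of $\sC$; the inherited plane order gives compatibility of the height partial orders. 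This yields $\sC_{v,v'}\to\sC$ and $\sC_{v,v'}\neq\sC$. If $|A^k|\geq 3$, write $A^k=\{u_1,\ldots,u_m\}$ in $<_{\cT}$-order and take $\sC_1 = \sC_{u_1,u_2}$, $\sC_2 = \sC_{u_2,u_3}$, which have distinct fusion brackets. If $|A^k|=2$ but some higher essential $\wt A\in\sC^{\ell}$ has $|\wt A^\ell|\geq 2$, take $\sC_1=\sC_{u_1,u_2}$ built from the fusion pair of $\sC$ itself and $\sC_2=\sC_{v,v'}$ from a consecutive pair in $\wt A^\ell$; their fusion brackets live at distinct depths $k$ and $\ell$.

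For the converse, I would fix $\sC'\in\fX(\cT)$ with $\sC'\to\sC$ and show $\sC'=\sC$. The fusion bracket $A'$ of $\sC'$ has $|(A')^{k'}|\geq 2$ and sits inside some essential $B\in\sC^{k'}$; under our hypothesis, the only essentials of $\sC$ with $|\cdot^{k'}|\geq 2$ live at depth $k$ (and have size exactly $2$), forcing $k'=k$ and $(A')^{k}=A^k$. The singleton parts at depths below $k$ then force $A'=A$. For each other essential $\wt A'\in(\sC')^j$ with $j>k$, containment in an essential of $\sC$ having $|\cdot^j|=1$ gives $|(\wt A')^j|=1$, so $\wt A'$ coincides with the corresponding singleton essential of $\sC$. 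Thus the essential data of $\sC'$ and $\sC$ agree, compatibility of $\sC'$ with $\sC$ forces identical height partial orders, and the injectivity of $\alpha$ in Proposition~\ref{prop:alpha_is_bijective} yields $\sC'=\sC$.

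The main obstacle will be verifying that $f_{v,v'}$ really defines a valid collision map: one needs to check that it is weakly order-preserving on the children of the fused vertex (so that the plane order inherited from $\cT'$ satisfies the rooted-plane-tree-morphism axioms) and that $\cT'_{v,v'}$ is not the excluded ``path with $n+1$ vertices''. A secondary subtlety is that the singleton essentials of $\sC_{v,v'}$ at depths strictly above $\ell$ need only be \emph{contained} in essentials of $\sC$, not literally equal to them; this is precisely what iterated \textsc{(partition)} applied along the chain of essential brackets above $\wt A$ supplies.
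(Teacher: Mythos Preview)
Your proposal follows the same two-part strategy as the paper: establish the ``moreover'' clause (hence the converse) by explicitly constructing $\sC_1,\sC_2$ in the two cases, and prove the forward implication by showing that any $\sC'\to\sC$ must coincide with $\sC$. The paper's treatment is terser---it says the constructions are ``easy to find'' and points to Figure~\ref{nonminimalcollisions}---while you spell out the collision-map data $(p,\{v,v'\},f_{v,v'})$ and the inheritance of the plane order from $\cT'$. Your direct argument for the forward direction (matching fusion brackets, then matching all essentials, then height orders via compatibility) is slightly cleaner than the paper's, which reduces to a minimal $\wt\sC$ and invokes a ``clearly impossible'' proper containment.

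One small gap to patch in the case $|A^k|=2$: your construction of $\sC_2=\sC_{v,v'}$ requires $v,v'$ to be siblings (so that $(p,\{v,v'\})$ is valid local data for a collision map), but a consecutive pair in $\wt A^\ell$ need not share a parent when $|\pi(\wt A)^{\ell-1}|\geq 2$. The fix is to choose $\ell>k$ \emph{minimal} among depths admitting an essential bracket of size at least $2$; then every essential at depths $k{+}1,\ldots,\ell{-}1$ is a singleton, so $\pi(\wt A)^{\ell-1}$ has exactly one element and all of $\wt A^\ell$ are children of that element. With this adjustment your argument goes through.
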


\begin{proof}
Suppose that $\sC \in \wh \fX(\cT)$ satisfies the conditions of Lemma \ref{collisionminimalchar}.
Suppose for contradiction that $\sC$ is not a minimal collision.
Then there exists some ${\wt \sC} \in \fX(\cT)$ with ${\wt \sC} \neq \sC$ such that ${\wt \sC} \rightarrow \sC$. 
We may assume without loss of generality that ${\wt \sC}$ is a minimal collision.
As ${\wt \sC}\neq \sC$ there must exist some essential $i$-bracket ${\wt A} \in {\wt \sC}^i$ and a nontrivial $i$-bracket $A' \in \sC$ such that ${\wt A} \subsetneq A'$, but this is clearly impossible.
Conversely, let $\sC \in \wh \fX(\cT)$ be a minimal collision with fusion bracket $A$, and suppose for contradiction that the conditions of Lemma \ref{collisionminimalchar} are not satisfied.
We can construct two collisions $\sC_1, \sC_2$ such that $\sC_1\rightarrow \sC$, $\sC_2\rightarrow \sC$.
Moreover $\sC_1$ and $\sC_2$ will both satisfy the conditions of \ref{collisionminimalchar} and hence be minimal.
There are two cases:

\begin{itemize}
\item $|A^k| \geq 3$ or

\item there is some $j$-bracket in ${\wt A} \in \sC$ with $j>k$, which is an essential bracket in $\sC$ and $|{\wt A}^j|\geq 2$.
\end{itemize}

In either case it is easy to find the desired extended collisions $\sC_1$ and $\sC_2$ (see Figure \ref{nonminimalcollisions}.)
\end{proof}

\begin{figure}[ht]
\includegraphics[width=0.55\textwidth]{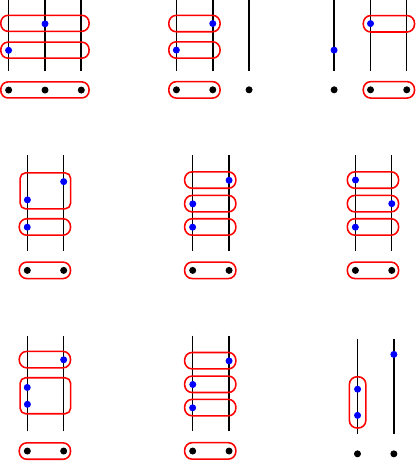}
\caption{In each line, the collision on the left violates the conditions of a minimal collision described in Lemma \ref{collisionminimalchar}, and the middle and right collisions in each row give a pair of minimal collisions $\sC_1$ and $\sC_2$ such that $\sC_1 \rightarrow \sC$ and $\sC_2 \rightarrow \sC$.
}
\label{nonminimalcollisions}
\end{figure}

For the following lemmas, recall Definition \ref{quotientbracketingdef}.

\begin{lemma}\label{uniquequotientlemma}
Let $\sC, \sC_1, \sC_2 \in \fX(\cT)$ such that for each $i \in \{1,2\}$ either $\sC \rightarrow \sC_i$ or $\sC \sim \sC_i$.
Then $\sC_1/\sC \neq \sC_2/\sC$.
\end{lemma}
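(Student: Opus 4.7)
The plan is to prove the contrapositive: assume $\sC_1/\sC=\sC_2/\sC$ and show $\sC_1=\sC_2$. Let $\psi\coloneqq\psi^{\cT}_{\cT/\sC}$ be the collision map associated to $\sC$ by Proposition \ref{prop:alpha_is_bijective}, with local data $(u,W,\cT')$. Then $(\sC_i/\sC)^k=\{\psi(A):A\in\sC_i^k\}$; moreover $\psi$ is the identity outside $\cT(u;W)$ and, on the inside, collapses each essential $k$-bracket of $\sC$ (viewed as its top-level vertex set) to a single vertex of $\cT/\sC$. Let $u^{\ast}\in\cT/\sC$ denote the image of the fusion bracket of $\sC$.

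First I would show that $\sC$ must have the same relation type ($\sim$ or $\rightarrow$) with each $\sC_i$. If $\sC\rightarrow\sC_j$, the essential bracket of $\sC_j$ that contains the fusion bracket of $\sC$ descends under $\psi$ to a nontrivial bracket of $\sC_j/\sC$ whose top-level vertex set contains $u^{\ast}$. If instead $\sC\sim\sC_j$, disjointness of essential brackets at each depth forbids any nontrivial bracket of $\sC_j/\sC$ from containing $u^{\ast}$. So the assumed equality of quotients forces the relation types to match.

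In the $\sim$-case, every nontrivial bracket $A\in\sC_i^k$ has each level $A^{\ell}$ disjoint from the top sets of the nontrivial essentials of $\sC$ at depth $\ell$, while each singleton essential of $\sC$ contains only one vertex. Hence $\psi$ is injective on each $A^{\ell}$, and therefore $A\mapsto\psi(A)$ is a depth-preserving bijection between the nontrivial brackets of $\sC_i$ and the nontrivial brackets of $\sC_i/\sC$. Lemma \ref{heightpartialorderlemma} transports the height partial order across this bijection, yielding $\sC_1=\sC_2$.

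The main obstacle is the $\rightarrow$-case, where I claim each nontrivial $A\in\sC_i^k$ satisfies $A^k=\psi^{-1}(\psi(A^k))$, i.e.\ $A^k$ is a union of complete $\psi$-fibers. Suppose an essential $k$-bracket $E$ of $\sC$ meets $A^k$. Since $\sC\rightarrow\sC_i$ there exists an essential $\hat A\in\sC_i$ with $E\subseteq\hat A$, so $A^k\cap\hat A^k\neq\emptyset$, and \textsc{(nested)} inside $\sC_i$ gives $A\subseteq\hat A$ or $\hat A\subseteq A$. The proper containment $A\subsetneq\hat A$ is excluded by Proposition \ref{collisioncharacterization}(2), and if $\hat A$ were a singleton then $A\subseteq\hat A$ would force $A$ to be a singleton, contradicting nontriviality; so either $A=\hat A$ or $\hat A\subseteq A$, and in both cases $E\subseteq A$. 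Thus $A^k$ is a union of complete $\psi$-fibers, so $A$ is uniquely recoverable from $\psi(A)$, and the reconstruction argument from the $\sim$-case (together with Lemma \ref{heightpartialorderlemma}) yields $\sC_1=\sC_2$. The heart of the argument is Proposition \ref{collisioncharacterization}: it prevents any nontrivial bracket of a collision $\sC_i$ from partially straddling an essential bracket of $\sC$, which is exactly what guarantees the fiber-completeness needed for reconstruction.
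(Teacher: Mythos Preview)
Your proof is correct in spirit and takes essentially the same approach as the paper: both arguments show that the quotient map $A\mapsto\psi(A)$ is injective on the brackets of $\sC_1$ and $\sC_2$. The paper's proof is a single sentence (``If $A\in\sC_1^k$ and $A\notin\sC_2^k$, then $A/\sC\notin(\sC_2/\sC)^k$''), which amounts to asserting precisely the fiber-completeness you carefully establish; your write-up is the detailed justification that the paper omits.

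Two small points to tighten. First, your relation-type argument tacitly excludes $\sC_j=\sC$: in that case the essential bracket containing the fusion of $\sC$ collapses to the singleton $\{u^\ast\}$, not a nontrivial bracket. This is harmless (if $\sC_1/\sC=\sC_2/\sC=\sB_{\min}$ then both equal $\sC$), but worth a sentence. Second, in the $\rightarrow$-case you rule out $\hat A$ being a singleton, but essential brackets of $\sC_i$ can also be maximum brackets, and then $A\subsetneq\hat A$ is not excluded by Proposition~\ref{collisioncharacterization}(2). The cleanest fix is to bypass $\hat A$ entirely: apply \textsc{(nested)} directly to $E$ and $A$ inside $\sC\vee\sC_i$ to get $E\subseteq A$ or $A\subseteq E$; if $A\subsetneq E$, use the alternative characterization of $\rightarrow$ (each nontrivial bracket of $\sC$ at a depth where $\sC_i$ is nontrivial lies in a \emph{nontrivial} bracket of $\sC_i$) to produce a nontrivial $\hat A\supseteq E\supsetneq A$, contradicting Proposition~\ref{collisioncharacterization}(2). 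When $E$ itself is a maximum essential bracket of $\sC$, the whole of $V^k(\cT)$ is a single $\psi$-fiber and the claim is immediate.
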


\begin{proof}
If $A \in \sC_1^k$ and $A \notin \sC_2^k$, then $A/\sC \notin (\sC_2/\sC)^k$.
\end{proof}

\begin{lemma}\label{preimagelemma}
Let $\sC \in \wh \fX(\cT)$, and let $\overline{\sC}\in {\wh \fX}(\cT/\sC)$, then there is a unique preimage $\sC'$ of $\overline{\sC}$ in $\wh \fX(\cT)$ so that $\sC'/\sC = \overline{\sC}$.
Moreover, $\sC'$ is such that either $\sC\rightarrow {\sC}'$ or $\sC\sim {\sC}'$.
\end{lemma}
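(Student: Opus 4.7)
The plan is to define $\sC'$ by pulling back $\overline\sC$ along $\psi = \psi^\cT_{\cT/\sC}$ in a minimal way that yields a collision: include $\psi^{-1}(\overline A)$ for each nontrivial bracket $\overline A \in \overline\sC$, all singleton and maximum brackets of $\cT$, and, as auxiliaries, the pullbacks $\psi^{-1}(A(\overline u))$ of the singletons of $\overline \sC$ corresponding to vertices $\overline u$ on the projection chain from the root of $\overline\sC$'s fusion bracket up to (but not including) the first vertex whose preimage under $\psi$ is a singleton; these auxiliary pullbacks coincide with certain essential brackets of $\sC$ (in particular $\psi^{-1}(A(w))$ equals the fusion bracket of $\sC$). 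Equip $\sC'$ with the height partial order inherited from $\overline\sC$. In the degenerate cases $\overline\sC = \sB_\min$ or $\sC = \sB_\min$, set $\sC' = \sC$ or $\sC' = \overline\sC$ respectively, which satisfy the \emph{moreover} clause via the conventions extending $\to$ and $\sim$ to $\wh\fX(\cT)$.

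The $n$-bracketing axioms for $\sC'$ descend from those of $\overline\sC$ using that $\psi$ is a surjective morphism of rooted plane trees restricting to a surjection $C(v)\twoheadrightarrow C(\psi(v))$ for every $v\in V(\cT)$ and weakly order-preserving on children; the auxiliary brackets are included precisely to close under the $(k\text{-bracketing projection})$ axiom. Applying Proposition \ref{collisioncharacterization}, I would verify that $\sC'$ is a collision by splitting on whether the root vertex $\overline u$ of $\overline\sC$'s fusion bracket $\overline A_{\mathrm{fusion}}$ has a singleton preimage under $\psi$. When yes, no auxiliaries are needed and $\psi^{-1}(\overline A_{\mathrm{fusion}})$ has singleton projection in $\cT$ and serves as the fusion bracket of $\sC'$. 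When no, the auxiliary chain $\psi^{-1}(A(\overline u)) \supseteq \psi^{-1}(A(\mathrm{parent}(\overline u))) \supseteq \cdots$ ascends to the first auxiliary with singleton projection in $\cT$, and this highest auxiliary plays the role of $\sC'$'s fusion bracket. In both regimes, the pullbacks and auxiliary brackets form a single nested chain across increasing depths, so there are no nested nontrivial brackets at a common level and the unique-fusion condition holds.

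The identity $\sC'/\sC = \overline\sC$ is immediate from $\psi(\psi^{-1}(\overline A)) = \overline A$, noting that the auxiliaries push forward to singletons of $\overline\sC$. For the \emph{moreover} clause: when the essentials of $\overline\sC$ avoid proper descendants of the root of $\cT'$, we are in the singleton-preimage regime, the pullbacks avoid $W = \psi^{-1}(w)$ and its descendants, and $\sC \sim \sC'$; otherwise the auxiliary chain contains $\sC$'s fusion bracket and the pullbacks contain all of $W$ at the relevant level, so every essential of $\sC$ sits inside a matching essential of $\sC'$ and $\sC \to \sC'$. For uniqueness, any other lift $\sC''$ satisfying the conditions must, for each nontrivial $\overline A \in \overline\sC$, contain a bracket $A'' \subseteq \psi^{-1}(\overline A)$ with $\psi(A'') = \overline A$; the moreover clause forces $A'' = \psi^{-1}(\overline A)$, because the $\to$ alternative forces $W \subseteq A''$ whenever $w \in \overline A$, while the $\sim$ alternative forces $A''$ to miss $W$ entirely so that $\overline A$ itself must avoid $w$; the $(k\text{-bracketing projection})$ axiom then forces exactly the auxiliary brackets. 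The main obstacle will be the careful identification of the auxiliary chain in the multi-preimage regime and the verification that this chain produces exactly one nontrivial bracket at each relevant level of $\sC'$.
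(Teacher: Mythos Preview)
Your construction has a genuine gap in the $\sC\to\sC'$ regime: the auxiliary brackets you add form only a \emph{vertical} chain (pullbacks of singletons on the path from the root of $\overline\sC$'s fusion bracket toward the root of $\cT/\sC$), but this is not enough to make $\sC'$ an $n$-bracketing. You also need the ``horizontal'' essential brackets of $\sC$ at levels at or below the fusion of $\overline\sC$ that are not absorbed into the pullbacks of nontrivial brackets of $\overline\sC$; without them the \textsc{(partition)} axiom fails.

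Concretely, take a $2$-associahedron with two lines $L_1,L_2$ and three points $p_1,p_2,p_3$ on $L_1$, $q_1,q_2,q_3$ on $L_2$. Let $\sC$ be the type~3 collision with fusion the maximum $1$-bracket and essential $2$-brackets $\{p_i,q_i\}$ for $i=1,2,3$. Then $\cT/\sC$ has one line $L$ and three points $r_1,r_2,r_3$. Let $\overline\sC$ be the type~1 collision with fusion $2$-bracket $\{r_1,r_2\}$. Your auxiliary chain is just $\psi^{-1}(A(L))=$ the maximum $1$-bracket, so your $\sC'$ consists of the maximum $1$-bracket, the $2$-bracket $\psi^{-1}(\{r_1,r_2\})=\{p_1,p_2,q_1,q_2\}$, and the trivial brackets. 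But now the second bullet of \textsc{(partition)} fails for the containment $\{p_1,p_2,q_1,q_2\}\subsetneq$ (maximum $2$-bracket): the point $p_3$ lies in no proper sub-$2$-bracket projecting to the maximum $1$-bracket. The missing bracket is exactly $\psi^{-1}(A(r_3))=\{p_3,q_3\}$, an essential $2$-bracket of $\sC$ not covered by $\overline\sC$.

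The paper's construction in this case adds the preimages of \emph{all} singleton brackets $A(u'')$ in $\cT/\sC$ with $u''$ a weak descendant of the image of $\sC$'s fusion bracket and $u''$ not contained in any nontrivial bracket of $\overline\sC$; this picks up precisely the needed siblings (here $\{p_3,q_3\}$). Relatedly, your case split (``does the root vertex of $\overline\sC$'s fusion bracket have singleton preimage?'') is not equivalent to the paper's (``does the root bracket of $\overline\sC$ weakly project to the image of $\sC$'s fusion?''), since an essential bracket of $\sC$ can be a singleton at its level; but the primary error is the missing horizontal brackets. For uniqueness the paper simply invokes Lemma~\ref{uniquequotientlemma}, which is cleaner than your direct argument.
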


\begin{proof}
For each $k$-bracket $A \in \sC$, the image of $A$ in $\cT/\sC$ is a singleton bracket by definition.
Given a $k$-bracket $A' \in \cT/\sC$, it has a well-defined preimage $k$-bracket in $\cT$.
We address two cases separately. 
In both cases $\sC'$ will be assumed to have all singleton and maximum brackets, as it must, and $\sC'$ will be equipped with a height partial order inherited from ${\overline{\sC}}$.
Let $A(u)$ be the image of the fusion bracket of $\sC$ in $\cT/\sC$.
Let $A(u') \in \overline{\sC}$ be the root bracket of $\overline{\sC}$ in $\cT/\sC$.
 
The first case is where there does not exist some $i\geq0$ such that $\pi^i(A(u'))=A(u)$.
In this case we simply take the preimage of each nontrivial $k$-bracket in $\overline{\sC}$.
This produces $\sC'$ such that $\sC \sim \sC'$

The second case is that there does exist some $i\geq0$ such that $\pi^i(A(u'))=A(u)$.
We construct $\sC'$ by taking the collection of preimages of nontrivial $k$-brackets in $\overline{\sC}$ together with the preimages of each singleton $k$-bracket $A(u'')$ in $\cT/\sC$ such that $A(u'')$ is not contained in a nontrivial $k$-bracket in $\overline{\sC}$, and for which there exists some $j\geq 0$ such that $\pi^j(A(u''))=A(u)$.
This produces a collision $\sC' \in \fX(\cT)$ such that $\sC \rightarrow \sC'$. 
Uniqueness follows from Lemma \ref{uniquequotientlemma}.
\end{proof}

\begin{lemma}
\label{whenthequotientofacollisionisacollision}
If $\sC, \sC' \in \fX(\cT)$, $\sC\rightarrow \sC'$, and $\sC, \sC'$ have different fusion brackets, then the quotient $\sC'/\sC$ is a collision.
\end{lemma}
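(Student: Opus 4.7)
The plan is to construct a collision map on $\cT/\sC$ whose associated collision under the bijection of Proposition~\ref{prop:alpha_is_bijective} is $\sC'/\sC$. Let $\psi_\sC \colon \cT \to \cT/\sC$ and $\psi_{\sC'} \colon \cT \to \cT/\sC'$ be the collision maps associated to $\sC$ and $\sC'$. The hypothesis $\sC \to \sC'$ says every essential bracket of $\sC$ is contained in an essential bracket of $\sC'$ at the same level; by Proposition~\ref{treemap} this translates to the statement that every fiber of $\psi_\sC$ is contained in a fiber of $\psi_{\sC'}$. Hence $\psi_{\sC'}$ factors uniquely as $\psi_{\sC'} = \bar\psi \circ \psi_\sC$ for a surjective morphism $\bar\psi \colon \cT/\sC \to \cT/\sC'$ of rooted plane trees.

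I would next verify that $\bar\psi$ is a collision map. Let $A_f$ and $A_{f'}$ be the fusion brackets of $\sC$ and $\sC'$ at levels $k$ and $k'$ respectively, and write $\pi(A_f) = A(u_f)$ and $\pi(A_{f'}) = A(u')$ for their singleton root brackets. Set $\bar u \coloneqq \psi_\sC(u')$ and $\bar W \coloneqq \psi_\sC(A_{f'}^{k'})$; this $\bar W$ is consecutive in $C(\bar u)$ since $\psi_\sC$ preserves plane orders. Using $\sC \to \sC'$ one checks that the collapse site $\cT(u_f; A_f^k)$ of $\psi_\sC$ sits inside $\cT(u'; A_{f'}^{k'})$, so outside $\cT(u'; A_{f'}^{k'})$ both $\psi_\sC$ and $\psi_{\sC'}$ act as the identity, forcing $\bar\psi$ to act as the identity outside $(\cT/\sC)(\bar u; \bar W)$; on this subtree $\bar\psi$ surjects onto the target $\cT'$ of the local collision map of $\psi_{\sC'}$. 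Once $|\bar W| \geq 2$ is established, $\bar\psi$ is a collision map, and the identification $\alpha(\bar\psi) = \sC'/\sC$ follows from the bracket description in Proposition~\ref{treemap}, with height partial orders matching via the plane-tree order on $\cT/\sC'$.

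The main obstacle is showing $|\bar W| \geq 2$, where the hypothesis that $\sC, \sC'$ have different fusion brackets enters essentially. The containment $\sC \to \sC'$ forces $k \geq k'$. If $k > k'$, essential brackets of $\sC$ lie only at levels $\geq k$, so $\psi_\sC$ identifies no vertices at level $k'$, giving $|\bar W| = |A_{f'}^{k'}| \geq 2$. If $k = k'$, then $A_f$ and $A_{f'}$ are both $k$-brackets with singleton $(k-1)$-level projections, forcing $A_f^i = A_{f'}^i$ for $i < k$; the assumption $A_f \neq A_{f'}$ then forces $A_f^k \subsetneq A_{f'}^k$, and since $A_f$ is the unique nontrivial $k$-bracket of $\sC$, $\psi_\sC$ collapses $A_f^k$ to a single vertex while leaving $A_{f'}^k \setminus A_f^k$ fixed, yielding $|\bar W| = 1 + |A_{f'}^k \setminus A_f^k| \geq 2$.
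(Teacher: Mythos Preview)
Your argument is correct, but it takes a different and considerably longer route than the paper's. The paper simply observes that $\sC'/\sC$ is already known to be an $n$-bracketing and then checks the two conditions of Proposition~\ref{collisioncharacterization} directly: there are no nested nontrivial brackets (clear, since $\sC'$ has none and pushing forward along $\psi_\sC$ cannot create nesting), and the image of the fusion bracket $A_{f'}$ of $\sC'$ is the unique fusion bracket of $\sC'/\sC$ (this uses that $A_{f'}$ properly contains the fusion bracket of $\sC$, which follows from $\sC\to\sC'$ together with the hypothesis that the fusion brackets differ). That is the entire proof.

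Your approach instead builds the factored map $\bar\psi\colon\cT/\sC\to\cT/\sC'$, verifies that it satisfies the axioms of a collision map, and then invokes the bijection of Proposition~\ref{prop:alpha_is_bijective}. This is more laborious---you must check consecutivity of $\bar W$, the bound $|\bar W|\geq 2$, that the local target is not a path, and finally that $\alpha(\bar\psi)$ coincides with $\sC'/\sC$ on brackets and height orders---but it does buy something: the factorization $\psi^\cT_{\cT/\sC'}=\bar\psi\circ\psi^\cT_{\cT/\sC}$ is precisely what is invoked later in the proof of Lemma~\ref{contractionlemma}, so your argument makes that step explicit rather than leaving it implicit. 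If brevity is the goal, the paper's route via Proposition~\ref{collisioncharacterization} is preferable; if you want the collision-map picture spelled out for later use, yours has that advantage.
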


\begin{proof}
We know that the quotient $\sC'/\sC$ is an $n$-bracketing.
To verify that it is a collision, we should check the two conditions of Lemma \ref{collisioncharacterization}.
That there are no nontrivial nested brackets is clear.
Because the fusion bracket $A$ for $\sC'$ properly contains the fusion bracket for $\sC$, we can see that the image of $A$ is the unique fusion bracket for $\sC'$.
\end{proof}

See Figure \ref{collisionquotient} above for an example demonstrating how the conclusion of the lemma can fail if the fusion bracket for $\sC'$ is equal to the fusion bracket for $\sC$.
The following Lemma refines the above observation.

\begin{lemma}\label{quotientofcollisionbycollision}
Suppose that $\sC, \sC' \in \fX(\cT)$ and let $\sC\rightarrow \sC'$.
Then ${\overline \sC'}$, the image of $\sC'$ in $\cK(\cT/\sC)$, is an $n$-bracketing which is uniquely expressable as
\begin{align}
{\overline \sC'}= \bigcup_{r=1}^t \sC_r,
\end{align}
where $\sC_r \in \fX(\cT/\sC)$ with $1\leq r\leq t$.
Moreover, for $1\leq r,s\leq t$ with $r \neq s$ we have that $\sC_r \sim \sC_s$. 
\end{lemma}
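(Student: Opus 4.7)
The plan is to split into cases according to whether $\sC$ and $\sC'$ share a common fusion bracket, then use atomicity of $\wh{\cK}(\cT/\sC)$ to build the decomposition and containment-minimality to prove the two structural claims.

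First I would note that $\overline{\sC'} = \sC'/\sC$ is a well-defined $n$-bracketing of $\cT/\sC$ by Definition \ref{quotientbracketingdef}. If $\sC$ and $\sC'$ have distinct fusion brackets, Lemma \ref{whenthequotientofacollisionisacollision} immediately gives that $\overline{\sC'}$ is itself a collision, so $t=1$ and we are done. Henceforth assume $\sC$ and $\sC'$ share a common fusion bracket $A$. Then the image of $A$ under the collision map $\psi\colon \cT \to \cT/\sC$ is a singleton bracket in $\cT/\sC$, so the tree of essential brackets of $\sC'$ (all of which weakly project to $A$ by Lemma \ref{lem:fusion_bracket_for_collision}) loses its root when passing to $\overline{\sC'}$. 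This is what will force a decomposition with multiple pieces.

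Next I would construct the decomposition. Define $\sC_1,\ldots,\sC_t$ to be the distinct collisions of $\cT/\sC$ that are containment-minimal in $\overline{\sC'}$. By Lemma \ref{collisioncontainingspecificbracket} applied to each nontrivial bracket of $\overline{\sC'}$, together with Lemma \ref{containmentlemma}, every nontrivial bracket of $\overline{\sC'}$ lies in some $\sC_r$; combined with Proposition \ref{n-bracketingunionlemma} this yields $\overline{\sC'} = \bigcup_{r=1}^t \sC_r$. For pairwise disjointness $\sC_r \sim \sC_s$ with $r \neq s$, I would invoke Scholium \ref{simplifiedcontainment-minimal condition}: since $\sC_r$ is containment-minimal in $\overline{\sC'}$, either $\sC_r \to \sC_s$ or $\sC_r \sim \sC_s$. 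The case $\sC_r \to \sC_s$ would mean each nontrivial essential bracket of $\sC_r$ is contained in some essential bracket of $\sC_s$, both of which are brackets of $\overline{\sC'}$; containment-minimality forces these containments to be equalities, which pins down the fusion bracket of $\sC_r$ to coincide with that of $\sC_s$ and hence $\sC_r = \sC_s$, a contradiction. This leaves only $\sC_r \sim \sC_s$.

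For uniqueness, suppose $\overline{\sC'} = \bigcup_{s=1}^{t'} \sC'_s$ is any decomposition into pairwise-disjoint collisions. Each $\sC'_s \leq \overline{\sC'}$, so by Lemma \ref{containmentlemma} there exists a containment-minimal $\sC_r$ with $\sC'_s \to \sC_r$; the same containment-minimality argument as in the previous paragraph forces $\sC'_s = \sC_r$, so the two decompositions match after reindexing. The main obstacle I anticipate is the pairwise-disjointness step: one must carefully deduce $\sC_r = \sC_s$ from $\sC_r \to \sC_s$ and containment-minimality by showing that a collision is determined by the collection of its nontrivial essential brackets (so that both the fusion bracket and the height partial order can be recovered). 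This is where the detailed bookkeeping of essential brackets and the extended height partial order of Lemma \ref{heightpartialorderlemma} will be needed.
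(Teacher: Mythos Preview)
Your overall strategy is sound and your case split matches the paper's. However, there is a real gap in the step where you assert that ``every nontrivial bracket of $\overline{\sC'}$ lies in some $\sC_r$''. Lemma~\ref{collisioncontainingspecificbracket} produces a collision $\sC_A \leq \overline{\sC'}$ containing a given nontrivial bracket $A$, and Lemma~\ref{containmentlemma} then produces a containment-minimal $\sC_r$ with $\sC_r \to \sC_A$; but nothing in those two lemmas forces $A \in \sC_r$. For a general $n$-bracketing $\sB$, the union of its containment-minimal collisions is strictly smaller than $\sB$ whenever $\sB$ has nested nontrivial brackets, so the argument as written would prove a false statement in that generality. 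The same issue reappears in your uniqueness paragraph: you invoke the ``$\to$ implies equal'' step for a pair where $\sC'_s$ is not yet known to be containment-minimal (and, incidentally, Lemma~\ref{containmentlemma} gives $\sC_r \to \sC'_s$, not the direction you wrote).

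The single missing observation that repairs everything is that $\overline{\sC'}$ inherits condition (2) of Proposition~\ref{collisioncharacterization} from $\sC'$: the nontrivial $k$-brackets of $\sC'$ are pairwise disjoint at level $k$, and the collision map $\psi$ preserves this disjointness, so $\overline{\sC'}$ has no nested nontrivial brackets. Once you note this, \emph{every} collision $\leq \overline{\sC'}$ is automatically containment-minimal, Lemma~\ref{collisioncontainingspecificbracket} alone gives the decomposition, and your disjointness and uniqueness arguments go through as you wrote them. By contrast, the paper's proof avoids the lattice-theoretic machinery entirely and is constructive: it identifies the fusion bracket of each $\sC_r$ directly as the image of an essential bracket $A_r \in \sC'$ which properly contains a nontrivial bracket of $\sC$ but has $\pi(A_r) \in \sC$, and then takes $\sC_r$ to consist of the images of those essential brackets of $\sC'$ that project to $A_r$.
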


\begin{proof}
Let $A$ and $A'$ be the fusion brackets for $\sC$ and $\sC'$, respectively.
If $A'\neq A$, then ${\overline \sC'}$ is a collision as described above in Lemma \ref{whenthequotientofacollisionisacollision}.
On the other hand, suppose that $A=A'$.
Let $\{A_r:1\leq r \leq t\}$ be the set of essential brackets in $\sC'$ which properly contain some nontrivial bracket in $\sC$, but $\pi(A_r)$ is equal to some bracket in $\sC$.
Let $X_r$ be the set of essential brackets in $\sC'$ which project to $A_r$.
Then the images of the $X_r$ in $\cT/\sC$ form the essential $k$-brackets of $\sC_r$ in the statement of the Lemma, and the images of the $A_r$ are their fusion brackets.
\end{proof}

\begin{lemma}\label{liftdisjointcollisions}
Suppose that $\sC \in \fX(\cT)$ and let $\{\sC_r:1\leq r\leq t\} \subseteq \fX(\cT/\sC)$ such that for $1\leq r,s\leq t$ with $r \neq s$ we have that $\sC_r \sim \sC_s$, and the root vertex of $\sC_r$ weakly projects to the image of the fusion bracket of $\sC$ in $\cT/\sC$.
Then there exists a unique collision $\sC' \in \fX(\cT)$ such that $\sC\rightarrow \sC'$ and ${\overline \sC'}$, the image of $\sC'$ in $\cT/\sC$ decomposes as
\begin{align}
{\overline \sC'}= \bigvee_{r=1}^t \sC_r.
\end{align}
\end{lemma}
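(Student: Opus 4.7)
The plan is to exhibit the required $\sC'$ via the collision-map bijection from Propositions \ref{treemap} and \ref{prop:alpha_is_bijective}: I will build $\sC'$ as $\alpha$ applied to an explicit composed tree map, then read off the required properties.

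I would first confirm that $\sB^\ast \coloneqq \bigvee_{r=1}^t \sC_r$ is a well-defined $n$-bracketing in $\cK(\cT/\sC)$. Pairwise disjointness of the $\sC_r$ gives pairwise compatibility in the sense of Definition \ref{def:compatible_n-bracketings}, and the join then exists by Lemma \ref{flagcondition} together with Proposition \ref{n-bracketingunionlemma}. Next, letting $\psi \colon \cT \to \cT/\sC$ be the collision map of $\sC$, supported on some $\cT(u_0; W_0)$ and sending the fusion bracket $W_0$ to the unique depth-$1$ vertex $u$ of the target local tree, I would construct a combined tree map $\tilde\psi \colon \cT/\sC \to \cT^\ast$ whose restriction to each $\sC_r$'s local region agrees with the collision map $\psi_r$ of $\sC_r$ and which is the identity elsewhere. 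Pairwise disjointness of the $\sC_r$ makes the supports of the $\psi_r$'s pairwise disjoint, so these pieces glue into a single surjective morphism of rooted plane trees. I then set $\sC' \coloneqq \alpha(\tilde\psi \circ \psi)$.

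The main obstacle, and the step requiring the hypothesis that the root vertex of each $\sC_r$ weakly projects to the image of $\sC$'s fusion bracket, is verifying that $\tilde\psi \circ \psi$ is a valid collision map; that is, that its non-identity part is supported on a single subtree of the form $\cT(u_0; W_0)$ and that the image tree has a single depth-$1$ vertex below $u_0$. The weak-projection hypothesis forces each local part of $\tilde\psi$ to lie inside the subtree of $\cT/\sC$ rooted at $u$, which is exactly $\psi(\cT(u_0; W_0)) \setminus \{u_0\}$; hence $\tilde\psi \circ \psi$ is the identity outside $\cT(u_0; W_0)$, and collapses $W_0$ (via $\psi$, then $\tilde\psi$) to the single vertex $\tilde\psi(u)$ in $\cT^\ast$. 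Without this hypothesis, some $\psi_r$ could act outside the $u$-subtree and introduce additional non-identity regions in the composition, destroying the collision-map structure.

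With $\sC'$ in hand, the remaining claims are quick. The relation $\sC \rightarrow \sC'$ follows because each essential bracket $\psi^{-1}(A(v))$ of $\sC$ is contained in the essential bracket $(\tilde\psi \circ \psi)^{-1}(A(\tilde\psi(v))) = \psi^{-1}\bigl(\tilde\psi^{-1}(A(\tilde\psi(v)))\bigr)$ of $\sC'$. The quotient identity $\sC'/\sC = \sB^\ast$ is immediate from the construction: the image of $\sC'$ in $\cT/\sC$ under $\psi$ records precisely the brackets encoded by $\tilde\psi$, which are exactly the brackets of $\bigvee_r \sC_r$. For uniqueness, any $\sC''$ with $\sC \rightarrow \sC''$ and $\sC''/\sC = \sB^\ast$ has a collision map that must factor through $\psi$ as $\hat\psi \circ \psi$; since $\hat\psi$ must realize $\sB^\ast$, it equals $\tilde\psi$, and therefore $\sC'' = \sC'$ by Proposition \ref{prop:alpha_is_bijective}.
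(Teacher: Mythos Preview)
Your proof is correct. You take a somewhat different route from the paper, so a brief comparison is in order.

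The paper constructs $\sC'$ directly at the level of brackets: it takes the preimages under $\psi$ of all essential brackets appearing in the $\sC_r$, together with the preimages of those singleton brackets in $\cT/\sC$ which (i) project to the image of the fusion bracket of $\sC$ and (ii) are not already contained in some essential bracket of some $\sC_r$. The height order is inherited from $\sC$ and the $\sC_r$, and uniqueness is read off from Lemma \ref{uniquequotientlemma}. This is short and elementary, but one must check by hand that the resulting collection satisfies the $n$-bracketing axioms and the collision characterization of Proposition \ref{collisioncharacterization}.

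Your approach instead packages the construction through the collision-map bijection of Propositions \ref{treemap} and \ref{prop:alpha_is_bijective}: you build a single morphism $\tilde\psi$ on $\cT/\sC$ by gluing the pairwise-disjoint $\psi_r$, observe that the weak-projection hypothesis forces $\tilde\psi$ to act entirely inside the subtree below $u$, and then take $\sC' = \alpha(\tilde\psi\circ\psi)$. The two constructions produce the same object: the essential brackets of $\alpha(\tilde\psi\circ\psi)$ are exactly the preimages the paper writes down. What your framing buys is that the verification that $\sC'$ is a collision reduces to checking that $\tilde\psi\circ\psi$ is a collision map, and your factoring argument for uniqueness is a pleasant alternative to invoking Lemma \ref{uniquequotientlemma}. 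One small point you leave implicit: you should note that the local target of $\tilde\psi\circ\psi$ is not a path (equivalently $\sC'\neq\sB_\min$), which follows since $\psi$ already has a non-path target and $\tilde\psi$ fixes $u$.
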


\begin{proof}
Let $S$ be the collection of essential brackets which appear in some $\sC_r$.
Let $T$ be the collection of singleton brackets in $\cT/\sC$ which project to the image of the fusion bracket of $\sC$, i.e.\ the singletons which are the images of the essential brackets in $\sC$.
We can construct $\sC'$ by taking, in addition to the necessary singleton and maximum brackets, the preimages of the brackets in $S$ and the preimages of the singleton brackets in $T$ which are not contained in any bracket in $S$.
We take the height partial order on these nontrivial brackets inherited from the height partial orders on $\sC$ and the $\{\sC_r\}$. The resulting $\sC'$ is an $n$-bracketing which satisfies the desired conditions.
Uniqueness follows from Lemma \ref{uniquequotientlemma}.
\end{proof}

\begin{figure}[ht]
\includegraphics[width=0.4\textwidth]{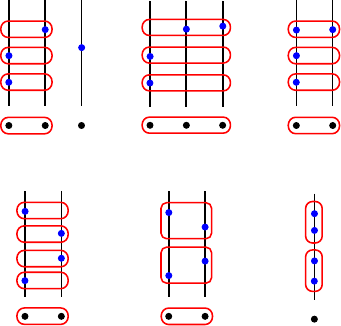}
\caption{In each line, the first and second figures give a pair of collisions $\sC$ and $\sC'$ with $\sC\rightarrow \sC'$ and the third figure is the quotient $\sC'/\sC$.
In the first line, the quotient is a collision, and the second line the quotient $\sC'/\sC$ is the union of two disjoint collisions.}
\label{collisionquotient}
\end{figure}

\section{The metric \texorpdfstring{$n$}{n}-bracketing cone complex}

In this section, we begin by recalling the theory of cone complexes.
We then introduce the collection of metric $n$-bracketings $\cK^{\met}(\cT)$, and prove that $\cK^{\met}(\cT)$ forms a cone complex whose face poset is $\cK(\cT)$.
The image of a finite cone complex under a globally injective piecewise-linear map to $\mathbb{R}^m$ is a fan, and in \S \ref{velocityfansection} we apply this fact to demonstrate that the velocity fan is indeed a fan.
In later sections, $\cK^{\met}(\cT)$ will continue to serve as our central tool for investigating the structure of the velocity fan.

\subsection{Cone complexes and piecewise-linear transformations}

\

A cone complex is a generalization of a (rational) fan which arises naturally in the context of logarithmic geometry.
Our presentation of cone complexes is slightly different from what others have given and is motivated by our applications as well as a desire to provide an elementary description of these objects which is accessible to researchers who may not be fluent in toric geometry.
In Remark \ref{compareconecomplexdefs}, we outline how our definition compares with definitions in the existing literature.
We note that our definition of a cone complex suppresses ambient lattices and thus admits irrational fans as a special case.

We begin by introducing the notion of a conical set complex, which may be viewed as a ``precone complex".
Recall that, informally, a monoid is a group without inverses and a semiring is a ring without additive inverses.  The reader may consult \cite{golan_semirings} for precise definitions if they are unfamiliar with these notions.

\begin{definition}[\S7, \cite{golan_semirings}]
Suppose that $R$ is a semiring.
A \emph{(left) $R$-semimodule} is an additively-written commutative monoid $M$ with identity $0_M$, together with a function $R\times M \to M$, $(r,m) \mapsto r\, m$, which is required to satisfy the following conditions for all $r, s \in R$ and $m, n \in M$:
\begin{enumerate}
\item
$(rs)m = r(sm)$,

\item
$r(m+n) = rm + rn$,

\item
$(r+s)m = rm + sm$,

\item
$1_Rm = m$,

\item
$r0_M = 0_M = 0_Rm$.
\null\hfill$\triangle$
\end{enumerate}
\end{definition}

\begin{definition}
A \emph{conical set} is a semimodule over $\mathbb{R}_{\geq 0}$.
\null\hfill$\triangle$
\end{definition}

\begin{definition}
    Let $\tau_1$ be a conical set and $\tau_2 \subseteq \tau_1$.  We say that $\tau_2$ is a \emph{conical subset} of $\tau_1$ if $\tau_2$ is a conical set and its $\mathbb{R}_{\geq 0}$-semimodule structure is the restriction of the $\mathbb{R}_{\geq 0}$-semimodule structure on $\tau_1$.
\end{definition}

\begin{definition}\label{facedefinition}
Given a conical set $\tau_1$, we define a \emph{face} of $\tau_1$ to be a conical subset $\tau_2\subseteq \tau_1$ such that given any $\lambda \in \bR_{>0}$ and $x, y \in \tau_1$, the following implications hold:
\begin{enumerate}\label{eq:face_requirements}
\item\label{facecond1} $\lambda x \in \tau_2 \Rightarrow x \in \tau_2$,
\qquad
\item\label{facecond2} $x+y \in \tau_2 \Rightarrow x, y \in \tau_2$.
\null\hfill$\triangle$
\end{enumerate}
\end{definition}

\begin{definition}\label{conecomdef}
Let $X$ be a set and $\cF$ be a collection of conical sets with $\tau \subseteq X$ for every $\tau \in \cF$.
We say that $\cF$ is a \emph{conical set complex} if
\begin{enumerate}
\item
\label{conecom1}
For any $\tau_1 \in \cF$ and for any conical set $\tau_2$ which is a face of $\tau_1$, we have $\tau_2\in \cF$.

\item
\label{conecomdef2}
For any $\tau_1, \tau_2 \in \cF$, $\tau_1 \cap \tau_2$ is a union of faces of both $\tau_1$ and $\tau_2$.
\null\hfill$\triangle$
\end{enumerate}
\end{definition}

\begin{definition}
Let $\tau_1,\tau_2$ be conical sets and $f$ be a map $f\colon \tau_1\rightarrow \tau_2$, then $f$ is \emph{linear} if for every $\lambda \in \bR_{\geq0}$ and $x, y \in \tau_1$ we have
\begin{align}
f(\lambda x) = \lambda f(x),
\qquad
f(x+y) = f(x) + f(y).
\end{align}
A linear map $f$ is an \emph{isomorphism} if it is bijective.
\null\hfill$\triangle$
\end{definition}

\begin{definition}
Let $\tau \subseteq \mathbb{R}^n$ be a conical set.
We say that $\tau$ is a \emph{closed polyhedral cone} if it is a conical set in $\mathbb{R}^n$ which is closed  and has finitely many faces.  
We say that $\tau$ is an \emph{open polyhedral cone} if it is the relative interior of some closed polyhedral cone.
\null\hfill$\triangle$
\end{definition}

When no confusion may arise, we will simply refer to a closed polyhedral cone as a \emph{cone}.

\begin{definition}
An \emph{abstract cone} is a conical set which admits a linear isomorphism to a closed polyhedral cone.
\null\hfill$\triangle$
\end{definition}

\begin{definition}
\label{def:cone_complex}
A conical set complex $\cF$ is a \emph{cone complex} if each conical set in $\cF$ is an abstract cone.
\null\hfill$\triangle$
\end{definition}

\begin{remark}\label{compareconecomplexdefs}
There are some slight variations in the definition of a cone complex in the literature (see \cite{kempftoroidal,kato1994toric, abramovich1997weak,payne2009toric,ulirsch2017functorial}). 
Often one defines a cone complex as a topological space and then acquires the algebraic structure via homeomorphisms to polyhedral cones.
Instead we begin with the algebraic structure of a conical set complex and observe that the relevant topology on a cone complex can be acquired as the weak-$*$ topology induced by the linear isomorphisms from the conical sets to polyhedral cones.
Thus our definition essentially agrees with, for example, the one given in \cite{ulirsch2017functorial}.
Almost all other authors keep careful track of ambient lattices in their definition, which we have not done, and observe that this allows irrational fans as a special case (although all fans considered in this article are rational).
We note that there are ambient lattices, often implicit, in our work, and all maps presented respect these lattice structures (see Corollary \ref{conecomplexlatticepropertyvelocityfan}).
Finally, we note that we do not insist that the conical sets or cones under consideration are pointed, i.e.\ strongly convex.
Indeed, as is justified by Remark \ref{linealityspacermk}, we will work with cone complexes with lineality spaces.
\null\hfill$\triangle$
\end{remark}

\begin{definition}\label{abstractfandef}
Let $X$ be a set and $\cF$ be a finite collection of abstract cones with $\tau \subseteq X$ for every $\tau \in \cF$.
We say that $\cF$ is an \emph{abstract fan}
if 
\begin{enumerate}
\item
\label{absfandef1}
For any $\tau_1 \in \cF$ and for any conical set $\tau_2$ which is a face of $\tau_1$, we have $\tau_2\in \cF$.

\item
\label{absfandef2}
For any $\tau_1, \tau_2 \in \cF$, $\tau_1 \cap \tau_2$ is a face of both $\tau_1$ and $\tau_2$.
\null\hfill$\triangle$
\end{enumerate}
\end{definition}

We emphasize that abstract fans are cone complexes where we have restricted to finite collections of conical sets and strengthened condition (2).\footnote{The authors were tempted to define an abstract fan as a conical set complex satisfying conditions (1) and (2) of Definition \ref{abstractfandef}, i.e.\ to not insist that abstract fans be cone complexes.
Other researchers may be interested in considering such objects.}

\begin{definition}
A \emph{fan} is an abstract fan whose ambient space is some Euclidean space $\mathbb{R}^m$.
\null\hfill$\triangle$
\end{definition}

\begin{definition}
\label{def:face_poset}
Let $\cF$ be a conical set complex, then the \emph{face poset} of $\cF$ is the partially ordered set $\cP(\cF)$ whose elements are the conical sets belonging to $\cF$, and for $\tau_1,\tau_2 \in \cF$, $\tau_1\leq \tau_2$ when $\tau_1$ is a face of $\tau_2$. 
\null\hfill$\triangle$
\end{definition}

\begin{definition}
\label{supportdefinition}
Let $\cF$ be a conical set complex with ambient set $X$.
We define the \emph{support of $\cF$} to the be the set of points $supp(\cF)=\{ {\bf v}\in X: \exists \, \tau \in \cF, {\bf v} \in \tau\}$.
\null\hfill$\triangle$
\end{definition}

\begin{definition}
\label{completedefinition}
Let $\cF$ be a fan in $\mathbb{R}^m$.
We say that $\cF$ is \emph{complete} if the support of $\cF$ is a linear space.
\null\hfill$\triangle$
\end{definition}

Typically, authors define a fan to be complete if its support is equal to all of $\mathbb{R}^n$, so this definition is a bit more general, but one can always project onto a coordinate subspace so that the support becomes all of a Euclidean space while preserving the face poset of the fan.
Alternately, one can add the orthogonal complement of the support of $\cF$ as the lineality space.
Our choice to work with a more general definition is only relevant in \S\ref{localsection} on fiber products of fans.
More precisely, for Corollary \ref{fibercomplete} to be true, we must use this more general definition of completeness for fans.

\begin{lemma}\label{generatorsconsetlem}
Let $\tau_1$ be a conical set for which there exists $R = \{x_1, \ldots, x_k\} \subseteq \tau_1$ such that $\tau_1 = \{\sum_{i=1}^k\lambda_i x_i:\lambda_i \in \mathbb{R}_{\geq 0}\}$.  For $\tau_2$ a face of $\tau_1$, there exists some $S \subseteq R$ such that $\tau_2 = \{\sum_{i=1}^k\lambda_i x_i:\lambda_i \in \mathbb{R}_{\geq 0}, x_i \in S\}.$
\end{lemma}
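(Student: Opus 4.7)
The plan is to show that $\tau_2$ is generated by the natural candidate subset $S \coloneqq R \cap \tau_2$, exploiting the two face axioms to pass from a representation $y = \sum_i \lambda_i x_i$ of a point $y \in \tau_2$ to the conclusion that each generator with $\lambda_i>0$ already lies in $\tau_2$.

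First I would set $S \coloneqq \{x_i \in R : x_i \in \tau_2\}$, and check the easy containment. Since $\tau_2$ is a conical subset of $\tau_1$, it is closed under the $\mathbb{R}_{\geq 0}$-semimodule operations inherited from $\tau_1$, and in particular under nonnegative scalar multiplication and finite sums. Hence any expression $\sum_{x_i \in S} \lambda_i x_i$ with $\lambda_i \geq 0$ lies in $\tau_2$, which gives
\begin{align}
\Bigl\{\tsum_{i=1}^k \lambda_i x_i : \lambda_i \in \mathbb{R}_{\geq 0},\ x_i \in S\Bigr\} \subseteq \tau_2.
\end{align}

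For the reverse containment, let $y \in \tau_2 \subseteq \tau_1$ and write $y = \sum_{i=1}^k \lambda_i x_i$ with $\lambda_i \geq 0$. I would now iterate face condition (\ref{facecond2}): writing $y = \lambda_1 x_1 + \bigl(\sum_{i=2}^k \lambda_i x_i\bigr)$, condition (\ref{facecond2}) forces both summands into $\tau_2$; repeating this splitting a finite number of times shows that $\lambda_i x_i \in \tau_2$ for every $i$. Then for each $i$ with $\lambda_i > 0$, face condition (\ref{facecond1}) applied with $\lambda = \lambda_i$ yields $x_i \in \tau_2$, so $x_i \in S$. Therefore $y$ is expressible as a nonnegative combination of elements of $S$, which gives the reverse containment.

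I do not anticipate a genuine obstacle here: the argument is a direct unwinding of Definitions \ref{facedefinition} and uses only that $R$ is a generating set in the $\mathbb{R}_{\geq 0}$-semimodule sense. The one point worth being careful about is that face condition (\ref{facecond2}) is stated only for two-term sums, so I would phrase the iteration explicitly (or by a short induction on $k$) rather than invoking a $k$-term version directly; aside from this bookkeeping the proof is immediate.
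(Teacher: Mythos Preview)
Your argument is correct and is the natural direct verification from the face axioms in Definition~\ref{facedefinition}. The paper states this lemma without proof, so there is nothing to compare against; your choice $S = R \cap \tau_2$ and the iterated use of condition~(\ref{facecond2}) followed by condition~(\ref{facecond1}) is exactly the intended unwinding.
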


\begin{lemma}\label{abstractsubconelem}
Let $\tau_1$ be an abstract cone and let $\tau_2$ be a conical subset of $\tau_1$, then $\tau_2$ is an abstract cone.
\end{lemma}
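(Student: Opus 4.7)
My plan is to transport the problem into $\bR^n$ using the isomorphism that witnesses $\tau_1$'s being an abstract cone. Fix a linear isomorphism $\varphi\colon \tau_1 \to \sigma_1$ onto a closed polyhedral cone $\sigma_1 \subseteq \bR^n$. The restriction $\varphi|_{\tau_2}$ is a linear bijection onto its image $\sigma_2 \coloneqq \varphi(\tau_2)$, and linearity of $\varphi$ together with the hypothesis on $\tau_2$ guarantees that $\sigma_2 \subseteq \bR^n$ is itself a conical subset of $\sigma_1$, inheriting its $\bR_{\geq 0}$-semimodule structure from $\bR^n$. This reduces the lemma to exhibiting $\sigma_2$ as a closed polyhedral cone.

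To do so, I would first produce a finite generating set for $\tau_2$. By Minkowski--Weyl, $\sigma_1$ is generated by finitely many vectors, and then Lemma \ref{generatorsconsetlem} furnishes a finite generating set $\{x_1,\ldots,x_k\} \subseteq \tau_2$ (directly applicable when $\tau_2$ arises as a face, and more broadly under the working convention that the conical subsets relevant to this paper are finitely generated). Linearity of $\varphi$ then identifies $\sigma_2$ with the conical hull of $\varphi(x_1),\ldots,\varphi(x_k)$ in $\bR^n$, and the conical hull of finitely many vectors is automatically a closed polyhedral cone by the other direction of Minkowski--Weyl. Hence $\varphi|_{\tau_2}$ realizes $\tau_2$ as linearly isomorphic to a closed polyhedral cone, which is the required conclusion.

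The hard part is securing the finite generating set. Without some such hypothesis the lemma is false: for instance, the subset of $\bR_{\geq 0}^2$ consisting of all vectors with strictly positive first coordinate, together with the origin, is a conical subset of the abstract cone $\bR_{\geq 0}^2$ that is not topologically closed, let alone polyhedral. So the real content of the lemma lies in the implicit input that the conical subsets appearing in the paper's constructions are finitely generated; granting this, the argument above completes the proof.
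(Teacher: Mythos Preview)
The paper states this lemma without proof. You have in fact done more than the paper: you correctly identify that the statement is false as written. Your counterexample is valid --- the set $\tau_2 = \{(x,y)\in\bR_{\geq 0}^2 : x > 0\}\cup\{0\}$ is a conical subset of the abstract cone $\bR_{\geq 0}^2$, but it is not an abstract cone. One way to confirm this: $\{0\}$ is a face of $\tau_2$, and $\tau_2$ has exactly three faces in total (the origin, the positive $x$-axis with the origin adjoined, and $\tau_2$ itself). Linear isomorphisms of conical sets preserve the face lattice, but a closed polyhedral cone in which $\{0\}$ is a face is pointed and hence is either $\{0\}$, a single ray, or has at least two extreme rays and thus at least four faces. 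So no closed polyhedral cone has the face lattice of $\tau_2$.

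Your repair is also the right one: under the additional hypothesis that $\tau_2$ is finitely generated, your transport-to-$\bR^n$ argument goes through, and Minkowski--Weyl finishes it. Note that you need not route through Lemma~\ref{generatorsconsetlem}, which concerns faces rather than arbitrary conical subsets. The sole application of Lemma~\ref{abstractsubconelem} in the paper (inside the proof of Lemma~\ref{triangulatedmetriccomplexresult}) is to the conical subset $\tau^{\met}(Y) = cone\{\ell(\sC):\sC\in Y\} + \langle\ell(\sB_\min)\rangle_\bR$, which is finitely generated by construction, so your argument covers the intended use.
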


\begin{definition}
Let $\cF$ be a conical set complex, and let $\cG$ be a collection of conical sets such that the elements of $\cG$ are subsets of a common ambient space $Y$.
Let $f$ be a collection of maps $\{f_\tau:\tau \in \cF\}$ such that the domain of $f_{\tau}$ is $\tau$ and the codomain of $f_{\tau}$ is some $\tau' \in \cG$.
Then we say that $f$ is \emph{piecewise-linear} if each $f_{\tau}$ is linear on $\tau$ and for any two conical sets $\tau_1, \tau_2 \in \cF$, we have
\begin{align}
f_{\tau_1}|_{\tau_1\cap \tau_2} = f_{\tau_2}|_{\tau_1\cap \tau_2}.
\end{align}
Such an $f$ is a \emph{morphism} of conical set complexes.
\null\hfill$\triangle$
\end{definition}

\noindent

The following notion is due to Gross \cite{gross2018intersection}, and was motivated by the desire to develop tropical intersection theory for cone complexes.

\begin{definition}\label{weaklyembeddeddef}
Let $\cF$ be a cone complex and $\cG$ be a collection of cones in $\mathbb{R}^n$.
Let $f:\cF \rightarrow \cG$ be a piecewise-linear map such that each $f_{\tau}$ with $\tau \in \cF$ is a linear isomorphism.
Then we say that the triple $(\cF,\cG, f)$ is a \emph{weakly embedded cone complex} and $\cF$ is a \emph{weakly embeddable cone complex}.
\null\hfill$\triangle$
\end{definition}

Note that in Definition \ref{weaklyembeddeddef}, the cones in $\cG$ need not be internally disjoint.

\begin{definition}
Let $\cF_1$ and $\cF_2$ be conical set complexes and $f$ be a piecewise-linear map from $\cF_1$ to $\cF_2$.
We say that $f$ is a piecewise-linear isomorphism if there exists some piecewise-linear $g:\cF_2 \rightarrow \cF_1$ such that $g$ is a two-sided inverse for $f$.
\null\hfill$\triangle$
\end{definition}

The proof of the following lemma is straightforward.

\begin{lemma}
\label{faceposetisolem}
Let $\cF$ be an conical set complex with ambient set $X$, and let $\cG$ be a collection of conical sets with ambient set $Y$.
Suppose that $f$ is piecewise-linear function from $\cF$ to $\cG$ such that $f$ induces a bijection from $supp(\cF)$ to $supp(\cG)$, and for each $\tau$, the map $f_\tau$ is an isomorphism onto a conical set in $\cG$.
Then $\cG$ is a conical set complex, $f$ is a piecewise-linear isomorphism, and $\cP(\cF) \cong \cP(\cG)$.
\end{lemma}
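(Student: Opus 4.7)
The plan is to build a piecewise-linear inverse $g:\cG \to \cF$ from the local inverses $f_\tau^{-1}$, and then to transport the two axioms of Definition \ref{conecomdef} from $\cF$ to $\cG$ along $f$ and $g$. Implicit in the setup is that every $\tau' \in \cG$ arises as $f_\tau(\tau)$ for some $\tau \in \cF$: by hypothesis each such image lies in $\cG$, and together with their faces these exhaust $\cG$, since every $\tau' \in \cG$ sits in $\supp(\cG) = f(\supp(\cF))$.

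First I would define $g$. For each $\tau' \in \cG$, choose $\tau \in \cF$ with $f_\tau(\tau) = \tau'$ and set $g_{\tau'} := f_\tau^{-1}$. This is independent of the choice of $\tau$: if also $f_{\tau_2}(\tau_2) = \tau'$, then for every $y \in \tau'$ the global bijectivity of $f$ on $\supp(\cF)$ forces the common unique preimage to lie in $\tau \cap \tau_2$, where $f_\tau$ and $f_{\tau_2}$ agree by piecewise-linearity. The same argument shows that $g_{\tau'_1}$ and $g_{\tau'_2}$ agree on $\tau'_1 \cap \tau'_2$, so $g$ is piecewise-linear, and the identities $f \circ g = \mathrm{id}_{\supp(\cG)}$ and $g \circ f = \mathrm{id}_{\supp(\cF)}$ follow at once.

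Next I would verify the two axioms for $\cG$. For the face-closure axiom, a face $\sigma'$ of $\tau' = f_\tau(\tau)$ pulls back along the linear isomorphism $f_\tau$ to a face $\sigma := f_\tau^{-1}(\sigma')$ of $\tau$; by the face-closure axiom for $\cF$, $\sigma \in \cF$, so by hypothesis $f_\sigma(\sigma) \in \cG$. Piecewise-linearity gives $f_\sigma = f_\tau$ on $\sigma$, so $\sigma' = f_\tau(\sigma) \in \cG$. For the intersection axiom, pick $\tau_1, \tau_2 \in \cF$ with $f(\tau_i) = \tau'_i$. The supp-bijectivity of $f$ yields $f(\tau_1 \cap \tau_2) = \tau'_1 \cap \tau'_2$: the nonobvious inclusion is that coincident images $f_{\tau_1}(x_1) = f_{\tau_2}(x_2)$ force $x_1 = x_2 \in \tau_1 \cap \tau_2$. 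The intersection axiom for $\cF$ decomposes $\tau_1 \cap \tau_2 = \bigcup_i \sigma_i$ with each $\sigma_i$ a face of both $\tau_1$ and $\tau_2$, and applying $f$ expresses $\tau'_1 \cap \tau'_2$ as a union of faces of both $\tau'_1$ and $\tau'_2$, since linear isomorphisms preserve the face relation.

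Finally, the face poset isomorphism follows because $f$ and $g$ both send faces to faces and are mutual inverses, so they induce a bijection $\cP(\cF) \cong \cP(\cG)$ respecting the face relation in both directions. The main subtlety lies in the well-definedness and overlap-compatibility of $g$; both rely crucially on the supp-bijectivity of $f$, without which distinct local inverses $f_{\tau_1}^{-1}$ and $f_{\tau_2}^{-1}$ could disagree on $\tau'_1 \cap \tau'_2$. Everything else is a routine transport of structure along the local linear isomorphisms.
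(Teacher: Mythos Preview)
Your proof is correct. The paper does not give a proof of this lemma at all; it simply states it, prefaced by ``The proof of the following lemma is straightforward.'' So there is nothing to compare against, and your argument supplies exactly the kind of routine verification the authors had in mind: construct the inverse $g$ from the local inverses $f_\tau^{-1}$, use the support bijection to show $g$ is well-defined and that $f$ carries intersections to intersections, and transport the two axioms of Definition~\ref{conecomdef} along the local linear isomorphisms.

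One small remark: your opening sentence about why every $\tau' \in \cG$ arises as some $f_\tau(\tau)$ is not quite a justification --- the fact that $\tau' \subseteq \supp(\cG) = f(\supp(\cF))$ only says the \emph{points} of $\tau'$ are in the image, not that $\tau'$ itself is. You are right that this surjectivity is implicit in the lemma (and holds in the paper's applications, where $\cG$ is literally defined as the image of $\Gamma$), so it is fine to take it as a standing assumption; just be aware that the sentence you wrote does not establish it. Everything else is airtight.
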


We now discuss subdivisions of conical set complexes.

\begin{definition}
Let $\tau$ be a conical set and $\cF$ be a conical set complex.
We say that $\tau$ is \emph{simplicial} if there exists no nontrivial relation
\begin{align}
\sum_{x_i \in Y}\lambda_i x_i = \sum_{x_i \in Y}\gamma_i x_i \, ,
\end{align}
for some finite set $Y \subseteq \tau$, and $\lambda_i,\gamma_i\geq 0$.  We say that $\cF$ is simplicial if each conical set in $\cF$ is simplicial.
\null\hfill$\triangle$
\end{definition}

\noindent

\begin{definition}
Let $\cF_1, \cF_2$ be conical set complexes living in a common ambient set $X$.
We say that $\cF_2$ is a \emph{refinement} of $\cF_1$ if $supp(\cF_1)=supp(\cF_2)$, and each conical set in $\cF_1$ is a union of conical sets in $\cF_2$.
We say that $\cF_2$ is a \emph{triangulation} of $\cF_1$ if $\cF_2$ is a refinement of $\cF_1$ and each conical set in $\cF_2$ is simplicial.
\null\hfill$\triangle$
\end{definition}

The following is immediate.

\begin{lemma}\label{imagetriang}
Let $\cF_1$ and $\cG_1$ be conical set complexes and suppose that $f:\cF_1 \rightarrow \cG_1$ is a piecewise-linear isomorphism.
Suppose that $\cF_2$ is a conical set complex which refines $\cF_1$, then the image $f(\cF_2)$ refines $\cG_1$. 
\end{lemma}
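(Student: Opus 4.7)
The plan is to verify the refinement conditions directly, exploiting the hypothesis that $f$ admits a piecewise-linear two-sided inverse. First I would check the support equality. Since $\cF_2$ refines $\cF_1$, we have $\supp(\cF_1) = \supp(\cF_2)$, and since $f$ is a piecewise-linear isomorphism it induces a bijection $\supp(\cF_1) \to \supp(\cG_1)$; hence $\supp(f(\cF_2)) = f(\supp(\cF_2)) = f(\supp(\cF_1)) = \supp(\cG_1)$.

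Next I would set up $f$ as a piecewise-linear map on $\cF_2$. For each $\sigma \in \cF_2$, refinement supplies some $\tau \in \cF_1$ with $\sigma \subseteq \tau$, so the restriction $f_\sigma \coloneqq f_\tau|_\sigma$ is linear. Since $f_\tau$ is a linear isomorphism onto the cone $f_\tau(\tau) \in \cG_1$, its restriction $f_\sigma$ is a linear isomorphism from $\sigma$ onto the conical set $f(\sigma) \subseteq f_\tau(\tau)$. Compatibility on overlaps in $\cF_2$ is inherited from compatibility of the $f_\tau$ on $\cF_1$. Applying Lemma \ref{faceposetisolem} to the map $f|_{\cF_2}\colon \cF_2 \to f(\cF_2)$ then yields that $f(\cF_2)$ is a conical set complex.

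Finally, for the refinement property itself, I would pick any $\tau' \in \cG_1$ and let $g\colon \cG_1 \to \cF_1$ denote the piecewise-linear inverse of $f$. Set $\tau \coloneqq g_{\tau'}(\tau') \in \cF_1$, so that $f_\tau(\tau) = \tau'$. Because $\cF_2$ refines $\cF_1$, there exist $\sigma_1, \ldots, \sigma_k \in \cF_2$ with $\tau = \bigcup_i \sigma_i$; applying the linear isomorphism $f_\tau$ gives $\tau' = \bigcup_i f_\tau(\sigma_i) = \bigcup_i f(\sigma_i)$, expressing $\tau'$ as a union of cones in $f(\cF_2)$.

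There is no real obstacle here; the argument is bookkeeping using that piecewise-linear isomorphisms transport conical set complex structure cone-by-cone. The only mildly subtle point, where the refinement hypothesis on $\cF_2$ is actually used, is in choosing an ambient cone $\tau \in \cF_1$ containing each $\sigma \in \cF_2$ so that one has an unambiguous linear restriction $f_\sigma$; the compatibility axiom for piecewise-linear maps ensures this choice of $\tau$ does not matter.
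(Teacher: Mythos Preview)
Your proof is correct. The paper itself provides no proof for this lemma, simply prefacing it with ``The following is immediate''; your argument spells out exactly the bookkeeping one would do to justify that claim, so there is nothing to compare.
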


\begin{definition}
Let $\tau$ be an abstract cone.
We say that $V \subseteq \tau$ is a \emph{vector subspace} if $V$ is a conical subset of $\tau$ such that for each $x \in \tau$ there exists a $y \in \tau$ such that $x+y =0_M$.
For $x, y \in \tau$, let $x\sim_V y$ if $x=y+v$ for some $v \in V$.
We define $\tau/V$ as the quotient of $\tau$ by the relation $\sim_V$.
Suppose that $\cF$ is a cone complex living in an ambient space $X$.
We say that $V \subseteq X$ is a \emph{vector subspace of $\cF$} if, for every conical set $\tau$ of $\cF$, $V$ is a vector subspace of $\tau$.
If $V$ is a vector subspace of $\cF$, we define the quotient $\cF/V = \{\tau/V: \tau \in \cF\}$.
We define the \emph{lineality space of $\cF$} to be the maximum vector subspace of $\cF$.
If $L$ is the lineality space of $\cF$, we may denote $\cF/L$ by $\overline{\cF}$.
\null\hfill$\triangle$
\end{definition}

Note that if $V$ is a vector subspace of a conical set $\tau$, then the semimodule structure of $V$ over $\bR_{\geq0}$ extends to a module structure over $\bR$, and $\tau/V$ is a conical set.

\begin{lemma}
\label{lem:abstract_fan_equivalence_relation}
Suppose that $\cF$ is a cone complex, and that $V$ is a vector subspace of $\cF$.
Then the quotient $\cF/V$ is an cone complex, and $\cP(\cF/V) = \cP(\cF)$.
\end{lemma}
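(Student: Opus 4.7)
The plan is to reduce everything to two basic structural facts: that every face of a cone $\tau \in \cF$ contains $V$, and that the quotient operation commutes with taking intersections of cones that share $V$ as a subspace. Once these are in hand, the lemma becomes essentially bookkeeping against the definitions in \S\ref{conecomdef} and \S\ref{def:cone_complex}.

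First I would establish the key face-containment fact. Fix $\tau \in \cF$ and a face $\sigma \subseteq \tau$. Since $\sigma$ is a conical subset it contains $0_\tau$. For any $v \in V$, both $v$ and $-v$ lie in $V \subseteq \tau$, and their sum $v + (-v) = 0_\tau$ lies in $\sigma$; face condition (\ref{facecond2}) of Definition \ref{facedefinition} then forces $v \in \sigma$. Hence $V \subseteq \sigma$. Combined with the linearity of the quotient map $q_\tau : \tau \to \tau/V$, this shows that the assignments $\sigma \mapsto \sigma/V$ and $\wt\sigma \mapsto q_\tau^{-1}(\wt\sigma)$ are mutually inverse bijections between faces of $\tau$ and faces of $\tau/V$, and that each such bijection is inclusion-preserving.

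Next I would verify that each $\tau/V$ is an abstract cone. By hypothesis there is a linear isomorphism $\varphi : \tau \to \sigma$ with $\sigma \subseteq \bR^n$ a closed polyhedral cone; the image $\varphi(V)$ is a linear subspace of $\sigma$, the quotient $\sigma/\varphi(V)$ is a closed polyhedral cone inside the finite-dimensional real vector space $\bR^n/\varphi(V)$ (which we identify with some $\bR^m$), and $\varphi$ descends to a well-defined linear isomorphism $\tau/V \to \sigma/\varphi(V)$. So each $\tau/V$ satisfies Definition \ref{def:cone_complex}.

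The main obstacle, and the only place one might slip, is checking Definition \ref{conecomdef}(\ref{conecomdef2}) for $\cF/V$: the intersection of two cones in $\cF/V$ must be a union of faces of each. For this I would prove $(\tau_1/V) \cap (\tau_2/V) = (\tau_1 \cap \tau_2)/V$. One inclusion is immediate. For the other, suppose $[x] \in (\tau_1/V) \cap (\tau_2/V)$ with representatives $x_1 \in \tau_1$ and $x_2 \in \tau_2$, so $x_1 - x_2 \in V$. Since $V \subseteq \tau_2$ and $\tau_2$ is a conical set, $x_1 = x_2 + (x_1 - x_2) \in \tau_2$, giving $x_1 \in \tau_1 \cap \tau_2$ with $[x_1] = [x]$. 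Now using that $\cF$ is a conical set complex, write $\tau_1 \cap \tau_2 = \bigcup_i \sigma_i$ as a union of faces of both $\tau_1$ and $\tau_2$; applying the quotient map and using the face bijection from the first paragraph yields $(\tau_1/V) \cap (\tau_2/V) = \bigcup_i (\sigma_i/V)$, a union of faces of both $\tau_1/V$ and $\tau_2/V$. Condition (\ref{conecom1}) follows directly from the face bijection. Finally, the face bijection is inclusion-preserving and compatible across different $\tau$'s (by restricting to common faces), so it assembles into a poset isomorphism $\cP(\cF) \cong \cP(\cF/V)$, completing the proof.
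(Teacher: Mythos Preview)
Your argument is sound. The paper states this lemma without proof, treating it as a routine structural fact about cone complexes, so there is no argument to compare against. Your approach---first showing that every face $\sigma$ of every $\tau\in\cF$ contains $V$ via $v+(-v)=0_\tau$ and face condition~(\ref{facecond2}), then establishing the inclusion-preserving face bijection $\sigma\leftrightarrow\sigma/V$, and finally proving the intersection identity $(\tau_1/V)\cap(\tau_2/V)=(\tau_1\cap\tau_2)/V$ to verify Definition~\ref{conecomdef}(\ref{conecomdef2})---is exactly the natural verification one would expect.

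One minor caveat worth flagging: your intersection step tacitly assumes that the conical-set structures on overlapping cones agree on their intersection, so that $\sim_V$ is a single well-defined equivalence relation on the ambient set $X$ and all the quotients $\tau/V$ embed in a common $X/{\sim_V}$. The paper's Definitions~\ref{conecomdef} and~\ref{def:cone_complex} do not make this compatibility explicit either, but it is an implicit standing assumption throughout \S3.1 and holds in every cone complex the paper actually uses.
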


\begin{lemma}\label{reducedisomorphismlemma}
Suppose that $f:\cF\rightarrow \cG$ is a piecewise-linear isomorphism of cone complexes, then the induced map ${\overline f}:{\overline \cF}\rightarrow{\overline \cG}$ is piecewise-linear isomorphism of cone complexes.
\end{lemma}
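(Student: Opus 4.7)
The plan is to verify three things in turn: that $f$ maps the lineality space $L_\cF$ of $\cF$ onto the lineality space $L_\cG$ of $\cG$; that this allows one to define a piecewise-linear map $\overline{f}$ on the quotients; and that the analogous construction applied to $f^{-1}$ gives a two-sided piecewise-linear inverse.

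For the first step I would argue as follows. Fix a cone $\tau \in \cF$; by definition $L_\cF \subseteq \tau$ and $L_\cF$ is a vector subspace of $\tau$. Since $f_\tau\colon \tau \to f_\tau(\tau)$ is a linear isomorphism, $f_\tau(L_\cF)$ is a vector subspace of $f_\tau(\tau)$. The piecewise-linear compatibility condition $f_{\tau_1}|_{\tau_1 \cap \tau_2} = f_{\tau_2}|_{\tau_1 \cap \tau_2}$ applied to $L_\cF \subseteq \tau_1 \cap \tau_2$ shows that the set $f(L_\cF)\coloneqq f_\tau(L_\cF)$ is independent of the chosen $\tau$, hence is a vector subspace of $\cG$ in the sense of the definition. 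By maximality of $L_\cG$, we have $f(L_\cF) \subseteq L_\cG$. Applying the same argument to $f^{-1}$ yields $f^{-1}(L_\cG) \subseteq L_\cF$, so in fact $f(L_\cF) = L_\cG$.

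For the second step, define $\overline{f}$ on each quotient cone $\overline{\tau} = \tau/L_\cF$ by $\overline{f}_{\overline{\tau}}([x]) \coloneqq [f_\tau(x)]$; this is well-defined and linear by the first step, and it is a bijection onto $\overline{f_\tau(\tau)}$. To check global piecewise-linearity I first claim $\overline{\tau_1} \cap \overline{\tau_2} = \overline{\tau_1 \cap \tau_2}$. The inclusion $\supseteq$ is immediate, and for $\subseteq$, given $[x]\in\overline{\tau_1}\cap\overline{\tau_2}$ with representatives $x_1\in\tau_1$ and $x_2\in\tau_2$, one has $x_2 - x_1 \in L_\cF$, and since $L_\cF$ is a vector subspace of $\tau_1$, the element $x_2 = x_1 + (x_2 - x_1)$ lies in $\tau_1$, giving a common representative in $\tau_1 \cap \tau_2$. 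On this common intersection, the compatibility of $f_{\tau_1}$ and $f_{\tau_2}$ descends directly to compatibility of $\overline{f}_{\overline{\tau}_1}$ and $\overline{f}_{\overline{\tau}_2}$.

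Finally, applying the same construction to $f^{-1}$ yields a piecewise-linear map $\overline{f^{-1}}\colon \overline{\cG} \to \overline{\cF}$, and the identities $\overline{f^{-1}} \circ \overline{f} = \on{id}$ and $\overline{f} \circ \overline{f^{-1}} = \on{id}$ hold because they already hold on each individual cone. Lemma \ref{lem:abstract_fan_equivalence_relation} guarantees $\overline{\cF}$ and $\overline{\cG}$ are bona fide cone complexes, so $\overline{f}$ is a piecewise-linear isomorphism. There is no serious obstacle here; the only delicate point is the set-theoretic identity $\overline{\tau_1} \cap \overline{\tau_2} = \overline{\tau_1 \cap \tau_2}$, which is exactly where one uses that $L_\cF$ acts as a vector subspace inside each cone.
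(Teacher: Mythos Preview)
Your argument is correct and complete. The paper states this lemma without proof, treating it as a routine consequence of the definitions; your proposal supplies exactly the expected verification, with the key technical point being the identity $\overline{\tau_1}\cap\overline{\tau_2}=\overline{\tau_1\cap\tau_2}$, which you handle correctly using that $L_\cF$ is a vector subspace of each cone.
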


\begin{definition}
If $\tau$ is a conical set whose lineality space is $0_M$, we say that $\tau$ is \emph{pointed}.
If $\cF$ is a cone complex whose lineality space is $0_M$, we say that $\cF$ is \emph{pointed}.
\null\hfill$\triangle$
\end{definition}

\begin{definition}\label{sigmaRdef}
Let $\sigma$ be an abstract cone.
We define $\langle \sigma \rangle_{\mathbb{R}} = \{x-y : x,y \in \sigma\}$ with $x-y \coloneqq \{(x,y) \in \sigma \times \sigma\}/\sim$ where $\sim$ is the equivalence relation $(x_1, y_1) \sim (x_2, y_2)$ if $x_1 + y_2 = x_2 + y_1$.
\null\hfill$\triangle$
\end{definition}

\begin{definition}\label{localizationforabtractfandef}
Suppose that $\cF$ is a cone complex, and $\sigma \in \cF$.
We define the \emph{star of $\sigma$} to be $star(\sigma) = \{ \tau \in \cF: \sigma \subseteq \tau\}$.
For $\tau \in star(\sigma)$, we define the \emph{localization of $\tau$ at $\sigma$} 
\begin{align}
\tau_\sigma
= \tau +\langle \sigma \rangle_{\mathbb{R}}\coloneqq
\{x - y \:|\: x \in \tau,  y \in \sigma\},
\end{align}
with $x - y$ defined as in Definition \ref{sigmaRdef}.
We then define \emph{localization of $\cF$ at $\sigma$} to be the union of all of these conical sets
\begin{align}
\cF_{\sigma}
\coloneqq
\{\tau_\sigma \:|\: \tau \in star(\sigma)\}.
\end{align}
\null\hfill$\triangle$
\end{definition}

We will not immediately pass to $\cF_{\sigma}/ \langle \sigma \rangle_{\mathbb{R}}$ for reasons alluded to in Remark \ref{linealityspacermk}, which are manifested in Theorem \ref{localvelocity}.

\begin{lemma}
Suppose that $\cF$ is a cone complex and $\sigma \in \cF$, then $\cF_{\sigma}$ is a cone complex.
\end{lemma}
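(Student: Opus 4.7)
The plan is to verify the two conditions of Definition \ref{conecomdef} for $\cF_\sigma$, after first checking that each $\tau_\sigma$ is an abstract cone. For the abstract cone property: fix a linear isomorphism of $\tau \in star(\sigma)$ with a closed polyhedral cone $\wt\tau \subseteq \bR^n$; this carries $\sigma$ to a conical subset $\wt\sigma$ of $\wt\tau$. The construction in Definition \ref{sigmaRdef} identifies $\tau_\sigma$ with $\wt\tau + \langle\wt\sigma\rangle_\bR$, which is again a closed polyhedral cone in $\bR^n$ (a Minkowski sum of a polyhedral cone and a linear subspace), so $\tau_\sigma$ is an abstract cone. The same isomorphism establishes a bijection between faces of $\tau_\sigma$ and those faces of $\tau$ that contain $\sigma$: by condition (\ref{facecond2}) of Definition \ref{facedefinition}, every face of $\tau_\sigma$ must contain the entire subspace $\langle\sigma\rangle_\bR$ (since each element has an additive inverse there), and such a face therefore arises as $\tau'_\sigma$ for a unique face $\tau' \subseteq \tau$ containing $\sigma$.

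For condition (\ref{conecom1}) of Definition \ref{conecomdef}, given a face $\tau'_\sigma$ of $\tau_\sigma$ constructed as above, the underlying face $\tau' \subseteq \tau$ lies in $\cF$ because $\cF$ is a cone complex, and $\sigma \subseteq \tau'$ means $\tau' \in star(\sigma)$, so $\tau'_\sigma \in \cF_\sigma$ by definition.

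For condition (\ref{conecomdef2}), I will argue that
\begin{align}
(\tau_1)_\sigma \cap (\tau_2)_\sigma = (\tau_1 \cap \tau_2)_\sigma
\end{align}
for $\tau_1, \tau_2 \in star(\sigma)$. The inclusion $\supseteq$ is immediate. For $\subseteq$, any element of the left-hand side can be written as $x_i - y_i$ with $x_i \in \tau_i$, $y_i \in \sigma$ for $i = 1, 2$, giving $x_1 + y_2 = x_2 + y_1 \in \tau_1 \cap \tau_2$; this common representative then witnesses membership in $(\tau_1 \cap \tau_2)_\sigma$. Using condition (\ref{conecomdef2}) for $\cF$, write $\tau_1 \cap \tau_2 = \bigcup_i F_i$ as a union of common faces of $\tau_1$ and $\tau_2$. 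A preliminary observation is that $\sigma$ itself lies in each $F_i$ that we need to consider: using that $\sigma \in \cF$, the intersection $\sigma \cap \tau_j = \sigma$ (for $j \in \{1,2\}$) is a union of common faces of $\sigma$ and $\tau_j$, and since $\sigma$ cannot be expressed as a union of proper faces of itself, $\sigma$ is a face of $\tau_j$. Then, applying condition (\ref{conecomdef2}) to $\sigma \cap F_i = \sigma$ (for any $F_i$ meeting the interior of $\sigma$) gives $\sigma \subseteq F_i$, so $F_i \in star(\sigma)$ and $(F_i)_\sigma \in \cF_\sigma$ is a face of both $(\tau_1)_\sigma$ and $(\tau_2)_\sigma$. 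Since localization commutes with finite unions, the claimed identity expresses $(\tau_1)_\sigma \cap (\tau_2)_\sigma$ as a union of common faces, completing condition (\ref{conecomdef2}).

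The main obstacle is the face-theoretic bookkeeping in the third step: ensuring that $\sigma$ is actually a face of every $\tau \in star(\sigma)$ and of every common face $F_i$, so that the localization operation behaves compatibly with the decomposition of $\tau_1 \cap \tau_2$ into common faces. Once this is in place, the remaining verifications reduce to the straightforward observation that passing to $\tau_\sigma$ is a linear quotient-like operation that respects inclusions, intersections, and finite unions of the constituent conical sets.
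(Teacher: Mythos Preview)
The paper states this lemma without proof, treating it as a standard fact about cone complexes, so your argument is filling in details the authors left to the reader. Your overall approach is correct and natural: identify faces of $\tau_\sigma$ with faces of $\tau$ containing $\sigma$, then transfer conditions (\ref{conecom1}) and (\ref{conecomdef2}) from $\cF$ to $\cF_\sigma$.

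There is one point in your verification of condition (\ref{conecomdef2}) that should be tightened. After writing $\tau_1 \cap \tau_2 = \bigcup_i F_i$ and observing that localization commutes with unions, you obtain $(\tau_1)_\sigma \cap (\tau_2)_\sigma = \bigcup_i (F_i + \langle\sigma\rangle_\bR)$ with the union over \emph{all} $i$, yet you only establish $\sigma \subseteq F_i$ for those $F_i$ meeting the relative interior of $\sigma$. You need to argue that the remaining terms are redundant. This follows: for any $x \in F_i$ and any $z$ in the relative interior of $\sigma$, the sum $x + z$ lies in $\tau_1 \cap \tau_2$, hence in some $F_j$; since $F_j$ is a face of $\tau_1$ and $x, z \in \tau_1$, the face condition forces $x, z \in F_j$, so $\sigma \subseteq F_j$ and $F_i + \langle\sigma\rangle_\bR \subseteq (F_j)_\sigma$. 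With this observation, the union may be restricted to those $F_j$ containing $\sigma$, each of which yields a common face $(F_j)_\sigma$ of $(\tau_1)_\sigma$ and $(\tau_2)_\sigma$, and your argument goes through.
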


\begin{lemma}
\label{quotientrefinement}
Let $\cF$ and $\cG$ be cone complexes such that $\cF$ refines $\cG$ and both $\cF$ and $\cG$ have the same lineality space, then the quotient $\overline{\cF}$ refines the quotient $\overline{\cG}$.
\end{lemma}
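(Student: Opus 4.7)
The plan is to work directly from the definitions of refinement and of the quotient complex. Write $L$ for the common lineality space of $\cF$ and $\cG$, and let $q \colon X \to X/L$ denote the quotient of the common ambient set by $L$; by the definition of lineality space, $L$ is contained in every cone of $\cF$ and of $\cG$, and by Lemma \ref{lem:abstract_fan_equivalence_relation} both $\overline{\cF} = \cF/L$ and $\overline{\cG} = \cG/L$ are cone complexes with ambient set $X/L$, and each quotient cone $\tau/L$ is precisely the image $q(\tau)$.

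First I would check that the supports agree. Since $q$ commutes with unions,
\begin{align}
\mathrm{supp}(\overline{\cF})
= q(\mathrm{supp}(\cF))
= q(\mathrm{supp}(\cG))
= \mathrm{supp}(\overline{\cG}),
\end{align}
where the middle equality is the support condition in the definition of refinement of $\cG$ by $\cF$.

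Next I would verify that each cone of $\overline{\cG}$ is a union of cones of $\overline{\cF}$. Given $\overline{\tau} \in \overline{\cG}$, pick a representative $\tau \in \cG$ with $\overline{\tau} = \tau/L = q(\tau)$. Because $\cF$ refines $\cG$, we may write $\tau = \bigcup_i \sigma_i$ for some collection $\{\sigma_i\} \subseteq \cF$ (each $\sigma_i$ automatically contains $L$ since $\sigma_i \subseteq \tau$ and $L$ is the lineality space). Applying $q$ and using that $q$ commutes with unions gives
\begin{align}
\overline{\tau}
= q(\tau)
= \bigcup_i q(\sigma_i)
= \bigcup_i \sigma_i/L,
\end{align}
exhibiting $\overline{\tau}$ as a union of cones of $\overline{\cF}$.

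There is no serious obstacle here; the only point requiring a little care is that the quotient cones $\sigma_i/L$ really are elements of $\overline{\cF}$, and this is exactly the content of the definition of $\overline{\cF} = \cF/L$, supported by Lemma \ref{lem:abstract_fan_equivalence_relation}. Together, these two verifications give the two defining properties of a refinement, so $\overline{\cF}$ refines $\overline{\cG}$.
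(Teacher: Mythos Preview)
Your proof is correct and is exactly the routine verification the paper has in mind; the paper states this lemma without proof, so your argument simply fills in the straightforward check of the two defining conditions of a refinement after passing to the quotient. One minor quibble: your parenthetical that ``each $\sigma_i$ automatically contains $L$ since $\sigma_i \subseteq \tau$'' is not the right justification (being contained in something containing $L$ does not force containment of $L$); rather, $L \subseteq \sigma_i$ holds directly because $L$ is by hypothesis the lineality space of $\cF$.
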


\begin{definition}
Let $\cF$ be a cone complex with lineality space $V$.
We define a \emph{ray} to be a face $\tau$ of $\cF$ for which there exists some $\rho \in \tau$ such that
\begin{align}
{\tau} = \{ \lambda{\rho} + V:\lambda \in \mathbb{R}_{\geq 0}\}.
\end{align}
If $\tau$ is a ray, we call any such $\rho$ a \emph{ray generator} for $\tau$. 
\null\hfill$\triangle$
\end{definition}

\begin{remark} 
Usually a ray of a cone $\tau$ or fan $\cF$ is defined to be a 1-dimensional face.
The definition above is reasonable; if we project an abstract cone $\tau$ (or a cone complex $\cF$) along its lineality space $L$, we obtain a pointed conical set (or cone complex), and the rays project to the one dimensional conical sets of the image.
This will arise naturally in our setting as the metric $n$-bracketing complex and the velocity fan both have 1-dimensional lineality spaces.
The velocity fan, like the braid arrangement, has a lineality space generated by the all-ones vector.
Anticipating this construction we offer the following definition.
\null\hfill$\triangle$
\end{remark}

\begin{definition}
We define \emph{tropical projective space} to be the quotient vector space
\begin{align}
\mathbb{T}^{m-1} = \mathbb{R}^m/\langle {\bf 1} \rangle_{\mathbb{R}}.
\end{align}
\null\hfill$\triangle$
\end{definition}
For analyzing the local structure of fans and cone complexes we will need the following lemma.

\begin{lemma}\label{raysoflocalizationlemma}
Let $\cF$ be a cone complex with lineality space $L$, and let $\tau$ be a ray of $\cF$.  The rays of $\cF_{\tau}$ are of the form $\tau' +\tau$, where $\tau'$ is a ray in $\cF$ and $\tau' +\tau \in \cF$.
\end{lemma}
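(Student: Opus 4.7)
My plan is to identify the lineality space of $\cF_\tau$, use the resulting face-poset dictionary between $\cF_\tau$ and the star of $\tau$ in $\cF$, and then perform an elementary geometric analysis of two-dimensional polyhedral cones.

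First, I would verify that the lineality space of $\cF_\tau$ is exactly $\langle\tau\rangle_\bR$. Since $\tau\in\mathrm{star}(\tau)$, the cone $\tau+\langle\tau\rangle_\bR=\langle\tau\rangle_\bR$ lies in $\cF_\tau$, and this subspace is contained in every cone $\sigma+\langle\tau\rangle_\bR$. Any larger linear subspace contained in every cone of $\cF_\tau$ would in particular have to sit inside $\langle\tau\rangle_\bR$ itself (by intersecting with this smallest cone), forcing equality. In particular $L\subseteq\langle\tau\rangle_\bR$.

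Next, I would observe that $\sigma\mapsto\sigma+\langle\tau\rangle_\bR$ gives an order-preserving bijection from $\mathrm{star}(\tau)\subseteq\cP(\cF)$ onto $\cP(\cF_\tau)$: indeed, a face of $\sigma+\langle\tau\rangle_\bR$ pulls back under $\sigma\to\sigma/\langle\tau\rangle_\bR$ to a face of $\sigma$ containing $\tau$, and every such face arises this way. Under this dictionary, rays of $\cF_\tau$ (i.e.\ faces whose quotient by the lineality $\langle\tau\rangle_\bR$ is one-dimensional) correspond precisely to cones $\sigma\in\cF$ that cover $\tau$ in $\cP(\cF)$.

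So fix such a covering $\sigma>\tau$ and consider the quotient $\sigma/L$, which by Lemma \ref{abstractsubconelem} is a polyhedral cone containing the one-dimensional conical set $\tau/L$ as a proper face; hence $\sigma/L$ has dimension two. The key geometric step is to show $\sigma/L$ is pointed: if it had nontrivial lineality it would be either a two-dimensional vector space (whose only face is itself, ruling out the proper face $\tau/L$) or a half-plane, whose proper faces are its one-dimensional lineality, which is a vector subspace and so cannot equal the half-line $\tau/L$. Being a pointed two-dimensional polyhedral cone, $\sigma/L$ has exactly two one-dimensional faces, namely $\tau/L$ and a second face $\tau'/L$, and $\sigma/L=\tau/L+\tau'/L$. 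The preimage $\tau'$ of $\tau'/L$ under $\sigma\to\sigma/L$ is a face of $\sigma$, hence a cone of $\cF$ by Definition \ref{conecomdef}, and $\tau'/L$ being a half-line means $\tau'$ is a ray of $\cF$. Lifting the decomposition gives $\sigma=L+\tau+\tau'=\tau+\tau'\in\cF$, and modulo $\langle\tau\rangle_\bR$ this realizes the ray of $\cF_\tau$ in the claimed form. I expect the main subtlety to be the short convexity argument that $\sigma/L$ is pointed; everything else is dictionary manipulation and unwinding Definition \ref{localizationforabtractfandef}.
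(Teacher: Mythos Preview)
Your argument is correct. The paper states this lemma without proof, treating it as a standard fact about localizations of cone complexes, so there is no approach to compare against; your route via identifying the lineality space of $\cF_\tau$ as $\langle\tau\rangle_\bR$ and then analyzing the two-dimensional pointed cone $\sigma/L$ is exactly the natural way to fill in the details. One small remark: the order-preserving \emph{bijection} $\mathrm{star}(\tau)\to\cP(\cF_\tau)$ you invoke is not strictly needed, and in a general cone complex its injectivity would require a word of justification. All you actually use is that every ray $\mu\in\cF_\tau$ is $\sigma_\tau$ for some $\sigma\in\mathrm{star}(\tau)$ (which is the definition of $\cF_\tau$), together with the dimension count $\dim\sigma_\tau=\dim\sigma$ forcing $\dim\sigma=\dim\tau+1$; the rest of your argument then goes through unchanged.
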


\subsection{Metric \texorpdfstring{$n$}{n}-bracketings}

\

In \cite{b:simplicial}, the second author introduced a simplicial complex $W_\bn^W$, which is composed of the \emph{type-$\bn$ metric 2-bracketings} and is expected to be homeomorphic to the realization of $W_\bn$ as a stratified topological space (see \cite{bottman2017moduli}).
In this section, we introduce a generalization of a slight variation on the notion of a metric 2-bracketing.
We abuse terminology and call our version by the same name.

\begin{remark}
The terminology ``metric $n$-bracketings'' is inspired by the second author's notion of metric tree-pairs in \cite{b:simplicial}, which in turn was inspired by the metric rooted plane trees that appear in the Boardman--Vogt's W-construction (see e.g.\ \cite{markl_shnider}).
\null\hfill$\triangle$
\end{remark}

\begin{remark}
In symplectic geometry, a metric 2-bracketing can be thought of as a witch curve (in the sense of \cite{bottman2017moduli}), together with instructions on how much, and whether, to smooth the nodes.
One can think of a metric $n$-bracketing in a similar way.
This is not of obvious relevance to symplectic geometry, but nevertheless it provides some geometric intuition.
\null\hfill$\triangle$
\end{remark}

\begin{definition}[metric $n$-bracketings]
\label{def:metric_stable_tree-pair}
For $n \geq 1$, and $\cT$ a rooted plane tree of depth $n$, a \emph{metric $n$-bracketing} is a pair $(\sB, \ell_\sB)$ consisting of an $n$-bracketing $\sB \in \cK(\cT)$ and a function $\ell_\sB$ from the brackets of $\cT$ to $\bR$ satisfying the following requirements: 
\begin{enumerate}

\smallskip

\item\label{metricdefcond1}
For any $k$-bracket $A$ not in $\sB^k$, we have $\ell_\sB(A) = 0$.

\smallskip

\item\label{metricdefcond2}
For any singleton $k$-bracket $A(u) \in \sB^k$, we have $\ell_\sB(A(u)) = 0$.
 
\smallskip

\item\label{metricdefcond3}
For $A \in \sB^k$ a nontrivial bracket, we have $\ell_\sB(A) > 0$.

\smallskip

\item\label{metricdefcond4}
Fix a $k$-bracket $\wt A$ that is not a singleton.
Fix $(k+1)$-brackets $A, A' \in \sB_{\wt A}$ with $A$ containment-maximal in $\sB_{\wt A}$ and $A'$ containment-minimal in $\sB_{\wt A}$.
We require that the following equality holds:
\begin{align}
\label{eq:coherences}
\ell_\sB\bigl(\wt A\bigr)
=
\sum_{
\substack{
A'' \in \sB_{\wt A},
\\
A' \subseteq A'' \subseteq A
}
}
\ell_\sB(A'').
\end{align}
\end{enumerate}

When there is no ambiguity, we will abuse notation and denote a metric $n$-bracketing $(\sB,\ell_\sB)$ by $\ell_\sB$.  By convention, we declare that there is a unique metric $0$-bracketing consisting of the  unique $0$-bracketing and the function from this bracketing to $0$.
\null\hfill$\triangle$
\end{definition}

\begin{remark}
Note that by the (\textsc{nested}) condition required for $n$-bracketings, the brackets $A''$ appearing in \eqref{eq:coherences} form a chain.
That is, for any two such brackets, one is contained in the other.
\null\hfill$\triangle$
\end{remark}

\begin{definition}
Fix a rooted plane tree $\cT$ and an $n$-bracketing $\sB \in \cK(\cT)$.
We define  $\tau^\met(\sB)$ to be the set of all metric $n$-bracketings of the form $\ell_{\sB'}$ with $\sB' \leq \sB$.
\null\hfill$\triangle$
\end{definition}

\begin{lemma}\label{def:ops_on_metric_n-bracketings}
For $\sB \in \cK(\cT)$, the set $\tau^{\met}(\sB)$ is naturally equipped with the structure of a conical set.
\end{lemma}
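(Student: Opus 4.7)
The plan is to endow $\tau^{\met}(\sB)$ with the structure of an $\bR_{\geq 0}$-semimodule via pointwise operations on length functions. For $(\sB_1, \ell_{\sB_1}), (\sB_2, \ell_{\sB_2}) \in \tau^{\met}(\sB)$, I would set
\begin{align}
(\sB_1, \ell_{\sB_1}) + (\sB_2, \ell_{\sB_2}) \coloneqq (\sB_1 \cup \sB_2,\, \ell_{\sB_1} + \ell_{\sB_2}),
\end{align}
where each $\ell_{\sB_i}$ is extended by $0$ to all brackets of $\cT$ outside $\sB_i$. For $\lambda > 0$ I would set $\lambda \cdot (\sB, \ell_\sB) \coloneqq (\sB, \lambda \ell_\sB)$, and would take the zero to be $(\sB_{\min}, 0)$, with $0 \cdot (\sB, \ell_\sB) \coloneqq (\sB_{\min}, 0)$.

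Well-definedness of the sum requires two checks. First, $\sB_1 \cup \sB_2$ must be an $n$-bracketing with $\sB_1 \cup \sB_2 \leq \sB$: since $\sB_1, \sB_2 \leq \sB$, the height partial order $<_\sB$ restricts to orders compatible with those on $\sB_1$ and $\sB_2$, so Proposition \ref{n-bracketingunionlemma} gives a unique $n$-bracketing structure on the union, and it is bounded above by $\sB$ in $\wh\cK(\cT)$. Second, the length function $\ell \coloneqq \ell_{\sB_1} + \ell_{\sB_2}$ must satisfy conditions (1)--(4) of Definition \ref{def:metric_stable_tree-pair} on $\sB_1 \cup \sB_2$. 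Conditions (1) and (2) are immediate; condition (3) holds because any nontrivial bracket in the union lies in at least one $\sB_i$ and thus contributes a positive summand.

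The main obstacle is the coherence condition (4). Fixing a non-singleton $k$-bracket $\wt A \in \sB_1 \cup \sB_2$ together with a containment-maximal $A$ and containment-minimal $A'$ in $(\sB_1 \cup \sB_2)_{\wt A}$ with $A' \subseteq A$, the approach is to split
\begin{align}
\sum_{\substack{A'' \in (\sB_1 \cup \sB_2)_{\wt A} \\ A' \subseteq A'' \subseteq A}} \ell(A'')
= \sum_{\substack{A'' \in (\sB_1)_{\wt A} \\ A' \subseteq A'' \subseteq A}} \ell_{\sB_1}(A'')
+ \sum_{\substack{A'' \in (\sB_2)_{\wt A} \\ A' \subseteq A'' \subseteq A}} \ell_{\sB_2}(A''),
\end{align}
using that $\ell_{\sB_i}$ vanishes outside $\sB_i$. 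The key subclaim is that when $\wt A \in \sB_i$, the index set of the $\sB_i$-subsum is itself a chain in $(\sB_i)_{\wt A}$ running from a containment-minimal element to a containment-maximal element. The singleton $A'$ belongs to every bracketing and is containment-minimal in $(\sB_i)_{\wt A}$. For the $\subseteq$-largest element $A_i$ in the $\sB_i$-subchain, any hypothetical $A^* \in (\sB_i)_{\wt A}$ with $A_i \subsetneq A^*$ would, by nestedness in $\sB$, either itself lie in the union-chain between $A'$ and $A$ (contradicting maximality of $A_i$ among $\sB_i$-elements) or strictly contain the union-containment-maximal $A$ (impossible), so $A_i$ is containment-maximal in $(\sB_i)_{\wt A}$. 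Applying axiom (4) for $\sB_i$ thus yields that subsum equals $\ell_{\sB_i}(\wt A)$; when $\wt A \notin \sB_i$, the corresponding subsum is empty and $\ell_{\sB_i}(\wt A) = 0$. Summing gives $\ell_{\sB_1}(\wt A) + \ell_{\sB_2}(\wt A) = \ell(\wt A)$, as required.

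Scalar multiplication by $\lambda > 0$ clearly preserves all four axioms, since coherence scales linearly, and $(\sB_\min, 0)$ is a valid metric $n$-bracketing (vacuously, since $\sB_\min$ has no nontrivial brackets). The semimodule axioms then follow from the corresponding pointwise axioms for $\bR_{\geq 0}$ acting on real-valued length functions, completing the construction.
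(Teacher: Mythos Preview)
Your approach --- defining addition by $(\sB_1 \cup \sB_2,\, \ell_{\sB_1} + \ell_{\sB_2})$ and scalar multiplication pointwise --- is exactly what the paper does (the paper writes $\sB_1 \vee \sB_2$, which coincides with $\sB_1 \cup \sB_2$ here since both lie below $\sB$). The paper's proof is terse and does not spell out the verification of condition~(4), so your attention to this is useful, but the argument you give contains a genuine error.

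You assert that the containment-minimal element $A'$ of $(\sB_1 \cup \sB_2)_{\wt A}$ is a singleton $(k+1)$-bracket and hence belongs to every $\sB_i$. This is false: a singleton $(k+1)$-bracket $A(u)$ projects to the singleton $k$-bracket $A(\pi(u))$, whereas $\pi(A') = \wt A$ is by hypothesis \emph{not} a singleton. So $A'$ need not lie in $\sB_i$, and your deduction that the smallest $\sB_i$-element $C_{j_1}$ of the chain is containment-minimal in $(\sB_i)_{\wt A}$ does not follow as written: a hypothetical $A^* \in (\sB_i)_{\wt A}$ with $A^* \subsetneq C_{j_1}$ could have $(A^*)^{k+1}$ disjoint from $(A')^{k+1}$, placing it outside the chain from $A'$ to $A$ without contradicting anything you have said.

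The fix is to invoke \textsc{(partition)} rather than the singleton property. Fix any vertex $u \in (A')^{k+1}$. Then the chain from $A'$ to $A$ in $(\sB_1\cup\sB_2)_{\wt A}$ is exactly the set of elements of that fiber containing $u$, and the $\sB_i$-subchain is exactly $\{D \in (\sB_i)_{\wt A} : u \in D^{k+1}\}$. Your argument correctly shows its largest element is containment-maximal in $(\sB_i)_{\wt A}$. For its smallest element $C_{j_1}$: if some $A^* \in (\sB_i)_{\wt A}$ satisfied $A^* \subsetneq C_{j_1}$, then the second bullet of \textsc{(partition)} applied in $\sB_i$ would produce $A_3 \in (\sB_i)_{\wt A}$ with $A_3 \subsetneq C_{j_1}$ and $u \in A_3^{k+1}$, contradicting the choice of $C_{j_1}$. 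With this correction your splitting argument goes through.
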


\begin{proof}
We explain how the metric $n$-bracketings in $\tau^{\met}(\sB)$ form an additive monoid with a multiplication by nonnegative real numbers making $\tau^\met(\sB)$ an $\mathbb{R}_{\geq 0}$ semimodule, i.e.\ a conical set.  Suppose that $(\sB_1, \ell_{\sB_1})$ and $(\sB_2, \ell_{\sB_2})$ are metric $n$-bracketings such that $\sB_1, \sB_2 \leq \sB$, then we can define the metric $n$-bracketing
\begin{align}
(\sB_1, \ell_{\sB_1})+(\sB_2, \ell_{\sB_2}) \coloneqq (\sB_1 \vee \sB_2, \ell_{\sB_1}+\ell_{\sB_2}).
\end{align}
Let $(\sB,\ell_\sB)$ be a metric $n$-bracketing and $t$ a nonnegative real,
then $t(\sB,\ell_\sB)\coloneqq(\sB,t\ell_\sB)$ is a metric $n$-bracketing.
\end{proof}

\begin{definition}
\label{defprop:n-bracketing_complex}
Let
\begin{align}
X^\met(\cT)
\coloneqq
\{(\sB,\ell_{\sB}):\sB \in \cK(\cT)\}
\end{align}

The \emph{metric $n$-bracketing complex is}
\begin{align}
\cK^\met(\cT)=\{\tau^\met(\sB):\sB \in \cK(\cT) \}
\end{align}
 in the ambient space $X^\met(\cT)$.
\null\hfill$\triangle$
\end{definition}

\begin{figure}[ht]
\centering
\includegraphics[width=0.7\textwidth]{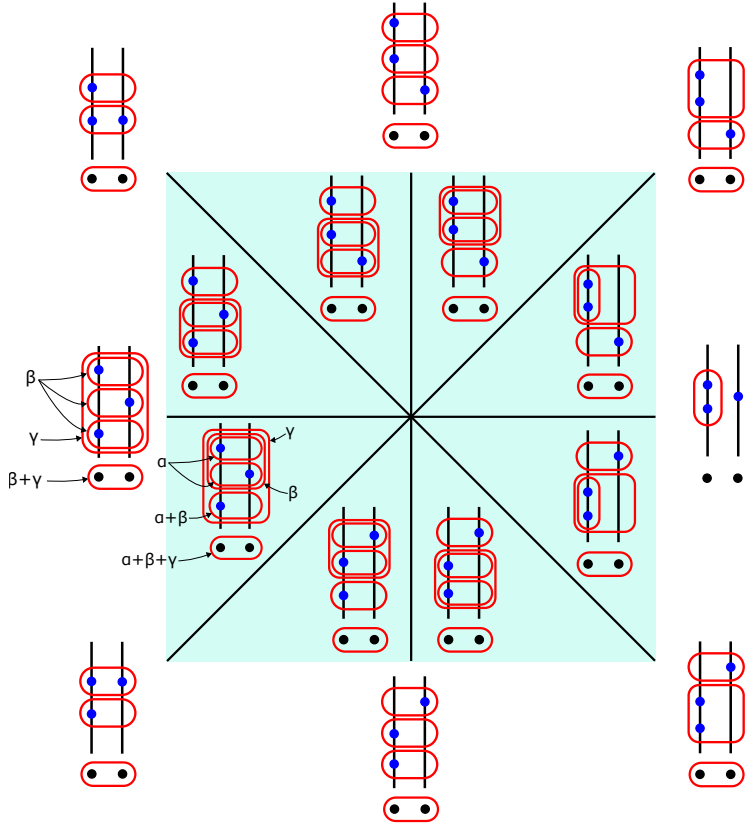}
\caption{
\label{fig:metric_2-bracketings_ex}
Here we depict the metric 2-bracketings of type $(2,1)$, which has 17 conical sets.
For two of these conical sets, we indicate the numbers associated to the brackets and the relations they must satisfy.
For the remaining 15 conical sets, we indicate only the underlying 2-bracketing.  This figure should not be taken too literally as this metric $n$-bracketing complex is not embedded in $\mathbb{R}^2$. 
}
\end{figure}

\subsection{Metric \texorpdfstring{$n$}{n}-bracketings as conical combinations of collisions}

\

There is a canonical map which associates a metric $n$-bracketing to each extended collision.

\begin{defprop}
\label{defprop:metric_bracketing_associated_to_collision}
Let $\sC \in {\wh\fX}(\cT)$.
We define the \emph{associated metric $n$-bracketing}
\begin{align}
\ell_\sC(A)
\coloneqq
\begin{cases}
1 &
\text{if } A \text{ is an essential bracket in $\sC$}, 
\\
0 &
\text{otherwise}.
\end{cases}
\end{align}
\null\hfill$\triangle$
\end{defprop}

It is straightforward to verify that $\ell_\sC$ is a metric $n$-bracketing.
\label{def:metric_n-bracketing_of_collision}
In the future we will typically denote $\ell_\sC$ by $\ell(\sC)$.
We now characterize the metric $n$-bracketings in $\tau^{\met}(\sB)$.

\begin{proposition}\label{conicalcomb}
Let $\sB \in \cK(\cT)$ and let  $\{\sC_0, \sC_1,\ldots,\sC_k\}\subseteq {\wh \fX}(\cT)$ such that $\sC_i \leq \sB$, and let $\sC_0 = \sB_{\min}$.
Then
\begin{align}
\tau^{\met}({\sB}) = \biggl\{ \,\sum_{i=0}^k \lambda_i\ell(\sC_i): \lambda_i \in \mathbb{R}, \, \text{and}\, \lambda_i\geq 0\, \text{for i}\,\neq 0 \,\biggr\}.
\end{align}
\end{proposition}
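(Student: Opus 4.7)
I would split the proof into the two inclusions, exploiting the collision-map characterization of collisions from Proposition \ref{treemap}.

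For $\supseteq$, the first step is to verify that $\ell(\sC)$ itself belongs to $\tau^\met(\sC)$ for each $\sC\in\wh\fX(\cT)$. Conditions (1)--(3) of Definition \ref{def:metric_stable_tree-pair} follow immediately from Definition-Proposition \ref{defprop:metric_bracketing_associated_to_collision} together with the observation that essential brackets of a collision are never singletons. For condition (4), I would exploit the collision map $\psi\colon\cT\to\cT/\sC$: when $\wt A\in\sC^k$ is an essential non-singleton bracket corresponding to vertex $u'\in V^k(\cT/\sC)$, the essential $(k+1)$-brackets in $\sC^{k+1}_{\wt A}$ are precisely $\{\psi^{-1}(A(w)):w\in C_{\cT/\sC}(u')\}$, and these are pairwise disjoint at level $k+1$ and hence pairwise incomparable; any chain in (4) collapses to a single bracket, and both sides of (4) equal $1$. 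For non-essential $\wt A$ both sides vanish. Combined with Lemma \ref{def:ops_on_metric_n-bracketings} for nonnegative combinations, it remains only to observe that adding a real multiple $\lambda_0\ell(\sB_{\min})$ uniformly shifts the values on maximum brackets and preserves (4), since each coherence relation pairs one max $(k+1)$-summand on the RHS against the max $k$-bracket on the LHS.

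For $\subseteq$, I would induct on the number of nontrivial brackets in $\sB'$. The base case $\sB'=\sB_{\min}$ follows by iterating condition (4) from the singleton max $0$-bracket upward: this forces all max values to coincide, so $\ell=\lambda_0\ell(\sB_{\min})$ with $\lambda_0=\ell(A_{\max}^n)$. For the inductive step, use Lemma \ref{containmentlemma} to select a collision $\sC\leq\sB'$ that is containment-minimal in $\sB'$, set $\epsilon=\min\{\ell(A):A\text{ nontrivial essential in }\sC\}>0$, and form $\ell':=\ell-\epsilon\,\ell(\sC)$. Then $\ell'$ remains nonnegative on every nontrivial bracket and vanishes on at least one essential nontrivial bracket of $\sC$. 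I would argue that $\ell'$ is a metric $n$-bracketing for the $n$-bracketing $\sB''\leq\sB'$ obtained by removing those nontrivial brackets of $\sB'$ on which $\ell'$ vanishes; the inductive hypothesis applied to $(\sB'',\ell')$ then produces the desired decomposition of $\ell$.

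The principal obstacle is verifying that $\sB''$ is a genuine $n$-bracketing, since removing nontrivial brackets can a priori violate the (\textsc{partition}) axiom. Containment-minimality of $\sC$ is essential here: each vanishing nontrivial $A^*$ is containment-minimal in $\sB'$, so no strict nontrivial subbracket of $A^*$ in $\sB'$ relies on its presence, and the description of essential brackets of $\sC$ as $\psi$-preimages of vertices of $\cT/\sC$ guarantees that the other members of $\sB'^{k+1}_{\pi(A^*)}$, together with the trivial brackets, continue to cover every child of $\pi(A^*)^k$ required by (\textsc{partition}). A short case analysis on whether $\pi(A^*)$ is a singleton, a nontrivial, or a maximum bracket then completes the verification and with it the induction.
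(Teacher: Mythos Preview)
Your proof follows the same inductive strategy as the paper: choose a collision $\sC\le\sB'$, set $\epsilon$ to be the minimum of $\ell_{\sB'}$ over the nontrivial essential brackets of $\sC$, and subtract $\epsilon\,\ell(\sC)$ to strictly decrease the number of nontrivial brackets. Your $\supseteq$ direction is more explicit than the paper's one-line remark and is fine.

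The divergence, and the gap, is in verifying that $\sB''$ (the support of $\ell-\epsilon\,\ell(\sC)$ together with the trivial brackets) is an $n$-bracketing. Restricting to a containment-minimal $\sC$ guarantees that each removed bracket $A^*$ has nothing nontrivial strictly below it, but that is not what threatens \textsc{(partition)}. The dangerous case is when some $A_1\supsetneq A^*$ in $(\sB')^k_{\pi(A^*)}$ survives while $A^*$ is deleted: part~2 of \textsc{(partition)} then demands a surviving $A_3\subsetneq A_1$ through every element of $(A^*)^k$, and your case split on whether $\pi(A^*)$ is singleton, nontrivial, or maximum does not supply one. Containment-minimality alone does not close this.

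What does close it is the observation the paper uses: condition~(\ref{metricdefcond4}) of Definition~\ref{def:metric_stable_tree-pair} is linear in $\ell$, so it is preserved under the subtraction, and the equality of all max-to-min chain sums over any fixed $\wt A$ is exactly the rigidity that forces both parts of \textsc{(partition)} to persist. For instance, if $A^*$ is isolated in $(\sB')^k_{\wt A}$ with weight $\epsilon$, condition~(\ref{metricdefcond4}) gives $\ell_{\sB'}(\wt A)=\epsilon$; then every sibling chain is squeezed to weight $\epsilon$ as well, so either $\wt A$ is itself removed (nontrivial case) or only the max $k$-bracket survives above it (max case), and part~2 becomes vacuous. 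With this argument in hand the restriction to containment-minimal $\sC$ is unnecessary: the paper works with an arbitrary $\sC\le\sB'$, which is also what is required for Scholium~\ref{sch:subtract}. Your containment-minimal refinement does reappear later, in Lemma~\ref{uniqueexpress}, where it is used to obtain uniqueness of the nested decomposition.
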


\begin{proof}
It is clear that $\tau^{\met}({\sB})$ contains the right hand side, so it remains to establish that $\tau^{\met}({\sB})$ is contained in the right hand side.
Fix a metric $n$-bracketing $(\sB',\ell_{\sB'})\in \tau^{\met}({\sB})$.
We must show that there exist $\lambda_0\ldots, \lambda_k$ as in the statement of the lemma so that $ \ell_{\sB'}=\sum_{i=0}^k \lambda_i\ell(\sC_i)$.
We prove this by induction on the number of nontrivial brackets in $\sB'$.
For $\sB'$ a collision or the minimum bracketing, this is trivial.
Next, suppose that $\sB'$ is neither a collision nor the minimum bracketing.
Choose a collision $\sC\leq \sB'$.
Set $t=\min\,\ell_{\sB'}(A)$ taken over all nontrivial brackets in the support of $\ell_\sC$.  Then we claim that there is a metric $n$-bracketing $\ell_{\sB''}$ with $\sB''<\sB'$ such that $\ell_{\sB''}+ t\,\ell_\sC=\ell_{
\sB'}$.  While conical sets do not admit subtraction, it does make sense to subtract the functions $\ell_{
\sB'}-t\,\ell_\sC$.  We can check that this function does satisfy the conditions of Definition \ref{def:metric_stable_tree-pair} (ignoring for a moment that the underlying object should be an $n$-bracketing).  It is then straightforward to check that the support $\ell_{
\sB'}-t\,\ell_\sC$ (together with the singleton and maximum brackets) determines a collection of brackets satisfying the conditions of an $n$-bracketing $\sB''$.
The {\sc (partition)} condition follows as a consequence of the fact that property (\ref{metricdefcond4}) from the definition of a metric $n$-bracketing is preserved.
We can equip $\sB''$ with the height partial order obtained by the restriction of the height partial order for $\sB'$.
\end{proof}

\begin{scholium}\label{inetermetricbracketings}
Define $\tau_{\mathbb{Z}}^{\met}(\sB)$ to be the metric $n$-bracketings in $\tau^{\met}(\sB)$ such that for each $A \in \sB^k$, we have $\ell_{\sB}(A) \in \mathbb{Z}$; these are the integer points of $\tau^{\met}(\sB)$.
It follows from the above argumentation that
\begin{align}
\tau_{\mathbb{Z}}^{\met}({\sB}) = \biggl\{ \sum_{i=0}^k \lambda_i\ell(\sC_i): \lambda_i \in {\mathbb{Z}}, \, \text{and}\, \lambda_i\geq 0\, \text{for i}\,\neq 0 \biggr\}.
\end{align}
\null\hfill$\triangle$
\end{scholium}
 
Scholium \ref{inetermetricbracketings} is extended by Scholium \ref{2ndinetermetricbracketings} and Corollary \ref{conecomplexlatticepropertyvelocityfan}, which provide a justification for our claim that while we ignore the standard lattice theoretic aspects of the definition of a cone complex, our cone complexes have implicit lattices and our maps respect these lattices. 

We observe another useful scholium of the proof of Proposition \ref{conicalcomb}.

\begin{scholium}
\label{sch:subtract}
Let $\ell_\sB$ be a metric $n$-bracketing with $\sB \neq \sB_{\min}$.
Let $\sC \in \fX(\cT)$ with $\sC \leq \sB$ and set $t_0=\min\,\ell_{\sB'}(A)$ taken over all nontrivial brackets in the support of $\ell_\sC$.  If $t \in \mathbb{R}_\geq 0$ with $t \leq t_0$, then $(\sB,\ell_\sB) - t(\sC,\ell_\sC)$  is a metric $n$-bracketing, and if $t=t_0$, the support of this metric $n$-bracketing is strictly bounded above by the support of $\ell_{\sB}$.
\null\hfill$\square$
\end{scholium}

\begin{figure}[ht]
\centering
\includegraphics[width=0.7\textwidth]{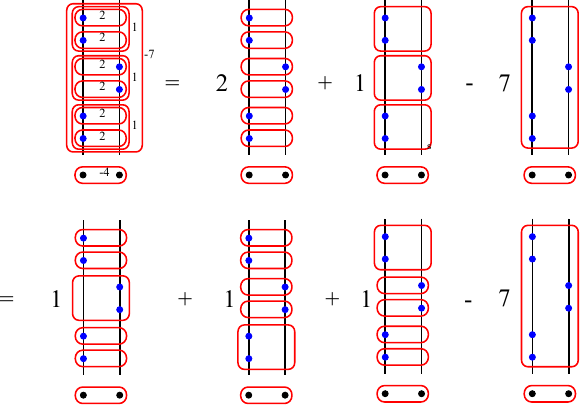}
\caption{
\label{sumofcollisionsfig}
A metric $n$-bracketing and two different expressions for this metric $n$-bracketing as a conical combination of metric $n$-bracketings associated to collisions.} 
\end{figure}

\begin{lemma}\label{fusionmetricbracketingdecomp}
Let $\sB \in \cK(\cT)$ and $\ell_{\sB} \in \cK^{\met}(\cT)$.
Let $\sC_0, \ldots, \sC_k \in \wh{\fX}(\cT)$ with $\sC_i\leq \sB$ for all $i$, and $\sC_0 = \sB_{\min}$.
Suppose that \begin{align}
\ell_{\sB} = \sum_{i=0}^k \lambda_i (\sC_i)
\end{align}
with $\lambda_i \in \mathbb{R}$ and $\lambda_i\geq 0$ for $i>0$.
Let $A$ be a fusion bracket for $\sB$.
Let $\ell_{\sB|_A}$ be the restriction of $\ell_{\sB}$ to the brackets which weakly project to $A$, and extended by 0 elsewhere.
Let $R(A) \subseteq \{\sC_i: 0\leq i\leq k\}$ be the subset of extended collisions which have $A$ as their fusion bracket, then 
\begin{align}
\ell_{\sB|_A} = \sum_{\sC_i \in R(A)}\lambda_i \ell(\sC_i).
\end{align}
In particular, $\ell_{\sB|_A}$ is a metric $n$-bracketing.  
\end{lemma}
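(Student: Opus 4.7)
The plan is to restrict the decomposition $\ell_\sB = \sum_{i=0}^k \lambda_i \ell(\sC_i)$ bracket-by-bracket, showing that only the terms indexed by $\sC_i \in R(A)$ contribute to $\ell_{\sB|_A}$.

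The crux is the following claim: if a bracket $B$ weakly projects to $A$ and lies in the support of $\ell(\sC_i)$ (i.e., $B$ is an essential bracket of $\sC_i$), then $\sC_i \in R(A)$. For the proof, recall from Lemma \ref{lem:fusion_bracket_for_collision}, together with the convention attaching essential brackets and a fusion bracket to $\sB_\min$, that $B$ also weakly projects to the fusion bracket $A_i$ of $\sC_i$. Thus both $A$ and $A_i$ are weak projections of the common bracket $B$, and so one must be a weak projection of the other: either $A = \pi^s(A_i)$ or $A_i = \pi^s(A)$ for some $s \geq 0$. Now use that both $A$ and $A_i$ are fusion brackets, so each is nonsingleton while $\pi(A)$ and $\pi(A_i)$ are singletons. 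Consequently any iterated projection $\pi^s$ of a fusion bracket with $s \geq 1$ is singleton, so were $s \geq 1$ one of $A, A_i$ would have to be singleton, contradicting nonsingletonness. Hence $s = 0$ and $A = A_i$, giving $\sC_i \in R(A)$.

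Conversely, for every $\sC_i \in R(A)$, every essential bracket of $\sC_i$ weakly projects to $A_i = A$, again by Lemma \ref{lem:fusion_bracket_for_collision} (and the $\sB_\min$ convention). Consequently each term $\lambda_i \ell(\sC_i)$ with $\sC_i \in R(A)$ is supported entirely on brackets weakly projecting to $A$ and contributes fully to $\ell_{\sB|_A}$, while the remaining terms contribute nothing at any such bracket. Combining the two directions yields the asserted equality
\begin{align}
\ell_{\sB|_A} = \sum_{\sC_i \in R(A)} \lambda_i \ell(\sC_i).
\end{align}

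The last assertion is then immediate: the right-hand side writes $\ell_{\sB|_A}$ as a conical combination of metric $n$-bracketings associated to extended collisions in $\wh\fX(\cT)$, with $\lambda_i \geq 0$ for $i \geq 1$ and $\lambda_0 \in \mathbb{R}$, so Proposition \ref{conicalcomb} applies to conclude that $\ell_{\sB|_A}$ is a metric $n$-bracketing. The only delicate point that I foresee is the bookkeeping in the edge case where $A$ happens to be a maximum bracket, since then the convention placing $\sB_\min$'s fusion bracket at the maximum $1$-bracket interacts nontrivially with the definition of $R(A)$; this amounts to a direct inspection of the contributions at maximum brackets and does not affect the structural argument above.
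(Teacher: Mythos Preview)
Your proof is correct and follows essentially the same approach as the paper's own proof. Both arguments reduce to the observation that a bracket $B$ in the support of $\ell(\sC_i)$ weakly projects to the fusion bracket $A_i$ of $\sC_i$, and that two fusion brackets which are weak projections of a common bracket must coincide (since a fusion bracket is nonsingleton while its projection is singleton); your writeup is somewhat more explicit about the two directions and the $\sB_{\min}$ edge case than the paper's terse version.
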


\begin{proof}
First assume that $A$ is nontrivial.
Let $A' \in \sB^k$ be a (necessarily nontrivial) bracket which weakly projects to $A$.  Observe that for a particular $i$, we have $A' \in \sC_i$ if and only if $A$ is the fusion bracket for $\sC_i$, i.e. $\sC_i \in R(A)$.  The statement when $A$ is the maximum 1-bracket (the only possible trivial fusion bracket for a collision) and $A'$ is some maximum $k$-bracket is similar.  Note that in this case, $\sB_{\min} \in R(A)$.
\end{proof}

\begin{lemma}\label{metricpartitionlemma}
Let $\sB \in \cK(\cT)$ and let $U(\sB)$ be the collection of fusion brackets for $\sB$.
Let $\ell_{\sB}$ be a metric $n$-bracketing, then 
\begin{align}
\ell_{\sB} = \sum_{A \in U(\sB)} \ell_{\sB|_A}
\end{align}
\end{lemma}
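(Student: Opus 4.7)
The plan is to verify the claimed equality bracket-by-bracket: for each bracket $A'$ of $\cT$, I will show that
$$\ell_{\sB}(A') = \sum_{A \in U(\sB)} \ell_{\sB|_A}(A').$$
Since each $\ell_{\sB|_A}$ agrees with $\ell_{\sB}$ on brackets weakly projecting to $A$ and vanishes elsewhere, it suffices to prove that every bracket $A'$ with $\ell_{\sB}(A') \neq 0$ weakly projects to exactly one $A \in U(\sB)$; brackets for which $\ell_{\sB}(A') = 0$ (including singleton brackets and brackets outside $\sB$) contribute trivially to both sides by Conditions (\ref{metricdefcond1}) and (\ref{metricdefcond2}) of Definition \ref{def:metric_stable_tree-pair}.

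So fix a bracket $A' \in \sB$ with $\ell_{\sB}(A') \neq 0$, which is necessarily nonsingleton. The key object is the chain of iterated projections $A', \pi(A'), \pi^2(A'), \ldots$; by the \textsc{($k$-bracketing projection)} condition this chain lies entirely in $\sB$, and it terminates in the unique $0$-bracket, which is a singleton. Since $A'$ is nonsingleton, there is a smallest $j \geq 1$ for which $\pi^j(A')$ is a singleton, and I would then set $A \coloneqq \pi^{j-1}(A')$, producing a nonsingleton bracket with singleton projection. When $A'$ is nontrivial, $A'$ itself witnesses condition (1) of Definition \ref{def:fusion_bracket}, so $A$ is a fusion bracket in the strict sense; when $A'$ is a nonsingleton maximum bracket the chain consists entirely of maximum brackets and $A$ is the maximum $1$-bracket, which is treated as a fusion bracket following the convention that declares the maximum $1$-bracket to be the fusion bracket of $\sB_{\min}$ as an extended collision.

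For uniqueness, I would suppose that both $A_1$ and $A_2$ in $U(\sB)$ are weakly projected to by $A'$, with $A_i = \pi^{\ell_i}(A')$ and, after relabeling, $\ell_1 < \ell_2$. Then $A_2 = \pi^{\ell_2 - \ell_1 - 1}(\pi(A_1))$. Since $A_1 \in U(\sB)$ has singleton projection $\pi(A_1)$, and since projections of singleton brackets remain singleton (the ancestor of a single vertex at any lower depth is a single vertex), $A_2$ would have to be singleton, contradicting $A_2 \in U(\sB)$.

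The main obstacle will be the existence step in the maximum-bracket case, where one must reconcile the literal fusion-bracket definition with the extended-collision convention for $\sB_{\min}$; once that bookkeeping is settled, the uniqueness step is immediate from the fact that iterated projections preserve singletonness, and the bracket-wise verification concludes the proof.
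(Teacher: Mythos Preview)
Your approach is correct but takes a genuinely different route from the paper's. The paper's proof is a one-liner invoking Proposition~\ref{conicalcomb} and Lemma~\ref{fusionmetricbracketingdecomp}: write $\ell_\sB = \sum_i \lambda_i \ell(\sC_i)$ as a conical combination of metric collisions, observe that each extended collision $\sC_i$ has a unique fusion bracket so the sets $R(A)$ partition the index set, and then sum the identities $\ell_{\sB|_A} = \sum_{\sC_i \in R(A)} \lambda_i \ell(\sC_i)$ over $A \in U(\sB)$. Your argument instead verifies the identity bracket-by-bracket, using only the elementary fact that the iterated projections of a nonsingleton bracket hit a unique last nonsingleton before becoming singleton. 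This is more self-contained---it does not rely on the collision decomposition---whereas the paper's route is shorter given the machinery already in place. Both approaches face the same bookkeeping issue you identify at the end: one must ensure the maximum $1$-bracket lies in $U(\sB)$ whenever a maximum bracket carries nonzero weight, and the paper handles this the same way you do, via the convention (used explicitly in the proof of Lemma~\ref{fusionmetricbracketingdecomp}) that the maximum $1$-bracket is the fusion bracket of $\sB_{\min}$.
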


\begin{proof}
This is a direct consequence of Proposition \ref{conicalcomb} and Lemma \ref{fusionmetricbracketingdecomp}.
\end{proof}
 
\begin{lemma}\label{equalityofcoefficientsums}
Let $\sB \in \cK(\cT)$ and let  $\{\sC_0, \sC_1,\ldots,\sC_k\}\subseteq {\wh \fX}(\cT)$ be the set of extended collisions such that $\sC_i \leq \sB$, and let $\sC_0 = \sB_{\min}$.  Let $\lambda_i, \gamma_i \in \mathbb{R}$ with $\lambda_i, \gamma_i \geq 0$ if $i>0$.  

Suppose that
\begin{align}
\ell_\sB = \sum_{i=0}^k \lambda_i \ell(\sC_i) = \sum_{i=0}^k \gamma_i \ell(\sC_i).
\end{align}
Then 
\begin{align}
\sum_{i=0}^k \lambda_i = \sum_{i=0}^k \gamma_i.
\end{align}
\end{lemma}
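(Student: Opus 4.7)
The plan is to exhibit an explicit linear functional $\Phi$ on metric $n$-bracketings, depending only on $\sB$, such that $\Phi(\ell(\sC_i)) = 1$ for every extended collision $\sC_i\leq \sB$; evaluating $\Phi$ on both decompositions of $\ell_\sB$ will then give $\sum_i \lambda_i = \Phi(\ell_\sB) = \sum_i \gamma_i$.

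I would take $F(\sB)$ to be the set of fusion brackets of $\sB$ (Definition~\ref{def:fusion_bracket}), together with $A_\max^1$ if it is not already included, and set $\Phi(\ell_\sB) \coloneqq \sum_{A\in F(\sB)} \ell_\sB(A)$. The key pointwise claim is that for every $i$, the bracket $\mathrm{fusion}(\sC_i)$ is the unique element of $F(\sB)$ which is essential in $\sC_i$, where we use the convention $\mathrm{fusion}(\sB_\min) = A_\max^1$. The nontrivial direction asserts that no other $A\in F(\sB)$ is essential in $\sC_i$: by Definition~\ref{essentialbrackets}, such an $A$ would need to satisfy $\pi^j(A) = \mathrm{fusion}(\sC_i)$ for some $j\geq 0$; but every $A \in F(\sB)$ has singleton projection $\pi(A)$, so $\pi^j(A)$ is singleton for all $j\geq 1$, whereas the fusion bracket of any (extended) collision is nonsingleton, forcing $j=0$ and $A = \mathrm{fusion}(\sC_i)$.

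I would then check that $\mathrm{fusion}(\sC_i)$ actually lies in $F(\sB)$: for $i\geq 1$, the inclusion $\sC_i \leq \sB$ immediately ensures that $\mathrm{fusion}(\sC_i)$ satisfies the defining properties of a fusion bracket of $\sB$; for $i=0$ this holds by the construction of $F(\sB)$. The pointwise claim then yields
\begin{align}
\ell_\sB(A) = \sum_i \lambda_i\,\ell(\sC_i)(A) = \sum_{\sC_i:\,\mathrm{fusion}(\sC_i)=A} \lambda_i
\end{align}
for every $A\in F(\sB)$, and summing over $A\in F(\sB)$ gives $\Phi(\ell_\sB) = \sum_i \lambda_i$, as the sets $\{\sC_i:\mathrm{fusion}(\sC_i)=A\}_{A\in F(\sB)}$ partition the indexing set.

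The delicate bookkeeping point is the case in which $A_\max^1$ is not itself a fusion bracket of $\sB$: we nevertheless include it in $F(\sB)$ to accommodate the $\lambda_0\ell(\sB_\min)$ summand, and in that case the partition class at $A_\max^1$ is $\{\sB_\min\}$ while the pointwise identity reduces to $\ell_\sB(A_\max^1)=\lambda_0$. Modulo this convention, the argument is essentially a pointwise reformulation of Lemma~\ref{fusionmetricbracketingdecomp}.
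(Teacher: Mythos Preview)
Your approach is correct and in fact more direct than the paper's. The paper groups the terms by fusion bracket via Lemmas~\ref{fusionmetricbracketingdecomp} and~\ref{metricpartitionlemma}, reduces to the case of a single fusion bracket $A$, and then argues by induction on $n$ (replacing each $\ell(\sC_i)$ by $\ell(\pi(\sC_i))$ and invoking the result for $\pi(\cT)$). You instead evaluate the metric $n$-bracketing at each $A \in F(\sB)$, obtaining $\ell_\sB(A) = \sum_{\mathrm{fusion}(\sC_i)=A} \lambda_i$ directly; summing over $F(\sB)$ gives $\sum_i \lambda_i = \Phi(\ell_\sB)$ without any induction. Your pointwise claim is precisely what one gets by evaluating both sides of Lemma~\ref{fusionmetricbracketingdecomp} at the fusion bracket $A$, so the inductive step in the paper's argument is, in hindsight, avoidable. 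What your version buys is an explicit formula $\sum_i\lambda_i = \sum_{A\in F(\sB)}\ell_\sB(A)$, not just the equality of the two coefficient sums.

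One small caveat: your treatment of the $\sB_\min$ summand via $A_\max^1$ tacitly assumes $|V^1(\cT)| \geq 2$. If $|V^1| = 1$, the maximum $1$-bracket is singleton and contributes zero to every $\ell(\sC_i)$, so including it in $F(\sB)$ does not recover $\lambda_0$. The easy fix is to use $A_\max^{k_0}$ instead, where $k_0$ is the least depth with $|V^{k_0}(\cT)| \geq 2$; your uniqueness argument then goes through verbatim, since any maximum bracket lying in $F(\sB)$ must have all lower levels singleton and hence equals $A_\max^{k_0}$. (The paper's remark in the proof of Lemma~\ref{fusionmetricbracketingdecomp} that the maximum $1$-bracket is ``the only possible trivial fusion bracket for a collision'' has the same blind spot.)
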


\begin{proof}
By Lemma \ref{metricpartitionlemma}, we can group the terms in both expressions for $\ell_{\sB}$ according to the fusion brackets of the collisions, and each sum must agree.
Therefore the verification of this lemma reduces to the case where $\sB$ has a single fusion bracket $A$.
If $A$ is an $n$-bracket then there is only a single  (type 1) collision having $A$ as its fusion bracket, and the statement is trivial.
Otherwise, we can project the sums, i.e.\ replace each term $\ell(\sC)$ with $\ell(\pi(\sC))$, and apply induction.
\end{proof}

\subsection{The metric \texorpdfstring{$n$}{n}-bracketings form an abstract fan}

\

\begin{lemma}\label{metricabstractfanlemma}
The metric $n$-bracketing complex $\cK^{\met}(\cT)$ is a conical set complex whose face poset is $\cK(\cT)$.
\end{lemma}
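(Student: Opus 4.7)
The plan is to verify the two conditions of Definition~\ref{conecomdef} directly, while simultaneously identifying the face poset of $\cK^\met(\cT)$ with $\cK(\cT)$ via the assignment $\sB \mapsto \tau^\met(\sB)$. Each $\tau^\met(\sB)$ is already a conical set by Lemma~\ref{def:ops_on_metric_n-bracketings}, so the real content lies in describing faces explicitly and controlling pairwise intersections.

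First I would show that $\tau^\met(\sB')$ is a face of $\tau^\met(\sB)$ whenever $\sB' \leq \sB$. It is a conical subset by construction. The two face implications of Definition~\ref{facedefinition} then follow from the key observation that scaling a metric $n$-bracketing by a positive real preserves its underlying $n$-bracketing, while adding two metric $n$-bracketings produces one whose underlying $n$-bracketing is the join of the summands'. Consequently, if $\lambda x \in \tau^\met(\sB')$ with $\lambda > 0$, then the underlying bracketing of $x$ is dominated by $\sB'$; and if $x + y \in \tau^\met(\sB')$, then the join of the underlying bracketings of $x$ and $y$ is dominated by $\sB'$, which forces both of them to be dominated by $\sB'$.

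For the converse, I would prove that every face $\tau_2$ of $\tau^\met(\sB)$ has the form $\tau^\met(\sB')$ for a unique $\sB' \leq \sB$. Let $S \coloneqq \{\sC \in \fX(\cT) : \ell(\sC) \in \tau_2\}$ and set $\sB' \coloneqq \bigvee S$, with the convention that $\sB' = \sB_{\min}$ if $S$ is empty. Since every conical subset contains $0$ and since $0 = \ell(\sB_{\min}) + (-\ell(\sB_{\min}))$ with both summands in $\tau^\met(\sB)$, the second face axiom forces $\pm \ell(\sB_{\min}) \in \tau_2$, so the entire lineality $\langle \ell(\sB_{\min}) \rangle_{\mathbb{R}}$ lies in $\tau_2$. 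Combined with Proposition~\ref{conicalcomb} applied to $\sB'$, this yields $\tau^\met(\sB') \subseteq \tau_2$. For the reverse containment, given $x \in \tau_2$, use Proposition~\ref{conicalcomb} to expand $x = \lambda_0 \ell(\sB_{\min}) + \sum_{i \geq 1} \lambda_i \ell(\sC_i)$ with $\lambda_i \geq 0$ for $i \geq 1$. Subtracting the lineality term leaves an element of $\tau_2$, and iterated application of the two face axioms peels off each $\ell(\sC_i)$ with $\lambda_i > 0$ as a member of $\tau_2$; hence each such $\sC_i$ lies in $S$, so the underlying bracketing of $x$ is dominated by $\sB'$, i.e.\ $x \in \tau^\met(\sB')$.

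Finally, for the intersection condition I would exploit the fact that $\cK(\cT)$ admits pairwise meets: the paper establishes that $\wh \cK(\cT)$ is a lattice, and since $\sB_{\min}$ is a minimum of $\cK(\cT)$, any meet in $\wh \cK(\cT)$ of two elements of $\cK(\cT)$ remains in $\cK(\cT)$. An element of $\tau^\met(\sB_1) \cap \tau^\met(\sB_2)$ has underlying bracketing bounded above by both $\sB_1$ and $\sB_2$, hence by $\sB_1 \wedge \sB_2$, so the intersection equals the single face $\tau^\met(\sB_1 \wedge \sB_2)$ --- even stronger than the union-of-faces property required by Definition~\ref{conecomdef}. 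Together, the three steps exhibit $\sB \mapsto \tau^\met(\sB)$ as an order-preserving bijection between $\cK(\cT)$ and the face poset of $\cK^\met(\cT)$. I expect the main obstacle to be the converse direction in step two: one must carefully separate the lineality direction from the collision generators when applying the face axioms, which is where the signed coefficient $\lambda_0$ in Proposition~\ref{conicalcomb} plays the essential role.
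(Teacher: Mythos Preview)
Your proposal is correct and follows the same approach as the paper: both identify the faces of $\tau^\met(\sB)$ with the $\tau^\met(\sB')$ for $\sB'\leq\sB$ by using the collision generators furnished by Proposition~\ref{conicalcomb}, and both establish the intersection condition via the meet $\tau^\met(\sB_1)\cap\tau^\met(\sB_2)=\tau^\met(\sB_1\wedge\sB_2)$. You spell out slightly more than the paper does---explicitly verifying that $\tau^\met(\sB')$ is a face when $\sB'\leq\sB$ and isolating the lineality direction before peeling off collision summands---but the substance is the same.
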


\begin{proof}
We must verify the conditions of Definition \ref{conecomdef}.  For establishing condition $(1)$, let $\sB \in \cK(\cT)$ and  
let $\tau$ be a face of $\tau^{\met}(\sB)$.  Let $\sC_1, \ldots, \sC_k \in \fX(\cT)$ be the collisions such that $\ell(\sC_i) \in \tau$ for all $i$.  We claim that $\tau = \tau(\sB')$ with $\sB' = \vee_{i=1}^k \sC_i$.  As $\tau$ is a conical set, we know that $\tau(\sB')\subseteq \tau$.  On the other hand if $\ell_{\sB''} \in \tau$ with ${\sB''} \nleq \sB'$, then by Lemma \ref{conicalcomb} there is an expression for $\ell_{\sB''}$ as a  conical combination of metric collisions.  There must be some $\ell(\sC)$ in this expression with $\sC \leq \sB''$ such that $\sC \nleq \sB'$.  By Definition \ref{facedefinition} for a face, $\ell(\sC) \in \tau$ contradicting the assumption that $\sC \neq \sC_i$ for any $i$.

For verifying condition $(2)$, we show the stronger statement that if $\tau^\met(\sB_1)$ and $\tau^\met(\sB_2)$ are elements of $\cK^\met(\cT)$, then $\tau^\met(\sB_1) \,\cap\, \tau^\met(\sB_2) = \tau^\met(\sB_1\,\wedge\,\sB_2)$.
Both inclusions $ \tau^\met(\sB_1 \,\wedge\, \sB_2) \subseteq \tau^\met(\sB_1) \,\cap\, \tau^\met(\sB_2)$ and $\tau^\met(\sB_1) \,\cap\, \tau^\met(\sB_2) \subseteq \tau^\met(\sB_1 \,\wedge\, \sB_2)$ are straightforward.
The face poset of $\cK^{\met}(\cT)$ is $\cK(\cT)$ by the definition of $\cK^{\met}(\cT)$.
\end{proof}

\begin{lemma}\label{metricweaklyembedded}
 The metric $n$-bracketing complex $\cK^{\met}(\cT)$ is a weakly embeddable cone complex.
\end{lemma}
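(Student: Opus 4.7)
The plan is to exhibit an explicit weak embedding of $\cK^{\met}(\cT)$ via the natural bracket-evaluation map. Writing $B(\cT)$ for the (finite) collection of all brackets of $\cT$ at depths $0 \leq k \leq n$, I would define the global map
\begin{align}
\Phi: X^{\met}(\cT) \to \mathbb{R}^{B(\cT)}, \qquad (\sB, \ell_\sB) \mapsto \bigl(\ell_\sB(A)\bigr)_{A \in B(\cT)},
\end{align}
and take $\cG \coloneqq \{\Phi(\tau^{\met}(\sB)) : \sB \in \cK(\cT)\}$. My goal would be to show that the triple $(\cK^{\met}(\cT), \cG, \Phi)$ satisfies Definition \ref{weaklyembeddeddef}.

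First, for each $\sB \in \cK(\cT)$ the restriction $\Phi_\sB \coloneqq \Phi|_{\tau^{\met}(\sB)}$ is linear; this is immediate from the bracketwise addition and scalar multiplication used in the proof of Lemma \ref{def:ops_on_metric_n-bracketings}. I would then identify the image $\Phi_\sB(\tau^{\met}(\sB))$ as the polyhedral cone in $\mathbb{R}^{B(\cT)}$ cut out by the equations $\ell(A) = 0$ for each $A \notin \sB$ and each singleton bracket $A \in \sB$, the coherence equations \eqref{eq:coherences}, and the inequalities $\ell(A) \geq 0$ for each nontrivial $A \in \sB$. Since $\cT$ is finite these conditions are finite in number, so the image is a genuine polyhedral cone, with a one-dimensional lineality direction spanned by $\Phi(\ell(\sB_{\min}))$ in accordance with Proposition \ref{conicalcomb}.

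The crux of the argument is the injectivity of each $\Phi_\sB$. Suppose $(\sB_1, \ell_{\sB_1}), (\sB_2, \ell_{\sB_2}) \in \tau^{\met}(\sB)$ have a common image under $\Phi$. Then $\ell_{\sB_1}$ and $\ell_{\sB_2}$ agree on every bracket of $\cT$, so their supports coincide; adjoining the common collections of singleton and maximum brackets, $\sB_1$ and $\sB_2$ have the same underlying bracket set. Because $\sB_1, \sB_2 \leq \sB$, their height partial orders are both restrictions of $<_\sB$ to this common bracket collection (consistent with the uniqueness statement of Proposition \ref{n-bracketingunionlemma}), and hence agree. Thus $\sB_1 = \sB_2$, showing that $\Phi_\sB$ is a linear isomorphism onto its polyhedral cone image.

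These observations simultaneously exhibit each $\tau^{\met}(\sB)$ as an abstract cone, thereby upgrading $\cK^{\met}(\cT)$ from the conical set complex of Lemma \ref{metricabstractfanlemma} to a cone complex, and they realize the weak embedding. Piecewise-linearity of $\Phi$ is then automatic, because the $\Phi_\sB$ are restrictions of a single globally-defined map and hence agree on every intersection $\tau^{\met}(\sB_1) \cap \tau^{\met}(\sB_2) = \tau^{\met}(\sB_1 \wedge \sB_2)$. I anticipate that the injectivity step will be the most delicate, its validity hinging on the fact that an $n$-bracketing is determined by its underlying collection of brackets.
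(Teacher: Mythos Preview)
Your approach is essentially the same as the paper's: both define the evaluation map sending $\ell_\sB$ to its vector of bracket values (the paper uses only nonsingleton brackets, you use all of them---an immaterial difference since singletons always take value $0$), and both observe this is linear and injective on each $\tau^{\met}(\sB)$. You supply more detail on the injectivity step, correctly noting that within a fixed $\tau^{\met}(\sB)$ the height partial order is the restriction of $<_\sB$ and hence determined by the bracket collection; the paper simply asserts nondegeneracy. One caution: your closing sentence, taken out of context, reads as the false claim that an $n$-bracketing is globally determined by its brackets---the paper's subsequent remark explicitly notes that $\eta$ forgets the height order and is therefore \emph{not} globally injective---but your actual argument correctly works locally within $\tau^{\met}(\sB)$, so this is only a matter of phrasing.
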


\begin{proof}
We define a piecewise-linear map  $\eta:\cK^{\met}(\cT)\rightarrow \mathbb{R}^w$, where $w$ is the total number of nonsingleton brackets for $\cT$, such that the image of each conical set is a cone.  
Send $\ell_{\sB} \mapsto {\bf{v}_{\ell_{\sB}}}$ where the coordinate of ${\bf{v}_{\ell_{\sB}}}$ associated to the bracket $A$ has entry $\ell_{\sB}(A)$.  
It is clear that this map is piecewise-linear and nondegenerate on each $\tau^{\met}(\sB)$.
\end{proof}

\begin{remark}
The map $\eta$ forgets the height partial order, thus it may not be globally injective, and the image of $\eta$ may not be a fan.
We note that image is a fan if $\cK(\cT)$ is a concentrated $n$-associahedron.
It is clear from considerations of dimension that this fan is not complete.
For the associahedron, this provides a seemingly canonical fan realization which we are not aware of authors having explicitly considered previously, perhaps because it is incomplete.
\null\hfill$\triangle$
\end{remark}

\begin{proposition}\label{metrinbracketingabstractfanprop}
The metric $n$-bracketing complex $\cK^{\met}(\cT)$ is an abstract fan whose face poset is $\cK(\cT)$.
\end{proposition}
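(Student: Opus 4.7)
The plan is to assemble Proposition \ref{metrinbracketingabstractfanprop} directly from the two preceding lemmas, plus a finiteness observation and a strengthening of the intersection condition.

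First, I would note that $\cK(\cT)$ is a finite poset (it has only finitely many $n$-bracketings of a fixed finite rooted plane tree $\cT$), so $\cK^\met(\cT) = \{\tau^\met(\sB) : \sB \in \cK(\cT)\}$ is a finite collection of conical sets. This handles the finiteness requirement built into Definition \ref{abstractfandef}.

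Next, I would verify that each $\tau^\met(\sB)$ is an abstract cone. By Lemma \ref{metricweaklyembedded}, the map $\eta\colon \cK^\met(\cT)\to \bR^w$ is piecewise-linear and restricts to a linear isomorphism from $\tau^\met(\sB)$ onto the cone $\eta(\tau^\met(\sB)) \subseteq \bR^w$ (which is a cone because, by Proposition \ref{conicalcomb}, it is the image of the finitely-generated conical set spanned by $\{\ell(\sC) : \sC \leq \sB\} \cup \{\pm\ell(\sB_\min)\}$). Hence each $\tau^\met(\sB)$ is an abstract cone in the sense of our definitions.

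Finally, I would upgrade Definition \ref{conecomdef}(\ref{conecomdef2}) to Definition \ref{abstractfandef}(\ref{absfandef2}). In the proof of Lemma \ref{metricabstractfanlemma}, the stronger identity
\begin{align}
\tau^\met(\sB_1) \cap \tau^\met(\sB_2) = \tau^\met(\sB_1 \wedge \sB_2)
\end{align}
was already established, where $\sB_1\wedge\sB_2$ exists in $\cK(\cT)$ because $\wh\cK(\cT)$ is a lattice and, when $\sB_1,\sB_2$ share a common lower bound (which they always do via $\sB_{\min}$), the meet lies in $\cK(\cT)$. Since $\sB_1\wedge\sB_2 \leq \sB_i$ for $i=1,2$, the intersection $\tau^\met(\sB_1\wedge\sB_2)$ is a single face of both $\tau^\met(\sB_1)$ and $\tau^\met(\sB_2)$. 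Closure under taking faces was established in the first half of the proof of Lemma \ref{metricabstractfanlemma}. Together with the identification of the face poset with $\cK(\cT)$ from that lemma, this verifies all conditions of Definition \ref{abstractfandef}, so $\cK^\met(\cT)$ is an abstract fan with face poset $\cK(\cT)$. No step here is a serious obstacle; the content is essentially bookkeeping on top of Lemmas \ref{metricabstractfanlemma} and \ref{metricweaklyembedded}.
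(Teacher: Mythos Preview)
Your proposal is correct and follows essentially the same approach as the paper, which simply says ``This follows from combining Lemma~\ref{metricabstractfanlemma} with Lemma~\ref{metricweaklyembedded}.'' You have made explicit the bookkeeping that the paper leaves implicit: finiteness of $\cK(\cT)$, that the weak embedding $\eta$ exhibits each $\tau^\met(\sB)$ as an abstract cone, and that the stronger meet identity $\tau^\met(\sB_1)\cap\tau^\met(\sB_2)=\tau^\met(\sB_1\wedge\sB_2)$ (already established in the proof of Lemma~\ref{metricabstractfanlemma}) upgrades condition~(\ref{conecomdef2}) of Definition~\ref{conecomdef} to condition~(\ref{absfandef2}) of Definition~\ref{abstractfandef}.
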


\begin{proof}
This follows from combining Lemma \ref{metricabstractfanlemma} with Lemma \ref{metricweaklyembedded}.
\end{proof}

\begin{remark}
Lemma \ref{metricweaklyembedded} is not an ingredient in establishing Theorem \ref{mainthmagain}; we only need Lemma \ref{metricabstractfanlemma}.
We have chosen to define an abstract fan as being a cone complex so that we may freely discuss quotients and localizations.
\end{remark}

\begin{definition}
\label{def:reduced_metric_n-bracketing_complex}
The \emph{reduced metric $n$-bracketing complex} is the quotient
\begin{align}
{\overline \cK}^\met(\cT)\coloneqq\cK^\met(\cT)/\langle\ell_{\sB_\min}\rangle.
\end{align}
\null\hfill$\triangle$
\end{definition}

\begin{definition}\label{c-metricdef}
Fix a collision $\sC$ in an $n$-associahedron $\cK(\cT)$.
We define the \emph{$\sC$-metric $n$-bracketing complex} to be the localization $\cK^{\met}(\cT)_{\tau^{\met}(\sC)}$, which we denote  $\cK^{\met}(\cT)_{\sC}$, and
we define the \emph{$\sC$-metric $n$-bracketings} to be the elements of $\cK^{\met}(\cT)_{\sC}$.
\null\hfill$\triangle$
\end{definition}

\begin{lemma}
\label{C-metriclemma}
The $\sC$-metric $n$-bracketings are naturally identified with pairs $(\sB, \ell_\sB)$ for $\sC \leq \sB$ with $\ell_{\sB}$ as described in Definition \ref{def:metric_stable_tree-pair} with condition (\ref{metricdefcond4}) relaxed to allow $\ell_{\sB}(A)<0$ if $A$ is an essential bracket in $\sC$. \end{lemma}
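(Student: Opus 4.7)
My plan is to exhibit mutually inverse natural maps between elements of $\cK^{\met}(\cT)_{\sC}$ and the pairs $(\sB, \ell_\sB)$ described, each map realized simply by viewing a point as a function on the brackets of $\cT$. The forward map reads an element of the localization as such a function; the inverse lifts a pair into the localization by adding a sufficiently large multiple of $\ell(\sC)$.

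For the forward direction, I would unwind Definitions \ref{localizationforabtractfandef} and \ref{c-metricdef}: any $x \in \cK^{\met}(\cT)_{\sC}$ lies in $\tau^{\met}(\sB) + \langle \ell(\sC)\rangle_{\mathbb{R}}$ for some $\sB \geq \sC$, so we may write $x = y + t\,\ell(\sC)$ with $y \in \tau^{\met}(\sB)$ and $t \in \mathbb{R}$. Interpreting $x$ as a function $\ell_\sB$ on brackets by pointwise sum, I would check each condition of Definition \ref{def:metric_stable_tree-pair} in turn. Conditions (\ref{metricdefcond1}) and (\ref{metricdefcond2}) persist because both $y$ and $\ell(\sC)$ vanish off $\sB$ and on singletons; the coherence equation is linear and is satisfied by both summands (the latter by Defprop \ref{defprop:metric_bracketing_associated_to_collision}), so it carries over to $\ell_\sB$; and on any nontrivial $A \in \sB$ that is not essential in $\sC$, the relation $\ell(\sC)(A) = 0$ forces $\ell_\sB(A) = y(A) > 0$. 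Only on essential brackets of $\sC$ does the value $\ell_\sB(A) = y(A) + t$ take an arbitrary real value, which is precisely the relaxation permitted in the statement.

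For the inverse direction, given a pair $(\sB, \ell_\sB)$ with $\sC \leq \sB$ satisfying the relaxed conditions, I would choose $M > 0$ strictly greater than $-\min\,\ell_\sB(A)$ where the minimum is taken over essential brackets $A$ of $\sC$, and set $y \coloneqq \ell_\sB + M\,\ell(\sC)$. Each defining condition of a genuine metric $n$-bracketing with underlying bracketing $\sB$ then holds for $y$: vanishing off $\sB$ and on singletons is inherited from both summands; strict positivity on every nontrivial bracket of $\sB$ holds by construction on the essential brackets of $\sC$ and by hypothesis elsewhere; and the coherence condition is the sum of two linear relations. Hence $y \in \tau^{\met}(\sB)$, and therefore $\ell_\sB = y - M\,\ell(\sC)$ lies in $\tau^{\met}(\sB) + \langle \ell(\sC)\rangle_{\mathbb{R}} = \tau^{\met}(\sB)_{\tau^{\met}(\sC)} \subseteq \cK^{\met}(\cT)_{\sC}$. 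Since both constructions preserve the pointwise function on brackets, they are mutually inverse.

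The main subtlety to flag is that the bracketing $\sB$ appearing in a pair is not uniquely determined by $\ell_\sB$: essential brackets of $\sC$ on which $\ell_\sB$ happens to vanish may be either included in or excluded from $\sB$. This ambiguity is harmless for the identification, because the forward map only requires the existence of some valid $\sB \geq \sC$ witnessing $x \in \tau^{\met}(\sB)_{\tau^{\met}(\sC)}$, and the correspondence is natural at the level of the underlying function on brackets. I do not anticipate further obstacles, as every assertion reduces to the linearity of the defining conditions of a metric $n$-bracketing together with the explicit form of $\ell(\sC)$ recorded in Defprop \ref{defprop:metric_bracketing_associated_to_collision}.
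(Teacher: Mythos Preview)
Your proof is correct and follows essentially the same approach as the paper's: in the forward direction you subtract a multiple of $\ell(\sC)$ from a genuine metric $n$-bracketing and check the relaxed conditions, and in the backward direction you add a sufficiently large multiple of $\ell(\sC)$ to restore positivity. You give more detail than the paper (which dispatches both directions in two sentences), and your remark on the nonuniqueness of $\sB$ is a useful clarification not made explicit there.
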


\begin{proof}
Let $(\sB,\ell_{\sB})$ be an  $\sC$-metric $n$-bracketing, then it is clear that $\ell_\sB$ is a function as described in Definition \ref{def:metric_stable_tree-pair} with the prescribed relaxation of condition (\ref{metricdefcond4}) because we have subtracted some multiple of $\ell_\sC$ from a metric $n$-bracketing.  Conversely, suppose that $\ell_\sB$ is a function as described in Definition \ref{def:metric_stable_tree-pair} with the prescribed relaxation of condition (\ref{metricdefcond4}).  Then we can add $\lambda\,\ell(\sC)$ for $\lambda >0$ large enough to obtain the new function $ \ell'_{\sB}$ which is a metric $n$-bracketing.
It follows that $\ell'_\sB \in \tau^{\met}(\sB)$ and thus $\ell_{\sB}$ is a $\sC$-metric $n$-bracketing.
\end{proof}

\section{The velocity fan}\label{velocityfansection}

In this section we will introduce a collection of polyhedral cones called the \emph{velocity fan} associated to an $n$-associahedron.
We establish Theorem \ref{mainthmagain}, the main result of this paper: the velocity fan associated to $\cK(\cT)$ is a complete polyhedral fan whose face poset is $\cK(\cT)$.

\subsection{Definition of the velocity fan}

\

We are now ready to define the velocity fan associated to a categorical $n$-associahedron.
Our formal definition is given using the input of a rooted plane tree.
Afterward, we will explain how the velocity fan can be understood very naturally via relative velocities of colliding affine coordinate spaces.
We emphasize that, despite its name, it is not at all clear that the velocity fan is in fact a fan, this is the content of Theorem \ref{mainvelocitystatement}.

Let $\cT$ be a rooted plane tree of depth $n\geq 1$ with $\cK(\cT)$ the corresponding $n$-associahedron.
We begin by recalling that there is a natural linear order $<_{\cT}$ on $V^k(\cT)$ induced by the rooted plane structure of $\cT$ (see Definitions \ref{rootedplanetreedef} and \ref{vertexorderdef}). 
We define the function $d_{\cT,k}:V^k(\cT) \rightarrow \mathbb{N}$ where for $u \in V^k(\cT)$, we have that $u$ is the $d_{\cT,k}(u)$-th vertex in $V^k(\cT)$ with respect to $<_{\cT}$ so that $d_{\cT,k}(u^k_i)=i$.  We simply write $d_{\cT}$ for $d_{\cT,k}$ if $k$ is clear from the context.
Let $m =|V(\cT)|-n$, and for $1\leq k \leq n$ set $t_k = |V^k(\cT)|-1$.\footnote{We note that our construction ignores each vertex in $\cT$ which is the unique vertex at its depth.}
Given ${\bf v} \in \mathbb{R}^m$, we denote its coordinates as
\label{def:vec_notation}
\begin{align}
{\bf v}
=
(v^1_1,\dots, v^1_{t_1},\dots,  v^n_1,\dots,v^n_{t_n}).
\end{align}
For defining the velocity fan, we first introduce a distinguished collection of ray generators associated to the collisions in $\cK(\cT)$.
Given $\sC \in \fX(\cT)$,
let
\begin{align}
\psi^\cT_{\cT/\sC}:V(\cT)\rightarrow V(\cT/\sC)
\end{align}
be the associated collision map as described in Definition \ref{def:collision_maps}, Propositions \ref{treemap} and \ref{prop:alpha_is_bijective}.
We may write $\psi_{\cT/\sC}$ if $\cT$ is clear from the context, or simply $\psi$ if both $\cT$ and $\cT/\sC$ are clear from the context.
\label{def:ray_generator}
We define a ray generator ${\rho}(\sC) \in \mathbb{R}^{m}$ having entries 
\begin{align}
{\rho}(\sC)^k_i
=
d_{\cT/\sC}(\psi(u^k_i))-d_{\cT/\sC}(\psi(u^k_{i+1}))+1.
\end{align}

Furthermore, we define
\label{def:all-ones}
\begin{align}
\rho(\sB_\min)={\bf 1}, 
\end{align}
where ${\bf 1}$ is the all-ones vector.
\label{def:cone_associated_to_n-bracketing}
Given $\sB \in \cK(\cT)$, we define the corresponding cone
\begin{align}
\tau(\sB) = \biggl\{ \,\,\sum_{\sC\leq \sB} \lambda_{\sC}\rho(\sC) +\lambda \rho(\sB_\min): \sC \in \fX(\cT) , \lambda_{\sC} \in \mathbb{R}_{\geq 0}, \lambda \in \mathbb{R}\,\biggr\}.
\end{align}

For $n=0$ and $\sB_{\min}$ the unique $0$-bracketing, we define $\tau(\sB_{\min})=\mathbb{R}^0$.

\begin{definition}
\label{def:velocity_fan}
Given a rooted plane tree $\cT$ of depth $n\geq 0$, we define the \emph{velocity fan} to be the collection of polyhedral cones
\begin{align}
\cF(\cT) = \{\tau(\sB):\sB \in \cK(\cT)\}.
\end{align}
\null\hfill$\triangle$
\end{definition}

\begin{remark}
Observe that that $\tau(\sB_\min) = \langle {\bf 1} \rangle_{\mathbb{R}}$.
Once $\cF(\cT)$ has been shown to be fan, it follows immediately that the lineality space of $\cF(\cT)$ is $\langle {\bf 1} \rangle_{\mathbb{R}}$.
Additionally, observe that
\begin{align}
{\tau}(\sC) = \{ \lambda {\rho}(\sC)+ \gamma \rho(\sB_\min): \lambda, \gamma \in \mathbb{R}, \lambda \geq 0\}.
\end{align}
\null\hfill$\triangle$
\end{remark}

\begin{figure}[ht]
\centering
\def\svgwidth{0.45\columnwidth}
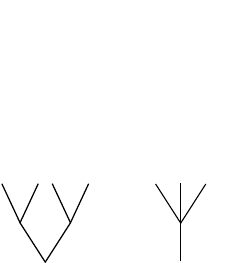
\caption{
\label{velocityrayex}
A collision $\sC$ and the corresponding ray $\rho(\sC)$.
}
\end{figure}

\begin{figure}[ht]
\centering
\includegraphics[width=0.75\columnwidth]{3ass_long_ex_3.pdf}

\

\begin{align*}
\rho(\sC)=(1,1,0,2,2,0,1,0,-1,3,0,4,-2)
\end{align*}
\caption{The collision (map) from Figures \ref{3collisionfig} and \ref{collisionmapfig}, and the corresponding ray generator.}
\label{raygenfor3collfig}
\end{figure}

The following is the main result of this paper.

\begin{theorem}
\label{mainthmagain}
The velocity fan $\cF(\cT)$ is a complete fan whose face poset is $\cK(\cT)$.
\end{theorem}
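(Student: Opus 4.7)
The plan is to deduce Theorem \ref{mainthmagain} from Proposition \ref{metricisomorphismprop} together with a separate argument for completeness. Proposition \ref{metricisomorphismprop} provides a piecewise-linear isomorphism $\Gamma: \cK^{\met}(\cT) \to \cF(\cT)$ sending $\ell(\sC) \mapsto \rho(\sC)$ and extending linearly on each cone; Proposition \ref{conicalcomb} ensures the image of $\tau^{\met}(\sB)$ is precisely $\tau(\sB)$. Combined with Proposition \ref{metrinbracketingabstractfanprop}, which identifies $\cK^{\met}(\cT)$ as a conical set complex with face poset $\cK(\cT)$, Lemma \ref{faceposetisolem} transports the conical set complex structure and face poset across $\Gamma$. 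Because each $\tau(\sB)$ is by construction the conical hull in $\mathbb{R}^m$ of a finite set of vectors together with the lineality direction $\langle\mathbf{1}\rangle_{\mathbb{R}}$, every cone is closed and polyhedral; since $\cK(\cT)$ is finite, this promotes the conical set complex structure on $\cF(\cT)$ to a genuine fan structure in $\mathbb{R}^m$ with the desired face poset.

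The remaining content is completeness of $\cF(\cT)$. My strategy is to prove directly that $\Gamma$ surjects onto $\mathbb{R}^m$. Given $\mathbf{v} \in \mathbb{R}^m$, I would first translate by a sufficiently large multiple of $\mathbf{1}$, which is absorbed by the lineality direction of $\cF(\cT)$, so that the shifted vector has all coordinates positive. I would then algorithmically peel off rays: at each stage, identify a containment-minimal collision $\sC$ discernible from the current residual's coordinates, subtract the maximal scalar $\lambda \rho(\sC)$ for which the residual still corresponds to a valid partial metric $n$-bracketing (using Scholium \ref{sch:subtract} as the bookkeeping tool on the metric side), and iterate. This process terminates since each step strictly shrinks the support of the associated metric $n$-bracketing, and it produces an expression $\mathbf{v} = \sum_i \lambda_i \rho(\sC_i) + c\mathbf{1}$ with the $\sC_i$ mutually compatible, witnessing membership of $\mathbf{v}$ in $\tau(\bigvee_i \sC_i)$.

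The main obstacle will be ensuring the peeling procedure yields a genuinely compatible family of collisions, so that $\bigvee_i \sC_i$ exists in $\cK(\cT)$; in particular, the induced height partial orders read off from successive residuals must cohere, so that the flagness of $\cK(\cT)$ established in Lemma \ref{flagcondition} can be invoked to assemble them into a single $n$-bracketing. A cleaner alternative would be induction on the depth $n$: the base case $n=1$ is completeness of Loday's wonderful associahedral fan (Theorem \ref{lodayfanisafan}), and the inductive step would proceed by verifying combinatorially that every codimension-one cone in $\cF(\cT)$ is a face of exactly two maximal cones, a balancing property which can be extracted from the characterization of containment-minimal collisions refining a codimension-one $n$-bracketing. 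Either route reduces completeness to a finite combinatorial verification once Proposition \ref{metricisomorphismprop} is in hand.
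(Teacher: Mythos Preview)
Your reduction of the fan structure and face poset to Proposition \ref{metricisomorphismprop}, Lemma \ref{faceposetisolem}, and the conical set complex structure on $\cK^{\met}(\cT)$ is exactly the paper's route (Theorem \ref{mainvelocitystatement} via Proposition \ref{mainvelocitythm} and Lemma \ref{metricabstractfanlemma}).

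For completeness, your primary strategy---peeling off rays to exhibit surjectivity of $\Gamma$ onto $\mathbb{R}^m$---has a circularity you have not fully confronted. You propose to invoke Scholium \ref{sch:subtract} ``on the metric side,'' but that scholium takes a metric $n$-bracketing as input, and for an arbitrary $\mathbf{v} \in \mathbb{R}^m$ you do not have one until you already know $\mathbf{v}$ lies in the image of $\Gamma$. The paper's mechanism for reading off a containment-minimal collision from coordinates (Lemma \ref{main}, Definition \ref{techdefinition}) is calibrated to vectors \emph{assumed} to lie in $\cF(\cT)$, and the paper explicitly remarks (in the paragraph following Proposition \ref{metrictriangulation}) that proving such a peeling algorithm succeeds for an arbitrary vector, without first knowing completeness, is ``surprisingly difficult.'' Translating by a multiple of $\mathbf{1}$ does not help either: for $n \geq 2$ the ray generators $\rho(\sC)$ have negative entries, so positivity of the residual carries no structural meaning.

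Your alternative---checking that every codimension-one cone is a face of two maximal cones---is essentially the paper's actual argument (Theorem \ref{complete2}, via Lemma \ref{refinedcompletecondition} and Lemma \ref{fulldimlemma} for purity and full-dimensionality, and Lemma \ref{codimension1lemma} for the two-cover property). The paper does not induct on $n$ or invoke the $n=1$ Loday case; it inducts on $|V(\cT)|$ and uses the characterization of minimal collisions (Lemma \ref{collisionminimalchar}) to produce the two covering $n$-bracketings directly.
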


\begin{proof}
We will establish Theorem \ref{mainthmagain} as a combination of two results: Theorem \ref{mainvelocitystatement} (\emph{the velocity fan is a fan}) and Theorem \ref {complete2} (\emph{the velocity fan is complete}).
Theorem \ref{mainvelocitystatement} will follow as a direct consequence of Proposition \ref{mainvelocitythm} (\emph{the velocity fan is piecewise-linearly isomorphic to the metric $n$-bracketing complex $\cK^{\met}(\cT)$}), combined with Lemma \ref{faceposetisolem} (\emph{The bijective piecewise-linear image of conical set complex is a conical set complex with the same face poset}) and Lemma \ref{metricabstractfanlemma} (\emph{$\cK^{\met}(\cT)$ is a conical set complex with face poset $\cK(\cT)$}).
\end{proof}

Thus Theorem \ref{mainthmagain} reduces to verifying Proposition \ref{mainvelocitythm} and Theorem \ref {complete2}.

\begin{definition}
\label{reducedvelocitydef}
We define the \emph{reduced velocity fan} to be the image of the velocity fan in tropical projective space
\begin{align}
\overline{\cF}(\cT)
=
\{\tau(\sB)/\langle {\bf 1} \rangle_{\mathbb{R}}:\sB \in \cK(\cT)\}.
\end{align} 
\null\hfill$\triangle$
\end{definition}

\begin{remark}\label{linealityspacermk}
As noted above, the velocity fan has lineality space $\langle {\bf 1} \rangle_{\mathbb{R}}$, hence it will follow as an immediate consequence of Theorem \ref{mainthmagain} that the reduced velocity fan is a pointed complete polyhedral fan.
Experts will note that the relationship between these two fans is a common and sometimes subtle distinction which arises in the literature on tropical geometry and generalized permutahedra.
Our choice to work with the velocity fan for much of the paper, rather than the reduced velocity fan, is due to two considerations.
The first, more superficial, reason is that some arguments are quite technical and we find it easier to work with explicit coordinates.
The second, more substantive, reason is that we will be interested in acting on subsets of the velocity fan by certain linear transformations (the adjoint permutation transformations $P^T_{\sigma}$\,) which do not have ${\bf 1}$ as an eigenvector, and we currently do not have a satisfactory way of discussing such maps in tropical projective space (see subsection \ref{permtuationssubsection}).
We feel that this second reason highlights a deeper aspect of our construction.
\null\hfill$\triangle$
\end{remark}

For considering the following proposition, we invite the reader to revisit Subsection \ref{Lodayfansubsection}.
\begin{proposition}\label{Lodayspecial}
The velocity fan of a categorical $1$-associahedron, i.e.\ an associahedron, is the wonderful associahedral fan, i.e. the normal fan of Loday's polytopal realization of the associahedron.
\end{proposition}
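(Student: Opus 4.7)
A depth-1 rooted plane tree $\cT$ with (say) $\ell$ leaves yields $\cK(\cT) \cong \cK_\ell$, and both the velocity fan $\cF(\cT)$ and the wonderful associahedral fan $\cF_\ell$ are collections of cones in the common ambient space $\bR^{\ell-1}$. The plan is to directly match the ray generators, and then the cones, of the two fans; once both coincide, equality of the fans is immediate.

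First I would match the rays. Under the bijection of Proposition \ref{prop:alpha_is_bijective}, the collisions $\sC \in \fX(\cT)$ correspond to consecutive subsets $A = \{a, a+1, \ldots, b\} \subseteq [\ell]$ of size at least two and strictly contained in $[\ell]$; the associated collision map $\psi^{\cT}_{\cT/\sC}$ simply merges the leaves $u^1_a, \ldots, u^1_b$ into a single depth-1 vertex of $\cT/\sC$. Reading off the positions $d_{\cT/\sC}(\psi(u^1_i))$ (which equal $i$ for $i<a$, equal $a$ for $a\le i\le b$, and equal $i-(b-a)$ for $i>b$) and plugging them into the defining formula $\rho(\sC)^1_i = d_{\cT/\sC}(\psi(u^1_i)) - d_{\cT/\sC}(\psi(u^1_{i+1})) + 1$ immediately yields $\rho(\sC)^1_i = 1$ when $i, i+1 \in A$ and $0$ otherwise, which is precisely the vector $\rho(A)$ of Definition \ref{deflodayfan}.

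Next I would match the cones. For $\sB \in \cK_\ell$, the collisions $\sC \leq \sB$ are in bijection with the non-singleton non-maximum brackets of $\sB$, so by Definition \ref{def:velocity_fan} the velocity cone $\tau(\sB)$ is generated by $\{\rho(A) : A \in \sB \text{ non-singleton and non-maximum}\}$ together with the lineality $\langle \rho(\sB_\min) \rangle_\bR = \langle {\bf 1} \rangle_\bR$. In the wonderful associahedral cone the singleton generators $\rho(\{i\})$ vanish and the maximum-bracket generator $\rho([\ell]) = {\bf 1}$ is absorbed into $\langle {\bf 1} \rangle_\bR$, so the surviving generating set is the same. The hardest step, and the only place where care is really needed, is the bookkeeping around the maximum bracket $[\ell]$: it is present in every $n$-bracketing (including $\sB_\min$) and so never defines a new atom in the velocity fan, while its Loday ray generator $\bf 1$ coincides with the velocity-fan lineality direction $\rho(\sB_\min) = \bf 1$, giving a harmless identification. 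Varying $\sB$ over $\cK_\ell$ then gives $\cF(\cT) = \cF_\ell$.
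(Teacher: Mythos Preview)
Your proposal is correct and follows essentially the same approach as the paper's proof: both argue that it suffices to show the ray generators of the velocity fan specialize to the standard $0$-$1$ ray generators $\rho(A)$ of Definition~\ref{deflodayfan}, and then conclude that the cones (hence the fans) coincide. The paper's proof is terser---it invokes Theorem~\ref{lodayfanisafan} and then simply says ``observe that our ray generators specialize to the standard $0$-$1$ ray generators''---whereas you actually carry out this observation explicitly by computing $d_{\cT/\sC}(\psi(u^1_i))$ and handling the bookkeeping around singleton and maximum brackets, but the logical skeleton is the same.
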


\begin{proof}
By Theorem \ref{lodayfanisafan}, know that the wonderful associahedral fan $\cF(\cK_n)$ has face poset given by the associahedron $\cK_n$, and $\langle {\bf 1} \rangle_{\mathbb{R}}$ as its lineality space.
Thus for verifying this statement it suffices, by the definition of the velocity fan, to observe that our ray generators specialize to the standard 0-1 ray generators for the wonderful associahedral fan.
\end{proof}

\begin{remark}
In our definition of the ray generator associated to a collision,
\begin{align}
{\rho}(\sC)^k_i
=
d_{\cT/\sC}(\psi(u^k_i))-d_{\cT/\sC}(\psi(u^k_{i+1}))+1,
\end{align}
the $1$ added at the end can be removed and this does not change the definition of the velocity fan.
This is because ${\bf 1}$ is in the lineality space of each cone and thus we could replace each ray generator with ${\rho}(\sC)-{\bf 1}$.  We have chosen not to do this for two reasons.
The first is that in Proposition \ref{Lodayspecial}, our choice of ${\rho}(\sC)$ specializes to the standard 0-1 ray generators for the wonderful associahedral fan.
More generally, as will be seen in \S \ref{s:concentrated_realization}, our velocity fan specializes to certain coarsenings of the braid arrangement where our choice of ray generators agrees with the standard 0-1 ray generators for the braid arrangement.
The second reason, explained in the following Remark \ref{whyremark}, is that there is a natural way of understanding our ray generators in terms of relative velocities of colliding affine coordinate spaces and our choice to leave the 1 is made to agree with this framework.
This all being said, there are some results for which ${\rho}(\sC)-{\bf 1}$ appears to be slightly nicer to work with, e.g.\ Lemma \ref{permutationtransformationlemma1}.
\null\hfill$\triangle$
\end{remark}

\begin{remark}\label{whyremark}{\bf The reason the velocity fan is called the velocity fan.}

Take $u \in V(\cT)$ and let ${\bf v}(u)$ be the vector encoding $u$ (see Definition \ref{vertexorderdef} and Figure \ref{nestedprop}).
We define a geometric arrangement of affine coordinate spaces.
Let $L(u) \subseteq \mathbb{R}^n$ be the affine coordinate space cut out by the equations $x_i = {\bf v}(u)_i$.
We define the arrangement $Y_\cT = \{ L(u): u \in V(\cT)\}$.
Let $\sC \in \fX(\cT)$.
Imagine the arrangement $Y_\cT$ moving to the arrangement $Y_{\cT/\sC}$ over a single unit of time, then ${\rho}(\sC)^k_i$ is equal to the relative velocity of $L(u^k_i)$ and $L(u^k_{i+1})$ in the direction $x_k$.  See Figure \ref{explicitcollisionfigure}.
\null\hfill$\triangle$
\end{remark}

\begin{figure}[ht]
\includegraphics[height=2.25in]{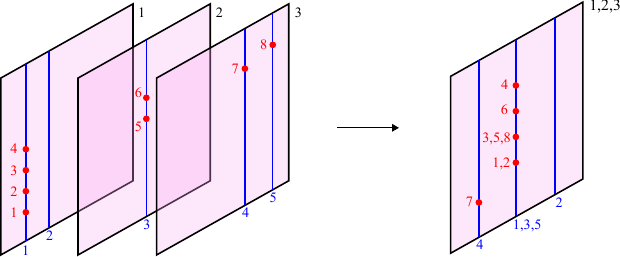}
\caption{A collision in a tree arrangement of spaces previously depicted in Figures  \ref{3collisionfig} and \ref{collisionmapfig}.  The ray generator $\rho(\sC)$ from Figure \ref{raygenfor3collfig} agrees with the relative velocities of the consecutive spaces as described in Remark \ref{whyremark}.}
\label{explicitcollisionfigure}
\end{figure}

We conclude this subsection with some fundamental lemmas for the velocity fan.
Let 
\begin{align}
{\bf v}
=
(v^1_1,\dots, v^1_{t_1},\dots, v^n_1,\dots,v^n_{t_n}) \in \mathbb{R}^m
\end{align}
and take
\begin{align}
\Delta^n_{j,k}({\bf v} ) \coloneqq \sum_{h=j}^{k-1} v^n_h.
\end{align}

\begin{lemma}
\label{fundvellemma}
Let $\sC, \sC' \in \wh \fX(\cT)$ and $1\leq j<k\leq t_n$, then

\begin{itemize}
\setlength{\itemindent}{-2.5em}
\item $\Delta^n_{j,k}({\rho}({\sC}))=k-j$ if and only if $u^n_j,u^n_k \in A$ for some essential $A \in {\sC}^n$,
\item $\Delta^n_{j,k}({\rho}({\sC}))>k-j$ if and only if $A(u^n_k)\,\,{\wh<_{\sC}}\,\,A(u^n_j)$,\footnote{Recall here, ${\wh<_{\sC}}$ is the extended height partial order for ${\sC}$.}
\item $\Delta^n_{j,k}({\rho}({\sC}))=0$ if $u^n_j,u^n_k$ are not descended from the root vertex of $\sC$.
\end{itemize}
\end{lemma}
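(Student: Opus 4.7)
The plan is to exploit a telescoping identity coming from the definition of $\rho(\sC)$. Writing $\psi = \psi^\cT_{\cT/\sC}$ for the collision map of $\sC$ (Proposition \ref{prop:alpha_is_bijective}), substitution gives
\begin{align*}
\Delta^n_{j,k}(\rho(\sC))
= \sum_{h=j}^{k-1}\bigl[d_{\cT/\sC}(\psi(u^n_h))-d_{\cT/\sC}(\psi(u^n_{h+1}))+1\bigr]
= d_{\cT/\sC}(\psi(u^n_j))-d_{\cT/\sC}(\psi(u^n_k))+(k-j).
\end{align*}
Since $d_{\cT/\sC}$ restricts to an order-preserving bijection on $V^n(\cT/\sC)$, each of the three bullets reduces to a statement about where $\psi(u^n_j)$ and $\psi(u^n_k)$ sit in the plane order of $\cT/\sC$.

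Bullet $(1)$ follows because $\Delta^n_{j,k}(\rho(\sC))=k-j$ is equivalent to $\psi(u^n_j)=\psi(u^n_k)$, and the explicit description of $\alpha^{-1}$ in Proposition \ref{prop:alpha_is_bijective} identifies the fibers of $\psi$ on $V^n(\cT)$ with the depth-$n$ components of essential $n$-brackets of $\sC$. For bullet $(3)$, under the hypothesis that $u^n_j$ and $u^n_k$ (and therefore, since the depth-$n$ descendants of the root vertex of $\sC$ form a contiguous block in the plane order, every intermediate $u^n_h$) are not descended from the root vertex, $\psi$ acts as the identity on all of them and merely shifts their $\cT/\sC$-indices uniformly by the number of depth-$n$ vertices lost in the collapse elsewhere in $\cT$. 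Consequently $d_{\cT/\sC}(\psi(u^n_j))-d_{\cT/\sC}(\psi(u^n_k)) = j-k$, and the telescope collapses to $\Delta^n_{j,k}(\rho(\sC))=0$.

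The main obstacle is bullet $(2)$, which requires translating $\psi(u^n_j)>_{\cT/\sC}\psi(u^n_k)$ into the extended height partial order relation $A(u^n_k)\,{\wh<_\sC}\,A(u^n_j)$. I would locate the unique vertex $u'\in V(\cT/\sC)$ at which the root paths of $\psi(u^n_j)$ and $\psi(u^n_k)$ diverge, together with the children $v_j,v_k\in C(u')$ on their downward paths. By Proposition \ref{prop:alpha_is_bijective}, the plane order on $C(u')$ is transported from the height partial order $<_\sC$ on the essential brackets $\wt A_j, \wt A_k$ of $\sC$ that respectively map to $v_j$ and $v_k$; these brackets share a common depth $d$ and projection $\pi(\wt A_j)=\pi(\wt A_k)$, and are disjoint at depth $d$. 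So $v_j>_{\cT/\sC}v_k$ is equivalent to $\wt A_k<_\sC\wt A_j$. The delicate step will be to combine clause (iii) of Definition \ref{heightpartialorderremark} with the iterated (\textsc{height partial orders}) coherence axiom for $n$-bracketings in order to lift this depth-$d$ comparison up to the depth-$n$ relation $A(u^n_k)\,{\wh<_\sC}\,A(u^n_j)$; the converse then follows by running the same chain in reverse and invoking the uniqueness of the divergence point in $\cT/\sC$.
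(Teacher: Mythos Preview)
Your proposal is correct and follows exactly the same approach as the paper: both arguments reduce immediately to the telescoping identity
\[
\Delta^n_{j,k}(\rho(\sC)) = d_{\cT/\sC}(\psi(u^n_j)) - d_{\cT/\sC}(\psi(u^n_k)) + (k-j),
\]
and then read off each bullet from the position of $\psi(u^n_j)$ and $\psi(u^n_k)$ in the plane order of $\cT/\sC$. The paper's proof is in fact considerably more terse than yours on bullet~(2): it simply asserts that $d_{\cT/\sC}(\psi(u^n_k)) < d_{\cT/\sC}(\psi(u^n_j))$ ``occurs if and only if $A(u^n_k)\,\wh<_\sC\,A(u^n_j)$,'' without unpacking the divergence-point argument or the lifting you outline. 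Your expanded treatment of that equivalence via the branch point $u'$ in $\cT/\sC$ and the corresponding essential brackets is a reasonable way to make that assertion precise.
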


\begin{proof}
We calculate that
\begin{align}
\Delta^n_{j,k}({\rho}({\sC}))
\end{align}
\begin{align}
=\sum_{i=j}^{k-1}d_{\cT/\sC}(\psi(u^n_i))-d_{\cT/\sC}(\psi(u^n_{i+1}))+1 \nonumber
\end{align}
\begin{align}
= d_{\cT/\sC}(\psi(u^n_j))-d_{\cT/\sC}(\psi(u^n_{k}))+(k-j).
\nonumber
\end{align}

Thus $\Delta^n_{j,k}({\rho}({\sC}))=k-j$ if and only if $d_{\cT/\sC}(\psi(u^n_j))=d_{\cT/\sC}(\psi(u^n_{k}))$, and $\Delta^n_{j,k}({\rho}({\sC}))>k-j$ if and only if $d_{\cT/\sC}(\psi(u^n_{k}))<d_{\cT/\sC}(\psi(u^n_j))$, which occurs if and only if $A(u^n_k)\,\,{\wh<_{\sC}}\,\,A(u^n_j)$ in the extended height partial order for $\sC$.
Finally if $u^n_j,u^n_k$ are not descended from the root vertex of $\sC$, then $d_{\cT/\sC}(\psi(u^n_j))-d_{\cT/\sC}(\psi(u^n_{k}))=j-k$.
\end{proof}

As remarked previously, we will use the notation $\pi$ to indicate several different projection maps. Given a rooted plane tree $\cT$ of depth $n$, we let $\pi(\cT)$ be the rooted plane tree obtained from $\cT$ by deleting the vertices $V^n(\cT)$.
\label{def:proj_of_n-associahedron}
Let $\pi(\cK(\cT))$ be the $(n-1)$-associahedron $\cK(\pi(\cT)$.
Given a point ${\bf v} = (v^1_1,\dots, v^1_{t_1},\dots, v^n_1,\dots,v^n_{t_n})$, we let $\pi({\bf v}) = ( v^1_1,\dots, v^1_{t_1},\dots, v^{n-1}_1,\dots,v^{n-1}_{t_{n-1}}).$
Given $Y\subseteq \mathbb{R}^n$ we set $\pi(Y) =\{ \pi{\bf v}: {\bf v} \in Y\}$.
Given a family $\cF$ of subsets of $\mathbb{R}^m$, we define the projection $\pi(\cF) = \{\pi(\tau):\tau \in \cF\}$. 

\begin{lemma}\label{rayprojection}
Suppose that $\sC \in \wh \fX(\cT)$.

\begin{itemize}
\item
If $\sC$ is type 2 or 3, then $\pi(\rho(\sC))= \rho(\pi(\sC))$.

\item
If $\sC$ is type 1, then 
$\pi(\rho(\sC))= {\bf 0}$ while $\rho(\pi(\sC)) = {\bf 1}.$
\end{itemize}

In particular, if $\{\sC_i:1\leq i\leq k\} \subseteq \wh \fX(\cT)$ is a collection of compatible extended collisions which are all not type 1, and ${\bf v} = \sum _{i =1}^k \lambda_i {\rho}(\sC_i)$ with $\lambda_i \in \mathbb{R}$ for $1\leq i \leq k$, then $\pi({\bf v}) = \sum _{i =1}^k \lambda_i {\rho}(\pi(\sC_i))$.
\end{lemma}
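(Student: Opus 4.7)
The plan is to perform a case analysis on the three mutually exclusive types in Definition \ref{codime1char}, handling the edge case $\sC = \sB_\min$ separately. In each case I will read off both sides of the claimed equation directly from the defining formula
\[
\rho(\sC)^k_i = d_{\cT/\sC}(\psi(u^k_i)) - d_{\cT/\sC}(\psi(u^k_{i+1})) + 1,
\]
where $\psi = \psi^\cT_{\cT/\sC}$. For each $k < n$, the comparison reduces to computing $d_{\cT/\sC}(\psi(u^k_i))$ against $d_{\pi(\cT)/\pi(\sC)}(\psi'(u^k_i))$, where $\psi' = \psi^{\pi(\cT)}_{\pi(\cT)/\pi(\sC)}$; this comparison is meaningful because $V^k(\cT) = V^k(\pi(\cT))$ for $k < n$.

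For type 1, the local collision map acts only on the depth-$n$ children of a depth-$(n-1)$ vertex, so $\psi$ is the identity on $V^k(\cT)$ for every $k < n$. Thus $d_{\cT/\sC}(\psi(u^k_i)) = i$, giving $\rho(\sC)^k_i = 0$ and $\pi(\rho(\sC)) = {\bf 0}$. The unique nontrivial bracket of $\sC$ lies at depth $n$ with singleton projection, so $\sC^l$ for $l \leq n-1$ contains only trivial brackets, forcing $\pi(\sC) = \sB_\min$ and $\rho(\pi(\sC)) = {\bf 1}$. For type 2, Lemma \ref{lem:fusion_bracket_for_collision} combined with the hypothesis that $\pi(\sC)$ is a collision forces the fusion bracket of $\sC$ to live at some depth $k_0 \leq n-1$ (fusion at depth $n$ would collapse $\pi(\sC)$ to the minimum). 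I would then unpack Proposition \ref{prop:alpha_is_bijective} to observe that the local collision map $f: \cT(u;W) \to \cT'$ for $\sC$ projects to the local collision map for $\pi(\sC)$ by deleting depth-$n$ vertices on both source and target. This identifies $\pi(\cT/\sC) = \pi(\cT)/\pi(\sC)$ and gives $\psi(u^k_i) = \psi'(u^k_i)$ for $k < n$, so $\rho(\sC)^k_i = \rho(\pi(\sC))^k_i$.

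The main obstacle is type 3, where $\pi(\sC) = \sB_\min$ yields $\rho(\pi(\sC)) = {\bf 1}$, and we must prove $\pi(\rho(\sC)) = {\bf 1}$. The first step is to classify the fusion bracket: since $\sC^l$ contains only trivial brackets for every $l \leq n-1$, and the fusion bracket is nonsingleton, it must be the maximum $k_0$-bracket for some $k_0 \in \{1, \ldots, n-1\}$; the singleton-parent condition then forces $V^l(\cT)$ to be a singleton for every $l \leq k_0 - 1$. The crucial combinatorial observation is that for every $k \in \{k_0, \ldots, n-1\}$ the only essential $k$-bracket of $\sC$ is the maximum $k$-bracket. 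Indeed, any nontrivial $A' \in \sC^n$ has $\pi(A') \in \sC^{n-1}$ trivial, and $\pi(A')$ cannot be a singleton (otherwise iterated projection would render the fusion bracket a singleton, contradicting its nontriviality), so $\pi(A')$ is the maximum $(n-1)$-bracket. This forces $(A')^l = V^l(\cT)$ for every $l \in \{k_0, \ldots, n-1\}$, and therefore every projection $\pi^{n-k}(A')$ is the maximum $k$-bracket. Consequently $\cT/\sC$ has a unique vertex at each depth $k \in \{k_0, \ldots, n-1\}$, so $\psi$ collapses all of $V^k(\cT)$ onto that vertex and $\rho(\sC)^k_i = 1 - 1 + 1 = 1$. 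For $k < k_0$ the $k$-th block is empty since $V^k(\cT)$ is a singleton, so $\pi(\rho(\sC)) = {\bf 1}$, as required.

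The edge case $\sC = \sB_\min$ is immediate: $\pi(\sB_\min) = \sB_\min$ at either depth and $\pi({\bf 1}) = {\bf 1}$. For the ``in particular'' statement, the per-collision identity $\pi(\rho(\sC_i)) = \rho(\pi(\sC_i))$ (which holds for every $\sC_i$ not of type 1, including $\sB_\min$) combines with linearity of $\pi$ to yield $\pi({\bf v}) = \sum_{i=1}^k \lambda_i \pi(\rho(\sC_i)) = \sum_{i=1}^k \lambda_i \rho(\pi(\sC_i))$.
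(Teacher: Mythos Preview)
Your proof is correct and follows the same approach as the paper, which simply asserts ``This is clear from the definition of $\rho(\sC)$.'' You have supplied the detailed case analysis that the paper omits; one minor quibble is that in the type~3 case you write ``contradicting its nontriviality'' when you mean ``contradicting its nonsingletonness'' (the fusion bracket there \emph{is} the maximum $k_0$-bracket, hence trivial, but it is nonsingleton since $|V^{k_0}(\cT)|\geq 2$).
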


\begin{proof}
This is clear from the definition of $\rho(\sC)$.
\end{proof}

\begin{proposition}\label{projfanthing}
For any rooted plane tree $\cT$, we have $\pi(\cF(\cT))=\cF(\pi(\cT)). $
\end{proposition}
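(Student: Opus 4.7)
The plan is to establish the stronger cone-by-cone statement: for every $\sB \in \cK(\cT)$ we have $\pi(\tau(\sB)) = \tau(\pi(\sB))$, and every cone in $\cF(\pi(\cT))$ arises as such a projection. The set-level equality $\pi(\cF(\cT)) = \cF(\pi(\cT))$ then follows.

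The forward inclusion $\pi(\tau(\sB)) \subseteq \tau(\pi(\sB))$ is an immediate consequence of Lemma \ref{rayprojection}. Any element of $\tau(\sB)$ can be written as $\sum_{\sC \leq \sB} \lambda_\sC \rho(\sC) + \lambda \rho(\sB_\min)$ with $\lambda_\sC \geq 0$. Applying $\pi$ termwise and invoking Lemma \ref{rayprojection}, the type-1 contributions become $\bf 0$, the type-2 and type-3 contributions become $\rho(\pi(\sC))$ with $\pi(\sC) \in \wh\fX(\pi(\cT))$ and $\pi(\sC) \leq \pi(\sB)$, and $\pi(\rho(\sB_\min)) = {\bf 1}_{m - t_n} = \rho(\sB_\min^{(n-1)})$. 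Each summand therefore lies in $\tau(\pi(\sB))$.

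For the reverse inclusion $\tau(\pi(\sB)) \subseteq \pi(\tau(\sB))$, take an arbitrary generator $\rho(\sC')$ with $\sC' \in \fX(\pi(\cT))$ and $\sC' \leq \pi(\sB)$. By Scholium \ref{lem:collisionlift}, there is some $\sC \in \wh\fX(\cT)$ with $\sC \leq \sB$ and $\pi(\sC) = \sC'$. Since $\sC' \neq \sB_\min^{(n-1)}$, the collision $\sC$ is neither the minimum bracketing nor type 1 (both of those have $\pi(\sC) = \sB_\min^{(n-1)}$), so $\sC$ is type 2, and Lemma \ref{rayprojection} gives $\pi(\rho(\sC)) = \rho(\sC')$. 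The lineality generator $\rho(\sB_\min^{(n-1)}) = {\bf 1}_{m - t_n}$ is likewise the projection of $\rho(\sB_\min) = {\bf 1}_m \in \tau(\sB)$. Assembling these preimages via linearity yields the inclusion.

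Finally, surjectivity onto $\cF(\pi(\cT))$ uses Lemma \ref{bracketinglift}: every $\sB' \in \cK(\pi(\cT))$ lifts to some $\sB \in \cK(\cT)$ with $\pi(\sB) = \sB'$, whence $\tau(\sB') = \pi(\tau(\sB)) \in \pi(\cF(\cT))$. I do not anticipate any real obstacle in this proof --- the technical content has been packaged into Lemma \ref{rayprojection}, Scholium \ref{lem:collisionlift}, and Lemma \ref{bracketinglift}. The only place one must pay attention is tracking which collision type $\sC$ belongs to, so that the identifications $\pi(\rho(\sC)) = \rho(\pi(\sC))$ versus $\pi(\rho(\sC)) = {\bf 0}$ are applied correctly; in particular, it is important that for $\sC' \neq \sB_\min^{(n-1)}$ the lift $\sC$ provided by Scholium \ref{lem:collisionlift} is necessarily of type 2, since this is what lets us realize $\rho(\sC')$ as the projection of a bona fide ray generator of $\tau(\sB)$ rather than of the zero vector.
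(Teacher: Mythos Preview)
Your proof is correct and follows essentially the same approach as the paper's: establish $\pi(\tau(\sB)) = \tau(\pi(\sB))$ cone-by-cone using Lemma~\ref{rayprojection} for the forward inclusion and Scholium~\ref{lem:collisionlift} for the reverse, then invoke Lemma~\ref{bracketinglift} for surjectivity. The paper organizes the forward direction slightly differently by explicitly separating type~3 collisions (whose projections hit the lineality generator ${\bf 1}$) from type~2, whereas you absorb both into $\wh\fX(\pi(\cT))$; both treatments are equivalent.
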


\begin{proof}
By definition
\begin{align}
\pi(\cF(\cT))= \{\pi(\tau(\sB)): \sB \in \cK(\cT)\},
\end{align}
and
\begin{align}\cF(\pi(\cT))=\{\tau(\sB):\sB \in \cK(\pi(\cT))\}.
\end{align}

Let $\sB \in \cK(\cT)$, then $\pi(\sB) \in \cK(\pi(\cT))$.
Applying Lemma \ref{rayprojection}, we observe that for any $\sC \in \wh \fX(\cT)$ with $\sC \leq \sB$

\begin{itemize}
\item
if $\sC$ is type 1, then $\pi(\rho(\sC))={\bf 0}$

\item
if $\sC$ is type 2, then $\pi(\rho(\sC))$ is a ray generator for $\tau(\pi(\sB))$, and

\item
if $\sC$ is type 3, then $\pi(\rho(\sC)) = {\bf 1}$ lies in the lineality space of $\tau(\pi(\sB))$.
\end{itemize}
By Scholium \ref{lem:collisionlift}, for every $\sC$ an extended collision with  $\sC\leq \pi(\sB)$ there exists some extended collision $\sC' \leq \sB$ such that $\pi(\sC') =\sC$.
So, the ray and lineality space generators of $\tau(\sB)$ surject onto the ray and lineality space generators for $\tau(\pi(\sB))$.
By linearity of the projection map, $\pi(\tau(\sB))=\tau(\pi(\sB))$.
Moreover, by Lemma \ref{bracketinglift}, for any $\sB \in \cK(\pi(\cT))$ there exists some $\sB' \in \cK(\cT)$ such that $\pi(\sB') = \sB$, and the desired statement follows.
\end{proof}

\subsection{Metric \texorpdfstring{$n$}{n}-bracketings and the velocity fan}

\

Our proof that the velocity fan $\cF(\cT)$ associated to an $n$-associahedron $\cK(\cT)$ is a  fan will proceed by induction on $n$.
Thus we will assume in what follows that every statement being proven for $n$-associahedra holds for all $(n-1)$-associahedra.
The base case of this induction is when $n=0$ and the only $0$-associahedron is the singleton poset with velocity fan a point.\footnote{The base case $n=0$ was chosen for aesthetic reasons: we would prefer to recover the special case of $n=1$ from our construction and arguments rather than build on it.}  
  
\begin{definition}\label{Gammamap}
We define a map $\Gamma: \cK^{\met}(\cT) \rightarrow \cF(\cT)$.  Let $\sC_0, \dots, \sC_k$ be extended collisions with $\sC_0 = \sB_\min$ such that $\sC_i \leq \sB$ for all $i$.
Let $\lambda_i \in \mathbb{R}$ with $\lambda_i\geq 0$ for $i>0$.
Given a metric $n$-bracketing $\ell_{\sB}$ such that $\ell_{\sB} = \sum _{i =0}^k \lambda_i \ell(\sC_i)$, we define
\begin{align}
\Gamma(\ell_{\sB}) \coloneqq \sum _{i =0}^k \lambda_i \rho(\sC_i).
\end{align}
\null\hfill$\triangle$
\end{definition}

Equivalently, for $\sC \in \wh \fX(\cT)$, take $\Gamma(\ell(\sC))=\rho(\sC)$,
then $\Gamma$ is obtained by extending this map  conically on the conical sets in $\cK^{\met}(\cT)$.  

\begin{proposition}\label{mainvelocitythm}
The map $\Gamma$ is a piecewise-linear isomorphism. 
\end{proposition}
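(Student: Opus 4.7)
The plan is to establish this by induction on the depth $n$, with the main work being to show that on each cone $\tau^{\met}(\sB)$ the restriction $\Gamma|_{\tau^{\met}(\sB)}$ is a linear isomorphism onto $\tau(\sB)$; piecewise-linearity and gluing then follow from the identity $\tau^{\met}(\sB_1) \cap \tau^{\met}(\sB_2) = \tau^{\met}(\sB_1 \wedge \sB_2)$ established inside the proof of Lemma \ref{metricabstractfanlemma}. The base case $n = 0$ is trivial, and $n = 1$ reduces to the classical identification of metric 1-bracketings with the cones of the wonderful associahedral fan via Theorem \ref{lodayfanisafan}. For the inductive step I exploit the projection-compatibility $\pi \circ \Gamma = \Gamma \circ \pi$, which is a direct consequence of Lemma \ref{rayprojection} and Proposition \ref{projfanthing}, together with the decomposition $\ell_\sB = \sum_{A \in U(\sB)} \ell_{\sB|_A}$ by fusion brackets (Lemma \ref{metricpartitionlemma}), which localizes the verification to a single fusion bracket at a time.

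Well-definedness of $\Gamma$ is the first technical step: because $\tau^{\met}(\sB)$ is generally not simplicial (see Figure \ref{sumofcollisionsfig}), distinct conical decompositions $\ell_\sB = \sum \lambda_i \ell(\sC_i) = \sum \gamma_i \ell(\sC_i)$ must produce the same velocity vector. I argue by induction on the number of nontrivial brackets in $\sB$. Choose a containment-minimal collision $\sC \leq \sB$, and set $t = \operatorname{min} \ell_\sB(A)$ over the nontrivial brackets $A$ in the support of $\ell(\sC)$. By Scholium \ref{sch:subtract}, $\ell_\sB - t \ell(\sC)$ is a metric $n$-bracketing with strictly smaller support, and after subtracting $t \rho(\sC)$ on the velocity side we invoke the inductive hypothesis. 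The delicate point is that $\sC$ need not appear explicitly in both decompositions; here I use Lemma \ref{equalityofcoefficientsums} (preservation of coefficient sums) together with the fusion-bracket decomposition of Lemma \ref{fusionmetricbracketingdecomp} to extract an equivalent decomposition in which $\sC$ does appear, allowing the subtraction to be carried out uniformly.

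For injectivity and the PL inverse, I construct an explicit left-inverse using the partial-sum formulas of Lemma \ref{fundvellemma}: the differences $\Delta^k_{j,i}({\bf v})$ precisely encode which pairs $u^k_j, u^k_i$ are joined by a common essential bracket and their relative position in the height partial order. Consequently, the values $\ell_\sB(A)$ can be recovered from ${\bf v} = \Gamma(\ell_\sB)$ by a peeling procedure that proceeds outward-to-inward through the nesting hierarchy of $\sB$ and uses the height partial order to disambiguate contributions at each stage. The recovery is piecewise-linear in the coordinates, giving a PL inverse on each $\tau(\sB)$, and the local inverses glue because the formula on a face is the restriction of the formula on a containing cone. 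Surjectivity of $\Gamma|_{\tau^{\met}(\sB)}$ onto $\tau(\sB)$ is immediate since the generators $\rho(\sC)$ for $\sC \leq \sB$ and the lineality generator $\rho(\sB_\min)$ all lie in the image.

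The main obstacle is the careful encoding of the height partial order in the signed partial sums $\Delta^k_{j,i}$. In the $n = 1$ case there is no height partial order and the argument collapses to the submodular-function recovery underlying generalized permutahedra; for $n \geq 2$ the height orderings manifest as positive \emph{overshoots} of the partial sums past the value $i - j$ (cf.\ Lemma \ref{fundvellemma}), and correctly unpacking this encoding, while simultaneously tracking which nontrivial brackets in $\sB$ contribute at each depth, is the most delicate part of both the well-definedness argument and the construction of the inverse.
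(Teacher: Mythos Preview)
Your plan has two genuine gaps, and both are precisely where the paper does its real work.

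\textbf{Well-definedness.} Your inductive subtraction argument is circular. After choosing a containment-minimal $\sC$ and subtracting $t\,\ell(\sC)$, you obtain a metric $n$-bracketing $\ell_{\sB'}$ with smaller support; but to conclude that subtracting $t\,\rho(\sC)$ from $\sum\lambda_i\rho(\sC_i)$ yields the (inductively well-defined) velocity vector of $\ell_{\sB'}$, you would need to know that $\sum\lambda_i\ell(\sC_i)$ can be rewritten as $t\,\ell(\sC)+\text{(decomposition of }\ell_{\sB'}\text{)}$ \emph{with the same velocity sum}. Lemmas \ref{equalityofcoefficientsums} and \ref{fusionmetricbracketingdecomp} do not provide this: they constrain coefficient sums, not velocity sums, and invoking them here presupposes the very statement you are proving. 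The paper avoids this entirely. After reducing to a single fusion bracket (as you also do), it picks a containment-minimal $\sC$, chooses a compatible $\cT$-shuffle $\sigma$, and applies the affine map ${\bf v}\mapsto P_\sigma^T({\bf v}-{\bf 1})+{\bf 1}$ termwise. Lemma \ref{permutationtransformationlemma1} shows that each coordinate of the result equals $\sum_{A'\ni u^k_{\sigma(i)},u^k_{\sigma(i+1)}}\ell_\sB(A')$, a manifest invariant of $\ell_\sB$; one then inverts using $\sum\lambda_i=\sum\gamma_i=\ell_\sB(A_\bullet)$. The adjoint permutation transformation $P_\sigma^T$ is the missing ingredient in your argument --- it does not appear anywhere in your proposal.

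\textbf{Injectivity.} You only argue that $\Gamma|_{\tau^{\met}(\sB)}$ is a linear isomorphism onto $\tau(\sB)$ and that the local inverses glue on faces. But at this point you do not know that $\cF(\cT)$ is a fan: two cones $\tau(\sB_1)$, $\tau(\sB_2)$ could a priori overlap in more than a common face, so nothing prevents $\Gamma(\ell_{\sB_1})=\Gamma(\ell_{\sB_2})$ with $\sB_1,\sB_2$ incomparable. Your peeling procedure ``through the nesting hierarchy of $\sB$'' presupposes knowledge of $\sB$, which is exactly what must be recovered from ${\bf v}$ alone. The paper handles this via Lemma \ref{main}: from ${\bf v}$ (and nothing else) one produces a collision $\sC_{\bf v}$ that is containment-minimal in \emph{every} $\sB$ underlying a decomposition of ${\bf v}$. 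This is the content of Definition \ref{techdefinition} and Lemma \ref{technicallemma}, which compare $\Delta^n_{j,k}({\bf v})$ not to $k-j$ (as in Lemma \ref{fundvellemma} for a single collision) but to $\gamma(\ell,A)(k-j)$, where $\ell$ is the unique metric $(n-1)$-bracketing with $\Gamma(\ell)=\pi({\bf v})$, supplied by the inductive hypothesis on depth. Only with $\sC_{\bf v}$ in hand does the subtraction-and-induct argument on $|\sB_1|+|\sB_2|$ go through, now using the already-established forward direction.
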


\begin{remark}
Because the velocity fan is typically nonsimplicial, the fact that $\Gamma$ is an isomorphism when restricted to a single conical set in $\cK^{\met}$ is already a nontrivial statement.
\null\hfill$\triangle$
\end{remark} 

\begin{theorem}\label{mainvelocitystatement}
Given a rooted plane tree $\cT$, the velocity fan $\cF(\cT)$ is a fan whose face poset if $\cK(\cT)$.
\end{theorem}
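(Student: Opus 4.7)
The theorem reduces to Proposition \ref{mainvelocitythm}, which asserts that $\Gamma : \cK^{\met}(\cT) \to \cF(\cT)$ is a piecewise-linear isomorphism. Assuming this, Lemma \ref{faceposetisolem} applied to the conical set complex structure on $\cK^{\met}(\cT)$ from Lemma \ref{metricabstractfanlemma} transfers that structure to $\cF(\cT)$ and identifies both face posets with $\cK(\cT)$. The single-face intersection property $\tau^{\met}(\sB_1)\cap\tau^{\met}(\sB_2) = \tau^{\met}(\sB_1\wedge\sB_2)$ shown in the proof of Lemma \ref{metricabstractfanlemma} is transported to $\cF(\cT)$ by the bijective linearity of $\Gamma$ on each cone, and since $\cF(\cT)$ sits inside the Euclidean space $\mathbb{R}^m$, it is a fan.

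I would prove Proposition \ref{mainvelocitythm} by induction on the depth $n$. The base case $n=0$ is trivial. For the inductive step, I would first address \emph{well-definedness}: if $\ell_\sB$ admits two expansions $\sum \lambda_i \ell(\sC_i) = \sum \gamma_i \ell(\sC_i)$, writing $\rho(\sC_i) = (\rho(\sC_i) - \mathbf{1}) + \mathbf{1}$ and invoking Lemma \ref{equalityofcoefficientsums} (which forces $\sum \lambda_i = \sum \gamma_i$) isolates the $\mathbf{1}$-contribution, reducing the issue to showing $\sum \lambda_i(\rho(\sC_i) - \mathbf{1})$ depends only on $\ell_\sB$. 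This last claim would follow once an explicit inverse to $\Gamma$ is produced on each cone.

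The plan for bijectivity is to construct $\Gamma^{-1}$ on each $\tau(\sB)$ by induction. Given $v \in \tau(\sB)$, Lemma \ref{rayprojection} together with Proposition \ref{projfanthing} lets me project to $\pi(v) \in \tau(\pi(\sB))$ and invoke the inductive hypothesis on $\cF(\pi(\cT))$ to recover $\pi(\ell_\sB)$, yielding all metric data on brackets of depth less than $n$. The remaining task is to read off $\ell_\sB(A)$ for each nontrivial $A \in \sB^n$ from the depth-$n$ coordinates of $v$. Here the key tool is Lemma \ref{fundvellemma}: the differences $\Delta^n_{j,k}(v)$ encode, as a weighted sum over extended collisions underlying $\sB$, both whether $u^n_j$ and $u^n_k$ share an essential $n$-bracket and how they are arranged in the height partial order. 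Mimicking the support-reduction procedure of Proposition \ref{conicalcomb} and Scholium \ref{sch:subtract} in the codomain by peeling off contributions of containment-minimal $n$-brackets first, one iteratively determines $\ell_\sB(A)$ for successively larger $A$. This produces $\Gamma^{-1}$ and hence both well-definedness and injectivity of $\Gamma$; surjectivity onto $\tau(\sB)$ is built into the definition of the velocity fan.

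The main obstacle will be making this iterative recovery formula precise and verifying the combinatorial identities required to separate metric values from height-order contributions. The larger-than-expected entries $\Delta^n_{j,k}(v) > k-j$ (second bullet of Lemma \ref{fundvellemma}) mix with bracket-length contributions $\Delta^n_{j,k}(v) = k-j$ (first bullet), and disentangling them requires processing $n$-brackets in a careful order, first by containment, then compatibly with the extended height partial order via Lemma \ref{heightpartialorderlemma}. I expect the bookkeeping to mirror the decomposition of a metric $n$-bracketing according to fusion brackets in Lemma \ref{metricpartitionlemma}, localizing the recovery to the subtree descended from a fixed fusion bracket and reducing to a smaller tree where induction applies.
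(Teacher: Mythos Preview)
Your reduction to Proposition~\ref{mainvelocitythm} via Lemmas~\ref{faceposetisolem} and~\ref{metricabstractfanlemma} is exactly the paper's argument, and your inductive framework (project to $\pi(v)$, recover $\pi(\ell_\sB)$ by induction, then handle the depth-$n$ data) is the right shape. But there is a genuine gap in the injectivity argument.

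You propose to construct $\Gamma^{-1}$ on each $\tau(\sB)$ by ``reading off $\ell_\sB(A)$ for each nontrivial $A\in\sB^n$'' from $v$. This presupposes knowledge of $\sB$, so at best you obtain a local inverse $\tau(\sB)\to\tau^{\met}(\sB)$. That shows $\Gamma|_{\tau^{\met}(\sB)}$ is injective, but it does \emph{not} show that two metric bracketings with distinct underlying $\sB_1,\sB_2$ cannot map to the same vector---i.e., it does not prove the cones $\tau(\sB_1)$ and $\tau(\sB_2)$ are internally disjoint. This global injectivity is precisely what makes $\cF(\cT)$ a fan rather than merely a weakly embedded cone complex. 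The paper handles this via Lemma~\ref{main}: from $\mathbf v$ alone (using the inductively-known $\ell_{\pi(\sB)}$ from $\pi(\mathbf v)$) one constructs a set $\Xi(\mathbf v)$ of collisions, and Lemma~\ref{technicallemma} proves $\Xi(\mathbf v)$ coincides with the set of containment-minimal collisions in \emph{any} $\sB$ with $\mathbf v\in\tau(\sB)$. One then subtracts a multiple of such a $\rho(\sC_{\mathbf v})$ and inducts on $|\sB_1|+|\sB_2|$. Your ``peeling off containment-minimal $n$-brackets'' gestures at this, but you never argue that the peeled bracket is determined by $\mathbf v$ independently of $\sB$; Definition~\ref{techdefinition} is exactly the missing combinatorial characterization.

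Your treatment of well-definedness is also circular: you defer it to the construction of the inverse, but a local inverse on $\tau(\sB)$ does not establish that two conical expansions of the same $\ell_\sB$ yield the same $\sum\lambda_i\rho(\sC_i)$. The paper proves well-definedness directly and first: after reducing to a single fusion bracket via Lemmas~\ref{fusionmetricbracketingdecomp}--\ref{metricpartitionlemma}, it picks a compatible shuffle $\sigma$ for a containment-minimal collision and applies the adjoint permutation transformation $P_\sigma^T$ (Lemma~\ref{permutationtransformationlemma1}) to exhibit each coordinate of the transformed vector as an invariant of $\ell_\sB$, namely $\sum_{A\ni u^k_{\sigma(i)},u^k_{\sigma(i+1)}}\ell_\sB(A)$. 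Your proposal never invokes $P_\sigma^T$, and without it (or an equivalent device) the well-definedness step does not close.
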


\begin{proof}
Combine Lemma \ref{faceposetisolem}, Lemma \ref{metricabstractfanlemma}, and Proposition \ref{mainvelocitythm}.
\end{proof}

Thus for proving Theorem \ref{mainvelocitystatement}, it suffices to establish Proposition \ref{mainvelocitythm}. 
 Proposition \ref{mainvelocitythm} is a direct consequence of the following lemma.

\begin{lemma}\label{metriclemma}
Let $\sB$ and $\sB'$ be $n$-bracketings in $\cK(\cT)$.
Let $\sC_1, \dots, \sC_k$ and $\sC'_1 \dots \sC'_l$ be collisions less than $\sB$ and $\sB'$, respectively.  Let $\sC_0 = \sC'_0 = \sB_\min$.
There exist $\lambda_i, \gamma_i \in \mathbb{R}_{\geq 0}$ for $i>0$, and $\lambda_0, \gamma_0 \in \bR$ such that
\begin{align}
\label{vecequal}
\sum _{i =0}^k \lambda_i \ell(\sC_i) = \sum _{i =0}^l \gamma_i \ell(\sC'_i)
\end{align}
if and only if
\begin{align}\label{brackequal}
\sum _{i=0}^k \lambda_i {\rho}(\sC_i) = \sum _{i =0}^l \gamma_i \rho(\sC'_i).
\end{align}
\end{lemma}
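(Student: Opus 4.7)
I propose to prove both implications of Lemma~\ref{metriclemma} simultaneously, by induction on $n$, the depth of $\cT$. The base case $n = 0$ is trivial, as there is a unique $0$-bracketing. For the inductive step I assume the lemma holds for the depth-$(n-1)$ tree $\pi(\cT)$ and deduce it for $\cT$. The argument proceeds in two stages: a projection stage that handles the depth-$<n$ coordinates, and a top-level stage that handles the coordinates $v^n_j$ for $1 \le j \le t_n$.

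\textbf{Projection stage.} I apply the vector projection $\pi$ to both sides of (\ref{brackequal}) and invoke Lemma~\ref{rayprojection}: type-$1$ collisions project to $\mathbf{0}$, while $\pi(\rho(\sC))=\rho(\pi(\sC))$ for type-$2$ and type-$3$ collisions, and $\pi(\rho(\sB_\min))=\mathbf{1}=\rho(\sB_\min)$ in $\pi(\cT)$. By Proposition~\ref{projfanthing} the resulting identity lies in $\cF(\pi(\cT))$. Correspondingly, the restriction of $\sum_i\lambda_i\ell(\sC_i)$ to brackets of depth $\le n-1$ is a conical combination in $\cK^\met(\pi(\cT))$ that involves only the type-$2$ and type-$3$ collisions. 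Applying the inductive hypothesis on $\pi(\cT)$, after using Lemma~\ref{equalityofcoefficientsums} to pin down the common coefficient sum $S = \sum_i\lambda_i = \sum_j\gamma_j$ and thereby the lineality contributions, yields either equality of the $\pi$-projections $\pi(\mathbf{v})$ and $\pi(\mathbf{v}')$ (forward direction) or equality of the depth-$\le n-1$ restrictions of the two metric bracketings (reverse direction).

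\textbf{Top-level stage.} It remains to match the coordinates $v^n_j$. For each collision $\sC$, Lemma~\ref{fundvellemma} gives
\begin{align*}
\Delta^n_{j,k}\bigl(\rho(\sC)\bigr) \;=\; d_{\cT/\sC}(\psi(u^n_j)) - d_{\cT/\sC}(\psi(u^n_k)) + (k-j),
\end{align*}
which records whether $u^n_j$ and $u^n_k$ share a common essential $n$-bracket of $\sC$, and otherwise their relative position in the extended height partial order ${\wh <_\sC}$. Both of these pieces of data are already encoded in the metric $n$-bracketing $\ell$: the essentiality pattern is visible from the values $\ell(A)$ on nontrivial $n$-brackets, while the extended height partial order is part of the underlying $n$-bracketing $\sB = \bigvee_i \sC_i$, which is the first coordinate of the pair $(\sB,\ell_\sB) \in X^\met$. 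Combining with the coefficient sum $S$ produced by Lemma~\ref{equalityofcoefficientsums}, each $v^n_j$ can be expressed as a linear function of $S$, of the values of $\ell$ on $n$-brackets, and of integer data read off from $\sB$. This expression is manifestly independent of the conical decomposition, giving the forward direction; and it can be inverted, given the depth-$<n$ data recovered in the projection stage, to recover $\ell$ on $n$-brackets from the $v^n_j$, giving the reverse direction.

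\textbf{Expected main obstacle.} The delicate point is the bookkeeping for lineality contributions: for type-$1$ collisions $\pi(\rho(\sC))=\mathbf{0}$ while $\rho(\pi(\sC))=\rho(\sB_\min)=\mathbf{1}$, and for type-$3$ collisions the essential brackets of $\sC$ at depth $<n$ are maximum brackets that contribute to $\ell(\pi(\sC))$ but not to the restriction of $\ell(\sC)$. Tracking these contributions so that the two decompositions seen by the inductive hypothesis are genuinely the same metric $(n-1)$-bracketing, rather than differing by an unknown multiple of $\ell(\sB_\min)$, is the heart of the projection stage and relies essentially on Lemma~\ref{equalityofcoefficientsums}. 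A secondary difficulty is that within a fixed $\tau^\met(\sB)$ the conical decompositions into collision generators need not be unique when the $n$-associahedron is nonsimplicial; one must verify that swapping between compatible decompositions preserves the top-level velocity entries, which via Lemma~\ref{fundvellemma} reduces to the consistency of ${\wh<_\sC}$ across all compatible collisions $\sC \le \sB$.
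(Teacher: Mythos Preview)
Your projection stage is sound and matches the paper's inductive setup: the restriction of $\ell_\sB$ to depth $<n$ is indeed the metric $(n-1)$-bracketing $\sum_{\text{type }2,3}\lambda_i\,\ell(\pi(\sC_i)) + \lambda_0\,\ell(\sB_\min)$, and $\pi(\mathbf{v})$ is its image under $\Gamma_{\pi(\cT)}$, so induction applies cleanly. The gaps are both in your top-level stage.

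For the forward direction, you assert that each $v^n_j$ ``can be expressed as a linear function of $S$, of the values of $\ell$ on $n$-brackets, and of integer data read off from $\sB$,'' but you do not supply such an expression, and it is not manifest. The entry $\rho(\sC_i)^n_j$ is an integer that depends on the collision map for $\sC_i$ itself (not merely on $\sB$), and when shuffling occurs it is neither $0$ nor $1$; that the weighted sum $\sum_i\lambda_i\rho(\sC_i)^n_j$ nonetheless depends only on $\ell_\sB$ is exactly the content the paper extracts via the adjoint permutation transformation. The paper first reduces to a single fusion bracket (Lemmas~\ref{fusionmetricbracketingdecomp}, \ref{metricpartitionlemma}), chooses a compatible shuffle $\sigma$ for a containment-minimal collision, and then uses Lemma~\ref{permutationtransformationlemma1} to show that the transformed coordinate $\sigma(\mathbf{v})^k_i$ equals $\sum_{A\ni u^k_{\sigma(i)},u^k_{\sigma(i+1)}}\ell_\sB(A)$, an explicit invariant of $\ell_\sB$. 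Inverting $P_\sigma^T$ (with Lemma~\ref{equalityofcoefficientsums} controlling the $\mathbf{1}$-shift) then gives $\mathbf{u}=\mathbf{v}$. Your proposal elides precisely this step.

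For the reverse direction the gap is more serious. You propose to ``invert'' the top-level formula, but inversion presupposes that you already know the underlying $n$-bracketing $\sB$---in particular, which $n$-brackets are present and what the height partial order is---whereas at this point you only have $\mathbf{v}$ and the depth-$<n$ data. The two decompositions may have different supports $\sB_1=\bigvee_i\sC_i$ and $\sB_2=\bigvee_j\sC'_j$, and nothing in your outline forces $\sB_1=\sB_2$. The paper closes this gap with substantial machinery: Lemma~\ref{main}, proved via Definition~\ref{techdefinition} and Lemma~\ref{technicallemma}, shows how to read off from $\mathbf{v}$ alone a collision $\sC_{\mathbf v}$ that is containment-minimal in \emph{any} such support. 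One then subtracts $\epsilon\,\ell(\sC_{\mathbf v})$ (Scholium~\ref{sch:subtract}) and inducts on $|\sB_1|+|\sB_2|$, invoking the already-established forward direction to pass between metric and vector equalities. This reduction is the heart of injectivity, and your ``inversion'' claim does not substitute for it.
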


\begin{proof}
(\ref{mainvelocitythm})
It is clear that the map $\Gamma$, if it is well-defined, is surjective.
Furthermore, by Lemma \ref{metricabstractfanlemma} we know that $\cK^{\met}(\cT)$ is a conical set complex whose face poset is $\cK(\cT)$, hence $\Gamma$ agrees on the intersection of conical sets.  What remains to be seen is that $\Gamma$ is well-defined and injective.
We observe that the well-definedness of $\Gamma$ follows from $(\ref{vecequal}) \Rightarrow (\ref{brackequal})$ in Lemma \ref{metriclemma}.
The injectivity of $\Gamma$ follows from $(\ref{brackequal}) \Rightarrow (\ref{vecequal})$ in Lemma \ref{metriclemma}.
\end{proof}

\begin{corollary}
\label{reducedgammaiso}
Let ${\overline \Gamma}: {\overline \cK}^{\met}(\cT) \rightarrow {\overline \cF}(\cT)$ be the map induced by $\Gamma$.  It follows from Lemma \ref{reducedisomorphismlemma} and Proposition \ref{mainvelocitythm} that ${\overline \Gamma}$ is an isomorphism.
\end{corollary}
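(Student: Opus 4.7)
The plan is to apply Lemma \ref{reducedisomorphismlemma} directly to the piecewise-linear isomorphism $\Gamma : \cK^{\met}(\cT) \to \cF(\cT)$ produced in Proposition \ref{mainvelocitythm}. To do this, the first thing I would do is identify the lineality spaces on both sides and check that they correspond under $\Gamma$. By Definition \ref{def:reduced_metric_n-bracketing_complex}, the reduced metric $n$-bracketing complex is $\overline{\cK}^{\met}(\cT) = \cK^{\met}(\cT)/\langle \ell_{\sB_\min}\rangle$, and by Definition \ref{reducedvelocitydef}, the reduced velocity fan is $\overline{\cF}(\cT) = \cF(\cT)/\langle \mathbf{1}\rangle$. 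The line $\langle \ell_{\sB_\min}\rangle$ really is the lineality space of $\cK^{\met}(\cT)$: Proposition \ref{conicalcomb} writes every element of every $\tau^{\met}(\sB)$ as a conical combination in which the coefficient of $\ell(\sB_\min)$ is allowed to be any real number, while the coefficients of nontrivial collisions are required to be nonnegative. The analogous statement for $\cF(\cT)$ shows that $\langle \mathbf{1}\rangle = \langle \rho(\sB_\min)\rangle$ is its lineality space.

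Next I would verify that $\Gamma$ is compatible with these lineality spaces. By the definition of $\Gamma$ (Definition \ref{Gammamap}), we have $\Gamma(\ell(\sB_\min)) = \rho(\sB_\min) = \mathbf{1}$, and since $\Gamma$ is linear on each conical set, $\Gamma$ carries $\langle \ell_{\sB_\min}\rangle$ isomorphically onto $\langle \mathbf{1}\rangle$. Hence $\Gamma$ descends to a well-defined map $\overline{\Gamma} : \overline{\cK}^{\met}(\cT) \to \overline{\cF}(\cT)$ on the quotients.

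At this point the conclusion is immediate: Lemma \ref{reducedisomorphismlemma} asserts that if $f : \cF \to \cG$ is a piecewise-linear isomorphism of cone complexes, then the induced map $\overline{f} : \overline{\cF} \to \overline{\cG}$ of quotients by lineality spaces is again a piecewise-linear isomorphism of cone complexes. Applying this with $f = \Gamma$, which is a piecewise-linear isomorphism by Proposition \ref{mainvelocitythm}, produces the desired conclusion that $\overline{\Gamma}$ is an isomorphism.

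There is essentially no obstacle here; the statement is a formal corollary combining two previously established results. The only genuinely substantive content that had to be in place beforehand is Proposition \ref{mainvelocitythm} itself, which in turn rests on the nontrivial Lemma \ref{metriclemma}. In writing out the proof I would keep it to one or two sentences, simply invoking Lemma \ref{reducedisomorphismlemma} and Proposition \ref{mainvelocitythm} as indicated in the statement of the corollary.
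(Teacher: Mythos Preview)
Your proposal is correct and matches the paper's approach exactly: the paper gives no separate proof for this corollary, as the statement itself already indicates that it follows immediately from Lemma \ref{reducedisomorphismlemma} and Proposition \ref{mainvelocitythm}. Your elaboration (identifying the lineality spaces and checking that $\Gamma(\ell(\sB_\min)) = \mathbf{1}$) simply spells out what the paper leaves implicit.
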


\subsection{\texorpdfstring{$\cT$}{T}-shuffles and adjoint permutation transformations}\label{permtuationssubsection}

\

The vertices of $\cT$ can be permuted during a collision.
For a 2-associahedron associated to a pair of lines, the permutations of the points which can occur during a collision are precisely the riffle shuffle permutations.
Motivated by this special case, we define a \emph{$\cT$-shuffle} to a be a permutation of the of vertices of $\cT$ which can occur during a collision.  More precisely,

\begin{definition}
Let $\cT$ be a rooted plane tree, and let 
\begin{align}
S_{\cT} \coloneqq \bigoplus_{i=1}^n S_{|V^k(\cT)|}
\end{align}
be a direct sum of permutation groups. 
Let $\sigma \in S_{\cT}$ with $\sigma = (\sigma^1, \ldots, \sigma^n)$.  
We say that $\sigma$ is a \emph{$\cT$-shuffle} if $\sigma^k(j)<\sigma^k(i)$ and $i<j$, then $\pi(u^k_{\sigma(j)}) \neq \pi(u^k_{\sigma(i)})$.  For simplicity, we may refer to $\cT$-shuffles as shuffles.
We refer to the process of shuffles acting on labels of vertices of $\cT$ as \emph{shuffling}.
\null\hfill$\triangle$
\end{definition}

\begin{remark}
The above definition is geometrically reasonable: if we have initial affine spaces $L^k_i$ and $L^k_j$ in a tree arrangement which are being permuted during a collision, they should not lie on a common affine space $L$ of dimension 1 greater, e.g.\ points on a line should not be permuted.
\null\hfill$\triangle$
\end{remark}

In this subsection, we will define and investigate certain linear transformations encoding permutations for understanding how shuffling is geometrically manifested in the velocity fan.

\begin{definition}
Let $n\in \mathbb{N}$.
Let $e_i$ for $0\leq i\leq n$ be the standard basis vectors for $\mathbb{R}^{n+1}$.
The \emph{type A-root system} is the set of vectors $A_n=\{e_i-e_j:0\leq i,j \leq n\}$\footnote{This notation conflicts with that of a bracket with subscript $n$, but the root system $A_n$ will be discussed sparingly, and no confusion should arise.}
The \emph{positive simple roots} are the elements of $A_n$ of the form $e_{i+1}-e_i$ for $0\leq i\leq n-1$.
The \emph{type-$A_{n}$ root lattice} $\mathbb{A}_n$ is the integer span of the type-$A_n$ roots, equivalently $\mathbb{A}_n= \{x \in \mathbb{Z}^{n+1}: \sum_{i=0}^{n}
x_i = 0\}$.
Let $\mathbb{A}^{\mathbb{R}}_n= \{x \in \mathbb{R}^{n+1}: \sum_{i=0}^{n}
x_i = 0\}=\mathbb{A}_n \otimes \mathbb{R}$.
\null\hfill$\triangle$
\end{definition}

\begin{definition}
\label{permtransdef}
Let $S_n$ be the $n$th permutation group.
Let $\sigma \in S_{n+1}$ be a permutation.
Let $R_{\sigma}:\mathbb{R}^{n+1} \rightarrow \mathbb{R}^{n+1}$ be the linear transformation determined by $R_{\sigma}(e_{i})=e_{\sigma(i)} $.
We define $P_\sigma:\mathbb{R}^{n} \rightarrow \mathbb{R}^{n}$ to be the restriction of $R_{\sigma}$ to $\mathbb{A}^{\mathbb{R}}_n$ written in the basis of the positive simple roots.
We call $P_{\sigma}$ a \emph{permutation transformation}.
We define the transpose $P_{\sigma}^T$ to be an \emph{adjoint permutation transformation.}\footnote{Explicit descriptions of permutation transformation matrices (not only for transpositions) are provided in the thesis of Stucky \cite{stucky2021cyclic}.}
\null\hfill$\triangle$
\end{definition}

We note that $R_\sigma^T = R_\sigma^{-1}=R_{\sigma^{-1}}$. As a result, there is a lack of agreement in the literature about the definition of $R_\sigma$, which is sometimes taken to mean the transpose of what we describe here.
In this paper, we will be primarily interested in the adjoint permutation transformations.
We emphasize that $P_\sigma^T \neq P_\sigma^{-1}$.
Moreover, $P_\sigma^T$ is not itself a permutation transformation.

Observe that 

\begin{align}
R_{\sigma}(e_{i+1}-e_i)=e_{\sigma(i+1)}-e_{\sigma(i)}.
\end{align}

Utilizing the elementary fact 

\begin{equation}
e_{\sigma(i+1)}-e_{\sigma(i)}=
\begin{cases}
\,\, \sum_{j=\sigma(i)}^{\sigma(i+1)-1}e_j-e_{j+1} & \text{if} \,\, \sigma(i)<\sigma(i+1) \\
\,\,\sum_{j=\sigma(i+1)}^{\sigma(i)-1}e_{j+1}-e_{j} & \text{if} \,\, \sigma(i)>  \sigma(i+1),
\end{cases}
\end{equation}
we may write the map $R_{\sigma}$ in the basis of the positive simple roots to obtain
\begin{align}
(P_{\sigma})_{i,j}=
\begin{cases}
\,\,\,\, 1 & \text{if} \,\, \sigma(j)\leq i<\sigma(j+1) \\
-1 & \text{if} \,\, \sigma(j)> i\geq \sigma(j+1) \\
\,\,\,\, 0 & \text{otherwise.}
\end{cases}
\end{align}

This implies that the transpose can be written as
\begin{align}
(P^T_{\sigma})_{i,j}=
\begin{cases}
\,\,\,\, 1 & \text{if} \,\, \sigma(i)\leq j<\sigma(i+1) \\
-1 & \text{if} \,\, \sigma(i)> j\geq \sigma(i+1) \\
\,\,\,\, 0 & \text{otherwise. } \\
\end{cases}
\end{align}

\begin{example}
An adjoint permutation transformation associated to $\sigma = 3142$:
\begin{align}
P_{3142}^T
=
\left(
\begin{array}{rrr}
-1 & -1 & 0
\\
1 & 1 & 1
\\
0 & -1 & -1
\end{array}
\right).
\end{align}
\null\hfill$\triangle$
\end{example}

The following lemma describes the essential relevance of the adjoint permutation transformations in this article.

\begin{lemma}\label{relevantadjoint}
Suppose that ${\bf v} \in \mathbb{R}^{n+1}$ and let ${\overline{\bf v}} \in \mathbb{R}^n$ be the vector such that ${\overline{\bf v}}_i=v_{i}-v_{i+1}$.
Then for each $\sigma \in S_{n+1}$,
\begin{align}
\label{eq:action_of_P^T_on_differences}
P^T_{\sigma}({\overline{\bf v}})_i=v_{\sigma(i)}-v_{\sigma(i+1)}.
\end{align}

Additionally, 
\begin{align}
P^T_{\sigma}({\bf 1})_i=\sigma(i+1)-\sigma(i).
\end{align}
\end{lemma}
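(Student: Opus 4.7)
The plan is to verify both identities by direct computation, using the explicit description of the matrix entries of $P^T_\sigma$ given immediately before the statement of the lemma. Since the paper has already established that
\begin{align}
(P^T_\sigma)_{i,j}
=
\begin{cases}
\phantom{-}1 & \text{if } \sigma(i) \leq j < \sigma(i+1), \\
-1 & \text{if } \sigma(i) > j \geq \sigma(i+1), \\
\phantom{-}0 & \text{otherwise,}
\end{cases}
\end{align}
no additional properties of $P^T_\sigma$ are needed beyond this.

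First I would expand
\begin{align}
P^T_\sigma(\overline{\bf v})_i
=
\sum_{j} (P^T_\sigma)_{i,j}\, \overline{\bf v}_j
=
\sum_j (P^T_\sigma)_{i,j}\,(v_j - v_{j+1}),
\end{align}
and then split into the two cases $\sigma(i) < \sigma(i+1)$ and $\sigma(i) > \sigma(i+1)$ (the equality case is excluded since $\sigma$ is a permutation). In the first case only the $+1$ entries contribute, and the sum telescopes:
\begin{align}
\sum_{j=\sigma(i)}^{\sigma(i+1)-1} (v_j - v_{j+1})
=
v_{\sigma(i)} - v_{\sigma(i+1)}.
\end{align}
In the second case only the $-1$ entries contribute, and an analogous telescoping with a sign flip again yields $v_{\sigma(i)} - v_{\sigma(i+1)}$. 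This establishes \eqref{eq:action_of_P^T_on_differences}.

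For the second identity, I would specialize \eqref{eq:action_of_P^T_on_differences} to the vector ${\bf v} \in \mathbb{R}^{n+1}$ defined by $v_j = -j$ (any affine shift works equally well). Then $\overline{\bf v}_j = v_j - v_{j+1} = 1$ for all $j$, so $\overline{\bf v} = {\bf 1}$, and applying the first part yields
\begin{align}
P^T_\sigma({\bf 1})_i
=
v_{\sigma(i)} - v_{\sigma(i+1)}
=
\sigma(i+1) - \sigma(i),
\end{align}
as required. There is no real obstacle here; the entire argument is bookkeeping with indices, and the one thing to be careful about is the case split and the direction of telescoping in the $\sigma(i) > \sigma(i+1)$ case, where the minus signs from both the matrix entry and the reversed summation range cancel correctly.
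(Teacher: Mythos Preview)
Your proof is correct and follows essentially the same approach as the paper's: a direct computation using the explicit matrix entries, splitting into the two cases $\sigma(i)<\sigma(i+1)$ and $\sigma(i)>\sigma(i+1)$ and telescoping. The only minor difference is that for the second identity the paper simply says the verification ``is similar'' (i.e.\ another direct case-split computation), whereas you obtain it more elegantly as a specialization of the first identity by taking $v_j=-j$; this is a nice touch that avoids repeating the telescoping argument.
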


\begin{proof}
The image of a vector ${\overline{\bf v}} \in \mathbb{R}^n$ under $P^T_{\sigma}$ is as follows:
\begin{equation}
P_{\sigma}^T({\overline{\bf v}})_i=
\begin{cases}
\,\, \sum_{j=\sigma(i)}^{\sigma(i+1)-1}{\overline{\bf v}}_j & \text{if} \,\, \sigma(i)<\sigma(i+1) \\
\,\,\sum_{j=\sigma(i+1)}^{\sigma(i)-1}-{\overline{\bf v}}_j & \text{if} \,\, \sigma(i)>\sigma(i+1) \\ 
\end{cases}
\end{equation}

\begin{align}
P_{\sigma}^T({\overline{\bf v}})_i=
\begin{cases}
\,\, \sum_{j=\sigma(i)}^{\sigma(i+1)-1}v_j-v_{j+1} & \text{if} \,\, \sigma(i)<\sigma(i+1) \\
\,\,\sum_{j=\sigma(i+1)}^{\sigma(i)-1}v_{j+1}-v_j & \text{if} \,\, \sigma(i)>  \sigma(i+1) \\
\end{cases}
\end{align}
\begin{align}
=v_{\sigma(i)}-v_{\sigma(i+1)}.
\end{align}

The verification of $P^T_{\sigma}({\bf 1})_i=\sigma(i+1)-\sigma(i)$ is similar.
\end{proof}

Observe that given a collision $\sC \in \fX(\cT)$, the vector $\rho(\sC)-{\bf 1}$ is the concatenation of $n$ vectors of the form ${\overline{\bf v}}$ described in Lemma \ref{relevantadjoint}.  Various unimodularity, i.e.\ toric smoothness, results in this paper are built on the following fact.
\begin{lemma}\label{unimodperm}
For each $\sigma\in S_{n+1}$, $P_\sigma \in GL_n(\mathbb{Z})$.

\end{lemma}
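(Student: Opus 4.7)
The plan is to deduce the lemma by showing that $P_\sigma$ represents a lattice automorphism of $\mathbb{A}_n$ in the $\mathbb{Z}$-basis of positive simple roots. The key observation is that $R_\sigma$ is already a lattice automorphism of the ambient lattice $\mathbb{Z}^{n+1}$, being a permutation matrix, so it suffices to check that this restricts to the sublattice $\mathbb{A}_n$ and to relate the basis of simple roots to $P_\sigma$.

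Concretely, I would proceed as follows. First, note that $R_\sigma \in GL_{n+1}(\mathbb{Z})$ since it is a permutation matrix, and $R_\sigma^{-1} = R_{\sigma^{-1}}$. Next, observe that $R_\sigma$ preserves the hyperplane $\mathbb{A}_n^\mathbb{R}$, since for any $x \in \mathbb{Z}^{n+1}$ one has $\sum_i (R_\sigma x)_i = \sum_i x_i$. Consequently $R_\sigma$ restricts to an automorphism of the intersection $\mathbb{Z}^{n+1} \cap \mathbb{A}_n^\mathbb{R} = \mathbb{A}_n$. Now the positive simple roots $\alpha_i \coloneqq e_{i+1} - e_i$, $1 \leq i \leq n$, form a $\mathbb{Z}$-basis of $\mathbb{A}_n$ (a standard fact: any $x \in \mathbb{A}_n$ can be expanded uniquely as $x = \sum_i c_i \alpha_i$ with $c_i = -\sum_{j \leq i} x_j \in \mathbb{Z}$). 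By Definition \ref{permtransdef}, $P_\sigma$ is precisely the matrix of $R_\sigma|_{\mathbb{A}_n^\mathbb{R}}$ expressed in this basis; hence $P_\sigma \in GL_n(\mathbb{Z})$, with $P_\sigma^{-1} = P_{\sigma^{-1}}$.

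There is essentially no obstacle here, as each step is a routine verification. The only mild subtlety is making sure one distinguishes the two lattices in play: $R_\sigma$ acts on $\mathbb{Z}^{n+1}$, while $P_\sigma$ encodes the restricted action on $\mathbb{A}_n$ in a specific basis. Once this bookkeeping is made explicit, integrality of $P_\sigma$ can also be read off directly from the explicit formula for $(P_\sigma)_{i,j}$ given in the excerpt (whose entries lie in $\{-1, 0, 1\}$), and integrality of the inverse follows immediately from the identification $P_\sigma^{-1} = P_{\sigma^{-1}}$.
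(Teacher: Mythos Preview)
Your proof is correct and essentially the same approach as the paper's. The paper argues by asserting the composition property $P_{\sigma_1}\circ P_{\sigma_2} = P_{\sigma_1\circ\sigma_2}$ and concluding $P_\sigma^{-1}=P_{\sigma^{-1}}$; your version is slightly more explicit in that it explains \emph{why} this holds, namely because $P_\sigma$ is the matrix of the lattice automorphism $R_\sigma|_{\mathbb{A}_n}$ in the fixed $\mathbb{Z}$-basis of simple roots.
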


\begin{proof}
Let $\sigma_1, \sigma_2 \in S_{n+1}$, then one can check that $P_{\sigma_1}\circ P_{\sigma_2} = P_{\sigma_1\,\circ \, \sigma_2}$.  
Therefore $P_{\sigma}\circ P_{\sigma^{-1}}=I_{n}$, so $P_{\sigma}^{-1} = P_{\sigma^{-1}}$, and $P_{\sigma} \in GL_n(\mathbb{Z})$.  
\end{proof}

\begin{remark}
The \emph{consecutive-ones property} for these matrices (ignoring signs) implies the stronger statement that $P_{\sigma}$ is totally unimodular.
\null\hfill$\triangle$
\end{remark}

\begin{definition}
Let $\sC$ be a collision and $\sigma=(\sigma^1,\ldots, \sigma^n)$ be a $\cT$-shuffle.  We say that $\sigma$ is a \emph{compatible $\cT$-shuffle for $\sC$}, or simply a \emph{compatible shuffle}, if the following  holds: let 
$\psi^{\cT}_{\cT/\sC}$ be the collision map from Def.\ \ref{def:collision_maps} (see also Propositions \ref{treemap} and \ref{prop:alpha_is_bijective}).
Then either
\begin{itemize}
\item $\psi^{\cT}_{\cT/\sC}(u^k_{\sigma^k(i+1)})=\psi^{\cT}_{\cT/\sC}(u^k_{\sigma^k(i)})$ or

\smallskip

\item $\psi^{\cT}_{\cT/\sC}(u^k_{\sigma^k(i+1)})$ is the successor of $\psi^{\cT}_{\cT/\sC}(u^k_{\sigma^k(i)})$ in $\cT/\sC$.
\end{itemize}
We adopt the convention that any $\cT$-shuffle is a compatible shuffle for $\sB_{\min}$.
\end{definition}

In the future, if $k$ is clear from the context, we may denote $\sigma^k(i)$ by $\sigma(i)$.

It is easy to construct a compatible shuffle $\sigma$ for a collision $\sC$: take the order $<_{\cT/\sC}$ on $V^k(\cT/\sC)$ and extend this to a total order $\sigma_k$ of $V^k(\cT)$ by ordering the elements in the fibers of the map $\psi^{\cT}_{\cT/\sC}:\mathcal{T}\rightarrow \mathcal{T}/\sC$ so that the shuffle condition is preserved.

\begin{definition}
Given $\sigma \in S_{\cT}$, we define $P_{\sigma}\coloneqq\bigoplus_{i=0}^n P_{\sigma^k}$.
\null\hfill$\triangle$
\end{definition}

\begin{example}
\label{adjointpermtransformationex2}
An adjoint permutation transformation associated to a compatible shuffle for the collision in Figure \ref{velocityrayex}:
\begin{align}
P_{(12,3142)}^T
=
\left(
\begin{array}{rrrr}
1 & 0 & 0 & 0
\\
0 & -1 & -1 & 0
\\
0 & 1 & 1 & 1
\\
0 & 0 & -1 & -1
\end{array}
\right).
\end{align}
\null\hfill$\triangle$
\end{example}

The following Lemmas \ref{compatibleidentity} and \ref{inclusionpermutation} are clear from inspection.

\begin{lemma}\label{compatibleidentity}
Let $\sC \in \fX(\cT)$ and let $\sigma$ be a compatible shuffle for $\sC$.  Then for any vertex $u^k_i \in V^k(\cT)$ which is not descended from a vertex in the fusion bracket of $\sC$, we have that $u^k_{\sigma^k(i)} = u^k_i$.
\end{lemma}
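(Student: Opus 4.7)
I would prove this by carefully analyzing how the compatible shuffle condition interacts with the structure of the collision map $\psi = \psi^{\cT}_{\cT/\sC}$ on each depth separately. Let the fusion bracket of $\sC$ be $A = (A^0, \ldots, A^{k'})$, so $A^{k'} = W$ is a consecutive subset of $C(u)$ for some $u \in V^{k'-1}(\cT)$ with $|W| \geq 2$. Fix a depth $k$ and partition $V^k(\cT) = Y_1 \sqcup X \sqcup Y_2$, where $X$ consists of the vertices at depth $k$ descended from some vertex of $W$, and $Y_1, Y_2$ are the vertices \emph{not} descended from $W$ that appear before and after $X$ (respectively) in the plane order. Because $W$ is consecutive in $C(u)$, the set $X$ is a consecutive block in $V^k(\cT)$.

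First I would describe the action of $\psi$ on $V^k(\cT)$: the map restricts to the identity on $Y_1 \sqcup Y_2$, and surjects $X$ onto a consecutive block $Z \subseteq V^k(\cT/\sC)$ occupying the slot vacated by $X$. Consequently $V^k(\cT/\sC) = Y_1 \sqcup Z \sqcup Y_2$ in plane order, with $Y_1$ at positions $1, \ldots, |Y_1|$, the block $Z$ at positions $|Y_1|+1, \ldots, |Y_1|+|Z|$, and $Y_2$ at positions $|Y_1|+|Z|+1, \ldots, |V^k(\cT/\sC)|$.

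Next I would deduce that $\psi \circ \sigma^k \colon \{1, \ldots, |V^k(\cT)|\} \to V^k(\cT/\sC)$ is a weakly monotonic surjection. Weak monotonicity follows immediately from the compatible shuffle condition, which requires consecutive values to be either equal or immediate successors in the plane order on $V^k(\cT/\sC)$; surjectivity follows because $\sigma^k$ is a bijection and $\psi|_{V^k(\cT)}$ is surjective. A weakly monotonic surjection with step sizes in $\{0, 1\}$ must start at the minimum of $V^k(\cT/\sC)$ and end at its maximum, visiting each element $v \in V^k(\cT/\sC)$ exactly $|\psi^{-1}(v) \cap V^k(\cT)|$ times in a row. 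Each $y \in Y_1 \cup Y_2$ is visited exactly once (since $\psi$ is injective there), while each $z \in Z$ is visited $|\psi^{-1}(z) \cap X|$ times.

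From these multiplicities, the $\sigma^k$-sequence traverses $Y_1$ in its plane order at positions $1, \ldots, |Y_1|$; then traverses $Z$ (with multiplicities totalling $|X|$) at positions $|Y_1|+1, \ldots, |Y_1|+|X|$; then traverses $Y_2$ in its plane order at positions $|Y_1|+|X|+1, \ldots, |V^k(\cT)|$. These ranges are exactly the positions occupied by $Y_1$, $X$, and $Y_2$ respectively in the plane order on $V^k(\cT)$, and the within-block orderings agree with the plane order. Therefore, for each $u^k_i \in Y_1 \cup Y_2$, the index $i$ at which $u^k_i$ appears in the $\sigma^k$-sequence equals its original position, giving $\sigma^k(i) = i$ and hence $u^k_{\sigma^k(i)} = u^k_i$.

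The main obstacle I anticipate is ensuring Step 1 is airtight, particularly verifying that $\psi$ places $Z$ precisely in the slot vacated by $X$ in the plane order, and that the relative plane order of $Y_1 \sqcup Y_2$ is preserved passing from $\cT$ to $\cT/\sC$. Once that structural claim is established from Definition \ref{def:collision_maps}, the remaining monotonicity and multiplicity arguments are essentially bookkeeping; notably, the $\cT$-shuffle condition (on $\pi$) is not needed, as the compatibility condition alone supplies weak monotonicity.
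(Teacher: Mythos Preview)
Your argument is correct. The paper, however, provides no proof at all: it simply declares Lemmas \ref{compatibleidentity} and \ref{inclusionpermutation} to be ``clear from inspection.'' So there is nothing to compare against beyond noting that you have supplied a full justification where the authors felt none was needed.

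Your proof is somewhat more elaborate than the situation demands, but the structure is exactly right: the key point is that the compatible-shuffle condition forces $i \mapsto d_{\cT/\sC}(\psi(u^k_{\sigma^k(i)}))$ to be a weakly increasing surjection with unit steps, and since $\psi$ is injective on $Y_1 \cup Y_2$ the preimage count pins down $\sigma^k$ there. Your observation that the $\cT$-shuffle condition on parents is not actually invoked is accurate and worth noting. The one place where you flag a potential obstacle --- verifying that $\psi$ preserves the relative plane order on $Y_1 \sqcup Y_2$ and inserts $Z$ in the vacated slot --- is indeed routine from Definition~\ref{def:collision_maps}, since the global collision map extends the local one by the identity and the local map has range a tree with a single depth-$1$ vertex.
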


\begin{lemma}\label{inclusionpermutation}
Let $\sC$ and $\sC'$ be collisions such that $\sC$ and $\sC'$ have the same fusion bracket and $\sC \rightarrow \sC'$.  Suppose $\sigma$ is a compatible shuffle for $\sC$, then $\sigma$ is a compatible shuffle for $\sC'$.
\end{lemma}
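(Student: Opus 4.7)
The plan is to exploit the factorization of collision maps that is implicit in the hypothesis $\sC \to \sC'$ combined with the equality of fusion brackets. Explicitly, let $A$ be the common fusion bracket of $\sC$ and $\sC'$. Since every essential bracket of $\sC$ is contained in an essential bracket of $\sC'$, the assignment sending the vertex of $\cT/\sC$ corresponding to an essential bracket $A_1$ of $\sC$ to the vertex of $\cT/\sC'$ corresponding to the unique essential bracket of $\sC'$ containing $A_1$ is well-defined, and it extends by the identity outside $\cT(u; W)$ (where $u$ is the parent of $A$ and $W = A^k$). I will denote the resulting map by $\varphi \colon \cT/\sC \to \cT/\sC'$. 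A direct check against Proposition \ref{prop:alpha_is_bijective} shows
\begin{align}
\psi^{\cT}_{\cT/\sC'} = \varphi \circ \psi^{\cT}_{\cT/\sC}.
\end{align}

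Next I would verify that $\varphi$ is a morphism of rooted plane trees. The image of the root of $\cT/\sC$ is the root of $\cT/\sC'$ because both collisions share the root bracket $\pi(A)$ (Lemma \ref{lem:fusion_bracket_for_collision}). Edges are sent to edges since truncation of brackets commutes with containment. For weak order-preservation on children, I would invoke the compatibility of the height partial orders $<_{\sC}$ and $<_{\sC'}$ under $\sC \to \sC'$: two essential brackets of $\sC$ lying in the same essential bracket of $\sC'$ get identified under $\varphi$, and otherwise their order under $<_{\sC}$ refines the order under $<_{\sC'}$. Additionally, I would note that $\varphi$ is surjective on each depth level — this follows because $\psi^{\cT}_{\cT/\sC'}$ is surjective on each depth level (it is a collision map extended by the identity), and $\psi^{\cT}_{\cT/\sC'}$ factors through $\varphi$.

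With these properties in hand, the lemma reduces to a direct chase. Fix $k$ and $i$ and set $v_j = \psi^{\cT}_{\cT/\sC}(u^k_{\sigma^k(j)})$ for $j = i, i+1$. Compatibility of $\sigma$ for $\sC$ gives two cases. If $v_i = v_{i+1}$, then applying $\varphi$ preserves the equality, confirming the first bullet of compatibility for $\sC'$. If instead $v_{i+1}$ is the successor of $v_i$ in $\cT/\sC$, then by weak order-preservation of $\varphi$ we have $\varphi(v_i) \leq_{\cT/\sC'} \varphi(v_{i+1})$; equality again gives the first bullet for $\sC'$, and strict inequality I would handle by contradiction: any $w \in V^k(\cT/\sC')$ strictly between $\varphi(v_i)$ and $\varphi(v_{i+1})$ would, by surjectivity of $\varphi$ on depth-$k$ vertices, be the image of some $v \in V^k(\cT/\sC)$ strictly between $v_i$ and $v_{i+1}$, which is impossible since the latter are consecutive.

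The main obstacle I anticipate is the verification that $\varphi$ is a well-defined morphism of rooted plane trees — specifically, the weak order-preservation on children of each vertex. This ultimately reduces to a careful bookkeeping of the extended height partial order of Definition \ref{heightpartialorderremark}: one must check that whenever two essential $k$-brackets $A_1, A_2$ of $\sC$ are comparable with $A_1 <_{\sC} A_2$ and are contained respectively in distinct essential $k$-brackets $A_1', A_2'$ of $\sC'$ with $\pi(A_1') = \pi(A_2')$, then $A_1' <_{\sC'} A_2'$. This is exactly the content of the third bullet of the \textsc{(height partial orders)} axiom, applied inside the common $n$-bracketing $\sC \vee \sC'$ (which exists by the compatibility asserted in $\sC \to \sC'$). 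Once this coherence is in hand, the rest of the argument is purely formal.
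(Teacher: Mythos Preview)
Your argument is correct and considerably more detailed than what the paper offers: the paper simply declares this lemma (together with Lemma~\ref{compatibleidentity}) to be ``clear from inspection'' and gives no proof at all. Your approach via the factorization $\psi^{\cT}_{\cT/\sC'} = \varphi \circ \psi^{\cT}_{\cT/\sC}$ is natural and is in fact the same factorization the paper later invokes (in the proof of Lemma~\ref{contractionlemma}) when $\sC'/\sC$ happens to be a single collision; you are handling the general case where, by Lemma~\ref{quotientofcollisionbycollision}, $\sC'/\sC$ may decompose as a union of disjoint collisions.

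One small point worth tightening: the contradiction step at the end requires that $\varphi$ be weakly monotone on the \emph{full} total order $<_{\cT/\sC}$ at level $k$, not merely on the children of each vertex (the morphism axiom only guarantees the latter directly). You correctly flag this as the main obstacle and point to the third bullet of \textsc{(height partial orders)} applied inside $\sC \vee \sC'$, which handles comparable essential brackets with the same projection. For brackets whose projections already differ, the order is inherited from a lower level, and one must iterate; alternatively, one can sidestep this bookkeeping by observing that $\varphi$ is (outside the collision region, the identity, and inside) a composition of the collision maps for the disjoint collisions of Lemma~\ref{quotientofcollisionbycollision}, each of which is weakly monotone on every level by construction. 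Either route closes the argument; what you wrote is essentially complete.
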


The following lemma will be utilized in the proof of Theorem \ref{mainvelocitystatement}.

\begin{lemma}\label{permutationtransformationlemma1}
Suppose that $\sC$ is an extended collision and $\sigma$ is a compatible shuffle for $\sC$.
Then $\sigma({\rho}(\sC))\coloneqq P_{\sigma}^T(({\rho}(\sC)-\bf{1})+\bf{1}$ is a 0-1 vector, and the entry $\sigma({\rho}(\sC))^k_i=1$ if and only if $u^k_{\sigma(i)}$ and $u^k_{\sigma(i+1)}$ belong to a common essential bracket in $\sC$.
\end{lemma}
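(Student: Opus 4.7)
The plan is to reduce the statement to a direct computation using Lemma \ref{relevantadjoint}, together with the definitions of the ray generator $\rho(\sC)$ and of a compatible $\cT$-shuffle. The key observation is that the entries of $\rho(\sC) - \mathbf{1}$ are precisely consecutive differences of the integer-valued function $v^k_j \coloneqq d_{\cT/\sC}(\psi^{\cT}_{\cT/\sC}(u^k_j))$, which is exactly the setup in which \eqref{eq:action_of_P^T_on_differences} applies.

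First I would fix the depth $k$ and specialize to the $k$-th block. By definition of $\rho(\sC)$ we have $(\rho(\sC) - \mathbf{1})^k_i = v^k_i - v^k_{i+1}$. Applying Lemma \ref{relevantadjoint} to the vector $(v^k_1, \ldots, v^k_{t_k+1})$ yields
\begin{align}
\bigl(P^T_{\sigma^k}(\rho(\sC) - \mathbf{1})\bigr)^k_i
= v^k_{\sigma^k(i)} - v^k_{\sigma^k(i+1)}
= d_{\cT/\sC}\bigl(\psi(u^k_{\sigma^k(i)})\bigr) - d_{\cT/\sC}\bigl(\psi(u^k_{\sigma^k(i+1)})\bigr).
\end{align}
Adding $1$, we obtain
\begin{align}
\sigma(\rho(\sC))^k_i
= d_{\cT/\sC}\bigl(\psi(u^k_{\sigma^k(i)})\bigr) - d_{\cT/\sC}\bigl(\psi(u^k_{\sigma^k(i+1)})\bigr) + 1.
\end{align}

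Next I would use compatibility of $\sigma$ with $\sC$: by definition, either $\psi(u^k_{\sigma^k(i)}) = \psi(u^k_{\sigma^k(i+1)})$, in which case the expression above equals $1$, or $\psi(u^k_{\sigma^k(i+1)})$ is the successor of $\psi(u^k_{\sigma^k(i)})$ in the linear order $<_{\cT/\sC}$ on $V^k(\cT/\sC)$, in which case $d_{\cT/\sC}(\psi(u^k_{\sigma^k(i+1)})) = d_{\cT/\sC}(\psi(u^k_{\sigma^k(i)})) + 1$ and the expression equals $0$. Thus $\sigma(\rho(\sC))$ is a $0$-$1$ vector.

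Finally I would identify when the entry equals $1$. By Proposition \ref{treemap} and the bijection $\alpha$, the essential $k$-brackets of $\sC$ are precisely the fibers $\psi^{-1}(A(u'))$ over singleton $k$-brackets $A(u') \in \cT/\sC$. Consequently $u^k_{\sigma^k(i)}$ and $u^k_{\sigma^k(i+1)}$ lie in a common essential $k$-bracket of $\sC$ if and only if $\psi(u^k_{\sigma^k(i)}) = \psi(u^k_{\sigma^k(i+1)})$, which by the previous paragraph is exactly the condition $\sigma(\rho(\sC))^k_i = 1$. The edge case $\sC = \sB_{\min}$ is handled by noting that $\rho(\sB_{\min}) = \mathbf{1}$, so $\sigma(\rho(\sB_{\min})) = \mathbf{1}$, which is consistent with the convention that the essential brackets of $\sB_{\min}$ are the maximum $k$-brackets, each containing all of $V^k(\cT)$. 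No step is genuinely hard; the only care needed is in matching our convention for $P^T_\sigma$ with the sign conventions in \eqref{eq:action_of_P^T_on_differences}, which is precisely why $\rho(\sC) - \mathbf{1}$ (rather than $\rho(\sC)$) appears inside $P^T_\sigma$.
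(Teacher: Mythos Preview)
Your proof is correct and follows essentially the same approach as the paper: write $(\rho(\sC)-\mathbf{1})^k_i$ as a consecutive difference of $d_{\cT/\sC}\circ\psi$, apply Lemma~\ref{relevantadjoint}, add $\mathbf{1}$, and then use the two-case definition of a compatible shuffle to read off the $0$-$1$ dichotomy and its interpretation via essential brackets. Your explicit treatment of the edge case $\sC=\sB_{\min}$ and the reference to Proposition~\ref{treemap} for the fiber description of essential brackets are minor additions not present in the paper's proof, but they do not change the argument.
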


\begin{proof}

We first give a formal proof and then outline how we are thinking about things in terms of collisions of affine coordinate spaces.
Recall that
\begin{align}
{\rho}(\sC)^k_i
=
d_{\cT/\sC}(\psi^{\cT}_{\cT/\sC}(u^k_i))-d_{\cT/\sC}(\psi^{\cT}_{\cT/\sC}(u^k_{i+1}))+1.
\end{align}

Therefore \begin{align}
({\rho}(\sC)-{\bf 1})^k_i
=
d_{\cT/\sC}(\psi^{\cT}_{\cT/\sC}(u^k_{i}))-d_{\cT/\sC}(\psi^{\cT}_{\cT/\sC}(u^k_{i+1})).
\end{align}
By Lemma \ref{relevantadjoint}, we know that
\begin{align}
P_\sigma^T({\rho}(\sC)-{\bf 1})^k_i=d_{\cT/\sC}(\psi^{\cT}_{\cT/\sC}(u^k_{\sigma(i)}))-d_{\cT/\sC}(\psi^{\cT}_{\cT/\sC}(u^k_{\sigma(i+1)})),
\end{align}

hence

\begin{align}(P_\sigma^T({\rho}(\sC)-{\bf 1})+{\bf1})^k_i=d_{\cT/\sC}(\psi^{\cT}_{\cT/\sC}(u^k_{\sigma(i)}))-d_{\cT/\sC}(\psi^{\cT}_{\cT/\sC}(u^k_{\sigma(i+1)}))+1.
\end{align}

\

Because $\sigma$ is a compatible shuffle for $\sC$, for every $1\leq i\leq t_k$ we have that 
\begin{align}d_{\cT/\sC}(\psi^{\cT}_{\cT/\sC}(u^k_{\sigma(i)}))=d_{\cT/\sC}(\psi^{\cT}_{\cT/\sC}(u^k_{\sigma(i+1)}))
\end{align}

or

\begin{align}d_{\cT/\sC}(\psi^{\cT}_{\cT/\sC}(u^k_{\sigma(i)}))=d_{\cT/\sC}(\psi^{\cT}_{\cT/\sC}(u^k_{\sigma(i+1)}))-1,
\end{align}

and the former case occurs if and only if $u^k_{\sigma(i)}$ and $u^k_{\sigma(i+1)}$ are contained in a common essential $k$-bracket in $\sC$. 
\end{proof}

Recall that an entry in ${\rho}(\sC)$ encodes the relative change of distance between a consecutive pair of initial coordinate spaces during a collision.
When we take the difference, ${\rho}(\sC)-\bf{1}$, this vector encodes the relative change in distance between a consecutive pair of initial coordinate spaces starting at the arrangement where all initial spaces have collapsed to a single flag of linear coordinate spaces, and ending at the configuration associated to  $\sC$.
When we apply the transformation $P_\sigma^T$ we find that an entry in $P_\sigma^T({\rho}(\sC)-\bf{1})$ similarly encodes the relative change in distance between a consecutive pair of initial coordinate spaces starting at the arrangement where all spaces have collapsed to a single flag and ending at the configuration associated to $\sC$, but now initial coordinate spaces are taken to be consecutive with respect to $\sigma$.
When we add the vector $\bf{1}$ we find that an entry in $P_\sigma^T({\rho}(\sC)-\bf{1})+\bf{1}$ encodes the relative change in distance between a consecutive pair of initial coordinate spaces starting at the arrangement where all spaces have been placed at positions according to $\sigma$ and pass to the configuration associated to  $\sC$, and consecutivity is again taken with respect to $\sigma$.
By our choice of $\sigma$, consecutive initial coordinate spaces are spaced one unit apart and either collide or remain one unit apart, establishing the lemma.  See Figure \ref{easytransformfig} for an illustration.

\begin{figure}
\centering
\def\svgwidth{0.75\textwidth}
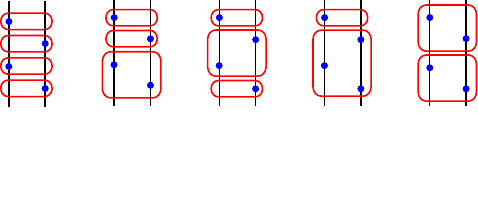
\vspace{22pt}
\begin{align}
P^T_{(12,3142)}\left(\left(\begin{array}{r}
1 \\
-1 \\
3 \\
0
\end{array}\right) - \left(\begin{array}{r}
1 \\
1 \\
1 \\
1
\end{array}\right)\right)
+
\left(\begin{array}{r}
1 \\
1 \\
1 \\
1
\end{array}\right)
\end{align}
\begin{align}
\,\,\,\,\,\,=\left(\begin{array}{rrrr}
1 & 0 & 0 & 0 \\
0 & -1 & -1 & 0 \\
0 & 1 & 1 & 1 \\
0 & 0 & -1 & -1
\end{array}\right)\left(\begin{array}{r}
0 \\
-2 \\
2 \\
-1
\end{array}\right)
+
\left(\begin{array}{r}
1 \\
1 \\
1 \\
1
\end{array}\right)
=
\left(\begin{array}{c}
1 \\
1 \\
0 \\
0
\end{array}\right).\nonumber
\end{align}
\caption{
We illustrate the map ${\rho} \mapsto P_{\sigma}^T({\rho}-{\bf 1})+{\bf 1}$ from Lemma \ref{permutationtransformationlemma1} using the transformation $P^T_{\sigma}$ from Example \ref{adjointpermtransformationex2}, and we explicitly calculate this map for the collision second from the left on the top row.
Observe that the points are placed at heights according to the permutation $3142$.
}\label{easytransformfig}
\end{figure}

The following technical Lemma \ref{contractionlemma} and its strengthening Lemma \ref{refinedcontractionlemma} are essential for understanding the recursive structure of the velocity fan, but will not be utilized until \S\ref{triangulationsection}, after the velocity fan has been proven to be a fan which is complete.
So, if the reader is primarily interested in understanding the proof of this main theorem, the following lemma can be skipped on an initial read.

\begin{lemma}
\label{contractionlemma}
Let $\sC \in \fX(\cT)$ and $\sC' \in \wh \fX(\cT)$. Take $\sigma$ a compatible shuffle for $\sC$.
Let $u^k_j \in V^k(\cT/\sC)$ and $u^k_{\sigma(i)} \in V^k(\cT)$ be such that $\psi^\cT_{\cT/\sC}(u^k_{\sigma(i)})=u^k_j$.  

If $\sC \rightarrow \sC'$ and $\sC/\sC' \in \fX(\cT/\sC)$, then
\begin{align}
(P_{\sigma}^T({\rho}(\sC')-{\rho}(\sC)))^k_{i}=
\begin{cases}
{\rho}(\sC'/\sC)^k_j & \text{if} \,\, i = {\rm max} \{h: \psi^\cT_{\cT/\sC}(u^k_{\sigma(h)}) = u^k_j\}  \\
\,\, 0& \text{otherwise,} \\
\end{cases}
\end{align}
and if $\sC \sim \sC'$ then
\begin{align}
\,\,\,\,\,\,\,\,\,\,\,\,\,\,\,\,\,\,\,\,\,(P_{\sigma}^T({\rho}(\sC')))^k_{i}=
\begin{cases}
{\rho}(\sC'/\sC)^k_j & \text{if} \,\, i = {\rm max} \{h: \psi^\cT_{\cT/\sC}(u^k_{\sigma(h)}) = u^k_j\}  \\
\,\, 0 & \text{otherwise.} \\
\end{cases}
\end{align}
\end{lemma}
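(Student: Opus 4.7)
The strategy is to rewrite both $\rho(\sC')$ and $\rho(\sC)$ in a form amenable to Lemma~\ref{relevantadjoint}. For any $\tilde{\sC} \in \wh{\fX}(\cT)$, set $H_{\tilde{\sC}}(m) \coloneqq d_{\cT/\tilde{\sC}}(\psi^\cT_{\cT/\tilde{\sC}}(u^k_m)) - m$. Then $\rho(\tilde{\sC})^k_i = H_{\tilde{\sC}}(i) - H_{\tilde{\sC}}(i+1)$, so by Lemma~\ref{relevantadjoint},
\begin{align}
P_\sigma^T(\rho(\tilde{\sC}))^k_i = H_{\tilde{\sC}}(\sigma(i)) - H_{\tilde{\sC}}(\sigma(i+1)).
\end{align}
In both cases of the lemma, the plan is to show that the relevant quantity ($H_{\sC'} - H_\sC$ in the first case, $H_{\sC'}$ in the second) factors through $\psi = \psi^\cT_{\cT/\sC}$, i.e.\ depends on $m$ only via $\psi(u^k_m)$. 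Compatibility of $\sigma$ with $\sC$ ensures that the sequence $i \mapsto \psi(u^k_{\sigma(i)})$ is weakly monotonic and visits each vertex of $V^k(\cT/\sC)$ contiguously, which then reduces the computation to comparing values at consecutive or identical vertices of $V^k(\cT/\sC)$.

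For the first case ($\sC \rightarrow \sC'$), the collision maps factor as $\psi' = \bar{\psi} \circ \psi$, where $\bar{\psi}\colon \cT/\sC \to \cT/\sC' = (\cT/\sC)/(\sC'/\sC)$ is the collision map of $\sC'/\sC$. A direct computation in which the $m$-terms cancel gives
\begin{align}
H_{\sC'}(m) - H_\sC(m) = d_{\cT/\sC'}\bigl(\bar{\psi}(\psi(u^k_m))\bigr) - d_{\cT/\sC}(\psi(u^k_m)) = H_{\sC'/\sC}\bigl(d_{\cT/\sC}(\psi(u^k_m))\bigr),
\end{align}
which depends only on $\psi(u^k_m)$. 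When $i$ is not the maximum index $h$ with $\psi(u^k_{\sigma(h)}) = u^k_j$, contiguity forces $\psi(u^k_{\sigma(i+1)}) = \psi(u^k_{\sigma(i)})$ and the displayed difference vanishes; when $i$ is maximal, $\psi(u^k_{\sigma(i+1)}) = u^k_{j+1}$ and the difference evaluates to $H_{\sC'/\sC}(j) - H_{\sC'/\sC}(j+1) = \rho(\sC'/\sC)^k_j$.

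For the second case ($\sC \sim \sC'$), the analogous identity $H_{\sC'}(m) = H_{\sC'/\sC}(d_{\cT/\sC}(\psi(u^k_m)))$ is equivalent to
\begin{align}
m - d_{\cT/\sC}(\psi(u^k_m)) = d_{\cT/\sC'}(\psi'(u^k_m)) - d_{(\cT/\sC)/(\sC'/\sC)}\bigl(\bar{\psi}(\psi(u^k_m))\bigr).
\end{align}
Both sides count non-representative vertices of essential brackets: the left counts those of essential $\sC$-brackets at depth $k$ in $\cT$ at positions $\leq m$, while the right counts those of essential $(\sC'/\sC)$-brackets in $\cT/\sC'$ at positions $\leq \psi'(u^k_m)$. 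Disjointness $\sC \sim \sC'$ implies that essential $\sC$-brackets avoid essential $\sC'$-brackets at each level, so $\psi'$ restricts to an order-preserving bijection from the union of essential $\sC$-brackets in $\cT$ onto the union of essential $(\sC'/\sC)$-brackets in $\cT/\sC'$, identifying the two counts. Once the identity is established, the conclusion follows exactly as in the first case. The main technical obstacle is this combinatorial identity, which rests on carefully unpacking how essential brackets of $\sC$ are transported by $\psi'$ and on the two-way decomposition $(\cT/\sC)/(\sC'/\sC) = (\cT/\sC')/(\sC/\sC')$ valid in the disjoint setting.
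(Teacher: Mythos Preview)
Your treatment of the containment case $\sC \rightarrow \sC'$ is correct and essentially the paper's argument, repackaged through the auxiliary function $H_{\tilde\sC}$; the factorization $\psi' = \bar\psi \circ \psi$ is exactly the paper's key step.

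The disjoint case $\sC \sim \sC'$ has a genuine gap. Your proposed identity
\[
m - d_{\cT/\sC}(\psi(u^k_m)) \;=\; d_{\cT/\sC'}(\psi'(u^k_m)) - d_{(\cT/\sC)/(\sC'/\sC)}\bigl(\bar\psi(\psi(u^k_m))\bigr)
\]
may well be true (and would give a cleaner proof than the paper's), but your justification via counting ``non-representative vertices'' is incorrect. The left-hand side is \emph{not} a count in general: when $\sC$ involves shuffling, the map $m \mapsto d_{\cT/\sC}(\psi(u^k_m))$ is not monotone, and $m - d_{\cT/\sC}(\psi(u^k_m))$ can be negative. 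For instance, take two lines with two points each, and let $\sC$ collide the lines with the points shuffled as $3,1,4,2$; then for $m=2$ one gets $2-4=-2$. So neither side admits the claimed counting interpretation, and the ``order-preserving bijection'' you invoke does not translate into equality of those two quantities. (There is also a notational slip: the essential brackets you want on the right live in $\cT/\sC'$, so you mean $\sC/\sC'$, not $\sC'/\sC$.)

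The paper avoids this global identity entirely and instead computes $(P_\sigma^T\rho(\sC'))^k_i$ directly via Lemma~\ref{relevantadjoint}, then performs a case analysis according to whether $u^k_{\sigma(i)}$ and $u^k_{\sigma(i+1)}$ are descended from the fusion bracket of $\sC$, of $\sC'$, or neither. This is more tedious but bypasses the need for any global positional identity. If you want to salvage your approach, you would need to prove the identity $a(m)+d(m)=b(m)+c(m)$ (in the obvious notation for positions in $\cT$, $\cT/\sC$, $\cT/\sC'$, $\cT''$) by a genuinely different argument---for example, by analyzing how positions in a rooted plane tree transform under disjoint quotients---rather than by counting.
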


\begin{proof}
Suppose that $\sC \rightarrow \sC'$ and $\sC/\sC' \in \fX(\cT/\sC)$.
We calculate $(P_{\sigma}^T({\rho}(\sC')-{\rho}(\sC)))^k_{i}.$  Recall that

\begin{align}
{\rho}(\sC)^k_i
=
d_{\cT/\sC}(\psi^\cT_{\cT/\sC}(u^k_i))-d_{\cT/\sC}(\psi^\cT_{\cT/\sC}(u^k_{i+1}))+1.
\end{align}
Therefore 

\begin{align}
({\rho}(\sC')-{\rho}(\sC))^k_i=
\end{align}

\begin{align}
 d_{\cT/\sC'}(\psi^\cT_{\cT/\sC'}(u^k_{i}))-d_{\cT/\sC'}(\psi^\cT_{\cT/\sC'}(u^k_{i+1}))-(d_{\cT/\sC}(\psi^\cT_{\cT/\sC}(u^k_{i}))-d_{\cT/\sC}(\psi^\cT_{\cT/\sC}(u^k_{i+1}))),  \nonumber
\end{align}

\begin{align}
{\rm \, hence\,\, by \,\, Lemma \,\, \ref{relevantadjoint}}\,\,(P_\sigma^T({\rho}(\sC')-{\rho}(\sC)))^k_i=
\end{align}

\begin{align}
d_{\cT/\sC'}(\psi^\cT_{\cT/\sC'}(u^k_{\sigma(i)}))-d_{\cT/\sC'}(\psi^\cT_{\cT/\sC'}(u^k_{\sigma(i+1)}))\\
-(d_{\cT/\sC}(\psi^\cT_{\cT/\sC}(u^k_{\sigma(i)}))-d_{\cT/\sC}(\psi^\cT_{\cT/\sC}(u^k_{\sigma(i+1)}))).\nonumber  
\end{align}

If $i \neq {\rm max} \{h: \psi^\cT_{\cT/\sC}(u^k_{\sigma(h)}) = u^k_j\}$, then $\psi^\cT_{\cT/\sC}(u^k_{\sigma(i)})=\psi^\cT_{\cT/\sC}(u^k_{\sigma(i+1)}))$ and $\psi^\cT_{\cT/\sC'}(u^k_{\sigma(i)})=\psi^\cT_{\cT/\sC'}(u^k_{\sigma(i+1)})$, thus $P_\sigma^T({\rho}(\sC')-{\rho}(\sC))^k_i=0$.
On the other hand, suppose that $i = {\rm max} \{h: \psi^\cT_{\cT/\sC}(u^k_{\sigma(h)}) = u^k_j\}$.
We would like to prove that 
\begin{align}
d_{\cT/\sC'}(\psi^\cT_{\cT/\sC'}(u^k_{\sigma(i)}))-d_{\cT/\sC'}(\psi^\cT_{\cT/\sC'}(u^k_{\sigma(i+1)}))\\
-(d_{\cT/\sC}(\psi^\cT_{\cT/\sC}(u^k_{\sigma(i)}))-d_{\cT/\sC}(\psi^\cT_{\cT/\sC}(u^k_{\sigma(i+1)})))\nonumber
\end{align}
\begin{align}
= {\rho}(\sC'/\sC)^k_j
\nonumber
\end{align}
\begin{align}
=
d_{\cT/\sC'}(\psi^{\cT/\sC}_{(\cT/\sC)/(\sC'/\sC)}(u^k_j))-d_{\cT/\sC'}((\psi^{\cT/\sC}_{(\cT/\sC)/(\sC'/\sC)}(u^k_{j+1}))+1.
\nonumber
\end{align}

By construction, 
\begin{align}
d_{\cT/\sC}(\psi^\cT_{\cT/\sC}(u^k_{\sigma(i)}))-d_{\cT/\sC}(\psi^\cT_{\cT/\sC}(u^k_{\sigma(i+1)}))=-1 
\end{align}

 and it remains to check that 

\begin{align}
d_{\cT/\sC'}(\psi^\cT_{\cT/\sC'}(u^k_{\sigma(i)}))-d_{\cT/\sC'}(\psi^\cT_{\cT/\sC'}(u^k_{\sigma(i+1)})) 
\end{align}

\begin{align}
=d_{\cT/\sC'}(\psi^{\cT/\sC}_{(\cT/\sC)/(\sC'/\sC)}(u^k_j))-d_{\cT/\sC'}(\psi^{\cT/\sC}_{(\cT/\sC)/(\sC'/\sC)}(u^k_{j+1})).  
\end{align}

Because $\sC \rightarrow \sC'$, the map of rooted plane trees $\psi^\cT_{\cT/\sC'}$ factors as 
\begin{align}
\psi^\cT_{\cT/\sC'}=\psi^{\cT/\sC}_{(\cT/\sC)/(\sC'/\sC)}\circ\psi^\cT_{\cT/\sC}  
\end{align}
and the desired equality follows.

We next assume that $\sC \sim \sC'$.
In verifying the corresponding part of the statement of the lemma we employ (admittedly annoying) case analysis.  
Recall 
\begin{align}
{\rho}(\sC')^k_i
=
d_{\cT/\sC'}(\psi^\cT_{\cT/\sC'}(u^k_i))-d_{\cT/\sC'}(\psi^\cT_{\cT/\sC'}(u^k_{i+1}))+1
\end{align}

\begin{align}
{\rm \, hence\,\, by \,\, Lemma \,\, \ref{relevantadjoint}},\,\,(P_\sigma^T({\rho}(\sC'))^k_i=  
\end{align}

\begin{align}
d_{\cT/\sC'}(\psi^\cT_{\cT/\sC'}(u^k_{\sigma(i)}))-d_{\cT/\sC'}(\psi^\cT_{\cT/\sC'}(u^k_{\sigma(i+1)}))+\sigma(i+1)-\sigma(i).
\end{align}

First, we note that $u^k_i$, respectively $u^k_{i+1}$, is descended from a vertex in the fusion bracket for $\sC$ if and only if $u^k_{\sigma(i)}$, respectively $u^k_{\sigma(i+1)}$, is descended from a vertex in the fusion bracket for $\sC$.

 Suppose $i \neq {\rm max} \{h: \psi^\cT_{\cT/\sC}(u^k_{\sigma(h)}) = u^k_j\}$, it must be that $u^k_i$ and $u^k_{i+1}$ belong to a common essential $k$-bracket in $\sC^k$, hence they are descended from vertices in the fusion bracket for $\sC$.  Then  

\begin{align}
 d_{\cT/\sC'}(\psi^\cT_{\cT/\sC'}(u^k_{\sigma(i)}))-\sigma(i)=d_{\cT/\sC'}(\psi^\cT_{\cT/\sC'}(u^k_{\sigma(i+1)}))-\sigma(i+1), \, \text{implying}
\end{align}

 \begin{align}
 d_{\cT/\sC'}(\psi^\cT_{\cT/\sC'}(u^k_{\sigma(i)}))-d_{\cT/\sC'}(\psi^\cT_{\cT/\sC'}(u^k_{\sigma(i+1)}))+\sigma(i+1)-\sigma(i)=0,
\end{align}

 so $(P_\sigma^T({\rho}(\sC'))^k_i=0$, as desired.

Next assume that $i = {\rm max} \{h: \psi^\cT_{\cT/\sC}(u^k_{\sigma(h)}) = u^k_j\}$.
Recall
\begin{align}
{\rho}(\sC'/\sC)^k_j=d_{({\cT/\sC})/\sC'}(\psi^{\cT/\sC}_{({\cT/\sC})/\sC'}(u^k_j))-d_{({\cT/\sC})/\sC'}(\psi^{\cT/\sC}_{({\cT/\sC})/\sC'}(u^k_{j+1}))+1.  
\end{align}

First suppose $u^k_i$ and $u^k_{i+1}$ are descended from vertices in the fusion bracket for $\sC$.  As observed above, $(P_\sigma^T({\rho}(\sC'))^k_i=0$, so we must show that ${\rho}(\sC'/\sC)^k_j=0$ as well. 
 We know that $\psi^\cT_{\cT/\sC}(u^k_{\sigma(i)})=u^k_j$ and $\psi^\cT_{\cT/\sC}(u^k_{\sigma(i+1)})=u^k_{j+1}$.  Moreover, $u^k_j$ and $u^k_{j+1}$ are both not descended from the fusion bracket for $\sC'/\sC$ in $\cT/\sC$, thus the image of $u^k_{j+1}$ in $\cT/\sC/\sC'$ is the successor of the image of $u^k_j$ in $\cT/\sC/\sC'$, i.e.\ $d_{({\cT/\sC})/\sC'}(\psi^{\cT/\sC}_{({\cT/\sC})/\sC'}(u^k_j))-d_{({\cT/\sC})/\sC'}(\psi^{\cT/\sC}_{({\cT/\sC})/\sC'}(u^k_{j+1}))=-1$ and so ${\rho}(\sC'/\sC)^k_j=0$, as desired.

For the remaining analysis we will assume that $u^k_i$ and $u^k_{i+1}$ are not both descended from vertices in the fusion bracket for $\sC$ (continuing to assume that $i = {\rm max} \{h: \psi^\cT_{\cT/\sC}(u^k_{\sigma(h)}) = u^k_j\}$).

  Next suppose that $u^k_i$ and $u^k_{i+1}$ are both not descended from vertices in the fusion bracket for $\sC$.
 Because $\sigma$ is a compatible shuffle for $\sC$ we can apply Lemma \ref{compatibleidentity} to deduce that 
\begin{align}
P_{\sigma}^T({\rho}(\sC'))^k_{i}=d_{\cT/\sC'}(\psi^\cT_{\cT/\sC'}(u^k_i))-d_{\cT/\sC'}(\psi^\cT_{\cT/\sC'}(u^k_{i+1}))+1.
 \end{align}

One may check that 

\begin{align}
d_{\cT/\sC'}(\psi^\cT_{\cT/\sC'}(u^k_i))-d_{({\cT/\sC})/\sC'}(\psi^{\cT/\sC}_{({\cT/\sC})/\sC'}(u^k_j))= 
\end{align}
\begin{align}
d_{\cT/\sC'}(\psi^\cT_{\cT/\sC'}(u^k_{i+1})) -d_{({\cT/\sC})/\sC'}(\psi^{\cT/\sC}_{({\cT/\sC})/\sC'}(u^k_{j+1})), \end{align}

 thus $P_{\sigma}^T({\rho}(\sC'))^k_{i}={\rho}(\sC'/\sC)^k_j$.

 Next suppose that  $u^k_{i}$ is not descended from a vertex in the fusion bracket for $\sC$, but $u^k_{i+1}$ is descended from a vertex in the fusion bracket for $\sC$. By Lemma \ref{compatibleidentity}, we have that $\sigma(i)=i=j$. 

Suppose further that $u^k_i$ is not descended from a vertex in the fusion bracket for $\sC'$.  In this case, because $\sC \sim \sC'$, $u^k_i$ and $u^k_{i+1}$ are both not descended from a vertex in the fusion bracket for $\sC'$, and $u^k_j$ and $u^k_{j+1}$ are both not descended from a vertex in the fusion bracket for $\sC'/\sC$.  Therefore

\begin{align}
(P_\sigma^T({\rho}(\sC'))^k_i=d_{\cT/\sC'}(\psi^\cT_{\cT/\sC'}(u^k_{\sigma(i)}))-d_{\cT/\sC'}(\psi^\cT_{\cT/\sC'}(u^k_{\sigma(i+1)}))+\sigma(i+1)-\sigma(i)=0,
\end{align}
and 
\begin{align}
 {\rho}(\sC'/\sC)^k_j=d_{({\cT/\sC})/\sC'}(\psi^{\cT/\sC}_{({\cT/\sC})/\sC'}(u^k_j))-d_{({\cT/\sC})/\sC'}(\psi^{\cT/\sC}_{({\cT/\sC})/\sC'}(u^k_{j+1}))+1=0.
\end{align}

Hence $(P_\sigma^T({\rho}(\sC'))^k_i={\rho}(\sC'/\sC)^k_j.$

Next, suppose that  $u^k_{i}$ is descended from a vertex in the fusion bracket for $\sC'$, and $u^k_{i+1}$ is descended from a vertex in the fusion bracket for $\sC$.

Because $\sigma(i)=i=j$, we see that 

\begin{align}
 d_{\cT/\sC'}(\psi^\cT_{\cT/\sC'}(u^k_{\sigma(i)}))
 = d_{({\cT/\sC})/\sC'}(\psi^{\cT/\sC}_{({\cT/\sC})/\sC'}(u^k_j)). 
\end{align}

So to prove that $(P_\sigma^T({\rho}(\sC'))^k_i={\rho}(\sC'/\sC)^k_j$, we must show that

\begin{align}
d_{({\cT/\sC})/\sC'}(\psi^{\cT/\sC}_{({\cT/\sC})/\sC'}(u^k_{j+1}))-(j+1)=d_{\cT/\sC'}(\psi^\cT_{\cT/\sC'}(u^k_{\sigma(i+1)}))- \sigma(i+1).
\end{align}

This equality follows from the observations that 
\begin{align}d_{\cT/\sC'}(\psi^\cT_{\cT/\sC'}(u^k_{\sigma(i+1)}))=\sigma(i+1)-(|V^k(\cT)|-|V^k(\cT/\sC')|)
\end{align}
and
\begin{align}
d_{({\cT/\sC})/\sC'}(\psi^{\cT/\sC}_{({\cT/\sC})/\sC'}(u^k_{j+1}))=j+1-(|V^k(\cT/\sC)|-|V^k((\cT/\sC)/\sC')|)
\end{align}
\begin{align}
=j+1-(|V^k(\cT)|-|V^k(\cT/\sC')|).  
\end{align}

The remaining case to check when $u^k_{i}$ is descended from a vertex in the fusion bracket for $\sC$, but $u^k_{i+1}$ is not descended from a vertex in the fusion bracket for $\sC$ (with subcases addressing whether or not $u^k_{i+1}$ is descended from a vertex in the fusion bracket for $\sC'$).
This case is very similar to the previous case above and we leave the details to the reader.  
\end{proof}

\begin{figure}
\centering
\def\svgwidth{0.6\textwidth}
\begingroup%
  \makeatletter%
  \providecommand\color[2][]{%
    \errmessage{(Inkscape) Color is used for the text in Inkscape, but the package 'color.sty' is not loaded}%
    \renewcommand\color[2][]{}%
  }%
  \providecommand\transparent[1]{%
    \errmessage{(Inkscape) Transparency is used (non-zero) for the text in Inkscape, but the package 'transparent.sty' is not loaded}%
    \renewcommand\transparent[1]{}%
  }%
  \providecommand\rotatebox[2]{#2}%
  \newcommand*\fsize{\dimexpr\f@size pt\relax}%
  \newcommand*\lineheight[1]{\fontsize{\fsize}{#1\fsize}\selectfont}%
  \ifx\svgwidth\undefined%
    \setlength{\unitlength}{221.07983416bp}%
    \ifx\svgscale\undefined%
      \relax%
    \else%
      \setlength{\unitlength}{\unitlength * \real{\svgscale}}%
    \fi%
  \else%
    \setlength{\unitlength}{\svgwidth}%
  \fi%
  \global\let\svgwidth\undefined%
  \global\let\svgscale\undefined%
  \makeatother%
  \begin{picture}(1,0.46330513)%
    \lineheight{1}%
    \setlength\tabcolsep{0pt}%
    \put(-0.00136934,0.0036398){\makebox(0,0)[lt]{\lineheight{1.25}\smash{\begin{tabular}[t]{l}$(1,0,3,0,-1,0)$\end{tabular}}}}%
    \put(0.3860706,0.0036398){\makebox(0,0)[lt]{\lineheight{1.25}\smash{\begin{tabular}[t]{l}$(1,1,2,1,0,0)$\end{tabular}}}}%
    \put(0.770021,0.0036398){\makebox(0,0)[lt]{\lineheight{1.25}\smash{\begin{tabular}[t]{l}$(0,0,0,0,0,1)$\end{tabular}}}}%
    \put(0,0){\includegraphics[width=\unitlength,page=1]{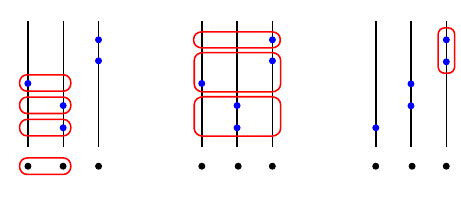}}%
    \put(0.12087941,0.44242042){\makebox(0,0)[lt]{\lineheight{1.25}\smash{\begin{tabular}[t]{l}$\rho_1$\end{tabular}}}}%
    \put(0.49703148,0.44242042){\makebox(0,0)[lt]{\lineheight{1.25}\smash{\begin{tabular}[t]{l}$\rho_2$\end{tabular}}}}%
    \put(0.87531459,0.44242042){\makebox(0,0)[lt]{\lineheight{1.25}\smash{\begin{tabular}[t]{l}$\rho_3$\end{tabular}}}}%
    \put(0,0){\includegraphics[width=\unitlength,page=2]{illustratingsecondtransformation1.pdf}}%
  \end{picture}%
\endgroup%

\vspace{11pt}
\begin{gather*}
\sigma = (12,23145)
\\
\\
P_\sigma^T
=
\left(\begin{array}{rrrrrr}
1 & 0 & 0 & 0 & 0 & 0 \\
0 & 1 & 0 & 0 & 0 & 0 \\
0 & 0 & 0 & 1 & 0 & 0 \\
0 & 0 & -1 & -1 & 0 & 0 \\
0 & 0 & 1 & 1 & 1 & 0 \\
0 & 0 & 0 & 0 & 0 & 1
\end{array}\right)
\\
\\
P_\sigma^T(\rho_2-\rho_1) = (\textcolor{red}{0},1,1,0,1,0),
\qquad
P_\sigma^T(\rho_3) = (\textcolor{red}{0},0,0,0,0,1)
\\
\end{gather*}
\def\svgwidth{0.275\textwidth}
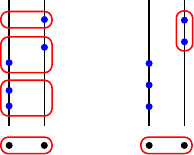
\caption{
We illustrate the maps ${\rho}(\sC') \rightarrow P^T({\rho}(\sC')-{\rho}(\sC))$ and ${\rho}(\sC') \rightarrow P^T({\rho}(\sC'))$ from Lemma \ref{contractionlemma}.
}\label{technicalshufflefig}
\end{figure}

We offer the following strengthening of the first part of Lemma \ref{contractionlemma}
.

\begin{lemma}\label{refinedcontractionlemma}
Let $\sC \in \fX(\cT)$ and $\sC' \in \wh \fX(\cT)$, and let $\sigma$ be a compatible shuffle for $\sC$. Let $u^k_{\sigma(i)} \in V^k(\cT)$ and $u^k_j \in V^k(\cT/\sC)$ such that $\psi^\cT_{\cT/\sC}(u^k_{\sigma(i)})=u^k_j$.
If $\sC \rightarrow \sC'$ and $\{(\sC'/\sC)_r:1\leq r\leq t\}$ are the collisions whose union is $\sC'/\sC$, as is guaranteed by Lemma \ref{quotientofcollisionbycollision}, then

\begin{equation}
(P_{\sigma}^T({\rho}(\sC')-{\rho}(\sC)))^k_{i}=
\begin{cases}
(\sum_{r=1}^t{\rho}(\sC'/\sC)_r)^k_j & \text{if} \,\, i = {\rm max} \{h: \psi^\cT_{\cT/\sC}(u^k_{\sigma(h)}) = u^k_j\} \\
\,\, 0& \text{otherwise. } \\
\end{cases}
\end{equation}
\end{lemma}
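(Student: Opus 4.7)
The strategy is to reduce to Lemma \ref{contractionlemma} by decomposing the contribution of $\sC'$ into pieces from each $(\sC'/\sC)_r$. First, dispose of the easy case: if $\sC$ and $\sC'$ have distinct fusion brackets, then Lemma \ref{whenthequotientofacollisionisacollision} forces $\sC'/\sC$ to be a single collision, so $t=1$ and the claim is exactly Lemma \ref{contractionlemma}. Henceforth assume $\sC$ and $\sC'$ share a fusion bracket, so that by Lemma \ref{quotientofcollisionbycollision} the decomposition $\sC'/\sC = \bigcup_{r=1}^t(\sC'/\sC)_r$ into pairwise-disjoint collisions in $\cT/\sC$ is genuine.

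Next, for each $r$ I apply Lemma \ref{preimagelemma} to $(\sC'/\sC)_r \in \fX(\cT/\sC)$ to produce a unique lift $\sC''_r \in \fX(\cT)$ with $\sC \rightarrow \sC''_r$ and $\sC''_r/\sC = (\sC'/\sC)_r$. A brief check shows that $\sigma$ remains a compatible shuffle for each $\sC''_r$: the map $\psi^{\cT}_{\cT/\sC''_r}$ factors as a further quotient of $\psi^{\cT}_{\cT/\sC}$ by $(\sC'/\sC)_r$, which acts only within the preimage of the fusion bracket of $(\sC'/\sC)_r$, so the compatibility conditions for $\sC$ propagate to $\sC''_r$.

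The crux is the coordinate-wise identity
\begin{align*}
\rho(\sC') - \rho(\sC) \ = \ \sum_{r=1}^{t}\bigl(\rho(\sC''_r) - \rho(\sC)\bigr),
\end{align*}
which reduces to the assertion that the depth-shift function $u \mapsto d_{\cT/\sC'}(\psi^{\cT}_{\cT/\sC'}(u)) - d_{\cT/\sC}(\psi^{\cT}_{\cT/\sC}(u))$ on $V^k(\cT)$ decomposes as the sum of the analogous shifts for the $\sC''_r$. Since $\cT/\sC'$ is obtained from $\cT/\sC$ by successively applying the pairwise commuting quotients by each $(\sC'/\sC)_r$, and since the fusion brackets of the $(\sC'/\sC)_r$ in $\cT/\sC$ are pairwise disjoint, each such quotient shifts the depth of any given vertex independently of the others. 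The total shift is therefore the sum of the individual shifts; this may be verified either by induction on $t$ or by directly counting how many vertices strictly preceding $\psi(u)$ are identified under each quotient.

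Once the identity is in hand, I apply $P_\sigma^T$ to both sides and invoke Lemma \ref{contractionlemma} on each pair $(\sC,\sC''_r)$. At a position $(k,i)$ with $i = \max\{h : \psi^\cT_{\cT/\sC}(u^k_{\sigma(h)}) = u^k_j\}$, the $r$-th summand contributes $\rho(\sC''_r/\sC)^k_j = \rho((\sC'/\sC)_r)^k_j$, whose total is precisely the claimed value; at any other $i$, each summand vanishes by the ``otherwise'' clause of Lemma \ref{contractionlemma}. The main obstacle is the depth-shift decomposition in the previous paragraph: verifying that shifts induced by pairwise disjoint collisions accumulate additively in the global indexing requires careful bookkeeping, though it is combinatorially transparent once one unpacks the definitions.
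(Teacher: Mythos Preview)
Your proof is correct and follows the same overall architecture as the paper: lift each $(\sC'/\sC)_r$ to $\sC''_r \in \fX(\cT)$ via Lemma~\ref{preimagelemma}, establish the identity $\rho(\sC') + (t-1)\rho(\sC) = \sum_r \rho(\sC''_r)$, and then apply Lemma~\ref{contractionlemma} termwise. The difference lies in how the ray identity is verified. You prove it directly by arguing that the depth-shift $u \mapsto d_{\cT/\sC'}(\psi(u)) - d_{\cT/\sC}(\psi(u))$ decomposes additively over the pairwise-disjoint $(\sC'/\sC)_r$; the paper instead invokes Proposition~\ref{mainvelocitythm} to pull the identity back through $\Gamma^{-1}$ to the metric $n$-bracketing complex, where it becomes $\ell(\sC') + (t-1)\ell(\sC) = \sum_r \ell(\sC''_r)$ and is checked bracket-by-bracket. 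The paper explicitly acknowledges your direct route as a valid alternative, calling it ``a little technical''; your approach avoids forward-referencing the isomorphism $\Gamma$ but requires more careful index bookkeeping, whereas the paper's route is slicker once $\Gamma$ is available. One minor point: your check that $\sigma$ is compatible with each $\sC''_r$ is unnecessary, since Lemma~\ref{contractionlemma} only requires $\sigma$ to be compatible with the first argument $\sC$.
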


\begin{proof}
Recall that $r>1$ only if $\sC$ and $\sC'$ have the same fusion bracket.
Let $\sC'_r$ denote the preimage of $(\sC'/\sC)_r$ in $\fX(\cT)$ provided by Lemma \ref{preimagelemma}.
By Lemma \ref{contractionlemma} we know that
\begin{equation}
(P_{\sigma}^T({\rho}(\sC'_r)-{\rho}(\sC)))^k_{i}=
\begin{cases}
{\rho}(\sC'_r/\sC)^k_j & \text{if} \,\, i = {\rm max} \{h: \psi^\cT_{\cT/\sC}(u^k_{\sigma(h)}) = u^k_j\}  \\
\,\, 0& \text{otherwise } \\
\end{cases}
\end{equation}

Thus it suffices to prove that
\begin{equation}
P_{\sigma}^T({\rho}(\sC')-{\rho}(\sC))= \sum_{i=1}^tP_{\sigma}^T({\rho}(\sC'_r)-{\rho}(\sC))
\end{equation}

Because $P_{\sigma}^T$ is an invertible linear transformation this reduces to checking that 

\begin{equation}
{\rho}(\sC')+(t-1){\rho}(\sC)= \sum_{i=1}^t{\rho}(\sC'_r)
\end{equation}

At this point one can use the definition of $\rho$ and calculate directly that this equation indeed holds, but this is a little technical and so we have chosen to go a different route.
We invoke Proposition \ref{mainvelocitythm}, which states that $\Gamma$ is a piecewise-linear isomorphism.\footnote{Although we invoke the fact that $\Gamma$ is a piecewise-linear isomorphism, we emphasize that Lemma \ref{refinedcontractionlemma} will not be utilized until \S \ref{triangulationsection} after we have proven Proposition \ref{mainvelocitythm}.}
This allows us to apply $\Gamma^{-1}$ and reduce the above equation to the observation that \begin{equation}
\ell(\sC')+(t-1)\ell(\sC)= \sum_{i=1}^t\ell(\sC'_r).
\end{equation}

Let $\sB = \sC'\vee\sC = \vee_{i=1}^t \sC_r$. 
 Let $\ell_{\sB}$ and $\ell'_{\sB}$ be the metric $n$-bracketings associated to the left and right side of the above equation, respectively.  We wish to show that for each $k$-bracket $A$, we have $\ell_{\sB}(A) = \ell'_{\sB}(A)$.  By definition, if $A \notin \sB^k$, or $A$ is a singleton bracket, then $\ell_{\sB}(A) = \ell'_{\sB}(A)=0$.
Let $A \in \sB^k$ be a nonsingleton bracket.  If $A \in (\sC')^k\setminus \sC^k$, then $A \in \sC_r^k$ for a unique $r$, hence $\ell_{\sB}(A) = \ell'_{\sB}(A)=1$.  If $A \in  (\sC')^k$ and $A \in \sC^k$, then $A \in \sC_r$ for all $r$, and $\ell_{\sB}(A) = \ell'_{\sB}(A)=t$.
If $A \in \sC^k\setminus (\sC')^k$ then there exists a $k$-bracket $A' \in (\sC')^k$ such that $A\subsetneq A'$, and there exists a unique $r$ such that $A' \in \sC_r$.
It follows that $A \in \sC_s$ for all $s \neq r$, and $\ell_{\sB}(A) = \ell'_{\sB}(A)=t-1$.
\end{proof}

\subsection{The main lemma for the velocity fan}

\

In this section we establish  Lemma \ref{main} which will later be employed for inductively proving $(\ref{brackequal}) \Rightarrow (\ref{vecequal})$ in Lemma \ref{metriclemma}. 
The motivating idea of this section is rather simple, even if the resulting argument is not: given a vector ${\bf v}$ in the velocity fan, we aim to find a collision $\sC$, which is a function of ${\bf v}$, such that for any metric $n$-bracketing  $\ell_\sB$ with $\Gamma(\ell_\sB)=\bf v$, we have $\sC \leq \sB$.
Given such a $\sC$, we hope to subtract some multiple of $\rho(\sC)$ from  ${\bf v}$ to get a ``smaller" vector to which we can apply induction for demonstrating $(\ref{brackequal}) \Rightarrow (\ref{vecequal})$.

\begin{lemma}\label{main}
Let ${\bf v} \in \cF(\cT)$.
There exists an extended collision $\sC_{\bf v}$, which is a function of ${\bf v}$ alone, such that given any compatible collection $\{\sC_i \in \wh \fX(\cT): 0\leq i\leq k\}$ with $\sC_0 = \sB_\min$ and $\lambda_i \in \mathbb{R}$ with $\lambda_i > 0$ for $i>0$ such that ${\bf v} = \sum _{i =1}^k \lambda_i {\rho}(\sC_i)$, we have that $\sC_{\bf v}$ is containment-minimal in $\bigvee_{i =1}^k \sC_i$.
\end{lemma}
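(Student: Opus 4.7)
The plan is to construct $\sC_{\mathbf{v}}$ recursively in $n$, reducing most of the work to the projection $\pi(\mathbf{v}) \in \cF(\pi(\cT))$ (using Proposition \ref{projfanthing}), and to verify the containment-minimality property via Scholium \ref{simplifiedcontainment-minimal condition}, which reduces it to showing $\sC_{\mathbf{v}} \rightarrow \sC_i$ or $\sC_{\mathbf{v}} \sim \sC_i$ for each $\sC_i$ in the decomposition. The base case $n=0$ is trivial.

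For the inductive step, I would split into two cases. \textbf{Case A} (\emph{projection is nontrivial}): if $\pi(\mathbf{v}) \notin \langle \mathbf{1} \rangle_{\mathbb{R}}$, the inductive hypothesis supplies $\sC_{\pi(\mathbf{v})} \in \wh{\fX}(\pi(\cT))$, and I would define $\sC_{\mathbf{v}}$ as a canonical lift obtained by reading off the level-$n$ structure of $\mathbf{v}$. Concretely, the essential brackets of $\sC_{\mathbf{v}}$ up to level $n-1$ are the preimages of the essential brackets of $\sC_{\pi(\mathbf{v})}$, while at level $n$ the $n$-brackets are determined by the partial sums $\Delta^n_{j,k}(\mathbf{v})$ via the criterion in Lemma \ref{fundvellemma}: consecutive depth-$n$ vertices $u^n_j, u^n_{j+1}$ are placed in a common essential bracket exactly when $\Delta^n_{j,j+1}(\mathbf{v})$ forces this in every decomposition, with the induced height order dictated by the shuffle detected in the same way. \textbf{Case B} (\emph{projection in lineality space}): if $\pi(\mathbf{v}) \in \langle \mathbf{1} \rangle_{\mathbb{R}}$, then $\mathbf{v}$ carries only depth-$n$ information, and I set $\sC_{\mathbf{v}} = \sB_\min$ when $\mathbf{v} \in \langle \mathbf{1} \rangle_{\mathbb{R}}$; otherwise I read off a containment-minimal type-1 or type-3 collision directly from the level-$n$ signature, again using Lemma \ref{fundvellemma}.

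To prove containment-minimality in a decomposition $\mathbf{v} = \sum_{i=1}^k \lambda_i \rho(\sC_i)$ with $\sB = \bigvee_i \sC_i$, I would argue via Scholium \ref{simplifiedcontainment-minimal condition}. In Case A, Lemma \ref{rayprojection} gives $\pi(\mathbf{v}) = \sum_i \lambda_i \rho(\pi(\sC_i))$ (type-1 $\sC_i$'s projecting to $\mathbf{0}$ and dropping out), so the inductive hypothesis forces $\pi(\sC_{\mathbf{v}}) = \sC_{\pi(\mathbf{v})}$ to be containment-minimal in $\pi(\sB)$; by Lemma \ref{projnestedisnested} this yields $\sC_{\mathbf{v}} \rightarrow \sC_i$ or $\sC_{\mathbf{v}} \sim \sC_i$ at levels $0, \ldots, n-1$. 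The remaining work lifts these relations to level $n$, which is exactly where the canonical choice described above becomes essential. The type-1 $\sC_i$'s, invisible to the projection, are controlled directly by the level-$n$ entries of $\mathbf{v}$: if a type-1 $\sC_i$ has fusion bracket involving $u^n_j, u^n_{j+1}$, then $\Delta^n_{j,j+1}(\mathbf{v}) \geq 1$, and the construction ensures the appropriate relation holds. Case B is analyzed directly at level $n$ using the same partial-sum criterion.

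The main obstacle is the lift in Case A. The projection $\pi$ discards the shuffling information encoded in the level-$n$ entries of $\mathbf{v}$, and one must extract the correct shuffle from $\mathbf{v}$ alone to ensure that every possible decomposition is compatible with $\sC_{\mathbf{v}}$. The technical heart is therefore the observation that the quantities $\Delta^n_{j,k}(\mathbf{v})$ satisfy additive inequalities across decompositions (since each $\rho(\sC_i)$ contributes nonnegatively to $\Delta^n_{j,k}(\rho(\sC_i)) - (k-j)$ by Lemma \ref{fundvellemma}), so the structure read off from $\mathbf{v}$ is a lower bound imposed on every $\sC_i$ simultaneously. Verifying that this uniform lower bound produces a valid extended collision --- and one that fits between $\sC_{\mathbf{v}} \rightarrow \sC_i$ and $\sC_{\mathbf{v}} \sim \sC_i$ for each $i$ --- is where a careful case analysis using the extended height partial order and Lemma \ref{refinedcontractionlemma}-style shuffle compatibility will be required.
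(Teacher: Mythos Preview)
Your overall architecture---induction on $n$, project via $\pi$, then lift to level $n$---is exactly the paper's strategy. But two concrete ingredients are missing, and without them the argument does not go through.

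First, your Case A / Case B split is not the correct dichotomy. The paper defines two sets $\Xi_1(\mathbf v)$ (type~1 candidates) and $\Xi_2(\mathbf v)$ (type~2/3 lifts) and takes $\sC_{\mathbf v}$ to be \emph{any} element of $\Xi_1(\mathbf v)\cup\Xi_2(\mathbf v)$. Crucially, $\Xi_1(\mathbf v)$ can be nonempty even when $\pi(\mathbf v)\notin\langle\mathbf 1\rangle_{\mathbb R}$, and in that situation the lift you build in Case A may fail to be containment-minimal precisely because a type~1 collision from $\Xi_1(\mathbf v)$ sits strictly below it. The paper handles this with condition (2b) of Definition \ref{techdefinition}: a lift belongs to $\Xi_2(\mathbf v)$ only if no $\sC'\in\Xi_1(\mathbf v)$ satisfies $\sC'\rightarrow\sC$. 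This condition is used directly in the proof of Lemma \ref{technicallemma} to force $\sC=\sC'$, and your proposal has no analogue of it.

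Second, your level-$n$ criterion (``$u^n_j,u^n_{j+1}$ are placed in a common essential bracket exactly when $\Delta^n_{j,j+1}(\mathbf v)$ forces this in every decomposition'') is not made precise, and the heuristic you give for it is wrong. You claim that each $\rho(\sC_i)$ contributes nonnegatively to $\Delta^n_{j,k}(\rho(\sC_i))-(k-j)$; Lemma \ref{fundvellemma} says no such thing---the contribution is negative whenever $A(u^n_j)\,\wh<_{\sC_i}\,A(u^n_k)$. The paper's actual threshold is the quantity $\gamma(\ell,A)=\sum_{A'\supseteq A}\ell(A')$, where $\ell=\ell_\sB$ is the metric $(n-1)$-bracketing with $\Gamma(\ell)=\pi(\mathbf v)$ supplied by the inductive hypothesis (Proposition \ref{mainvelocitythm} in dimension $n-1$). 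The level-$n$ bracket structure and height order of the lift are read off by comparing $\Delta^n_{j,k}(\mathbf v)$ against $\gamma(\ell,A)(k-j)$ (condition (2d) of Definition \ref{techdefinition}), not against $(k-j)$ alone. This invariant $\gamma(\ell,A)$ is exactly what makes ``forces this in every decomposition'' well-defined, and it is absent from your proposal.
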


The proof of Lemma \ref{main} requires us to assume Proposition \ref{mainvelocitythm} holds for all $(n-1)$-associahedra.
We will establish Lemma \ref{main} as consequence of Lemma \ref{technicallemma}, which allows us to identify containment-minimal collisions.
Lemma \ref{technicallemma} requires the following technical Definition \ref{techdefinition}.

Recall that we denote a vector ${\bf v} \in \mathbb{R}^m$ as
${\bf v} = (v^1_1,\dots, v^1_{t_1},\dots,  v^n_1,\dots,v^n_{t_n})$,
and for $j<k$, we have the following notation:
\begin{align}
\Delta^i_{j,k}({\bf v} ) \coloneqq \sum_{h=j}^{k-1} v^i_h.
\end{align}

In particular, $\Delta^i_{j,j+1}({\bf v} )=v^i_j$.
\begin{definition}\label{techdefinition}
Given a rooted plane tree $\cT$ of depth $n$ and a vector in ${\bf v} \in \cF(\cT)$, we define a collection of extended collisions $\Xi({\bf v})$ associated to ${\bf v}$.
This set is the union of two other sets $\Xi_1({\bf v})$ and  $\Xi_2({\bf v})$.
Let $\sB$ be the $(n-1)$-bracketing in $\cK(\pi(\cT))$ such that $\ell_{\sB}=\Gamma^{-1}(\pi({\bf v}))$
\footnote{
\label{inductivefootnote}
The existence of $\ell_{\sB}$ is afforded by Proposition \ref{projfanthing} combined with our inductive hypothesis that \ref{mainvelocitythm} holds for all $(n-1)$-associahedra.}.
To simplify notation, let $\ell = \ell_{\sB}$.
Given an $(n-1)$-bracket $A \in \sB^{n-1}$, we define 

\begin{align}
\gamma({\ell},A)
\coloneqq
\sum_{\substack{A' \in \sB^{n-1}\\ A \subseteq A'}} \ell(A').
\end{align}

Let $\sC \in \wh \fX(\cT)$ then

\begin{enumerate}
\item
\label{1tech}
$\sC \in \Xi_1({\bf v})$ if

\begin{enumerate}
\item \label{1techa} $\sC$ is type 1, with unique nontrivial $n$-bracket $A$.

\item \label{1techb} There exists some real number $w({\bf v},\sC)$ with  $w({\bf v},\sC)> \gamma({\ell},\pi(A))$ such that if $u^n_j,u^n_k \in A^n $ with $j<k$, then $\Delta^n_{j,k}({\bf v}) =w({\bf v},\sC) (k-j)$.

\item \label{1techc} For any $n$-bracket $A'$ with $A \subsetneq A'$ and $\pi(A') = \pi(A)$, there exists  $u^n_{j} \in A'^{\,n}$ with $v^n_j<w({\bf v},\sC)$.
\end{enumerate}

\medskip

\item
\label{2tech}
$\sC \in \Xi_2({\bf v})$ if

\begin{enumerate}
\item
\label{2techa}
$\sC$ is not type 1.
 
\item
\label{2techb}
There exists no $\sC' \in \Xi_1({\bf v})$ such that  $\sC' \rightarrow \sC$.
 
\item
\label{2techc}
$\pi(\sC)$ is containment-minimal in $\sB$. 
\item
\label{2techd}

Let $A \in \sC^{n-1}$, and $u^n_j,u^n_k \in V^n(\cT)$ with $j<k$ and $\pi(u^n_j),\pi(u^n_k)\in A$. 

\begin{enumerate}
\item\label{2techd1} If $\Delta^n_{j,k}({\bf v}) = \gamma({\ell},A)(k-j)$, there exists ${\wt A} \in \sC^{n}$ such that $\pi({\wt A})=A$ with $u^n_j,u^n_k \in  {\wt A}^n$. 

\item \label{2techd2}
If $\Delta^n_{j,k}({\bf v}) >\gamma({\ell},A)(k-j)$, there exists $A_1, A_2 \in \sC^n$ such that $\pi(A_1)= \pi(A_2) = A$, $A_2<_{\sC}A_1$ in the height partial order for $\sC$, and $u^n_j \in A_1^n $, $u^n_k \in A_2^{n}$.

\item \label{2techd3} 
If $\Delta^n_{j,k}({\bf v}) <\gamma({\ell},A)(k-j)$, there exists $A_1, A_2 \in \sC^n$ such that $\pi(A_1)= \pi(A_2) = A$, $A_1<_{\sC}A_2$ in the height partial order for $\sC$, and $u^n_j \in A_1^n $, $u^n_k \in A_2^{n}$.
\end{enumerate}
\end{enumerate}
\end{enumerate}

We define
\begin{align}
\Xi({\bf v}) \coloneqq \Xi_1({\bf v}) \cup \Xi_2({\bf v}).
\end{align}
\null\hfill$\triangle$
\end{definition}

\begin{figure}[ht]
\centering
\def\svgwidth{0.65\textwidth}
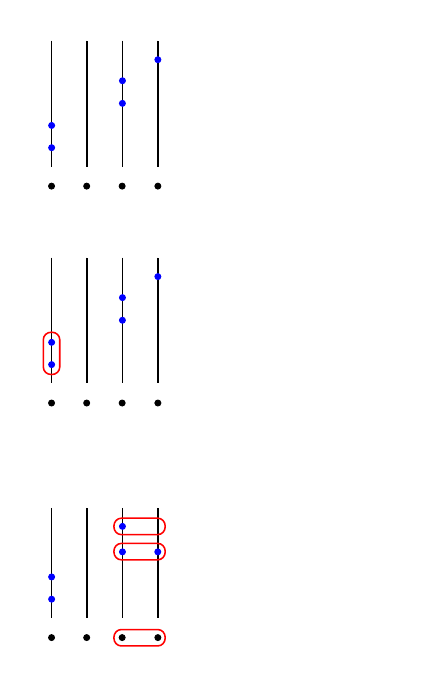
\caption{
\label{Xiexample}
In the left column, we have depicted an arrangement for a 2-associahedron, a vector ${\bf v} \in \mathbb{R}^7$, and the sets $\Xi_1({\bf v})$ and $\Xi_2({\bf v})$.
In the right column, we have depicted an expression of ${\bf{v}}$ as a point in $\cF(\cT)$ and the metric $n$-bracketing $\ell_\sB$ such that $\Gamma(\ell_\sB) ={\bf v}$.}
\end{figure}

\begin{lemma}\label{technicallemma}
Let $\sC_i \in \wh \fX(\cT)$ for $0\leq i\leq k$ with $\sC_0 = \sB_\min$.
Let $\lambda_i \in \mathbb{R}$ with $\lambda_i > 0$ for $i>0$.
Suppose that the $\sC_i$ are compatible and let
\begin{align}
\ell_{\wt \sB} = \sum_{i=1}^k \lambda_i\ell(\sC_i).
\end{align}

For ${\bf v} = \sum _{i =1}^k \lambda_i {\rho}(\sC_i)$ and $\sC \in \wh \fX(\cT)$, we have that $\sC$ is containment-minimal in ${\wt \sB}$ if and only if $\sC \in \Xi({\bf v}) $.
\end{lemma}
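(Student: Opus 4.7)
The plan is to prove both directions simultaneously via a case analysis on whether $\sC$ is type 1 or not, reducing containment-minimality to a pairwise condition via Scholium \ref{simplifiedcontainment-minimal condition}, and then translating that pairwise condition into the numerical conditions defining $\Xi_1$ and $\Xi_2$ using Lemma \ref{fundvellemma}.

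\textbf{Setup.} First I would set $\sB = \pi(\wt\sB)$ and note that by Lemma \ref{rayprojection} and the inductive hypothesis that Proposition \ref{mainvelocitythm} holds for $(n-1)$-associahedra,
\begin{align}
\ell_\sB = \Gamma^{-1}(\pi({\bf v})) = \sum_{i=0}^k \lambda_i \ell(\pi(\sC_i)).
\end{align}
Consequently, for each $(n-1)$-bracket $A \in \sB^{n-1}$,
\begin{align}
\gamma(\ell,A) = \sum_{A' \supseteq A} \ell_\sB(A') = \sum_{i \,:\, A \subseteq A'_i \text{ for some essential } A'_i \in \pi(\sC_i)^{n-1}} \lambda_i,
\end{align}
which expresses $\gamma(\ell,A)$ as the total weight of those $\sC_i$ whose projections carry an essential $(n-1)$-bracket above $A$. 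By Scholium \ref{simplifiedcontainment-minimal condition}, $\sC$ is containment-minimal in $\wt\sB$ precisely when $\sC \leq \wt\sB$ and for every $i\geq 1$, either $\sC \to \sC_i$ or $\sC \sim \sC_i$.

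\textbf{Type 1 case.} Suppose $\sC$ is type 1 with unique nontrivial $n$-bracket $A$. For $u^n_j, u^n_k \in A^n$ with $j<k$, Lemma \ref{fundvellemma} lets me decompose
\begin{align}
\Delta^n_{j,k}({\bf v}) = \sum_{i=1}^k \lambda_i \Delta^n_{j,k}(\rho(\sC_i))
\end{align}
into contributions of size $\lambda_i(k-j)$ (when an essential $n$-bracket of $\sC_i$ contains both vertices), contributions strictly greater than $\lambda_i(k-j)$ (when the height partial order of $\sC_i$ separates them in reverse order), contributions strictly less than $\lambda_i(k-j)$ (standard order), and $0$ (when neither vertex is descended from the root of $\sC_i$). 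The conditions ``$\sC \to \sC_i$ or $\sC \sim \sC_i$'' over all $i$ together with $\sC \leq \wt\sB$ force every $\sC_i$ that is not disjoint from $\sC$ to contain $A$ in an essential $n$-bracket, which translates exactly into the existence of a common value $w({\bf v},\sC) > \gamma(\ell, \pi(A))$ such that $\Delta^n_{j,k}({\bf v}) = w({\bf v},\sC)(k-j)$ for every pair in $A$; this is condition (\ref{1techb}). The containment-minimality of $A$ inside $\wt\sB$ among $n$-brackets over $\pi(A)$ translates into condition (\ref{1techc}), by examining the coordinate $v^n_j$ for some $u^n_j$ in any strictly larger bracket $A'$. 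The reverse direction runs the same argument in reverse: given the three conditions, any $\sC_i$ incompatible with either $\sC \to \sC_i$ or $\sC \sim \sC_i$ would violate one of them.

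\textbf{Non-type-1 case.} When $\sC$ is not type 1, Lemma \ref{projnestedisnested} implies that $\pi(\sC)$ is a valid extended $(n-1)$-collision, and compatibility of $\sC$ with each $\sC_i$ projects to compatibility of $\pi(\sC)$ with each $\pi(\sC_i)$. Using the inductive form of the lemma applied to $\pi({\bf v})$ and the decomposition $\pi({\bf v}) = \sum_i \lambda_i \rho(\pi(\sC_i))$, containment-minimality of $\sC$ in $\wt\sB$ forces $\pi(\sC)$ to be containment-minimal in $\sB$, giving (\ref{2techc}); condition (\ref{2techb}) is then exactly the statement that no type 1 refinement below $\sC$ is already forced by the $\sC_i$. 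For condition (\ref{2techd}), fix a nontrivial $A \in \sC^{n-1}$ and a pair $u^n_j, u^n_k$ with $\pi(u^n_j), \pi(u^n_k) \in A$. The $\sC_i$ contributing a nonzero summand to $\Delta^n_{j,k}({\bf v})$ are precisely those whose projection has an essential $(n-1)$-bracket containing $A$; their combined weight is $\gamma(\ell,A)$, and Lemma \ref{fundvellemma} partitions them into the three mutually exclusive possibilities that yield $\Delta^n_{j,k}({\bf v})$ equal to, strictly greater than, or strictly less than $\gamma(\ell,A)(k-j)$. Matching these three cases with the height structure of the $n$-brackets of $\sC$ over $A$ gives exactly the three subcases of (\ref{2techd}).

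\textbf{Main obstacle.} I expect the hardest step to be the non-type-1 case, specifically forcing the exact height partial order on the $n$-brackets of $\sC$ over a given $A \in \sC^{n-1}$ from the inequalities in (\ref{2techd2}) and (\ref{2techd3}). The subtlety is that the $\sC_i$ carrying essential $(n-1)$-brackets above $A$ may themselves carry several incomparable $n$-brackets over those, and I must rule out ``partial'' height relations in $\sC$ that are consistent with the sums $\Delta^n_{j,k}({\bf v})$ but not with a single compatible collision. I plan to handle this by repeatedly applying Scholium \ref{sch:subtract} to peel off a single collision at a time from $\ell_{\wt\sB}$ (descending in the support), together with a monotonicity argument showing that strict inequalities in $\Delta^n_{j,k}({\bf v})$ cannot arise except from a common height witness in every contributing $\sC_i$. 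Once this is established, the extended height partial order of $\sC$ is determined uniquely by the numerical data, and Lemma \ref{heightpartialorderlemma} upgrades this to the full height partial order, completing the proof.
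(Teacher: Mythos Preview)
Your forward direction (containment-minimal $\Rightarrow \sC \in \Xi({\bf v})$) tracks the paper's argument closely: partition the $\sC_i$ according to $\sC \to \sC_i$ versus $\sC \sim \sC_i$ via Scholium \ref{simplifiedcontainment-minimal condition}, and read off the numerical conditions using Lemma \ref{fundvellemma}. This part is fine.

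The backward direction, however, has a real gap in the non-type-1 case. You correctly isolate that conditions (\ref{2techd}) should pin down the $n$-brackets and height order of $\sC$ over each $A \in \sC^{n-1}$; but uniqueness by itself does not establish that $\sC$ is containment-minimal in $\wt\sB$, nor even that $\sC \leq \wt\sB$. The paper closes this gap by a comparison step you have not mentioned: using Scholium \ref{lem:collisionlift} and Lemma \ref{containmentlemma} it produces a collision $\sC'$ that \emph{is} containment-minimal in $\wt\sB$ with $\pi(\sC') \to \pi(\sC)$, rules out $\sC'$ being type 1 via (\ref{2techb}), uses Lemma \ref{projcontain} together with (\ref{2techc}) to force $\pi(\sC') = \pi(\sC)$, and then applies the already-proven forward direction to $\sC'$ so that both $\sC$ and $\sC'$ satisfy (\ref{2techd}). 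Only then does uniqueness give $\sC = \sC'$. Your proposed route through Scholium \ref{sch:subtract} and a peeling/monotonicity argument does not produce such a $\sC'$, and it is not clear how subtracting metric collisions from $\ell_{\wt\sB}$ would yield either $\sC \leq \wt\sB$ or containment-minimality. Moreover, the uniqueness itself does not require any peeling: once $\pi(\sC)$ is fixed, condition (\ref{2techd1}) already determines the $n$-brackets of $\sC$, and (\ref{2techd2}), (\ref{2techd3}) determine the height order directly.

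The type-1 reverse direction is also underspecified. ``Running the argument in reverse'' does not work: the paper separates into (i) $\sC$ compatible with $\wt\sB$ but not containment-minimal, where one takes a type-1 $\sC_i$ containment-minimal in $\wt\sB$ with $\sC_i \to \sC$ and compares adjacent entries $v^n_{j-1}, v^n_j$ to contradict (\ref{1techb}) or (\ref{1techc}); and (ii) $\sC$ not compatible with $\wt\sB$, where one first argues that there must exist a type-1 $\sC_i$ with $A$ and $A_i$ overlapping improperly, and then compares sets $U \subsetneq W$ of collisions witnessing adjacent pairs to obtain $v^n_{j-1} < v^n_j$ and again contradict (\ref{1techb}) or (\ref{1techc}). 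Your one sentence does not cover either subcase.
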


\begin{proof}(\ref{main})
By Lemma \ref{technicallemma} we may take $\sC_{\bf v}$ to be any collision in $\Xi({\bf v})$ and observe that $\Xi({\bf v}) = \emptyset$ if and only if ${\bf v} = 0$.
\end{proof}

\begin{remark}
Let $M(\wt \sB)$ denote the collection of extended collisions which are containment-minimal in $\wt \sB$.  For proving Lemma \ref{metriclemma} we will only need the ``if'' direction of the statement of Lemma \ref{technicallemma} ($\sC \in \Xi({\bf v}) \Rightarrow \sC \in M(\wt \sB)$), but this part of the argument naturally builds on the ``only if'' direction ($\sC \in M(\wt \sB) \Rightarrow \sC \in \Xi({\bf v})$).
\null\hfill$\triangle$
\end{remark}

\begin{proof}
(\ref{technicallemma})
Let $\sC \in \wh \fX(\cT)$.  Recall that Scholium \ref{simplifiedcontainment-minimal condition} clarifies that $\sC$ is containment-minimal in ${\wt \sB}$ if and only if for each $i$, either  $\sC\rightarrow \sC_i$ or $\sC \sim \sC_i$.
We let $\sB$ = $\pi({\wt \sB})$ and $\ell=\ell_{\sB}$  aligning our notation with that of Definition \ref{techdefinition}.

Suppose that $\sC$ is containment-minimal in ${\wt \sB}$ and that $\sC$ is type 1 with unique nontrivial bracket $A \in \sC^n$ (condition (\ref{1techa})).
We wish to show that $\sC$ satisfies conditions 
(\ref{1techb}) and (\ref{1techc}). 
Observe that because $\sC$ is containment-minimal in $\wt \sB$, and $\sC$ is type 1, we must have that $\sC = \sC_j$ for some $j$.  
Let $R \subseteq \{\sC_i\}$ be the set of extended collisions such that $\sC \rightarrow \sC_i$, and let $S \subseteq R$ be those $\sC_i$ in $R$ which are not type 1. We claim that $\gamma({\ell},\pi(A)) = \sum_{\sC_i \in S}\lambda_i $.
The $n$-bracketing condition {\sc (nested)} and Lemma \ref{collisioncharacterization} (2) combined imply that 
for each $\sC_i \in S$ there exists a unique nonsingleton $(n-1)$-bracket $A' \in \sC_i^{n-1}$ with $\pi(A)\subsetneq A'$.  In the other direction, if $A'$ is a nonsingleton $(n-1)$-bracket with $A' \in \sB^{n-1}$ and $\pi(A)\subsetneq A'$, it follows from Lemma \ref{collisioncontainingspecificbracket} and Scholium \ref{lem:collisionlift} that there exists some $\sC_i \in S$ such that $A' \in \sC_i^{n-1}$.  Because $\sC$ is containment-minimal in $\wt \sB$, and $\pi(\sC) \subseteq A'$, we know that $\sC$ and $\sC_i$ are not disjoint, hence it must be that $\sC \rightarrow \sC_i$.  The claim follows.

Let $u^n_j,u^n_k \in A$ with $j<k$, and let $w({\bf v},\sC) = \sum_{\sC_i \in R}\lambda_i$.  Because $\sC \in R\setminus S$, we have that $w({\bf v},\sC) > \gamma({\ell},\pi(A))$.
Lemma \ref{fundvellemma} states that if $\sC \rightarrow \sC_i$ then $\Delta^n_{j,k}({\rho}(\sC_i))=k-j$, and if $\sC_i\sim \sC$ then $\Delta^n_{j,k}({\rho}(\sC_i))=0$.
Thus $\Delta^n_{j,k}({\bf v}) =w({\bf v},\sC) (k-j)$ establishing condition (\ref{1techb}).
Let $A'$ be an $n$-bracket  with $A\subsetneq A'$ and $\pi(A) = \pi(A')$.
Suppose there exists $u^n_j,u^n_{j+1} \in A'^{ \,n}$ with $u^n_j \in A^n$, and $u^n_{j+1}\in A'^{\,n} \setminus A^n$.
Let $Q \subseteq \{\sC_i\}$ be the collection of extended collisions which are either equal to $\sB_{\min}$ or have a nontrivial $n$-bracket ${\wt A} \in \sC_i$ such that $u^n_{j},u^n_{j+1} \in {\wt A}$.
It follows by the $n$-bracketing condition {\sc (nested)} and the containment-minimality of $\sC$ that $Q \subseteq R$.
For each $\sC_i$ we have that $\rho(\sC)^n_j$ is an integer at most 1, and is equal to 1 if and only if $\sC_i \in Q$, hence $v^n_{j} \leq \sum_{\sC_i \in Q}\lambda_i$.
Because $\sC \in R\setminus Q$, we have $\sum_{\sC_i \in Q}\lambda_i<w({\bf v},\sC)$ implying $v^n_j<w({\bf v},\sC)$ (condition (\ref{1techc})).  The case where $u^n_j \in A^n$ and $u^n_{j-1} \in {\wt A} \setminus A$ is similar.

Conversely, suppose that $\sC$ satisfies conditions (\ref{1tech}), but  $\sC$ is not containment-minimal in $\wt \sB$.  Either $\sC$ is not compatible with $\wt \sB$, or $\sC$ is compatible with $\wt \sB$ but not containment-minimal in $\wt \sB$. 
Suppose first that $\sC$ is compatible with $\wt \sB$, but not containment-minimal in $\wt \sB$.  In what follows, for $\sC_i$ a type 1 collision, we will take $A_i$ to be the fusion bracket for $\sC_i$, i.e.\ the unique nontrivial $n$-bracket in $\sC_i$.
Let $\sC_i$ be a type 1 collision which is containment-minimal in $\wt \sB$ with $\sC_i \rightarrow \sC$.  
By the ``only if'' direction above, we know that $\sC_i$ satisfies the conditions in (\ref{1tech}).  Let $w({\bf v}, \sC_i)$ be the constant in condition (\ref{1techb}) for $\sC_i$.  Suppose there exist vertices $u^n_{j-1}, u^n_j, u^n_{j+1} \in A^n$ with $u^n_{j+1} \in A^n \setminus A_i^n$.  
Take $A'$ to be the $n$-bracket obtained from $A_i$ by adding $u^n_{j+1}$ to $A_i^n$.  Then condition (\ref{1techc}) for $\sC_i$ with $A'$ as described implies that $v^n_j < v^n_{j-1} = w({\bf v}, \sC_i)$. 
 On the other hand, condition (\ref{1techb}) for $\sC$ implies that $v^n_j=  v^n_{j-1} = w({\bf v}, \sC)$, a contradiction.  The case where $u^n_{j-1}, u^n_j, u^n_{j+1} \in A^n$ with $u^n_{j-1} \in A^n \setminus A_i^n$ is similar.  
 Therefore we may assume that $\sC$ is not compatible with $\wt \sB$.
We claim that there exists some type 1 collision $\sC_i$ such that $\sC$ and $\sC_i$ are not compatible. 
 Because there exist $u^n_j,u^n_k \in A^n$ such that $\Delta^n_{j,k}({\bf v}) =w({\bf v},\sC) (k-j)$ and $w({\bf v},\sC) > \gamma({\ell},A)$, the argumentation of the ``only if" direction above implies there must exist some type 1 collision $\sC_i$ with $u^n_j,u^n_k \in A_i^n$.  If $\sC_i$ is not compatible with $\sC$, the claim is verified.  Assume instead $\sC_i \rightarrow \sC$, then there exists some $\sC_j\rightarrow \sC_i$ with $\sC_j$ containment-minimal in $\wt \sB$.  We see that $\sC_j \rightarrow \sC$, but this situation was excluded previously.  

Suppose instead that $\sC \rightarrow \sC_i$.  If there exists some $\sC_j\rightarrow \sC_i$, such that $\sC_j$ and $\sC$ are not compatible, we have verified the claim, so we may assume that for each $\sC_j$ with $\sC_j \rightarrow \sC_i$, we have $\sC_j$ and $\sC$ are compatible.  But, this implies that $\sC$ and $\wt \sB$ are compatible  as any other collision $\sC_j$ which is not compatible with $\sC$ would also need to contain $\sC_i$, and this is impossible, hence the claim is also verified.

Now take $\sC_i$ type 1 such that $\sC$ and $\sC_i$ are not compatible.  Suppose there exist some vertices $u^n_{j-1},u^n_{j},u^n_{j+1}$ with $u^n_{j-1} \in A^n\setminus A_i^n$, $u^n_j \in A_i^n\cap A^n$ and $u^n_{j+1} \in A_i^n$.
Let $U \subseteq \{\sC_i\}$ be the extended collisions which are equal to $\sB_{\min}$ or contain a nontrivial $n$-bracket $\wt A$ with $\{u^n_{j-1},u^n_{j}\} \in {\wt A}^n$, and let $W \subseteq \{\sC_i\}$ be the extended collisions which are equal to $\sB_{\min}$ or contain a nontrivial $n$-bracket $\wt A$ with $\{u^n_{j},u^n_{j+1}\} \in {\wt A}^n$.  Because $u^n_{j},u^n_{j+1} \in A_i^n$ and $u^n_{j-1}\notin A_i^n$, the condition {\sc (nested)} for $n$-bracketings implies that $U\subsetneq W$. 
Again applying Lemma \ref{fundvellemma}, we calculate $v^n_{j-1} \leq  \sum_{\sC_i \in U}\lambda_i$ and $v^n_{j} = \sum_{\sC_i \in W}\lambda_i$.  It follows that $v^n_{j-1}<v^n_{j}$.  If $u^n_{j+1} \in A^n$, the inequality $v^n_{j-1}<v^n_{j}$ violates condition condition (\ref{1techb}) for $\sC$. If $u^n_{j+1} \notin A^n$, the inequality $v^n_{j-1}<v^n_{j}$ violates condition (\ref{1techc}) for $\sC$, with $A'$ taken to be the $n$-bracket obtained from $A$ by adding $u^n_{j+1}$ to $A^n$.  
The remaining case where there exist vertices $u^n_{j-1},u^n_{j},u^n_{j+1}$ with $u^n_{j+1} \in A^n\setminus A_i^n$, $u^n_j \in A_i^n\cap A^n$, and $u^n_{j-1} \in A_i^n$ is similar.  We conclude that $\sC$ is containment-minimal in $\wt \sB$.

Next, suppose that $\sC$ is containment-minimal in $\wt \sB$ and $\sC$ is not type 1 (condition (\ref{2techa})). 
We will show that $\sC$ satisfies conditions  (\ref{2techb}), (\ref{2techc}), and (\ref{2techd}).
By containment-minimality, there exists no $\sC' \in \Xi_1({\bf v})$ such that  $\sC' \rightarrow \sC$, which is condition (\ref{2techb}).
Condition (\ref{2techc}) follows from Lemma \ref{projcontain}.

Let $A \in \sC^{n-1}$ and $u^n_j,u^n_k \in V^n(\cT)$ with $j<k$ such that $\pi(u^n_j),\pi(u^n_k)\in A$.  Suppose that $\sC \rightarrow \sC_i$.  By containment-minimality of $\sC$, if there exists ${\wt A} \in \sC^{n}$ such that $\pi({\wt A})=A$ with $u^n_j,u^n_k \in  {\wt A}^n$, then for any $A_i \in \sC_i^n$ with $u_j^n \in A_i$, we have $u_k^n \in A_i$. On the other hand, suppose there exists $A,A' \in \sC^n$ such that $\pi(A) = \pi(A')$ with $u^n_j \in A^n$, $u^n_k \in (A')^n$, and $A'<_{\sC}A$.
Let $A_i, A_i' \in \sC^{n}_i$ such that $A \subseteq A_i$ and $A' \subseteq A'_i$.  By condition {\sc (height partial orders)} of an $n$-bracketing, either $A'_i<_{\sC_i} A_i$ or $A_i = A'_i$.  A symmetric implication holds: if $A'>_{\sC}A$, then $A'_i>_{\sC_i}A_i$ or $A_i = A'_i$.  Combining these observations with Lemma \ref{fundvellemma}, we have the following implications:
\begin{itemize}
\item If $\Delta^n_{j,k}({\rho}(\sC))=k-j$ then $\Delta^n_{j,k}({\rho}(\sC_i))= k-j$.
\item If $\Delta^n_{j,k}({\rho}(\sC))>k-j$ then $\Delta^n_{j,k}({\rho}(\sC_i))\geq k-j$.
\item If $\Delta^n_{j,k}({\rho}(\sC))<k-j$ then $\Delta^n_{j,k}({\rho}(\sC_i))\leq k-j$.
\end{itemize}

By Lemma \ref{fundvellemma}, $\Delta^n_{j,k}(\rho(\sC_i)) = 0$ if $\sC \sim \sC_i$.
Thus, letting $R\subseteq \{\sC_i\}$ be the set of extended collisions with $\sC \rightarrow \sC_i$, we know that $\Delta^n_{j,k}({\bf v})=\sum_{\sC_i \in R}\lambda_i\Delta^n_{j,k}({\rho}(\sC_i))$ and
$\gamma({\ell},A) = \sum_{\sC_i \in R}\lambda_i$.  Therefore

\begin{itemize}
\item If $\Delta^n_{j,k}({\bf v}) = \gamma({\ell},A)(k-j)$ then $\Delta^n_{j,k}({\rho}(\sC))=k-j$. 
\item If $\Delta^n_{j,k}({\bf v}) > \gamma({\ell},A)(k-j)$ then $\Delta^n_{j,k}({\rho}(\sC))>k-j$.
\item If $\Delta^n_{j,k}({\bf v}) < \gamma({\ell},A)(k-j)$ then $\Delta^n_{j,k}({\rho}(\sC))<k-j$.
\end{itemize}
Condition (\ref{2tech2d}) now follows by reinterpreting these implications using Lemma \ref{fundvellemma}.

Conversely, suppose that $\sC$ satisfies the conditions of (\ref{2tech}).
We wish to prove that $\sC$ is containment-minimal in $\wt \sB$.
By Scholium \ref{lem:collisionlift} combined with
Lemma \ref{containmentlemma}, there exists some $\sC' \in \wh \fX(\cT)$ which is containment-minimal in ${\wt \sB}$, and $\pi(\sC') \rightarrow \pi(\sC)$ or $\sC'$ is type 1.
We will demonstrate that $\sC = \sC'$.
By condition (\ref{2techb}) for $\sC$, we know that $\sC'$ is not type 1.  By Lemma \ref{projcontain}, $\pi(\sC')$ is containment-minimal in $\pi({\wt \sB})$.  Combined with condition (\ref{2techb}) for $\sC$ and the fact that $\pi(\sC')\rightarrow \pi(\sC)$, we conclude that $\pi(\sC')= \pi(\sC)$.  

By the ``only if'' direction verified above, we know that $\sC'$ also satisfies the conditions of (\ref{2tech}).  We claim that the conditions (\ref{2techd}) determine $\sC$ uniquely, implying $\sC=\sC'$.  Indeed, we can use condition (\ref{2techd1}) to determine the $n$-brackets in $\sC'$, and we can we can use conditions (\ref{2techd2}) and (\ref{2techd3}) to determine the height partial order on these $n$-brackets.
\end{proof}

\subsection{The velocity fan is a fan}\label{metricprfsect}

\ 

We are now ready to prove Theorem \ref{mainvelocitystatement}, which has been reduced to Proposition \ref{metriclemma}.

\begin{proof}(\ref{metriclemma})
 We will first prove that (\ref{vecequal}) $\Rightarrow$ (\ref{brackequal}).
Let
\begin{align}
\ell_{\sB}=\sum _{i=0}^k \lambda_i \ell(\sC_i)= \sum _{i=0}^l \gamma_i \ell({\sC}_i'),
\end{align}
and set ${\bf u} = \sum _{i=0}^k \lambda_i {\rho}(\sC_i)$ and ${\bf v} = \sum _{i=0}^l \gamma_i {\rho}({\sC}_i')$.  We will show that ${\bf u}={\bf v}$.

By Lemmas \ref{fusionmetricbracketingdecomp} and \ref{metricpartitionlemma}, it suffices to prove (\ref{vecequal}) $\Rightarrow$ (\ref{brackequal}) when $\ell_{\sB}$ has a single fusion bracket $A$.  Let $\sC$ be a containment minimal collision in $\sB$ and take $\sigma$ to be a compatible shuffle for $\sC$.  For each $i$, we know that $\sC \nsim \sC_i$, as they have the same fusion bracket, hence $\sC \rightarrow \sC_i$.  By Lemma \ref{inclusionpermutation}, $\sigma$ is a compatible shuffle for each $\sC_i$.
Let 
\begin{align}
\sigma({\bf u}) =\sum_{i=0}^l\gamma_i(P_{\sigma}^T(\rho(\sC_i) -{\bf 1})+{\bf 1}),
\end{align}
\begin{align}
\sigma({\bf v}) =\sum_{i=0}^k\lambda_i(P_{\sigma}^T(\rho(\sC_i') -{\bf 1})+{\bf 1}).
\end{align}
 
We remark that $\sigma({\bf u})$ is, \emph{a priori}, not determined by ${\bf u}$ alone; it is determined by the given expression for ${\bf u}$.
By Lemma \ref{permutationtransformationlemma1}, we have that for a particular index $i$, the coordinate $\sigma({\bf v})^k_i$ is equal to
\begin{align}\sum_{\substack {A' \in \sB^k \\ (A')^{n}\, \ni\, u^n_{\sigma(i)},u^n_{\sigma(i+1)}} } l_{\sB}(A').
\end{align}

This is an invariant of the metric $n$-bracketing $\ell_{\sB}$.  In particular, $\sigma({\bf v})=\sigma({\bf u})$. 
Furthermore,  
\begin{align}
\sigma({\bf v}) =P_{\sigma}^T(\sum_{i=0}^k\lambda_i\rho(\sC_i')) -P_{\sigma}^T(\sum_{i=0}^k\lambda_i{\bf 1})+\sum_{i=0}^k\lambda_i{\bf 1}.
\end{align}

By Lemma \ref{equalityofcoefficientsums}, we have that $ \sum_{i=0}^k\lambda_i = \sum_{i=0}^l\gamma_i=\ell_{\sB}(A)$, hence

\begin{align}
(P_{\sigma}^T)^{-1}(\sigma({\bf v}) +P_{\sigma}^T(\ell_{\sB}(A){\bf 1})+\ell_{\sB}(A){\bf 1})= {\bf v}.
\end{align}
\begin{align}(P_{\sigma}^T)^{-1}(\sigma({\bf u}) +P_{\sigma}^T(\ell_{\sB}(A){\bf 1})+\ell_{\sB}(A){\bf 1})= {\bf u}.
\end{align}

Because $\sigma({\bf v})=\sigma({\bf u})$, it follows that ${\bf v} = {\bf u}$ as desired.

\

Next we will prove  (\ref{brackequal}) $\Rightarrow$ (\ref{vecequal}).

\medskip

\noindent
Let
\begin{align}
{\bf v}=\sum _{i =1}^k \lambda_i \rho(\sC_i) = \sum _{i =1}^l \gamma_i \rho(\sC_i'),
\end{align}

\begin{align}
\ell_{\sB_1} = \sum _{i=0}^k \lambda_i \ell(\sC_i) \,\,\,\,\,{\rm and} \,\,\,\,\, \ell_{\sB_2} = \sum _{i =1}^l \gamma_i \ell(\sC_i').
\end{align}

Given $\sB \in \cK(\cT)$, let $|\sB|\coloneqq \sum_{i=1}^n |\sB^k|$.  We proceed by induction on $|\sB_1|+|\sB_2|$.
By Lemma \ref{main} we can always find an extended collision $\sC_{\bf v}$, which is a function of  ${\bf v}$ alone, such that $\sC_{\bf v}$ is containment-minimal in both $\sB_1$ and $\sB_2$.  Let $\epsilon >0$ be the minimum value of $\ell_{\sB_1}(A)$ and $\ell_{\sB_2}(A)$, where $A$ ranges over all nontrivial brackets in $\sC_{\bf v}$.  By Scholium\ \ref{sch:subtract} we know that there exist metric $n$-bracketings $\ell_{{\underline \sB}_1}$ and $\ell_{{\underline \sB}_2}$ with underlying $n$-bracketings ${{\underline \sB}_1}$ and ${{\underline \sB}_2}$, respectively, such that

\begin{gather}
\ell_{{\underline \sB}_1}
=
\ell_{\sB_1}-\epsilon \ell(\sC_{\bf v}),
\\
\ell_{{\underline \sB}_2}
=
\ell_{\sB_2}-\epsilon \ell(\sC_{\bf v}).
\nonumber
\end{gather}

By Lemma \ref{conicalcomb} there exist extended collisions ${\underline \sC_0}, {\underline \sC_1}, \ldots, {\underline \sC_{r}}$ and  ${\underline \sC_0'}, {\underline \sC_1'}, \ldots , {\underline \sC'_{s}}$  such that ${\underline \sC_0} = {\underline \sC_0'} = \sB_\min$, and ${\underline \lambda_i},{\underline \gamma_i}\in \mathbb{R}$ with ${\underline \lambda_i},{\underline \gamma_i}\geq 0$ for $i>0$ such that
\begin{gather}
\ell_{{\underline \sB}_1} =  \sum _{i =1}^{r} {\underline \lambda_i} \ell({\underline \sC_i}),
\\
\ell_{{\underline \sB}_2} = \sum _{i =1}^{s} {\underline \gamma_i} \ell({\underline \sC_i'}).
\nonumber
\end{gather}
Because 
$\ell_{\sB_1} = \ell_{{\underline \sB}_1}  + \epsilon \ell(\sC_{\bf v})$ and $\ell_{\sB_2} = \ell_{{\underline \sB}_2} + \epsilon \ell(\sC_{\bf v})$, we know by (\ref{vecequal})  $\Rightarrow$ (\ref{brackequal}), verified above, that
\begin{align}
{\bf v} = \sum _{i =0}^k \lambda_i \rho(\sC_i)= \sum _{i =0}^{r} {\underline \lambda_i} \rho({\underline {\sC}_i}) +\epsilon {\rho}(\sC_{\bf v}),
\end{align}
and 

\begin{align}
{\bf v}  = \sum _{i=0}^l \gamma_i \rho(\sC_i')=\sum _{i=0}^{s} {\underline \gamma_i} \rho({\underline {\sC}_i'}) +\epsilon {\rho}(\sC_{\bf v}).
\end{align}

Thus we can take
\begin{align}
{\bf v}' =\sum _{i=0}^{r} {\underline \lambda_i} \rho({\underline {\sC}_i}) = \sum _{i=0}^{s} {\underline \gamma_i} \rho({\underline {\sC}_i'}).
\end{align}

By our choices of $\epsilon$, we have that
\begin{align}
|{{\underline \sB}_1}|+ |{{\underline \sB}_2}| < |{\sB_1}|+ |{\sB_2}|. 
\end{align}
By induction, we can conclude that $\ell_{{\underline \sB}_1} = \ell_{{\underline \sB}_2}$, hence $\ell_{\sB_1} = \ell_{\sB_2}$ as desired.
\end{proof}

 \subsection{The velocity fan is complete}\label{velocityfaniscompletesection}

 \

In this subsection we prove that the velocity fan is complete\footnote{For this section, we take \emph{complete} to mean having support all of $\mathbb{R}^m$.}  by demonstrating that it has no boundary.
We proceed by  combinatorial analysis of $n$-bracketings.

\begin{lemma}\label{codimension1lemma}
Let $\cT$ be a rooted plane tree of depth $n$, and let $\sB \in \cK(\cT)$ be a nonmaximal $n$-bracketing.
There exist $\sB_1, \sB_2 \in \cK(\cT)$ with $\sB_1 \neq \sB_2$ such that $\sB <\sB_1$ and $\sB <\sB_2$.
\end{lemma}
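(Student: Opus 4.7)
I would prove the stronger statement that every non-maximal $\sB \in \cK(\cT)$ has at least two upper covers, by induction on $n$. For the base case $n = 1$, a non-maximal 1-bracketing has some nontrivial bracket $A \in \sB^1$ whose containment-maximal proper sub-brackets in $\sB^1$ partition $A^1$ into $k \geq 3$ consecutive pieces; inserting a new bracket that is the consecutive union of pieces $i$ through $j$ for indices $1 \leq i < j \leq k$ with $(i,j) \neq (1,k)$ gives $\binom{k}{2} - 1 \geq 2$ distinct upper covers.

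For the inductive step $n \geq 2$, I would split into two subcases. If $\pi(\sB) \in \cK(\pi(\cT))$ is non-maximal, the induction hypothesis supplies distinct $\sB'_1 \neq \sB'_2$ both strictly above $\pi(\sB)$. By Proposition \ref{lem:atomic_and_atomic} together with Lemma \ref{containmentlemma}, one may choose minimal collisions $\sC'_i \in \fX(\pi(\cT))$ with $\sC'_i \leq \sB'_i$, $\sC'_i \not\leq \pi(\sB)$, and $\sC'_1 \neq \sC'_2$ (picking $\sC'_1 \not\leq \sB'_2$ and symmetrically, using atomicity to ensure their existence). Combining Scholium \ref{lem:collisionlift} with Lemma \ref{bracketinglift}, I would lift each $\sC'_i$ to $\sC_i \in \fX(\cT)$ with $\pi(\sC_i) = \sC'_i$, chosen to be compatible with $\sB$ by taking only the minimal level-$n$ additions forced by $\sB^n$. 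Setting $\sB_i \coloneqq \sB \vee \sC_i$ yields two upper bounds of $\sB$ with $\pi(\sB_1) \neq \pi(\sB_2)$, hence $\sB_1 \neq \sB_2$. In the remaining case, $\pi(\sB)$ is maximal, so every maximal $\sB^* > \sB$ has $\pi(\sB^*) = \pi(\sB)$ and the non-maximality resides entirely in level-$n$ structure; for some nonsingleton $A \in \sB^{n-1}$ the local collection $\sB^n_A$ of $n$-brackets projecting to $A$ (with the induced height partial order) admits a refinement, and the base-case argument applied to this laminar structure on $\bigcup_{u \in A^{n-1}} C(u)$ produces at least two distinct local refinements, each extending to a distinct upper cover of $\sB$ since modifications affect only level-$n$ brackets projecting to $A$.

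The main obstacle is the compatible-lift construction in the first subcase: one must verify that the level-$n$ structure of $\sB$ does not obstruct introducing $\sC'_i$ at level less than $n$, which requires exhibiting a choice of level-$n$ additions in $\sC_i$ such that the \textsc{(nested)}, \textsc{(partition)}, and \textsc{(height partial orders)} axioms are preserved in $\sB \cup \sC_i$. A secondary difficulty in the second subcase is verifying that local refinements of $\sB^n_A$ respect the cross-level height-order compatibility condition (the third bullet of \textsc{(height partial orders)}) in the resulting global $n$-bracketing, so that the local refinement genuinely embeds into $\cK(\cT)$.
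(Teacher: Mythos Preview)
Your approach is genuinely different from the paper's, and while it could likely be pushed through, it carries real complications that the paper sidesteps entirely.

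The paper inducts on $|V(\cT)|$ rather than on $n$, and uses \emph{quotients} rather than projections. It takes a containment-minimal extended collision $\sC$ in $\sB$ and splits on whether $\sC$ is a minimal collision (Definition~\ref{minimalcollisiondef}). If $\sC$ is minimal, the upper interval $[\sB,\star]$ in $\wh\cK(\cT)$ is isomorphic to $[\sB/\sC,\star]$ in $\wh\cK(\cT/\sC)$, and $|V(\cT/\sC)| = |V(\cT)|-1$, so induction applies directly. If $\sC$ is not minimal, Lemma~\ref{collisionminimalchar} hands you two distinct minimal collisions $\sC_1,\sC_2$ with $\sC_i\rightarrow\sC$, and $\sB_i\coloneqq\sB\vee\sC_i$ are immediately the required distinct upper bounds. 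No lifting, no height-order verification.

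Your first subcase has a genuine gap: Scholium~\ref{lem:collisionlift} lifts only collisions $\sC'\leq\pi(\sB)$, but you need to lift $\sC'_i\not\leq\pi(\sB)$ compatibly with $\sB$. This means producing $n$-brackets over \emph{new} $(n-1)$-brackets of $\sC'_i$ that are nested with the existing $n$-brackets of $\sB$ and satisfy \textsc{(partition)} and \textsc{(height partial orders)}; this is not covered by any cited lemma and is exactly the obstacle you flag. Your distinctness argument (``picking $\sC'_1\not\leq\sB'_2$'') also fails when $\sB'_1<\sB'_2$, which the induction hypothesis does not exclude. Your second subcase is likewise not a direct reduction to the $n=1$ base case: for a nonsingleton $A\in\sB^{n-1}$, the $n$-brackets in $\sB^n_A$ live over $\bigcup_{u\in A^{n-1}}C(u)$ with consecutivity imposed separately on each $C(u)$ and a height partial order stitching them together; this is richer than a single $1$-bracketing, so the ``$\binom{k}{2}-1$'' count does not transfer verbatim.

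The paper's quotient-based argument buys you exactly the avoidance of these lifting and compatibility checks: passing to $\cT/\sC$ packages all levels simultaneously, so the induction step never has to reconcile a new lower-level collision with existing top-level data.
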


\begin{proof}
We proceed by induction on $|V(\cT)|$.
Let $\sC \in \wh \fX(\cT)$ be containment-minimal in $\sB$.
If $\sC$ is a minimal collision, then there is an isomorphism of poset intervals $[\sB,\star_{\,\cT}] \subseteq {\wh \cK}(\cT)$ and $[\sB/\sC,\star_{\,\cT/\sC}]\subseteq {\wh \cK}(\cT/\sC)$, and we can apply induction.
If $\sC$ is not a minimal collision, by Lemma \ref{collisionminimalchar} we can find two distinct minimal collisions $\sC_1$ and $\sC_2$ such that $\sC_1 \rightarrow \sC$ and $\sC_2 \rightarrow \sC$.
Thus we can let $\sB\vee\sC_1 = \sB_1$ and $\sB \vee \sC_2 =\sB_2$.
\end{proof}

The following is standard.  

\begin{lemma}\label{completecondition}
Let $\cF$ be a fan in $\mathbb{R}^m$, then $\cF$ is complete if and only if $\cF$ has no boundary.
\end{lemma}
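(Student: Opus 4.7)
The plan is to first fix a precise meaning for ``no boundary,'' namely that every codimension-one cone of $\cF$ is a face of at least two maximal cones; equivalently, that the topological boundary of $\text{supp}(\cF)$ inside its linear span is empty. Under this reading the lemma becomes a standard polyhedral fact: a finite union of closed polyhedral cones equals its linear span if and only if none of its codimension-one faces is exposed.

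For the forward direction I would argue directly. Assume $\cF$ is complete and let $L = \text{supp}(\cF)$, a linear space. Take any maximal cone $\sigma$ and codimension-one face $\tau$; the hyperplane $H = \langle \tau \rangle_\mathbb{R}$ splits $L$ into two closed half-spaces, only one of which contains $\sigma$. For a point $p$ in the relative interior of $\tau$, any sufficiently small perturbation of $p$ into the opposite half-space lies in $L = \text{supp}(\cF)$ and therefore in some cone $\sigma'$ of $\cF$. Taking such perturbations to zero and using that cones are closed, $p \in \sigma'$, while $\sigma' \neq \sigma$ since points of $\sigma'$ sit in the opposite half-space. Since $\cF$ is a fan, $\sigma \cap \sigma'$ is a common face and contains $\tau$; enlarging $\sigma'$ to a maximal cone through $\tau$ gives the second maximal cone we need.

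For the reverse direction I would argue by contrapositive. Suppose $\cF$ is not complete; then $\text{supp}(\cF) \subsetneq L$, where $L$ is the linear span of $\text{supp}(\cF)$. Since $\text{supp}(\cF)$ is a finite union of closed polyhedral cones, it is closed in $L$ with non-empty proper complement, so its topological boundary in $L$ is non-empty. This boundary is itself a finite union of relatively open faces of cones of $\cF$, and its top-dimensional pieces must be open subsets of codimension-one cones of $\cF$. Choosing a point $p$ in the relative interior of such a codimension-one cone $\sigma$ that lies on the boundary of the support, only one of the two closed half-spaces of $L$ determined by $\langle \sigma \rangle_\mathbb{R}$ meets $\text{supp}(\cF)$ near $p$; consequently $\sigma$ is a face of a unique maximal cone, producing a boundary.

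The main obstacle lies in the reverse direction: verifying that a topological boundary point of $\text{supp}(\cF)$ can always be found in the relative interior of a codimension-one cone of $\cF$. This will require a local polyhedral analysis exploiting the finiteness of $\cF$ -- namely, that the boundary of the support stratifies as a finite union of relatively open cone faces, and that at least one such face must be codimension one in $L$, since otherwise $\text{supp}(\cF)$ would have empty relative interior in $L$ and hence be empty, contradicting $L = \text{span}(\text{supp}(\cF))$.
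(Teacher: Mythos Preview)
The paper does not prove this lemma; it is declared standard, and the combinatorial reformulation (the subsequent Lemma~\ref{refinedcompletecondition}) is asserted to be equivalent without argument. So there is no paper proof to compare against; I will just assess your argument on its own.

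Your forward direction is fine and is the standard argument.

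Your reverse direction has a genuine gap, and it is precisely the one you flag in your last paragraph. You offer two readings of ``no boundary'': (i) every codimension-one cone lies in at least two maximal cones, and (ii) the topological boundary of $\operatorname{supp}(\cF)$ in its linear span $L$ is empty. These are \emph{not} equivalent without a full-dimensionality hypothesis. Take $\cF$ to consist of three linearly independent rays in $\mathbb{R}^3$ together with the origin: the linear span is $L=\mathbb{R}^3$, there are no cones of dimension $2$, so condition (i) holds vacuously, yet $\operatorname{supp}(\cF)$ is three rays and certainly has nonempty topological boundary in $L$. Your closing sentence ``$\operatorname{supp}(\cF)$ would have empty relative interior in $L$ and hence be empty'' is simply false: a finite union of lower-dimensional cones can have empty interior in its span without being empty.

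The repair is exactly what the paper's Lemma~\ref{refinedcompletecondition} does: add ``$\cF$ is full-dimensional'' to the combinatorial condition. Once you know $\cF$ contains a cone $\sigma$ with $\dim\sigma=\dim L$, your argument goes through: pick $p$ in the relative interior of $\sigma$ and a generic $q\in L\setminus\operatorname{supp}(\cF)$; the segment $[p,q]$ exits $\operatorname{supp}(\cF)$ at some point $r$, and genericity forces $r$ to lie in the relative interior of a codimension-one face of some maximal cone, with no maximal cone on the far side. Alternatively, keep the purely topological reading of ``no boundary'' throughout, in which case (with the section's convention that complete means $\operatorname{supp}(\cF)=\mathbb{R}^m$) the lemma is nearly tautological: $\operatorname{supp}(\cF)$ is closed and nonempty, so it equals $\mathbb{R}^m$ if and only if its boundary is empty.
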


This is equivalent to the following statement.

\begin{lemma}\label{refinedcompletecondition}
Let $\cF$ be a fan in $\mathbb{R}^m$, then $\cF$ is complete if and only if $\cF$ is full-dimensional and for each codimension 
1 cone $\tau \in \cF$ there exist full-dimensional cones $\tau_1,\tau_2 \in \cF$ with $\tau_1 \neq\tau_2$ such that  $\tau <\tau_1$ and $\tau < \tau_2$.
\end{lemma}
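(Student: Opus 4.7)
}
The plan is to deduce this from Lemma \ref{completecondition} by a topological analysis inside the linear span $L \coloneqq \langle\supp(\cF)\rangle_{\bR} \subseteq \bR^m$, writing $d \coloneqq \dim L$. Under the paper's conventions, $\cF$ being complete means $\supp(\cF)=L$, and ``no boundary'' means the topological boundary of $\supp(\cF)$ relative to $L$ is empty; by ``full-dimensional'' I mean every maximal cone has dimension $d$.

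For the forward direction, I would suppose $\cF$ is complete, so $\supp(\cF) = L$. A generic point of $L$ (one that avoids the finitely many cones of dimension $<d$, which have $d$-dimensional Lebesgue measure zero in $L$) must lie in the relative interior of some cone, which is therefore of dimension $d$; this gives full-dimensionality. For a codimension $1$ cone $\tau$, I would take $p$ in the relative interior of $\tau$ and a small open ball $U \subseteq L$ around $p$. The affine span of $\tau$ inside $L$ splits $U$ into two open half-balls $U_+, U_-$. Since $\supp(\cF)=L$, each half-ball meets the relative interior of some cone, which by choice of $U$ must contain $\tau$ as a face. The cone on each side must then be full-dimensional (since its interior meets the open half-ball), giving two distinct full-dimensional cones $\tau_1, \tau_2$ with $\tau < \tau_1, \tau_2$.

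For the reverse direction, I would suppose the two stated conditions hold and argue $\supp(\cF)=L$. Since $\cF$ is full-dimensional, $\supp(\cF)$ is a closed polyhedral subset of $L$ of topological dimension $d$. If $\supp(\cF)\neq L$, then by closedness its relative topological boundary $\partial\supp(\cF) \subseteq L$ is nonempty, and as the boundary of a closed set with nonempty interior in the $d$-manifold $L$ it has topological dimension exactly $d-1$. Since $\partial\supp(\cF)$ is a finite union of faces of cones of $\cF$, it must contain the relative interior of some codimension $1$ cone $\tau$. By hypothesis there exist distinct full-dimensional $\tau_1,\tau_2 \in \cF$ with $\tau < \tau_1, \tau_2$; the fan axiom forces $\tau_1 \cap \tau_2 = \tau$, so $\tau_1$ and $\tau_2$ sit on opposite sides of the affine span of $\tau$ in $L$. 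Then a neighborhood of any relative interior point of $\tau$ is contained in $\tau_1 \cup \tau_2 \subseteq \supp(\cF)$, contradicting that the point is on the topological boundary.

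The main obstacle will be the reverse direction, specifically the extraction of a codimension $1$ boundary face. The cleanest route is the dimension count above (a proper closed full-dimensional subset of a connected manifold has codimension $1$ boundary pieces), but an equivalent route is to argue by connectedness: $\supp(\cF)^{\circ} \subseteq L$ is open and the hypothesis implies it is also closed in $L$ (any limit point in a codim $1$ face gets absorbed by the two full-dim neighbors, and limit points in higher codim are reached via nearby codim $1$ points by a local fan argument), so $\supp(\cF)^{\circ}=L$ and hence $\supp(\cF)=L$.
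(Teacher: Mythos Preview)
The paper does not prove this lemma; it is presented as an equivalent reformulation of Lemma~\ref{completecondition} (itself labeled ``standard'' and left unproved), with no further argument given. Your proposal is correct and supplies exactly the topological details the paper omits, proceeding via the same boundary criterion. One small remark: the footnote opening the subsection containing this lemma fixes ``complete'' to mean $\supp(\cF)=\bR^m$ for the purposes of that section, so you may simply take $L=\bR^m$ and $d=m$ rather than working in the span of the support.
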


\begin{definition}
A fan is \emph{pure} if each maximal cone has the same dimension.
\null\hfill$\triangle$
\end{definition}

\begin{lemma}\label{fulldimlemma}
The velocity fan $\cF(\cT)$ is a  full-dimensional pure fan in $\mathbb{R}^m$.

\end{lemma}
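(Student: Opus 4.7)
The plan is to show by induction on the depth $n$ of $\cT$ that every maximum $\sB \in \cK(\cT)$ satisfies $\dim\tau(\sB) = m$. This gives full-dimensionality (some maximal cone has dimension $m$) and purity (every maximal cone has dimension $m$) simultaneously.

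For the base case $n=0$ the velocity fan is the trivial fan $\{\bR^0\}$. For $n=1$, Proposition \ref{Lodayspecial} identifies $\cF(\cT)$ with the wonderful associahedral fan, which by Theorem \ref{lodayfanisafan} is complete, and therefore full-dimensional and pure.

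For the inductive step, fix $n\geq 2$ and a maximum $\sB\in \cK(\cT)$. First I would argue that $\pi(\sB)$ is maximum in $\cK(\pi(\cT))$: given any $\sB' > \pi(\sB)$ in $\cK(\pi(\cT))$, Lemma \ref{bracketinglift} produces $\widetilde{\sB}\in\cK(\cT)$ with $\pi(\widetilde{\sB}) = \sB'$. Since $\wh\cK(\cT)$ is a lattice, $\sB\vee \widetilde{\sB}$ exists, and because $\widetilde{\sB}$ introduces new structure at levels $\leq n-1$ absent from $\sB$, this join must strictly exceed $\sB$, contradicting maximality. Then by Proposition \ref{projfanthing}, $\pi(\tau(\sB)) = \tau(\pi(\sB))$, which by the inductive hypothesis is full-dimensional in the ambient space of $\cF(\pi(\cT))$, giving $\dim\tau(\sB) \geq m - t_n$.

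To upgrade this inequality to equality, I would use the piecewise-linear isomorphism $\Gamma$ from Proposition \ref{mainvelocitythm} to transfer to the metric side: $\dim\tau(\sB) = \dim\tau^\met(\sB)$. The added degrees of freedom at level $n$ correspond to independent assignments of lengths to the $n$-brackets of $\sB$ subject to the coherence condition of Definition \ref{def:metric_stable_tree-pair}(4). For a maximum $\sB$, over each non-singleton $\wt A \in \sB^{n-1}$, the $n$-brackets in $\sB^n_{\wt A}$ form a complete parenthesization structure on $\bigcup_{u\in\wt A^{n-1}}C(u)$, and the contribution of each such $\wt A$ to the fiber dimension matches exactly what the Loday-fan chamber on the same set of children contributes, namely $|C(u)|-1$ collectively. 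Summing over the relevant $\wt A$'s and accounting for the shared global maximum $n$-bracket yields fiber dimension exactly $t_n$. Equivalently, one can produce $t_n$ linearly independent rays $\rho(\sC) \in \tau(\sB)$ with $\pi(\rho(\sC))\in \langle\pi({\bf 1})\rangle_{\bR}$: the type 1 collisions below $\sB$ (for which $\pi(\rho(\sC)) = 0$ by Lemma \ref{rayprojection}), augmented where necessary by differences $\rho(\sC_1)-\rho(\sC_2)$ of type 3 collision rays, span a $t_n$-dimensional subspace of the kernel.

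The main obstacle is executing the fiber count cleanly, since a maximum $\sB$ distributes its nontrivial $n$-brackets across several $(n-1)$-brackets, and the coherence constraints at different $\wt A \in \sB^{n-1}$ interact through the shared maximum $n$-bracket. The cleanest resolution is to use the base case, applied at each non-singleton $\wt A$ at level $n-1$, as a black box: the local parenthesization structure over $\wt A$ contributes the same number of independent metric parameters as the corresponding Loday chamber does in its ambient $\bR^{|V^{1}|-1}$, and these contributions are transverse across distinct $\wt A$, so they aggregate to the required $t_n$ fiber dimension.
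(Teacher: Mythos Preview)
Your approach (induct on the depth $n$ via projection to $\pi(\cT)$) is different from the paper's (induct on $m$ via quotient by a containment-minimal collision), but both steps in your argument have gaps.

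The claim that $\pi(\sB)$ is maximal does not follow from the lattice property of $\wh\cK(\cT)$: the join $\sB\vee\widetilde\sB$ may equal $\star$. Lemma~\ref{bracketinglift} lifts $\sB'$ by adjoining $n$-brackets of the form $(A',\{u\})$ over each $A'\in(\sB')^{n-1}$; if some new $A'$ strictly contains an $A''\in\sB^{n-1}$ and $\sB^n$ already contains a bracket $(A'',S)$ with $u\in S$ and $|S|\geq 2$, then $(A',\{u\})$ and $(A'',S)$ intersect at level $n$ yet neither contains the other, violating \textsc{(nested)}. A correct argument must lift $\sB'$ in a way compatible with $\sB$'s existing level-$n$ structure, which you have not done. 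The fiber-dimension computation is also incomplete: even granting that the variable sets $\{\ell_\sB(A):A\in\sB^n_{\wt A}\}$ are disjoint across distinct $\wt A$, you must still determine, for each non-singleton $\wt A$, how many independent parameters survive the chain-sum constraints of Definition~\ref{def:metric_stable_tree-pair}(4), and verify these local counts sum to $t_n$. For non-singleton $\wt A$ the set $\sB^n_{\wt A}$ is a height-ordered partition structure on $\bigcup_{u\in\wt A^{n-1}}C(u)$, not a $1$-bracketing of any single $C(u)$, so the appeal to Loday chambers is not immediate; you acknowledge this as ``the main obstacle'' but do not resolve it.

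The paper's route sidesteps both issues. Take $\sC$ containment-minimal in a maximal $\sB$; maximality forces $\sC$ to be a minimal collision (for any $\sC'\to\sC$ with $\sC'\neq\sC$, the brackets of $\sC'$ nest inside those of $\sC\leq\sB$, so $\sB\cup\sC'$ is a genuine $n$-bracketing strictly above $\sB$). Then $|V(\cT/\sC)|=|V(\cT)|-1$ by Lemma~\ref{collisionminimalchar}, and the interval isomorphism $[\sC,\sB]\cong[\sB_{\min},\sB/\sC]$ in $\cK(\cT/\sC)$ gives $\mathrm{rk}(\sB)=1+(m-2)=m-1$ directly, with no fiber count needed.
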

\begin{proof}
We must prove that each maximal cone in $\cF(\cT)$ has dimension $m$.
From Theorem \ref{mainvelocitystatement}, it follows that $\cK(\cT)$ is a ranked poset.
Let $\text{rk}(\sB)$ denote the rank of $\sB \in \cK(\cT)$.  We have that $\text{rk}(\sB) = \text{dim}(\tau(\sB))-1$, with the -$1$ accounting for the lineality space.
Thus, it suffices to prove that any maximal $n$-bracketing $\sB \in \cK(\cT)$ has $\text{rk}(\sB)=m-1$.  
We proceed by induction on $m$.
For $m=1$, the statement is clear, so we assume $m>1$.
Next, let $\sB \in \cK(\cT)$ be a maximal $n$-bracketing, and observe that because $m>1$ there must exist some $\sC \in \fX(\cT)$ with $\sC\leq \sB$.
Let $\sC \in \fX(\cT)$ be containment-minimal in $\sB$.
We know that $\sC$ is a minimal collision, otherwise we would have some $\sC'\in \fX(\cT)$ with $\sC' \neq \sC$, $\sC' \rightarrow \sC$, and $\sB\vee \sC' >\sB$, contradicting maximality of $\sB$.
As described in Definition \ref{quotientbracketingdef}, $\sB/\sC \in \cK(\cT/\sC)$ and it is clear that $[\sC, \sB] \cong [\sB_{\min},\sB/\sC]$.
Moreover, $\sB/\sC$ is maximal in $\cK(\cT/\sC)$, otherwise if we had some $\sB'>\sB/\sC$ we would be able to lift $\sB'$ to some $\wt \sB \in \cK(\cT)$ such that $\wt \sB >\sB$, e.g. by taking the joins of the preimages of the collisions less than $\sB'$ afforded by Lemma \ref{preimagelemma}.
As $\sC$ is a minimal collision, it satisfies the conditions of  Lemma \ref{collisionminimalchar}, therefore $|V(\cT/\sC)| = |V(\cT)|-1$.  It follows by induction that $\text{rk}(\sB/\sC)=m-2$, and $\text{rk}(\sC)=1$, hence $\text{rk}(\sB)=(m-2)+1 =m-1$.
\end{proof}

\begin{theorem}
\label{complete2}
The velocity fan $\cF(\cT)$ is complete.
\end{theorem}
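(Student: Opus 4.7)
The plan is to apply the criterion for completeness in Lemma \ref{refinedcompletecondition}, which reduces the problem to two conditions: full-dimensionality and the property that every codimension-1 cone is a face of at least two distinct maximal cones. The first condition is immediate from Lemma \ref{fulldimlemma}, so essentially all of the work comes down to checking the second, and I expect this to follow almost immediately from the combinatorial structure of $\cK(\cT)$ that has been established.

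First, I would translate the desired condition on codimension-1 cones into a statement about $\cK(\cT)$. By Theorem \ref{mainvelocitystatement}, the face poset of $\cF(\cT)$ is $\cK(\cT)$, so a codimension-1 cone $\tau \in \cF(\cT)$ is of the form $\tau = \tau(\sB)$ for some $\sB \in \cK(\cT)$ with $\dim\tau(\sB) = m-1$. The proof of Lemma \ref{fulldimlemma} established that $\cK(\cT)$ is ranked, that the top-dimensional (i.e.\ maximal) cones have dimension $m$, and that $\dim\tau(\sB) = \mathrm{rk}(\sB)+1$. Hence any codimension-1 cone corresponds to an $\sB$ of rank $m-2$, and in particular $\sB$ is \emph{not} a maximal element of $\cK(\cT)$.

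Next, I would invoke Lemma \ref{codimension1lemma}, which guarantees the existence of two distinct $n$-bracketings $\sB_1, \sB_2 \in \cK(\cT)$ with $\sB < \sB_1$ and $\sB < \sB_2$. Since $\mathrm{rk}(\sB) = m-2$, both $\sB_1$ and $\sB_2$ have rank at least $m-1$; combined with the pureness part of Lemma \ref{fulldimlemma}, which ensures that every maximal element of $\cK(\cT)$ has rank exactly $m-1$, the bracketings $\sB_1$ and $\sB_2$ must themselves be maximal. Thus $\tau(\sB_1)$ and $\tau(\sB_2)$ are two distinct full-dimensional cones of $\cF(\cT)$, both properly containing $\tau(\sB)$ as a face.

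Putting these observations together, both hypotheses of Lemma \ref{refinedcompletecondition} are satisfied, and we conclude that $\cF(\cT)$ is complete. The genuine content has already been absorbed into the earlier lemmas: Lemma \ref{fulldimlemma} handled the full-dimensionality and pureness via an induction on $|V(\cT)|$ using the quotient $\sB/\sC$ by a containment-minimal collision, while Lemma \ref{codimension1lemma} handled the two-sidedness of codimension-1 strata via Lemma \ref{collisionminimalchar} on minimal collisions. Accordingly, I do not expect any real obstacle in the proof of Theorem \ref{complete2} itself --- it is a short bookkeeping argument combining these ingredients.
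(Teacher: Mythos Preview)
Your proposal is correct and follows essentially the same approach as the paper: both apply Lemma \ref{refinedcompletecondition}, use Lemma \ref{fulldimlemma} for full-dimensionality, and invoke Lemma \ref{codimension1lemma} to produce two distinct maximal $n$-bracketings above any nonmaximal $\sB$. Your version is in fact slightly more careful than the paper's in spelling out why $\sB_1$ and $\sB_2$ must be maximal (via the rank/pureness statement), whereas the paper asserts this directly.
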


\begin{proof}
We will verify the conditions of Lemma \ref{refinedcompletecondition}.  By Lemma \ref{fulldimlemma}, we know that $\cF(\cT)$ is full-dimensional, thus the set of all codimension 1 cones in $\cF(\cT)$ is nonempty; we can take a facet of a full-dimensional cone.  Let $\tau(\sB)$ be a codimension 1 cone in $\cF(\cT)$.  By Lemma \ref{fulldimlemma}, we know that $\sB$ is not maximal.  By Lemma \ref{codimension1lemma}, there exists maximal $n$-bracketings $\sB_1$ and $\sB_2$ such that $\sB < \sB_1$ and $\sB < \sB_2$.  Thus we can take $\tau_1 = \tau(\sB_1)$ and $\tau_2 = \tau(\sB_2)$.
\end{proof}

\

\section{Triangulating \texorpdfstring{$n$}{n}-associahedra}\label{triangulationsection}
The velocity fan is not smooth, it is not even simplicial.  
In this section we describe a canonical smooth flag triangulation of the reduced velocity fan on the same set of rays.
In the special case of the associahedron this triangulation is trivial; it agrees with the wonderful associahedral fan which is already 
 simplicial.
For concentrated $n$-associahedra such as the multiplihedron, this triangulation is a nestohedral fan (this will be elaborated upon in \S \ref{s:concentrated_realization}).  In \S \ref{nestohedralatlassection} we describe a general connection between our triangulation and nestohedra for all $n$-associahedra.  We apply the triangulated velocity fan as a tool for giving a recursive calculation of the normal vectors for the walls of the velocity fan, and for proving the unimodularity of the map $\Gamma$.  Additionally, we introduce a second, finer triangulation which generalizes the braid arrangement viewed as a triangulation of the wonderful associahedral fan.

\subsection{Nested collisions and triangulated \texorpdfstring{$n$}{n}-associahedra}

\

We begin by describing an abstract simplicial complex $\ccD$ on the set of collisions $\fX(\cT)$.  We will eventually show that this simplicial complex induces a smooth flag triangulation of the reduced velocity fan.
Recall Definitions \ref{collisionrelations} and \ref{collisionrelationsdisjoint}.

\begin{definition}\label{nestedcollisionsdef}
Let $\{\sC_i : 1\leq i \leq k\}$ be a collection of collisions.
Suppose that for all $1\leq i <j \leq k$ either 
\begin{enumerate}
\item \label{nestedcon1} $\sC_i\sim \sC_j$ or 
\item \label{nestedcon2} $\sC_i \rightarrow \sC_j$,
\end{enumerate} 

then we say that the collection $\{\sC_i\}$ is \emph{nested}.
\null\hfill$\triangle$
\end{definition}

\begin{lemma}
\label{growing}
Let $\{\sC_i :1\leq i \leq k\}$ be a nested collection of extended collisions, then they are a compatible collection of collisions. 
\end{lemma}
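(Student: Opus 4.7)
The plan is to establish pairwise compatibility of the $\sC_i$ first, and then invoke the flagness of $\cK(\cT)$ from Lemma \ref{flagcondition} to upgrade this to compatibility of the full family.

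First I would verify pairwise compatibility. Fix indices $i<j$. If either $\sC_i$ or $\sC_j$ equals $\sB_\min$, compatibility is immediate because $\sB_\min$ is the minimum element of $\wh\cK(\cT)$. Otherwise, by Definition \ref{nestedcollisionsdef}, either $\sC_i \rightarrow \sC_j$ or $\sC_i \sim \sC_j$. In the first case, compatibility is built into the definition of $\rightarrow$ from Definition \ref{collisionrelations}. In the second case, by Definition \ref{collisionrelationsdisjoint} the nontrivial brackets of $\sC_i$ and $\sC_j$ lie over disjoint vertex subsets of $V(\cT)$ at every depth, so the \textsc{(nested)} condition for $\sC_i \cup \sC_j$ holds vacuously for pairs drawn one from each collision. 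Moreover, since the \textsc{(height partial orders)} condition only compares brackets with equal projection, and distinct nontrivial brackets of $\sC_i$ and $\sC_j$ never share a projection, the union $<_{\sC_i} \cup <_{\sC_j}$ extends by the identity on singleton brackets to a valid partial order on $\sC_i \cup \sC_j$ meeting the required axioms.

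Next I would appeal to the flag property. By Proposition \ref{n-bracketingunionlemma}, pairwise compatibility is equivalent to $\sC_i \vee \sC_j \in \cK(\cT)$ (as opposed to being $\star$) for every pair. Lemma \ref{flagcondition} then yields $\bigvee_{i=1}^k \sC_i \in \cK(\cT)$. One more application of Proposition \ref{n-bracketingunionlemma} translates this back into the assertion that $\{\sC_i\}_{i=1}^k$ is a compatible family, with the induced union being the claimed join.

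The only modest obstacle lies in the $\sim$-case of the pairwise argument, where one must ensure that the two height partial orders assemble into a single partial order on the union without creating a contradictory pair of comparabilities. This is harmless because the two collisions contribute nontrivial brackets over disjoint vertex sets, so any two brackets drawn one from each collision either fail the comparability hypothesis (unequal projections) or involve a singleton, for which the comparability is already forced by $<_\cT$ and is consistent across both collisions.
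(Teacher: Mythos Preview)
Your overall strategy matches the paper's one-line proof exactly: nested implies pairwise compatible, and Lemma~\ref{flagcondition} upgrades pairwise compatibility to compatibility of the whole family.

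However, your justification of the $\sim$ case contains a false assertion. You claim that ``distinct nontrivial brackets of $\sC_i$ and $\sC_j$ never share a projection,'' but this fails already for $1$-associahedra: if $\sC_i$ has nontrivial $1$-bracket with $A^1=\{u_1,u_2\}$ and $\sC_j$ has nontrivial $1$-bracket with $A^1=\{u_3,u_4\}$, both project to the unique $0$-bracket. More generally, whenever the fusion brackets of two disjoint collisions lie over the same singleton, their nontrivial brackets at that level share a projection and \emph{must} be made comparable in the height order of the union. Your follow-up sentence (``either fail the comparability hypothesis \ldots\ or involve a singleton'') repeats the same error.

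The conclusion you want is still correct: the paper records immediately after Definition~\ref{collisionrelationsdisjoint} that $\sC_i \sim \sC_j$ implies compatibility, and you may simply cite that. If you prefer a direct argument, the required relation between such $A_1 \in \sC_i^k$ and $A_2 \in \sC_j^k$ is forced by the singleton comparabilities (second bullet of \textsc{(height partial orders)}) together with the coherence bullet, exactly as in the proof of Proposition~\ref{n-bracketingunionlemma}; disjointness of the top levels guarantees no conflict arises.
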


\begin{proof}
This is a special case of Lemma \ref{flagcondition} which says that if a collection of collisions are pairwise compatible, then they are compatible. 
\end{proof}

\begin{definition}
Let $V$ be a finite set. 
An abstract simplicial complex $\Delta$ on $V$ is a collection of subsets of $V$ which contains all singletons of $V$ and for any $Y \in \Delta$ and $Y'\subseteq Y$, then $Y' \in \Delta$.
\null\hfill$\triangle$
\end{definition}

\begin{definition}
Given a rooted plane tree $\cT$ of depth $n$, we define the \emph{triangulated $n$-associahedron} $\ccD$ to be the abstract simplicial complex whose faces are the nested collections of collisions.
\null\hfill$\triangle$
\end{definition}

\begin{remark}
Note that a nested collection of collisions may have more than one order for which it satisfies the conditions of Definition \ref{nestedcollisionsdef}, but this collection only determines a single face of $\ccD$.
\null\hfill$\triangle$
\end{remark}

We have a natural surjective poset morphism $\Omega: \ccD\rightarrow \cK(\cT)$.  For $Y \in \ccD$, we let
\begin{align}
\Omega(Y) = \bigvee_{\sC \in Y} \, \sC.
\end{align}

We now use the triangulated $n$-associahedra to define triangulations of the reduced $n$-bracketing complex and the reduced velocity fan.

\begin{definition}
Given $Y \in \ccD$, let
\begin{align}
\tau^{\met}(Y)= cone\{ \ell(\sC): \sC \in Y\}+\langle\ell(\sB_\min)\rangle_{\mathbb{R}}\,.
\end{align}

\noindent
The \emph{triangulated $n$-bracketing complex} is the collection of conical sets
\begin{align}
\cK^{\met}(\ccD) = \{ \tau^{\met}(Y):Y \in \ccD\}.
\end{align}

\noindent
The \emph{reduced triangulated $n$-bracketing complex} is the quotient
\begin{align}
{\overline \cK^{\met}}(\ccD) = \{ \tau^{\met}(Y)/\langle \ell(\sB_\min) \rangle_{\mathbb{R}}: Y \in \ccD\}.
\end{align}
\null\hfill$\triangle$
\end{definition}

\begin{definition}
Given $Y \in \ccD$, let 
\begin{align}
\tau(Y)=  cone\{ \rho(\sC): \sC \in Y\}+\langle 
{\bf 1}\rangle_{\mathbb{R}}\,.
\end{align}
\label{def:triangulated_velocity_fan}
The \emph{triangulated velocity fan}\footnote{Our use of the word ``triangulated'' is a bit dubious as this fan has a non trivial lineality space and thus is not simplicial.  On the other hand, the reduced triangulated velocity fan is simplicial.} is the collection of cones
\begin{align}
\cF(\ccD) = \{ \tau(Y):Y \in \ccD\}.
\end{align}
\label{def:reduced_triangulated_velocity_fan}
The \emph{reduced triangulated velocity fan} is the is the image of the triangulated velocity fan in tropical projective space
\begin{align}
{\overline \cF}(\ccD) = \{ \tau(Y)/\langle {\bf 1} \rangle_{\mathbb{R}}: Y \in \ccD\}.
\end{align}
\null\hfill$\triangle$
\end{definition}

\begin{figure}[ht]
\centering
\def\svgwidth{0.6\textwidth}
\begingroup%
  \makeatletter%
  \providecommand\color[2][]{%
    \errmessage{(Inkscape) Color is used for the text in Inkscape, but the package 'color.sty' is not loaded}%
    \renewcommand\color[2][]{}%
  }%
  \providecommand\transparent[1]{%
    \errmessage{(Inkscape) Transparency is used (non-zero) for the text in Inkscape, but the package 'transparent.sty' is not loaded}%
    \renewcommand\transparent[1]{}%
  }%
  \providecommand\rotatebox[2]{#2}%
  \newcommand*\fsize{\dimexpr\f@size pt\relax}%
  \newcommand*\lineheight[1]{\fontsize{\fsize}{#1\fsize}\selectfont}%
  \ifx\svgwidth\undefined%
    \setlength{\unitlength}{202.20366955bp}%
    \ifx\svgscale\undefined%
      \relax%
    \else%
      \setlength{\unitlength}{\unitlength * \real{\svgscale}}%
    \fi%
  \else%
    \setlength{\unitlength}{\svgwidth}%
  \fi%
  \global\let\svgwidth\undefined%
  \global\let\svgscale\undefined%
  \makeatother%
  \begin{picture}(1,1.39610166)%
    \lineheight{1}%
    \setlength\tabcolsep{0pt}%
    \put(0,0){\includegraphics[width=\unitlength,page=1]{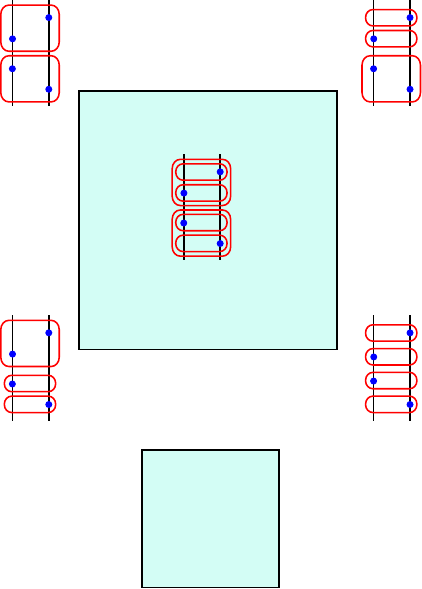}}%
    \put(0.19632419,1.14342437){\makebox(0,0)[lt]{\lineheight{1.25}\smash{\begin{tabular}[t]{l}$(1,0,2,0)$\end{tabular}}}}%
    \put(0.19632419,0.58800106){\makebox(0,0)[lt]{\lineheight{1.25}\smash{\begin{tabular}[t]{l}$(1,0,3,-1)$\end{tabular}}}}%
    \put(0.58894891,0.58800106){\makebox(0,0)[lt]{\lineheight{1.25}\smash{\begin{tabular}[t]{l}$(1,0,3,-2)$\end{tabular}}}}%
    \put(0.58894891,1.14342437){\makebox(0,0)[lt]{\lineheight{1.25}\smash{\begin{tabular}[t]{l}$(1,0,2,-1)$\end{tabular}}}}%
    \put(0,0){\includegraphics[width=\unitlength,page=2]{triangulatedW2,2.pdf}}%
  \end{picture}%
\endgroup%

\caption{
\label{triangulatedfigured}
An example of a nonsimplicial chamber in a reduced velocity fan ${\overline\cF}(\cT)$ (viewed from above) and its triangulation into two simplicial chambers in ${\overline\cF}(\ccD)$.
}
\end{figure}

\begin{definition}
Given a simplicial fan $\cF$ we can associate an abstract simplicial  complex $\Delta(\cF)$ with vertices corresponding to the rays of $\cF$ and a face $Y \in \Delta(\cF)$ if the corresponding rays generate a cone in $\cF$.
\null\hfill$\triangle$
\end{definition}

\begin{definition}\label{defflagcomplex}
An abstract simplicial complex $\Delta$ is a \emph{flag simplicial complex} if for every set of vertices $Y \subseteq V$ such that for any $u_1,u_2 \in Y$, $\{u_1,u_2\} \in \Delta$, we have $Y \in \Delta$.
\null\hfill$\triangle$
\end{definition}

\begin{definition}
A simplicial fan $\cF$ is flag if its associated simplicial complex is flag.
\null\hfill$\triangle$
\end{definition}

\begin{definition}
Let $N$ be a $\mathbb{Z}$-lattice.  Let $\tau$ be a simplicial cone in $N \otimes \mathbb{R}$.
We say that $\tau$ is \emph{ smooth} if there exist ray generators ${\bf v}^1, \ldots, {\bf v}^k$ for $\tau$ such that ${\bf v}^1, \ldots, {\bf v}^k$ can be extended to a basis ${\bf v}^1, \ldots, {\bf v}^m$ for $N$.
A fan $\cF$ in $N \otimes \mathbb{R}$ is {\emph smooth} if each cone in $\cF$ is smooth.
\null\hfill$\triangle$
\end{definition}

The following is the main result of this section.

\begin{theorem}\label{triangulationtheorem}
The reduced triangulated velocity fan ${\overline \cF}(\ccD)$ is a canonical smooth flag triangulation of the reduced velocity fan ${\overline \cF}(\cT)$ on the same set of rays.
\end{theorem}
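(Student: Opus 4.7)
The plan is to verify the four assertions (refinement, simpliciality, smoothness, flagness) in turn, transporting most of the argument to the metric side via the piecewise-linear isomorphism $\Gamma$ of Proposition \ref{mainvelocitythm} and using the adjoint permutation transformations of \S\ref{permtuationssubsection} to propagate a block-triangular structure through induction. Canonicity is automatic: $\ccD$ is defined intrinsically from $\fX(\cT)$ by the pairwise nested condition, without any auxiliary choices. The ray sets of $\overline\cF(\cT)$ and $\overline\cF(\ccD)$ coincide by construction, since the 1-element subsets $\{\sC\}$ are the singletons of $\ccD$ and generate the cones $[\rho(\sC)]$ which are precisely the rays of $\overline\cF(\cT)$.

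For the refinement, by Lemma \ref{growing} every nested family is compatible, so $\Omega(Y)=\bigvee_{\sC\in Y}\sC$ is a well-defined $n$-bracketing and $\tau(Y)$ is a conical subset of $\tau(\Omega(Y))\in\cF(\cT)$. To see that the $\tau(Y)$ cover each cone $\tau(\sB)$, I would work with metric $n$-bracketings: given $\ell_\sB\in\tau^\met(\sB)$, apply a greedy decomposition by repeatedly selecting a containment-minimal collision $\sC_i$ in the current support, taking $t_i$ to be its minimum coefficient, and subtracting $t_i\ell(\sC_i)$ using Scholium \ref{sch:subtract}. The critical point is that the produced sequence $\sC_1,\sC_2,\dots$ is nested: at the moment we extract $\sC_{i+1}$, Scholium \ref{simplifiedcontainment-minimal condition} forces $\sC_{i+1}\rightarrow\sC_j$ or $\sC_{i+1}\sim\sC_j$ for every earlier $\sC_j$, because each $\sC_j$ survives as an element whose join lies below the remaining $n$-bracketing. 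Applying $\Gamma$ transports this decomposition back to $\tau(\sB)$.

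The main obstacle is simpliciality together with smoothness, which I would handle simultaneously by double induction on $n$ and on $|Y|$. Pick a $\rightarrow$-minimal $\sC\in Y$; by Lemma \ref{collisionminimalchar} we may take a minimal collision, so $\cT/\sC$ has strictly fewer vertices than $\cT$. Choose a compatible shuffle $\sigma$ for $\sC$ and apply $P_\sigma^T$, which is unimodular by Lemma \ref{unimodperm}. By Lemma \ref{permutationtransformationlemma1}, $P_\sigma^T(\rho(\sC)-{\bf 1})+{\bf 1}$ is a 0-1 vector supported exactly on the coordinates $(k,i)$ where $u^k_{\sigma(i)}$ and $u^k_{\sigma(i+1)}$ lie in a common essential bracket of $\sC$. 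By Lemmas \ref{contractionlemma} and \ref{refinedcontractionlemma}, for every other $\sC'\in Y$ the vector $P_\sigma^T(\rho(\sC')-\rho(\sC))$ (if $\sC\rightarrow\sC'$) or $P_\sigma^T(\rho(\sC'))$ (if $\sC\sim\sC'$) is supported on the complementary set of coordinates and, under the identification with $\mathbb{R}^{|V(\cT/\sC)|-n}$, becomes a sum of velocity rays of $\cF(\cT/\sC)$. After a unimodular change of coordinates the generators of $\tau(Y)$ split as a direct sum of $P_\sigma^T(\rho(\sC))$ (in the newly created coordinate block, where it is a standard basis vector of an appropriate sublattice) and the images of the remaining $\rho(\sC')$ (in the complementary block, forming a nested collection in $\ccD(\cT/\sC)$). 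By the inductive hypothesis the latter extend to a lattice basis of $\cT/\sC$, and together with the former we obtain a lattice basis of $\mathbb{Z}^m/\langle{\bf 1}\rangle$; this simultaneously yields linear independence (simpliciality) and that each maximal cone is $\mathbb{Z}$-unimodular (smoothness). The delicate point is that when $\sC$ and $\sC'$ share a fusion bracket, the quotient $\sC'/\sC$ may decompose as a disjoint union of collisions via Lemma \ref{quotientofcollisionbycollision}; one must check that these disjoint pieces inherit a nested structure in $\cT/\sC$, which follows from Lemma \ref{preimagelemma} together with the nestedness of $Y$ in $\cT$.

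Finally, flagness is immediate from Definition \ref{defflagcomplex}: the defining condition of $\ccD$ in Definition \ref{nestedcollisionsdef} is a pairwise constraint (every pair of collisions is either $\sim$ or $\rightarrow$-comparable), so any vertex set whose pairs all lie in $\ccD$ already belongs to $\ccD$. Combining these four steps with Lemma \ref{quotientrefinement} (which ensures that the refinement passes to the quotient by $\langle{\bf 1}\rangle_\mathbb{R}$) gives the desired theorem.
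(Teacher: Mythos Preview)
Your overall architecture matches the paper's: work on the metric side via $\Gamma$, obtain flagness from the pairwise nature of Definition~\ref{nestedcollisionsdef} (this is Proposition~\ref{flagnessresult}), and prove smoothness by an induction using $P_\sigma^T$ together with Lemmas~\ref{contractionlemma} and~\ref{refinedcontractionlemma} (this is Lemma~\ref{smooth}). Two genuine gaps remain.

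First, your greedy decomposition only gives \emph{existence} of a nested expression for each $\ell_\sB$; you never establish \emph{uniqueness}. Without uniqueness you have not shown that distinct cones $\tau(Y_1),\tau(Y_2)$ meet in a common face, so $\overline\cF(\ccD)$ is not yet known to be a fan---only a covering family of simplicial cones. The paper proves uniqueness directly (Lemma~\ref{uniqueexpress}) and uses it to obtain $\tau^\met(Y_1)\cap\tau^\met(Y_2)=\tau^\met(Y_1\cap Y_2)$ (Lemma~\ref{triangulatedmetriccomplexresult}). Incidentally, in your existence argument the arrow goes the other way: since $\sC_j$ is containment-minimal in $\sB_{j-1}\geq\sB_i$ and $\sC_{i+1}\leq\sB_i\leq\sB_{j-1}$, Lemma~\ref{containmentmintocollisionminlem} yields $\sC_j\rightarrow\sC_{i+1}$ or $\sC_j\sim\sC_{i+1}$; your claim that ``$\sC_j$ survives below the remaining $n$-bracketing'' is false once $t_j\ell(\sC_j)$ has been subtracted.

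Second, your smoothness induction on $|Y|$ does not work as stated. For non-maximal $Y$ a $\rightarrow$-minimal element of $Y$ need not be a minimal collision in the sense of Definition~\ref{minimalcollisiondef} (Lemma~\ref{collisionminimalchar} does not help here), so the ``fiber block'' may have dimension larger than one and your claimed standard-basis-vector description fails. More seriously, when several $\sC'\in Y$ share the fusion bracket of $\sC$, replacing the disjoint-union quotients $\sC'/\sC$ by a nested family of the \emph{same cardinality} in $\cT/\sC$ is exactly the content of Lemma~\ref{connstructingnestedcollisionsinthequotient}, whose hypothesis requires that subfamily to be maximal with that fusion bracket; Lemma~\ref{preimagelemma} only lifts individual collisions and does not control cardinality. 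The paper avoids both problems by restricting to maximal $Y$ from the outset (smoothness of faces then follows from Lemma~\ref{smoothchamber}), setting up the matrix $M_\tau$ of generators together with ${\bf 1}$, performing column subtractions and multiplication by $P_\sigma^T$, and reducing by a single Laplace expansion to $\cT/\sC$.
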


Our choice of the word \emph{canonical} in Theorem \ref{triangulationtheorem} is intended to communicate that the definition of the triangulated velocity fan requires no input data beyond $\cT$.
We emphasize that  ${\overline \cF}(\ccD)$ and ${\overline \cF}(\cT)$ have the same set of rays which, as we know, correspond to the collisions.
At the level of toric varieties, this says that the toric variety of ${\overline \cF}(\ccD)$ defines a crepant resolution of singularities for the toric variety of ${\overline \cF}(\cT)$.  We note that a fan can only be considered simplicial, furthermore flag or smooth, if it is pointed, thus Theorem \ref{triangulationtheorem} would not make sense if stated for the nonreduced velocity and triangulated velocity fans.

\subsection{The collision mixed graph}

\

In this section we establish that the the triangulated $n$-associahedron is flag by reinterpreting this simplicial complex as the clique complex of a graph.
We note that the flagness of $\ccD$ does not yet immediately follow from Lemma \ref{flagcondition} because of the order required in Definition \ref{nestedcollisionsdef}.

\begin{definition}
A \emph{mixed graph} is a graph in which every edge is either directed or undirected.
\null\hfill$\triangle$
\end{definition}

We will introduce a mixed graph on $\fX(\cT)$.

\begin{definition}
Given a rooted plane tree $\cT$, we define the \emph{collision mixed graph} $G_\cT$ with vertex set $V(G_\cT)=\fX(\cT)$.
Let $\sC_1,\sC_2 \in \fX(\cT)$ with $\sC_1 \neq \sC_2$, then we have a directed edge $(\sC_1,\sC_2)  \in E(G_\cT)$ if $\sC_1 \rightarrow \sC_2$ and an undirected edge $\{\sC_1,\sC_2\} \in E(G_\cT)$ if $\sC_1 \sim \sC_2$.
\null\hfill$\triangle$
\end{definition}

\begin{definition}
Let $G_\cT$ be the collision mixed graph associated to the rooted plane tree $\cT$.
We define the \emph{undirected collision graph} to be the undirected graph, written $G^{\circ}_\cT$, obtained from $G_\cT$ by keeping the undirected edges, and replacing the directed edges with undirected edges.
\null\hfill$\triangle$
\end{definition}

\begin{definition}
Let $G$ be an undirected simple graph.
The \emph{clique complex of $G$}, denoted $\Delta(G)$, is the abstract simplicial complex with vertices indexed by vertices of $G$ and a face for every complete subgraph, i.e.\ clique, in $G$.
\null\hfill$\triangle$
\end{definition}

The following lemma is standard and straightforward to verify.

\begin{lemma}\label{flagclique}
An abstract simplicial complex $\Delta$ is a flag simplicial complex if and only if there exists a graph $G$ such that $\Delta$ is the clique complex of $G$.
\end{lemma}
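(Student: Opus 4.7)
The plan is to prove both implications by exhibiting the natural correspondence between $\Delta$ and its 1-skeleton.

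For the backward direction, I would start by assuming $\Delta = \Delta(G)$ for some graph $G$, and verify flagness directly from the definition. Fix $Y \subseteq V$ such that $\{u_1, u_2\} \in \Delta$ for every pair $u_1, u_2 \in Y$. Since $\Delta = \Delta(G)$, each such pair is an edge of $G$, so $Y$ is a clique of $G$, and therefore $Y \in \Delta(G) = \Delta$. This direction is essentially a tautology once the definitions are unpacked.

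For the forward direction, assume $\Delta$ is flag. I would define $G$ to be the 1-skeleton of $\Delta$: the vertices of $G$ are the vertices of $\Delta$, and $\{u_1, u_2\}$ is an edge of $G$ precisely when $\{u_1, u_2\} \in \Delta$. I would then argue $\Delta = \Delta(G)$ by showing containment in both directions. For $\Delta \subseteq \Delta(G)$: if $Y \in \Delta$, then since $\Delta$ is a simplicial complex, every pair $\{u_1, u_2\} \subseteq Y$ belongs to $\Delta$, hence is an edge of $G$; so $Y$ is a clique of $G$ and $Y \in \Delta(G)$. For $\Delta(G) \subseteq \Delta$: if $Y \in \Delta(G)$, then every pair in $Y$ is an edge of $G$, so every pair $\{u_1, u_2\} \subseteq Y$ lies in $\Delta$, and flagness of $\Delta$ gives $Y \in \Delta$.

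There is no real obstacle here; the lemma is a structural reformulation rather than a deep statement, and each step follows immediately from the definitions of flag simplicial complex and clique complex. The only subtlety worth flagging is that one must include all singletons in $\Delta(G)$, which is consistent with the paper's convention that an abstract simplicial complex contains all singletons of its vertex set.
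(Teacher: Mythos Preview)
Your proof is correct and is the standard argument. The paper does not actually give a proof of this lemma; it simply states that the result ``is standard and straightforward to verify,'' so your argument is exactly what would be expected to fill in that verification.
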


We will now investigate $\Delta(G^\circ_{\cT})$.

\begin{lemma}\label{transitive}
The directed part of the collision mixed graph is transitive: if $\sC_1, \sC_2, \sC_3 \in \wh \fX(\cT)$ such that $\sC_1 \rightarrow \sC_2$ and $\sC_2 \rightarrow \sC_3$, then $\sC_1 \rightarrow \sC_3$.
\end{lemma}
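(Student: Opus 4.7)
The plan is to verify the two clauses in the definition of $\sC_1 \to \sC_3$: essential bracket containment, and compatibility of $\sC_1$ and $\sC_3$. First I would dispense with the boundary cases. The extensions of $\to$ in Definition \ref{collisionrelations} only put $\sB_\min$ on the right-hand side, so the hypotheses force $\sC_1,\sC_2\in\fX(\cT)$; if $\sC_3=\sB_\min$ the conclusion holds by convention. So I may assume $\sC_1,\sC_2,\sC_3\in\fX(\cT)$. The essential bracket containment condition is then immediate: for any essential $A_1\in\sC_1^k$, I use $\sC_1\to\sC_2$ to produce an essential $A_2\in\sC_2^k$ with $A_1\subseteq A_2$, and then $\sC_2\to\sC_3$ to produce an essential $A_3\in\sC_3^k$ with $A_2\subseteq A_3$, so $A_1\subseteq A_3$.

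The substantive part is compatibility. For the \textsc{(nested)} condition on $\sC_1\cup\sC_3$, I would take $A\in\sC_1^k$ and $A'\in\sC_3^k$ with $A^k\cap(A')^k\neq\emptyset$. If either is a maximum or singleton bracket, containment follows immediately from the structure of singleton and maximum brackets. When both are nontrivial (hence essential, by the remark following Definition \ref{essentialbrackets}), I would invoke an essential $A_2\in\sC_2^k$ with $A\subseteq A_2$, so that $(A_2)^k\cap(A')^k\neq\emptyset$, and apply the \textsc{(nested)} condition for the compatible pair $\sC_2,\sC_3$ to compare $A_2$ and $A'$. The case $A_2\subseteq A'$ gives $A\subseteq A'$ at once; the remaining case $A'\subseteq A_2$ requires more care, and this is where I expect the main obstacle to lie. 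The cleanest way I see through it is to use Proposition \ref{prop:alpha_is_bijective} to reinterpret $\sC_1\to\sC_2$ (resp.\ $\sC_2\to\sC_3$) as a factorization of collision maps: by Lemma \ref{whenthequotientofacollisionisacollision} and Lemma \ref{quotientofcollisionbycollision}, when the fusion brackets differ, $\sC_2/\sC_1$ is a genuine collision of $\cT/\sC_1$, and otherwise $\sC_2/\sC_1$ is a disjoint union of collisions; in either event the tree quotients compose, $\cT\to\cT/\sC_1\to\cT/\sC_2\to\cT/\sC_3$, and the essential bracket structure of $\sC_3$ restricted to $A_2$ pulls back through the collision map $\psi^\cT_{\cT/\sC_2}$ into the essential bracket structure of $\sC_1$ inside $A_2$. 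Tracing through this pullback forces $A$ and $A'$ to be comparable in $\subseteq$. For the \textsc{(partition)} condition, the singletons contained in $\sC_1\cup\sC_3$ already cover at each level, so only the second clause of \textsc{(partition)} needs checking, which follows by the same essential-bracket-pullback argument.

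Finally, for the height partial order, I would use Lemma \ref{heightpartialorderlemma} to reduce to the extended height partial order on the singleton brackets of $\sC_1\cup\sC_3$. Given two such singletons $A(u_1),A(u_2)$ sitting above a common parent, their comparison is determined by how they sit inside essential brackets of $\sC_1$ and $\sC_3$; if both brackets come from a single $\sC_i$, its own height partial order decides the comparison, and otherwise I would bridge the comparison through the essential bracket $A_2\in\sC_2$ guaranteed by the two-step containment, using the third bullet of \textsc{(height partial orders)} in Definition \ref{def:n-bracketings} to transport the height comparison from $\sC_1\cup\sC_2$ into $\sC_2\cup\sC_3$ and back. Coherence of these comparisons is then automatic. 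Putting the three pieces together yields $\sC_1\to\sC_3$. The principal obstacle, as noted, is the nested-condition subcase $A'\subsetneq A_2$; circumventing naive iteration by passing through collision maps is what makes the argument terminate cleanly in a tree of finite depth.
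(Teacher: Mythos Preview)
The paper's proof is a single sentence: ``This is clear from Definition~\ref{collisionrelations}.'' Your plan is far more elaborate than this, and takes a genuinely different route. The transitivity-of-$\subseteq$ step for essential brackets is exactly right and is essentially all the paper intends. Your extended treatment of compatibility, and in particular the ``problematic subcase $A'\subsetneq A_2$'' together with the detour through collision maps, Proposition~\ref{prop:alpha_is_bijective}, and Lemmas~\ref{whenthequotientofacollisionisacollision} and~\ref{quotientofcollisionbycollision}, is substantial overkill.

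The short route implicit in the paper uses the alternative characterization in the Remark following Definition~\ref{collisionrelations}. Along a chain $\sC_1\to\sC_2\to\sC_3$ the fusion-bracket levels are monotone (the fusion bracket of $\sC_i$ is essential, hence lands in an essential bracket of $\sC_{i+1}$, which weakly projects to the latter's fusion bracket). Hence at every level $k$ where $\sC_3$ has a nontrivial $k$-bracket, either $\sC_2$ already does---and then the two-step containment lands each nontrivial $k$-bracket of $\sC_1$ inside a \emph{nontrivial} $k$-bracket of $\sC_3$---or $\sC_2$ has none, in which case $\sC_1$ has none either. Compatibility is then direct. For \textsc{(nested)}: given nontrivial $A\in\sC_1^k$, $A'\in\sC_3^k$ with $A^k\cap(A')^k\neq\emptyset$, choose a nontrivial $A_3\in\sC_3^k$ with $A\subseteq A_3$; then $A_3$ and $A'$ are nontrivial $k$-brackets of the collision $\sC_3$ with intersecting $k$-parts, hence equal by Proposition~\ref{collisioncharacterization}(2), so $A\subseteq A'$. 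The subcase you flagged never occurs with this bookkeeping. The height partial order on $\sC_1\cup\sC_3$ is then determined as in the proof of Proposition~\ref{n-bracketingunionlemma}, using that every nontrivial bracket of $\sC_1$ sits inside one of $\sC_3$. No tree-quotient machinery is needed.
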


\begin{proof}
This is clear from Definition \ref{collisionrelations}.
\end{proof}

\begin{lemma}\label{acyclic}
The directed part of the collision mixed graph is acyclic.
\end{lemma}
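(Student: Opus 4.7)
By Lemma \ref{transitive}, any directed cycle $\sC_1 \to \sC_2 \to \cdots \to \sC_r \to \sC_1$ (with $r \geq 2$) forces $\sC_1 \to \sC_2$ and $\sC_2 \to \sC_1$ via transitivity, so it suffices to show that $\to$ is antisymmetric: whenever $\sC_1, \sC_2 \in \fX(\cT)$ satisfy $\sC_1 \to \sC_2$ and $\sC_2 \to \sC_1$, we have $\sC_1 = \sC_2$.

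I plan to argue antisymmetry using the collision-map bijection of Proposition \ref{prop:alpha_is_bijective}. Write $\psi_i\colon \cT \to \cT/\sC_i$ for the collision map of $\sC_i$; the fibers of $\psi_i$ record precisely which vertices of $\cT$ are glued by $\sC_i$, and by the construction in Proposition \ref{prop:alpha_is_bijective} this partition of $V(\cT)$ is determined level-by-level by the (minimal) essential bracket of $\sC_i$ containing each vertex. The hypothesis $\sC_1 \to \sC_2$ asserts that every essential $k$-bracket of $\sC_1$ lies inside an essential $k$-bracket of $\sC_2$, which forces the fibers of $\psi_1$ to refine those of $\psi_2$. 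Compatibility of $\sC_1$ and $\sC_2$ (part of the definition of $\to$), together with the uniqueness of the induced height partial order on $\sC_1 \cup \sC_2$ guaranteed by Proposition \ref{n-bracketingunionlemma}, then promotes this set-level refinement to a surjective morphism of rooted plane trees $\phi_{12}\colon \cT/\sC_1 \to \cT/\sC_2$ satisfying $\psi_2 = \phi_{12} \circ \psi_1$.

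Symmetrically, $\sC_2 \to \sC_1$ yields $\phi_{21}\colon \cT/\sC_2 \to \cT/\sC_1$ with $\psi_1 = \phi_{21} \circ \psi_2$. Composing gives $\psi_1 = (\phi_{21} \circ \phi_{12}) \circ \psi_1$, and surjectivity of $\psi_1$ forces $\phi_{21} \circ \phi_{12} = \id_{\cT/\sC_1}$; symmetrically $\phi_{12} \circ \phi_{21} = \id_{\cT/\sC_2}$. Hence $\phi_{12}, \phi_{21}$ are mutually inverse isomorphisms of rooted plane trees, so $\psi_1$ and $\psi_2$ coincide up to this canonical isomorphism of their targets. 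Proposition \ref{prop:alpha_is_bijective} then yields $\sC_1 = \sC_2$, completing the proof.

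\textbf{Main obstacle.} The delicate step is to verify that the essentials-containment data actually produces a morphism of rooted plane trees $\phi_{12}$, not merely a set map. Well-definedness as a set map (fibers into fibers) requires careful bookkeeping of essentials in the presence of maximum brackets that are themselves essential --- as can occur when the fusion bracket equals the maximum $1$-bracket --- where the essential structure is nested rather than strictly partitioned. Preservation of the plane order on children hinges on the compatibility of the height partial orders in $\sC_1 \cup \sC_2$, again supplied by Proposition \ref{n-bracketingunionlemma}.
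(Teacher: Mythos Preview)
Your reduction via Lemma~\ref{transitive} to the antisymmetry of $\to$ is exactly the paper's argument. The paper's proof then simply asserts as a fact that no distinct pair $\sC,\sC'$ can satisfy both $\sC\to\sC'$ and $\sC'\to\sC$, without further justification; you supply one.

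Your collision-map argument is correct. The fiber-refinement claim follows from Definition~\ref{collisionrelations} together with the description of $\psi_i$-fibers in Proposition~\ref{prop:alpha_is_bijective}, and compatibility (via Proposition~\ref{n-bracketingunionlemma}) does indeed upgrade the set map $\phi_{12}$ to a morphism of rooted plane trees. Once $\phi_{12},\phi_{21}$ are mutually inverse, the fibers and induced orders of $\psi_1,\psi_2$ coincide, so $\alpha(\psi_1)=\alpha(\psi_2)$ by the construction in Proposition~\ref{treemap}.

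A more direct route --- likely what the paper has in mind --- is to observe that mutual containment of essential (equivalently, nontrivial) $k$-brackets, combined with condition~(2) of Proposition~\ref{collisioncharacterization} forbidding strictly nested nontrivial brackets in a collision, forces the nontrivial brackets of $\sC_1$ and $\sC_2$ to agree at every level; compatibility then determines the height partial order uniquely. Your factorization argument trades this bracket-chasing for a cleaner categorical statement, at the cost of verifying that $\phi_{12}$ respects the plane structure. Both are valid; yours makes the role of the collision-map bijection explicit.
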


\begin{proof}
This is a corollary of Lemma \ref{transitive} and the fact that it is impossible to have a pair of distinct collisions $\sC,\sC'$ such that $\sC \rightarrow \sC'$ and $\sC'\rightarrow \sC$.
\end{proof}

\begin{proposition}\label{flagnessresult}
For a rooted plane tree $\cT$, we have
\begin{align}
\ccD=\Delta(G^\circ_{\cT}),
\end{align}
hence $\ccD$ is a flag simplicial complex.
\end{proposition}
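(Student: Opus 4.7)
The plan is to verify both containments in the set equality $\ccD = \Delta(G^\circ_\cT)$, and then invoke Lemma \ref{flagclique} to conclude flagness. The easy direction is $\ccD \subseteq \Delta(G^\circ_\cT)$: given a nested collection $\{\sC_1,\ldots,\sC_k\}$ ordered as in Definition \ref{nestedcollisionsdef}, every pair $\sC_i,\sC_j$ satisfies $\sC_i\sim \sC_j$ or $\sC_i \rightarrow \sC_j$, which in either case is an edge of $G^\circ_\cT$, so $\{\sC_1,\ldots,\sC_k\}$ spans a clique.

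For the nontrivial direction $\Delta(G^\circ_\cT)\subseteq \ccD$, suppose $Y=\{\sC_1,\ldots,\sC_k\}$ is a clique in $G^\circ_\cT$. I need to produce an ordering of $Y$ witnessing the conditions of Definition \ref{nestedcollisionsdef}. The plan is to look at the induced sub-mixed-graph of $G_\cT$ on $Y$ and extract its directed part $D_Y$. By Lemma \ref{acyclic}, $D_Y$ is acyclic, so it admits a topological ordering $\sC_{i_1},\sC_{i_2},\ldots,\sC_{i_k}$ in which every directed edge $\sC_{i_a}\rightarrow \sC_{i_b}$ has $a<b$. For $a<b$, the pair $\{\sC_{i_a},\sC_{i_b}\}$ is an edge of $G^\circ_\cT$, so in $G_\cT$ it is either undirected ($\sim$) or directed; if directed, the topological order forces $\sC_{i_a}\rightarrow \sC_{i_b}$. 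This is precisely the condition required by Definition \ref{nestedcollisionsdef}, so $Y\in \ccD$.

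Having established the equality $\ccD=\Delta(G^\circ_\cT)$, the flagness of $\ccD$ follows immediately from Lemma \ref{flagclique}, which states that a simplicial complex is flag if and only if it is the clique complex of some graph.

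I do not anticipate any serious obstacle here: Lemmas \ref{transitive} and \ref{acyclic} have already been established, and topological sorting of a finite DAG is elementary. The only mild subtlety is noting that transitivity (Lemma \ref{transitive}) is not strictly needed for producing the ordering but is conceptually reassuring; acyclicity alone suffices. Thus the proof reduces to a short combinatorial argument chaining the two preceding lemmas with Lemma \ref{flagclique}.
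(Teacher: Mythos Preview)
Your proposal is correct and follows essentially the same approach as the paper: both verify the easy inclusion directly, and for the reverse inclusion both use acyclicity of the directed part (the paper also cites transitivity to phrase it as taking a linear extension of a poset, while you phrase it as a topological sort of a DAG, which is the same thing), then invoke Lemma \ref{flagclique}. Your observation that transitivity is not strictly needed is accurate.
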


\begin{proof}
For $Y \in \Delta_\cT$, it is clear that $Y \in \Delta(G^\circ_{\cT})$.  For the reverse inclusion, let $Y \in \Delta(G^\circ_{\cT})$.
We wish to prove that there exists a total order on the collisions in $Y$ so that they satisfy the conditions of being nested.
By Lemmas \ref{transitive} and \ref{acyclic} we may view the directed part of $Y$ as the set of relations in a poset for which we can take a linear extension, and this linear extension is the desired total order.  
\end{proof}

\subsection{Tree combinatorics of \texorpdfstring{$\ccD$}{D}}

\

In this section we will describe how the faces of $\ccD$ (hence the conical sets in ${\overline{ \cK}^{\met}}(\ccD)$, and the cones in ${\overline \cF}(\ccD)$) can be identified with certain rooted trees.
We remark that the main result of this section, Theorem \ref{treecombtheorem}, also follows from the results of \S \ref{nestohedralatlassection} when combined with Postnikov's theory of $\mathbb{B}$-trees, although we find this approach awkwardly indirect, and unnecessarily local, as it must first pass through the $\omega_{\sigma}$ maps with $\sigma$ varying over all $\cT$-shuffles.
Indeed the results of this subsection were obtained before the results of \S \ref{nestohedralatlassection}, and inspired the discovery of those results.
We first define the extended collision mixed graph.

\begin{definition}\label{extended}
The \emph{extended collision mixed graph} $\wh{G}_\cT$ is obtained from the collision mixed graph by adding a vertex corresponding to the minimum bracketing $\sB_\min$ with a directed edge $(\sC,\sB_\min)$ for each $\sC \in \fX(\cT)$.
The \emph{extended undirected collision graph} $\wh{G}^{\circ}_\cT$ is the undirected graph obtained from the extended collision mixed graph by undirecting the directed edges.
The \emph{extended triangulated $n$-associahedron} is defined as $\wh{\Delta}_\cT=\Delta(\wh{G}^{\circ}_\cT)$.
\end{definition}

Note that the above definition is consistent with our earlier definition that for each $\sC \in \fX(\cT)$, $\sC \rightarrow \sB_\min$ and $\sC \nsim \sB_\min$.  The purpose of Definition \ref{extended} is that $\sB_\min$ will conveniently play the role of the root vertex for certain trees, which otherwise would be forests.
This paradigm will be familiar to experts in associahedra, nestohedra, or  nested set complexes.

\begin{lemma}\label{outdeg1}
Let $Y\in \wh{\Delta}_\cT$, and let $\sC_1, \sC_2, \sC_3 \in Y$ such that $\sC_1 \rightarrow \sC_2$ and $\sC_1 \rightarrow \sC_3$, then either $\sC_2 \rightarrow \sC_3$ or $\sC_3 \rightarrow \sC_2$
\end{lemma}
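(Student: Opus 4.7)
The plan is to argue by contradiction on the remaining alternative, namely to rule out $\sC_2 \sim \sC_3$; once this is excluded, the fact that $\{\sC_2,\sC_3\}$ is an edge of the clique $Y$ in $\wh{G}^\circ_\cT$ will force $\sC_2 \rightarrow \sC_3$ or $\sC_3\rightarrow\sC_2$ by Definition \ref{extended}. Trivial cases (any $\sC_i = \sB_\min$, or $\sC_2 = \sC_3$) can be disposed of immediately using the convention $\sC \rightarrow \sB_\min$ for all $\sC \in \fX(\cT)$, so I will assume $\sC_1,\sC_2,\sC_3 \in \fX(\cT)$ are distinct.

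First I would exploit the hypothesis $\sC_1 \rightarrow \sC_i$ to produce a common witness in $\cT$. Since $\sC_1$ is a bona fide collision, it contains a nontrivial bracket $A$ at some depth $k$, with $A^k \neq \emptyset$. By Definition \ref{collisionrelations}, there exist essential brackets $B_2 \in \sC_2^k$ and $B_3 \in \sC_3^k$ with $A \subseteq B_2$ and $A \subseteq B_3$. Hence
\begin{align}
\emptyset \neq A^k \subseteq B_2^k \cap B_3^k.
\end{align}

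Next I would translate this intersection into a statement about the fusion brackets $F_2$ and $F_3$ of $\sC_2$ and $\sC_3$. By Definition \ref{essentialbrackets} together with Lemma \ref{lem:fusion_bracket_for_collision}, every essential bracket of $\sC_i$ weakly projects to $F_i$, so every vertex of $B_i^k$ is a weak descendant of some vertex of $F_i$. Therefore $B_2^k \cap B_3^k \subseteq \overline{D}(F_2)\cap \overline{D}(F_3)$, and this intersection is nonempty, contradicting $\sC_2 \sim \sC_3$ by Definition \ref{collisionrelationsdisjoint}.

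The argument is short and the only subtlety I anticipate is that the essential brackets $B_2,B_3$ need not themselves be nontrivial when $\sC_2$ or $\sC_3$ is a type 3 collision (so that some essential brackets are maximum brackets). I would verify that the containment $B_i^k \subseteq \overline{D}(F_i)$ still holds in that case: indeed if $B_i$ is a maximum $k$-bracket, then $\sC_i$ is type 3 and $F_i$ is the maximum $1$-bracket, whose weak descendants exhaust $V(\cT)$, so the inclusion is automatic. This handles all cases and completes the plan.
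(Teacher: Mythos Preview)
Your proposal is correct and follows essentially the same approach as the paper's proof: both reduce to ruling out $\sC_2 \sim \sC_3$, after which membership of $\{\sC_2,\sC_3\}$ in the clique $Y$ forces one of the directed relations. The paper's proof is a single sentence (``Because $\sC_1 \rightarrow \sC_2$ and $\sC_1 \rightarrow \sC_3$, it is not possible that $\sC_2\sim \sC_3$''), whereas you spell out explicitly why a nontrivial bracket of $\sC_1$ forces $\overline{D}(F_2)\cap\overline{D}(F_3)\neq\emptyset$, including the type~3 edge case; this extra detail is sound and fills in what the paper leaves to the reader.
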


\begin{proof}
Because $\sC_2, \sC_3 \in Y$, it must be that $\sC_2\sim \sC_3$, $\sC_2 \rightarrow \sC_3$, or $\sC_3 \rightarrow \sC_2$.
Because $\sC_1 \rightarrow \sC_2$ and $\sC_1 \rightarrow \sC_3$, it is not possible that $\sC_2\sim \sC_3$, and the statement follows. 
\end{proof}

We will now discuss cover relations in faces of an extended triangulated $n$-associahedron.

\begin{definition}
\label{coverdef}
Let $Y \in \wh{\Delta}_\cT$, and let $\sC_1,\sC_2 \in Y$ such that $(\sC_1,\sC_2)$ is a directed edge in the collision mixed graph.
We say that \emph{$\sC_2$ covers $\sC_1$ in $Y$} if there exists no $\sC_3 \in Y$ for which there is a directed path $\sC_1\rightarrow\sC_3\rightarrow\sC_2$ in $Y$.
\null\hfill$\triangle$
\end{definition}

\begin{lemma}\label{outdegreelem}
Let $Y$ be a maximal simplex in $\wh{\Delta}_\cT$, then for any collision $\sC \in Y$, there are at most two collisions in $Y$ which are covered by $\sC$ in $Y$.
\end{lemma}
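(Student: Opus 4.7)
The plan is to argue by contradiction: suppose $\sC \in Y$ covers three distinct collisions $\sC_1, \sC_2, \sC_3 \in Y$, and construct an auxiliary collision $\sC^*$ that is compatible with every element of $Y$ but strictly between some $\sC_i$ and $\sC$; then $Y \cup \{\sC^*\}$ is a strictly larger nested collection, contradicting the maximality of $Y$.

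First I would verify that the three covered collisions are pairwise $\sim$. Indeed, being in $Y$ they are pairwise related by either $\sim$ or $\rightarrow$; an arrow $\sC_i \rightarrow \sC_j$ would yield $\sC_i \rightarrow \sC_j \rightarrow \sC$, contradicting the cover relation. Next I would project the weak descendants of each $\sC_i$ to depth $k$ (where $k$ is the depth of $\sC$'s fusion bracket $A$), obtaining disjoint subsets $P_i \subseteq A^k$ which are either consecutive subsets of size at least $2$ (if $\sC_i$'s fusion is at depth $k$) or single vertices (if deeper). Since the $P_i$ are disjoint and lie inside the consecutive set $A^k$, they inherit a linear order; after reindexing, assume $P_1 < P_2 < P_3$ from left to right.

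To construct $\sC^*$, let $A^*$ be the minimal consecutive subset of $A^k$ containing $P_1 \cup P_2$; by the ordering $A^*$ is disjoint from $P_3$. Declare the fusion bracket of $\sC^*$ to be $A^*$, and at deeper depths include the essentials of $\sC_1$ and $\sC_2$, supplemented by the essentials of every $\sC'' \in Y$ with $\sC'' \rightarrow \sC$ whose depth-$k$ extent is contained in $A^{*k}$. One checks via the {\sc (nested)} and {\sc (partition)} conditions, together with Lemma \ref{growing} (compatibility of the contributing $\sC''$'s), that this defines a valid collision. By construction, $\sC_1, \sC_2 \rightarrow \sC^* \rightarrow \sC$ strictly, and $\sC^* \neq \sC$ because $P_3 \subseteq A^k \setminus A^{*k}$.

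Finally I would verify compatibility of $\sC^*$ with an arbitrary $\sC'' \in Y$ by cases: if $\sC \rightarrow \sC''$, transitivity gives $\sC^* \rightarrow \sC''$; if $\sC \sim \sC''$, then $\sC^* \sim \sC''$ since $\sC^*$'s extent is contained in $\sC$'s; if $\sC'' \rightarrow \sC$ strictly, then the cover relation forces $\sC''$ to not be intermediate for any $\sC_i$, and case analysis (depending on whether $\sC''$ refines some $\sC_i$, or lies inside $A^*$, or is disjoint from $A^*$) yields the desired relation via the absorbed essentials of $\sC^*$. The hard part is the final case, where $\sC'' \in Y$ has depth-$k$ extent inside the ``gap'' $A^{*k} \setminus (P_1 \cup P_2)$: here compatibility is not automatic and is exactly what forced the deliberate inclusion of such $\sC''$ in the deeper essential structure of $\sC^*$, and the main obstacle will be checking that this inclusion preserves the {\sc (nested)} and {\sc (partition)} axioms so that $\sC^*$ remains a genuine collision.
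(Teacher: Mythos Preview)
Your strategy—assume $\sC$ covers three pairwise-disjoint collisions and then exhibit an intermediate collision $\sC^*$ contradicting maximality—is exactly the idea behind the paper's proof. The paper, however, executes it via the contraction machinery: it simultaneously contracts all the covered collisions $\sC_1,\dots,\sC_s$ (they are disjoint and have fusion brackets distinct from that of $\sC$, so Lemma~\ref{whenthequotientofacollisionisacollision} applies iteratively) and observes that the image $\overline{\sC}$ must be a \emph{minimal} collision in the quotient—otherwise any $\overline{\sC'}\rightarrow\overline{\sC}$ lifts back (via Lemma~\ref{liftdisjointcollisions}) to a collision $\sC'$ nested with all of $Y$. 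The characterization of minimal collisions (Lemma~\ref{collisionminimalchar}) then forces the fusion of $\overline{\sC}$ to have exactly two depth-$k$ vertices, and these are precisely the images of the $\sC_i$, so $s\le 2$.

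Your direct construction has a real gap at the third step. You assert that the depth-$k$ projections $P_i\subseteq A^k$ are disjoint, but this can fail: when two of the $\sC_i$ have fusion brackets at depth strictly greater than $k$ whose root vertices share the same depth-$k$ ancestor $v\in A^k$, one gets $P_i=P_j=\{v\}$, and there is no way to separate them at level $k$ by a consecutive interval. In that situation your $A^*$ would collapse to a singleton and $\sC^*$ is undefined; fixing this requires recursing to a deeper level, which in effect rediscovers the contraction argument. Even in the good case where the $P_i$ are linearly ordered, your description of $\sC^*$ at depths greater than $k$ (``include the essentials of $\sC_1,\sC_2$, supplemented by \dots'') is not sufficient to guarantee the \textsc{(partition)} axiom over the gap $A^{*k}\setminus(P_1\cup P_2)$: you must specify brackets covering \emph{all} descendants of $A^*$, not just those touched by some $\sC''\in Y$, and you do not say where those brackets come from or why they avoid nesting with the absorbed essentials. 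The paper's passage to the quotient sidesteps all of this bookkeeping at once.
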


\begin{proof}
Let $\{\sC_i: 1\leq i \leq s\}$ be an enumeration of all of the collisions in $Y$ which are covered by $\sC$ in $Y$. Suppose that $s\geq 2$.
It is not possible to have $\sC_i$ and  $\sC_j$ such that, without loss of generality, $\sC_i\rightarrow \sC_j$, as $\sC$ would not cover $\sC_i$, so it must be that $\sC_i\sim \sC_j$ for all $1\leq i,j \leq s$.
Because  $\sC \neq  \sC_i$ for all $1\leq i \leq s$, there must exist a minimum integer $k$, which is a function of $i$, and a pair of $k$-brackets $A_i \in \sC_i^k$ and $A_i' \in \sC^k$ such that $A_i \subsetneq A_i'$.
Because $s\geq 2$ and the $\sC_i$ are disjoint, we know that the fusion bracket for $\sC$ is distinct from the fusion brackets of the $\sC_i$. 
Thus, by Lemma \ref{whenthequotientofacollisionisacollision}, we may simultaneously contract all of the collisions $\{\sC_i: 1\leq i \leq s\}$ and the image of $\sC$ will be a collision which we will denote $\overline{\sC}$.
After this contraction, for each $i$, we find that 
 ${\overline A_i}'$, the image of each $A'_i$, will be a $k$-bracket in $\overline{\sC}^k$ containing at least two vertices of depth $k$.
We claim that the collision $\overline{\sC}$, is a minimal collision in the resulting contracted $n$-associahedron.
Suppose not and let $\overline{\sC'}$ be a collision with $\overline{\sC'}\rightarrow \overline{\sC}$, then the preimage $\sC'$ of $\overline{\sC'}$, as guaranteed by iterated application of Lemma \ref{liftdisjointcollisions}, is a collision such that $\{\sC'\} \cup Y \in \Delta$ contradicting the maximality of $Y$.
Thus by Lemma \ref{collisionminimalchar} the fusion $k$-bracket $A$ for $\overline{\sC}$ contains two singleton $k$-brackets.
It must be then that ${\overline A_i}'=A$ for all $i$, we have $s=2$, and the two  singleton $k$-brackets associated to the two vertices in $A^k$ are the images of the $A_i$ with $i \in \{1,2\}$.
\end{proof}

\begin{definition}
An abstract simplicial complex $\Delta$ is \emph{pure} if each maximal simplex (facet) has the same number of vertices.
\null\hfill$\triangle$
\end{definition}

\begin{lemma}
\label{maxnumbernested}
The extended triangulated $n$-associahedron $\wh{\Delta}_\cT$ is a pure complex of dimension $m-1$.
\end{lemma}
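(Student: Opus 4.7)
The plan is to prove this by induction on $|V(\cT)|$, establishing that every maximal nested family $Y\subseteq\wh{\fX}(\cT)$ has cardinality $m$. Because $\sB_\min$ is adjacent to every other vertex in $\wh{G}^{\circ}_{\cT}$, it lies in every facet of $\wh{\Delta}_\cT$, so the claim reduces to showing $|Y\cap\fX(\cT)|=m-1$. The base case is when $\fX(\cT)=\emptyset$: then $m=1$ and $Y=\{\sB_\min\}$, giving a $0$-dimensional facet, as required.

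For the inductive step, I would pick any $\sC\in Y\cap\fX(\cT)$ that is $\rightarrow$-minimal in $Y$. The first key step is to show that $\sC$ must be a minimal collision in the sense of Definition \ref{minimalcollisiondef}. Suppose otherwise; then Lemma \ref{collisionminimalchar} provides distinct $\sC_1,\sC_2\in\fX(\cT)$ with $\sC_i\rightarrow\sC$. For any $\sC''\in Y\setminus\{\sC\}$, the nestedness of $Y$ together with the $\rightarrow$-minimality of $\sC$ in $Y$ forces $\sC\rightarrow\sC''$ or $\sC\sim\sC''$. In the first case, Lemma \ref{transitive} yields $\sC_1\rightarrow\sC''$; in the second, every non-trivial bracket of $\sC_1$ is contained in an essential bracket of $\sC$, whose vertex set is disjoint from every non-trivial bracket of $\sC''$, so $\sC_1\sim\sC''$. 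Thus $\{\sC_1\}\cup Y$ is nested, and maximality of $Y$ forces $\sC_1\in Y$, contradicting $\sC$'s $\rightarrow$-minimality.

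Having secured that $\sC$ is a minimal collision, I would contract along it. Setting $\bar\cT\coloneqq\cT/\sC$, Lemma \ref{collisionminimalchar} gives $|V(\bar\cT)|=|V(\cT)|-1$, so $m(\bar\cT)=m(\cT)-1$. I then construct a map $\phi:Y\setminus\{\sC\}\to\wh{\fX}(\bar\cT)$ by $\phi(\sB_\min)=\sB_\min$, $\phi(\sC')=\bar{\sC'}$ (the unchanged image) when $\sC'\sim\sC$, and $\phi(\sC')=\sC'/\sC$ when $\sC\rightarrow\sC'$. When $\sC$ and $\sC'$ have distinct fusion brackets, Lemma \ref{whenthequotientofacollisionisacollision} guarantees $\sC'/\sC\in\fX(\bar\cT)$, and Lemma \ref{projnestedisnested} shows the image is a nested family in $\wh{\Delta}_{\bar\cT}$. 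Maximality of $\phi(Y\setminus\{\sC\})$ there follows from Lemma \ref{preimagelemma}: any strict enlargement in $\bar\cT$ would lift to a collision in $\cT$ compatible with $Y$, contradicting the maximality of $Y$. The inductive hypothesis then produces $|\phi(Y\setminus\{\sC\})\cap\fX(\bar\cT)|=m(\bar\cT)-1$, giving $|Y\cap\fX(\cT)|=m(\cT)-1$ as desired.

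The main obstacle is the case in which $\sC$ and some $\sC'\in Y$ share a fusion bracket, so that Lemma \ref{quotientofcollisionbycollision} decomposes $\sC'/\sC$ as a join of $t\geq 2$ pairwise disjoint collisions $\sC_1,\dots,\sC_t\in\fX(\bar\cT)$. To handle this, I would invoke Lemma \ref{liftdisjointcollisions} to produce, for each component $\sC_r$, an individual lift $\tilde\sC_r\in\fX(\cT)$ with $\sC\rightarrow\tilde\sC_r$; the nesting argument above applied to each $\tilde\sC_r$ shows it is compatible with all of $Y$, so maximality of $Y$ forces $\tilde\sC_r\in Y$. The delicate bookkeeping --- showing that the presence of $\sC'$ in $Y$ is exactly compensated by the simultaneous presence of $\tilde\sC_1,\dots,\tilde\sC_t$ in $Y$ (whereas $\phi$ collapses them onto the single multi-component image), and using the at-most-binary tree structure of $Y$ furnished by Lemma \ref{outdegreelem} to tally correctly --- is the most technical portion of the proof, but yields the same final count $|Y|=m(\cT)$.
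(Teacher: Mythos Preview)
Your strategy is exactly the inductive approach the paper itself discusses in the paragraph preceding its proof of Lemma~\ref{maxnumbernested}: pick a $\rightarrow$-minimal $\sC\in Y$, contract, and induct on $|V(\cT)|$. The paper explicitly flags the obstacle you encounter --- that the image of $Y$ in $\fX(\cT/\sC)$ need not be a nested collection when some $\sC'\in Y$ shares the fusion bracket of $\sC$ --- and states that a correct fix requires the content of Lemma~\ref{connstructingnestedcollisionsinthequotient} (which shows that among the components $\sC_{i,1},\ldots,\sC_{i,s_i}$ of $\sC_i/\sC$ there is a \emph{unique} new one $\sC_{i,r_i}$, and that $\{\sC_{i,r_i}\}$ is again nested of the same cardinality). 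Your final paragraph gestures toward this but does not carry it out: lifting each component $\sC_r$ to $\tilde\sC_r\in\fX(\cT)$ and observing that maximality forces $\tilde\sC_r\in Y$ does not by itself give a bijection between $Y\setminus\{\sC\}$ and a maximal nested family in $\wh\fX(\cT/\sC)$, and the invocation of Lemma~\ref{outdegreelem} does not supply that bijection either. The ``delicate bookkeeping'' you allude to \emph{is} the proof --- without it, the shared-fusion-bracket case remains a genuine gap. One further slip: Lemma~\ref{projnestedisnested} concerns the truncation $\pi$, not the quotient by $\sC$, so it does not verify that $\phi$ preserves nestedness; that requires a separate (straightforward) check.

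The paper sidesteps all of this by giving a direct, non-inductive argument. It observes that for each $\sC_i\in Y$ there is a unique $k$-bracket $\wt A_i\in\sC_i^k$ whose set $\wt A_i^k$ is new relative to all $\sC_j$ with $j<i$ (this uses the proof of Lemma~\ref{outdegreelem}). The sets $\{\wt A_i^k\}$ at a fixed depth $k$ assemble into an ordinary $1$-bracketing of the $t_k+1$ vertices of $V^k(\cT)$, reordered compatibly with the extended height order of $\sB=\bigvee_i\sC_i$. Maximality of $Y$ forces each of these $1$-bracketings to be maximal, hence to contain exactly $t_k$ nonsingleton brackets, and summing gives $|Y|=\sum_{k=1}^n t_k=m$. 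This argument avoids contraction entirely and hence never faces the non-collision-quotient issue.
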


Let $Y=\{\sC_i :1\leq i \leq l\}$ be a maximal nested collection of extended collisions.
There are several ways to prove that $|Y|=m$.  
One natural way to attempt to prove this statement is to take a containment-minimal collision $\sC  \in Y$, contract $\sC$ and apply induction to the image of $Y$.  The immediate obstacle to this approach is that the image of $Y$ may not be a nested collection.
This subtlety is addressed in Lemma \ref{connstructingnestedcollisionsinthequotient}, which is later utilized for proving smoothness of the triangulated velocity fan.
One may use Lemma \ref{connstructingnestedcollisionsinthequotient} to canonically replace the image of $Y$ with a maximal nested set with the same cardinality, which by induction is equal to $|Y|-1$ thus proving the desired statement.
One may also derive Lemma \ref{maxnumbernested} from Proposition \ref{metrictriangulation} as we know that the dimension of $\overline{\cK}^{\met}(\cT)$ as an abstract fan is $m-1$.
We give another proof here which highlights some combinatorics not discussed elsewhere in this paper (and avoids forward referencing). 

\begin{proof}(\ref{maxnumbernested})
By the proof of Lemma \ref{outdegreelem}, for each $\sC_i \in Y$, there exists a unique $k$-bracket ${\wt A}_i \in \sC_i^k$ such that for every $k$-bracket $A \in \sC_j^k$ for  $j<i$, ${\wt A}_i^k\neq A^k$.
Let $\sB$ be the join of the collisions in $Y$.
The set of all ${\wt A}_i^k$ can be interpreted as forming a ``1-bracketing'' of the set $V^k(\cT)$, with vertices reordered to be compatible with the extended height partial for $\sB$.
If $Y$ is maximal then each of these 1-bracketings is maximal.
We know that a maximal 1-bracketing on a set with $t_k+1$ elements must have $t_k$ nonsingleton brackets, thus the total number of ${\wt A}_i$ is equal to $\sum_{k=1}^n t_k = m$, which is the cardinality of $Y$.
\end{proof}

\begin{remark}
\label{binarytreecharacterization}
In the case of $n=2$, the perspective offered in the previous proof is closely related to the \emph{stable tree pairs} of the second author \cite{b:2-associahedra}.
\null\hfill$\triangle$
\end{remark}

\begin{definition}\label{qrootedtreedef}
Let $D$ be a directed graph and $q$ a vertex in $D$.
Then $D$ is a \emph{$q$-rooted tree} if $D$ is acyclic, each vertex other than $q$ has exactly one outgoing edge, and $q$ has no outgoing edges.
A $q$-rooted tree is \emph{binary} if each vertex has at most two incoming edges.
\null\hfill$\triangle$
\end{definition}

\begin{theorem}\label{treecombtheorem}
Let $Y$ be a simplex in $\ccD$ with $\sB_{\min} \in Y$, then the set of directed cover relations in $Y$ form a $q$-rooted tree $\cT_Y$ where $q = \sB_\min$.
Moreover, if $Y$ is maximal then $\cT_Y$ is a binary tree with precisely $m$ vertices.
\end{theorem}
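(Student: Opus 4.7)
The plan is to verify the three defining properties of a $q$-rooted tree from Definition \ref{qrootedtreedef} in order, with $q=\sB_\min$, and then dispatch the binary and cardinality claims for maximal $Y$ by direct invocation of the structural lemmas already proved. First, acyclicity of $\cT_Y$ is immediate: its edge set is a subset of the directed edges of $\wh G_\cT$ restricted to $Y$, and the full directed part is acyclic by Lemma \ref{acyclic}. Second, $\sB_\min$ has no outgoing edge in $\wh G_\cT$ by the convention that $\sC\rightarrow \sB_\min$ for every $\sC \in \fX(\cT)$ and never the reverse, so in particular $\sB_\min$ has no outgoing cover edge in $Y$. Third, every $\sC \in Y\setminus\{\sB_\min\}$ satisfies $\sC\rightarrow \sB_\min$, so the set of $\sC' \in Y$ with $\sC\rightarrow \sC'$ is nonempty, and hence at least one element of this set must be a cover of $\sC$ in $Y$ (any non-cover is witnessed by an intermediate that can be replaced, and the relation on finite $Y$ is well-founded by acyclicity).

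The content of the argument is the uniqueness of the outgoing cover edge. Suppose for contradiction that $\sC\in Y\setminus\{\sB_\min\}$ has two distinct outgoing cover edges $\sC\rightarrow\sC_1$ and $\sC\rightarrow\sC_2$ in $Y$. By Lemma \ref{outdeg1}, either $\sC_1\rightarrow\sC_2$ or $\sC_2\rightarrow\sC_1$; say $\sC_1\rightarrow\sC_2$. Then $\sC\rightarrow\sC_1\rightarrow\sC_2$ is a directed path in $Y$, which by Definition \ref{coverdef} disqualifies $\sC\rightarrow\sC_2$ from being a cover relation, a contradiction. The symmetric case is identical. Consequently every non-root vertex has exactly one outgoing cover edge, which together with the first two steps shows $\cT_Y$ is a $\sB_\min$-rooted tree in the sense of Definition \ref{qrootedtreedef}.

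For the second assertion, assume $Y$ is maximal. Binarity of $\cT_Y$ (each vertex having at most two incoming cover edges) is precisely Lemma \ref{outdegreelem}, whose statement bounds by two the number of collisions in $Y$ covered by a given $\sC\in Y$, and these are exactly the children of $\sC$ in $\cT_Y$. The cardinality claim follows from Lemma \ref{maxnumbernested}: the extended triangulated $n$-associahedron $\wh\Delta_\cT$ is pure of dimension $m-1$, so a maximal simplex contains exactly $m$ vertices, giving $|V(\cT_Y)|=m$. The only nontrivial step in the entire argument is the uniqueness of the outgoing cover edge, and it reduces cleanly to Lemma \ref{outdeg1}; everything else is a routine unpacking of definitions combined with the two lemmas about outdegree and purity.
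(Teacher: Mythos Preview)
Your proof is correct and follows essentially the same approach as the paper: acyclicity from Lemma \ref{acyclic}, existence and uniqueness of the outgoing cover edge from $\sC\rightarrow\sB_\min$ together with Lemma \ref{outdeg1}, binarity from Lemma \ref{outdegreelem}, and the vertex count from Lemma \ref{maxnumbernested}. Your treatment of the uniqueness step is in fact more explicit than the paper's, which simply asserts that Lemma \ref{outdeg1} gives outdegree exactly one without spelling out the cover-relation contradiction.
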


\begin{proof}
In order to prove that the cover relations $\cT_Y$ define a rooted tree, we observe that Lemma \ref{acyclic} implies the direct part of the the (extended) collision mixed graph is acyclic, and Lemma \ref{outdeg1} implies each vertex of $\cT_Y$ has out degree exactly one.
Each vertex in $\cT_Y$ other than $\sB_\min$ is incident to at least one outgoing edge, and $\sB_\min$ has no outgoing edges, hence $\sB_\min$ is the root of $\cT_Y$. 
 Assuming further that $Y$ is maximal, we see the statement that $\cT_Y$ is binary follows from Lemma \ref{outdegreelem}, and the statement that $\cT_Y$ has $m$ vertices is Lemma \ref{maxnumbernested}.
\end{proof}

\begin{figure}[ht]
\includegraphics[width=0.5\textwidth]{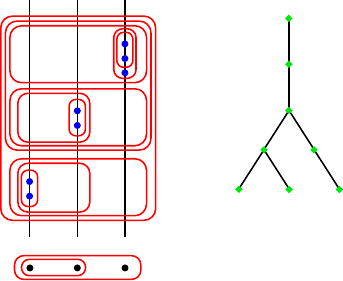}
\caption{
A maximal 2-bracketing, whose associated chamber in the velocity fan is already simplicial, and the associated rooted binary tree.
}
\label{binarytreeforbracketing}
\end{figure}

\subsection{\texorpdfstring{${\overline{ \cK}^{\met}}(\ccD)$ is a triangulation of $\overline{\cK}^{\met}(\cT)$}{KmetD is a triangulation of KmetT}}

\

In this subsection we prove Proposition \ref{metrictriangulation}, which states that ${\overline{ \cK}^{\met}}(\ccD)$ is a triangulation of $\overline{\cK}^{\met}(\cT)$.

\begin{lemma}\label{uniqueexpress}
Let $\ell_{\sB}$ be a metric $n$-bracketing.
There exists a unique expression
\begin{align}
\ell_{\sB} = \sum_{i=0}^s \lambda_i\ell(\sC_i)
\end{align}
with $\lambda_i \in \mathbb{R}$ and $\lambda_i > 0$ for $i>0$, where $\{\sC_i :1\leq i \leq s\}$ is a collection of nested collisions, and $\sC_0=\sB_\min$. 
\end{lemma}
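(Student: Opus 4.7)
The plan is to induct on the number of nontrivial brackets of $\sB$, establishing existence and uniqueness in tandem. In the base case $\sB = \sB_\min$, condition \ref{metricdefcond4} forces $\ell_{\sB_\min}$ to take a common value $c$ on every nontrivial maximum bracket, so $\ell_{\sB_\min} = c\cdot\ell(\sB_\min)$; we take $s = 0$ and $\lambda_0 = c$, and uniqueness is immediate since no nested collisions can contribute.

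For existence in the inductive step, I pick any containment-minimal collision $\sC_1 \leq \sB$ (available by the unlabelled existence lemma preceding Scholium \ref{lem:collisionlift}) and set $t := \min\{\ell_\sB(A) : A \text{ is a nontrivial essential bracket of } \sC_1\}$. By Scholium \ref{sch:subtract}, $\ell_{\sB'} := \ell_\sB - t\,\ell(\sC_1)$ is a metric $n$-bracketing whose underlying bracketing $\sB'$ is a proper sub-bracketing of $\sB$, so the inductive hypothesis supplies a nested expression $\ell_{\sB'} = \lambda_0\,\ell(\sB_\min) + \sum_{i=2}^{s}\lambda_i\,\ell(\sC_i)$. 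Prepending $\sC_1$ to $\sC_2,\ldots,\sC_s$ preserves nestedness because containment-minimality of $\sC_1$ in $\sB$, combined with Scholium \ref{simplifiedcontainment-minimal condition}, guarantees $\sC_1 \to \sC_i$ or $\sC_1 \sim \sC_i$ for each $i \geq 2$.

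For uniqueness in the inductive step, suppose
\begin{align*}
\ell_\sB \,=\, \lambda_0\,\ell(\sB_\min) + \sum_{i=1}^{s}\lambda_i\,\ell(\sC_i) \,=\, \gamma_0\,\ell(\sB_\min) + \sum_{j=1}^{t}\gamma_j\,\ell(\sC'_j)
\end{align*}
are two nested decompositions, with $\sC_1$ and $\sC'_1$ the first elements of their respective orderings. Both are containment-minimal in $\sB$ by Scholium \ref{simplifiedcontainment-minimal condition}, and any two containment-minimal collisions in $\sB$ must be $\sim$-related (else one $\to$-contains the other, contradicting containment-minimality of the target). The strategy is to identify a canonical containment-minimal collision $\sC^\star$ of $\sB$ depending only on $\ell_\sB$, whose coefficient $\lambda^\star$ in every nested decomposition is also determined by $\ell_\sB$; subtracting $\lambda^\star\,\ell(\sC^\star)$ from both expressions then reduces to a metric $n$-bracketing with strictly smaller support, and the inductive hypothesis concludes.

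The main obstacle is isolating the coefficient $\lambda^\star$. My plan is to exhibit an essential nontrivial bracket $A^\star$ of $\sC^\star$ that is itself containment-minimal in $\sB$: any collision $\sC_i$ in a nested decomposition sharing $A^\star$ as an essential bracket is then $\to$-related to $\sC^\star$ (the common bracket rules out $\sim$, and containment-minimality of $A^\star$ prevents a strictly smaller essential bracket below $A^\star$), so
\begin{align*}
\ell_\sB(A^\star) \,=\, \lambda^\star \,+\, \sum_{i \,:\, \sC^\star \to \sC_i,\; A^\star \text{ essential in } \sC_i} \lambda_i.
\end{align*}
Recursing through the contraction $\cT/\sC^\star$ from Definition \ref{contractbracketing} to bookkeep the contributions of the strictly larger collisions should isolate $\lambda^\star$ as a function of $\ell_\sB$ alone, completing the induction. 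The technical heart lies in making this bookkeeping compatible with the nestedness ordering and with the extended height partial orders of the larger collisions in the decomposition.
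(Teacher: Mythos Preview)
Your existence argument is essentially the paper's: pick a containment-minimal collision, subtract the appropriate multiple, and induct. That part is fine.

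The uniqueness argument has a genuine gap. You correctly observe that the first element in any witnessing order of a nested collection is containment-minimal in $\sB$, and that two distinct containment-minimal collisions must be $\sim$-related. But this does \emph{not} give you a collision that appears in both decompositions: if $\sC_1 \sim \sC'_1$ are disjoint, there is no reason $\sC_1$ should equal any $\sC'_j$, and you cannot subtract it from both sides. Your proposal to ``identify a canonical containment-minimal collision $\sC^\star$'' never explains why such a $\sC^\star$ must actually occur among the $\sC_i$ and among the $\sC'_j$. The subsequent plan to isolate the coefficient via a containment-minimal essential bracket and contraction through $\cT/\sC^\star$ is both vague and presupposes the very thing you need.

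The paper's proof supplies exactly this missing step. It picks \emph{any} containment-minimal $\sC$ in $\sB$ and argues by contradiction that $\sC$ must appear in every nested decomposition: each essential bracket of $\sC$ lies in some $\sC_i$; take $\sC_i$ sharing the most brackets with $\sC$, and find $\sC_j$ containing a bracket $\sC_i$ misses. Using containment-minimality of $\sC$ one shows $\sC_i \nrightarrow \sC_j$, $\sC_j \nrightarrow \sC_i$, and $\sC_i \nsim \sC_j$, contradicting nestedness. Once $\sC$ is known to appear in both decompositions with coefficients $\lambda_q$ and $\gamma_r$, the paper avoids isolating the coefficient entirely: it subtracts $\min(\lambda_q,\gamma_r)\,\ell(\sC)$ from both sides and inducts on the total number of summands $s+t$. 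This is considerably simpler than your proposed contraction bookkeeping.
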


\begin{proof}
The existence of such an expression is demonstrated by a slightly refinement of the proof of Lemma \ref{conicalcomb}: instead of picking a collision $\sC$ in the support of $\ell_{\sB}$ arbitrarily, take $\sC$ containment-minimal in the support of $\ell_{\sB}$ and again take $t$ the minimum value of $\ell_{\sB}(A)$ for $A$ a nontrivial bracket in $\sC$.
By induction on the number of brackets in the support of a metric $n$-bracketing, we have that $\ell_{\sB}-t\ell(\sC)$ admits an expression of the desired form, and by our choice of $\sC$, it follows that $\ell_{\sB}$ does as well.

Suppose that there exists two distinct expressions
\begin{align}
\ell_{\sB} = \sum_{i=1}^s \lambda_i\ell(\sC_i)=\sum_{i=1}^t \gamma_i\ell({\underline \sC}_i)
\end{align}
with $\lambda_i, \gamma_i \in \mathbb{R}$, and $\lambda_i, \gamma_i> 0$ for $i>0$, such that $\{\sC_i :1\leq i \leq s\}$ and $\{{\underline\sC}_i :1\leq i \leq t\}$ are collections of nested collisions, and ${\sC}_0={\underline\sC}_0=\sB_\min$.
Let $\sC$ be an extended collision which is containment-minimal in $\sB$.
We claim that there exists some $1\leq q\leq s$ and $1\leq r\leq t$ such that $\sC_q ={\underline\sC}_r= \sC$.
Suppose for contradiction that there exists no $q$ such that $\sC_q=\sC$.
For each $k$ with $1\leq k\leq n$, and $A \in \sC^k$ essential, there exists some $i$ such that $A \in \sC^k_i$.
Let $\sC_i$ be a collision which maximizes the number of common brackets with $\sC$.
Because $\sC_i\neq \sC$ we know that there exists some $k$ with $1\leq k\leq n$, and $A_1 \in \sC^k$ such that $A_1 \notin \sC_i^k$.
There exists some  $\sC_j$ such that $A_1 \in \sC_j^k$.
By the maximality assumption on $\sC_i$, we know that there exists some $l$ with $1\leq l\leq n$ and $A_2\in \sC^l$ such that $A_2\in \sC_i^l$, but $A_2\notin \sC_j^l$, otherwise $\sC_j$ would have more brackets in common with $\sC$ than $\sC_i$.  

Because $\sC$ is containment-minimal in $\sB$, and $\sC \nsim \sC_i$, $\sC \nsim \sC_j$, we must have $\sC \rightarrow \sC_i$, $\sC \rightarrow \sC_j$.  Hence there must exist some $A_1' \in \sC_i^k$ and $A_2' \in \sC_j^l$ such that $A_1\subsetneq A_1'$ and $A_2\subsetneq A_2'$.
This implies that $\sC_i \nrightarrow \sC_j$ and $\sC_j \nrightarrow \sC_i$.
We observe that $A_1$ and $A_2$ project to the fusion bracket for $\sC$, hence $\sC_i \nsim \sC_j$.
This contradicts our assumption that the pair $\sC_i$ and $\sC_j$ are nested.
It follows that there must exist indices $q$ and $r$ such that $\sC_q ={\underline\sC}_r= \sC$.
We may assume without loss of generality that $\lambda_q\leq \gamma_r$.
We have by Scholium \ref{sch:subtract} that $\ell_{\sB}-\lambda_q\ell(\sC)=\ell_{\sB'}$  is a metric $n$-bracketing, and we have the expression 
\begin{align}
\ell_{\sB'} = \sum_{i=0,i\neq q}^s \lambda_i\ell(\sC_i)= \biggl(\sum_{i=0, i\neq r}^t \gamma_i\ell({\underline \sC}_i)\biggr)+(\gamma_r-\lambda_q)\ell({\underline\sC}_r).
\end{align}

The total number of summands appearing in the above expression for $\ell_{\sB'}$ is at least 1 less than the total number of summands appearing in the expression for $\ell_{\sB}$.
It follows by induction on $s+t$ that the two expressions for $\ell_{\sB'}$ above are equal, hence the two expressions for $\ell_{\sB}$ above are also equal.
\end{proof}

\begin{lemma}\label{triangulatedmetriccomplexresult}
The collection of conical sets ${{ \cK}^{\met}}(\ccD)$ forms an abstract fan.
\end{lemma}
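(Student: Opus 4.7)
The plan is to verify the two axioms of Definition \ref{abstractfandef}, with all substantive work reduced to the uniqueness clause of Lemma \ref{uniqueexpress}. First, I will argue that each $\tau^{\met}(Y)$ is an abstract cone. Applying Lemma \ref{uniqueexpress} to $\ell_\sB = 0$ shows that $\{\ell(\sC):\sC\in Y\}\cup\{\ell(\sB_\min)\}$ admits no nontrivial linear relation with nonnegative coefficients on the $\ell(\sC)$'s (any two such expressions would contradict uniqueness). Consequently the tautological surjection
\begin{align}
\mathbb{R}_{\geq0}^{|Y|}\oplus\mathbb{R}\longrightarrow\tau^{\met}(Y),\qquad ((\lambda_\sC)_{\sC\in Y},\lambda_0)\longmapsto \sum_{\sC\in Y}\lambda_\sC\,\ell(\sC)+\lambda_0\,\ell(\sB_\min),
\end{align}
is a linear isomorphism onto a simplicial polyhedral cone with one-dimensional lineality space $\langle\ell(\sB_\min)\rangle_{\mathbb{R}}$.

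Next, for face closure, every face of $\tau^{\met}(Y)$ contains its lineality space, and by Lemma \ref{generatorsconsetlem} is generated by a subset of its extremal generators $\ell(\sC)$, $\sC\in Y$. Any such face is therefore of the form $\tau^{\met}(Y')$ for some $Y'\subseteq Y$. Since $\ccD$ is a simplicial complex, $Y'\in\ccD$, so $\tau^{\met}(Y')\in\cK^{\met}(\ccD)$ as required.

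Third, for the intersection axiom I will establish the stronger equality
\begin{align}
\tau^{\met}(Y_1)\cap\tau^{\met}(Y_2)=\tau^{\met}(Y_1\cap Y_2),
\end{align}
which immediately exhibits the intersection as a common face of $\tau^{\met}(Y_1)$ and $\tau^{\met}(Y_2)$. The inclusion $\supseteq$ is clear. For $\subseteq$, take $\ell_\sB$ in the intersection and record, for each $i\in\{1,2\}$, an expression witnessing $\ell_\sB\in\tau^{\met}(Y_i)$. The set of collisions in $Y_i$ appearing with strictly positive coefficient is a nested collection, as it is a subset of the nested collection $Y_i$. Hence both expressions fit the hypothesis of Lemma \ref{uniqueexpress}, whose uniqueness clause forces the two nested supports to coincide. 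This common support lies in $Y_1\cap Y_2$, so $\ell_\sB\in\tau^{\met}(Y_1\cap Y_2)$.

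The only substantive content is the uniqueness clause of Lemma \ref{uniqueexpress}, which is already in hand; the remaining steps are essentially bookkeeping, so no further obstacle arises.
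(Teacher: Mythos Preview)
Your proof is correct and follows essentially the same approach as the paper: both the face-closure and intersection axioms are reduced to the uniqueness clause of Lemma \ref{uniqueexpress}, and you prove the identical equality $\tau^{\met}(Y_1)\cap\tau^{\met}(Y_2)=\tau^{\met}(Y_1\cap Y_2)$. The only minor difference is that you verify each $\tau^{\met}(Y)$ is an abstract cone by exhibiting an explicit linear isomorphism with $\mathbb{R}_{\geq0}^{|Y|}\oplus\mathbb{R}$ (again via Lemma \ref{uniqueexpress}), whereas the paper appeals to Lemma \ref{abstractsubconelem}.
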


\begin{proof}
For proving that ${{ \cK}^{\met}}(\ccD)$ is an abstract fan, we first verify conditions (\ref{absfandef1}) and (\ref{absfandef2}) of Definition \ref{abstractfandef}.
Given a conical set $\tau$ in  ${\cK^{\met}}(\ccD)$, it follows from Lemma \ref{generatorsconsetlem} that each face of $\tau$ is generated by a subset of the ray generators of $\tau$, together with the lineality space generated by $\ell(\sB_\min)$.
By Lemma \ref{uniqueexpress}, each subset of the rays together with $\ell(\sB_\min)$ generate a face of $\tau$.
Because $\ccD$ is a simplicial complex, any subset of the rays of $\tau$ together with $\ell(\sB_\min)$ generate a conical set in ${\cK^{\met}}(\ccD)$, thus condition (\ref{absfandef1}) is satisfied. 

We verify condition (\ref{absfandef2}).
Let $Y_1, Y_2 \in \ccD$ with $Y_1 = \{ \sC_1, \ldots, \sC_k\}$ and $Y_2 = \{ {\underline \sC}_1, \ldots, {\underline \sC}_l\}$.
Let $\tau_1, \tau_2 \in { {\cK}^{\met}}(\ccD)$ with $\tau_1 = \tau^{\met}(Y_1)$ and $\tau_2 = \tau^{\met}(Y_2)$.
We claim that $\tau_1 \cap \tau_2 = \tau^{\met}(Y_1 \cap Y_2)$ implying $\tau_1\cap\tau_2 \in{ {\cK}^{\met}}(\ccD)$.
Clearly, $\tau_1 \cap \tau_2 \supseteq \tau^{\met}(Y_1 \cap Y_2)$.
For establishing opposite inclusion, let $\ell_\sB \in \tau_1 \cap \tau_2$.
There are expressions 
\begin{align}
\ell_{\sB} = \sum_{i=0}^k \lambda_i\ell(\sC_i)=\sum_{i=0}^l \gamma_i\ell({\underline \sC}_i)
\end{align}
with $\lambda_i, \gamma_i\geq 0$ for $i>0$, and $\sC_0={\underline \sC}_0=\sB_\min$.
Restricting to the support of these sums, we find ourselves in the setting of Lemma \ref{uniqueexpress}, which implies that the two expressions are the same.
Hence, the ray generators which appear in the sums must belong to $\tau^{\met}(Y_1 \cap Y_2)$, so $\ell_\sB \in \tau^{\met}(Y_1 \cap Y_2)$.

For verifying that each conical set in ${{ \cK}^{\met}}(\ccD)$ is an abstract cone, we may apply Lemma \ref{abstractsubconelem}.  
\end{proof}

\begin{scholium}\label{2ndinetermetricbracketings}
Let $Y \in \ccD$ with $Y = \{\sC_1, \ldots \sC_k\}$.  Define $\tau_{\mathbb{Z}}^{\met}(Y)$ to be the metric $n$-bracketings $\ell_{\sB} \in \tau^{\met}(Y)$ such that for each $A \in \sB^k$, we have $\ell_{\sB}(A) \in \mathbb{Z}$; these are the integer points of $\tau^{\met}(Y)$.
It follows from the above argumentation that each $\ell_{\sB} \in \tau_{\mathbb{Z}}^{\met}(Y)$ is uniquely expressible as
\begin{align}
\ell_{\sB} = \sum_{i=0}^k\lambda_i\ell(\sC_i)
\end{align}
with $\sC_0 =\sB_{\min}$, $\lambda_i \in {\mathbb{Z}}$, and $\lambda_i\geq 0$ for $i\neq 0$. 
\null\hfill$\triangle$
\end{scholium}

\begin{lemma}\label{refinementofmetriccomplex}
The conical set complex ${{ \cK}^{\met}}(\ccD)$ is a refinement of ${{ \cK}^{\met}}(\cT)$.
\end{lemma}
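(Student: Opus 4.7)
The plan is to check the two conditions of the definition of refinement directly, using Lemma \ref{uniqueexpress} as the workhorse. First I would check equality of supports. The support of $\cK^{\met}(\cT)$ is the set $X^{\met}(\cT)$ of all metric $n$-bracketings, while the support of $\cK^{\met}(\ccD)$ is, by construction, the set of all nonnegative conical combinations of the $\ell(\sC)$ (plus multiples of $\ell(\sB_{\min})$). Lemma \ref{uniqueexpress} says exactly that every metric $n$-bracketing admits such a combination, so the two supports coincide.

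Next I would show that each conical set $\tau^{\met}(\sB)\in\cK^{\met}(\cT)$ is a union of conical sets in $\cK^{\met}(\ccD)$. Concretely, I would prove the equality
\begin{align}
\tau^{\met}(\sB)
\;=\;
\bigcup_{\substack{Y\in\ccD \\ \sC\leq\sB\ \forall\,\sC\in Y}}\tau^{\met}(Y).
\end{align}
The right-to-left containment is immediate from the definitions: if $Y$ is a nested collection all of whose elements are $\leq\sB$, then every generator $\ell(\sC)$ of $\tau^{\met}(Y)$ lies in $\tau^{\met}(\sB)$ by Proposition \ref{conicalcomb}, so the whole cone $\tau^{\met}(Y)$ does as well. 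For the left-to-right containment, I would take any $\ell_{\sB'}\in\tau^{\met}(\sB)$ and invoke Lemma \ref{uniqueexpress} to write
\begin{align}
\ell_{\sB'} \;=\; \sum_{i=0}^s\lambda_i\,\ell(\sC_i)
\end{align}
with $\lambda_0\in\bR$, $\lambda_i>0$ for $i>0$, $\sC_0=\sB_{\min}$, and $\{\sC_1,\ldots,\sC_s\}$ a nested collection. Each $\sC_i$ appears in the support of $\ell_{\sB'}$ so $\sC_i\leq \sB'\leq \sB$; hence $Y\coloneqq\{\sC_1,\ldots,\sC_s\}$ is an element of $\ccD$ all of whose collisions lie below $\sB$, and by construction $\ell_{\sB'}\in\tau^{\met}(Y)$.

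There is essentially no obstacle here beyond unwinding the definitions; all of the real work has already been done in Lemma \ref{uniqueexpress} (existence and uniqueness of a nested decomposition) and Proposition \ref{conicalcomb} (description of $\tau^{\met}(\sB)$ as conical combinations of collisions $\leq\sB$). The only subtle point worth flagging is that the nested collection $Y$ produced by Lemma \ref{uniqueexpress} need not be a maximal nested collection in $[\sB_{\min},\sB]$; but this is irrelevant, since the claim only requires $\ell_{\sB'}$ to lie in \emph{some} $\tau^{\met}(Y)$ of the displayed form.
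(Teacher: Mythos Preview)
Your proof is correct and follows essentially the same approach as the paper: apply Lemma~\ref{uniqueexpress} to write an arbitrary $\ell_{\sB'}\in\tau^{\met}(\sB)$ as a nested combination, observe that each $\sC_i$ in that combination satisfies $\sC_i\leq\sB'\leq\sB$, and conclude $\ell_{\sB'}\in\tau^{\met}(Y)\subseteq\tau^{\met}(\sB)$. You are slightly more explicit than the paper in separately verifying support equality and in justifying why $\sC_i\leq\sB'$, but the argument is the same.
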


\begin{proof}
Let $\sB \in \cK(\cT)$ so that $\tau^{\met}(\sB) \in {{ \cK}^{\met}}(\cT)$.
We must show that $\tau^{\met}(\sB)$ is a union of conical sets in ${ \cK}^{\met}(\ccD)$.
Let $\ell_{\sB'} \in \tau^{\met}(\sB)$.
We know by Lemma \ref{uniqueexpress} that  we can express  
\begin{align}
\ell_{\sB'} =\sum_{i=0}^k \lambda_i\ell(\sC_i),
\end{align}
where the set of collisions $Y=\{{\sC}_i:1\leq i \leq k\}$ is nested and ${\sC}_0=\sB_\min$, hence $\ell_{\sB'} \in \tau^{\met}(Y)$.
For each $i$ with $1\leq i \leq k$, we have that ${\sC}_i \leq \sB$, thus $\ell ({\sC}_i) \in \tau^{\met}(\sB)$ and $\tau^{\met}(Y) \subseteq \tau^{\met}(\sB)$.
Thus $\tau^{\met}(\sB)$ is a union of conical sets in ${ \cK}^{\met}(\ccD)$.
\end{proof}

\begin{proposition}\label{metrictriangulation}
The conical set complex ${\overline{ \cK}^{\met}}(\ccD)$ is a triangulation of ${\overline{ \cK}^{\met}}(\cT)$.  
\end{proposition}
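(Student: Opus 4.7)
The plan is to reduce the proposition to two ingredients already at hand: the refinement statement of Lemma \ref{refinementofmetriccomplex} and the uniqueness statement of Lemma \ref{uniqueexpress}. After passing to the quotient by the common lineality space, these will give refinement and simpliciality respectively, which together constitute a triangulation.

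First I would check that $\cK^{\met}(\cT)$ and $\cK^{\met}(\ccD)$ share the same lineality space, namely $\langle \ell(\sB_\min)\rangle_{\mathbb{R}}$. Every maximal conical set $\tau^{\met}(\sB)$ contains $\ell(\sB_\min)$ as a lineality generator by definition, and similarly each $\tau^{\met}(Y)$ contains $\ell(\sB_\min)$ as a lineality generator. No larger linear subspace can lie in the lineality space of either complex, because both have conical sets of positive dimension transverse to $\langle\ell(\sB_\min)\rangle_{\mathbb{R}}$ (e.g.\ any ray $\mathbb{R}_{\geq 0}\cdot \ell(\sC) + \langle\ell(\sB_\min)\rangle_{\mathbb{R}}$). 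Lemma \ref{refinementofmetriccomplex} then tells us that $\cK^{\met}(\ccD)$ refines $\cK^{\met}(\cT)$, and since the two complexes share the same lineality space, Lemma \ref{quotientrefinement} yields that ${\overline{\cK}^{\met}}(\ccD)$ refines ${\overline{\cK}^{\met}}(\cT)$.

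The second and main step is to verify that each conical set $\overline{\tau^{\met}(Y)}$ in ${\overline{\cK}^{\met}}(\ccD)$ is simplicial. Fix $Y = \{\sC_0,\sC_1,\ldots,\sC_k\}\in\ccD$ with $\sC_0=\sB_\min$, and suppose for contradiction that we have a nontrivial relation
\begin{align}
\sum_{i=1}^k \lambda_i\,\ell(\sC_i) \;=\; \sum_{i=1}^k \gamma_i\,\ell(\sC_i)
\end{align}
in the quotient $\overline{\tau^{\met}(Y)}$, with $\lambda_i,\gamma_i\geq 0$. Lifting back to $\cK^{\met}(\cT)$, this equality holds modulo $\langle \ell(\sB_\min)\rangle_{\mathbb{R}}$, so there exists $c\in\mathbb{R}$ with
\begin{align}
\sum_{i=1}^k \lambda_i\,\ell(\sC_i) \;=\; c\,\ell(\sB_\min)+\sum_{i=1}^k \gamma_i\,\ell(\sC_i).
\end{align}
Denoting the common metric $n$-bracketing by $\ell_\sB$, both sides are expressions of $\ell_\sB$ as a conical combination in which $\sB_\min$ is permitted a real coefficient and the remaining coefficients are nonnegative, indexed by a nested collection of collisions drawn from $Y$. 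Lemma \ref{uniqueexpress} forces $\lambda_i=\gamma_i$ for every $i\geq 1$, which contradicts nontriviality. Thus $\overline{\tau^{\met}(Y)}$ is simplicial.

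Combining these two steps gives the proposition. The only real content is the second step, and the hard part there has already been carried out in Lemma \ref{uniqueexpress}; the argument above simply translates its uniqueness into the simpliciality condition of Definition \ref{facedefinition}'s cousin after quotienting out the lineality generator. I expect no obstacles beyond bookkeeping with the quotient.
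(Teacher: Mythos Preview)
Your approach is essentially the same as the paper's: use Lemma \ref{refinementofmetriccomplex} together with Lemma \ref{quotientrefinement} for the refinement, and Lemma \ref{uniqueexpress} for simpliciality after quotienting by $\langle\ell(\sB_\min)\rangle_{\mathbb{R}}$. The paper additionally cites Lemma \ref{triangulatedmetriccomplexresult} with Lemma \ref{lem:abstract_fan_equivalence_relation} to ensure $\overline{\cK}^{\met}(\ccD)$ is an abstract fan (needed to invoke Lemma \ref{quotientrefinement}, which is stated for cone complexes), a bookkeeping step you implicitly assume; your lineality-space check and the explicit lift-and-apply-uniqueness argument are otherwise just a more detailed rendering of the paper's one-line appeal to Lemma \ref{uniqueexpress}.
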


\begin{proof}
First observe that by Lemma \ref{triangulatedmetriccomplexresult} combined with Lemma \ref{lem:abstract_fan_equivalence_relation}, we have that ${\overline{ \cK}^{\met}}(\ccD)$ is an abstract fan.
Moreover, by Lemma \ref{refinementofmetriccomplex} and Lemma \ref{quotientrefinement},  ${\overline{ \cK}^{\met}}(\ccD)$ refines ${\overline{ \cK}^{\met}}(\cT)$.
It remains to check that every conical set in ${\overline{ \cK}^{\met}}(\ccD)$ is simplicial.  Let $\tau \in {\overline{ \cK}^{\met}}(\ccD)$ and $\ell \in \tau$.
By Lemma \ref{uniqueexpress}, we have that $\ell$ can be expressed uniquely as a conical combination of the rays of $\tau$, thus $\tau$ is simplicial. 
\end{proof}

Building on the ideas of this subsection, we briefly describe an algorithm for expressing a vector ${\bf v}$ as a conical combination of the rays of a cone in the velocity fan, moreover, the rays of a cone in the triangulated velocity fan.
This algorithm can alternately be interpreted as providing a method for computing $\Gamma^{-1}$.

Let ${\bf v}^0 = {\bf v}$.
At the $k$th step of the algorithm we have constructed a tuple $M^k=({\bf v}^k,\sC^k,\lambda_k)$ such that
\begin{align}
{\bf v}^k+\lambda_k{\rho}(\sC_k) = {\bf v}^{k-1}
\end{align}
so that
\begin{align}
{\bf v} = {\bf v}^k +\sum_{i=1}^k \lambda_i{\rho}(\sC_i)
\end{align}
and $\{\sC_i:1\leq i \leq k\}$ is a nested collection of collisions.

We stop if ${\bf v}^k = {\bf 0}$.  Otherwise, take $\sC^{k+1} \in \Xi({\bf v}^k )$ and let
\begin{align}
\lambda_{k+1} \coloneqq sup\{t \in \mathbb{R}_{\geq 0}: \sC^{k+1} \in \Xi({\bf v}^k-t {\rho}(\sC^{k+1}) \}.
\end{align}

To prove that this algorithm works, one can apply $\Gamma^{-1}$ to ${\bf v}$ and check that the algorithm agrees with the one outlined at the beginning of Lemma \ref{uniqueexpress}.
This assumes that the velocity fan is complete so that $\Gamma$ surjects onto $\mathbb{R}^m$. 
It is, perhaps, surprisingly difficult to prove directly that this algorithm succeeds without first knowing that the velocity fan is complete.

\subsection{\texorpdfstring{${\overline{\cF}}(\Delta_\cT)$}{FD} is a smooth triangulation of \texorpdfstring{${\overline{\cF}}(\cT)$}{FT}}

\

In this subsection, we establish Theorem \ref{triangulationtheorem} which is the main result of \S \ref{triangulationsection}.
If $\{\sC_i:1\leq i\leq k\}$ is a collection of nested collisions then it is tempting to expect that their image $\{\sC_i/\sC_1:1\leq i\leq k\}$ in $\cT/\sC_1$ forms a collection of nested collisions.
This is nearly true, except for the important fact that they may not form a collection of collisions!
In particular, if $\sC_i$ and $\sC_1$ have the same fusion bracket, then $\sC_i/\sC_1$ may not be a collision.
The following lemma effectively addresses this subtlety in a way which will allow us to utilize induction for proving that the triangulated velocity fan is smooth.

\begin{lemma}\label{connstructingnestedcollisionsinthequotient}
Let $\{\sC_i:1\leq i\leq k\}$ be a collection of nested collisions which all have the same fusion bracket and are maximal with this property.
For each $\sC_i$, we let $\{\sC_{i,r}:1\leq r\leq s_i\}$ denote the collection of pairwise disjoint collisions in $\fX(\cT/\sC_1)$ such that
\begin{align}
\sC_i/\sC_1 = \bigvee_{r=1}^{s_i} \sC_{i,r}\,,
\end{align}
as is guaranteed by Lemma \ref{quotientofcollisionbycollision}.
Then there exists a unique $r_i$ with $1\leq r_i\leq s_i$ such that $\sC_{i,r_i} \notin \{\sC_{i-1,r}: 1\leq r\leq s_{i-1}\}$, and  $\{\sC_{i,r_i}:1\leq i\leq k\}$ is a nested collection of collisions with
\begin{align}
\bigvee_{i=1}^k\sC_i/\sC_1 = \bigvee_{i=1}^k \sC_{i,r_i}\,.
\end{align}
\end{lemma}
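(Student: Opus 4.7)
The plan is first to reduce to a totally ordered chain, then to invoke maximality to show each one-step difference $\sC_i/\sC_{i-1}$ is a single collision, and finally to translate this to the decomposition-level statements in $\cT/\sC_1$.

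Since all $\sC_i$ share the fusion bracket $A$, no two are disjoint in the sense of Definition \ref{collisionrelationsdisjoint}, so nestedness forces them to be pairwise $\rightarrow$-comparable. I would relabel so that $\sC_1 \rightarrow \sC_2 \rightarrow \cdots \rightarrow \sC_k$; then $\bigvee_i \sC_i = \sC_k$, and applying Lemma \ref{quotientofcollisionbycollision} to $\sC_1 \rightarrow \sC_k$ yields $\bigvee_i (\sC_i/\sC_1) = \sC_k/\sC_1$. Note $\sC_1/\sC_1 = \sB_\min$ has $s_1 = 0$, so the substantive content concerns $i\geq 2$; the index $i=1$ contributes the trivial $\sB_\min$, which drops out of joins and nestedness conditions.

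The crucial use of maximality is the following claim: for each $i\geq 2$, the quotient $\sC_i/\sC_{i-1}$ is a single collision $\sD_i \in \fX(\cT/\sC_{i-1})$. Indeed, by Lemma \ref{quotientofcollisionbycollision} it is a disjoint union of collisions, and if this union had more than one summand, Lemma \ref{liftdisjointcollisions} would lift any proper subcollection to a $\sC' \in \fX(\cT)$ with $\sC_{i-1} \rightarrow \sC' \subsetneq \sC_i$. Since $\sC_{i-1}$ and $\sC_i$ share fusion bracket $A$, so does $\sC'$ (its top essential bracket is still $A$), and inserting $\sC'$ strictly enlarges the given nested collection of collisions with common fusion bracket, contradicting maximality.

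Finally I would translate this single-collision property into the desired decomposition statement. Since $\sC_{i-1} \rightarrow \sC_i$, we have $\sC_{i-1}/\sC_1 \leq \sC_i/\sC_1$, so each component $\sC_{i-1,r}$, being a single collision whose brackets share a common fusion bracket, has all its brackets inside a unique component of $\sC_i/\sC_1$; thus $\sC_{i-1,r}$ is either preserved verbatim as some $\sC_{i,r'}$ or is properly absorbed into a larger $\sC_{i,r'}$. Further quotienting $\sC_i/\sC_1$ by $\sC_{i-1}/\sC_1$ collapses every preserved $\sC_{i-1,r}$ to trivial and sends the absorbed-plus-new material to $\sD_i$; since $\sD_i$ is a single collision, exactly one component $\sC_{i,r_i}$ of $\sC_i/\sC_1$ fails to coincide with any $\sC_{i-1,r}$, yielding uniqueness. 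This also gives $\sC_i/\sC_1 = (\sC_{i-1}/\sC_1) \vee \sC_{i,r_i}$, from which the join equality $\bigvee_{j\leq i} \sC_{j,r_j} = \sC_i/\sC_1$ follows by induction, specializing at $i=k$ to the stated equality. For nestedness, given $i<j$, the single collision $\sC_{i,r_i} \leq \sC_j/\sC_1$ has all its brackets in a unique component $\sC_{j,r^*}$: if $r^* = r_j$ then $\sC_{i,r_i}\rightarrow \sC_{j,r_j}$, and otherwise $\sC_{j,r^*}$ is disjoint from $\sC_{j,r_j}$, forcing $\sC_{i,r_i} \sim \sC_{j,r_j}$. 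The main obstacle is this last paragraph's bookkeeping: correctly distinguishing preserved versus absorbed components and pinning down the unique new component via the two-step quotient $\cT \rightarrow \cT/\sC_1 \rightarrow \cT/\sC_{i-1}$.
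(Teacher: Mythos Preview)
Your proposal is correct and rests on the same core mechanism as the paper: use maximality together with Lemma \ref{liftdisjointcollisions} to manufacture an intermediate collision $\sC'$ with $\sC_{i-1}\rightarrow\sC'\rightarrow\sC_i$, contradicting maximality. The difference is where you apply the lift. The paper works entirely in $\cT/\sC_1$: assuming two components $\sC_{i,r_i},\sC_{i,r_j}$ are both new, it lifts the set $\{\sC_{i,r_i}\}\cup\{\sC_{i-1,r}:\sC_{i,r_i}\sim\sC_{i-1,r}\}$ directly to $\cT$ via Lemma \ref{liftdisjointcollisions}, obtaining the contradicting $\sC$ in one step. You instead establish the intermediate claim that $\sC_i/\sC_{i-1}$ is a single collision in $\cT/\sC_{i-1}$, and then translate this back to the component count in $\cT/\sC_1$.

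Your route is a valid reformulation, but the translation step you flag as ``bookkeeping'' is genuinely where the extra work lives: since $\sC_{i-1}/\sC_1$ is a join of disjoint collisions rather than a single collision, the passage $(\cT/\sC_1)\to\cT/\sC_{i-1}$ must be carried out as an iterated quotient by the $\sC_{i-1,r}$, and one must check that a non-preserved component $\sC_{i,r'}$ has nontrivial image and that two such images remain disjoint. This all goes through, but the paper's direct lift in $\cT/\sC_1$ sidesteps it. On the other hand, you explicitly sketch the join equality and the nestedness of $\{\sC_{i,r_i}\}$, which the paper's proof omits (treating them as routine consequences of uniqueness).
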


\begin{proof}
Because $\{\sC_i:1\leq i\leq k\}$ is a collection of nested collisions with the same fusion bracket, it follow that no pair are disjoint, hence $\sC_i \rightarrow \sC_j$ for all $1\leq i<j\leq k$.
Because $\sC_{i-1} \neq \sC_i$ there must exist some $r_i$ such that $\sC_{i,r_i} \notin \{\sC_{i-1,r}: 1\leq r\leq s_{i-1}\}$.
Suppose that there exist $1\leq r_i < r_j \leq s_i$ such that $\sC_{i,r_i}, \sC_{i,r_j} \notin \{\sC_{i-1,r}: 1\leq r\leq s_{i-1}\}$.
Let $\overline \sC$ be the collection $\{\sC_{i,r_i}\} \cup \{\sC_{{i-1},r}: \sC_{i,r_i} \sim \sC_{{i-1},r}, 1\leq r\leq s_{i-1}\}$.
Then we can apply Lemma \ref{liftdisjointcollisions} to conclude that $\overline \sC$ is the image in $\fX(\cT/\sC_1)$ of a collision $\sC \in \fX(\cT)$.
Moreover, we have that $\sC \neq \sC_{i-1}$, $\sC \neq \sC_{i}$, and $\sC_{i-1} \rightarrow \sC\rightarrow \sC_i$.
Thus $\{\sC\}\cup \{\sC_i:1\leq i\leq k\}$ is a nested collection of collisions with the same fusion bracket which properly contains $\{\sC_i:1\leq i\leq k\}$, but this contradicts the maximality of the nested collection $\{\sC_i:1\leq i\leq k\}$.
\end{proof}

The following two lemmas are by definition.

\begin{lemma}\label{smoothchamber}
If a cone $\tau$ is smooth, then all of the faces of $\tau$ are smooth. 
\end{lemma}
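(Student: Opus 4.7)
The plan is to derive the lemma directly from the definition of smoothness together with the fact that each face of a simplicial cone is generated by a subset of its rays. First I would fix a smooth cone $\tau$ in $N \otimes \mathbb{R}$ with ray generators ${\bf v}^1, \ldots, {\bf v}^k$ that extend to a $\mathbb{Z}$-basis ${\bf v}^1, \ldots, {\bf v}^m$ of $N$; such an extension exists by the definition of smoothness. Note that smoothness forces $\tau$ to be simplicial, since any $\mathbb{Z}$-linearly independent set is $\mathbb{R}$-linearly independent.

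Next I would let $\sigma$ be a face of $\tau$ and appeal to Lemma \ref{generatorsconsetlem}, which says that every face of a conical set generated by a finite set $R$ of vectors is generated by some subset $S \subseteq R$. Applied to $\tau$ with $R = \{{\bf v}^1, \ldots, {\bf v}^k\}$, this produces an index set $I \subseteq \{1, \ldots, k\}$ such that $\sigma = \operatorname{cone}\{{\bf v}^i : i \in I\}$. Then the ray generators of $\sigma$ are precisely $\{{\bf v}^i : i \in I\}$, and this set, being a subset of a $\mathbb{Z}$-basis of $N$, can itself be extended to the full basis ${\bf v}^1, \ldots, {\bf v}^m$ of $N$. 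Hence $\sigma$ is smooth.

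There is no substantive obstacle here: the argument is a one-line consequence of the extension property of bases combined with the structure of faces of simplicial cones. The only mild care needed is to ensure that the ``ray generators'' of $\sigma$ in the sense of the definition of smoothness genuinely coincide with the subset $\{{\bf v}^i : i \in I\}$ singled out by Lemma \ref{generatorsconsetlem}, and this follows because primitive lattice generators along each ray are unique up to the lineality directions already accounted for in the basis.
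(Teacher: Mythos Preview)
Your argument is correct and is precisely the unpacking that the paper leaves implicit: the paper simply states that this lemma holds ``by definition'' and gives no further proof. Your use of Lemma~\ref{generatorsconsetlem} to identify the ray generators of a face with a subset of those of $\tau$, followed by the observation that any subset of a lattice basis extends to that same basis, is exactly the content behind that phrase.
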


\begin{lemma}\label{smoothtropical}
Let $N = \mathbb{Z}^m/\langle {\bf 1} \rangle_{\mathbb{Z}}$ so that $\mathbb{T}^{m-1} = N \otimes \mathbb{R}$.
Let $\tau$ be an $m-1$ dimensional cone in $\mathbb{T}^{m-1}$, then $\tau$ is smooth if and only if the primitive ray generators ${\rho}_1, \ldots, {\rho}_{m-1}$ for $\tau$ are such that the $m\times m$ matrix $M_{\tau}$ with columns ${\rho}_1, \ldots, {\rho}_{m-1}$ and additional column ${\bf 1}$ satisfies  $M_{\tau} \in GL_{m}(\mathbb{Z})$.
\end{lemma}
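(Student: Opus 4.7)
The plan is to prove the lemma by unraveling the relevant definitions of primitive generator, smooth cone, and $GL_m(\mathbb{Z})$, and then applying the standard split short exact sequence
\begin{align}
0 \longrightarrow \mathbb{Z}\cdot \mathbf{1} \longrightarrow \mathbb{Z}^m \longrightarrow N \longrightarrow 0\,,
\end{align}
where the right-hand map is the quotient by $\langle \mathbf{1}\rangle_{\mathbb{Z}}$. There is essentially no content here beyond linear algebra and bookkeeping; the point of recording the lemma is to give a matrix criterion that can be checked coordinate-wise in later arguments.

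First, I would spell out the definition of smoothness in the quotient lattice $N$. Because $\tau$ is an $(m-1)$-dimensional cone in the $(m-1)$-dimensional space $\mathbb{T}^{m-1}$, its primitive ray generators in $N$ are $(m-1)$ vectors $\bar\rho_1,\ldots,\bar\rho_{m-1}$, and by definition $\tau$ is smooth precisely when these vectors extend to a $\mathbb{Z}$-basis of $N$. Since there are already $m-1 = \mathrm{rk}(N)$ of them, smoothness is equivalent to $\{\bar\rho_1,\ldots,\bar\rho_{m-1}\}$ itself forming a $\mathbb{Z}$-basis of $N$.

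Next, I would use the split exact sequence above, with the splitting $N \to \mathbb{Z}^m$ given by any set-theoretic lift. The key elementary observation is that for any choice of integral lifts $\rho_i \in \mathbb{Z}^m$ of the $\bar\rho_i$, the set $\{\bar\rho_1,\ldots,\bar\rho_{m-1}\}$ is a $\mathbb{Z}$-basis of $N$ if and only if $\{\rho_1,\ldots,\rho_{m-1},\mathbf{1}\}$ is a $\mathbb{Z}$-basis of $\mathbb{Z}^m$: one direction is immediate from the quotient map; for the other, given a splitting one checks that the $\mathbb{Z}$-span of $\rho_1,\ldots,\rho_{m-1},\mathbf{1}$ equals all of $\mathbb{Z}^m$ iff the images span $N$, and linear independence is automatic since $\mathbf{1}$ is nonzero in the kernel while the $\bar\rho_i$ are independent in $N$. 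Finally, $\{\rho_1,\ldots,\rho_{m-1},\mathbf{1}\}$ is a $\mathbb{Z}$-basis of $\mathbb{Z}^m$ exactly when the matrix $M_\tau$ having these vectors as columns has determinant $\pm 1$, i.e.\ $M_\tau \in GL_m(\mathbb{Z})$.

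The only point requiring a sentence of care is well-definedness of the matrix criterion: the lifts $\rho_i \in \mathbb{Z}^m$ are only determined up to adding integer multiples of $\mathbf{1}$, so different choices alter $M_\tau$ by elementary column operations (adding an integer multiple of the last column to one of the others). Such operations preserve membership in $GL_m(\mathbb{Z})$, so the statement $M_\tau \in GL_m(\mathbb{Z})$ is independent of the choices of lifts and of the primitive representatives $\rho_i$ themselves. Since the argument is purely a diagram chase in finitely generated free abelian groups, there is no genuine obstacle; the lemma is really a translation lemma that will be invoked repeatedly when verifying smoothness of cones in $\overline{\cF}(\ccD)$ by checking a unimodularity condition on an $m \times m$ integer matrix.
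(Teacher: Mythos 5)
Your proof is correct, and it fills in exactly the routine verification the paper elides: the authors state this lemma with no proof (declaring it to hold "by definition"), and your unpacking via the split exact sequence $0 \to \mathbb{Z}\cdot\mathbf{1} \to \mathbb{Z}^m \to N \to 0$, together with the observation that a generating set of cardinality equal to the rank of a free abelian group is a basis, is precisely the standard argument they are implicitly invoking. Your closing remark on well-definedness (that changing the lifts $\rho_i$ by integer multiples of $\mathbf{1}$ alters $M_\tau$ only by column operations preserving $GL_m(\mathbb{Z})$) is a worthwhile point of care, since in the paper's application the $\rho_i$ are specific integer vectors in $\mathbb{Z}^m$ representing classes in $N$.
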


\begin{lemma}\label{smooth}
Each cone in ${\overline \cF}(\ccD)$ is smooth.
\end{lemma}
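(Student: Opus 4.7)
By Lemma \ref{smoothchamber} it suffices to prove that the maximal cones of $\overline{\cF}(\ccD)$ are smooth, and by Lemma \ref{smoothtropical} this is equivalent to showing that for each maximal nested collection $Y = \{\sC_1,\ldots,\sC_N\} \in \ccD$ the matrix $M_{\tau(Y)}$ whose columns are the primitive ray generators $\rho(\sC_1),\ldots,\rho(\sC_N)$ together with ${\bf 1}$ has determinant $\pm 1$. The plan is to induct on $m = |V(\cT)| - n$, the base case being trivial.

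The first key step is to show that one may choose $\sC_1 \in Y$ to be a \emph{minimal} collision in the sense of Lemma \ref{collisionminimalchar}. To do this, take $\sC_1$ to be a source of $Y$ under $\rightarrow$ (which exists by Lemma \ref{acyclic}), and observe that if $\sC_1$ were not minimal then Lemma \ref{collisionminimalchar} would provide a minimal collision $\sC^\ast$ with $\sC^\ast \rightarrow \sC_1$; using sourceness and Lemma \ref{transitive} one checks that $Y \cup \{\sC^\ast\}$ is still nested (relations with every other $\sC_i$ propagate through $\sC_1$), contradicting maximality. Once $\sC_1$ is minimal, its fusion bracket $A^k = \{u^k_a,u^k_{a+1}\}$ consists of two vertices consecutive in $V^k(\cT)$, and the contraction $\cT/\sC_1$ satisfies $m^{\cT/\sC_1} = m - 1$.

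Let $\sigma$ be a compatible $\cT$-shuffle for $\sC_1$. Perform the determinant-preserving column operations $\rho(\sC_1) \mapsto \rho(\sC_1) - {\bf 1}$ and $\rho(\sC_i) \mapsto \rho(\sC_i) - \rho(\sC_1)$ whenever $\sC_1 \rightarrow \sC_i$, and then apply $P_\sigma^T$ to every column, producing a new matrix $M'$ with $\det M' = \pm \det M_{\tau(Y)}$ (using Lemma \ref{unimodperm}). Partition the rows of $M'$ according to whether the index $i$ at depth $k$ is a \emph{fiber-end} index of $\psi^\cT_{\cT/\sC_1}$ in the $\sigma$-ordering. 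Minimality of $\sC_1$ forces the existence of exactly one non-fiber-end row, located at depth $k$ between $u^k_a$ and $u^k_{a+1}$. By Lemmas \ref{permutationtransformationlemma1}, \ref{contractionlemma}, and \ref{refinedcontractionlemma}, every column of $M'$ except the final column $P_\sigma^T {\bf 1}$ vanishes on this row, while the entry of $P_\sigma^T {\bf 1}$ on this row is $\sigma(i+1)-\sigma(i) = \pm 1$ since the two indices are consecutive. Cofactor-expanding along this row gives $\det M' = \pm \det M''$, where $M''$ is the submatrix on the fiber-end rows and all columns except $P_\sigma^T {\bf 1}$.

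The final step is to identify $M''$, up to further elementary column operations of determinant $\pm 1$, with the matrix $M_{\tau'}$ for a maximal cone of $\overline{\cF}(\ccD_{\cT/\sC_1})$. The fiber-end rows are in canonical bijection with the coordinates of $\cF(\cT/\sC_1)$, and Lemma \ref{permutationtransformationlemma1} tells us that $P_\sigma^T(\rho(\sC_1) - {\bf 1})$ equals $-{\bf 1}$ on this row-subset, giving the lineality direction after a sign flip. For $\sC_i$ with $\sC_1 \sim \sC_i$, Lemma \ref{contractionlemma} directly yields $\rho(\sC_i/\sC_1)$ as the image column. The genuinely subtle case is $\sC_1 \rightarrow \sC_i$ with $\sC_1$ and $\sC_i$ sharing the same fusion bracket: then Lemma \ref{refinedcontractionlemma} shows that the image column is $\sum_r \rho((\sC_i/\sC_1)_r)$, a sum of rays rather than a single ray. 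This is the main obstacle, and it is resolved by Lemma \ref{connstructingnestedcollisionsinthequotient}: ordering the $\sC_i$ sharing a fusion bracket with $\sC_1$ compatibly with the height partial order within $Y$, one performs upper-triangular column subtractions to isolate precisely the rays $\rho(\sC_{i,r_i})$ of the nested collection $Y' \subset \fX(\cT/\sC_1)$ produced by that lemma. A dimension count via Lemma \ref{maxnumbernested} shows $Y'$ is maximal in $\ccD_{\cT/\sC_1}$. The inductive hypothesis then yields $\det M_{\tau'} = \pm 1$, and unwinding the reductions gives $\det M_{\tau(Y)} = \pm 1$.
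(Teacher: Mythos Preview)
Your proposal is correct and follows essentially the same approach as the paper's proof: induct on $m$, pick a minimal $\sC_1$ via maximality of $Y$, perform column operations and apply $P_\sigma^T$, invoke Lemmas \ref{contractionlemma}, \ref{refinedcontractionlemma}, and \ref{connstructingnestedcollisionsinthequotient}, then Laplace-expand along the single non-fiber-end row and reduce to $\cT/\sC_1$. The only differences are cosmetic: the paper subtracts $\rho(\sC_1)$ from the ${\bf 1}$-column (so the pivot lands in the $\sC_1$-column), whereas you subtract ${\bf 1}$ from the $\rho(\sC_1)$-column (so the pivot lands in the ${\bf 1}$-column), and the paper does the Lemma \ref{connstructingnestedcollisionsinthequotient} column subtractions before rather than after the cofactor expansion; either way the resulting minor is the matrix $M_{\tilde\tau}$ for a maximal cone in $\overline{\cF}(\Delta_{\cT/\sC_1})$.
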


\begin{proof}(\ref{smooth})
Let $Y = \{\sC_i :1\leq i \leq k\}$ be a maximal nested collection of collisions.
For establishing the statement of the lemma, it suffices by Lemma \ref{smoothchamber} to prove that the cone $\tau =\tau(Y)\in {\overline \cF}(\ccD)$ is smooth.  By Lemma \ref{maxnumbernested}, we know that $k=m-1$.
Construct the matrix $M_{\tau}$ described in Lemma \ref{smoothtropical} with $\rho_i = \rho(\sC_i)$.
For completing the proof, it suffices by Lemma \ref{smoothtropical} to prove that $M_{\tau} \in GL_m(\mathbb{Z})$, i.e.\ $det(M_{\tau})=1$.
The collision $\sC_1$ is a minimal collision, otherwise we would be able to enlarge $Y$ by adding a collision $\sC$ with $\sC \rightarrow \sC_1$.
Subtract the column ${\rho}(\sC_1)$ from the column ${\rho}(\sC_i)$ for each $i$ such that $\sC_1\rightarrow \sC_i$.
Additionally, subtract the column $\rho(\sC_1)$ from to last column ${\bf 1}$ and let the resulting matrix be $M'_\tau$.
Let $\sigma$ be a compatible shuffle for $\sC_1$.
Multiply $M'_\tau$ by $P_{\sigma}^T$ on the left to obtain a matrix $M''_\tau$.
Then we may use Lemma \ref{refinedcontractionlemma}, to calculate the column of $M''_\tau$ corresponding to each $\sC_i$.
Let $Y' = \{\sC'_i:1\leq i\leq l\} \subsetneq \{\sC_i:1\leq i\leq k\}$ be the collection of collisions such that for each $1\leq i\leq l$ we have $\sC_1\rightarrow \sC'_i$, $\sC_1\neq \sC'_i $, and $\sC'_i$ has the same fusion bracket as $\sC_1$.
Beginning with $i=1$ and proceeding to $i=l$, subtract the columns of $M''_\tau$ corresponding to $\sC'_j$ from the column of $M''_\tau$ corresponding to $\sC'_{i+1}$ if $j\leq i$ and $\sC'_{i,r_i}=\sC'_{j,r}$ for some $1\leq r \leq s_i$.
Let the resulting matrix be $M'''_\tau$.
We have performed only column subtractions to go from from $M_{\tau}$ to $M'_\tau$ and from $M''_\tau$ to $M'''_\tau$.
Furthermore, by Lemma \ref{unimodperm} we know that $P_{\sigma}^T \in GL_m(\mathbb{Z})$, thus we have the equality of determinants: $det(M_{\tau})=det(M'_\tau)=det(M''_\tau)=det(M'''_\tau)$.

Let $i$ be the row which corresponds to the pair of depth $k$ vertices contained in the fusion of $\sC_1$ as guaranteed by Lemma \ref{collisionminimalchar}.
By Lemmas \ref{contractionlemma} and \ref{refinedcontractionlemma}, the matrix $M'''_\tau$ is such that the $(i,j)$th entry is 1 for $j=1$ and $0$ for $j>1$.
Using the Laplace expansion calculation of $det(M'''_\tau)$ along the $i$th row, we find that $det(M'''_\tau)$ is equal to the determinant of the principal submatrix obtained by deleting the $i$th row and the first column.
Let ${\Tilde{\tau}}$ be the cone in $\cF(\Delta_{\cT/\sC_1})$ which is generated by the rays associated to $\{\sC_i/\sC_1:\sC_i \in Y\setminus Y', i\neq 1\} \cup \{\sC'_{i,r_i}:\sC'_i \in Y'\}$ with $\sC'_{i,r_i}$ associated to $\sC'_i$ as in Lemma \ref{connstructingnestedcollisionsinthequotient}.  
By the arguments above, the principal submatrix in question is equal to $M_{{\Tilde{\tau}}}$.
By induction on the size the matrix, with base case $m=1$, we have that $1 = det(M_{{\Tilde{\tau}}}) = det(M'''_\tau) = det(M_{\tau})$.
\end{proof}

We are now ready to prove the main theorem of this section.

\begin{proof}(\ref{triangulationtheorem})
To see that ${{\overline \cF}(\ccD)}$ is a triangulation of ${{\overline \cF}(\cT)}$, we utilize Corollary \ref{reducedgammaiso} for Proposition \ref{metrictriangulation} 
combined with Lemma \ref{imagetriang}.
Flagness is established in Proposition \ref{flagnessresult}.
It is clear from the definitions that these fans use the same set of rays.  Smoothness follows from Lemma \ref{smooth}. 
\end{proof}

\begin{remark}
In our future work on projectivity for the velocity fan, we will investigate the triangulated velocity fan through the lens of smooth toric blowups, e.g.\ central stellar subdivisions.
\null\hfill$\triangle$
\end{remark}

\begin{corollary}\label{conecomplexlatticepropertyvelocityfan}
For $\sB \in \cK(\cT)$, the map $\Gamma$ induces a monoid isomorphism between the integer metric $n$-bracketings in a conical set $\tau^{\met}({\sB}) \in \cK^{\met}(\cT)$ and the integer points of the corresponding cone $\tau(\sB) \in \cF(\cT)$.
\end{corollary}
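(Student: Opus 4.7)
The plan is to leverage the triangulated velocity fan, whose smoothness (Lemma \ref{smooth}) provides precisely the lattice-theoretic control needed for this monoid statement. Observe first that the easy direction is essentially by construction: if $\ell_{\sB'} \in \tau^{\met}(\sB)$ is an integer metric $n$-bracketing, Scholium \ref{inetermetricbracketings} lets us write $\ell_{\sB'} = \sum_{i=0}^k \lambda_i \ell(\sC_i)$ with $\sC_0 = \sB_\min$, $\lambda_0 \in \mathbb{Z}$, and $\lambda_i \in \mathbb{Z}_{\geq 0}$ for $i\geq 1$, all with $\sC_i \leq \sB$. Applying $\Gamma$ gives $\Gamma(\ell_{\sB'}) = \sum_{i=0}^k \lambda_i \rho(\sC_i)$, which lies in $\tau(\sB)$ and has integer coordinates because every $\rho(\sC)$ and $\rho(\sB_\min) = \mathbf{1}$ has integer entries. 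Additivity of $\Gamma$ on each face of $\cK^\met(\cT)$ (Definition \ref{Gammamap}) and bijectivity (Proposition \ref{mainvelocitythm}) make this a monoid homomorphism onto its image.

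For the reverse direction, I would start with an integer point $\mathbf{v} \in \tau(\sB) \cap \mathbb{Z}^m$ and use the triangulation $\cF(\ccD)$ to locate it. By Theorem \ref{triangulationtheorem} (interpreted on the nonreduced level, since $\cF(\ccD)$ refines $\cF(\cT)$ cone-by-cone and both share the lineality space $\langle \mathbf{1}\rangle_\mathbb{R}$), there exists $Y \in \ccD$ with $\Omega(Y) \leq \sB$ such that $\mathbf{v} \in \tau(Y)$. Write $Y = \{\sC_1,\ldots,\sC_k\}$ and adjoin $\sC_0 = \sB_\min$. The cone $\tau(Y)$ is simplicial with rays $\rho(\sC_1),\ldots,\rho(\sC_k)$ and one-dimensional lineality space $\langle\mathbf{1}\rangle_\mathbb{R}$, and the reduced cone is smooth by Lemma \ref{smooth}. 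Via Lemma \ref{smoothtropical}, the primitive generators $\rho(\sC_1),\ldots,\rho(\sC_k)$ together with $\mathbf{1}$ extend to a $\mathbb{Z}$-basis of $\mathbb{Z}^m$; consequently the unique expression $\mathbf{v} = \lambda_0 \mathbf{1} + \sum_{i=1}^k \lambda_i \rho(\sC_i)$ guaranteed by simpliciality has $\lambda_0 \in \mathbb{Z}$ and $\lambda_i \in \mathbb{Z}_{\geq 0}$ for $i \geq 1$.

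Finally, I would set $\ell_{\sB'} := \sum_{i=0}^k \lambda_i \ell(\sC_i)$ and apply Scholium \ref{2ndinetermetricbracketings} to conclude that $\ell_{\sB'} \in \tau_\mathbb{Z}^\met(Y) \subseteq \tau_\mathbb{Z}^\met(\sB)$, with $\Gamma(\ell_{\sB'}) = \mathbf{v}$ by the forward direction. The main technical subtlety to articulate carefully is that the primitive ray generators of $\tau(Y)$ in $\mathbb{Z}^m$ are exactly the vectors $\rho(\sC_i)$ themselves (not some proper multiple): this follows from the ``1 at the end'' convention in the definition of $\rho(\sC)$ combined with the smoothness argument, since the basis extension witnessed in the proof of Lemma \ref{smooth} uses the $\rho(\sC_i)$ directly. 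Together these two directions establish the monoid isomorphism $\tau_\mathbb{Z}^\met(\sB) \xrightarrow{\sim} \tau(\sB) \cap \mathbb{Z}^m$.
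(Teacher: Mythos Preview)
Your proposal is correct and follows essentially the same approach as the paper: both reduce to the triangulated fan $\cF(\ccD)$, invoke the smoothness result (Lemma \ref{smooth}) to express an integer point of $\tau(Y)$ as an integer combination of the $\rho(\sC_i)$ and $\mathbf{1}$, and then appeal to Scholium \ref{2ndinetermetricbracketings} and Proposition \ref{mainvelocitythm} to transport this back to $\tau^{\met}_{\mathbb Z}(Y)$. Your write-up simply makes the two directions and the role of primitivity more explicit than the paper's compressed version.
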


\begin{proof}
It suffices to prove this statement for $\Gamma$ viewed as a map $\Gamma:{{ \cK}^{\met}}(\ccD)\rightarrow \cF(\ccD)$.
By Lemma \ref{smooth}, we see that each integer point in $\tau(Y)$ is an integer combination of the ray generators and the statement follow by combining this observation with Scholium \ref{2ndinetermetricbracketings}, Proposition \ref{mainvelocitythm}, and the fact that $\Gamma$ maps the ray generators of ${{ \cK}^{\met}}(\ccD)$ to the ray generators of $\cF(\ccD)$.
\end{proof}

\subsection{The permutahedral \texorpdfstring{$n$}{n}-associahedron}\label{permutahedralassociahedronsubsection}

\

In this subsection, we describe a finer triangulation of the the triangulated velocity fan, which we call the permutahedral velocity fan.
For the case of $n=1$, the velocity fan and the triangulated velocity fan coincide and specialize to the wonderful associahedral fan, while the permutahedral velocity fan specializes to the braid arrangement.
Thus, this triangulation can be viewed as providing a new family of generalizations of the braid arrangement associated to rooted plane trees.
We note that several rooted plane trees can give the same permutahedral velocity fan, e.g.\ all of the concentrated $n$-associahedra presented in \S \ref{s:concentrated_realization} have permutahedral velocity fan equal to the braid arrangement.

We begin by describing a certain simplicial complex which will form the underlying combinatorial object for our triangulation.

\begin{definition}
\label{generalized_collision}
Let $\cT$ be a rooted plane tree and $\{\sC_1,\ldots, \sC_k\}$ be a collection of collisions such that for every $1\leq i<j \leq k$, we have $\sC_i \sim \sC_j$, then we define $\sD=\{\sC_1,\ldots, \sC_k\}$ to be a \emph{generalized collision}. We denote the collection of all generalized collisions by $\mathfrak{P}(\cT)$.
\null\hfill$\triangle$
\end{definition}

\begin{remark}
   We may identify a generalized collision $\sD =\{\sC_1,\ldots, \sC_k\}$ with the associated $n$-bracketing $\vee_{i=1}^k \sC_i$. 
\end{remark}

We now introduce a partial order on $\mathfrak{P}(\cT)$.

\begin{definition}
\label{perm_partial}
Given $\sD_1, \sD_2 \in \mathfrak{P}(\cT)$.
We say that $\sD_1 \leq_{\mathfrak{P}} \sD_2$ if for every $\sC_1 \in \sD_1$ there exists some $\sC_2 \in \sD_2$ such that $\sC_1 \rightarrow \sC_2$.
\null\hfill$\triangle$
\end{definition}

We recall the following construction from poset theory.

\begin{definition}
Given a partially ordered set $\cP$, the \emph{order complex} of $\cP$, which we denote $\mathscr{O}(\cP)$, is the abstract simplicial complex whose vertices are the elements of $\cP$ and whose faces are flags, i.e. chains, in $\cP$.
\end{definition}

\begin{definition}
Let $\cT$ be a rooted plane tree of depth $n$. 
The \emph{permutahedral $n$-associahedron} is $\mathscr{O}(\cT) \coloneqq \mathscr{O}(\mathfrak{P}(\cT))$.  
\null\hfill$\triangle$
\end{definition}

Given some $\sD \in \mathfrak{P}(\cT)$ we assign a metric $n$-bracketing and a vector:
\begin{align}
\ell(\sD) = \sum_{\sC \in \sD} \ell(\sC) \,\,\, , \,\,\, \rho(\sD)=\sum_{\sC \in \sD} \rho(\sC).
\end{align}

Given a face $Z \in \mathscr{O}(\cT)$, we assign a conical set and a cone:
\begin{gather}
\tau^{\met}(Z) = cone\{\ell(\sD):\sD \in Z\}+\langle {\ell(\sB_\min)}\rangle_{\mathbb{R}},
\\
\tau(Z) = cone\{\rho(\sD):\sD \in Z\}+\langle {\bf 1}\rangle_{\mathbb{R}}.
\nonumber
\end{gather}

\begin{definition}
The \emph{permutahedral metric $n$-bracketing complex} is
\begin{align}
\cK^{\met}(\mathscr{O}(\cT))=\{\tau^{\met}(Z):Z \in \mathscr{O}(\cT)\},
\end{align}
and the \emph{reduced permutahedral metric $n$-bracketing complex} is
\begin{align}
{\overline \cK}^{\met}(\mathscr{O}(\cT))=\{\tau^{\met}(Z)/\langle {\ell(\sB_\min)}\rangle_{\mathbb{R}} :Z \in \mathscr{O}(\cT)\}.
\end{align}
\null\hfill$\triangle$
\end{definition}

\begin{definition}
\label{permutahedral_assoc}
The \emph{permutahedral velocity fan} is
\begin{align}
\cF(\mathscr{O}(\cT))=\{\tau(Z):Z \in \mathscr{O}(\cT)\},
\end{align}
and the \emph{reduced permutahedral velocity fan} is
\begin{align}
{\overline \cF}(\mathscr{O}(\cT))=\{\tau(Z)/\langle {\bf 1}\rangle_{\mathbb{R}}:Z \in \mathscr{O}(\cT)\}.
\end{align}
\null\hfill$\triangle$
\end{definition}

The following is the main result of this subsection. 

\begin{figure}[ht]
\def\svgwidth{0.85\textwidth}
{\small
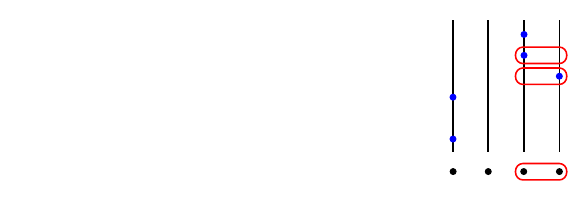
}
\caption{A pair of disjoint collisions, the generalized collision associated to the pair, and the corresponding ray generators.}
\label{generalizedcollisions}
\end{figure}

\begin{proposition}\label{permutahedralvelocitymainprop}
The reduced permutahedral velocity fan ${\overline \cF}(\mathscr{O}(\cT))$ is a smooth flag triangulation of  ${\overline \cF}(\ccD)$.
\end{proposition}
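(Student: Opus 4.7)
The plan has four parts: (a) flagness of $\mathscr{O}(\cT)$, (b) simpliciality of each cone $\tau(Z) \in \overline{\cF}(\mathscr{O}(\cT))$, (c) refinement of $\overline{\cF}(\ccD)$ by $\overline{\cF}(\mathscr{O}(\cT))$, and (d) smoothness of each cone.

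Flagness is essentially automatic. Since $\mathscr{O}(\cT)$ is the order complex of the poset $\mathfrak{P}(\cT)$, any collection of elements that is pairwise comparable is totally ordered by transitivity, hence is a chain. Equivalently, $\mathscr{O}(\cT)$ is the clique complex of the comparability graph of $\mathfrak{P}(\cT)$, and Lemma \ref{flagclique} applies.

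For (b) and (c), I would work with metric $n$-bracketings by pulling back along the isomorphism $\Gamma$ of Corollary \ref{reducedgammaiso}, reducing to showing that $\overline{\cK}^{\met}(\mathscr{O}(\cT))$ is a simplicial refinement of $\overline{\cK}^{\met}(\ccD)$. Fix a nested collection $Y \in \ccD$ and a metric $n$-bracketing $\ell_\sB \in \tau^{\met}(Y)$. By Lemma \ref{uniqueexpress}, $\ell_\sB$ has a unique decomposition
\begin{align}
\ell_\sB = \lambda_0\,\ell(\sB_{\min}) + \sum_{\sC \in Y_0} \lambda_\sC\, \ell(\sC)
\end{align}
with $Y_0 \subseteq Y$ nested and $\lambda_\sC > 0$. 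The crux is to show that $\ell_\sB$ admits a unique expression $\ell_\sB = \lambda_0\,\ell(\sB_{\min}) + \sum_{j=1}^s \mu_j\,\ell(\sD_j)$ with $\mu_j > 0$ and $\sD_1 <_{\mathfrak{P}} \cdots <_{\mathfrak{P}} \sD_s$ a chain in $\mathfrak{P}(Y_0)$. I expect to construct the chain by an iterative procedure driven by the tree $\cT_{Y_0}$ of Theorem \ref{treecombtheorem}: at each stage, form the next generalized collision $\sD_j$ from the $\sim$-antichain of $\rightarrow$-minimal surviving collisions, choosing $\mu_j$ so that some surviving collision is exhausted in the comparison $\lambda_\sC \leftrightarrow \sum_{j' : \sC \in \sD_{j'}} \mu_{j'}$; the exhausted collisions are then fused into their parents in $\cT_{Y_0}$, and the process continues. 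Bookkeeping between the $\lambda$-values and the $\mu$-coefficients, combined with the nested structure of $Y_0$, should yield both existence and uniqueness. Existence of such chains gives refinement (c); uniqueness gives simpliciality (b) since it prevents nontrivial relations among the ray vectors $\rho(\sD_j)$ once $\Gamma$ is applied.

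For smoothness (d), I would follow the template of Lemma \ref{smooth}. For a maximal chain $Z = (\sD_1 <_{\mathfrak{P}} \cdots <_{\mathfrak{P}} \sD_{m-1})$, form the matrix $M_{\tau(Z)}$ whose columns are $\rho(\sD_1), \ldots, \rho(\sD_{m-1}), \mathbf{1}$ and show $\det M_{\tau(Z)} = \pm 1$ via Lemma \ref{smoothtropical}. Replacing $\rho(\sD_j)$ by the increment $\rho(\sD_j) - \rho(\sD_{j-1})$ by column operations produces columns which, by the chain relation $\sD_{j-1} <_{\mathfrak{P}} \sD_j$, decompose as signed sums of collision rays $\rho(\sC)$ for the ``newly appearing'' or ``recently fused'' collisions at stage $j$. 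Conjugating by $P_\sigma^T$ for a compatible shuffle $\sigma$ for a bottom-level collision and applying Lemmas \ref{contractionlemma} and \ref{refinedcontractionlemma} then reduces the computation to a matrix of the same form for a smaller permutahedral velocity fan, from which induction on $|V(\cT)|$ concludes. The main obstacle is the construction of the canonical chain and coefficients in step (c): $Y_0$ mixes $\rightarrow$- and $\sim$-relations in a nontrivial way, so the interplay between the ordering of collisions by $\lambda$-value and the tree structure of $\cT_{Y_0}$ must be handled carefully. The smoothness argument in (d) is technically involved for the same reason, since each $\rho(\sD_j)$ is a sum of collision rays rather than a single collision ray, so the column reductions must be coordinated across the entire chain before reduction to the inductive step is possible.
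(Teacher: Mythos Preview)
Your outline for parts (a), (b), (c) is essentially the paper's approach: flagness is immediate for order complexes, and the paper's Lemma \ref{permuassuniqueexpress} establishes unique decomposition of a metric $n$-bracketing into a chain of generalized collisions, which yields both simpliciality and refinement exactly as you describe. The paper's uniqueness argument is a bit cleaner than your iterative construction: it compares two putative chain decompositions by showing, via Lemma \ref{uniqueexpress}, that their minimal generalized collisions must coincide (each constituent collision appears with the same coefficient on both sides), then subtracts and inducts.

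The real divergence is in smoothness. You propose redoing the inductive argument of Lemma \ref{smooth} from scratch: column reductions $\rho(\sD_j)-\rho(\sD_{j-1})$, conjugation by $P_\sigma^T$, Laplace expansion, and induction on $|V(\cT)|$. This could in principle be made to work, but you correctly flag it as technically involved because each $\rho(\sD_j)$ is a sum of collision rays and the bookkeeping across the chain is delicate. The paper avoids all of this by leveraging a combinatorial bijection you did not use: Proposition \ref{linearextensionbijection} identifies maximal flags $Z\in\mathscr{O}(\cT)$ with linear extensions $\prec$ of maximal faces $Y\in\ccD$, via $\sD_{\sC_j,\prec}=\max_\prec(\sC_j)$. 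Writing the columns of $M_{\tau(Y)}$ in the order $\prec$, the change-of-basis matrix taking $\{\rho(\sC_i)\}$ to $\{\rho(\sD_{\sC_j,\prec})\}$ is upper triangular with $1$'s on the diagonal (since $\sC_j\in\sD_{\sC_j,\prec}$ and only $\sC_i$ with $i\le j$ can appear). Hence $\det M_{\tau(Z)}=\det M_{\tau(Y)}=1$ by Lemma \ref{smooth}, and you are done. This reduces smoothness of $\overline\cF(\mathscr{O}(\cT))$ directly to the already-established smoothness of $\overline\cF(\ccD)$, with no further induction or adjoint permutation transformations needed.
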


The proof of Proposition \ref{permutahedralvelocitymainprop} is very similar to, but easier than, the proof of Theorem \ref{triangulationtheorem}.
For this reason, we err on the side of concision.

We begin with a combinatorial observation about $\mathscr{O}(\cT)$ which, in the case $n=1$, specializes to a well-know relationship between the wonderful associahedral fan and the braid arrangement (see \cite{postnikov2008faces}).
Let $Y\in \ccD$ and let $\cT_Y$ be the corresponding rooted tree from Theorem \ref{treecombtheorem}.
Let $\cP(\cT_Y)$ be the poset whose as Hasse diagram is $\cT_Y$.
Let  $L(Y)$ denote the collection of linear extensions of $\cP(\cT_Y)$. For a fixed linear extension $\prec \, \in L(Y)$ and $\sC \in Y$, let
\begin{align}
max_{\prec}(\sC)
\coloneqq
\{\sC' \in Y: \sC' \preceq \sC,\, \nexists \, \, \sC'' \in Y \, \text{s.th.} \,\,  \sC''\preceq \sC, \sC' \neq \sC'' \,\text{and}\,\, \sC' \rightarrow \sC'' \}.
\end{align}
Let $\sD_{\sC,\prec} \in \mathfrak{P}(\cT)$ be
\begin{align}
\sD_{\sC,\prec} =\{ \sC':\sC'\in max_{\prec}(\sC)\}.
\end{align}
Let
\begin{align}
\Lambda:\bigsqcup_{Y \in \ccD} L(Y) \,\rightarrow \,\mathscr{O}(\cT)
\end{align}
be defined by
\begin{align}
\Lambda(\prec) = \{\sD_{\sC,\prec} : \sC \in Y\}.
\end{align}

\begin{proposition}
\label{linearextensionbijection}
The map $\Lambda$ is a bijection.
\end{proposition}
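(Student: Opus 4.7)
The plan is to establish the bijection in two halves: verifying well-definedness of $\Lambda$, then constructing an explicit inverse. For well-definedness, I must check first that each $\sD_{\sC,\prec}$ is a generalized collision in the sense of Definition \ref{generalized_collision}, and second that $\{\sD_{\sC,\prec}: \sC \in Y\}$ is totally ordered under $\leq_{\mathfrak{P}}$. The first point is immediate from nestedness of $Y$: if distinct $\sC', \sC'' \in \max_\prec(\sC)$ were $\rightarrow$-comparable, say $\sC' \rightarrow \sC''$, then since $\prec$ linearly extends $\cP(\cT_Y)$ we would have $\sC' \prec \sC''$, and since $\sC'' \in \max_\prec(\sC)$ implies $\sC'' \preceq \sC$, the collision $\sC''$ would witness that $\sC'$ violates the defining $\max$-property. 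So $\sC' \sim \sC''$, and $\sD_{\sC,\prec} \in \mathfrak{P}(\cT)$.

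For the chain property, I will enumerate $Y = \{\sC_1 \prec \cdots \prec \sC_{|Y|}\}$ in $\prec$-order and prove the key structural claim: $\sD_{\sC_{i+1},\prec}$ is obtained from $\sD_{\sC_i,\prec}$ by adjoining $\sC_{i+1}$ and deleting precisely those $\sC' \in \sD_{\sC_i,\prec}$ satisfying $\sC' \rightarrow \sC_{i+1}$. Indeed, any such $\sC'$ now has $\sC_{i+1}$ as a witness destroying its $\max$-property at level $\sC_{i+1}$, while an element $\sC''\preceq \sC_i$ that was already absorbed (i.e.\ $\sC'' \notin \sD_{\sC_i,\prec}$) remains absorbed by the same intermediary. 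Since removed elements $\rightarrow$ to $\sC_{i+1}$ and the rest are unchanged, this exhibits $\sD_{\sC_i,\prec} <_{\mathfrak{P}} \sD_{\sC_{i+1},\prec}$, and in particular the $\sD_{\sC_i,\prec}$ are pairwise distinct, so $\Lambda(\prec)$ is a chain in $\mathfrak{P}(\cT)$ of length exactly $|Y|$.

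For the inverse, given a chain $\sD_1 <_{\mathfrak{P}} \sD_2 <_{\mathfrak{P}} \cdots <_{\mathfrak{P}} \sD_k$, I set $Y := \bigcup_{i=1}^k \sD_i$ as a set of collisions, and define $\prec$ on $Y$ by first appearance in the chain. The goal is then to show: (i) $|Y| = k$, so each chain step introduces exactly one new collision; (ii) $Y$ is nested, so $Y \in \ccD$; (iii) $\prec$ is a linear extension of $\cP(\cT_Y)$; and (iv) applying $\Lambda$ to $(Y, \prec)$ recovers the original chain. Items (iii) and (iv) will follow directly from the structural claim used in well-definedness: the cover relations in $\cT_Y$ match the chain transitions, and the first-appearance ordering respects these covers by construction.

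The main obstacle is establishing (i) and (ii). The key combinatorial fact is that if a collision $\sC \in \sD_i$ is absorbed at step $i+1$ (i.e.\ $\sC \notin \sD_{i+1}$), then by $\sD_i <_{\mathfrak{P}} \sD_{i+1}$ there is a unique $\sC^* \in \sD_{i+1}$ with $\sC \rightarrow \sC^*$ and $\sC \neq \sC^*$; uniqueness comes from the fact that two such witnesses would themselves be $\sim$-related (both in $\sD_{i+1}$) yet both contain $\sC$, contradicting disjointness. Iterating this forward through the chain gives, for every $\sC \in Y$, a canonical trajectory of $\rightarrow$-predecessors in subsequent $\sD_j$'s. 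To conclude nestedness, I will argue by induction on chain length that any two $\sC, \sC' \in Y$ are related by $\sim$ or $\rightarrow$: either they coexist in some $\sD_j$ (hence $\sim$), or their trajectories merge at some step forcing $\rightarrow$-comparability. The cardinality bound $|Y| = k$ then follows because each chain step either absorbs old elements into a single new element or is ``free'' with one new addition; a case analysis using the partition/nestedness built into $\mathfrak{P}(\cT)$ via Lemma \ref{projnestedisnested} and the definition of $\leq_{\mathfrak{P}}$ rules out simultaneous multi-insertions. Once (i) and (ii) are in place, (iii) and (iv) follow as described, completing the proof.
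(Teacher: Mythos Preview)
Your well-definedness and (implicit) injectivity argument is correct: the structural claim that $\sD_{\sC_{i+1},\prec}$ arises from $\sD_{\sC_i,\prec}$ by adjoining $\sC_{i+1}$ and deleting its $\cT_Y$-children shows that $\Lambda(\prec)$ is a chain of length exactly $|Y|$, and both $Y$ and $\prec$ are recoverable from this chain (as the union, and the order of first appearance of the unique new element at each step).

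The surjectivity argument, however, has a genuine gap. Your claim (i) that $|Y|=k$ is false in general, and consequently ``first appearance'' does not define a total order on $Y$. The simplest obstruction is a length-one flag $Z=\{\sD\}$ with $\sD=\{\sC_1,\sC_2\}$ a two-element generalized collision (for instance the brackets $\{1,2\}$ and $\{3,4\}$ in $\cK_4$): here $|Y|=2\neq 1=k$, and both collisions first appear at step~$1$. More generally, nothing in the definition of $\leq_{\mathfrak P}$ forbids a single step $\sD_i<_{\mathfrak P}\sD_{i+1}$ from introducing several new pairwise-disjoint collisions at once, so your sketched case analysis does not actually ``rule out simultaneous multi-insertions,'' and Lemma~\ref{projnestedisnested} is of no help here. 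In fact such a $Z$ is \emph{not in the image of $\Lambda$ at all}, since every chain produced by $\Lambda$ has a singleton as its minimum element and gains exactly one new collision per step. Thus $\Lambda$ is injective but not surjective onto $\mathscr{O}(\cT)$ under the literal reading of the statement. The paper's one-line proof (``follows readily from the definitions'') does not address this point, and its sole application of the proposition---in the smoothness argument of Proposition~\ref{permutahedralvelocitymainprop}---is unaffected once one first reduces to maximal $Z$ via Lemma~\ref{smoothchamber}, where your structural claim does yield a genuine bijection between $\bigsqcup_{Y\ \text{maximal}}L(Y)$ and the maximal flags of $\mathscr{O}(\cT)$.
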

\begin{proof}
First observe that the linear extensions of $Y$ are those orders which allow $Y$ to be described as in Definition \ref{nestedcollisionsdef}.
With this observation in hand, the proposition follows readily from the definitions.
\end{proof}

The following Lemma is analogous to Lemma \ref{uniqueexpress}.

\begin{lemma}
\label{permuassuniqueexpress}
Let $\ell_{\sB}$ be a metric $n$-bracketing.
There exists a unique expression
\begin{align}
\ell_{\sB} = \sum_{i=0}^k \lambda_i\ell(\sD_i)
\end{align}
with $\sC_0=\sB_\min$, $\lambda_i \in \mathbb{R}$, and $\lambda_i \geq 0$ for $i>0$, where $\{\sD_i :1\leq i \leq k\}$ is a flag of generalized collisions. 
\end{lemma}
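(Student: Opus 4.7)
My strategy is to imitate the proof of Lemma \ref{uniqueexpress}, refined by a top-down peeling of generalized collisions. First I would apply Lemma \ref{uniqueexpress} to write $\ell_\sB = \lambda_0 \ell(\sB_\min) + \sum_{\sC \in Y} \lambda_\sC \ell(\sC)$ with $Y \in \ccD$ nested and all $\lambda_\sC > 0$. This reduces the problem to producing and uniquely characterizing the flag directly from the nested collection $Y$ together with its coefficients; since terms with $\lambda_i = 0$ may be dropped, ``unique'' is understood as unique after pruning trivial contributions.

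For existence, set $Y^{(1)} = Y$ and iteratively let $\sD^{(i)} \subseteq Y^{(i)}$ consist of the $\rightarrow$-maximal elements of $Y^{(i)}$, meaning those $\sC \in Y^{(i)}$ admitting no $\sC' \in Y^{(i)} \setminus \{\sC\}$ with $\sC \rightarrow \sC'$. Any two such maxima must satisfy $\sim$ by the nested property, so $\sD^{(i)}$ is a generalized collision. Set $\mu^{(i)} \coloneqq \min_{\sC \in \sD^{(i)}} \lambda^{(i)}_\sC$, subtract $\mu^{(i)} \ell(\sD^{(i)})$ (which remains a valid metric $n$-bracketing by a mild extension of Scholium \ref{sch:subtract}), and let $Y^{(i+1)}$ be the updated support. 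Reversing the order of the resulting sequence gives the desired flag $\sD_1 \leq_{\mathfrak{P}} \cdots \leq_{\mathfrak{P}} \sD_k$: the chain condition holds because each $\sC \in \sD^{(i)}$ is either still present in $\sD^{(i-1)}$, in which case $\sC \rightarrow \sC$ suffices, or was previously dominated in $Y^{(i-1)}$ by an element that has since been peeled, supplying a strict $\rightarrow$-ancestor inside $\sD^{(i-1)}$.

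For uniqueness, take any decomposition $\ell_\sB = \lambda_0 \ell(\sB_\min) + \sum_{j=1}^k \mu_j \ell(\sD_j)$ with $\mu_j > 0$ and the $\sD_j$ forming a $\leq_{\mathfrak{P}}$-chain, and expand as a sum over individual collisions. The flag condition, combined with pairwise-disjointness inside each $\sD_j$, forces the combined support $Y' \coloneqq \bigcup_j \sD_j$ to be nested, so Lemma \ref{uniqueexpress} yields $Y' = Y$ and recovers the $\lambda_\sC$. The crucial step is then to show that $\sD_k$ must equal the set of $\rightarrow$-maximal elements of $Y$: if some $\sC \in \sD_k$ possessed a strict $\rightarrow$-ancestor $\sC''$ in $Y$, then tracking $\sC''$ up through the flag would yield $\sC''' \in \sD_k$ with $\sC \rightarrow \sC''' \neq \sC$, contradicting the pairwise-disjointness of $\sD_k$; conversely, a $\rightarrow$-maximal $\sC$ must lie in $\sD_k$, since otherwise the relation $\sD_j \leq_\mathfrak{P} \sD_{j+1}$ at the largest index containing $\sC$ would demand a strict ancestor. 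With $\sD_k$ pinned down, the constraint $\mu_j > 0$ forces $\mu_k = \min_{\sC \in \sD_k} \lambda_\sC$: the unique collision of maximal rank in the underlying linear extension contributes exactly $\mu_k$, while every other element of $\sD_k$ accumulates additional contributions from earlier $\sD_j$'s and strictly exceeds $\mu_k$. Subtracting $\mu_k \ell(\sD_k)$ strictly decreases $|Y|$, and induction completes the proof.

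The main obstacle will be the forced identification of $\sD_k$ with the $\rightarrow$-maximal elements of $Y$. This step interlocks the flag condition, pairwise-disjointness inside a generalized collision, and transitivity of $\rightarrow$; once it is established, the rest of the argument is a routine induction on $|Y|$ paralleling the structure of the proof of Lemma \ref{uniqueexpress}.
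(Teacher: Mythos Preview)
Your proposal is correct and follows the same overall outline as the paper's proof—reduce to the nested-collision decomposition of Lemma~\ref{uniqueexpress}, then peel the flag one layer at a time and induct—but you peel from the \emph{top} (the $\rightarrow$-maximal layer $\sD_k$) whereas the paper peels from the \emph{bottom} (the minimal generalized collision $\sD_1$). Your direction is arguably cleaner: $\sD_k$ admits the intrinsic description ``the set of $\rightarrow$-maximal elements of $Y$'', which you establish carefully using transitivity, acyclicity, and the incompatibility of $\rightarrow$ with $\sim$; by contrast $\sD_1$ has no such direct characterization in terms of $(Y,\lambda_\sC)$, and the paper is correspondingly terser about why the two bottom elements in two candidate expressions must coincide. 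Both routes ultimately rest on the same two ingredients—Lemma~\ref{uniqueexpress} to pin down $(Y,\lambda_\sC)$, and the disjointness-within-each-$\sD_j$ condition to force each layer—so the approaches are dual rather than genuinely different.

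One minor correction: in your determination of $\mu_k$, the phrases ``unique collision of maximal rank'' and ``strictly exceeds $\mu_k$'' are not quite right, since several elements of $\sD_k$ may simultaneously realize the minimum $\lambda_\sC$. What you actually need, and what your argument does yield once tightened, is merely the existence of some $\sC^*\in\sD_k$ lying in no earlier $\sD_j$. The consecutive-range property (if $\sC\in\sD_{j_1}\cap\sD_{j_3}$ with $j_1<j_2<j_3$ then $\sC\in\sD_{j_2}$, proved exactly as in your step~5) together with distinctness $\sD_{k-1}\neq\sD_k$ supplies such a $\sC^*$; then $\lambda_{\sC^*}=\mu_k$ gives $\mu_k=\min_{\sC\in\sD_k}\lambda_\sC$ without any uniqueness claim.
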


\begin{proof}
It is easy to  extend the argument at the beginning of Lemma \ref{uniqueexpress} to prove that  an expression of the desired form exists for $\ell_{\sB}$.
To show uniqueness, suppose we have two expressions for $\ell_\sB$ of the form in the statement of the lemma.
Look at the minimal generalized collisions in the flags for each sum.
Lemma \ref{uniqueexpress} gives that each collision appears in both expressions with the same coefficient.
We may apply this observation to show that the two minimal generalized collisions are the equal.
We can then subtract a positive scalar multiple of the corresponding metric generalized collision from both sides of the sum so that the number of terms drops by one in at least one of the sums and apply induction.
\end{proof}

The following propositions and their proofs are similar to Lemmas \ref{triangulatedmetriccomplexresult} and \ref{refinementofmetriccomplex} and their proofs, respectively.

\begin{proposition}\label{permutahedralmetricfan}
The permutahedral metric $n$-bracketing complex $\cK^{\met}(\mathscr{O}(\cT))$ is an abstract fan.
\end{proposition}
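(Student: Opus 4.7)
The plan is to mirror the structure of the proof of Lemma \ref{triangulatedmetriccomplexresult}, with Lemma \ref{permuassuniqueexpress} playing the role that Lemma \ref{uniqueexpress} played there. First I will verify the two conditions of Definition \ref{abstractfandef} for $\cK^{\met}(\mathscr{O}(\cT))$, and then observe that each conical set is an abstract cone by Lemma \ref{abstractsubconelem}, since $\tau^{\met}(Z)$ is a conical subset of $\tau^{\met}\bigl(\bigvee_{\sD \in Z}\bigvee_{\sC \in \sD}\sC\bigr) \in \cK^{\met}(\cT)$.

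For condition (\ref{absfandef1}), let $Z \in \mathscr{O}(\cT)$ and let $\tau$ be a face of $\tau^{\met}(Z)$. By Lemma \ref{generatorsconsetlem}, $\tau$ is generated by a subset of the ray generators $\{\ell(\sD):\sD \in Z\}$ together with the lineality space $\langle \ell(\sB_\min)\rangle_{\mathbb{R}}$; that is, $\tau = \tau^{\met}(Z')$ for some $Z' \subseteq Z$. Since $Z$ is a flag in $\mathfrak{P}(\cT)$, so is $Z'$, hence $Z' \in \mathscr{O}(\cT)$ and $\tau \in \cK^{\met}(\mathscr{O}(\cT))$. Conversely, given any $Z' \subseteq Z$ the uniqueness of flag expressions in Lemma \ref{permuassuniqueexpress} guarantees that $\tau^{\met}(Z')$ satisfies the face conditions of Definition \ref{facedefinition} as a subset of $\tau^{\met}(Z)$.

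For condition (\ref{absfandef2}), I will prove the stronger statement that if $Z_1, Z_2 \in \mathscr{O}(\cT)$, then $\tau^{\met}(Z_1) \cap \tau^{\met}(Z_2) = \tau^{\met}(Z_1 \cap Z_2)$, which lies in $\cK^{\met}(\mathscr{O}(\cT))$ since the intersection of two flags is again a flag. The inclusion $\tau^{\met}(Z_1 \cap Z_2) \subseteq \tau^{\met}(Z_1) \cap \tau^{\met}(Z_2)$ is immediate. For the reverse, take $\ell_{\sB}$ in the intersection; expressing it using the generators indexed by $Z_1$ and by $Z_2$, one obtains two expressions
\begin{align}
\ell_{\sB} = \sum_{\sD \in Z_1} \lambda_\sD \ell(\sD) + \lambda_0 \ell(\sB_\min) = \sum_{\sD \in Z_2} \gamma_\sD \ell(\sD) + \gamma_0 \ell(\sB_\min),
\end{align}
with $\lambda_\sD, \gamma_\sD \geq 0$. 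Restricting each sum to its support, we obtain two presentations of $\ell_\sB$ as a positive conical combination over a flag of generalized collisions. By Lemma \ref{permuassuniqueexpress}, these presentations coincide, so every $\sD$ appearing with nonzero coefficient lies in both $Z_1$ and $Z_2$. Hence $\ell_\sB \in \tau^{\met}(Z_1 \cap Z_2)$, as desired.

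The main (and only nontrivial) obstacle is propagating the uniqueness Lemma \ref{permuassuniqueexpress} to conclude that supports in the two expressions above agree; this is already packaged in Lemma \ref{permuassuniqueexpress}, so the argument is largely a bookkeeping exercise parallel to the one in Lemma \ref{triangulatedmetriccomplexresult}. I expect no further technical subtleties beyond observing that subsets and intersections of flags in the poset $\mathfrak{P}(\cT)$ remain flags, which is trivial since chains are closed under both operations.
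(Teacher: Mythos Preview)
Your proof is correct and follows exactly the approach the paper indicates: the paper does not spell out a proof of this proposition but simply states that it and its proof are similar to Lemma~\ref{triangulatedmetriccomplexresult}, with Lemma~\ref{permuassuniqueexpress} substituting for Lemma~\ref{uniqueexpress}. You have faithfully carried out that parallel, including the use of Lemma~\ref{generatorsconsetlem} for condition~(\ref{absfandef1}), the uniqueness argument for condition~(\ref{absfandef2}), and Lemma~\ref{abstractsubconelem} for the abstract-cone property.
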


\begin{proposition}\label{permutahedralmetricrefinement}
The abstract fan ${{ \cK}^{\met}}(\mathscr{O}(\cT))$ is an refinement of ${{ \cK}^{\met}}(\ccD)$.
\end{proposition}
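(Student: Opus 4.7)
The plan is to establish both support equality and refinement of cones by combining the uniqueness statements of Lemmas \ref{uniqueexpress} and \ref{permuassuniqueexpress} with the bijection from Proposition \ref{linearextensionbijection}.

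For support equality, Lemma \ref{refinementofmetriccomplex} already gives $\supp(\cK^{\met}(\ccD)) = \supp(\cK^{\met}(\cT))$. Each generator $\ell(\sD)$ for $\sD \in \mathfrak{P}(\cT)$ lies in $\cK^{\met}(\cT)$ because the collisions constituting $\sD$ are pairwise disjoint, hence pairwise compatible, so $\ell(\sD)$ is a metric $n$-bracketing for their join. Conversely, Lemma \ref{permuassuniqueexpress} expresses every metric $n$-bracketing as a conical combination of generalized-collision metrics along a flag, so $\supp(\cK^{\met}(\mathscr{O}(\cT))) = \supp(\cK^{\met}(\cT))$.

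For each cone $\tau^{\met}(Z) \in \cK^{\met}(\mathscr{O}(\cT))$ attached to a flag $Z = \{\sD_1 \leq_{\mathfrak{P}} \cdots \leq_{\mathfrak{P}} \sD_k\}$, I would show $\tau^{\met}(Z) \subseteq \tau^{\met}(Y_Z)$ for $Y_Z \coloneqq \bigcup_j \sD_j$. The key claim is that $Y_Z$ is a nested collection: two collisions in a common $\sD_j$ are pairwise $\sim$ by the definition of a generalized collision, and for $\sC \in \sD_i$ and $\sC' \in \sD_j$ with $i < j$, the relation $\sD_i \leq_{\mathfrak{P}} \sD_j$ produces $\sC'' \in \sD_j$ with $\sC \rightarrow \sC''$; combining $\sC'' \sim \sC'$ with $\sC \rightarrow \sC''$ and comparing fusion brackets then yields either $\sC \rightarrow \sC'$ or $\sC \sim \sC'$. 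Since each $\ell(\sD_j) = \sum_{\sC \in \sD_j} \ell(\sC)$ lies in $\tau^{\met}(Y_Z)$, the containment follows.

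Conversely, for any $\ell_\sB \in \tau^{\met}(Y)$ with $Y \in \ccD$, Lemma \ref{uniqueexpress} writes $\ell_\sB = \sum_j \mu_j \ell(\sC_j)$ uniquely with $\sC_j \in Y$ and $\mu_j > 0$; I would extract the required flag by iterated peeling: take the $\rightarrow$-maximal elements of $\{\sC_j : \mu_j > 0\}$, which are pairwise $\sim$ (since within a nested set, $\rightarrow$-incomparable elements must be $\sim$), so they form a generalized collision $\sD$; subtract $m\,\ell(\sD)$ with $m = \min_{\sC \in \sD}\mu_\sC$, and iterate. The sequence of generalized collisions produced, read in reverse order of peeling, is an ascending flag $Z$ in $\mathfrak{P}(\cT)$, and $\ell_\sB \in \tau^{\met}(Z)$. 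The main obstacle is verifying both the nested-set claim above and the assertion that the peeling procedure produces a legitimate $\leq_{\mathfrak{P}}$-ascending flag; both reduce to careful bookkeeping with the $\rightarrow$/$\sim$ dichotomy for nested collections and how fusion brackets interact across successive peelings. Finally, Proposition \ref{linearextensionbijection} confirms that, for maximal $Y \in \ccD$, the cones of $\cK^{\met}(\mathscr{O}(\cT))$ refining $\tau^{\met}(Y)$ are exactly those indexed by the linear extensions of the binary tree $\cT_Y$.
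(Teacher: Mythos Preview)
Your proposal is correct and follows essentially the same route as the paper, which simply points to the analogy with Lemma~\ref{refinementofmetriccomplex}: given $\ell_\sB \in \tau^{\met}(Y)$, invoke the unique flag expression (Lemma~\ref{permuassuniqueexpress}) and observe that the collisions appearing in it all lie in $Y$. Your explicit top-down peeling (take $\rightarrow$-maximals, subtract, repeat) is a harmless variant of the bottom-up peeling implicit in the paper's existence argument for Lemma~\ref{permuassuniqueexpress}, and your forward-containment verification that $Y_Z=\bigcup_j\sD_j$ is nested---while not strictly required by the definition of refinement---is correct (indeed $\sC\rightarrow\sC''$ and $\sC''\sim\sC'$ force $\sC\sim\sC'$, since the fusion bracket of $\sC$ lies under that of $\sC''$).
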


We now complete the proof of Proposition \ref{permutahedralvelocitymainprop}.

\begin{proof}(\ref{permutahedralvelocitymainprop})
By Proposition \ref{permutahedralmetricrefinement} and Lemma \ref{quotientrefinement}, ${{\overline{ \cK}}^{\met}}(\mathscr{O}(\cT))$ is a refinement of ${{\overline{ \cK}}^{\met}}(\ccD)$.
By Corollary \ref{reducedgammaiso} and Lemma \ref{imagetriang} we see that ${\overline \cF}(\mathscr{O}(\cT))$ is a fan which refines ${\overline \cF}(\ccD)$, and ${\overline \cF}(\mathscr{O}(\cT))$ is simplicial by construction.
The complex $\mathscr{O}(\cT)$ is flag as every order complex is flag.
For smoothness, given $Z \in \mathscr{O}(\cT)$, one can construct the matrix $M_{\tau(Z)}$ of primitive ray generators together with ${\bf 1}$ as in the statement of Lemma \ref{smoothtropical}.
Let $\prec \,=\Lambda^{-1}(Z) \in L(Y)$ as provided by Proposition \ref{linearextensionbijection}.
Take the matrix $M_{\tau(Y)}$ with its columns ordered according to the linear extension $\prec$.
To prove that $M_{\tau(Z)}$ has determinant 1, we can observe that it is related to $M_{\tau(Y)}$  by multiplication by an integer matrix $A$, and Proposition \ref{linearextensionbijection} describes $A$ as being uppertriangular with 1's on the diagonal, hence $det(M_{\tau(Z)})=det(M_{\tau(Y)})=1.$
\end{proof}

\subsection{1-dimensional \texorpdfstring{$n$}{n}-associahedra and walls of the velocity fan}
\label{wallssection}

\

In this section we provide a recursive calculation of the normal vectors for the walls of the velocity fan.
Generalized permutahedra are characterized as those polytopes whose edge directions are type $A$ roots, i.e.\ vectors of the form $e_i-e_j$.
This is equivalent to saying the vectors normal to the walls of the normal fan of a generalized permutahedron are type $A$ roots, which is in turn equivalent to saying that the normal fan should be a coarsening of the type $A$ Coxeter arrangement (the braid arrangement).
We refer the reader to \cite{ardila2020coxeter} for a concise treatment of this story.

The complexity of a family of polytopes which in some way extends generalized permutahedra can very roughly be measured by the complexity of the edge directions, i.e.\ the vectors orthogonal to the walls of their normal fan.
In this subsection, we will see  that the normal vectors for the walls of the velocity fan have unbounded support.  

For calculating these normal vectors we interpret them as normal vectors for walls of the triangulated velocity fan sitting inside walls of the nontriangulated velocity fan. We then utilize Lemma \ref{contractionlemma} and Lemma \ref{connstructingnestedcollisionsinthequotient} for giving a recursive reduction to a simpler case where the normal vectors of walls can be read of directly from the combinatorics of the corresponding nested collisions utilizing a characterization of the 1-dimensional $n$-associahedra.
This approach mimics the way in which one can understand the normal vectors for the walls of the wonderful associahedral fan by first characterizing these walls via the combinatorics of the unique 1-dimensional associahedron.
For fixed $n$ there is no longer a unique $1$-dimensional $n$-associahedron, there are quadratically many combinatorial types, nonetheless they fall into two distinct categories.

\begin{definition}
Let $\cT$ be a rooted plane tree.  We say that $\cK(\cT)$ is \emph{1-dimensional} if $\cK(\cT)$ has height 1 as a ranked poset, equivalently $\overline{\cF}(\cT)$ is a 1-dimensional fan.
\end{definition}

\begin{lemma}\label{1dimtrees}
Let $\cT$ be a rooted plane tree of depth $n$, and fix a path $\cP$ in $\cT$ of depth $n$.
Suppose that $\cK(\cT)$ is 1-dimensional, then either 
\begin{enumerate}
\item $\cT$ is obtained by simultaneously adding two leaves to a pair of (not necessarily distinct) vertices of depth at most $n-1$ in $\cP$, so that $\cT$ has three leaves, or
\item $\cT$ is obtained by adding a path of length two to a vertex at depth at most $n-2$ in $\cP$, so that $\cT$ has two leaves.
\end{enumerate}
\end{lemma}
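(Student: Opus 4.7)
The plan is to reduce the statement to a straightforward vertex count. The starting observation is that by Lemma \ref{fulldimlemma}, any maximal $n$-bracketing in $\cK(\cT)$ has rank $m-1$, where $m$ is the ambient dimension of $\cF(\cT)$ determined by $\cT$. So the hypothesis that $\cK(\cT)$ is 1-dimensional translates into $m=2$, which is equivalent to saying that $\cT$ has exactly two non-root vertices more than the minimum $n$ required to have depth $n$. In particular, if $\cP$ is any path in $\cT$ from the root to a leaf at depth $n$, then $V(\cT)\setminus V(\cP)$ consists of exactly two vertices $u_1, u_2$.

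I would then case-analyze on whether $u_1$ and $u_2$ are adjacent in $\cT$. If $u_1$ and $u_2$ are not adjacent, then each must attach directly to a vertex of $\cP$, since $\cT$ is connected and the only other vertices of $\cT$ lie in $\cP$. Neither $u_i$ can have any child in $\cT$ (there are no further vertices available), so both are leaves of $\cT$. The constraint that $\cT$ has depth $n$ forces the parents of $u_1, u_2$ in $\cP$ to have depth at most $n-1$, putting us in case (1) of the lemma with three leaves.

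If instead $u_1$ and $u_2$ are adjacent, relabel so that $u_2$ is a child of $u_1$. Then $u_1$ must itself be a child of some vertex $w\in\cP$, again by connectedness. There are no remaining vertices to serve as additional children of $u_1$, so $u_2$ is a leaf and $u_1$ is internal with a unique child. For $u_2$ to have depth at most $n$, we need $w$ to have depth at most $n-2$, yielding case (2) with two leaves.

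The only potential subtlety is ruling out configurations in which both $u_1, u_2$ were internal, but this is impossible: an internal vertex requires at least one child, and accounting for both $u_1$ and $u_2$ internal would consume more than the two available non-root vertices outside $\cP$. Once the two cases are exhausted we are done; there are no deeper obstacles and the argument is essentially a piece of combinatorial bookkeeping riding on the dimension formula from Lemma \ref{fulldimlemma}.
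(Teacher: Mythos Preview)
Your proposal is correct and follows essentially the same approach as the paper: both translate the 1-dimensional hypothesis into $m=2$ and then enumerate the depth-$n$ rooted plane trees with exactly two extra vertices beyond a path. The paper's proof is more terse (it simply notes that $m=2$ means exactly two pairs of consecutive same-depth vertices and asserts the enumeration), while you spell out the case analysis on whether the two extra vertices are adjacent, but the content is the same.
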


\begin{proof}
If the reduced velocity fan is 1-dimensional, them $m=2$, hence $\cT$ has exactly two pairs of consecutive vertices at the same depth.
The only rooted plane trees of depth $n$ with this property are those described in the statement of the lemma.
\end{proof}

\begin{figure}[ht]
\includegraphics[height=1.5in]{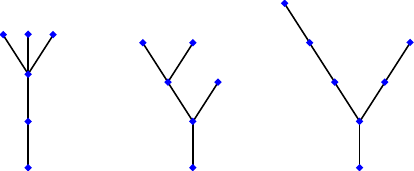}
\caption{Three different rooted planes trees with 1-dimensional $n$-associahedra.
The first two are of the form (1) in Lemma \ref{1dimtrees} and the third is of the form (2) in Lemma \ref{1dimtrees}.}
\label{1Dplanetrees}
\end{figure}

\begin{proposition}\label{braidonedimfan}
The velocity fan of every 1-dimensional $n$-associahedron is the braid arrangement $\mathcal{A}_2$.
\end{proposition}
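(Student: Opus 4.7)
The plan is to derive the claim directly from Theorem \ref{mainthmagain}, avoiding a case-by-case analysis of the trees classified in Lemma \ref{1dimtrees}.

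First I would note that the hypothesis that $\cK(\cT)$ is 1-dimensional places $\overline{\cF}(\cT)$ inside $\mathbb{R}^m/\langle\mathbf{1}\rangle\cong\mathbb{R}^{m-1}$ as a 1-dimensional fan, forcing $m=2$. Thus $\cF(\cT)\subset\mathbb{R}^2$ with lineality $\langle\mathbf{1}\rangle=\langle(1,1)\rangle$, and every nonminimum element of $\cK(\cT)$ is an atom, i.e.\ a collision. Moreover, by the face-poset equality $\cP(\cF(\cT))=\cK(\cT)$ from Theorem \ref{mainthmagain}, distinct elements of $\cK(\cT)$ produce distinct cones; in particular, for each collision $\sC$ the ray generator $\rho(\sC)$ cannot lie in $\langle(1,1)\rangle$, since otherwise $\tau(\sC)=\tau(\sB_\min)$. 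Hence each $\tau(\sC)$ is a closed half-plane of $\mathbb{R}^2$ bounded by the line $\langle(1,1)\rangle$.

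Next I would invoke completeness of $\cF(\cT)$ (again from Theorem \ref{mainthmagain}): the half-planes $\{\tau(\sC):\sC\in\fX(\cT)\}$ together cover $\mathbb{R}^2$. Since the line $\langle(1,1)\rangle$ bounds exactly two closed half-planes of $\mathbb{R}^2$, namely $H^+\coloneqq\{x\in\mathbb{R}^2:x_1\geq x_2\}$ and $H^-\coloneqq\{x\in\mathbb{R}^2:x_1\leq x_2\}$, there must be exactly two collisions, with $\{\tau(\sC):\sC\in\fX(\cT)\}=\{H^+,H^-\}$.

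Finally, unpacking Definition \ref{defbraidarrangement} in the case $n=2$ shows that $\mathcal{A}_2$ consists of exactly the three cones $\langle(1,1)\rangle$, $H^+$, and $H^-$, indexed respectively by the three flags $\{\emptyset,[2]\}$, $\{\emptyset,\{1\},[2]\}$, and $\{\emptyset,\{2\},[2]\}$. This matches $\cF(\cT)$, giving the desired equality $\cF(\cT)=\mathcal{A}_2$. The main obstacle is conceptual rather than computational: one must be comfortable bypassing the enumerative analysis suggested by Lemma \ref{1dimtrees} and trusting that the rigidity of a complete 2-dimensional fan with prescribed 1-dimensional lineality determines it uniquely. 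An alternative, more computational route would be to enumerate the two collisions in each subcase of Lemma \ref{1dimtrees} and compute $\rho(\sC)$ explicitly, verifying that the two generators sit on opposite sides of $\langle(1,1)\rangle$ and recover $(1,0)$ and $(0,1)$ modulo lineality.
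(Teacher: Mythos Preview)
Your argument is correct and takes a genuinely different route from the paper's proof. The paper proceeds exactly along the ``alternative, more computational route'' you mention at the end: it invokes the classification of Lemma \ref{1dimtrees}, and in each case explicitly writes down the two ray generators (namely $(1,0),(0,1)$ in the three-leaf case and $(1,0),(1,2)$ in the two-leaf case), checking that modulo $\langle(1,1)\rangle$ they recover the standard generators for $\mathcal{A}_2$. Your argument instead leverages the full strength of Theorem \ref{mainthmagain}: once you know $\cF(\cT)$ is a complete fan in $\bR^2$ with lineality $\langle(1,1)\rangle$ and with face poset $\cK(\cT)$, rigidity forces it to equal $\mathcal{A}_2$ without any computation.

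The trade-off is that your approach is cleaner but loses the explicit ray generators. In the paper these are not incidental: the Remark immediately following Proposition \ref{braidonedimfan} relies on the observation that in the two-leaf case the velocity fan does \emph{not} produce the standard 0--1 generators directly (one gets $(1,2)$ rather than $(0,1)$), and this phenomenon is identified as the combinatorial source of walls whose normals are not type-$A$ roots. So while your proof establishes the proposition more efficiently, the paper's case analysis carries information used downstream.
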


\begin{proof}
Let $\cT$ be a rooted plane tree of depth $n$ whose corresponding $n$-associahedron is 1-dimensional.
We utilize the characterization given in Lemma \ref{1dimtrees}.
If $\cT$ has three leaves, then the two corresponding collisions have ray generators   $(1,0)$ and $(0,1)$, and lineality space generated by $(1,1)$ as desired.
On the other hand, if $\cT$ has two leaves, then the ray generators are $(1,0)$ and $(1,2)$ with lineality space generated by $(1,1)$.
We can subtract the lineality space generator $(1,1)$ from the ray generator $(1,2)$ to obtain the ray generator $(0,1)$.  
\end{proof}

\begin{remark}
We  highlight the curious fact that for 1-dimensional categorical $n$-associahedra associated to trees with two leaves, while the velocity fan is indeed the 1-dimensional braid arrangement, the velocity fan does not immediately give us the standard ray generators.
This observation is related to the fact that these types of 1-dimensional $n$-associahedra are the reason why general velocity fans do not live in the braid arrangement.
That is, suppose that $\cK(\cT)$ is an $n$-associahedron with the rank of $\cK(\cT)$ at least 2.
Then $\cF(\cT)$ is not a coarsening of the braid arrangement if and only if there exists a wall in $\cF(\cT)$ such that the identification from Lemma \ref{kbracketinreducedwall} gives a $k$-bracket $A$ associated to a  tree with two leaves.
\null\hfill$\triangle$
\end{remark}

In order to give a calculation of the  normal vectors for the walls of the velocity fan, it suffices to  calculate the normal vectors for certain walls of the triangulated velocity fan which are contained in walls of the velocity fan.
This simplifying observation is the reason why this calculation is presented in this section.

\begin{definition}\label{reducedwalldef}
Let $W \in \ccD$ with $W = \{\sC_i:1\leq i \leq m-2\}$, and let $\tau(W)$ be the corresponding wall of the triangulated velocity fan.
Suppose that $\vee_{i=1}^{m-2}\,\sC_i=\sB$ and suppose for each $\sC_i$ which is containment-minimal in $\sB$, $\sC_i$ is not a minimal collision, then we say that $\tau(W)$ is \emph{reduced}.
\null\hfill$\triangle$
\end{definition}

\begin{lemma}\label{pathtreereduced}
If $\tau(W)$ is a reduced wall of the triangulated velocity fan, then the tree $\cT_W$ corresponding to $W$, as described in Theorem \ref{treecombtheorem}, is a path. 
\end{lemma}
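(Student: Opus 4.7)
The plan is to derive a contradiction by assuming both that $\tau(W)$ is reduced and that $\cT_W$ is not a path, then producing at least four pairwise distinct maximal simplices of $\ccD$ containing $W$. This will contradict the fact that $\overline\cF(\ccD)$, being a simplicial triangulation of the complete fan $\overline\cF(\cT)$ (Theorems~\ref{triangulationtheorem} and~\ref{complete2}), has every codimension-one cone bounding exactly two top-dimensional cones.

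The first step is to identify the set of leaves of $\cT_W$ lying in $W$ with the set of collisions in $W$ that are containment-minimal in $\sB \coloneqq \vee W$. Indeed, being a leaf of $\cT_W$ amounts to having no incoming cover edges from $W$, which by Scholium~\ref{simplifiedcontainment-minimal condition} is containment-minimality in $\sB$. Since every non-root vertex of $\cT_W$ has exactly one outgoing edge, if $\cT_W$ is not a path then it has at least $\ell \geq 2$ leaves $\sC^{(1)}, \ldots, \sC^{(\ell)}$. Moreover any two such leaves must satisfy $\sC^{(i)} \sim \sC^{(j)}$ rather than $\rightarrow$, since neither lies below the other in $W$.

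Next, the reducedness hypothesis forces each $\sC^{(i)}$ to be non-minimal, so Lemma~\ref{collisionminimalchar} furnishes two distinct minimal collisions $\wt\sC^{(i)}_1, \wt\sC^{(i)}_2 \in \fX(\cT)$ with $\wt\sC^{(i)}_j \rightarrow \sC^{(i)}$ and $\wt\sC^{(i)}_j \neq \sC^{(i)}$; since $\sC^{(i)}$ is a leaf nothing in $W$ lies strictly below it, so $\wt\sC^{(i)}_j \notin W$. I would then check that each $W \cup \{\wt\sC^{(i)}_j\}$ is a nested set of size $m-1$, hence a maximal simplex of $\ccD$ by Lemma~\ref{maxnumbernested}: transitivity of $\rightarrow$ (Lemma~\ref{transitive}) gives $\wt\sC^{(i)}_j \rightarrow \sC_k$ for every $\sC_k \in W$ lying above $\sC^{(i)}$ on the $\cT_W$-path to $\sB_\min$, while containment of the fusion bracket of $\wt\sC^{(i)}_j$ inside that of $\sC^{(i)}$ yields $\wt\sC^{(i)}_j \sim \sC_k$ for every $\sC_k \in W$ with $\sC_k \sim \sC^{(i)}$. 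These $2\ell$ extensions are pairwise distinct: within a single leaf by Lemma~\ref{collisionminimalchar}, and across distinct leaves because the fusion brackets of the $\sC^{(i)}$'s are pairwise disjoint, forcing the same of any pair of sub-collisions. For $\ell \geq 2$ this yields at least four top-dimensional cones of $\overline\cF(\ccD)$ adjacent to $\overline\tau(W)$, the desired contradiction.

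I expect the main technical obstacle to be the careful verification in the previous paragraph that $W \cup \{\wt\sC^{(i)}_j\}$ is genuinely nested; this hinges on the precise propagation of the two relations $\rightarrow$ and $\sim$ under passage from $\sC^{(i)}$ to the strict sub-collision $\wt\sC^{(i)}_j$, via Definitions~\ref{collisionrelations} and~\ref{collisionrelationsdisjoint}, together with the observation that disjointness of fusion brackets is inherited by sub-collisions. Once this bookkeeping is in place the rest of the argument is a clean combinatorial counting argument against the manifold property of complete simplicial fans.
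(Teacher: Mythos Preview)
Your argument is correct and follows essentially the same route as the paper's proof. Both proceed by noting that if $\cT_W$ is not a path it has at least two leaves, that these leaves are exactly the collisions in $W$ that are containment-minimal in $\sB=\bigvee W$, that reducedness forces each such leaf to be non-minimal, and hence (via Lemma~\ref{collisionminimalchar}) each leaf admits at least two strict sub-collisions that extend $W$ to a maximal nested set; this produces too many chambers adjacent to $\tau(W)$ for it to be a wall. The paper phrases the conclusion as ``the codimension of $\tau(W)$ is greater than $1$'' while you phrase it as ``more than two maximal cones meet along $\overline\tau(W)$,'' but these are equivalent in a complete simplicial fan.

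Your write-up is more explicit than the paper's in two respects: you spell out the identification of leaves with containment-minimal collisions via Scholium~\ref{simplifiedcontainment-minimal condition}, and you verify that each candidate $W\cup\{\wt\sC^{(i)}_j\}$ is genuinely nested. The paper's proof takes these points for granted. Your verification of the latter is sound: the key observation that $\sC^{(i)}\sim\sC_k$ and $\wt\sC^{(i)}_j\rightarrow\sC^{(i)}$ imply $\wt\sC^{(i)}_j\sim\sC_k$ follows because the fusion bracket of $\wt\sC^{(i)}_j$, being an essential bracket, lies inside an essential bracket of $\sC^{(i)}$ and hence inside $\overline D$ of its fusion bracket.
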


\begin{proof}
If $\cT_W$ is not a path, then it has at least two leaves $\sC_1$ and $\sC_2$, and if neither of these are minimal collisions, then the set of collisions $\{\sC\}$ such that $\sC \rightarrow \sC_1$ or $\sC \rightarrow \sC_2$ has size greater than 2.  Because $\cF(\cT)$ is a complete fan, this implies the codimension of $\tau(W)$ is greater than 1, a contradiction.
\end{proof}

\begin{lemma}
Suppose that $\tau(W)$ is a reduced wall of the triangulated velocity fan, then $\tau(W)$ is contained in a wall of the velocity fan.
\end{lemma}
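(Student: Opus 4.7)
The strategy is to exhibit the two maximal simplicial cofaces of $\tau(W)$ in $\cF(\ccD)$ explicitly and show their ambient chambers in $\cF(\cT)$ differ. By Lemma \ref{pathtreereduced}, $\cT_W$ is a path; let $\sC_1$ be its unique leaf, which is the unique containment-minimal collision of $\sB:=\vee W$ that lies in $W$. By the reduced hypothesis, $\sC_1$ is not a minimal collision, so Lemma \ref{collisionminimalchar} supplies two distinct minimal collisions $\eta,\eta' \in \fX(\cT)$ with $\eta,\eta' \rightarrow \sC_1$.

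Next I would verify that both $W \cup \{\eta\}$ and $W \cup \{\eta'\}$ are maximal nested collections. Since $\eta \rightarrow \sC_1$ and $\sC_1 \rightarrow \sC_2 \rightarrow \cdots \rightarrow \sC_{m-2}$, transitivity (Lemma \ref{transitive}) gives $\eta \rightarrow \sC_i$ for all $i$; hence $W \cup \{\eta\}$ is nested, and similarly for $W \cup \{\eta'\}$. Each has cardinality $m-1$ and so is maximal by Lemma \ref{maxnumbernested}. Therefore $\tau(W\cup\{\eta\})$ and $\tau(W\cup\{\eta'\})$ are maximal simplicial cones of $\cF(\ccD)$ having $\tau(W)$ as a facet. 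Since $\cF(\ccD)$ is a complete simplicial fan (Theorem \ref{triangulationtheorem}), the codimension-$1$ cone $\tau(W)$ admits exactly two maximal cofaces, and hence these must be exactly $\tau(W\cup\{\eta\})$ and $\tau(W\cup\{\eta'\})$.

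It then suffices to show that the two underlying $n$-bracketings $\sB_\eta := \vee(W\cup\{\eta\})$ and $\sB_{\eta'} := \vee(W\cup\{\eta'\})$ differ, because then $\tau(W\cup\{\eta\}) \subseteq \tau(\sB_\eta)$ and $\tau(W\cup\{\eta'\}) \subseteq \tau(\sB_{\eta'})$ belong to different chambers of $\cF(\cT)$, forcing $\tau(W)$ onto a wall of $\cF(\cT)$. Using Lemma \ref{collisionminimalchar} together with the observation that the height partial order on singleton essential brackets in a minimal collision is forced by $<_\cT$, distinct minimal collisions must have distinct fusion brackets, so $A^\eta \neq A^{\eta'}$. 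Neither fusion bracket lies in $\vee W$: each is a strict refinement of some essential bracket of $\sC_1$, while the nontrivial brackets of $\vee W$ are precisely those of the $\sC_i$, all of which are at least as coarse as those of the containment-minimal $\sC_1$. Consequently $A^\eta \in \sB_\eta \setminus \sB_{\eta'}$, whence $\sB_\eta \neq \sB_{\eta'}$.

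The main obstacle is the last bracket-tracking step: one must verify carefully that every nontrivial bracket in $\vee W$ arises from some $\sC_i$ with brackets no finer than those of $\sC_1$, so that the strictly finer brackets contributed by $\eta$ cannot accidentally already appear in $\vee W$. A secondary subtlety is the uniqueness of the minimal collision associated to a given 2-element sibling bracket, which rests on the fact that essential brackets at depths above the fusion depth are singletons and hence the height partial order is fully determined by $<_\cT$. Both points are direct consequences of the structural results already developed, but together they deliver the desired bracket-level distinction between $\sB_\eta$ and $\sB_{\eta'}$.
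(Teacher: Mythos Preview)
Your approach is considerably more elaborate than the paper's, which proceeds by a direct dimension count: since $\sC_1$ is containment-minimal in $\sB=\vee W$ but not a minimal collision, one can find $\eta \in \fX(\cT)$ with $\eta \rightarrow \sC_1$, $\eta \neq \sC_1$; then $\eta$ contributes a bracket strictly finer than a containment-minimal bracket of $\sB$, so $\eta \not\leq \sB$ and hence $\sB$ is not maximal. Since $\tau(W) \subseteq \tau(\sB)$ and $\tau(W)$ has codimension~$1$, the cone $\tau(\sB)$ must itself have codimension exactly~$1$, i.e.\ it is a wall. No explicit analysis of the two adjacent simplicial chambers is needed.

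Your route does reach the same conclusion, but the bracket-tracking argument for $\sB_\eta \neq \sB_{\eta'}$ has a genuine gap. The assertion that ``the height partial order on singleton essential brackets in a minimal collision is forced by $<_\cT$'' is false: when the fusion $k$-bracket $A$ has $A^k=\{u,v\}$ with $u,v$ belonging to \emph{different} parents, the singleton essential $(k{+}1)$-brackets lying over $A$ need not be ordered by $<_\cT$, and two minimal collisions with identical fusion bracket can differ only in this height order (cf.\ the second row of Figure~\ref{nonminimalcollisions}). In that situation the fusion bracket $A^\eta$ equals the fusion bracket of $\sC_1$ and hence \emph{does} lie in $\vee W$, contradicting your claim ``Neither fusion bracket lies in $\vee W$''. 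The conclusion $\sB_\eta \neq \sB_{\eta'}$ is still correct---the two height orders disagree, so the $n$-bracketings are distinct---but your stated justification does not cover this case. If you want to salvage the argument, split into the two cases of Lemma~\ref{collisionminimalchar}: either the fusion bracket of $\sC_1$ has $|A^k|\geq 3$ (and then $A^\eta \neq A^{\eta'}$ genuinely) or some deeper essential bracket has $\geq 2$ elements (and then $\eta,\eta'$ may share a fusion bracket but differ in height order).
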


\begin{proof}
Let $\sC_1$ be a minimal collision in $W$, then because $\tau(W)$ is reduced, we know that $\sC_1$ is not a minimal collision in $\cK(\cT)$.
Therefore the $n$-bracketing $\sB = \vee_{\sC \in W}\sC$ is not a maximal $n$-bracketing.  Because $\tau(W)$ is a wall of the  triangulated velocity fan, $\tau(\sB)$ must be a wall of the velocity fan, and we know $\tau(W) \subseteq \tau(\sB)$.
\end{proof}

For $W \in \ccD$ with codimension 1, let ${\bf v}(W)$ denote a primitive vector orthogonal to $\tau(W)$ and for $\sB \in \cK(\cT)$ with codimension 1, let ${\bf v}(\sB)$ denote a primitive vector orthogonal to $\tau(\sB)$.  We note that ${\bf v}(W)$ and ${\bf v}(\sB)$ are only determined up to sign, but this will be inconsequential for us. 
Let $\tau(\sB)$ be a wall of the velocity fan and let $\tau(W)$ be a wall of the triangulated velocity fan contained in $\tau(\sB)$.
For calculating ${\bf v}(\sB)={\bf v}(W)$, we provide a reduction to the case where $\tau(W)$ is reduced. We will need the following basic lemma from linear algebra.

\begin{lemma}\label{fundlinalg}
Suppose that ${\bf u},{\bf v_1,\ldots, v_k}\in \mathbb{R}^m$ such that ${\bf u}\perp {\bf v_i}$ for $1\leq i\leq k$.
Let $A:\mathbb{R}^m \rightarrow \mathbb{R}^m$ be an invertible linear transformation.
Then $A^{-1}({\bf u})\perp A^T({\bf v_i})$. 
\end{lemma}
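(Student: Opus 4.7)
The plan is to prove this by a direct computation using the standard identities $(A^{-1})^T = (A^T)^{-1}$ and $(BC)^T = C^T B^T$. Since all the vectors live in $\mathbb{R}^m$ equipped with the standard inner product, the orthogonality hypothesis ${\bf u} \perp {\bf v}_i$ is equivalent to ${\bf u}^T {\bf v}_i = 0$, and we want to verify that $(A^{-1}({\bf u}))^T (A^T({\bf v}_i)) = 0$.

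First I would write
\begin{align}
\bigl(A^{-1}({\bf u})\bigr)^T \bigl(A^T({\bf v}_i)\bigr)
=
{\bf u}^T (A^{-1})^T A^T {\bf v}_i.
\end{align}
Next, I would apply the identity $(A^{-1})^T A^T = (A A^{-1})^T = I$ to simplify the middle factor to the identity. This leaves ${\bf u}^T {\bf v}_i$, which vanishes by hypothesis. Hence $A^{-1}({\bf u}) \perp A^T({\bf v}_i)$ as claimed.

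There is no genuine obstacle here; the statement is a one-line consequence of the fact that transpose and inverse commute. The only subtlety worth flagging is consistency of notation: we must interpret $A^T$ as the transpose of $A$ with respect to the standard basis (this is how $P_\sigma^T$ is used elsewhere in the paper), so that the pairing $\langle {\bf x}, {\bf y} \rangle = {\bf x}^T {\bf y}$ satisfies $\langle A{\bf x}, {\bf y}\rangle = \langle {\bf x}, A^T{\bf y}\rangle$. Under this convention the computation above is literally the chain of adjoint identities $\langle A^{-1}{\bf u}, A^T{\bf v}_i\rangle = \langle {\bf u}, (A^{-1})^T A^T {\bf v}_i\rangle = \langle {\bf u}, {\bf v}_i\rangle = 0$.
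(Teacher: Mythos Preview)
Your proof is correct. The paper does not actually prove this lemma; it is stated without proof as a ``basic lemma from linear algebra,'' and your one-line adjoint computation $\langle A^{-1}{\bf u}, A^T{\bf v}_i\rangle = \langle {\bf u}, (A^{-1})^T A^T {\bf v}_i\rangle = \langle {\bf u}, {\bf v}_i\rangle = 0$ is exactly the intended verification.
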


Let $\tau(W)$ be a nonreduced wall of the triangulated velocity fan associated to a nested collection of collisions $\{\sC_i:1\leq i \leq m-2\}$.
Suppose that $\tau(W)$ is contained in a wall of the velocity fan associated to the $n$-bracketing $\sB$.
Suppose that $\sC_1$ is a minimal collision which is containment-minimal in $\sB$.

We begin by taking the following transformation of the ray generators for $\tau(W)$:
\begin{align}
{\overline {\rho}}(\sC_i)=
\begin{cases}
\,\, P_{\sigma}^T({\rho}(\sC_i)-{\rho}(\sC_1)) & \text{if} \,\, \sC_1 \rightarrow \sC_i \\
\,\, P_{\sigma}^T{\rho}(\sC_i) & \text{ if } \sC_1 \sim \sC_i .
\end{cases}
\end{align}

Lemmas \ref{contractionlemma} and \ref{refinedcontractionlemma} allow us to reinterpret these vectors.
We would like to invoke Lemma \ref{connstructingnestedcollisionsinthequotient} to reorganize the image of the quotients of the collisions as a collection of nested collisions, but we should careful because the nested collection is not maximal.
We observe that if the $\sC_{i,r_i} \notin \{\sC_{i-1,r}: 1\leq r\leq s_{i-1}\}$ from Lemma \ref{connstructingnestedcollisionsinthequotient} is not unique, then $\bigvee_{i=1}^{m-2}\,\sC_i$ is a maximal bracketing, and this would contradict the assumption that $\tau(W)$ is contained in a wall of the  velocity fan.
Reorganize the image of the quotients as a collection of collisions which form a wall $\tau(W')$ for $W' \in \Delta_{\cT/\sC_1}$.
Note that if we delete the 0 entry in ${\overline {\rho}}(\sC_i)$ corresponding to the pair of $k$-brackets which collided in $\sC_1$, then the span of the resulting vectors is equal to the span of $\tau(W')$.
 
By induction on the size of $\cT$, we can calculate ${\bf v}(W')$.
Extend ${\bf v}(W')$ by adding one extra entry $x$ in the place corresponding to the pair of $k$-brackets which collided in $\sC_1$ and let this new vector be ${\bf v}_x$.
To complete the calculation of ${\bf v}(W)$ we apply Lemma \ref{fundlinalg} and see that $(P_\sigma^T)^T({\bf v}_x)=P_\sigma({\bf v}_x)={\bf v}(W)$.
We only need to solve for the value of $x$ and to do this we can use the fact that ${\bf v}(W)\perp {\bf 1}$, i.e.\ the sum of the coordinates of ${\bf v}(W)$ is zero.
This is particularly easy: suppose the fusion bracket of $\sC_1$ is a $k$-bracket, then $\sigma$ is the identity permutation for all entries corresponding to pairs of consecutive $j$-brackets for $j\leq k$.
This implies that there is a single entry in $P_\sigma({\bf v}_x)$ equal to $x$ and this variable is not present anywhere else in the vector $P_\sigma({\bf v}_x)$, so we take $x$ equal to the negative of the sum of the remaining entries of $P_\sigma({\bf v}_x)$.

\begin{figure}[ht]
\includegraphics[width=0.2\textwidth]{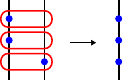}
\

\begin{gather*}
\rho(\sC) = (1,0,3),
\quad
\sigma = (1,312) \\
\vspace{1em}
\\
P_\sigma^T = \left(\begin{array}{rrr}
1 & 0 & 0 \\
0 & -1 & -1 \\
0 & 1 & 0
\end{array}\right),
\quad
(1,1) \perp (1,-1) \\
\vspace{1em}
\\
\left(\begin{array}{rrr}
1 & 0 & 0 \\
0 & -1 & 1 \\
0 & -1 & 0
\end{array}\right)\left(\begin{array}{r}
x \\
1 \\
-1
\end{array}\right)
=
\left(\begin{array}{r}
x \\
-2 \\
-1
\end{array}\right),
\quad
\left(\begin{array}{r}
x \\
-2 \\
-1
\end{array}\right)
\perp
(1,1,1)
\Rightarrow
x = 3.
\\
\vspace{1em}
\\
\text{Observe }
(3,-2,-1) \perp (1,0,3).
\end{gather*}
\caption{A recursive calculation of the normal vector for a wall of $W_{2,1}$.
\label{wallnormal}
}
\end{figure}

It remains to calculate ${\bf v}(W)$ when $\tau(W)$ is reduced, which forms the base case of the inductive calculations described above. 

\begin{lemma}
\label{kbracketinreducedwall}
Let $W =\{\sC_1, \ldots, \sC_{m-2}\}$ be a collection of nested collisions such that $\tau(W)$ is a reduced wall of the triangulated velocity fan, and $\sC_i \rightarrow \sC_j$ for all $1\leq i<j\leq m-2$.
Then there exists a unique $k$-bracket $A \in \sC_1^k$, with $k$ taken minimum, such that $\cK(A)$ is a 1-dimensional $k$-associahedron.
\end{lemma}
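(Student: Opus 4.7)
The plan is to localize the codimension-one structure of $\tau(W)$ at a single bracket of $\sC_1$, using the path structure of $\cT_W$.

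First, by Lemma \ref{pathtreereduced} the rooted tree $\cT_W$ is a path, so the hypothesis $\sC_i\to\sC_j$ for $i<j$ realizes the full containment order on $W$, and $\sC_1$ is the unique containment-minimal collision in $\sB=\bigvee_i\sC_i$. By reducedness and Lemma \ref{collisionminimalchar}, $\sC_1$ is not a minimal collision. Since $\cF(\ccD)$ is a complete simplicial fan (combining Theorems \ref{complete2} and \ref{triangulationtheorem}), the codimension-one cone $\tau(W)$ is a face of exactly two maximal cones, corresponding to two distinct extensions $W\sqcup\{\sC^*_\pm\}$ to maximal nested sets.

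The first substantive step would be to show that both $\sC^*_\pm$ satisfy $\sC^*_\pm\to\sC_1$, so that both extensions are attached as children of $\sC_1$ in the path tree. Non-minimality of $\sC_1$ already supplies at least two distinct collisions $\sC'\to\sC_1$ (Lemma \ref{collisionminimalchar}), each producing an extension below $\sC_1$. Having two such extensions saturates the codimension, so no extension can occur at a higher position in $\cT_W$ (a sibling of some $\sC_i$ under $\sC_{i+1}$, or a sibling of $\sC_{m-2}$ under $\sB_\min$); otherwise the codimension would exceed $1$. Thus the combinatorics of $\tau(W)$ is governed entirely by the minimal refinements of $\sC_1$.

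Next I would pinpoint the bracket $A$. By Lemma \ref{collisionminimalchar}, the non-minimality of $\sC_1$ is of one of two types: (a) the fusion $k_f$-bracket $A_f$ satisfies $|A_f^{k_f}|\geq3$, or (b) some essential $j$-bracket $A'$ with $j>k_f$ satisfies $|A'^j|\geq2$. The requirement of exactly two minimal refinements of $\sC_1$ rules out the simultaneous occurrence of (a) and (b), as well as any larger violation. This leaves two restricted subcases. In case (a), $|A_f^{k_f}|=3$ with every higher essential bracket of size one: take $A=A_f$; since $\pi(A_f)$ is a singleton (Def.\ \ref{def:fusion_bracket}), $\cT|_{A_f}$ is a depth-$k_f$ path branching into three leaves at its deepest vertex, which matches case (1) of Lemma \ref{1dimtrees}, so $\cK(A)$ is 1-dimensional. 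In case (b), $|A_f^{k_f}|=2$ with a unique essential $A'$ of size two whose two vertices have distinct parents at depth $j-1$ (the equal-parent alternative gives a Y-tree with a $0$-dimensional $\cK$, yielding only a single refinement rather than two), and all other essentials of size one: take $A=A'$; then $\cT|_{A'}$ matches case (2) of Lemma \ref{1dimtrees} (a path with a length-two branch), so $\cK(A)$ is 1-dimensional.

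For uniqueness of $A$ at minimum $k$: any other candidate in $\sC_1^{k''}$ with $k''\leq k$ giving 1-dimensional $\cK$ is either a singleton bracket (necessarily giving a $0$-dimensional $\cK$, excluded) or a maximum $k''$-bracket (with $\cT|_A=\pi^{n-k''}(\cT)$), and the latter possibility can be excluded because otherwise the wall $\tau(W)$ would admit additional extensions incompatible with $\mathrm{codim}=1$; at the identified depth $k$, the bracket $A$ above is the unique nontrivial essential bracket in $\sC_1^k$ encoding the single location of non-minimality. The principal obstacle is the codimension-one dimension count in the second and third paragraphs: rigorously enumerating the minimal refinements of $\sC_1$ together with potential sibling extensions along $\cT_W$, and showing that $\mathrm{codim}=1$ forces the non-minimality of $\sC_1$ to take exactly one of the restricted forms (a) or (b). This is purely combinatorial but requires careful bookkeeping of essential brackets via Lemmas \ref{collisionminimalchar} and \ref{1dimtrees}.
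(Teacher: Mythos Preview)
Your overall strategy coincides with the paper's: use completeness to see that the codimension-one wall has exactly two maximal extensions, argue that both must be minimal refinements $\sC^*\to\sC_1$, and deduce from this that the non-minimality of $\sC_1$ is localized in a single essential bracket $A$ with $\cK(A)$ one-dimensional. The paper carries this out more directly by fixing one minimal refinement $\sC\to\sC_1$, taking $A'$ to be the essential bracket of $\sC_1$ at minimum depth strictly containing a bracket of $\sC$, building a second refinement inside $A'$, and then invoking completeness to conclude that $\cK(A')$ has only two collisions. Your explicit case split via Lemma~\ref{collisionminimalchar} is a reasonable alternative, but it contains a genuine gap.

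The error is in your handling of case (b). You assert that the two depth-$j$ vertices of the unique size-two essential bracket $A'$ must have distinct parents, on the grounds that the equal-parent alternative ``gives a Y-tree with a $0$-dimensional $\cK$, yielding only a single refinement''. Both claims are false. Since $A'$ is an essential bracket of $\sC_1$, it projects through the fusion bracket $A_f$, so $A'^{k_f}=A_f^{k_f}$ has two vertices; thus $\cT|_{A'}$ is never a Y-tree. In the equal-parent situation (which is forced whenever $j>k_f+1$, since then $|\pi(A')^{j-1}|=1$), the tree $\cT|_{A'}$ has three leaves---one fusion vertex at depth $k_f$ together with the two depth-$j$ vertices---and matches case (1) of Lemma~\ref{1dimtrees}, so $\cK(A')$ is one-dimensional. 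Moreover there are still exactly two minimal refinements of $\sC_1$: the refinement with fusion $A_f$ that splits $A'^j$ into singletons, and the type-1 refinement whose fusion is the $j$-bracket with $j$-th level $A'^j$ over the singleton $(j-1)$-bracket. A concrete instance is $\sC_1$ in $W_{1,2}$ with fusion the two lines and $A'^2$ the two points on the second line. Hence the equal-parent subcase must not be excluded; it belongs to case (1) of Lemma~\ref{1dimtrees}, and only the distinct-parent subcase (which forces $j=k_f+1$) yields case (2).
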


Our argumentation here is closely related to that of Lemma \ref{collisionminimalchar}.

\begin{proof}
Let $\sC$ be a minimal collision such that $\sC \rightarrow \sC_1$ and let $A \in \sC^k$ such that there exists an $A' \in \sC_1^k$ such that $A \subsetneq A'$, and we take $k$ minimum by assuming that $\pi(A)=\pi(A')$.
Then $\cK(A)$ has dimension at least 1 and $\sC|_A$ is a collision in $\cK(A)$.
Thus we can find some other collision in $\cK(A)$ and extend it, using $\sC_1$ outside of $A$, to obtain a collision $\sC' \in \fX(\cT)$ such that $\sC' \rightarrow \sC_1$, $\sC' \neq \sC$ and  $\sC' \neq \sC_1$.  
Therefore $\tau(W \cup \{\sC\})$ and $\tau(W \cup \{{\sC'}\})$ are two chambers of the triangulated velocity fan which contain $\tau(W)$, and thus are separated by $\tau(W)$ as a wall.
Because $\cF(\ccD)$ is a complete fan, it must be that if $\sC'' \in \fX(\cT)$ and $\sC'' \rightarrow \sC_1$ then $\sC'' \in \{\sC,\sC'\}$.
It follows that $\cK(A)$ must be 1-dimensional with exactly two collisions corresponding to the restrictions of $\sC$ and $\sC'$.  Moreover, there is no other $k$-bracket in $\sC_1$ with $k$ minimum such that the restriction is positive dimensional.
\end{proof}

\begin{lemma}
Let $\tau(W)$ be a reduced wall of the triangulated velocity fan, and assume the notation of Definition \ref{reducedwalldef}. 
Let $\sC_1$ be the unique collision which is minimal in $W$, and let $A$ be the unique $k$-bracket in $\sC_1$, with $k$ minimum, such that $\cK(A)$ is a 1-dimensional $n$-associahedron.  

\begin{enumerate}
\item\label{treedim1collision2} Suppose the tree $\cT(A)$ has three leaves.
Let $u_1$ and $u_2$ be the vertices of degree at least three which are parents of these leaves (noting that $u_1$ and $u_2$ are potentially equal).
Suppose the leftmost children of $u_1,u_2$ in $\cT(A)$ correspond to $u^{k_1}_{i_1},u^{k_2}_{i_2}\in V(\cT)$, respectively. Then $e^{k_1}_{i_1}-e^{k_2}_{i_2} \perp W$.  

\item\label{treedim1collision3} Suppose that $\cT(A)$ has two leaves.
Let $u$ be the nearest common ancestor of these leaves in $\cT(A)$, and suppose that $u^{k-2}_i \in V^{k-2}(\cT)$ corresponds to $u$.
Let the $u^{k-1}_{r},u^{k-1}_{r+1} \in V^{k-1}(\cT)$ correspond to the left and right children of $u$ in $\cT(A)$, respectively.
Let $u^{k}_{s_1},u^{k}_{s_2} \in V^k(\cT)$ correspond to the left and right grandchildren of $u$ in $\cT(A)$, respectively.
Then
\begin{align}
((s_2-s_1)\, e^{k-1}_{r} - \sum_{i=s_1}^{s_2-1}e^k_i \,)\perp W.
\end{align}
\end{enumerate}
\end{lemma}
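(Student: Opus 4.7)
The plan is direct verification of orthogonality. Since $\tau(W)$ has codimension one, the normal direction $v(W)$ is determined up to a nonzero scalar, so it suffices to show each proposed vector is perpendicular to $\rho(\sC_j)$ for every $\sC_j \in W$. Both candidates are visibly perpendicular to the lineality generator $\mathbf{1}$: in case (\ref{treedim1collision2}) the coefficients $+1$ and $-1$ cancel, and in case (\ref{treedim1collision3}) the factor $(s_2-s_1)$ at position $(k-1, r)$ is balanced by $s_2 - s_1$ coefficients equal to $-1$ at depth $k$. By Lemma \ref{pathtreereduced} the nested set $W$ is a chain $\sC_1 \to \cdots \to \sC_{m-2}$, and by Lemma \ref{kbracketinreducedwall} there are exactly two minimal collisions $\sC, \sC' \in \fX(\cT)$ with $\sC, \sC' \to \sC_1$, corresponding to the two atoms of the one-dimensional $\cK(A)$. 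By transitivity of $\to$ (Lemma \ref{transitive}), $\sC, \sC' \to \sC_j$ for every $\sC_j \in W$. I will compute the relevant entries of $\rho(\sC_j)$ using the defining formula $\rho(\sC_j)^p_q = d_{\cT/\sC_j}(\psi(u^p_q)) - d_{\cT/\sC_j}(\psi(u^p_{q+1})) + 1$, exploiting that whenever a pair $(u^p_q, u^p_{q+1})$ lies in a common essential $p$-bracket of $\sC_j$ the corresponding entry equals $1$.

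For case (\ref{treedim1collision2}) with $u_1 \neq u_2$, the atom $\sC$ has its unique nontrivial essential bracket equal to the sibling pair $\{u^{k_1}_{i_1}, u^{k_1}_{i_1+1}\}$. Since $\sC \to \sC_j$, this pair lies in a common essential $k_1$-bracket of $\sC_j$, so $\psi_{\cT/\sC_j}$ identifies them and $\rho(\sC_j)^{k_1}_{i_1} = 1$. Symmetrically $\sC' \to \sC_j$ yields $\rho(\sC_j)^{k_2}_{i_2} = 1$, and $(e^{k_1}_{i_1} - e^{k_2}_{i_2}) \cdot \rho(\sC_j) = 0$. The degenerate case $u_1 = u_2$ (three leaves sharing a single branching vertex) requires the analogous variant of the vector using consecutive sibling positions directly, and the argument is identical.

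Case (\ref{treedim1collision3}) is the main obstacle. Here the two atoms $\sC, \sC'$ share the fusion bracket $\{u^{k-1}_r, u^{k-1}_{r+1}\}$ at depth $k-1$ and differ only in the height order of the essential singleton $k$-brackets $\{u^k_{s_1}\}$ and $\{u^k_{s_2}\}$ (which become comparable under $<_{\sC}$ only because the $(k-1)$-level merge has placed them over a common $(k-1)$-bracket). Compatibility of both $\sC$ and $\sC'$ with $\sC_j$ forces $\sC_j$ to place $u^k_{s_1}, u^k_{s_2}$ in a common essential $k$-bracket, since otherwise the opposite height orders from $\sC$ and $\sC'$ could not both extend to a partial order on $\sC \cup \sC' \cup \sC_j$. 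Consequently $\psi_{\cT/\sC_j}(u^k_{s_1}) = \psi_{\cT/\sC_j}(u^k_{s_2})$, and a telescoping computation gives
\begin{align*}
\sum_{i=s_1}^{s_2-1} \rho(\sC_j)^k_i
\;=\; \bigl[d_{\cT/\sC_j}(\psi(u^k_{s_1})) - d_{\cT/\sC_j}(\psi(u^k_{s_2}))\bigr] + (s_2 - s_1)
\;=\; s_2 - s_1,
\end{align*}
while $\rho(\sC_j)^{k-1}_r = 1$ directly from $\sC \to \sC_j$. Therefore $(s_2-s_1)\rho(\sC_j)^{k-1}_r - \sum_{i=s_1}^{s_2-1}\rho(\sC_j)^k_i = 0$, as required. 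Note that the telescoping is insensitive to the presence of other depth-$k$ vertices of $\cT$ lying between $u^k_{s_1}$ and $u^k_{s_2}$ in the lex order, since only the endpoints contribute to the collapsed sum. The main subtlety of the whole proof is thus the compatibility argument forcing $u^k_{s_1}, u^k_{s_2}$ into a common essential bracket of $\sC_j$, which relies on the precise definitions of compatibility and $\to$ from Section \ref{collisionsasatoms} together with the fact that $k$ was chosen minimum in Lemma \ref{kbracketinreducedwall}.
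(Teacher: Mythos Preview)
Your proof is correct and follows the same overall strategy as the paper: verify that the proposed vector is orthogonal to $\mathbf{1}$ and to each $\rho(\sC_j)$ by showing the relevant pairs of vertices lie in a common essential bracket of $\sC_j$, then apply the telescoping identity (equivalently, Lemma~\ref{fundvellemma}).

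The one genuine difference is in how you justify the key containment. The paper observes directly that $A$ is itself an essential $k$-bracket of $\sC_1$, so the chain relation $\sC_1 \to \sC_j$ immediately gives $A \subseteq A_j$ for some essential $A_j \in \sC_j^k$; all the singleton brackets in $A$ then lie in $A_j$ and Lemma~\ref{fundvellemma} finishes both cases uniformly. You instead pull back to the two minimal collisions $\sC, \sC' \to \sC_1$ furnished by Lemma~\ref{kbracketinreducedwall}, push forward via transitivity to $\sC, \sC' \to \sC_j$, and in case~(\ref{treedim1collision3}) run a height-order compatibility argument: if $u^k_{s_1}, u^k_{s_2}$ landed in distinct essential $k$-brackets $B_1 \neq B_2$ of $\sC_j$, the fixed order $B_1 <_{\sC_j} B_2$ would (via the \textsc{height partial orders} axiom) force the same order on $A_{s_1}, A_{s_2}$ in both $\sC \vee \sC_j$ and $\sC' \vee \sC_j$, contradicting one of them. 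This is a valid and illuminating alternative, though your phrasing ``extend to a partial order on $\sC \cup \sC' \cup \sC_j$'' is slightly loose since $\sC$ and $\sC'$ are mutually incompatible; the argument should be run separately for $\sC \vee \sC_j$ and $\sC' \vee \sC_j$ as you clearly intend. The paper's route is shorter once one has internalized that $A \in \sC_1^k$ is essential; your route makes the role of the two atoms more explicit and avoids having to unpack exactly how $A$ sits inside $\sC_1$.
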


Before giving a proof, we note that in case (\ref{treedim1collision3}), it must be that the vertices corresponding to the left and right children of $u$ in $\cT(A)$ are consecutive as the projection of $\cK(\pi(A))$ must be be zero dimensional and therefore the fusion bracket for $\sC_1$.

\begin{proof}
Let $W =\{\sC_1, \ldots, \sC_{m-2}\}$ be a collection of nested collisions and assume that $\sC_i \rightarrow \sC_j$ for all $1\leq i<j\leq m-2$.
For verifying that statement we first observe that ${\bf v} \perp W$ if and only if ${\bf v} \perp \rho(\sC_j)$ for $1\leq j \leq m-2$.
By Lemma \ref{pathtreereduced}, we know that the tree associated to $W$ is a path.
Hence for each $j$ with $1\leq j\leq m-2$, we have that there exists an essential $k$-bracket $A_j \in \sC_j^k$ such that $A\subseteq A_j$.
In particular, $A_j$ contains all of the singleton brackets contained in $A$.

We employ Lemma \ref{fundvellemma} in what follows.
If $\cT(A)$ has three leaves as in case (\ref{treedim1collision2}), then $\rho(\sC_j)^{k_1}_{i_1}=\rho(\sC_j)^{k_2}_{i_2} = 1$ for $1\leq j \leq m-2$.
On the other hand, if $\cT(A)$ has two leaves as in case (\ref{treedim1collision3}), then  ${\rho}({\sC}_j))^{k-1}_r=1$ and $\Delta^{k}_{s_1,s_2}({\rho}({\sC_j}))=s_2-s_1$ for all $1 \leq j \leq m-2$.
It follows that in both cases ${\bf v}(W)$ is equal to the vector described in the statement of the lemma.
\end{proof}

\begin{figure}[ht]
\def\svgwidth{0.15\textwidth}
\begingroup%
  \makeatletter%
  \providecommand\color[2][]{%
    \errmessage{(Inkscape) Color is used for the text in Inkscape, but the package 'color.sty' is not loaded}%
    \renewcommand\color[2][]{}%
  }%
  \providecommand\transparent[1]{%
    \errmessage{(Inkscape) Transparency is used (non-zero) for the text in Inkscape, but the package 'transparent.sty' is not loaded}%
    \renewcommand\transparent[1]{}%
  }%
  \providecommand\rotatebox[2]{#2}%
  \newcommand*\fsize{\dimexpr\f@size pt\relax}%
  \newcommand*\lineheight[1]{\fontsize{\fsize}{#1\fsize}\selectfont}%
  \ifx\svgwidth\undefined%
    \setlength{\unitlength}{46.10243225bp}%
    \ifx\svgscale\undefined%
      \relax%
    \else%
      \setlength{\unitlength}{\unitlength * \real{\svgscale}}%
    \fi%
  \else%
    \setlength{\unitlength}{\svgwidth}%
  \fi%
  \global\let\svgwidth\undefined%
  \global\let\svgscale\undefined%
  \makeatother%
  \begin{picture}(1,1.5399833)%
    \lineheight{1}%
    \setlength\tabcolsep{0pt}%
    \put(0,0){\includegraphics[width=\unitlength,page=1]{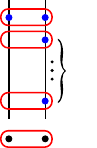}}%
    \put(0.72725476,0.77827682){\makebox(0,0)[lt]{\lineheight{1.25}\smash{\begin{tabular}[t]{l}$r-1$\end{tabular}}}}%
  \end{picture}%
\endgroup%

\caption{
A collision $\sC_1$ for a reduced wall in a triangulated velocity fan for a 2-associahedron with $A = (\{u^1_1,u^1_2\},\{u^2_1,u^2_{r+1}\})$ depicted at the top.
The associated normal vector is $r\, e^1_1 -\sum_{i=1}^{r}{e_i^2}$.
\label{wallnormal2}
}
\end{figure}

\section{Piecewise-unimodular maps to the braid arrangement}\label{piecewisesection}

Categorical $n$-associahedra cannot be realized by generalized permutahedra, nor do they appear to be realizable in any of their known extensions (see footnote \ref{permufootnote}).
On the other hand, our construction of the velocity fan generalizes the wonderful associahedral fan, and the results of \S\ref{triangulationsection} are reminiscent of nestohedra, so it is quite natural to ask whether there exists an explicit connection between general velocity fans and the braid arrangement.
In this section we give an affirmative answer to this question in two separate ways by constructing two different maps from the velocity fan to the braid arrangement.
For the wonderful associahedral fan, both of these maps specialize to the identity map.  For analyzing these maps, we view them as factoring through the isomorphism of the velocity fan with the metric $n$-bracketing complex.

The first map, addressed in subsection \ref{piecewisessubsection}, will be a piecewise unimodular map from the velocity fan to the braid arrangement.
This suggests an extension of the theory of generalized permutahedra which we call \emph{permutahedroids}.
The second map, addressed in subsection \ref{permutahedralatlassubsection}, will be a collection of  piecewise unimodular maps $\omega_{\sigma}$ defined on local shuffle charts $U_{\sigma}$ which cover the velocity fan.
While the maps $\omega_{\sigma}$ do not glue to give a global piecewise-linear function on the velocity fan, they have other nice properties:  each $\omega_{\sigma}$ is a piecewise unimodular isomorphism on the corresponding shuffle chart, and they provide a direct relationship to the theory of nestohedra via the triangulated velocity fan in subsection \ref{nestohedralatlassection}.

\subsection{Piecewise-unimodular transformations and permutahedroids}\label{piecewisessubsection}

\
  
We begin by defining a new extension of coarsenings of the braid arrangement and generalized permutahedra.
Recall the definition of the braid arrangement given in Definition \ref{defbraidarrangement}.

\begin{figure}
\centering
\includegraphics[width=0.6\textwidth]{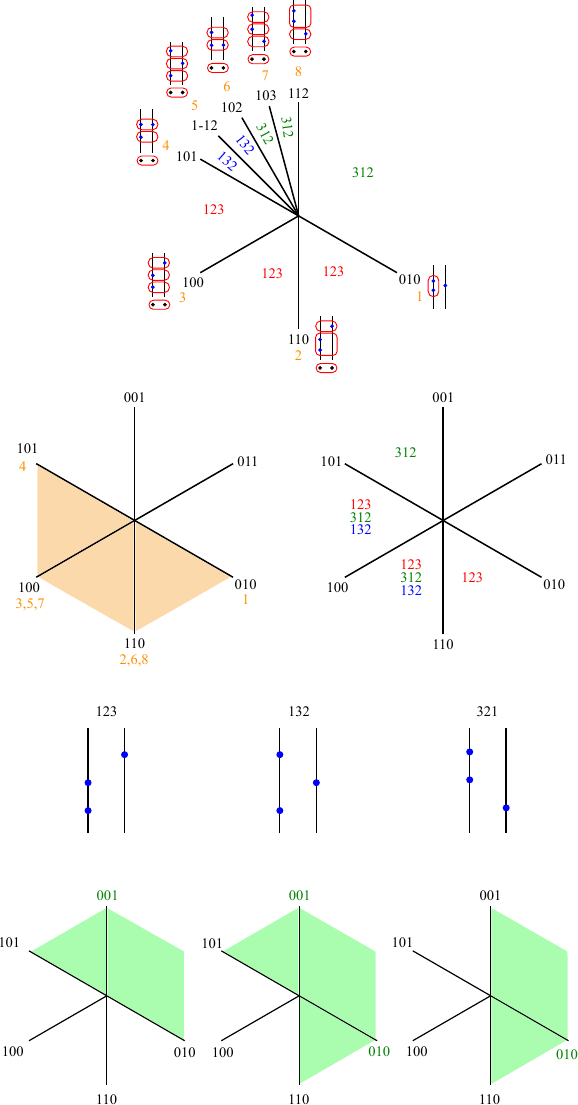}
\caption{
\label{fig:maps_to_braid_example}
The top figure is the velocity fan of $W_{2,1}$ with a clockwise ordering on the rays, and a labeling of chambers by $\cT$-shuffles.
On the left of the second row is the image of the velocity fan under the map $\Gamma^{\rho}_{\zeta}$; on the right is the union of the images of shuffle charts under the maps $\Gamma^{\rho}_{\omega_{\sigma}}$. 
\label{maps_to_braid_example}
On the third and fourth rows are the three nestohedral fans associated to the images of the shuffle charts in $W_{2,1}$, and the stars of rays which are deleted.
}
\end{figure}

\begin{definition}\label{permutahedroiddef}
Let $\mathcal{F}$ be a fan.
Suppose that there exists a piecewise-linear function $f$ defined on $\mathcal{F}$ such that for each cone $\tau \in \mathcal{F}$
\begin{enumerate}
\item\label{localpermfancond1} the restricted function $f|_{\tau}$ is unimodular and invertible,
\item\label{localpermfancond2} $f(\tau)$ is a union of cones in the braid arrangement.\footnote{Polyhedral cones which are unions of cones in the braid arrangement are sometimes called a \emph{preposet cones} as they are in bijection with, and encode much of the combinatorial structure of, preposets.
When the cones in question are full-dimensional, the corresponding preposets are honest posets, and by intersecting with standard cubes one recovers Stanley's order polytopes.
We refer the reader to the work of Postnikov-Reiner-Williams \cite{postnikov2008faces} for further details.}
\end{enumerate}

Then we say that $\mathcal{F}$ is a \emph{locally-braid fan}.
If $P$ is a polytope whose normal fan is a locally-braid fan, then we say that $P$ is a \emph{permutahedroid}.
\null\hfill$\triangle$
\end{definition}

Observe that when this $f$ is restricted to be the identity map, locally-braid fans specialized to subfans of coarsenings of the braid arrangement, and  permutahedroids specialize to generalized permutahedra.
We now state the main result of this subsection.

\begin{theorem}
\label{permutahedroidthm}
Each velocity fan is a locally-braid fan.
\end{theorem}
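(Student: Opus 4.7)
The plan is to construct the function $\Gamma^{\rho}_\zeta: \cF(\cT) \to \mathbb{R}^m$ alluded to in the introduction and then verify the conditions (\ref{localpermfancond1}) and (\ref{localpermfancond2}) of Definition \ref{permutahedroiddef}.
The construction will factor through the piecewise-linear isomorphism $\Gamma: \cK^{\met}(\cT) \to \cF(\cT)$ from Proposition \ref{mainvelocitythm}, so it is equivalent to define a piecewise-linear function on $\cK^{\met}(\cT)$ (or on the triangulated complex ${\cK^\met}(\ccD)$) that becomes piecewise-unimodular after passing to $\cF(\cT)$.  The key building block is Lemma \ref{permutationtransformationlemma1}, which produces, for any $\sC \in \fX(\cT)$ and any compatible $\cT$-shuffle $\sigma$, an affine transformation
\begin{align}
\sigma({\bf v}) \coloneqq P_\sigma^T({\bf v}-{\bf 1}) + {\bf 1}
\end{align}
sending $\rho(\sC)$ to a $0$--$1$ vector, i.e., to a ray generator of the braid arrangement $\cA_m$.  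By Lemma \ref{unimodperm}, $P_\sigma$ lies in $GL_m(\mathbb{Z})$, so each such $\sigma$ is unimodular.

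First I would choose, for each chamber $\tau(\sB) \in \cF(\cT)$, a \emph{canonical} compatible shuffle $\zeta(\sB)$.  A natural recipe is to use a containment-minimal collision $\sC \leq \sB$ and its image-height order on $V^k(\cT)/\psi^{\cT}_{\cT/\sC}$, breaking ties in fibers in a uniform way dictated by $<_\cT$; Lemma \ref{inclusionpermutation} then ensures this shuffle is simultaneously compatible with every collision below $\sB$ that shares the fusion bracket of $\sC$.  On each maximal cone $\tau(\sB)$ I would then \emph{define} $\Gamma^{\rho}_\zeta$ to be the restriction of $\zeta(\sB)$.  Because $\tau(\sB)$ is generated by the $\rho(\sC)$ with $\sC \leq \sB$ together with the lineality space $\langle {\bf 1}\rangle_{\mathbb{R}}$, the image of $\tau(\sB)$ is the conical hull of $0$--$1$ vectors together with $\langle {\bf 1}\rangle_{\mathbb{R}}$; such a cone is precisely a preposet cone, hence a union of chambers of $\cA_m$, establishing (\ref{localpermfancond2}).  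Unimodular invertibility on $\tau(\sB)$ follows from Lemma \ref{unimodperm} and the fact that $\Gamma^{\rho}_\zeta|_{\tau(\sB)}$ is an affine unimodular transformation, giving (\ref{localpermfancond1}).

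The main obstacle is verifying that these local definitions glue into a continuous piecewise-linear function on all of $\cF(\cT)$.  For two chambers $\tau(\sB_1), \tau(\sB_2)$ meeting in a common wall $\tau(\sB_1\wedge\sB_2)$, the two chosen shuffles $\zeta(\sB_1)$ and $\zeta(\sB_2)$ will in general differ, yet I need $P_{\zeta(\sB_1)}^T$ and $P_{\zeta(\sB_2)}^T$ to agree when restricted to the linear span of the wall.  The wall is spanned by the rays $\rho(\sC)$ with $\sC \leq \sB_1\wedge\sB_2$, plus ${\bf 1}$, so the agreement condition amounts to the claim that $\zeta(\sB_1)$ and $\zeta(\sB_2)$ both act compatibly on every such $\sC$.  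I expect to prove this by applying Lemma \ref{contractionlemma} (and its refinement Lemma \ref{refinedcontractionlemma}) together with Lemma \ref{permutationtransformationlemma1}, which show that the action of $P_\sigma^T$ on the ray generators in a star-neighborhood of a collision depends on $\sigma$ only through its restriction to the collision's descendants.  A careful choice of the canonical $\zeta$ that respects the local iterated-fiber-product structure developed around Theorem \ref{localvelocity} should make this compatibility automatic; this is where the argument is most delicate.

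Finally, having constructed $\Gamma^{\rho}_\zeta$ on each maximal cone and verified that it glues, continuity extends $\Gamma^{\rho}_\zeta$ to all of $\cF(\cT)$ by piecewise linearity, and the proof concludes by invoking Definition \ref{permutahedroiddef}.  As a sanity check, in the $n=1$ case every chamber's containment-minimal collisions determine the identity shuffle, so $\Gamma^{\rho}_\zeta$ reduces to the identity and recovers the fact that Loday's associahedron is a generalized permutahedron, consistently with Theorem \ref{lodayfanisafan}.
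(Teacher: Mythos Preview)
Your approach has a genuine gap at precisely the point you flag as ``the main obstacle,'' and the paper's proof takes a different route that avoids it.

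The map you propose is, on each chamber, the affine transformation $v\mapsto P^T_{\sigma(\sB)}(v-{\bf 1})+{\bf 1}$ for the unique admissible shuffle $\sigma(\sB)$.  But these do not glue across walls.  If $\tau(\sB_1)$ and $\tau(\sB_2)$ share a wall containing $\rho(\sC)$, then both $\sigma_1=\sigma(\sB_1)$ and $\sigma_2=\sigma(\sB_2)$ are compatible shuffles for $\sC$, and by Lemma~\ref{permutationtransformationlemma1} your two maps send $\rho(\sC)$ to $\omega_{\sigma_1}(\sC)$ and $\omega_{\sigma_2}(\sC)$ respectively.  These are the $0$--$1$ vectors recording which \emph{$\sigma$-consecutive} pairs share an essential bracket, and for $\sigma_1\neq\sigma_2$ they are generally different vectors (same number of $1$'s, different positions).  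There is no canonical choice of $\zeta(\sB)$ that repairs this; the paper makes this explicit when it introduces the shuffle atlas in \S\ref{permutahedralatlassubsection}: ``the maps $\omega_\sigma$ do not glue to give a global piecewise-linear function on the velocity fan.''  Your hoped-for compatibility via Lemmas~\ref{contractionlemma}--\ref{refinedcontractionlemma} does not materialize, because those lemmas control the action on \emph{differences} $\rho(\sC')-\rho(\sC)$, not on the rays themselves.

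The paper's actual construction defines a \emph{shuffle-free} $0$--$1$ vector $\zeta(\sC)$ by $\zeta(\sC)^k_i=1$ iff $u^k_i$ lies in an essential $k$-bracket together with some $u^k_j$ with $j>i$ (using the fixed order $<_\cT$, not any shuffle).  Because this depends only on $\sC$, the map $\ell(\sC)\mapsto\zeta(\sC)$ extends to a single piecewise-linear $\Gamma_\zeta$ on $\cK^{\met}(\cT)$; well-definedness and injectivity on cones are then proved by a parallel of the Lemma~\ref{metriclemma} argument.  Unimodularity and condition~(\ref{localpermfancond2}) are obtained not by the ``conical hull of $0$--$1$ vectors is a preposet cone'' assertion (which is false in general: e.g.\ the cone on $\chi_{\{1,2\}}$ and $\chi_{\{2,3\}}$ in $\mathbb{R}^4/\langle{\bf 1}\rangle$ satisfies $x_1+x_3=x_2+x_4$, not a braid relation), but by passing to the permutahedral velocity fan $\cF(\mathscr{O}(\cT))$ and checking that the images of a maximal flag of generalized collisions form a chain of indicator vectors, hence generate an honest braid chamber.
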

 
For a nontrivial example of a permutahedroid see Figure \ref{W22} depicting the 2-associahedron $W_{2,2}$.
Note that this polytope is not combinatorially equivalent to a generalized permutahedron.
In future work we aim to show that every velocity fan is projective, thus settling Bottman's conjecture in the affirmative, and producing a large family of examples of permutahedroids.
We are hopeful that the existence of these definitions may inspire researchers to discover other examples of such fans and polytopes.

In order to prove Theorem \ref{permutahedroidthm}, we will first define for each collision $\sC$ a 0-1 vector $\zeta(\sC)$.
We will then show that the map ${\ell}(\sC)\mapsto \zeta(\sC)$ extends to a piecewise-linear function on all of $\cK^{\met}(\cT)$.
By precomposing with $\Gamma^{-1}$ we obtain a piecewise-linear map $\Gamma^{\rho}_{\zeta}$ defined on the velocity fan which satisfied the conditions from Definition \ref{permutahedroiddef}.

\begin{definition}
Let $\sC \in \fX(\cT)$.
We take $\zeta(\sC)$ to be a vector of length $m$ such that $\zeta(\sC)^k_i = 1$ if the vertex $u^k_i$ belongs to an essential $k$-bracket $A \in\sC^k$ with some $u^k_j$ with $j>i$, and $\zeta(\sC)^k_i = 0$ otherwise.
\null\hfill$\triangle$
\end{definition}

\begin{definition}
Let $\{\sC_i:1\leq i \leq k\}$ be a collection of compatible collisions.
We define a cone associated to this collection:
\begin{align}
\tau_{\zeta}\{\sC_i\}
=
cone\{\zeta(\sC_1), \ldots, \zeta(\sC_k)\}+ \langle\bf{1}\rangle_{\mathbb{R}}.
\end{align}

Let $\sB$ be an $n$-bracketing and suppose $\{\sC_i\}$ is the set of all collisions contained in $\sB$.
We define the cone $\tau_{\zeta}(\sB) = \tau_{\zeta}\{\sC_i\}$.  Similarly, given a face $Y \in \Delta_\cT$, or $Y \in \mathscr{O}(\cT)$, we define $\tau_{\zeta}(Y) = \tau_{\zeta}\{\sC_i\}$ where $Y = \{\sC_i\}$.
\null\hfill$\triangle$
\end{definition}

\begin{definition}
Let $\sC_0, \ldots, \sC_k \in \wh \fX(\cT)$ be such that $\sC_0= \sB_\min$, and let $\lambda_i\in \mathbb{R}$ for $0\leq i\leq k$ with  $\lambda_i\geq 0$ for $i>0$.
Let $\ell_\sB\in{{\cK}}^{\met}(\cT)$ such that
\begin{align}
\ell_\sB = \sum_{i=0}^k \lambda_i \ell(\sC_i).
\end{align}
We define the map $\Gamma_{\zeta}:{{\cK}}^{\met}(\cT)\rightarrow \mathbb{R}^m$ as
\begin{align}
\Gamma_{\zeta}(\ell_{\sB})= \sum_{i=0}^k \lambda_i \zeta(\sC_i).
\end{align}
\end{definition}

\begin{lemma}\label{zetamap}
The map $\Gamma_{\zeta}$ defines a piecewise-linear map which restricts to a linear injection on each conical set in ${{\cK}}^{\met}(\cT)$.\footnote{Equivalently, $\Gamma_{\zeta}$ is a weak embedding of the cone complex ${{\cK}}^{\met}(\cT)$.}
\end{lemma}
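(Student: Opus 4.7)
The proof will follow the blueprint of the proof of Proposition \ref{mainvelocitythm}, establishing well-definedness, linearity on each conical set $\tau^{\met}(\sB) \in \cK^{\met}(\cT)$, and injectivity on each such conical set. The central technical ingredient is an analog of Lemma \ref{metriclemma}: showing that any relation $\sum_i \lambda_i \ell(\sC_i) = \sum_j \gamma_j \ell(\sC'_j)$ among the generators implies the corresponding relation $\sum_i \lambda_i \zeta(\sC_i) = \sum_j \gamma_j \zeta(\sC'_j)$, which yields well-definedness; the converse implication gives injectivity.

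For well-definedness and linearity, my plan is to derive a direct combinatorial formula expressing $\Gamma_\zeta(\ell_\sB)$ in terms of $\ell_\sB$ alone. Concretely, I expect a formula of the shape
\begin{align*}
\bigl(\Gamma_\zeta(\ell_\sB)\bigr)^k_i
\;=\;
\sum_{\substack{A \in \sB^k \\ u^k_i \in A^k \\ \max\{j : u^k_j \in A^k\} > i}} \ell_\sB(A),
\end{align*}
possibly with a small correction term accounting for maximum brackets. The essential combinatorial input is Proposition \ref{collisioncharacterization}(2) combined with the \textsc{(nested)} condition of Definition \ref{def:n-bracketings}, which together imply that the nontrivial essential $k$-brackets of any single collision are pairwise disjoint at depth $k$. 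Thus for a fixed collision $\sC_i$ at most one such bracket contains a given vertex $u^k_i$, and the double sum $\sum_i \lambda_i \zeta(\sC_i)^k_i$ collapses by interchanging order of summation with the definition $\ell_\sB(A) = \sum_i \lambda_i \mathbf{1}[A \text{ essential in } \sC_i]$. The formula then exhibits $\Gamma_\zeta|_{\tau^{\met}(\sB)}$ as a linear function of $\ell_\sB$, so both well-definedness and linearity follow at once.

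For injectivity on $\tau^{\met}(\sB)$, I will pass to the canonical triangulation $\cK^{\met}(\ccD)$ of Proposition \ref{metrictriangulation}. On each simplicial subcone $\tau^{\met}(Y)$ with $Y = \{\sC_1, \ldots, \sC_s\}$ nested, injectivity reduces to linear independence of the vectors $\zeta(\sC_1), \ldots, \zeta(\sC_s)$ together with $\mathbf{1}$ (the image of the lineality generator). I plan to prove this by a recursive argument modeled on Lemma \ref{smooth}: take $\sC_1 \in Y$ a containment-minimal collision (necessarily a minimal collision), and identify a row of the resulting matrix --- indexed by the colliding pair of vertices in $\sC_1$ --- which vanishes on every column other than the $\sC_1$-column. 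A Laplace expansion along this row reduces the problem to a smaller $\zeta$-matrix associated to $Y/\sC_1$ in $\Delta_{\cT/\sC_1}$, where the nested structure of the quotient is extracted via Lemma \ref{connstructingnestedcollisionsinthequotient}, at which point induction on $m$ completes the argument. The main obstacle will be verifying the linear formula above in the delicate case (arising only for $n \geq 3$) where the maximum $k$-bracket is essential at depth $k$ in some collision of the support of $\ell_\sB$ simultaneously with a nontrivial essential $k$-bracket containing $u^k_i$; careful case analysis, using that a maximum $k$-bracket is essential only when some nontrivial descendant bracket projects all the way to $V^k(\cT)$, should produce a linear correction term preserving exact linearity across the full cone.
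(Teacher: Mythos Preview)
Your proposal splits naturally into two halves, and they compare differently with the paper.

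\medskip

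\textbf{Well-definedness and linearity.} Your plan here is essentially the paper's argument. The paper shows (in the proof of the auxiliary Lemma~\ref{zetametric}) that each coordinate $v^k_i$ of $\Gamma_\zeta(\ell_\sB)$ equals the sum of $\ell_\sB(A)$ over $k$-brackets $A$ containing $u^k_i$ together with some $u^k_j$ with $j>i$ --- exactly your formula --- and concludes that this is an invariant of $\ell_\sB$. The paper does not flag your ``maximum-bracket correction'' concern; the implicit claim is that in any single collision $\sC$ there is at most one essential $k$-bracket containing a given $u^k_i$. Your instinct to check this is sound, but once verified no correction term is needed and the formula is literally linear in $\ell_\sB$.

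\medskip

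\textbf{Injectivity.} Here your route diverges from the paper. The paper does \emph{not} pass to the triangulation $\cK^{\met}(\ccD)$ or argue via linear independence of the $\zeta(\sC_i)$. Instead it runs the analog of the main-lemma machinery: it defines a set $\Xi(\sB,\mathbf{v})$ (Definition~\ref{zetatechdefinition2}) depending on the pair $(\sB,\mathbf{v})$, proves it consists exactly of the containment-minimal collisions in the support of any expression $\mathbf{v}=\sum_i\lambda_i\zeta(\sC_i)$ with $\sC_i\le\sB$ (Lemma~\ref{zetatechnicallemma2}), and then runs the subtraction-induction of Lemma~\ref{metriclemma}. This is easier than the global version because $\sB$ is fixed, so no shuffling enters and the recognition of containment-minimal collisions is elementary.

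Your alternative --- linear independence of $\{\zeta(\sC):\sC\in Y\}\cup\{\mathbf 1\}$ via a recursion modeled on Lemma~\ref{smooth} --- is plausible in outline but has a real gap as stated. The Laplace-expansion step requires that, after suitable column operations, the $\zeta$-matrix for $Y$ reduces to the $\zeta$-matrix for a nested collection in $\cT/\sC_1$. For $\rho$ this is the content of Lemmas~\ref{contractionlemma} and~\ref{refinedcontractionlemma}, and the adjoint permutation transformation $P_\sigma^T$ is what makes it work. You have not stated or proved a $\zeta$-analog of these contraction lemmas, and it is not obvious what plays the role of $P_\sigma^T$ for $\zeta$; the vectors $\zeta(\sC_j)-\zeta(\sC_1)$ (for $\sC_1\to\sC_j$) do not obviously restrict to $\zeta(\sC_j/\sC_1)$ after deleting a row. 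Without this ingredient the recursion does not close up. If you want to pursue this route you must formulate and prove such a lemma; the paper's $\Xi$-set approach sidesteps the issue entirely.
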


We will prove Lemma \ref{zetamap} by an argument similar to the one used for proving Proposition \ref{mainvelocitythm}, although this version of the argument will be easier because, for example, we do not need to prove that $\Gamma_{\zeta}$ is globally injective (as it is not).  We will establish Lemma \ref{zetamap} by way of the following lemma.

\begin{lemma}
\label{zetametric}
Suppose that $\{\sC_0, \ldots, \sC_{l_1}\}$ and $\{\underline{\sC}_0, \ldots, \underline{\sC}_{l_2}\}$ are two sets of compatible extended collisions contained in $\sB$.
Let $ = \sC^0= \underline{\sC}_0=\sB_\min$ and take $\lambda_i,\gamma_i \in \mathbb{R}$ for $0\leq i\leq {l_1}$ and $0\leq i\leq {l_2}$ and $\lambda_i,\gamma_i \geq 0$ for  $i>0$.
Then
\begin{align}
\sum_{0\leq i\leq {l_1}}\lambda_i\ell(\sC_i)=\sum_{0\leq i\leq {l_2}}\gamma_i\ell(\underline{\sC}_i)
\end{align}
if and only if
\begin{align}
\sum_{0\leq i\leq {l_1}}\lambda_i\zeta(\sC_i)=\sum_{0\leq i\leq {l_2}}\gamma_i\zeta(\underline{\sC}_i).
\end{align}
\end{lemma}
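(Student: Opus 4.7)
The plan is to imitate the proof of Lemma~\ref{metriclemma} for each direction, noting that for compatible $\cT$-shuffles the vectors $\zeta(\sC')$ and $\rho(\sC')$ become essentially interchangeable after applying the adjoint permutation transformation. The structural reason is that in compatible-shuffled order the essential $k$-brackets of a collision appear as consecutive blocks, so the condition ``$u^k_j$ belongs to a nonsingleton essential bracket with a later vertex'' reduces to a condition about consecutive pairs of vertices, which is exactly the type of condition encoded by the 0-1 vector $P_\sigma^T(\rho(\sC') - {\bf 1}) + {\bf 1}$ from Lemma~\ref{permutationtransformationlemma1}.

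For the forward implication I would first reduce to the case of a single fusion bracket $A_f$ via Lemmas~\ref{fusionmetricbracketingdecomp} and~\ref{metricpartitionlemma}, so that all extended collisions in both sums share fusion bracket $A_f$. Choose a containment-minimal collision $\sC \leq \sB$ and a compatible shuffle $\sigma$; by Lemma~\ref{inclusionpermutation}, $\sigma$ is compatible for every $\sC_i$ and $\underline{\sC}_i$. By Lemma~\ref{permutationtransformationlemma1}, the $(k,j)$-entry of $P_\sigma^T(\rho(\sC') - {\bf 1}) + {\bf 1}$ records whether $u^k_{\sigma(j)}$ and $u^k_{\sigma(j+1)}$ lie in a common essential $k$-bracket of $\sC'$, and an analogous computation in compatible-shuffled coordinates shows that this matches the corresponding entry of a suitably shuffled $\zeta(\sC')$; the only discrepancies arise when the relevant essential $k$-bracket is a singleton (where the $\zeta$ contribution is automatically zero) or a maximum bracket (where the discrepancy is a constant, captured by the $\ell_\sB$-invariant $\sum_i \lambda_i$ of Lemma~\ref{equalityofcoefficientsums}). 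Summing and invoking Proposition~\ref{mainvelocitythm} to identify $\sum_i \lambda_i \rho(\sC_i)$ with $\Gamma(\ell_\sB)$, one deduces that the shuffled $\sum_i \lambda_i \zeta(\sC_i)$ is determined by $\ell_\sB$; unshuffling via $(P_\sigma^T)^{-1}$ then gives the same conclusion for $\sum_i \lambda_i \zeta(\sC_i)$.

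For the reverse implication I would induct on $|\sB_1|+|\sB_2|$ (with $\sB_j$ the underlying bracketing of the $j$th sum), following the template of $(\ref{brackequal}) \Rightarrow (\ref{vecequal})$ in the proof of Lemma~\ref{metriclemma}. The induction step demands a $\zeta$-analog of Lemma~\ref{main}: a collision $\sC_{\bf w}$ depending only on the common vector ${\bf w} = \sum_i \lambda_i \zeta(\sC_i) = \sum_i \gamma_i \zeta(\underline{\sC}_i)$ that is simultaneously containment-minimal in both $\sB_1$ and $\sB_2$. Once such $\sC_{\bf w}$ is produced, Scholium~\ref{sch:subtract} lets us subtract a positive multiple $\epsilon\,\ell(\sC_{\bf w})$ from both metric bracketings; by the already-established forward direction this subtracts $\epsilon\,\zeta(\sC_{\bf w})$ from each $\zeta$-sum, so the equality of $\zeta$-sums persists while $|\sB_1|+|\sB_2|$ strictly decreases, closing the induction.

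The main obstacle in both directions is the same: $\zeta$ is coarser than $\rho$ precisely at positions corresponding to maximum essential brackets, and such positions arise only when the fusion bracket $A_f$ is itself a maximum bracket. To handle this I expect to need a technical lemma analogous to Definition~\ref{techdefinition} and Lemma~\ref{technicallemma}, which characterizes containment-minimal collisions directly from the coordinates of ${\bf w}$, splitting into Case A ($A_f$ not a maximum bracket, where the identification of $\zeta$- and $\rho$-entries is direct at every relevant level) and Case B ($A_f$ a maximum bracket, which requires first extracting $\sum_i \lambda_i$ from ${\bf w}$ via projection along the lineality direction ${\bf 1}$, and then applying a Case A argument to the residual on the contracted tree $\cT/\sC$). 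Once this lemma is in hand, both implications follow the template of Lemma~\ref{metriclemma} with only cosmetic modifications.
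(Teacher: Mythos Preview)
Your forward direction is overcomplicated, and your reverse direction has a genuine gap.

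For the forward direction, the paper's argument is a one-line observation: the $(k,i)$-entry of $\sum_j \lambda_j\zeta(\sC_j)$ equals $\sum_A \ell_\sB(A)$, where $A$ ranges over essential $k$-brackets containing $u^k_i$ together with some $u^k_j$ with $j>i$. This is manifestly an invariant of the metric $n$-bracketing $\ell_\sB$, so the two $\zeta$-sums coincide. No shuffles, no adjoint permutation transformations, no reduction to a single fusion bracket. Your route via Lemma~\ref{permutationtransformationlemma1} is not only unnecessary but unclear: there is no natural notion of a ``suitably shuffled $\zeta(\sC')$'' (you may be conflating $\zeta$ with $\omega_\sigma$ from \S\ref{permutahedralatlassubsection}, which is a genuinely different, shuffle-dependent object), and the precise relationship you assert between $\zeta$ and the shuffle-transformed $\rho$ is never established.

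For the reverse direction, you correctly identify the need for a $\zeta$-analog of Lemma~\ref{main}, but you miss the key structural feature that distinguishes this lemma from Lemma~\ref{metriclemma}: both collections of collisions lie in a \emph{common} ambient $n$-bracketing $\sB$. The paper exploits this by making the containment-minimal collision $\sC_{\sB,\mathbf{v}}$ depend on both $\sB$ and $\mathbf{v}$ (Lemma~\ref{zetacollisionminimal}, via Definition~\ref{zetatechdefinition2} and Lemma~\ref{zetatechnicallemma2}), not on $\mathbf{v}$ alone as you propose. This is essential: $\Gamma_\zeta$ is \emph{not} globally injective (the paper says so explicitly), because $\zeta$ forgets the height partial order, so $\mathbf{w}$ alone cannot pin down a collision. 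The ambient $\sB$ supplies the missing height data. Correspondingly, your Case~A/Case~B split on whether the fusion bracket is a maximum bracket is not the right mechanism; the paper instead restricts to collisions $\sC\leq\sB$ from the outset, and condition~(\ref{zeta2tech2d}) of Definition~\ref{zetatechdefinition2} needs only the simple coordinate equality $v^n_j = \gamma(\ell_{\wt\sB},\pi(A))$, with no sign trichotomy as in Definition~\ref{techdefinition}.
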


\begin{proof}(\ref{zetamap})
It is clear that if $\Gamma_{\omega}$ is well-defined, it is linear on the conical sets of  ${{\cK}}^{\met}(\cT)$ and agrees on their intersections.
If follows by Lemma \ref{zetametric} that $\Gamma_{\omega}$ is well-defined and injective on each conical set of ${{\cK}}^{\met}(\cT)$.
\end{proof}

\begin{lemma}\label{projzeta}
Let $\sB\in{{\cK}}(\cT)$, then $\pi(\tau_{\zeta}(\sB)) = \tau_{\zeta}(\pi(\sB))$.
\end{lemma}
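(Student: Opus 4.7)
The plan is to mimic the proof of Proposition \ref{projfanthing}, which establishes the analogous statement $\pi(\cF(\cT)) = \cF(\pi(\cT))$ for the velocity-fan generators $\rho$ in place of $\zeta$. That argument rested on Lemma \ref{rayprojection}, so the first step is to prove the $\zeta$-analog of Lemma \ref{rayprojection}: for each $\sC \in \wh\fX(\cT)$,
\begin{itemize}
\item if $\sC$ is of type 2 or type 3, then $\pi(\zeta(\sC)) = \zeta(\pi(\sC))$;
\item if $\sC$ is of type 1, then $\pi(\zeta(\sC)) = \mathbf{0}$ while $\zeta(\pi(\sC)) = \zeta(\sB_\min) = \mathbf{1}$.
\end{itemize}
Since $\pi \colon \mathbb{R}^m \to \mathbb{R}^{m'}$ is the coordinate projection dropping the depth-$n$ entries, each of these reduces to a coordinate-by-coordinate comparison of the essential $k$-brackets of $\sC$ and $\pi(\sC)$ for $k \leq n-1$.

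For $\sC$ of type 1, the unique nontrivial bracket lies at depth $n$, so for $k < n$ every weak projection to it from a nontrivial bracket is a singleton; hence $\zeta(\sC)^k_i = 0$ for all $k \leq n-1$, giving $\pi(\zeta(\sC)) = \mathbf{0}$. For $\sC$ of type 3, $\pi(\sC) = \sB_\min$ and the fusion bracket is the maximum $1$-bracket; every weak projection to it is a maximum bracket, so the maximum $k$-bracket is essential for every $k \leq n-1$, yielding $\zeta(\sC)^k_i = 1$ for all $i$ and hence $\pi(\zeta(\sC)) = \mathbf{1} = \zeta(\sB_\min)$. For $\sC$ of type 2, I would first check via Definition \ref{def:fusion_bracket} and the fact that $\pi(\sC)$ is a genuine $(n-1)$-collision that the fusion bracket of $\sC$ lies at depth $\leq n-1$ and coincides with the fusion bracket of $\pi(\sC)$; the nontrivial $k$-brackets of $\sC$ and $\pi(\sC)$ agree at each depth $k \leq n-1$, and any witnessing nontrivial $n$-bracket $A'$ whose $(n-1)$-projection is trivial produces a trivial (singleton or maximum) $k$-projection that is either irrelevant to $\zeta$ or forced to also not be essential for $\pi(\sC)$ for the same depth reasons (since the fusion is nontrivial), so the essential $k$-brackets of $\sC$ and $\pi(\sC)$ coincide at depths $\leq n-1$, giving $\pi(\zeta(\sC)) = \zeta(\pi(\sC))$.

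Given this analog, the containment $\pi(\tau_\zeta(\sB)) \subseteq \tau_\zeta(\pi(\sB))$ follows by applying $\pi$ to each generator $\zeta(\sC)$ of $\tau_\zeta(\sB)$: either the image is $\zeta(\pi(\sC))$ with $\pi(\sC) \leq \pi(\sB)$, or $\sC$ is type 1 and the image is $\mathbf{0} \in \langle \mathbf{1}\rangle_\mathbb{R}$; in either case it lies in $\tau_\zeta(\pi(\sB))$. For the reverse containment, I would invoke Scholium \ref{lem:collisionlift}: every extended collision $\sC' \leq \pi(\sB)$ lifts to an extended collision $\sC \leq \sB$ with $\pi(\sC) = \sC'$, and such a lift is never of type 1 (since $\pi$ of a type-1 collision is $\sB_\min$, which lifts to itself). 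Hence $\zeta(\sC') = \zeta(\pi(\sC)) = \pi(\zeta(\sC))$ lies in $\pi(\tau_\zeta(\sB))$, and the equality of cones follows by linearity and the preservation of the lineality space $\langle \mathbf{1}\rangle_\mathbb{R}$ under $\pi$.

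The main obstacle will be the case analysis inside the $\zeta$-analog of Lemma \ref{rayprojection}, particularly for type 2: one must carefully track what happens when a depth-$n$ essential bracket has a trivial $(n-1)$-projection, and verify that the corresponding lower-depth projections are either singletons (contributing nothing to $\zeta$) or fail to be essential in both $\sC$ and $\pi(\sC)$ for the same reason (absence of a nontrivial witness once the fusion is nontrivial). Once that bookkeeping is nailed down, the rest of the proof is a short generator-level verification as in Proposition \ref{projfanthing}.
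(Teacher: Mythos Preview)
Your overall plan---prove a $\zeta$-analog of Lemma~\ref{rayprojection} and then push generators through $\pi$ exactly as in Proposition~\ref{projfanthing}---is precisely what the paper's one-line proof (``clear from the definition of $\zeta$ \ldots\ together with the fact that taking conical combinations commutes with applying projection maps'') is tacitly invoking. So the strategy is right, and the type~1 and type~3 cases are fine.

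The type~2 case, however, has a real gap. You assert that when a nontrivial $n$-bracket $A'$ has trivial $(n-1)$-projection, the resulting maximum $k$-bracket ``is forced to also not be essential for $\pi(\sC)$ \ldots\ (since the fusion is nontrivial)''. But the fusion bracket of a type~2 collision is not always nontrivial: it can be the maximum $1$-bracket (more generally the maximum $d$-bracket whenever $|V^{d-1}(\cT)|=1$). In that situation the maximum $k$-bracket \emph{can} be essential in $\sC$, and your argument collapses there.

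The repair is to argue in the opposite direction: show that if the maximum $k$-bracket (for some $k\leq n-1$) is essential in $\sC$, then it is also essential in $\pi(\sC)$. Since the essential $k$-brackets of a collision partition the depth-$k$ vertices descending from the fusion, the maximum $k$-bracket being essential forces it to be the \emph{unique} essential $k$-bracket; by the tree structure of the collision map this propagates downward, so every essential $l$-bracket for $d\leq l\leq k$ is the maximum $l$-bracket, and hence all nontrivial brackets of $\sC$ live at depth $>k$. Because $\sC$ is type~2, $\pi(\sC)$ has a nontrivial bracket $A''$ at some depth $l$ with $k<l\leq n-1$, and $A''$ necessarily projects to the max $k$-bracket (its unique depth-$k$ ancestor among essential brackets). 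Thus the max $k$-bracket is witnessed in $\pi(\sC)$ as well. With this correction the essential $k$-brackets of $\sC$ and $\pi(\sC)$ agree for all $k\leq n-1$, and the rest of your argument goes through.
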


\begin{proof}
The statement is clear from the definition of $\zeta$ and $\tau_{\zeta}(\sB)$ together with the fact that taking conical combinations commutes with applying projection maps.
\end{proof}

\begin{lemma}
\label{zetacollisionminimal}
Let ${\bf v} \in \tau_{\zeta}(\sB)$.
There exists an extended collision $\sC_{\sB,{\bf v}}$, which is a function of $\sB$ and ${\bf v}$, such that for every set of compatible extended collisions 
$\{\sC_i :0\leq i\leq k\}$ with $\sC_0 = \sB_{\min}$ such that  $\sC_i \leq \sB$, and 
${\bf v} = \sum_{i=0}^k \lambda_i {\zeta}(\sC_i)$, with $\lambda_i > 0$ for $i>0$, we have that $\sC_{\sB,{\bf v}}$ is containment-minimal in $ \bigvee_{i =1}^k \sC_i$.
\end{lemma}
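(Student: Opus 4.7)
The plan is to reduce this lemma to its analog for the velocity fan, namely Lemma \ref{main}, by leveraging the injectivity of $\Gamma_\zeta$ on each conical set established in Lemma \ref{zetamap}. The key observation is that, although $\zeta(\sC)$ is a much coarser invariant of a collision than $\rho(\sC)$ and does not by itself encode the height partial order, once the ambient $n$-bracketing $\sB$ is fixed the restriction $\Gamma_\zeta|_{\tau^{\met}(\sB)}$ becomes an injection, so the pair $(\sB,{\bf v})$ uniquely recovers the underlying metric $n$-bracketing $\ell_\sB$ and hence the richer vector $\Gamma(\ell_\sB)\in\cF(\cT)$.

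First I define $\sC_{\sB,{\bf v}}$. Since $\Gamma_\zeta$ restricted to $\tau^{\met}(\sB)$ is a linear injection by Lemma \ref{zetamap}, and since ${\bf v}\in\tau_\zeta(\sB)=\Gamma_\zeta(\tau^{\met}(\sB))$, there is a unique $\ell_\sB\in\tau^{\met}(\sB)$ with $\Gamma_\zeta(\ell_\sB)={\bf v}$. Set $\sC_{\sB,{\bf v}}:=\sC_{\Gamma(\ell_\sB)}$, where the right-hand side is the extended collision provided by Lemma \ref{main} applied to the vector $\Gamma(\ell_\sB)\in\cF(\cT)$. Because both $\ell_\sB$ and $\Gamma(\ell_\sB)$ are determined by $\sB$ and ${\bf v}$ alone, so is $\sC_{\sB,{\bf v}}$.

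For the containment-minimality property, let ${\bf v}=\sum_{i=0}^{k}\lambda_{i}\zeta(\sC_{i})$ be any decomposition as in the hypothesis, with $\sC_{0}=\sB_{\min}$, $\sC_{i}\leq\sB$, and $\lambda_{i}>0$ for $i>0$. The element $\sum_{i=0}^{k}\lambda_{i}\ell(\sC_{i})$ lies in $\tau^{\met}(\sB)$ and maps under $\Gamma_\zeta$ to ${\bf v}$, so the injectivity of $\Gamma_\zeta|_{\tau^{\met}(\sB)}$ forces $\ell_\sB=\sum_{i=0}^{k}\lambda_{i}\ell(\sC_{i})$. Applying the linearity of $\Gamma$ on $\tau^{\met}(\sB)$ guaranteed by Proposition \ref{mainvelocitythm} yields $\Gamma(\ell_\sB)=\sum_{i=0}^{k}\lambda_{i}\rho(\sC_{i})$. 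Lemma \ref{main}, applied to this vector and decomposition, then gives that $\sC_{\Gamma(\ell_\sB)}=\sC_{\sB,{\bf v}}$ is containment-minimal in $\bigvee_{i=1}^{k}\sC_{i}$, as required. I do not anticipate any serious obstacle here: the argument is essentially a transfer of Lemma \ref{main} through the linear isomorphism $\Gamma_\zeta|_{\tau^{\met}(\sB)}$, combined with the linearity of $\Gamma$ on the same conical set, so the only work is bookkeeping to confirm that the two decompositions match up coefficient by coefficient via the injectivity statement.
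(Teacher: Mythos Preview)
Your argument is correct, but it takes a different route from the paper's. The paper proceeds in direct analogy with Lemma~\ref{main}: it introduces an explicit set $\Xi(\sB,{\bf v})$ (Definition~\ref{zetatechdefinition2}), proves Lemma~\ref{zetatechnicallemma2} showing that $\sC$ is containment-minimal in $\bigvee_i \sC_i$ if and only if $\sC\in\Xi(\sB,{\bf v})$, and then (implicitly) takes $\sC_{\sB,{\bf v}}$ to be any element of $\Xi(\sB,{\bf v})$. In other words, the paper redoes the technical analysis of Lemma~\ref{technicallemma} in the $\zeta$-setting rather than transferring the problem back to the velocity fan.

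Your reduction via the injectivity of $\Gamma_\zeta|_{\tau^{\met}(\sB)}$ and Proposition~\ref{mainvelocitythm} is cleaner to read, since it reuses Lemma~\ref{main} instead of repeating its proof. Be aware, however, that it does not actually save any work in the global logical structure: the injectivity statement in Lemma~\ref{zetamap} is itself proven via Lemma~\ref{zetametric}, whose backward direction already requires Lemma~\ref{zetatechnicallemma2}. So your argument implicitly rests on the same technical lemma that gives the paper's direct proof; you have simply routed the dependency through $\Gamma$ and Lemma~\ref{main} rather than invoking $\Xi(\sB,{\bf v})$ explicitly. There is no circularity, since Lemma~\ref{zetatechnicallemma2} has an independent proof, but you should not present your approach as bypassing that analysis. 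The paper's route has the side benefit of giving an explicit intrinsic description of $\sC_{\sB,{\bf v}}$ in terms of ${\bf v}$ and $\sB$, without passing through $\Gamma^{-1}$.
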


Lemma \ref{zetametric} is a direct consequence of Lemma \ref{zetatechnicallemma2}, which allows us to identify containment-minimal collisions.
Lemma \ref{zetatechnicallemma2} requires the following Definition \ref{zetatechdefinition2}.

\begin{definition}
\label{zetatechdefinition2}
Given ${\bf v} \in \tau_{\zeta}(\sB)$, we define a collection of collisions $\Xi(\sB,{\bf v})$ associated to ${\bf v}$ and ${\sB}$.
This set will be described as the union of two other sets $\Xi_1(\sB,{\bf v})$ and $\Xi_2(\sB,{\bf v})$. Let $\wt \sB$ be the $(n-1)$-bracketing in $\cK(\pi(\cT))$ with $\wt \sB\leq \pi(\sB)$ and $\ell_{\wt \sB}$ the unique metric $(n-1)$-bracketing supported on $\wt \sB$ with $\Gamma_{\zeta}(\ell_{\wt \sB})={\pi(\bf v)}$.\footnote{The existence and uniqueness of $\ell_{\wt \sB}$ is afforded by Lemma \ref{projzeta} together with induction on $n$ applied to Lemma \ref{zetamap}.}
Given an $(n-1)$-bracket $A \in \wt \sB^{n-1}$, recall the notation 
\begin{align}
\gamma({\ell_{\wt \sB}},A)
=
\sum_{\substack{A' \in {\wt \sB}^{n-1}\\ A \subseteq A'}}\ell_{\wt \sB}(A').
\end{align}

Let $\sC$ be a collision  such that $\sC \leq \sB$, then

\begin{enumerate}
\item
\label{zeta1tech2}
$\sC \in \Xi_1(\sB,{\bf v})$ if 

\begin{enumerate}
\item \label{zeta1tech2a} $\sC$ is type 1 with fusion $n$-bracket $A$.

\item \label{zeta1tech2b} There exists a real number $w_{\zeta}(\sB,{\bf v},\sC) > \gamma({\ell_{\wt \sB}},\pi(A)) $ such that $u_{j-1},u^n_j\in A^n$, precisely when $v^n_{j-1} =w_{\zeta}(\sB,{\bf v},\sC)$.

\item \label{zeta1tech2c} For any $n$-bracket ${\wt A} \in \sB^{n}$ with $A\subsetneq {\wt A}$ and $\pi(A) = \pi({\wt A})$, there exists  $u_{j-1}^n,u^n_j \in {\wt A}^n$ with $v^n_{j-1}<w_{\zeta}(\sB,{\bf v},\sC)$.
\end{enumerate}

\medskip

\item
\label{zeta2tech2}
$\sC \in \Xi_2(\sB,{\bf v})$ if

\begin{enumerate}
\item
\label{zeta2tech2a}
$\sC$ is not type 1,
 
\item
\label{zeta2tech2b}
There exists no $\sC' \in \Xi_1(\sB,{\bf v})$ such that  $\sC' \rightarrow \sC$.
 
\item
\label{zeta2tech2c}
$\pi(\sC)$ is containment-minimal in $\wt \sB$,
 
\item
\label{zeta2tech2d}
For each $A \in \sC^n$ and $u^n_{j},u_{k}^n \in A^n$, with $j<k$, we have $v^n_{j} = \gamma({\ell_{\wt \sB}},\pi(A))$.
\end{enumerate}

\medskip

\end{enumerate}

We define
\begin{align}
\Xi(\sB,{\bf v}) = \Xi_1(\sB,{\bf v})  \cup \Xi_2(\sB,{\bf v}).
\end{align}
\null\hfill$\triangle$
\end{definition}

\begin{remark}
Unlike Definition \ref{techdefinition}, we do not need a conditions like \ref{2techd2} or \ref{2techd3} above because the height partial order is inherited by from $\sB$.
\null\hfill$\triangle$
\end{remark}

\begin{lemma}\label{zetatechnicallemma2}
Let $\{\sC_i :1\leq i\leq k\}$ be a set of compatible collisions and let
\begin{align}
\sB = \bigvee_{i=1}^k \sC^i.
\end{align}

Let ${\bf v} = \sum _{i =0}^k \lambda_i {\zeta}(\sC_i)$ with $\sC_0 = \sB_\min$, $\lambda_i \in \mathbb{R}$, and $\lambda_i >0$ for $i>0$.
Then $\sC$ is containment-minimal in $\sB$ if and only if $\sC \in \Xi(\sB,{\bf v}) $.
\end{lemma}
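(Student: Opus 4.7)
The plan is to adapt the structure of the proof of Lemma \ref{technicallemma}, replacing the velocity-theoretic Lemma \ref{fundvellemma} with the simpler $\zeta$-theoretic fact that $\zeta(\sC')^n_j = 1$ if and only if $u^n_j$ and $u^n_{j+1}$ lie in a common essential $n$-bracket of $\sC'$, and $0$ otherwise. I will proceed by induction on $n$, relying on the identity $\pi(\zeta(\sC)) = \zeta(\pi(\sC))$ (both sides vanish for type 1 collisions and match the essential-bracket data at depth $\leq n-1$ otherwise). Combined with the inductive form of Lemma \ref{zetamap}, this guarantees that the metric $(n-1)$-bracketing $\ell_{\wt\sB}$ in Definition \ref{zetatechdefinition2} exists, is unique, and satisfies $\ell_{\wt\sB}(A'') = \sum_{i \,:\, A'' \text{ essential in } \sC_i} \lambda_i$, providing the dictionary for transferring bookkeeping between level $n$ and its projection.

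For the forward direction, suppose $\sC$ is containment-minimal in $\sB$. If $\sC$ is type 1 with fusion bracket $A$, the essential-bracket structure of collisions together with the {\sc (nested)} condition forces $\sC$ to equal some $\sC_j$ in the expression: any other $\sC_i$ with $A \in \sC_i^n$ would have to be type 1 with the same fusion bracket, else its additional nontrivial brackets at depth $<n$ could not be contained in essential brackets of $\sC$. Setting $R \coloneqq \{i : \sC \rightarrow \sC_i\}$, $S \subseteq R$ the non-type-1 entries, and $w_\zeta(\sB, {\bf v}, \sC) \coloneqq \sum_{i \in R} \lambda_i$, a partition argument parallel to the one in Lemma \ref{technicallemma} identifies $\gamma(\ell_{\wt\sB}, \pi(A)) = \sum_{i \in S} \lambda_i < w_\zeta$, and the fundamental $\zeta$-identity yields conditions \ref{zeta1tech2b} and \ref{zeta1tech2c}. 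If $\sC$ is not type 1, condition \ref{zeta2tech2a} is immediate, condition \ref{zeta2tech2b} follows because any $\sC' \in \Xi_1$ with $\sC' \rightarrow \sC$ and $\sC' \neq \sC$ would witness a nontrivial $n$-bracket strictly contained in one of $\sC$, and condition \ref{zeta2tech2c} is Lemma \ref{projcontain}. For \ref{zeta2tech2d}, when $u^n_j, u^n_{j+1} \in A^n$ for $A \in \sC^n$, the fundamental identity gives $v^n_j = \sum_{i \,:\, \sC \rightarrow \sC_i} \lambda_i$ (since $\sC \rightarrow \sC_i$ forces $A$ into a common $n$-bracket of $\sC_i$, while $\sC \sim \sC_i$ precludes this), and projection bookkeeping identifies this sum with $\gamma(\ell_{\wt\sB}, \pi(A))$.

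For the reverse direction, suppose $\sC \in \Xi(\sB, {\bf v})$. If $\sC \in \Xi_1$ is type 1 with fusion bracket $A$, I argue as in the corresponding step of Lemma \ref{technicallemma}: any failure of containment-minimality produces either a type-1 $\sC_i$ with fusion bracket strictly containing $A$, or a type-1 $\sC_i$ incompatible with $\sC$; in either case combining condition \ref{zeta1tech2c} for $\sC$ with the now-established ``only if'' direction applied to $\sC_i$ forces a strict inequality that contradicts \ref{zeta1tech2b} for $\sC$. If $\sC \in \Xi_2$, apply Lemma \ref{containmentlemma} to obtain a containment-minimal $\sC' \in \wh\fX(\cT)$ with $\sC' \rightarrow \sC$; condition \ref{zeta2tech2b} rules out $\sC'$ being type 1, and then the containment-minimality of $\pi(\sC)$ in $\wt\sB$ (condition \ref{zeta2tech2c}) combined with Lemma \ref{projcontain} and the ``only if'' direction applied to $\pi(\sC')$ forces $\pi(\sC') = \pi(\sC)$. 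The $n$-level data of $\sC$ is then uniquely recovered from condition \ref{zeta2tech2d}: for each nontrivial $A'' \in \pi(\sC)^{n-1}$, the $n$-vertices above $A''$ partition into maximal consecutive runs on which $v^n_j = \gamma(\ell_{\wt\sB}, A'')$, and these runs are exactly the $n$-brackets of $\sC$ over $A''$, so $\sC' = \sC$.

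The main obstacle will be the fundamental $\zeta$-identity underlying the non-type-1 case: verifying that when $\sC$ is containment-minimal in $\sB$ and $u^n_j, u^n_{j+1} \in A^n$ for some $A \in \sC^n$, the collisions contributing to $v^n_j$ are exactly the $\sC_i$ with $\sC \rightarrow \sC_i$. This requires careful case analysis across type 1 versus non-type-1 $\sC_i$, together with systematic use of the {\sc (nested)} and {\sc (partition)} axioms for the join $\bigvee_i \sC_i$, in order to rule out stray contributions from $\sC_i$ disjoint from $\sC$ and to confirm that every compatible $\sC_i$ with $\sC \rightarrow \sC_i$ does contribute one summand $\lambda_i$ to $v^n_j$.
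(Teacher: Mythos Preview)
Your approach is correct and mirrors the proof of Lemma \ref{technicallemma}, but it is considerably more elaborate than the paper's. The paper's proof is a single paragraph asserting each implication is ``straightforward'' or ``clear'', and the reason is structural: in Definition \ref{zetatechdefinition2} the membership condition $\sC \leq \sB$ is \emph{built into} $\Xi(\sB,{\bf v})$, and the height partial order is inherited from $\sB$ (as the paper remarks just before Lemma \ref{zetatechnicallemma2}). Thus, unlike in Lemma \ref{technicallemma}, compatibility of $\sC$ with the $\sC_i$ is never in question, and the conditions of $\Xi_1$ and $\Xi_2$ are tailored to detect containment-minimality directly once $\sC \leq \sB$ is assumed. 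For instance, your type-1 converse imports the dichotomy ``either a type-1 $\sC_i$ strictly contains $A$, or a type-1 $\sC_i$ is incompatible with $\sC$'' from Lemma \ref{technicallemma}, but the second branch is vacuous here; likewise, condition \ref{zeta1tech2c} quantifies only over $\wt A \in \sB^n$ (not arbitrary $n$-brackets), so the verification reduces immediately to checking that $A$ is a containment-minimal nontrivial $n$-bracket in $\sB$.

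What your longer route buys is an explicit audit trail: you spell out the identity $\pi(\zeta(\sC)) = \zeta(\pi(\sC))$, the formula $v^n_j = \sum_{i\,:\,\sC \rightarrow \sC_i} \lambda_i$, and the uniqueness argument pinning down the $n$-level data of a non-type-1 $\sC$ from condition \ref{zeta2tech2d}. These are exactly the ingredients the paper's ``straightforward'' is gesturing at, so your write-up would serve well as an expanded version. But you should flag that the incompatibility casework you carry over from Lemma \ref{technicallemma} is unnecessary here, and that this simplification---having $\sB$ rather than just ${\bf v}$---is precisely why the paper can afford to be brief.
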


\begin{proof}
Suppose that $\sC$ is type 1 and containment-minimal in $\sB$, then $\sC \in \Xi_1(\sB,{\bf v})$.
Conversely, if $\sC \in \Xi_1(\sB,{\bf v})$, it is straightforward to verify that $\sC$ is containment-minimal in $\sB$.
Suppose that $\sC$ is not type 1 and is containment-minimal in $\sB$, then it is clear that $\sC \in \Xi_2(\sB,{\bf v})$. The converse when $\sC$ is not type 1 is also straightforward.
\end{proof}

\begin{proof}
(\ref{zetametric})
Supposing the notational set  up of Lemma \ref{zetametric}, and
\begin{align}
\ell_{\sB} = \sum_{0\leq i\leq {l_1}}\lambda_i\ell(\sC_i)=\sum_{0\leq i\leq {l_2}}\gamma_i \ell(\underline{\sC}_i).
\end{align}

We claim that
\begin{align}
\sum_{0\leq i\leq {l_1}}\lambda_i\zeta(\sC_i)=\sum_{0\leq i\leq {l_2}}\gamma_i\zeta(\underline{\sC}_i).
\end{align}

Let $\sum_{0\leq i\leq {l_1}}\lambda_i\zeta(\sC_i)= {\bf w}$ and $\sum_{0\leq i\leq {l_2}}\gamma_i\zeta(\underline{\sC}_i)= {\bf v}$.
We observe that each coordinate $v^k_i$ 
is an invariant of the metric $n$-bracketing $\ell_{\sB}$ : $v^k_i$ is the sum of $\ell_{\sB}(A)$ where $A$ ranges over all essential $k$-brackets which contain $u^k_i$ and some other vertex $u^k_j$ with $i<j$, thus $w^k_i = v^k_i$, and ${\bf w}={\bf v}$.

For establishing the converse, suppose

\begin{align}
{\bf v} = \sum_{0\leq i\leq {l_1}}\lambda_i\zeta(\sC_i)=\sum_{0\leq i\leq {l_2}}\gamma_i\zeta(\underline{\sC}_i).
\end{align}

We wish to prove that
\begin{align}
\sum_{0\leq i\leq {l_1}}\lambda_i\ell(\sC_i)=\sum_{0\leq i\leq {l_2}}\gamma_i\ell(\underline{\sC}_i).
\end{align}

By Lemma \ref{technicallemma2} we can find a collision  which is containment-minimal in the support of both metric $n$-bracketings $\sum_{0\leq i\leq {l_1}}\lambda_i\ell(\sC_i)$ and $\sum_{0\leq i\leq {l_2}}\gamma_i\ell(\underline{\sC}_i)$.
The proof is completed by following the same inductive argument utilized in the proof of Lemma \ref{metriclemma} when  demonstrating that  (\ref{brackequal}) implies (\ref{vecequal}).  
\end{proof}

We conclude this section with a proof of the main Theorem \ref{permutahedroidthm}.

\begin{proof}
(\ref{permutahedroidthm})
We will verify the conditions of Definition \ref{permutahedroiddef} with $f = \Gamma_{\zeta}^{\rho}$.  
The statement that $\Gamma_{\zeta}^{\rho}$ is a well-defined piecewise-linear map is clear as $\Gamma$ and $\Gamma_{\zeta}$ are both piecewise-linear and $\Gamma$ is invertible by Proposition \ref{mainvelocitythm}.
We will now focus on verifying condition (\ref{localpermfancond1}).
The injectivity, equivalently invertibility, of $\Gamma_{\zeta}^{\rho}$  on each cone follows as $\Gamma_\zeta$ is injective on conical sets by Lemma \ref{zetamap}. 

Next we will prove that $\Gamma_{\zeta}^{\rho}$ is unimodular on each cone in the velocity fan.
It suffices to show that if we have a chamber $\tau(\sB) \in \cF(\cT)$, a linearly independent unimodular collection of vectors $B = \{v_1\ldots, v_{m-1},{\bf 1}\}$ contained in $\tau(\sB)$, then the image  $\Gamma_{\zeta}^{\rho}(B)$ in $\tau_{\zeta}(\sB)$ is unimodular.
To do so, we will utilize the permutahedral velocity fan introduced in subsection \ref{permutahedralassociahedronsubsection}.
Let $Z\in \mathscr{O}(\cT)$ with $Z = \{\sD_1, \ldots, \sD_{m-1}\}$ such that $\tau(Z) \subseteq \tau(\sB)$, and take $B = \{\rho(\sD_1), \ldots \rho(\sD_{m-1}),{\bf 1} \}$, the generators for $\tau(Z)$.
By Proposition \ref{permutahedralvelocitymainprop} combined with Theorem \ref{triangulationtheorem}, we have that $B$ is a linearly independent unimodular collection in $\tau(\sB)$. 
Hence it suffices to prove that $\{\Gamma_{\zeta}^{\rho}(\rho(\sD_1)), \ldots, \Gamma_{\zeta}^{\rho}(\rho(\sD_{m-1}),{\bf 1}\}$ form the columns of a unimodular matrix.
In fact we claim that $\{\Gamma_{\zeta}^{\rho}(\rho(\sD_1)), \ldots, \Gamma_{\zeta}^{\rho}(\rho(\sD_{m-1}),{\bf 1}\}$ form the generators for a chamber in the braid arrangement, i.e.\ they form indicator vectors for a maximal chain of subsets of $[m]$.
To see this simply observe that for $1\leq i\leq m-2$, when passing from $\Gamma_{\zeta}^{\rho}(\rho(\sD_i))$ to $\Gamma_{\zeta}^{\rho}(\rho(\sD_{i+1}))$ we increase a single entry from 0 to 1.  
 
Condition (\ref{localpermfancond2}) is now an immediate consequence of the above argument.
We know by the injectivity of $\Gamma_{\zeta}^{\rho}$ on $\tau(\sB)$ that $\tau_{\zeta}(\sB)$ is equal to the union of all $\tau_{\zeta}(Z)$ where $Z$ ranges over all flags of generalized collisions such that $\tau^{\met}(Z) \subseteq \tau^{\met}(\sB)$.
\end{proof}

\begin{remark}
It follows that $\tau_{\zeta}(\sB)$ is a poset cone with face poset equal to $[\sB_\min,\sB]$.
Moreover, in the above proof, we passed from the velocity fan to the permutahedral velocity fan, skipping over the triangulated velocity fan, but one can see that for $Y \in \ccD$, the corresponding cone $\tau_{\zeta}(Y)$ is a tree poset cone associated to the tree $\cT_Y$ from Theorem \ref{treecombtheorem}.
\null\hfill$\triangle$
\end{remark}

\subsection{The shuffle atlas for the velocity fan}\label{permutahedralatlassubsection}

\

In this section we describe a family $\omega=\{\omega_{\sigma}\}$ of piecewise-unimodular maps defined on subsets of the metric $n$-bracketing complex, called \emph{shuffle charts}, associated to linear extensions of the extended height partial order on the singleton brackets in an $n$-bracketing.
We will see that each $\omega_{\sigma}$ is a piecewise-linear isomorphism and that the image of each conical set is a union of cones in the braid arrangement.
Moreover, when we extend these maps to the velocity fan by precomposing with $\Gamma^{-1}$, we find that they are unimodular on each cone.
In subsection \ref{nestohedralatlassection} we show that for each $\sigma$, when $\omega_{\sigma}$ is applied to a shuffle chart in the triangulated metric $n$-bracketing complex, its image is a subfan of a nestohedral fan obtained by deleting the stars of certain coordinate rays.

\begin{definition}\label{permdef}
Given a $\sB \in \cK(\cT)$ we define the \emph{admissible $\cT$-shuffles} of $\sB$ to be the set $\mathfrak{S}(\sB)\subseteq S_{\cT}$ where $\sigma = (\sigma_1, \ldots, \sigma_n)$ satisfies $\sigma \in \mathfrak{S}(\sB)$ if each $\sigma_k$ linearly extends the extended 
height partial order $\wh <_{\sB}$ on $V^k(\cT)$, and if $A \in \sB^k$ is a nontrivial bracket, and $a<b<c$, we disallow $u_{\sigma(a)}^k, u_{\sigma(c)}^k \in A^k$ and $u_{\sigma(b)}^k \notin A^k$.
\null\hfill$\triangle$
\end{definition}

Note that for $\sB'\leq \sB$, we have $\mathfrak{S}(\sB')\subseteq \mathfrak{S}(\sB)$, and $\mathfrak{S}(\sB_\min) = S_{\cT}$.

\begin{lemma}
\label{uniqueshufflelemma}
Let $\sB \in \cK(\cT)$ be a maximal $n$-bracketing, then $|\mathfrak{S}(\sB)|=1$.
\end{lemma}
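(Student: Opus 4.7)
The plan is to induct on $|V(\cT)|$. The base case is when $\cT$ is a path: then $|V^k(\cT)|=1$ at every depth, each $\sigma_k$ is trivial, and $\mathfrak{S}(\sB)$ is automatically a singleton. For the inductive step, assume the lemma for all rooted plane trees with strictly fewer vertices, and let $\sB$ be maximal in $\cK(\cT)$. Following the argument in the proof of Lemma \ref{fulldimlemma}, I choose a containment-minimal collision $\sC\leq\sB$; maximality of $\sB$ forces $\sC$ to be a minimal collision, so by Lemma \ref{collisionminimalchar} its fusion bracket $A\in\sC^k$ satisfies $A^k=\{u^k_a,u^k_b\}$ and every essential $j$-bracket of $\sC$ for $j>k$ is a singleton. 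The poset-interval isomorphism $[\sC,\sB]\cong[\sB_\min,\sB/\sC]$ shows that $\sB/\sC$ is maximal in $\cK(\cT/\sC)$, and since the collision map $\psi\coloneqq\psi^\cT_{\cT/\sC}$ identifies $u^k_a$ with $u^k_b$ we have $|V(\cT/\sC)|=|V(\cT)|-1$. By induction, $|\mathfrak{S}(\sB/\sC)|=1$.

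I would then construct a contraction map $\Phi:\mathfrak{S}(\sB)\to\mathfrak{S}(\sB/\sC)$ as follows. For $\sigma\in\mathfrak{S}(\sB)$, the consecutivity condition applied to $A\in\sB^k$ forces $u^k_a$ and $u^k_b$ to appear adjacently in $\sigma_k$, so the pushforward sequence $(\psi(u^k_{\sigma_k(i)}))_i$ has exactly one repeated adjacent entry; deleting it yields $\Phi(\sigma)_k$, and at depths $j\neq k$ we set $\Phi(\sigma)_j=\sigma_j$ under the natural identification $V^j(\cT)\cong V^j(\cT/\sC)$, reindexed according to the plane order of $\cT/\sC$. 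The two claims that carry the argument are: (i) $\Phi(\sigma)\in\mathfrak{S}(\sB/\sC)$, which uses that $\wh<_{\sB/\sC}$ is the pushforward of $\wh<_\sB$ under $\psi$ together with the observation that the (\textsc{nested}) condition forces every nontrivial $k$-bracket of $\sB$ to contain either both of $u^k_a,u^k_b$ or neither (so consecutivity descends cleanly); and (ii) $\Phi$ is injective, since given $\sigma'\in\mathfrak{S}(\sB/\sC)$ one recovers $\sigma_k$ by re-inserting the missing element of $\{u^k_a,u^k_b\}$ adjacent to $\psi(u^k_a)$, with the relative order dictated by the height partial order $<_\sC$ on the singletons $A(u^k_a),A(u^k_b)$, which is part of $<_\sB$. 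Nonemptiness of $\mathfrak{S}(\sB)$ follows from the symmetric lifting construction: starting from the unique $\sigma'\in\mathfrak{S}(\sB/\sC)$, inserting the missing vertex adjacent to $\psi(u^k_a)$ in the order prescribed by $<_\sC$ yields an element of $\mathfrak{S}(\sB)$.

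The main obstacle is the careful compatibility check in claim (i), particularly at depths $j\geq k$: the plane order of $\cT/\sC$ encodes the permutation that the collision $\sC$ performs on the descendants of $u^k_a$ and $u^k_b$, and the sibling-preservation requirement for $\cT/\sC$-shuffles must be extracted from the local collision map and reconciled with the sibling-preservation condition on $\sigma$ as a $\cT$-shuffle admissible for $\sB$. I expect this reduces to showing that the local structure of $\sC$---encoded by its essential singleton brackets at depths $>k$---matches the sibling structure inherited from $\cT/\sC$ in a manner consistent with the consecutivity conditions on $\sigma$.
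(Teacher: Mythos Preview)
Your approach is genuinely different from the paper's and, as you yourself flag, is not complete. The paper argues by induction on the depth $n$ rather than on $|V(\cT)|$: since $\pi(\sB)$ is maximal in $\cK(\pi(\cT))$, induction gives $\pi(\sigma_1)=\pi(\sigma_2)$ for any $\sigma_1,\sigma_2\in\mathfrak{S}(\sB)$, so any discrepancy lives at depth $n$. If $\sigma_1^n$ and $\sigma_2^n$ disagree on some pair $u^n_i,u^n_j$, the paper exhibits two new $n$-brackets $\wt A_1,\wt A_2$ (built by splitting a suitable minimal bracket $A\in\sB^n$ according to $\pi(A_1)$ versus its complement in $\pi(A)$) that can be added to $\sB$, contradicting maximality. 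This is short, self-contained, and never leaves the ambient $\cK(\cT)$.

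Your contraction strategy is plausible but the part you mark as ``the main obstacle'' is a real gap, not a formality. At depths $j>k$ the collision map $\psi$ is a bijection on vertices, but the plane order of $V^j(\cT/\sC)$ is the pushforward of the height order of the essential singleton brackets of $\sC$, and the sibling relation changes (children of $u^k_a$ and $u^k_b$ become siblings). Verifying that $\Phi(\sigma)$ is a $\cT/\sC$-shuffle at depth $k{+}1$ requires knowing that $\sigma_{k+1}$, restricted to children of $\{u^k_a,u^k_b\}$, already agrees with the $\cT/\sC$-plane order; this is true, but it uses that $\sigma$ linearly extends $\wh<_\sB$ and that $\wh<_\sB$ contains the height order on the essential $(k{+}1)$-brackets of $\sC$ (which are not singleton brackets, so the extended height order must be invoked via the containment clause in Definition~\ref{heightpartialorderremark}). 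The analogous checks at depths $j>k{+}1$, and the verification that consecutivity of nontrivial brackets in $\sB/\sC$ is inherited, are of the same nature. None of this is wrong, but none of it is written; and once you do write it out, the argument is substantially longer than the paper's three-line contradiction. The paper's depth-induction avoids all of this bookkeeping because projection does not alter the plane structure at the retained depths.
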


\begin{proof}
Suppose for contradiction that there exists $\sigma_1,\sigma_2 \in \mathfrak{S}(\sB)$ with $\sigma_1 \neq \sigma_2$.
Because $\pi(\sB)$ is a maximal $(n-1)$-bracketing in $\cK(\pi(\cT))$, we may apply induction on $n$ to obtain $\pi(\sigma_1)=\pi(\sigma_2)$.
Let $u_i^n, u_j^n \in V(\cT)^n$ such that 
$\sigma_1^n(i)<\sigma_1^n(j)$, but $\sigma_2^n(i) > \sigma_2^n(j)$.
Let $A \in \sB^n$ be such that $A^n$ is the smallest $n$-bracket which contains both $u_i^n$ and $u^n_j$, $A_1$ is the largest $n$-bracket which contains $u_i^n$, but not $u_j^n$, and $A_2$ is the largest $n$-bracket which contains $u_j^n$, but not $u^n_i$.
By the assumption that $u_i^n$ and $u_j^n$ are incomparable with respect to $\wh <_{\sB}$ we must have $\pi(A_1)\cap\pi(A_2)=\emptyset$. 
We claim that we can add a pair of additional $n$-bracket to $\sB$, contradicting its maximality.
Let ${\wt A_1}$ be the $n$-bracket with $\pi({\wt A_1})=\pi(A)$ and ${\wt A_1}^n= \{u \in V(\cT)^n : u \in A, \pi(u) \in \pi(A_1)^{n-1}\}$ and ${\wt A_2}$ be the $n$-bracket with $\pi({\wt A_2})=\pi(A)$ and ${\wt A_2}^n= \{u \in V(\cT)^n : u \in A, \pi(u) \in (\pi(A)^{n-1}\setminus \pi(A_1)^{n-1})\}$.
Then $\wt \sB$ is the $n$-bracketing with $\wt \sB^k = \sB^k$ for $k<n$ and $\wt \sB^n = \sB^n \cup \{{\wt A_1}, {\wt A_2}\}$, equipped with the additional relation ${\wt A_1}<_{\wt \sB}{\wt A_2}$.
It is straightforward to verify that $\wt \sB$ is an $n$-bracketing.
\end{proof}

\begin{remark}
For $\sB \in \cK(\cT)$, it can be shown that $\sigma \in \mathfrak{S}(\sB)$ if and only if there exists a maximal $n$-bracketing $\sB'$ with $\sB\leq \sB'$ such that $\{\sigma \} = \mathfrak{S}(\sB')$.
\end{remark}

\begin{definition}
Let $\sC$ be a collision such that $\sigma \in \mathfrak{S}(\sC)$.
We take $\omega_{\sigma}(\sC)$ to be a vector of length $m$ such that $\omega_{\sigma}(\sC)^k_i = 1$ if the vertices $u^k_{\sigma(i)}$ and $u^k_{\sigma(i+1)}$ belong to an essential $k$-bracket $A \in\sC^k$, and $\omega_{\sigma}(\sC)^k_i = 0$ otherwise.
\null\hfill$\triangle$
\end{definition}

\begin{definition}
Let $\{\sC_i:1\leq i \leq s\}$ be a collection of compatible collisions and $\sigma \in S_\cT$ such that $\sigma \in \mathfrak{S}(\sC_i)$ for each $i$ with $1\leq i \leq s$.
We define a cone associated to this collection as
\begin{align}
\tau_{\omega_{\sigma}}\{\sC_i\}
=
cone\{\omega_{\sigma}(\sC_1), \ldots, \omega_{\sigma}(\sC_k)\}+ \langle\bf{1}\rangle_{\mathbb{R}}.
\end{align}

Let $\sB$ be an $n$-bracketing and suppose $\{\sC_i\}$ is the set of all collisions contained in $\sB$.
We define the cone $\tau_{\omega_
\sigma}(\sB) = \tau_{\omega_
\sigma}\{\sC_i\}$.
Similarly, given a face $Y \in \Delta_\cT$, or $Y \in \mathscr{O}(\cT)$, we define $\tau_{\omega_
\sigma}(Y) = \tau_{\omega_
\sigma}\{\sC_i\}$ where $Y = \{\sC_i\}$.
\null\hfill$\triangle$
\end{definition}

\begin{definition}\label{permatlasdef}
Given $\sigma \in S_\cT$, we define the \emph{shuffle chart} of the metric $n$-bracketing complex associated to $\sigma$ to be the pair $(U_\sigma, \Gamma_{\omega_\sigma})$ where
\begin{align}
U_\sigma = \{\tau^{\met}(\sB) \in { \cK^{\met}}(\cT): \sigma \in \mathfrak{S}(\sB)\}
\end{align}
and $\Gamma_{\omega_\sigma}:U_{\sigma}\rightarrow \mathbb{R}^m$ is defined as follows.
Let $\sC_0, \ldots, \sC_k \in \wh \fX(\cT)$ with $\sC_0=\sB_{\min}$, and $\lambda_i\in \mathbb{R}$ for $0\leq i\leq k$ with $\lambda_i\geq 0$ for $i>0$.
Let $\ell_\sB \in U_{\sigma}$ be such that
\begin{align}\ell_\sB = \sum_{i=0}^k \lambda_i \ell(\sC_i).
\end{align}
We define the map $\Gamma_{\omega_{\sigma}}:U_{\sigma}\rightarrow \mathbb{R}^m$, as
\begin{align}
\Gamma_{\omega_{\sigma}}(\ell_{\sB})= \sum_{i=0}^k \lambda_i \omega_{\sigma}(\sC_i).
\end{align}

The \emph{shuffle atlas for the metric $n$-bracketing complex} is the collection of all shuffle charts $\{(U_\sigma, \Gamma_{\omega_\sigma})\}$ where $\sigma=(\sigma_1, \ldots, \sigma_n)$ ranges over all $\sigma \in S_\cT$.
We similarly define \emph{the shuffle atlas for the velocity fan} as the collection of \emph{shuffle charts} $\{\Gamma(U_\sigma),\Gamma^{\rho}_{\omega_{\sigma}}\}$ where $\Gamma^{\rho}_{\omega_{\sigma}} =\Gamma_{\omega_\sigma}\circ \Gamma^{-1}$.
\null\hfill$\triangle$
\end{definition}

We remark that at this stage, it is not clear that $\Gamma_{\omega_\sigma}$ is well-defined, but $U_{\sigma}$ clearly is.

\begin{proposition}
\label{localPLmapsprop}
For each $\sigma \in S_{\cT}$, the map $\Gamma^{\rho}_{\omega_\sigma}$ is an injective piecewise-unimodular map on $\Gamma(U_{\sigma})$, and the image of each cone is a union of cones in the braid arrangement.
\end{proposition}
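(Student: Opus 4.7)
The starting point is to identify each generator $\omega_\sigma(\sC)$ explicitly. I would first verify that every $\sigma \in \mathfrak{S}(\sC)$ is a compatible shuffle for $\sC$ in the sense of Lemma \ref{permutationtransformationlemma1}: the height-linear-extension clause of Definition \ref{permdef} forces $\psi^\cT_{\cT/\sC}(u^k_{\sigma(i)})$ to be weakly increasing in $i$ with respect to $<_{\cT/\sC}$, while the no-splitting clause forces $\sigma$-consecutive vertices lying in a common essential $k$-bracket of $\sC$ to map to the same vertex of $\cT/\sC$ and those in distinct essential brackets to map to $<_{\cT/\sC}$-adjacent vertices; together these give the successor-or-equal condition defining compatibility. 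Lemma \ref{permutationtransformationlemma1} then yields the uniform formula $\omega_\sigma(\sC) = P_\sigma^T(\rho(\sC) - {\bf 1}) + {\bf 1}$ for every $\sC \in \wh\fX(\cT)$ with $\sigma \in \mathfrak{S}(\sC)$ (including $\sC = \sB_{\min}$), in particular making $\omega_\sigma(\sC)^k_j = 1$ precisely when $u^k_{\sigma(j)}$ and $u^k_{\sigma(j+1)}$ lie in a common essential $k$-bracket of $\sC$. Because within a single collision at most one essential $k$-bracket can contain both vertices, linear expansion yields the coordinate formula
\begin{align*}
\Gamma_{\omega_\sigma}(\ell_\sB)^k_j = \sum_{\substack{A \in \sB^k \\ \{u^k_{\sigma(j)},\, u^k_{\sigma(j+1)}\} \subseteq A^k}} \ell_\sB(A),
\end{align*}
which is manifestly an invariant of $\ell_\sB$.

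The invariance of this formula (mirroring the role of Lemma \ref{zetametric} in the proof of Theorem \ref{permutahedroidthm}) shows that $\Gamma_{\omega_\sigma}$ is well-defined and linear on each conical set $\tau^{\met}(\sB) \subseteq U_\sigma$, so $\Gamma^\rho_{\omega_\sigma} = \Gamma_{\omega_\sigma} \circ \Gamma^{-1}$ is a well-defined piecewise-linear map on $\Gamma(U_\sigma)$. For global injectivity I would invert the coordinate formula: for each depth $k$, the $\sigma$-admissibility of $\sB$ forces the vertices of each nonsingleton $A \in \sB^k$ to occupy a $\sigma$-consecutive block, so the values $v^k_j$ are constant on the inner positions of each block and change only at its boundaries; the pattern of constant blocks thus reveals the $\sigma$-spans of all nonsingleton brackets of $\sB^k$, and M\"obius inversion along the nested containment poset of $\sB^k$ then recovers each $\ell_\sB(A)$. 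Injectivity of $\Gamma^\rho_{\omega_\sigma}$ on $\Gamma(U_\sigma)$ follows because $\Gamma$ is bijective by Proposition \ref{mainvelocitythm}.

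For piecewise unimodularity, I would pass to the triangulated velocity fan $\cF(\ccD)$ via Theorem \ref{triangulationtheorem}: it suffices to show that for each maximal nested collection $Y = \{\sC_1,\ldots,\sC_{m-1}\} \in \ccD$ with $\sigma \in \mathfrak{S}(\Omega(Y))$, the matrix $N_Y$ with columns $\omega_\sigma(\sC_1),\ldots,\omega_\sigma(\sC_{m-1}),{\bf 1}$ has determinant $\pm 1$. The strategy mirrors that of Lemma \ref{smooth}: take $\sC_1$ minimal in $Y$, use the identity $\omega_\sigma(\sC_i) - \omega_\sigma(\sC_1) = P_\sigma^T(\rho(\sC_i) - \rho(\sC_1))$ to perform determinant-preserving column operations, left-multiply by the unimodular matrix $(P_\sigma^T)^{-1}$ (Lemma \ref{unimodperm}), and invoke Lemmas \ref{contractionlemma}, \ref{refinedcontractionlemma}, and \ref{connstructingnestedcollisionsinthequotient} to reduce $\det N_Y$ to the determinant of the analogous matrix on the triangulated velocity fan of $\cT/\sC_1$, closing the induction on $|V(\cT)|$. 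The hard part is precisely this step: one must check that the restriction of $\sigma$ descends to an admissible shuffle on $\cT/\sC_1$ compatible with the residual nested collection produced by Lemma \ref{connstructingnestedcollisionsinthequotient}, so that the inductive hypothesis applies.

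Finally, to see that the image $\Gamma^\rho_{\omega_\sigma}(\tau(\sB))$ is a union of cones in the braid arrangement $\cA_m$, I would refine further by the permutahedral velocity fan $\cF(\mathscr{O}(\cT))$ (Proposition \ref{permutahedralvelocitymainprop}). For each maximal flag $Z = \{\sD_1 <_{\mathfrak{P}} \cdots <_{\mathfrak{P}} \sD_{m-1}\} \in \mathscr{O}(\cT)$ with $\tau(Z) \subseteq \Gamma(U_\sigma)$, the images $\omega_\sigma(\sD_r) = \sum_{\sC \in \sD_r}\omega_\sigma(\sC)$ are 0-1 vectors (the summands have disjoint supports because disjoint collisions have disjoint essential brackets) whose supports form a strictly increasing chain of proper subsets of $[m]$: the cover relations in $\mathfrak{P}(\cT)$ produce atomic enlargements of the support, and unimodularity of the image basis (from the previous paragraph) together with the fact that the image generators together with ${\bf 1}$ are 0-1 vectors forces each cover to add exactly one new coordinate. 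Consequently $\tau_{\omega_\sigma}(Z)$ is a maximal chamber of $\cA_m$, and $\Gamma^\rho_{\omega_\sigma}(\tau(\sB))$ is the union of such chambers as $Z$ ranges over the maximal flags refining $\sB$ inside $\Gamma(U_\sigma)$.
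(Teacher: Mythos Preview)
Your approach is broadly correct but takes a more direct route than the paper, which instead funnels everything through the nestohedral structure of \S\ref{nestohedralatlassection}. For well-definedness, your coordinate formula $\Gamma_{\omega_\sigma}(\ell_\sB)^k_j = \sum_{A \ni u^k_{\sigma(j)},u^k_{\sigma(j+1)}} \ell_\sB(A)$ is exactly what the paper uses in the forward direction of Lemma~\ref{omegametric}, and your observation that $\sigma \in \mathfrak{S}(\sC)$ implies $\sigma$ is a compatible shuffle for $\sC$ (hence $\omega_\sigma(\sC) = P_\sigma^T(\rho(\sC)-{\bf 1})+{\bf 1}$) is correct and useful. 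For injectivity, however, the paper does not attempt your M\"obius-inversion reconstruction; it instead proves the reverse implication of Lemma~\ref{omegametric} by the same inductive peel-off argument as in Lemma~\ref{metriclemma}: identify a containment-minimal collision $\sC_{(\sigma,{\bf v})} \in \Xi(\sigma,{\bf v})$ purely from ${\bf v}$ and $\sigma$ (Definition~\ref{techdefinition2}, Lemma~\ref{technicallemma2}), subtract, and recurse. Your inversion sketch is plausible at depth $n$ but glosses over the fact that distinct $k$-brackets can share the same $A^k$ while differing in their projections, so the depth-$k$ coordinates alone do not determine the individual weights; a careful version would have to proceed inductively by depth, at which point it converges to the paper's argument.

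The more substantive divergence is in unimodularity and the braid-union property. Rather than rerunning the determinant induction of Lemma~\ref{smooth} (your proposed route, which you correctly flag as delicate because one must check that $\sigma$ descends to an admissible shuffle for the quotient nested collection), the paper shows in Proposition~\ref{localbuildingsetprop} and Lemma~\ref{nestoisom} that $\{\omega_\sigma(\sC):\sigma \in \mathfrak{S}(\sC)\}$, augmented by certain singletons, is a \emph{building set} $\mathbb{B}_\sigma$, and that $\Gamma^\rho_{\omega_\sigma}$ identifies the triangulated chart $\cF(\Delta_\cT)|_{\Gamma(U_\sigma)}$ with a subfan $\cF^\circ(\mathbb{B}_\sigma)$ of the associated nestohedral fan (Proposition~\ref{nestomaps}). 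Unimodularity then comes for free from smoothness of nestohedral fans (Lemma~\ref{smoothnestohedra}) together with smoothness of $\cF(\Delta_\cT)$, and the braid-union property is immediate since nestohedral fans coarsen the braid arrangement (Corollary~\ref{unionbraicconeslemma}). Your permutahedral-fan argument for the braid property is valid and close in spirit, but the nestohedral identification is what buys the paper both conclusions simultaneously without redoing any determinant computation.
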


\begin{proof}
The statement that $\Gamma^{\rho}_{\omega_\sigma}$ is an injective piecewise-linear map on $\Gamma(U_{\sigma})$ reduces to the corresponding statement for $\Gamma_{\omega_\sigma}$, which is a consequence of the following Lemma \ref{omegametric}.
That the image of each cone is a union of cones in the braid arrangement is established in Corollary \ref{unionbraicconeslemma}, and that $\Gamma^{\rho}_{\omega_\sigma}$ is unimodular on each cone is Corollary \ref{omegasigmaunimodular}.
\end{proof}

\begin{lemma}
\label{omegametric}
Suppose that $\{\sC_0, \ldots, \sC_{l_1}\}$ and $\{\underline{\sC}_0, \ldots, ,\underline{\sC}_{l_2}\}$ are two sets of compatible extended collisions such that $\sigma \in \mathfrak{S}(\sC_i)$ for $1\leq i\leq {l_1}$ and $\sigma \in \mathfrak{S}(\underline{\sC}_i)$ for $1\leq i\leq {l_2}$.
Let $ \sC_0= \underline{\sC}_0=\sB_\min$, take $\lambda_i,\gamma_i \in \mathbb{R}$ for $0\leq i\leq {l_1}$ and $0\leq i\leq {l_2}$, and $\lambda_i,\gamma_i \geq 0$ for $i>0$.
Then \begin{align}
 \sum_{0\leq i\leq {l_1}}\lambda_i\ell(\sC_i)=\sum_{0\leq i\leq {l_2}}\gamma_i\ell(\underline{\sC}_i)
\end{align}
if and only if
\begin{align}
\sum_{0\leq i\leq {l_1}}\lambda_i\omega_{\sigma}(\sC_i)=\sum_{0\leq i\leq {l_2}}\gamma_i\omega_{\sigma}(\underline{\sC}_i).
\end{align}
\end{lemma}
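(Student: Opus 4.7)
The plan is to mirror the structure of the proofs of Lemma \ref{metriclemma} and Lemma \ref{zetametric}, but with the simplifying feature that admissibility of $\sigma$ makes the $\omega_\sigma$-coordinates transparent invariants of $\ell_\sB$.

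For the forward direction, I would show that each coordinate of $\Gamma_{\omega_\sigma}(\ell_\sB)$ depends only on $\ell_\sB$. The key identity is
\begin{align*}
\bigl(\Gamma_{\omega_\sigma}(\ell_\sB)\bigr)^k_i
=
\sum_{A:\, u^k_{\sigma(i)},\, u^k_{\sigma(i+1)}\,\in\, A^k}
\ell_\sB(A),
\end{align*}
where $A$ ranges over $k$-brackets of $\cT$ (and $\ell_\sB(A)=0$ on brackets outside $\sB$'s support). Admissibility of $\sigma$ for each $\sC_j$ forces every essential $k$-bracket of $\sC_j$ to be a $\sigma^k$-interval (Def.\ \ref{permdef}), so at most one essential bracket of $\sC_j$ contains both vertices, and the standard regrouping yields the identity. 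Since the right-hand side is manifestly a function of $\ell_\sB$, the forward direction is immediate. A quicker route: admissibility implies compatibility of $\sigma$ with each $\sC_j$, so Lemma \ref{permutationtransformationlemma1} gives $\omega_\sigma(\sC)=P_\sigma^T(\rho(\sC)-\mathbf{1})+\mathbf{1}$, and summing against the $\lambda_i$'s (using Lemma \ref{equalityofcoefficientsums} to certify that $c_\sB\coloneqq\sum\lambda_i$ is an invariant of $\ell_\sB$ after decomposing along fusion brackets as in Lemma \ref{metricpartitionlemma}) gives $\Gamma_{\omega_\sigma}(\ell_\sB) = P_\sigma^T\Gamma(\ell_\sB) + c_\sB(\mathbf{1}-P_\sigma^T\mathbf{1})$, which is a function of $\ell_\sB$ alone by Lemma \ref{metriclemma}.

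For the reverse direction, my plan is to invert the forward formula combinatorially. Admissibility of $\sigma$ for $\sB$ implies that the nontrivial $k$-brackets in the support of $\ell_\sB$ have depth-$k$ parts $A^k$ forming a laminar family of $\sigma^k$-intervals, and (by the bracket axioms) $A^k$ determines $A$ uniquely as a $k$-bracket. The function $i\mapsto(\Gamma_{\omega_\sigma}(\ell_\sB))^k_i$ therefore equals the sum of $\ell_\sB(A_I)$ over laminar intervals $I$ containing the gap between positions $i$ and $i+1$. Standard laminar inversion (peeling off innermost intervals via local differences of the profile) recovers each weight $\ell_\sB(A_I)$ from $\mathbf{v}$ and $\sigma$. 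This recovers $\ell_\sB$ as a function on brackets; the underlying bracketing $\sB$ is then determined, because the same-level comparability for $<_\sB$ depends only on the brackets themselves, and any admissible $\sigma$ linearly extends $<_\sB$ on each level, so the partial order is pinned down.

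If a more uniform presentation is desired, one can instead follow exactly the template of the proofs of Lemmas \ref{metriclemma} and \ref{zetametric}: proceed by induction on $|\sB_1|+|\sB_2|$, and first establish an analog of Lemma \ref{main}/Lemma \ref{zetacollisionminimal} asserting the existence of an extended collision $\sC^\sigma_{\mathbf{v}}$, a function of $\mathbf{v}$ and $\sigma$ alone, that is containment-minimal in every $\sB$ admitting an expression $\mathbf{v}=\sum\lambda_i\omega_\sigma(\sC_i)$ with $\sigma\in\mathfrak{S}(\sC_i)$ and $\sC_i\le\sB$. Granted this, pick such a $\sC^\sigma_{\mathbf{v}}$ and a sufficiently small $\epsilon>0$ so that both $\ell_1-\epsilon\ell(\sC^\sigma_{\mathbf{v}})$ and $\ell_2-\epsilon\ell(\sC^\sigma_{\mathbf{v}})$ are still metric $n$-bracketings (Scholium \ref{sch:subtract}); their $\omega_\sigma$-images remain equal (by subtracting $\epsilon\omega_\sigma(\sC^\sigma_{\mathbf{v}})$ from both sides), and the total support strictly decreases, so induction finishes the proof.

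The main obstacle is formalizing either the laminar inversion or, equivalently, the analog of Lemma \ref{main} — that is, singling out a containment-minimal collision in $\sB$ purely from the $\omega_\sigma$-data. Following the template of Definitions \ref{techdefinition} and \ref{zetatechdefinition2}, I would define $\Xi^\sigma(\mathbf{v})=\Xi^\sigma_1(\mathbf{v})\cup\Xi^\sigma_2(\mathbf{v})$ by explicit conditions phrased in terms of runs of identical entries in $\mathbf{v}$ at level $n$, compared against the projection $\pi(\mathbf{v})$ (recovered by induction on $n$ via $\Gamma_{\omega_{\pi(\sigma)}}$ on the projected complex, cf.\ Lemma \ref{projzeta}), and then prove the counterpart of Lemmas \ref{technicallemma} and \ref{zetatechnicallemma2} identifying $\Xi^\sigma(\mathbf{v})$ with the containment-minimal collisions in $\sB$. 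The crucial simplification compared to the $\rho$-case is that admissibility trivializes the height analysis: since $\sigma$ already encodes the linear extension, only the interval structure of $\mathbf{v}$ at each level needs to be read off, so the conditions analogous to \eqref{2techd2} and \eqref{2techd3} collapse to a single combinatorial condition on consecutive equal coordinates of $\mathbf{v}$.
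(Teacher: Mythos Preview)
Your forward direction and your approach (b) are correct and are exactly what the paper does: the paper proves the forward implication by observing that $(\Gamma_{\omega_\sigma}(\ell_\sB))^k_j$ equals the sum of $\ell_\sB(A)$ over $k$-brackets $A$ with $u^k_{\sigma(j)},u^k_{\sigma(j+1)}\in A^k$, and proves the converse via the analogue of Lemma~\ref{main} that you sketch (this is Lemma~\ref{main2}, built from Definition~\ref{techdefinition2} and Lemma~\ref{technicallemma2}), followed by the same subtract-and-induct scheme as in Lemma~\ref{metriclemma}.

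Your primary approach (a), however, has a genuine gap. The claim that ``$A^k$ determines $A$ uniquely as a $k$-bracket'' is false: two distinct nontrivial $k$-brackets in $\sB$ can share the same top part $A^k$ while differing at a lower level. Concretely, take a $2$-associahedron with three lines $L_1,L_2,L_3$ and two points $p_1,p_2$ on $L_1$; then $A_1=(\{L_1\},\{p_1,p_2\})$ and $A_2=(\{L_1,L_2\},\{p_1,p_2\})$ can both appear in a valid $2$-bracketing $\sB$, and both contribute to the same level-$2$ coordinate. Laminar inversion at level $k$ alone therefore recovers only the total weight $\sum_{A:\,A^k=I}\ell_\sB(A)$ on each $\sigma^k$-interval $I$, not the individual values $\ell_\sB(A)$. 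To separate them one must feed in the lower-level data together with condition~(\ref{metricdefcond4}) of Definition~\ref{def:metric_stable_tree-pair} (in the example, $\ell_\sB(A_2)=\ell_\sB(\{L_1,L_2\})$ is forced, after which $\ell_\sB(A_1)$ is determined); this is precisely the inductive coupling across levels that your $\Xi^\sigma$ template in approach (b) handles, and that simple per-level laminar inversion does not.
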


We will follow an argument similar to the one used for proving Lemmas \ref{zetametric} and \ref{metriclemma}.

\begin{lemma}
\label{projomega}
Let $\sB\in{{\cK}}(\cT)$ and $\sigma \in S_{\cT}$, then $\pi(\tau_{\omega_\sigma}(\sB)) = \tau_{\omega_{\pi(\sigma)}}(\pi(\sB))$.
\end{lemma}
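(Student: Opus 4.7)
The plan is to identify the generators of both cones (modulo the lineality $\langle\mathbf{1}\rangle_\mathbb{R}$) and to verify that the projection $\pi$ maps the generators of $\tau_{\omega_\sigma}(\sB)$ onto those of $\tau_{\omega_{\pi(\sigma)}}(\pi(\sB))$. To begin, I would observe that $\sigma\in\mathfrak{S}(\sB)$ implies $\pi(\sigma)\in\mathfrak{S}(\pi(\sB))$ since the admissibility conditions in Definition \ref{permdef} for depth $k\leq n-1$ are preserved under projection, so both cones are well-defined. The generators of the two sides are $\{\omega_\sigma(\sC):\sC\leq\sB\}$ and $\{\omega_{\pi(\sigma)}(\sC'):\sC'\leq\pi(\sB)\}$ respectively, together with the lineality generator $\omega_\sigma(\sB_\min)=\omega_{\pi(\sigma)}(\sB_\min)=\mathbf{1}$.

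The heart of the argument is a case analysis of $\pi(\omega_\sigma(\sC))$ based on the three collision types of Definition \ref{codime1char}. For type 1, the only nontrivial bracket of $\sC$ is its fusion $n$-bracket, whose projection is a singleton; hence by Definition \ref{essentialbrackets} every essential $k$-bracket of $\sC$ for $k<n$ is a singleton, and singletons cannot contain two distinct depth-$k$ vertices, so $\omega_\sigma(\sC)^k_i=0$ for every $k<n$ and $\pi(\omega_\sigma(\sC))=\mathbf{0}$. For type 2 the fusion of $\sC$ is not a $1$-bracket, and the essential $k$-brackets of $\sC$ with $k\leq n-1$ coincide with those of $\pi(\sC)$; since $\pi(\sigma)^k=\sigma^k$ for $k<n$, this forces $\pi(\omega_\sigma(\sC))=\omega_{\pi(\sigma)}(\pi(\sC))$. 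For type 3 the fusion is the maximum $1$-bracket, so every maximum $k$-bracket for $k<n$ is essential, giving $\pi(\omega_\sigma(\sC))=\mathbf{1}=\omega_{\pi(\sigma)}(\sB_\min)$.

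With these identifications in hand, the containment $\pi(\tau_{\omega_\sigma}(\sB))\subseteq\tau_{\omega_{\pi(\sigma)}}(\pi(\sB))$ is immediate: each projected generator either vanishes, coincides with the lineality generator, or equals $\omega_{\pi(\sigma)}(\pi(\sC))$ with $\pi(\sC)\leq\pi(\sB)$. For the reverse containment, I would invoke Scholium \ref{lem:collisionlift}: any $\sC'\leq\pi(\sB)$ with $\sC'\neq\sB_\min$ lifts to some $\sC\leq\sB$ with $\pi(\sC)=\sC'$, and since $\sC'$ has nontrivial brackets any such lift is automatically not type 1, whence by the case analysis $\pi(\omega_\sigma(\sC))=\omega_{\pi(\sigma)}(\sC')$. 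The only subtlety worth flagging is ensuring that $\sigma$ remains admissible for the lift, which is granted by the anti-monotonicity $\mathfrak{S}(\sB)\subseteq\mathfrak{S}(\sC)$ noted right after Definition \ref{permdef}.
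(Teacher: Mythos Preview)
Your proof is correct and takes the same approach as the paper, which simply asserts that the statement ``is clear from the definitions of $\omega_\sigma$ and $\tau_{\omega_\sigma}(\sB)$ together with the fact that taking conical combinations commutes with applying projection maps.'' You have unpacked this by explicitly analyzing how $\pi$ acts on each generator $\omega_\sigma(\sC)$ via the type~1/2/3 trichotomy, exactly paralleling the paper's more detailed proof of the analogous Proposition~\ref{projfanthing} for the velocity fan.

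One small wording issue: for type~1 you say ``every essential $k$-bracket of $\sC$ for $k<n$ is a singleton,'' but in fact there are \emph{no} essential $k$-brackets for $k<n$ in a type~1 collision, since the fusion bracket is itself an $n$-bracket and condition~(2) of Definition~\ref{essentialbrackets} requires every essential bracket to project to it; the conclusion $\pi(\omega_\sigma(\sC))=\mathbf{0}$ is unaffected. Also, your invocation of anti-monotonicity $\mathfrak{S}(\sB)\subseteq\mathfrak{S}(\sC)$ for $\sC\leq\sB$ is the correct direction (the paper's displayed inclusion after Definition~\ref{permdef} appears to be a typo).
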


\begin{proof}
The statement is clear from the definitions of $\omega_\sigma$ and $\tau_{\omega_\sigma}(\sB)$ together with the fact that taking conical combinations commutes with applying projection maps.
\end{proof}

\begin{lemma}\label{main2}
Let $\sigma \in S_{\cT}$ and ${\bf v} \in \omega_{\sigma}(U_{\sigma})$.
There exists a collision $\sC_{(\sigma,{\bf v})}$, which is a function of $\sigma$ and ${\bf v}$, such that for every set of compatible collisions $\{\sC_i :1\leq i\leq k\}$ such that $\sigma \in \mathfrak{S}(\sC_i)$, and ${\bf v} = \sum _{i =0}^k \lambda_i {\omega_{\sigma}}(\sC_i)$ with $\sC_0 = \sB_{\min}$, $\lambda_i \in\mathbb{R}$ and $\lambda_i>0$ for $i>0$, we have that $\sC_{(\sigma,{\bf v})}$ is containment-minimal in
$\bigvee _{i =1}^k \sC_i.$
\end{lemma}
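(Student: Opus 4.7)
The plan is to mirror the template established for Lemma \ref{main} (via Lemma \ref{technicallemma}) and for Lemma \ref{zetacollisionminimal} (via Lemma \ref{zetatechnicallemma2}), namely to produce a combinatorially-defined set $\Xi_\sigma(\mathbf v) \subseteq \fX(\cT)$, depending only on $\sigma$ and $\mathbf v$, whose elements are exactly the collisions that are forced to be containment-minimal in any admissible decomposition. Having done so, one simply sets $\sC_{(\sigma,\mathbf v)}$ to be any element of $\Xi_\sigma(\mathbf v)$; nonemptiness whenever $\mathbf v \notin \langle \mathbf 1 \rangle_\mathbb R$ is automatic from this characterization.

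The key observation that keeps the arithmetic transparent is Lemma \ref{permutationtransformationlemma1}: because $\sigma \in \mathfrak{S}(\sC_i)$ forces $\sigma$ to be a compatible shuffle for each $\sC_i$, we have $\omega_\sigma(\sC_i) = P_\sigma^T(\rho(\sC_i)-\mathbf 1) + \mathbf 1$, which is a $0$-$1$ vector whose $(k,i)$-entry equals $1$ iff $u^k_{\sigma(i)}$ and $u^k_{\sigma(i+1)}$ lie in a common essential $k$-bracket of $\sC_i$. Consequently, for any decomposition $\mathbf v = \sum_{i=0}^k \lambda_i \omega_\sigma(\sC_i)$, the entry $v^k_i$ is the sum of those $\lambda_i$ for which such an essential $k$-bracket exists in $\sC_i$. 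This replaces the role played by $\Delta^n_{j,k}(\mathbf v)$ in Lemma \ref{main} and by the unordered ``exists $j>i$'' sums in Lemma \ref{zetacollisionminimal}, and, because the relevant pairs are consecutive under $\sigma$, eliminates altogether the bookkeeping for the extended height partial order (items (\ref{2techd2})-(\ref{2techd3}) of Definition \ref{techdefinition}) — the $\sigma$-admissibility hypothesis already encodes that information. Using Lemma \ref{projomega}, I would set up induction on $n$ by first producing the metric $(n-1)$-bracketing $\ell_{\wt \sB}$ with $\Gamma_{\omega_{\pi(\sigma)}}(\ell_{\wt \sB}) = \pi(\mathbf v)$, which lets us define $\gamma(\ell_{\wt \sB}, A)$ for any $(n-1)$-bracket $A$ exactly as in Definition \ref{techdefinition}.

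With this setup I would define $\Xi_\sigma(\mathbf v) = \Xi_{1,\sigma}(\mathbf v) \sqcup \Xi_{2,\sigma}(\mathbf v)$: a type $1$ collision $\sC$ with fusion $n$-bracket $A$ belongs to $\Xi_{1,\sigma}(\mathbf v)$ if there exists a constant $w_\sigma(\mathbf v, \sC) > \gamma(\ell_{\wt \sB}, \pi(A))$ so that $v^n_{j-1} = w_\sigma(\mathbf v, \sC)$ for every consecutive pair $u^n_{\sigma(j-1)}, u^n_{\sigma(j)} \in A^n$, and so that every strictly larger $n$-bracket $\wt A$ over $\pi(A)$ admits a consecutive pair in $\wt A^n$ with $v$-entry strictly less than $w_\sigma(\mathbf v, \sC)$. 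A collision $\sC$ which is not type $1$ belongs to $\Xi_{2,\sigma}(\mathbf v)$ if it is not contained in any $\sC' \in \Xi_{1,\sigma}(\mathbf v)$, its projection $\pi(\sC)$ is containment-minimal in $\wt \sB$, and, for each essential $k$-bracket $A \in \sC^k$, $v^k_i = \gamma(\ell_{\wt \sB}, \pi(A))$ precisely when the $\sigma$-consecutive vertices $u^k_{\sigma(i)}, u^k_{\sigma(i+1)}$ lie together in $A^k$.

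The equivalence ``$\sC$ is containment-minimal in $\bigvee \sC_i$ iff $\sC \in \Xi_\sigma(\mathbf v)$'' is then proved by the same two-directional induction used for Lemmas \ref{technicallemma} and \ref{zetatechnicallemma2}: containment-minimality of $\sC$ forces each admissible decomposition to contain collisions whose $\omega_\sigma$-vectors witness the numerical conditions on $\mathbf v$ above, by the \textsc{(nested)} and \textsc{(partition)} axioms combined with Lemma \ref{collisioncontainingspecificbracket} and Scholium \ref{simplifiedcontainment-minimal condition}; conversely, a $\sC$ satisfying the $\Xi_\sigma$ conditions cannot be strictly refined because any refining $\sC'$ would cause at least one $\omega_\sigma$-entry to disagree with the value prescribed by the conditions. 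The principal obstacle I anticipate is verifying the converse direction in the type $1$ case: one must rule out the scenario in which $\sC \in \Xi_{1,\sigma}(\mathbf v)$ is only $\sigma$-admissible in an unavoidable sense while some decomposition uses incompatible type $1$ collisions on overlapping vertex sets. As in the proof of Lemma \ref{technicallemma}, this is resolved by extracting an explicit type $1$ collision $\sC_i$ not compatible with $\sC$ and exhibiting a consecutive pair of vertices $u^n_{\sigma(j-1)}, u^n_{\sigma(j)}$ whose $\mathbf v$-entry $v^n_{j-1}$ violates one of the two numerical conditions defining $\Xi_{1,\sigma}(\mathbf v)$; the $\sigma$-admissibility hypothesis on the decomposition ensures that this incompatibility is visible on a $\sigma$-consecutive pair, which is the essential simplification over the $\Gamma$-version.
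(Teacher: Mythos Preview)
Your proposal is correct and follows essentially the same route as the paper: the paper establishes Lemma \ref{main2} by introducing exactly the set $\Xi(\sigma,\mathbf v)=\Xi_1(\sigma,\mathbf v)\cup\Xi_2(\sigma,\mathbf v)$ you describe (Definition \ref{techdefinition2}) and proving the characterization ``$\sC$ is containment-minimal iff $\sC\in\Xi(\sigma,\mathbf v)$'' (Lemma \ref{technicallemma2}), then taking $\sC_{(\sigma,\mathbf v)}$ to be any element. Two small remarks: the paper checks condition (\ref{2tech2d}) only on essential $n$-brackets rather than all $k$-brackets (the lower levels being absorbed into the inductive hypothesis via $\pi(\sC)$ containment-minimal in $\wt\sB$), and the paper treats the type~1 converse you flag as the ``principal obstacle'' as immediate---the $\sigma$-admissibility hypothesis indeed simplifies matters enough that the incompatibility analysis from Lemma \ref{technicallemma} is not needed here.
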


We will establish Lemma \ref{main2} as consequence of Lemma \ref{technicallemma2}, which allows us to identify containment-minimal collisions. Lemma \ref{technicallemma2} requires the following Definition \ref{techdefinition2}.

\begin{definition}
\label{techdefinition2}
Given ${\bf v} \in \omega_{\sigma}(U_{\sigma})$, we define a collection of collisions $\Xi(\sigma,{\bf v})$ associated to ${\bf v}$ and ${\sigma}$.
This set will be described as the union of sets $\Xi_1(\sigma,{\bf v})$ and $\Xi_2(\sigma,{\bf v})$.
Let $\ell_{\wt \sB}$ be the unique metric $(n-1)$-bracketing in $\pi(U_{\sigma})$ such that $\Gamma_{\omega_{\pi(\sigma)}}(\ell_{\wt \sB})=\pi({\bf v})$.\footnote{The existence and uniqueness of $\ell_{\wt \sB}$ is afforded by Lemma \ref{projomega} together with induction on $n$ applied to Lemma \ref{omegametric}, and the fact that $\pi(U_\sigma) = U_{\pi(\sigma)}\subseteq \cK(\pi(\cT))$.}

Let $\sC$ be a collision such that $\sigma \in \mathfrak{S}(\sC)$, then

\begin{enumerate}
\item
\label{1tech2}
$\sC \in \Xi_1(\sigma,{\bf v})$ if 

\begin{enumerate}
\item \label{1tech2a} $\sC$ is type 1 with fusion $n$-bracket $A$,

\item \label{1tech2b} There exists a constant $w_{\omega}(\sigma,{\bf v},\sC) > \gamma({\ell_{\wt \sB}},A)$ such that for $u^n_{\sigma(j)}$ and $u^n_{\sigma(j+1)}$ in $A^n$, we have $v^n_{j} =w_{\omega}(\sigma,{\bf v},\sC)$.

\item \label{1tech2c} For any $n$-bracket $A'$ with $A \subsetneq A'$ and $\pi(A') = \pi(A)$, there exists $u^n_{\sigma(j)},u^n_{\sigma(j+1)} \in A'^{\, n}$ with $v^n_{j}<w_{\omega}(\sigma,{\bf v},\sC)$.
\end{enumerate}

\medskip

\item
\label{2tech2}
$\sC \in \Xi_2(\sigma,{\bf v})$ if

\begin{enumerate}
\item
\label{2tech2a}
$\sC$ is type 2

\item
\label{2tech2b}
There exists no $\sC' \in \Xi_1(\sigma,{\bf v})$ such that $\sC' \rightarrow \sC$.
 
\item
\label{2tech2c}
$\pi(\sC)$ is containment-minimal in $\wt \sB$,

\item
\label{2tech2d}
For each $A \in \sC^n$ and $u^n_{\sigma(j)}, u^n_{\sigma(j+1)}\in A^n$, we have $v^n_{j} = \gamma({\ell_{\wt \sB}},\pi(A))$.
\end{enumerate}
\end{enumerate}

\medskip

We define
\begin{align}
\Xi(\sigma,{\bf v}) = \Xi_1(\sigma,{\bf v}) \cup \Xi_2(\sigma,{\bf v}).
\end{align}
\null\hfill$\triangle$
\end{definition}

\begin{lemma}
\label{technicallemma2}
Let $\{\sC_i :1\leq i\leq k\}$ be a set of compatible collisions such that $\sigma \in \mathfrak{S}(\sC_i)$ for $1\leq i\leq k$, and let
$\sB = \bigvee_{i=1}^k \sC_i.$
Let ${\bf v} = \sum _{i =0}^k \lambda_i \omega_\sigma(\sC_i)$ with $\sC_0 = \sB_{\min}$, $\lambda_i \in \mathbb{R}$, and $\lambda_i > 0$ for $i>0$.
Then $\sC$ is containment-minimal in $\sB$ if and only if $\sC \in \Xi(\sigma,{\bf v}) $.
\end{lemma}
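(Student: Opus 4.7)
The proof will proceed in close analogy with Lemma \ref{technicallemma} for the velocity fan and especially with Lemma \ref{zetatechnicallemma2} for the $\zeta$-map, but with the important difference that the coordinates of ${\bf v}$ now encode information indexed by pairs of $\sigma$-consecutive vertices rather than by pairs of $<_\cT$-consecutive vertices or by single vertices. The argument will split naturally into the two cases $\sC \in \Xi_1(\sigma,{\bf v})$ (type 1) and $\sC \in \Xi_2(\sigma,{\bf v})$ (not type 1), and within each case into a forward and a reverse implication. Throughout, the key combinatorial observation is that by definition,
\begin{align}
v^k_j \;=\; \sum_{\substack{i\,:\,u^k_{\sigma(i)},\,u^k_{\sigma(i+1)}\\ \text{lie in a common essential}\\ k\text{-bracket of }\sC_i}} \lambda_i,
\end{align}
so coordinates of ${\bf v}$ are read off as sums of those $\lambda_i$ for which $\sigma$-consecutive vertices of $\cT$ collide in $\sC_i$.

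For the forward direction, I would first treat the case where $\sC$ is type 1 and containment-minimal in $\sB$, with unique nontrivial $n$-bracket $A$. Let $R \subseteq \{\sC_i\}$ consist of those $\sC_i$ with $\sC\rightarrow \sC_i$, and let $S\subseteq R$ be the non-type-1 members. Mimicking the corresponding step in the proof of Lemma \ref{technicallemma}, the (\textsc{nested}) and (\textsc{partition}) axioms combined with Scholium \ref{lem:collisionlift} yield $\gamma(\ell_{\wt\sB},\pi(A)) = \sum_{\sC_i\in S}\lambda_i$, and setting $w_\omega(\sigma,{\bf v},\sC) = \sum_{\sC_i\in R}\lambda_i$ verifies conditions (\ref{1tech2b}) and (\ref{1tech2c}) — here condition (\ref{1tech2c}) uses that if $A\subsetneq A'$ then containment-minimality of $\sC$ forces some $\sC_i\in R$ with $A'\not\subseteq A_i$, detected by a pair of $\sigma$-consecutive vertices in $A'$. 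The non-type-1 case is easier: conditions (\ref{2tech2a})–(\ref{2tech2c}) follow from Lemma \ref{projcontain} together with the fact that containment-minimality excludes any $\sC'\in\Xi_1$ with $\sC'\rightarrow\sC$, and condition (\ref{2tech2d}) reduces, via the same identity for $v^n_j$ above, to the statement $v^n_j = \sum_{\sC_i \in R}\lambda_i$, where $R$ is the set of $\sC_i$ with $\sC\rightarrow\sC_i$ and $u^n_{\sigma(j)},u^n_{\sigma(j+1)}$ in a common essential $n$-bracket of $\sC_i$, which matches $\gamma(\ell_{\wt\sB},\pi(A))$ by induction on $n$ applied to $\pi(\sC)$.

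For the reverse direction, I would assume $\sC \in \Xi(\sigma,{\bf v})$ and argue containment-minimality. By Lemma \ref{containmentlemma} and Scholium \ref{lem:collisionlift}, I can pick $\sC'\in\wh\fX(\cT)$ which is containment-minimal in $\sB$ with either $\sC'$ of type 1 or $\pi(\sC')\rightarrow\pi(\sC)$. Condition (\ref{1tech2c}) (respectively (\ref{2tech2b})) rules out the inequality cases, and the remaining task is to show $\sC = \sC'$. By the forward direction applied to $\sC'$, both $\sC$ and $\sC'$ satisfy the conditions of $\Xi$. Condition (\ref{1tech2b}) determines the nontrivial bracket $A$ in the type 1 case, and condition (\ref{2tech2d}) combined with the induction hypothesis on $\pi(\sC) = \pi(\sC')$ determines the essential $n$-brackets of $\sC$; the height partial order on these brackets is then inherited from $\sB$, eliminating the step in Definition \ref{techdefinition} where the order had to be read off from inequalities in $\Delta^n_{j,k}({\bf v})$.

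The main obstacle will be the interaction between $\sigma$ and the height partial order of $\sC$. The admissibility requirement $\sigma\in\mathfrak{S}(\sC)$ guarantees that $\sigma$ linearly extends $\wh<_{\sC}$ on $V^k(\cT)$ and, crucially, that no bracket of $\sB$ is split into non-consecutive pieces by $\sigma$; this is precisely what allows us to recover the nontrivial brackets of $\sC$ from the set of indices $j$ at which $v^k_j$ attains its prescribed value. Without this condition the map $\omega_\sigma$ would fail to detect which vertices participate in essential brackets, and both implications would break. Consequently, the technical heart of the proof will be verifying that admissibility of $\sigma$ is preserved under the passages $\sC \leftrightarrow \pi(\sC)$ and $\sB \leftrightarrow \sC$ used in the inductive step, which follows from the definition of $\mathfrak{S}$ and from $\mathfrak{S}(\sC) \supseteq \mathfrak{S}(\sB)$ whenever $\sC \leq \sB$.
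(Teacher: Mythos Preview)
Your proposal is correct and follows the same approach as the paper, which gives only a four-line proof declaring each case ``clear'' or ``straightforward'' by analogy with Lemmas \ref{technicallemma} and \ref{zetatechnicallemma2}; your expansion is a faithful unpacking of that analogy. Two minor points: in your displayed formula for $v^k_j$ the index $i$ is overloaded (it should be $\sigma(j),\sigma(j+1)$ in the subscript, with the sum running over the collision index), and in the reverse direction Lemma \ref{containmentlemma} cannot be applied directly to $\sC$ since $\sC\leq\sB$ is not yet known---you should instead lift $\pi(\sC)$ to some $\sC''\leq\sB$ via Scholium \ref{lem:collisionlift} (using that condition (\ref{2tech2c}) gives $\pi(\sC)\leq\wt\sB=\pi(\sB)$) and then apply Lemma \ref{containmentlemma} to $\sC''$, exactly as in the proof of Lemma \ref{technicallemma}.
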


\begin{proof}
Suppose that $\sC$ is type 1 and $\sC$ is containment-minimal in $\sB$, then it is clear that $\sC \in \Xi_1(\sigma,{\bf v})$.
Conversely, if $\sC \in \Xi_1(\sigma,{\bf v})$, it is a type 1 collision such that $\sC$ is containment-minimal in $\sB$.
Suppose that $\sC$ is not type 1 and $\sC$ is containment-minimal in $\sB$, then it is clear that $\sC \in \Xi_2(\sigma,{\bf v})$.
The converse where $\sC \in \Xi_2(\sigma,{\bf v})$ is also straightforward. 
\end{proof}

\begin{proof}(\ref{omegametric})
Supposing the notational setup of Theorem \ref{omegametric}, let \begin{align}
\ell_{\sB} = \sum_{0\leq i\leq {l_1}}\lambda_i\ell(\sC_i)=\sum_{0\leq i\leq {l_2}}\gamma_i\ell(\underline{\sC}_i).
\end{align}

We claim that
\begin{align}
\sum_{0\leq i\leq {l_1}}\lambda_i\omega_{\sigma}(\sC_i)=\sum_{0\leq i\leq {l_2}}\gamma_i\omega_{\sigma}(\underline{\sC}_i).
\end{align}

Let $\sum_{0\leq i\leq {l_1}}\lambda_i\omega_{\sigma}(\sC_i)= {\bf w}$ and $\sum_{0\leq i\leq {l_2}}\gamma_i\omega_{\sigma}(\underline{\sC}_i)= {\bf v}$. We observe that a coordinate $v^k_j$ of ${\bf v}$ equal to the weighted sum of $\ell_{\sB}(A)$ over the $k$-brackets $A$ which contain $u^k_{\sigma(j)}$ and $u^k_{\sigma(j+1)}$.
This is an invariant of the metric $n$-bracketing $\ell_{\sB}$, thus $w^k_j = v^k_j$, and ${\bf w}={\bf v}$.

For establishing the converse, suppose
\begin{align}
{\bf v} = \sum_{0\leq i\leq {l_1}}\lambda_i\omega_{\sigma}(\sC_i)=\sum_{0\leq i\leq {l_2}}\gamma_i\omega_{\sigma}(\underline{\sC}_i).
\end{align}

We wish to prove that
\begin{align}
\sum_{0\leq i\leq {l_1}}\lambda_i\ell(\sC_i)=\sum_{0\leq i\leq {l_2}}\gamma_i\ell(\underline{\sC}_i).
\end{align}

By Lemma \ref{technicallemma2} we can find a collision which is containment-minimal in the support of both metric $n$-bracketings
$\sum_{0\leq i\leq {l_1}}\lambda_i\ell(\sC_i)$ and $\sum_{0\leq i\leq {l_2}}\gamma_i\ell(\underline{\sC}_i)$.
The proof is completed by following the same inductive argument utilized in the proof of Lemma \ref{metriclemma} when demonstrating that (\ref{brackequal}) implies (\ref{vecequal}).
\end{proof}

\begin{remark}
While $\Gamma^{\rho}_{\omega_\sigma}$ is injective and piecewise-linear on all of $\Gamma(U_{\sigma})$, it may not be linear on all of $\Gamma(U_{\sigma})$.
For example, in Figure \ref{maps_to_braid_example} we see that in the chart $\Gamma(U_{312})$\footnote{Here we have simplified notation and denote $\sigma = (\sigma^1, \sigma^2) = (12,312)$ by $312$.}, \, we have $(1,0,1)=(0,0,1)+(1,0,0)=\Gamma^{\rho}_{\omega_{312}}(0,1,0)+\Gamma^{\rho}_{\omega_{312}}(1,0,3) \neq \Gamma^{\rho}_{\omega_{312}}((0,1,0)+(1,0,3))$ as by injectivity, $\Gamma^{\rho}_{\omega_{312}}(1,1,3) \neq \Gamma^{\rho}_{\omega_{312}}(1,1,2) = (1,0,1)$.
\null\hfill$\triangle$
\end{remark}

\subsection{The nestohedral atlas for the triangulated velocity fan}
\label{nestohedralatlassection}

\

We now investigate the shuffle atlas for the triangulated $n$-bracketing complex.

\begin{definition}\label{nestatlasdef}
Given $Y \in \ccD$ and $\sB = \vee_{\sC \in Y} \sC$, let $\mathfrak{S}(Y)\coloneqq\{\sigma \in S_{\cT}: \sigma \in \mathfrak{S}(\sB)$.
We define the \emph{nestohedral chart of the triangulated $n$-bracketing complex associated to $\sigma$} to be the pair $(U_\sigma, \Gamma_{\omega_\sigma})$ where $U_\sigma = \{\tau^{\met}(Y) \in { \cK^{\met}}(\ccD): \sigma \in \mathfrak{S}(Y)\}$ and $\Gamma_{\omega_\sigma}$, is the piecewise-linear map obtained by extending ${\ell}(\sC)\mapsto \omega_{\sigma}(\sC)$ linearly on each conical set $\tau^{\met}(Y)$.
The \emph{nestohedral atlas for the triangulated metric $n$-bracketing complex} is the collection of all nestohedral charts $\{(U_\sigma, \Gamma_{\omega_\sigma})\}$ where $\sigma$ ranges over $S_{\cT}$.
We define the nestohedral charts and atlas for the triangulated velocity fan similarly by taking $\{\Gamma(U_\sigma),\Gamma^{\rho}_{\omega_{\sigma}}\}$ where $\Gamma^{\rho}_{\omega_{\sigma}} =\Gamma_{\omega_\sigma}\circ \Gamma^{-1}$.
\null\hfill$\triangle$
\end{definition}

The shuffle atlas is essentially the same as the nestohedral atlas except that the latter is defined on the triangulated velocity fan.
Our choice to introduce new terminology for this essentially equivalent object will become clear shortly.

We begin by recalling the definition of building sets and their corresponding nested set complexes for the Boolean lattice following Postnikov.
These definitions are derived from De Concini and Procesi's theory of wonderful compactifications of \cite{de1995wonderful} in the special case of the coordinate arrangement.\footnote{Although not explained in Backman-Danner \cite{backman2024convex}, one of the first author's primary motivations for that work was the desire to better understand the results of this subsection.  In future work, we will utilize tropical geometry for contextualizing the results of this subsection.}

\begin{definition}\label{builddef}
Let $\mathbb{B}$ be a collection of subsets of $[n]$.
We say that $\mathbb{B}$ is a \emph{building set} if 

\begin{enumerate}
\item\label{build1} For all $i \in [n]$, $\{i\} \in \mathbb{B}$

\item\label{build2} For all $B_1, B_2 \in \mathbb{B}$, if $B_1 \cap B_2 \neq \emptyset$ then $B_1 \cup B_2 \in \mathbb{B}$.
\null\hfill$\triangle$
\end{enumerate}
\end{definition}

\begin{definition}\label{nestedsetcomplexdef}
A set $N \subseteq \mathbb{B}$ is a \emph{nested set} if
\begin{enumerate}
\item
\label{nesteddef1} $B_1, B_2 \in N$, then $B_1, \subseteq B_2$, $B_2 \subseteq B_1$ or $B_1 \cap B_2 =\emptyset$, 

\item
\label{nesteddef2} when $B_1, \ldots B_k \in N$ are disjoint with $k\geq 2$, then
\begin{align}
\bigcup_{i=1}^k B_i \notin \mathbb{B}.
\end{align}
\end{enumerate}

The \emph{nested set complex} $\Delta_{\mathbb{B}}$ associated to $\mathbb{B}$ is the collection of all nested sets for $\mathbb{B}$.
It is clear that $\Delta_{\mathbb{B}}$ is an abstract simplicial complex.
\null\hfill$\triangle$
\end{definition}

Let $\fX(\cT)^{\sigma}=\{\sC \in \fX(\cT): \sigma \in \mathfrak{S}(\sC)\}$ and $\wh \fX(\cT)^{\sigma}=\{\sC \in \wh \fX(\cT): \sigma \in \mathfrak{S}(\sC)\}$.
In the remainder of this section we will identify the 0-1 vectors $\omega_{\sigma}(\sC)$ for $\sC \in \wh \fX(\cT)^{\sigma}$, with indicator vectors for sets and hence allow discussion of their unions and intersections.
Our ambient set will be $\bigsqcup_{k=1}^n[t_k]$, where $[t_k]$ indicates the set of the first $t_k$ positive integers.
We will let $e^k_i$ denote the $i$th element of $[t_k]$. 

\begin{definition}
\label{localbuildingsetdef}
Let $\sigma \in S_{\cT}$.
Let $W_{\sigma}=\{\omega_{\sigma}(\sC): \sC \in \wh \fX(\cT)^{\sigma}\}$.
Let $V_{\sigma} = \{ e^k_i: \pi(u^k_{\sigma(i)})\neq \pi(u^k_{\sigma(i+1)}) \}$.
The set $\mathbb{B}_{\sigma} = W_{\sigma} \sqcup V_{\sigma}$ is the \emph{local $\sigma$-collision building set}.
\null\hfill$\triangle$
\end{definition}

Our name for $\mathbb{B}_{\sigma}$ is justified by the following Proposition \ref{localbuildingsetprop}.

\begin{proposition}\label{localbuildingsetprop}
The set $\mathbb{B}_{\sigma}$ is a building set.
\end{proposition}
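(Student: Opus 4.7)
The plan is to check both axioms of Definition \ref{builddef} for $\mathbb{B}_\sigma = W_\sigma \sqcup V_\sigma$ by a direct combinatorial analysis, using Lemma \ref{permutationtransformationlemma1} as the key translation between $\omega_\sigma(\sC)$ and the geometry of essential brackets in $\sigma$-order.

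For axiom (1), I would show that every singleton $\{e^k_i\}$ in the ambient set $\bigsqcup_{k=1}^n[t_k]$ lies in $\mathbb{B}_\sigma$ by a dichotomy. If $\pi(u^k_{\sigma(i)}) \neq \pi(u^k_{\sigma(i+1)})$, then $\{e^k_i\} \in V_\sigma$ by definition. Otherwise, $u^k_{\sigma(i)}$ and $u^k_{\sigma(i+1)}$ are siblings in $\cT$; the $\cT$-shuffle condition on $\sigma$ then forces $\sigma(i) < \sigma(i+1)$, so these two vertices are $\cT$-consecutive siblings. I would then construct the minimal type-1 collision $\sC$ whose unique nontrivial bracket is the $k$-bracket $\{u^k_{\sigma(i)}, u^k_{\sigma(i+1)}\}$ sitting over singleton brackets at smaller depths, verify directly that $\sigma \in \mathfrak{S}(\sC)$ (admissibility is immediate because only one nontrivial bracket must be checked), and conclude from Lemma \ref{permutationtransformationlemma1} that $\omega_\sigma(\sC) = \{e^k_i\}$, i.e.\ $\{e^k_i\} \in W_\sigma$.

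For axiom (2), assume $B_1, B_2 \in \mathbb{B}_\sigma$ with $B_1 \cap B_2 \neq \emptyset$. If either $B_j$ is a singleton in $V_\sigma$, then the nonempty intersection forces it to be contained in the other $B$, and $B_1 \cup B_2$ equals the other set; the remaining two-singleton subcase is trivial. The substantive case is $B_j = \omega_\sigma(\sC_j)$ for $\sC_j \in \wh\fX(\cT)^\sigma$. By Lemma \ref{permutationtransformationlemma1}, the essential $k$-brackets of a $\sigma$-admissible collision partition $V^k(\cT)$ into $\sigma$-consecutive blocks at each depth $k$, and $\omega_\sigma(\sC)$ records precisely the intra-block adjacencies $(i,i+1)$. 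A shared entry $(k_0, i_0) \in \omega_\sigma(\sC_1) \cap \omega_\sigma(\sC_2)$ therefore witnesses that both collisions have a block at depth $k_0$ containing the pair $u^{k_0}_{\sigma(i_0)}, u^{k_0}_{\sigma(i_0+1)}$. I would then define a candidate collision $\sC_3$ whose partition of $V^k(\cT)$ at each depth $k$ is the join $P_k(\sC_1) \vee P_k(\sC_2)$ in the partition lattice, equivalently the transitive closure of the union of adjacencies. Since overlapping $\sigma$-intervals union to a $\sigma$-interval, this join is again a partition into $\sigma$-consecutive blocks. The height partial order on these blocks is inherited from $\sigma$ itself. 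It remains to check that $\sC_3$ really is a collision (unique fusion bracket, no nested nontrivial brackets, valid \textsc{(nested)}/\textsc{(partition)}/\textsc{(height)} axioms) via Proposition \ref{collisioncharacterization}, and to verify $\sigma \in \mathfrak{S}(\sC_3)$, which is automatic because we have only coarsened blocks along $\sigma$-consecutive edges and the new nontrivial brackets appear consecutively in $\sigma$-order. The identity $\omega_\sigma(\sC_3) = B_1 \cup B_2$ is then by construction.

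The main obstacle is the tree-coherence step: $\sC_1$ and $\sC_2$ need not be compatible as $n$-bracketings (the classical-associahedron case $[1,3]$ and $[2,4]$ already illustrates that two collisions whose $\omega_\sigma$ sets overlap can fail the \textsc{(nested)} axiom jointly), so I cannot define $\sC_3$ as the lattice-theoretic join $\sC_1 \vee \sC_2$ in $\cK(\cT)$. Instead I plan to proceed by induction on the depth $n$: at the bottom depth, the statement reduces to the purely one-dimensional fact that two overlapping $\sigma$-intervals union to a $\sigma$-interval, which gives the block partition at depth $n$; at higher depths, the projection $\pi(A_j)$ of the essential bracket $A_j \subseteq \sC_j^k$ containing the shared adjacency again contains either the common parent of the two fused vertices (when they are siblings) or both of their distinct parents, so $\pi(A_1) \cap \pi(A_2) \neq \emptyset$ and the overlap cascades inductively up the tree, producing a coherent sequence of joined partitions that assemble into the essential-bracket system of the desired collision $\sC_3$. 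This inductive cascade, combined with Lemma \ref{lem:fusion_bracket_for_collision} to identify the fusion bracket of $\sC_3$ as the projection along which the nested essentials terminate, is what I expect to occupy most of the proof.
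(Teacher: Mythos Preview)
Your proposal is correct and follows essentially the same route as the paper. The paper packages the construction of $\sC_3$ into a preparatory Lemma \ref{zetajoin} (together with Lemma \ref{intersectionimplication} translating $\omega_\sigma(\sC_1)\cap\omega_\sigma(\sC_2)\neq\emptyset$ into $\sC_1\nsim\sC_2$), whereas you present it inline; but your ``join of the depth-$k$ partitions'' is exactly their transitive closure of the relation $A^k\sim_{\sigma_k}A'^{\,k}$ when $A^k\cap A'^{\,k}\neq\emptyset$, and your level-by-level tree-coherence check matches their inductive step attaching each joined block at depth $k$ to the appropriate $(k-1)$-bracket of $\sC_3$. Two small remarks: your detailed treatment of axiom (1) is more explicit than the paper's ``clear by construction'', and your invocation of Lemma \ref{permutationtransformationlemma1} is unnecessary since the interpretation of $\omega_\sigma(\sC)$ as recording intra-block $\sigma$-adjacencies is its \emph{definition}, not a consequence of that lemma.
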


We begin with a few lemmas.

\begin{lemma}
\label{zetajoin}
Suppose that $\sC_1, \sC_2 \in \wh X_{\cT}^{\sigma}$ such that $\sC_1 \nsim \sC_2$, then there exists a collision $\sC_3\in \wh X_{\cT}^{\sigma}$ such that $\omega_{\sigma}(\sC_1) \cup \omega_{\sigma}(\sC_2) = \omega_{\sigma}(\sC_3)$.
\end{lemma}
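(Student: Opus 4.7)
The plan is to construct $\sC_3$ explicitly from its essential bracket data and then verify that it is a collision with the desired $\omega_\sigma$-vector. For each depth $k$, define an equivalence relation $\sim_k$ on $V^k(\cT)$ generated by declaring $u^k_{\sigma(i)} \sim_k u^k_{\sigma(i+1)}$ whenever $(\omega_\sigma(\sC_1) \cup \omega_\sigma(\sC_2))^k_i = 1$, and take the non-singleton equivalence classes to be the nontrivial essential $k$-brackets of $\sC_3$. Because $\sigma \in \mathfrak{S}(\sC_1) \cap \mathfrak{S}(\sC_2)$, every essential bracket of either $\sC_i$ appears as a $\sigma$-consecutive subset of $V^k(\cT)$ by Definition \ref{permdef}, so each equivalence class is $\sigma$-consecutive and thus a well-defined $k$-bracket. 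The height partial order on $\sC_3$ is induced from $\sigma$.

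Next I would verify that this data forms a valid $n$-bracketing in the sense of Definition \ref{def:n-bracketings}. The \textsc{(trivial $k$-brackets)} condition is automatic; \textsc{($k$-bracketing projection)} follows because projection already intertwines with the essential bracket structure of each $\sC_i$ and hence descends to the $\sim_k$-classes; \textsc{(nested)} holds trivially since the equivalence classes at each depth are pairwise disjoint; \textsc{(partition)} is inherited from the analogous property in $\sC_1$ and $\sC_2$ after gluing along shared vertices; and \textsc{(height partial orders)} is compatible with $\sigma$ by the admissibility of $\sigma$ for both $\sC_i$. With these axioms in hand, I would apply Proposition \ref{collisioncharacterization} to conclude that $\sC_3$ is a collision: the ``no properly-nested nontrivial brackets at the same depth'' condition is immediate from the disjointness of equivalence classes, while uniqueness of the fusion bracket is the content of the main obstacle below.

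The main obstacle is precisely the uniqueness of a fusion bracket for $\sC_3$, and this is where the hypothesis $\sC_1 \nsim \sC_2$ is used. By Definition \ref{collisionrelationsdisjoint}, if $A_1$ and $A_2$ are the fusion brackets of $\sC_1$ and $\sC_2$, at depths $k_1 \le k_2$ say, then $\overline{D}(A_1) \cap \overline{D}(A_2) \neq \emptyset$; this forces the $(k_2 - k_1)$-fold projection of $A_2$ to intersect $A_1$, which via the nested property of a single $n$-bracketing places the vertices of $\pi^{k_2-k_1}(A_2)$ inside $A_1$. Tracing this through the $\sim_k$-equivalence, the fusion contributions of $\sC_1$ and $\sC_2$ link into a single connected essential-bracket tower of $\sC_3$, whose shallowest nontrivial bracket is the unique fusion bracket. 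I expect the delicate point to be a careful case analysis comparing $k_1$ with $k_2$ and verifying that the root bracket (in the sense of Definition--Proposition prior to \ref{def:fusion_bracket}) is common to all nontrivial brackets of $\sC_3$.

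The final step is routine: by construction of $\sim_k$ we have $\omega_\sigma(\sC_3) = \omega_\sigma(\sC_1) \cup \omega_\sigma(\sC_2)$, and $\sigma \in \mathfrak{S}(\sC_3)$ follows because each bracket of $\sC_3$ is $\sigma$-consecutive and its height partial order was chosen as a restriction of $\sigma$.
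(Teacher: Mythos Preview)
Your proposal is correct and follows essentially the same construction as the paper: both build $\sC_3$ by merging the essential brackets of $\sC_1$ and $\sC_2$ into $\sigma$-consecutive equivalence classes at each depth, with height order induced from $\sigma$, and then verify the collision axioms. The paper phrases the equivalence on bracket-sets rather than on vertices and is terser about the verification (it simply asserts ``one may verify that $\sC_3$ is an extended collision''), whereas you correctly isolate fusion-bracket uniqueness as the step where the hypothesis $\sC_1 \nsim \sC_2$ is used; one small caution is that your appeal to the \textsc{(nested)} axiom there is misplaced, since $A_1$ and $\pi^{k_2-k_1}(A_2)$ live in different $n$-bracketings --- the actual reason the projection lands inside $A_1$ is that $\pi^{k_2-k_1}(A_2)$ is a singleton (being at or below the root depth of $\sC_2$).
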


\begin{proof}
For $k$ with $1\leq k \leq n$, we define an equivalence relation $\sim_{\sigma_k}$ as follows: if $A \in \sC_1^k$ and $A' \in \sC_2^k$ are essential brackets in $\sC_1$ and $\sC_2$, respectively, such that $A^k \cap A'^{\,k} \neq \emptyset$, then we let $A^k \sim_{\sigma_k} A'^{\,k}$, and we take $\sim_{\sigma_k}$ to be the transitive closure of this relation.
Let $x$ be an equivalence class of $\sim_{\sigma_k}$, and let $A^k_x = \{u^k_i \in V(\cT)^k: \exists A^k \in x, u^k_i \in A^k\}$.
Suppose we have already constructed $\sC_3^{k-1}$.
For each equivalence class $x$, we extend $A^k_x$ to an $k$-bracket in $\sC_3^{k}$ by appending it to the $(k-1)$-bracket $A \in {\underline \sC}_3^{k-1}$ such that $\pi(A^k_x)\subseteq A^{k-1}$.
It is straightforward to verify that such an $A \in \sC_3^{k-1}$ always exists.
We must also include the singleton and maximum $k$-brackets in $\sC_3^k$.
To complete our description of $\sC_3^k$, we give the height partial order on the $k$-brackets in $\sC_3^k$ induced by the total order $\sigma_k$: if $A_1, A_2 \in \sC_3^k$ such that $\pi(A_1)= \pi(A_2)$ then we take $A_1 <A_2$ if there exists $u^k_{\sigma(i)} \in A_1^k$ and $u^k_{\sigma(j)} \in A_2^k$ such that $i<j$.
It is straightforward to check that this partial order is well-defined.

We now verify that $\sC_3$ is an extended collision in $\fX({\cT})^{\sigma}$ such that $\omega_{\sigma}(\sC_1) \cup \omega_{\sigma}(\sC_2) = \omega_{\sigma}(\sC_3)$.
The depth $k$-vertices in the $k$-brackets in $\sC_1^k$ and $\sC_2^k$ can be viewed as consecutive subsets of the vertices in $V(\cT)^k$ ordered as in $\sigma_k$.
Thus, the equivalence classes defined above are also consecutive subsets of $V(\cT)^k$ ordered as in $\sigma_k$.
Using this observation, one may verify that $\sC_3$ is an extended collision in $\wh \fX({\cT})^{\sigma}$.

We check $\omega_{\sigma}(\sC_1) \cup \omega_{\sigma}(\sC_2) = \omega_{\sigma}(\sC_3)$.
Induction affords us that $\omega_{\pi(\sigma)}(\pi(\sC_1)) \cup \omega_{\pi(\sigma)}(\pi(\sC_2)) = \omega_{\pi(\sigma)}(\pi(\sC_3))$, thus we must simply check the desired statement for the last $t_n$ entries of $\omega_{\sigma}(\sC_3)$ encoding collisions of singleton $n$-brackets.
If $\omega_{\sigma}(\sC_1)^n_i=1$ or $\omega_{\sigma}(\sC_2)^n_i=1$, then $u^n_{\sigma(i)},u^n_{\sigma(i+1)} \in A^n$ for some $n$-bracket A belonging to either $\sC_1^n$ or $\sC_2^n$.
Let $x$ be the equivalence class of $\sim_{\sigma}$ containing $A$, then $u^n_{\sigma(i)},u^n_{\sigma(i+1)} \in A_x^n$ and so $\omega_{\sigma}(\sC_3)^n_i=1$.
Conversely, suppose that $\omega_{\sigma}(\sC_3)^n_i=1$, then there exists an equivalence class $x$ of $\sim_{\sigma}$ such that $u^n_{\sigma(i)},u^n_{\sigma(i+1)} \in A_x^n$.
Because $A_x^n$ is a union of collection of vertices $V^n(\cT)$ which are consecutive with respect to $\sigma_n$, it must be that there is some $n$-bracket $A \in \sC_1^n \cup \sC_2^n$ such that $u^n_{\sigma(i)},u^n_{\sigma(i+1)} \in A^n$, hence either $\omega_{\sigma}(\sC_1)^n_i=1$ or $\omega_{\sigma}(\sC_2)^n_i=1$.
\end{proof}

The following lemma is immediate.

\begin{lemma}
\label{intersectionimplication}
Let $\omega_{\sigma}(\sC_1),\omega_{\sigma}(\sC_2) \in \mathbb{B}_{\sigma}$ such that $\omega_{\sigma}(\sC_1)\cap \omega_{\sigma}(\sC_2) \neq \emptyset$, then $\sC_1 \nsim \sC_2$.
\end{lemma}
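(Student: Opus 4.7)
The plan is to argue by contrapositive: assuming $\omega_\sigma(\sC_1) \cap \omega_\sigma(\sC_2) \neq \emptyset$, I will produce a depth-$k$ vertex witnessing $\overline{D}(F_1) \cap \overline{D}(F_2) \neq \emptyset$, where $F_j$ denotes the fusion bracket of $\sC_j$, and hence conclude $\sC_1 \nsim \sC_2$ from Definition \ref{collisionrelationsdisjoint}.

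First, I unpack the intersection hypothesis. By the definition of $\omega_\sigma$, an entry $\omega_\sigma(\sC)^k_i$ equals $1$ precisely when $u^k_{\sigma(i)}$ and $u^k_{\sigma(i+1)}$ lie together in some essential $k$-bracket of $\sC$. So if the $(k,i)$-coordinate of both $\omega_\sigma(\sC_1)$ and $\omega_\sigma(\sC_2)$ equals $1$, there exist essential brackets $A_1 \in \sC_1^k$ and $A_2 \in \sC_2^k$ with $\{u^k_{\sigma(i)}, u^k_{\sigma(i+1)}\} \subseteq A_1^k \cap A_2^k$. In particular neither $A_1$ nor $A_2$ is a singleton, since singleton $k$-brackets contain only one depth-$k$ vertex.

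Second, I use the structure of essential brackets given in Definition \ref{essentialbrackets}: for any essential bracket $A \in \sC^k$ there exists some $j \geq 0$ with $\pi^j(A)$ equal to the fusion bracket $F$ of $\sC$. Iterating the projection shows that every vertex of $A^k$ is a descendant of some vertex of $F$ (one simply traces the ancestor chain up $j$ levels inside the tree). Therefore $A_1^k \subseteq \overline{D}(F_1)$ at depth $k$, and similarly $A_2^k \subseteq \overline{D}(F_2)$. Since $u^k_{\sigma(i)}$ lies in both $A_1^k$ and $A_2^k$, it is a common weak descendant of $F_1$ and $F_2$, so $\overline{D}(F_1) \cap \overline{D}(F_2) \neq \emptyset$, contradicting $\sC_1 \sim \sC_2$.

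I do not expect any real obstacle here; the main piece of bookkeeping is the verification in the second step that essential brackets sit below the fusion bracket in the sense of descendants, but this is an immediate unfolding of the definitions of essential bracket, fusion bracket, and the truncation map $\pi$ already established earlier in the paper. The maximum-bracket subtlety (relevant for type 3 collisions) does not affect the argument, because the descendant description of essential brackets applies uniformly, and the conclusion needed is just nonemptiness of $\overline{D}(F_1) \cap \overline{D}(F_2)$, which is precisely the negation of disjointness in Definition \ref{collisionrelationsdisjoint}.
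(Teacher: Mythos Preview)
Your argument is correct and is precisely the routine unwinding of Definitions~\ref{collisionrelationsdisjoint} and~\ref{essentialbrackets} that the paper has in mind when it declares the lemma ``immediate'' without further proof. The only minor simplification available is to invoke the second, equivalent characterization of $\sim$ in Definition~\ref{collisionrelationsdisjoint} (nontrivial $k$-brackets have empty intersection) once you have produced $A_1,A_2$ sharing $u^k_{\sigma(i)}$; your route via descendants of the fusion bracket is slightly longer but has the advantage of handling the maximum-bracket case uniformly, exactly as you note.
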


\begin{proof}
(\ref{localbuildingsetprop})
It is clear by construction that for all $e^k_i \in \bigsqcup_{k=0}^n[t_k]$, $\{e^k_i\} \in \mathbb{B}_{\sigma}$ thus condition (\ref{build1}) of Definition \ref{builddef} is satisfied.
We now verify condition (\ref{build2}) of Definition \ref{builddef}.
Let $\omega_{\sigma}(\sC_1),\omega_{\sigma}(\sC_2) \in \mathbb{B}_{\sigma}$ such that $\omega_{\sigma}(\sC_1)\cap \omega_{\sigma}(\sC_2) \neq \emptyset$.
By Lemma \ref{intersectionimplication} we know that $\sC_1 \nsim \sC_2$, and it follows from Lemma \ref{zetajoin} that $\omega_{\sigma}(\sC_1)\cup \omega_{\sigma}(\sC_2) \in \mathbb{B}_{\sigma}$.
\end{proof}

Given a building set $\mathbb{B}$ and its associated nested set complex $\Delta_{\mathbb{B}}$, Postnikov \cite{postnikovgp} explicitly described how one can realize the nested set complex by a smooth projective fan $\cF(\Delta_{\mathbb{B}})$ which coarsens the braid arrangement (see also Zelevinsky \cite{zelevinsky2005nested} and Feichtner-Sturmfels \cite{feichtner2004matroid}).
The toric variety associated to this fan is De Concini-Procesi's associated wonderful compactification of the torus with respect to $\mathbb{B}$ \cite{de1995wonderful}.
Let $N$ be a nested set for $\mathbb{B}$, then the corresponding cone in $\cF(\Delta_{\mathbb{B}})$ is 
\begin{align}
\tau(N) = cone\{\chi_B: B \in N\}+\langle {\bf 1} \rangle_{\mathbb{R}},
\end{align}
where $\chi_B$ is the indicator vector for $B$.

\begin{definition}
\label{stardef}
Give a simplicial complex $\Delta$, and a face $\tau \in \Delta$, the star of $\tau$ is 
\begin{align}
star(\tau)= \{\tau' \in \Delta:\tau \subseteq \tau'\}.
\end{align}
\null\hfill$\triangle$
\end{definition}

\begin{lemma}
\label{nestoisom}
Let $\Delta_{\cT_\sigma} = \{Y \in \Delta_\cT: \forall \, \sC \in Y,\, \sigma \in \mathfrak{S}(\sC)\}$, and let $\Delta^{\circ}_{\mathbb{B}_{\sigma}}= \Delta_{\mathbb{B}_{\sigma}}\setminus \{star(\{e^k_i\}): e^k_i \in V_{\sigma}\}$.
Then
\begin{align}
\Delta_{\cT_\sigma} \cong \Delta^{\circ}_{\mathbb{B}_{\sigma}},\end{align}
where $\cong$ is an isomorphism of abstract simplicial complexes induced by the map $\sC \mapsto \omega_{\sigma}(\sC)$.
\end{lemma}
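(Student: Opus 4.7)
The plan is to verify that $\phi: \sC \mapsto \omega_\sigma(\sC)$ extends to a simplicial isomorphism, by establishing a vertex bijection and then showing that nested collections of collisions correspond exactly to nested sets in $\mathbb{B}_\sigma$ whose members all lie in $W_\sigma$. I will proceed in three steps: vertex bijection, forward simpliciality, and reverse simpliciality, with the main subtlety located in the second step.

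The vertex bijection between $\fX(\cT)^\sigma$ and $W_\sigma$ (accounting for the fact that $\omega_\sigma(\sB_{\min})=\mathbf{1}$ plays the role of the top element and is orthogonal to the collision bijection) is surjective by the definition of $W_\sigma$; for injectivity, when $\sigma \in \mathfrak{S}(\sC)$ the nontrivial essential $k$-brackets of $\sC$ are read off as the maximal consecutive runs of $1$'s in the depth-$k$ block of $\omega_\sigma(\sC)$, and together with the height data inherited from $\sigma$ this data recovers $\sC$. For the forward direction, given a nested collection $Y = \{\sC_1, \ldots, \sC_k\}$, condition (1) of Definition \ref{nestedsetcomplexdef} for $\phi(Y)$ is immediate from the pairwise dichotomy in $Y$: $\sC_i \rightarrow \sC_j$ yields $\omega_\sigma(\sC_i) \subseteq \omega_\sigma(\sC_j)$ (since every essential bracket of $\sC_i$ is contained in an essential bracket of $\sC_j$, and $\sigma$-compatibility transports this containment to the $\omega$-vectors), while $\sC_i \sim \sC_j$ forces $\omega_\sigma(\sC_i) \cap \omega_\sigma(\sC_j) = \emptyset$ by disjointness of essential brackets. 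For condition (2), assume $\omega_\sigma(\sC_{i_1}), \ldots, \omega_\sigma(\sC_{i_l})$ are pairwise disjoint with $l \geq 2$; by Lemma \ref{intersectionimplication} the $\sC_{i_j}$ are pairwise disjoint as collisions. If their union belonged to $\mathbb{B}_\sigma$, it would lie in $W_\sigma$ (it is not a singleton, hence not in $V_\sigma$), so it would equal $\omega_\sigma(\sC)$ for some extended collision $\sC$; but every essential bracket of a single collision weakly projects onto its unique fusion bracket (Lemma \ref{lem:fusion_bracket_for_collision}), which is incompatible with the disjoint fusion structures of the $\sC_{i_j}$, a contradiction.

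For the reverse direction, let $N = \{\omega_\sigma(\sC_1), \ldots, \omega_\sigma(\sC_k)\} \in \Delta^\circ_{\mathbb{B}_\sigma}$ (all entries lie in $W_\sigma$ since $N$ avoids the prescribed $V_\sigma$-stars). Consider each pair $\omega_\sigma(\sC_i), \omega_\sigma(\sC_j)$. If their intersection is empty, nested-set condition (2) implies their union is not in $\mathbb{B}_\sigma$; applying the contrapositive of Lemma \ref{zetajoin} then yields $\sC_i \sim \sC_j$. If instead $\omega_\sigma(\sC_i) \subseteq \omega_\sigma(\sC_j)$, the consecutive-run decoding shows every maximal run of $1$'s in the depth-$k$ block of $\omega_\sigma(\sC_i)$ is contained in a corresponding run of $\omega_\sigma(\sC_j)$, giving the required depth-by-depth containment of essential brackets and hence $\sC_i \rightarrow \sC_j$. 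Thus $\{\sC_i\}$ is a nested collection in $\Delta_{\cT_\sigma}$, and $\phi^{-1}(N) \in \Delta_{\cT_\sigma}$.

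The hard part will be verifying nested-set condition (2) in the forward direction: showing that the $\omega_\sigma$-vector of a single collision cannot be written as a disjoint union of the $\omega_\sigma$-vectors of two or more pairwise disjoint collisions. The rigid ``single fusion bracket'' structure established in \S\ref{collisionsasatoms} is what makes this work: a merged would-be collision would need two disjoint fusion sites at once, which contradicts Proposition \ref{collisioncharacterization} and Lemma \ref{lem:fusion_bracket_for_collision}. The remaining steps are essentially bookkeeping that exploits the dictionary between consecutive runs in $\sigma$-ordered coordinates and essential brackets, together with the building-set machinery of Lemmas \ref{zetajoin} and \ref{intersectionimplication}.
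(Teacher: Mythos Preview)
Your overall architecture matches the paper's: vertex bijection, then forward and reverse simpliciality. The vertex bijection and the reverse direction are essentially the same as the paper's (the paper also uses the contrapositive of Lemma~\ref{zetajoin} for the disjoint case and invokes Proposition~\ref{flagnessresult} to conclude). One small misstep: your deduction that the $\sC_{i_j}$ are pairwise disjoint as collisions cannot come from Lemma~\ref{intersectionimplication}, which goes the other way; it follows instead from the nested-collection hypothesis on $Y$ (if $\sC_{i_j} \rightarrow \sC_{i_{j'}}$ then $\omega_\sigma(\sC_{i_j}) \subseteq \omega_\sigma(\sC_{i_{j'}})$, contradicting disjointness of the $\omega$-vectors).

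The substantive gap is your forward verification of nested-set condition (2). You assert that having $\bigsqcup_j \omega_\sigma(\sC_{i_j}) = \omega_\sigma(\sC)$ is ``incompatible with the disjoint fusion structures of the $\sC_{i_j}$'' and that ``a merged would-be collision would need two disjoint fusion sites at once,'' but neither statement is an argument. The equality of $0$--$1$ vectors does not say that the essential brackets of $\sC$ \emph{are} those of the $\sC_{i_j}$; it only says the $\sigma$-consecutive pairs covered agree. In particular, $\sC$ retains a single fusion bracket, and one must rule out that its essential-bracket pattern happens to decompose this way. The paper does this by induction on $n$: projecting via $\pi$ forces at most one $\sC_{i_j}$ to be non--type 1, and then a careful chase at depths $n$ and $n-1$ produces a pair $\sC_i \nsim \sC_j$, contradicting disjointness; finally the all--type 1 case is handled by the consecutive-run overlap argument. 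Your sketch does not supply any of this, and Proposition~\ref{collisioncharacterization} alone does not yield the contradiction you claim. A direct (noninductive) argument along your lines can be made to work, but it requires analyzing the depth of the fusion bracket of $\sC$ and showing that either (i) two of the $\sC_{i_j}$ have fusion at that depth, forcing their fusion $k$-sets to share an endpoint in $\sigma$-order, or (ii) exactly one does, whence the remaining $\sC_{i_j}$ are disjoint from it and hence have empty $\omega_\sigma$-vector in $\overline D(A)$. As written, your proof omits this.
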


\begin{proof}
Given $Y \in \Delta_{\cT_\sigma}$, we let $\omega_{\sigma}(Y)$ denote $\{\omega_{\sigma}(\sC):\sC \in Y\}$ where $\omega_{\sigma}(\sC)$ is identified with the set for which it is an indicator vector.
We first show that given $Y \in \Delta_{\cT_\sigma}$, $\omega_{\sigma}(Y) \in \Delta^{\circ}_{\mathbb{B}_{\sigma}}$.
It is clear that for $e^k_i \in V_{\sigma}$, there exists no $\sC \in Y$, such that $\omega_{\sigma}(\sC)= e^k_i$, hence $\omega_{\sigma}(Y) \notin star(\{e_i^k\})$.  It remains to check that $\omega_{\sigma}(Y)$ is a nested set, and we verify the conditions of Definition \ref{nestedsetcomplexdef}.
For verifying condition (\ref{nestedcon1}), we let $\sC_i,\sC_j \in Y$ and observe that if $\sC_i\rightarrow \sC_j$ then $\omega_{\sigma}(\sC_i)\subseteq \omega_{\sigma}(\sC_j)$.
On the other hand, if $\sC_i\sim \sC_j$, then $\omega_{\sigma}(\sC_i)\cap \omega_{\sigma}(\sC_j)= \emptyset$ by the contrapositive of Lemma \ref{intersectionimplication}.

For verifying condition (\ref {nestedcon2}), suppose that $N=\{ \sC_i:1\leq i \leq k\}\subseteq Y$ with $k\geq 2$, are such that the $\omega_{\sigma}(\sC_i)$ are disjoint, then we must show that $\bigcup_{i=1}^k\omega_{\sigma}(\sC_i) \notin \mathbb{B}_{\sigma}$. 
Suppose for contradiction that $\bigcup_{i=1}^k\omega_{\sigma}(\sC_i) = \omega_{\sigma}(\sC) \in \mathbb{B}_{\sigma}$.
The collection of nonzero $\pi(\omega_{\sigma}(\sC_i))$ are disjoint, and $\bigcup_{i=1}^k\pi(\omega_{\sigma}(\sC_i)) = \pi(\omega_{\sigma}(\sC))$. By induction on $n$, we can conclude that there is at most one $\sC_i$ 
which is not type 1, and $\pi(\omega_{\sigma}(\sC_i))=\pi(\omega_{\sigma}(\sC))$.
Let $\sC_j \in N$ which is type 1, $A_j\in \sC_j^n$ the corresponding fusion $n$-bracket, and $u^n_{\sigma(s)}, u^n_{\sigma(s+1)} \in A_j^n$ so that $\omega_{\sigma}(\sC_j)^n_s =1$.
There must exists some $A \in \sC^{n}$ such that $u^n_{\sigma(s)}, u^n_{\sigma(s+1)} \in A^n$ as $\omega_{\sigma}(\sC)^n_s =1$.
But we know that $\pi(A)$ is not a singleton $(n-1)$-bracket thus there exists some $u^{{n-1}}_{\sigma(t)}, u^{n-1}_{\sigma(t+1)} \in A^{n-1}$ such that, without loss of generality, $\pi(u^n_{\sigma(s)})= \pi(u^n_{\sigma(s+1)}) = u^{{n-1}}_{\sigma(t)}$.
We have that $\omega_{\sigma}(\sC)^{n-1}_t =1$,
thus $\omega_{\sigma}(\sC_i)^{n-1}_t =1$ and $u^{n-1}_{\sigma(t)}, u^{n-1}_{\sigma(t+1)}$ belong to an essential bracket in $\sC^{n-1}_i$, and there exists an essential bracket $A \in \sC^n$ with $u^n_{\sigma(s)} \in A^n$. This contradicts the assumption that $\sC_i \sim \sC_j$.

It follows that all of the $\sC_i$ are type 1, and $\pi(\sC_i) = \pi(\sC_j)$ for $i,j$ with $1\leq i,j\leq k$.
Thus $\sC$ must also be type 1.
Take the minimum $s$ such that the fusion bracket $A_1 \in \sC^n$ has $u^n_{\sigma(s)} \in A_1^n$.
Let $\sC_i$ be such that there exists some $A_2 \in \sC_i^n$ with $u^n_{\sigma(s)} \in A_2$. 
Let $u^n_{\sigma(t)} \in A_2$ with $t$ maximum.
Because $\sC_i \neq \sC$, it must be that  $u^n_{\sigma(t+1)} \in A_1$ implying $\omega_{\sigma}(\sC)^n_t=1$.
So, there exists some $j$ such that $\omega_{\sigma}(\sC_j)^n_t=1$ implying there exists some $A_3 \in \sC_j^n$ with $u^n_{\sigma(t)}, u^n_{\sigma(t+1)} \in A_3^n$, but then $\sC_i \nsim \sC_j$, a contradiction.
Hence $\bigcup_{i=1}^k\omega_{\sigma}(\sC_i) \notin \mathbb{B}_{\sigma}$.\footnote{This last step of this argument, considering the case when all $\sC_i$ are type 1, corresponds to the classical verification of the statement that the rays of the wonderful associahedral fan form the indicator vectors for a building set.}

We have proven that the image of $\Delta_{\cT_\sigma}$ is a subcomplex of $\Delta^{\circ}_{\mathbb{B}_{\sigma}}$.
To show that the image is all of $\Delta^{\circ}_{\mathbb{B}_{\sigma}}$, we will take $\tau \in \Delta^{\circ}_{\mathbb{B}_{\sigma}}$ and show that the preimage of $\tau$ under the map induced by $ \sC \mapsto \omega_{\sigma}(\sC)$ is in $\Delta_{\cT_\sigma}$.
First note that the preimage is well-defined: if $\sC, \sC' \in \fX({\cT})^{\sigma}$ and $\omega_{\sigma}(\sC)=\omega_{\sigma}(\sC')$, then $\sC=\sC'$.
Take  $\omega_{\sigma}(\sC_i),\omega_{\sigma}(\sC_j) \in \tau$.  If $\omega_{\sigma}(\sC_i) \subseteq \omega_{\sigma}(\sC_j)$ then $\sC_i\rightarrow \sC_j$.  We claim that if $\omega_{\sigma}(\sC_i) \cap \omega_{\sigma}(\sC_j)  = \emptyset$, then $\sC_i \sim \sC_j$.  If $\sC_i \nsim \sC_j$, then by Lemma \ref{zetajoin} we know that there exists some $\sC_3\in \wh X_{\cT}^{\sigma}$ such that $\omega_{\sigma}(\sC_1) \cup \omega_{\sigma}(\sC_2) = \omega_{\sigma}(\sC_3)$, but this contradicts the assumption that $\tau$ is a nested set.
By Proposition \ref{flagnessresult}, these conditions are sufficient to ensure that the preimage of $\tau$ is a nested collection of collisions in $\Delta_{\cT_\sigma}$.
\end{proof}

\begin{proposition}
\label{nestomaps}
Let $\cF^{\circ}(\mathbb{B}_{\sigma}) = \cF(\mathbb{B}_{\sigma}) \setminus \{star(\{e_i\}): i \in V_{\sigma}\}$.
Then
\begin{align}
\Gamma_{\omega_\sigma}:\cK^{\met}(\Delta_\cT)|_{U_{\sigma}}\rightarrow \cF^{\circ}(\mathbb{B}_{\sigma})
\end{align}
and \begin{align}
\Gamma_{\omega_\sigma}^{\rho}:\cF(\Delta_\cT)|_{\Gamma(U_{\sigma})}\rightarrow \cF^{\circ}(\mathbb{B}_{\sigma})
\end{align}
are piecewise-linear isomorphisms, and $\Gamma_{\omega_\sigma}^{\rho}$ is unimodular on each cone of \\ $\cF(\Delta_\cT)|_{\Gamma(U_{\sigma})}$.
\end{proposition}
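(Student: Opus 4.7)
The plan is to leverage the combinatorial isomorphism from Lemma \ref{nestoisom} and promote it from a map of rays to a piecewise-linear isomorphism of cone complexes, and then transport this statement to the velocity-fan side via the isomorphism $\Gamma$. First I would check that the conical sets of $\cK^{\met}(\Delta_\cT)|_{U_\sigma}$ are in natural bijection with the faces of $\Delta_{\cT_\sigma}$ (the subcomplex of $\ccD$ appearing in Lemma \ref{nestoisom}), since $\tau^{\met}(Y) \in U_\sigma$ exactly when every $\sC \in Y$ satisfies $\sigma \in \mathfrak{S}(\sC)$; analogously, the cones of $\cF^\circ(\mathbb{B}_\sigma)$ correspond to nested sets in $\Delta^\circ_{\mathbb{B}_\sigma}$. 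Lemma \ref{nestoisom} then delivers a bijection between these two face posets via $\sC \mapsto \omega_\sigma(\sC)$, and by construction this is exactly what $\Gamma_{\omega_\sigma}$ does on the level of rays.

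Next I would show that $\Gamma_{\omega_\sigma}$ is a linear isomorphism on each conical set. Well-definedness and injectivity on each conical set are already granted by Lemma \ref{omegametric}. Surjectivity onto the corresponding nestohedral cone follows from the fact that each nested-set cone $\tau(N)$ in $\cF(\mathbb{B}_\sigma)$ is generated by the indicator vectors of its elements together with $\mathbf{1}$, a standard consequence of Postnikov's construction of the nestohedral fan from a building set. Since the restrictions agree on intersections (both being determined by $\omega_\sigma$ applied to common rays) and the $\omega_\sigma(\sC)$ form part of a $\mathbb{Z}$-basis on each nested-set cone, $\Gamma_{\omega_\sigma}$ assembles into a piecewise-linear isomorphism, with inverse defined by extending the ray-level inverse linearly on each cone.

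For the second map, I would write $\Gamma^\rho_{\omega_\sigma} = \Gamma_{\omega_\sigma} \circ \Gamma^{-1}$ and invoke Proposition \ref{mainvelocitythm} together with Proposition \ref{metrictriangulation} (and Lemma \ref{imagetriang}) to see that $\Gamma$ restricts to a piecewise-linear isomorphism between $\cK^{\met}(\Delta_\cT)|_{U_\sigma}$ and $\cF(\Delta_\cT)|_{\Gamma(U_\sigma)}$. Composing then yields a piecewise-linear isomorphism $\Gamma^\rho_{\omega_\sigma}$ onto $\cF^\circ(\mathbb{B}_\sigma)$. For unimodularity on a cone $\tau(Y) \in \cF(\Delta_\cT)|_{\Gamma(U_\sigma)}$, I would compare two $\mathbb{Z}$-bases of the integer points: the generators $\{\rho(\sC_i):\sC_i \in Y\} \cup \{\mathbf{1}\}$, which form such a basis by Lemma \ref{smooth}, and the image generators $\{\omega_\sigma(\sC_i):\sC_i \in Y\} \cup \{\mathbf{1}\}$, which form a $\mathbb{Z}$-basis of the integer points of the associated nested-set cone by smoothness of the nestohedral fan. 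Since $\Gamma^\rho_{\omega_\sigma}$ sends one basis to the other, the corresponding change-of-basis matrix lies in $GL_m(\mathbb{Z})$, giving unimodularity on $\tau(Y)$.

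The main obstacle will be the surjectivity of $\Gamma_{\omega_\sigma}$ onto $\cF^\circ(\mathbb{B}_\sigma)$, i.e.\ showing that every cone of the target appears and that excluding $star(\{e^k_i\})$ for $e^k_i \in V_\sigma$ on the nested-set side matches exactly the absence of the corresponding collisions on the $\Delta_{\cT_\sigma}$ side; this is packaged into Lemma \ref{nestoisom}, but it needs to be combined cleanly with the integral structure of the nested-set cones in order to give the simultaneous unimodularity statement. A minor auxiliary point worth checking is that the $\omega_\sigma(\sC_i)$ appearing as rays of a given image cone are linearly independent modulo $\langle \mathbf{1}\rangle_\mathbb{R}$; this follows because the corresponding nested set is a face of the simplicial nestohedral fan, so this step is essentially bookkeeping rather than a substantive obstruction.
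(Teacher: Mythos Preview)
Your proposal is correct and follows essentially the same approach as the paper: both use Lemma \ref{omegametric} for well-definedness and injectivity, Lemma \ref{nestoisom} to identify the image as $\cF^\circ(\mathbb{B}_\sigma)$, and then deduce unimodularity by observing that $\Gamma^\rho_{\omega_\sigma}$ sends the smooth generators of $\tau(Y)$ (Lemma \ref{smooth}) to the smooth generators of the corresponding nestohedral cone (Lemma \ref{smoothnestohedra}). Your more explicit invocation of Proposition \ref{mainvelocitythm} and the factoring $\Gamma^\rho_{\omega_\sigma} = \Gamma_{\omega_\sigma}\circ\Gamma^{-1}$ is a welcome clarification, but the substance is the same.
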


We will need the following statement.

\begin{lemma}
\label{smoothnestohedra}\cite{postnikovgp,zelevinsky2005nested, feichtner2004matroid}
Given a building set $\mathbb{B}$, the associated nestohedral fan $\cF({\mathbb{B}})$ is smooth.
\end{lemma}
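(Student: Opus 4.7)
The plan is to verify smoothness on each maximal cone $\tau(N) = \mathrm{cone}\{\chi_B : B \in N\} + \langle \mathbf{1} \rangle_{\mathbb{R}}$ of $\cF(\mathbb{B})$; by Lemma \ref{smoothchamber} this reduction to maximal cones suffices. Concretely, I will show that for any maximal nested set $N$ the collection $\{\chi_B : B \in N\}$ is itself a $\mathbb{Z}$-basis of $\mathbb{Z}^n$, from which smoothness of $\tau(N)$ modulo $\langle \mathbf{1} \rangle_{\mathbb{R}}$ is immediate.

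The central combinatorial input is the forest structure on a nested set $N$: order $N$ by inclusion, and for each $B \in N$ let the \emph{children of $B$ in $N$} be the maximal $B' \in N$ with $B' \subsetneq B$. Define
\begin{align}
B^{\circ} \coloneqq B \setminus \bigcup\{B' : B' \text{ a child of } B \text{ in } N\}.
\end{align}
Axiom (\ref{nesteddef1}) of a nested set forces the children of $B$ to be pairwise disjoint, and axiom (\ref{nesteddef2}) prevents the union of two or more children from equaling $B$ itself, since $B \in \mathbb{B}$ while the union would be forbidden from lying in $\mathbb{B}$. Together with the trivial zero- and one-child cases, this yields $B^{\circ} \neq \emptyset$ for every $B \in N$, and a routine induction on the forest shows that $\{B^{\circ} : B \in N\}$ is a pairwise disjoint family whose union is $\bigcup_{B \in N} B$.

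Next I order $N$ so that each child precedes its parent and perform the unimodular column operations
\begin{align}
\chi_B \;\mapsto\; \chi_B - \sum_{B' \text{ a child of } B \text{ in } N} \chi_{B'} \;=\; \chi_{B^{\circ}},
\end{align}
which form an upper-unitriangular change of basis on the generator set and therefore preserve the $\mathbb{Z}$-span. A separate combinatorial count, carried out by induction on $n$, shows that $|N| = n$ and $\bigcup_{B \in N} B = [n]$ for any maximal nested set: in the case that $[n] \in \mathbb{B}$ one peels off $[n]$ and induces on the children of $[n]$ in $N$, while the general case reduces to this one by factoring $\mathbb{B}$ over its connected components, each of which contributes independently to $N$ and to the corresponding disjoint union. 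Thus $\{B^{\circ} : B \in N\}$ is a partition of $[n]$ into $n$ nonempty blocks, forcing each $B^{\circ}$ to be a singleton; consequently $\{\chi_{B^{\circ}} : B \in N\}$ is a permutation of the standard basis of $\mathbb{Z}^n$, and reversing the column operations shows that $\{\chi_B : B \in N\}$ is a $\mathbb{Z}$-basis of $\mathbb{Z}^n$.

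The main obstacle I anticipate is the combinatorial count $|N| = n$ and the identity $\bigcup_{B \in N} B = [n]$ for maximal nested sets, since this interacts subtly with the recursive definition of nested sets and with the possible disconnectedness of $\mathbb{B}$. I would handle the connected case by a direct induction on $n$, removing a singleton block of the forest partition and recognizing the remainder as a maximal nested set on a smaller connected building set, and then reduce the disconnected case to the connected case via a product decomposition of both $\mathbb{B}$ and $\cF(\mathbb{B})$ over the connected components of $\mathbb{B}$.
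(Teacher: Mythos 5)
The paper does not prove this lemma at all: it is quoted from \cite{postnikovgp,zelevinsky2005nested,feichtner2004matroid}, so there is no internal proof to compare against. Your argument is essentially the standard one from those references, and its core is sound: the forest structure on a maximal nested set $N$, the nonemptiness of the blocks $B^{\circ}$ (via the two nested-set axioms), the unitriangular change of generators $\chi_B \mapsto \chi_{B^{\circ}}$, and the conclusion that the blocks are singletons so that the generators form a lattice basis. This is exactly what is needed to apply Lemma \ref{smoothtropical} when the full ground set lies in $\mathbb{B}$ (so that $\chi_{[n]}=\mathbf{1}$ is one of the generators), which is the only case the paper ever uses: the building sets $\mathbb{B}_{\sigma}$ of Definition \ref{localbuildingsetdef} always contain $\omega_{\sigma}(\sB_{\min})=\mathbf{1}$.

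Two caveats. First, the load-bearing combinatorial fact --- that a maximal nested set has exactly $n$ elements and covers $[n]$, equivalently that every $B^{\circ}$ is a singleton --- is only sketched; the step where one must rule out $|[n]^{\circ}|\geq 2$ (by showing that a singleton $\{i\}\subseteq [n]^{\circ}$, or a suitable union containing it, can be adjoined to $N$ without violating axiom (\ref{nesteddef2})) is precisely where the work lies, and your proposal defers it to ``a routine induction.'' Second, in the genuinely disconnected case ($[n]\notin\mathbb{B}$, components $C_1,\dots,C_c$ with $c\geq 2$) your final inference breaks: $\mathbf{1}=\sum_i \chi_{C_i}$ is then a positive combination of generators of every maximal cone, so $\tau(N)/\langle\mathbf{1}\rangle_{\mathbb{R}}$ is not pointed, hence not simplicial, and ``smooth'' only makes sense after quotienting by the full $c$-dimensional lineality space; the collection $\{\tau(N)\}$ as written is not even a fan there. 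Since this case never arises in the paper, the gap is harmless in context, but the lemma as you have proved it should be stated for connected building sets.
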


\begin{proof}(\ref{nestomaps})
It follows by Lemma \ref{omegametric} that these maps are indeed injective piecewise-linear maps defined on the charts of the shuffle atlas for the metric $n$-bracketing complex.
It follows by Lemma \ref{nestoisom} that when we apply these same maps to the nestohedral atlas the image is as described.
It remains to check that $\Gamma_{\omega_\sigma}^{\rho}$ is unimodular on each cone.
We know by Lemma \ref{smooth} that each cone of $\cF(\Delta_\cT)|_{U_{\sigma}}$ is smooth and by Lemma \ref{smoothnestohedra} each cone of $\cF^{\circ}(\mathbb{B}_{\sigma})$ is smooth.
Because $\Gamma_{\omega_\sigma}^{\rho}$ maps the generators for unimodular cones to generators for unimodular cones, the desired statement follows.
\end{proof}

\begin{corollary}\label{unionbraicconeslemma}
For each $n$-bracketing $\sB$ and each $\sigma \in \mathfrak{S}(\sB)$, the cone $\tau_{\omega_{\sigma}}(\sB)$ is a union of cones in the braid arrangement.
\end{corollary}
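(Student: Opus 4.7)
The plan is to deduce the corollary as an essentially immediate consequence of Proposition \ref{nestomaps} together with the classical fact that nestohedral fans are coarsenings of the braid arrangement. First, I would recall that by definition $\tau_{\omega_\sigma}(\sB)$ is the image of $\tau^{\met}(\sB) \subseteq U_\sigma$ under the piecewise-linear map $\Gamma_{\omega_\sigma}$. Restricting to the conical set $\tau^{\met}(\sB)$, this image is a single honest cone (not merely piecewise-linearly so), since the formula $\ell_\sB \mapsto \sum_i \lambda_i\,\omega_\sigma(\sC_i)$ is linear in the conical coefficients $\lambda_i$ associated to collisions $\sC_i \leq \sB$.

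Next, by Proposition \ref{nestomaps}, the map $\Gamma_{\omega_\sigma}$ sends $\cK^{\met}(\Delta_\cT)|_{U_\sigma}$ isomorphically onto $\cF^\circ(\mathbb{B}_\sigma)$, which is obtained from the full nestohedral fan $\cF(\mathbb{B}_\sigma)$ by deleting the stars of the coordinate rays corresponding to $V_\sigma$. In particular, the image of the refinement of $\tau^\met(\sB)$ by the triangulated $n$-bracketing complex is covered by cones of $\cF(\mathbb{B}_\sigma)$. It follows that $\tau_{\omega_\sigma}(\sB)$, being the union of the images of all triangulated cones contained in $\tau^\met(\sB)$, is a union of cones of the nestohedral fan $\cF(\mathbb{B}_\sigma)$.

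Finally, I would invoke the classical fact (due to Postnikov \cite{postnikovgp}, and also Feichtner--Sturmfels \cite{feichtner2004matroid} and Zelevinsky \cite{zelevinsky2005nested}) that for any building set $\mathbb{B}$, the nestohedral fan $\cF(\mathbb{B})$ is a coarsening of the braid arrangement $\mathcal{A}_m$, since the ray generators of $\cF(\mathbb{B})$ are precisely the 0-1 indicator vectors $\chi_B$ for $B \in \mathbb{B}$, which are rays of $\mathcal{A}_m$, and each cone of $\cF(\mathbb{B})$ is a preposet cone in the sense of Postnikov--Reiner--Williams \cite{postnikov2008faces}. Applying this to $\mathbb{B}_\sigma$ (which is a building set by Proposition \ref{localbuildingsetprop}), every cone of $\cF(\mathbb{B}_\sigma)$ is a union of cones in $\mathcal{A}_m$, and hence so is $\tau_{\omega_\sigma}(\sB)$. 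There is no real obstacle here beyond invoking these previously established facts in the correct sequence; the substantive work has all been carried out in Proposition \ref{nestomaps} and Lemma \ref{nestoisom}.
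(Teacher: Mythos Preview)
Your proposal is correct and follows essentially the same route as the paper's proof: both argue that $\tau_{\omega_\sigma}(\sB)$ is triangulated by cones of the nestohedral fan $\cF(\mathbb{B}_\sigma)$ (via Proposition~\ref{nestomaps}), and then invoke the fact that nestohedral fans coarsen the braid arrangement. Your version is slightly more expansive in justifying that the image is a single cone and in citing the relevant references, but the logical structure is identical.
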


\begin{proof}
The cone $\tau_{\omega_{\sigma}}(\sB)$ is triangulated by the collection of cones from $\cF(\mathbb{B}_\sigma)$ which it contains.
Each cone in $\cF(\mathbb{B}_\sigma)$ is itself a union of cones from the braid arrangement.
By transitivity, $\tau_{\omega_{\sigma}}(\sB)$ is a union of cones in the braid arrangement.
\end{proof}

As a direct application we have the following corollary. 

\begin{corollary}
\label{omegasigmaunimodular}
The maps $\Gamma_{\omega_\sigma}^{\rho}$ is unimodular on each cone of the velocity fan in the shuffle chart $\Gamma(U_\sigma)$.
\end{corollary}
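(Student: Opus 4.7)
The plan is to derive Corollary \ref{omegasigmaunimodular} as an immediate consequence of Proposition \ref{nestomaps} combined with Theorem \ref{triangulationtheorem}, with no new technical ingredients needed. The key observation is that the statement of the corollary is about cones of the (non-simplicial) velocity fan $\cF(\cT)$, while Proposition \ref{nestomaps} already establishes unimodularity on each cone of the triangulated velocity fan $\cF(\ccD)$, so what must be argued is simply that unimodularity transfers appropriately through the refinement.

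Concretely, I would proceed as follows. First I would recall from Theorem \ref{triangulationtheorem} that the (reduced) triangulated velocity fan $\cF(\ccD)$ is a smooth flag triangulation of the (reduced) velocity fan $\cF(\cT)$ on the same set of rays. Lifting this statement back to the unreduced setting, each cone $\tau(\sB)$ of $\cF(\cT)$ is a union of simplicial cones $\tau(Y)$ of $\cF(\ccD)$ whose rays are among the rays of $\tau(\sB)$. Next, since $\Gamma_{\omega_\sigma}^{\rho}$ is only piecewise-linear on $\tau(\sB)$ — with pieces given precisely by those $\tau(Y)\subseteq \tau(\sB)$ with $\tau(Y)\subseteq \Gamma(U_\sigma)$ — the assertion ``unimodular on each cone of the velocity fan in $\Gamma(U_\sigma)$'' must be interpreted in the piecewise sense already used in Proposition \ref{localPLmapsprop}: unimodularity on each piece of the canonical simplicial refinement of $\tau(\sB)$ by cones of $\cF(\ccD)$.

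With this interpretation fixed, the corollary reduces to a direct invocation of Proposition \ref{nestomaps}, which asserts precisely that $\Gamma_{\omega_\sigma}^{\rho}$ is unimodular on every cone of $\cF(\ccD)|_{\Gamma(U_\sigma)}$. Since every simplicial piece of $\tau(\sB)\cap \Gamma(U_\sigma)$ is such a cone, the conclusion follows.

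There is no real obstacle here; the corollary is essentially a bookkeeping statement translating the unimodularity result of Proposition \ref{nestomaps} from the triangulated velocity fan to the non-triangulated velocity fan via the refinement relation of Theorem \ref{triangulationtheorem}. The only point meriting emphasis in the writeup is clarifying what ``unimodular on a non-simplicial cone'' means for a piecewise-linear map, which is done by pointing back to the usage in Definition \ref{permutahedroiddef} and Proposition \ref{localPLmapsprop}.
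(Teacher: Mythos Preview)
Your proposal contains a genuine misunderstanding. You assert that $\Gamma_{\omega_\sigma}^{\rho}$ is ``only piecewise-linear on $\tau(\sB)$'' and then reinterpret the corollary as a piecewise statement about the simplicial pieces $\tau(Y)\subseteq\tau(\sB)$. But this is false: by Lemma \ref{omegametric}, the map $\Gamma_{\omega_\sigma}$ is well-defined and \emph{linear} on each conical set $\tau^{\met}(\sB)$ of $\cK^{\met}(\cT)$ (this is exactly what the ``if'' direction of that lemma says), and hence $\Gamma_{\omega_\sigma}^{\rho}=\Gamma_{\omega_\sigma}\circ\Gamma^{-1}$ is linear on each cone $\tau(\sB)$ of the velocity fan. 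The remark following the proof of Lemma \ref{omegametric} says only that $\Gamma_{\omega_\sigma}^{\rho}$ is not linear on all of $\Gamma(U_\sigma)$ at once; it is linear cone-by-cone. With your reinterpretation, the corollary would collapse to a restatement of Proposition \ref{nestomaps} and have no independent content.

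The paper's actual argument is: pick a full-dimensional simplicial piece $\tau(Y)\subseteq\tau(\sB)$; by Proposition \ref{nestomaps} the restriction of $\Gamma_{\omega_\sigma}^{\rho}$ to $\tau(Y)$ is unimodular; by Lemma \ref{omegametric} this restriction is the same linear map as on all of $\tau(\sB)$; hence that single linear map is unimodular. The missing ingredient in your proposal is precisely the invocation of Lemma \ref{omegametric} to pass from unimodularity on a simplicial piece to unimodularity of the linear map on the whole cone.
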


\begin{proof}
Let $\sB \in \cK(\cT)$ such that $\sigma \in \mathfrak{S}(\sB)$.
Let $Y \in \Delta_\cT$ be a facet such that $\tau(Y) \subseteq \tau(\sB)$.
By Proposition \ref{nestomaps}, $\Gamma_{\omega_\sigma}^{\rho}$ is unimodular on each cone of $\cF(\Delta_\cT)|_{\Gamma(U_{\sigma})}$, and by Lemma \ref{omegametric}, we know that $\Gamma_{\omega_\sigma}^{\rho}$ is linear on all of $\tau(\sB)$.
\end{proof}

\begin{remark}
The image of the permutahedral velocity fan restricted to $\Gamma(U_{\sigma})$ under $\Gamma_{\omega_\sigma}^{\rho}$ is equal to the braid arrangement triangulation of $\cF^{\circ}(\mathbb{B}_{\sigma})$.
\null\hfill$\triangle$
\end{remark}

\begin{remark}
Postnikov's $\mathbb{B}$-trees associated to $\Delta^{\circ}_{\mathbb{B}_{\sigma}}$ are precisely the trees obtained in Theorem \ref{treecombtheorem}.
\null\hfill$\triangle$
\end{remark}

\begin{remark}
In this section when proving that the image of a cone in the velocity fan is a union of cones in the braid arrangement, we did so by appealing either to the triangulated or permutahedral velocity fan highlighting the role of these structures.
Alternately, we could have proven these statements directly by appealing to the defining inequalities of preposet cones described by Postnikov-Reiner-Williams \cite{postnikov2008faces}.
\null\hfill$\triangle$
\end{remark}

\begin{remark}
The velocity fan always contains a distinguished chamber of the braid arrangement, which we now describe.
For this remark, we will break with convention and denote the coordinates of a vectors ${\bf v} \in \mathbb{R}^m$ by ${\bf v} = (v_1, \ldots, v_m)$, i.e.\ we will not use superscripts.
There is a distinguished $n$-bracketing, which we will denote $\sB_{\cT}$, and distinct extended collisions $\sC_{\cT,\,1}, \ldots ,\sC_{\cT,\,m} \in \fX(\cT)$ such that for each $i$, $\sC_{\cT,\,i}\leq \sB_{\cT}$ and 
\begin{align}
\rho(\sC_{\cT,\,i})_k =
\begin{cases}
1 \,\,\,\text{if}\,\,\, k\leq i \,\,\,\text{and} \\
0 \,\,\,\text{if}\,\,\, i < k. \\
\end{cases}
\end{align}

In particular, we always take $\sC_{\cT,\,m} = \sB_{\min}(\cT)$. 
Let $u^n_{t_n}$ be the largest labeled vertex of depth $n$ in $\cT$.
By induction, a bracketing $\sB_{\cT'}$ as described above exists for the tree 
$\cT' =\cT\setminus \{u^n_{t_n}\}$.
For describing $\sB_{\cT}$, we construct a collision $\sC_{\cT,\,i}$ from each extended collision $\sC_{\cT',i}$.
For each $A' \in \sC_{\cT',i}^{n-1}$ with $\pi(u^n_{t_n}) \in A'^{\,n-1}$, we introduce an $n$-bracket $A \in \sC_{\cT',i}^{n}$ with $A^n = \{u^n_{t_n}\}$.
We extend the height partial order for $\sC_{\cT',i}$ by making $A$ larger than any other $n$-bracket $\wt A$ with $\pi({\wt A})=\pi(A)$.
It is straightforward to check that the $\sC_{\cT,\,i}$ are compatible, and have coordinates as described.
\null\hfill$\triangle$
\end{remark}

\section{The local nested fiber product structure of the velocity fan}
\label{localsection}

Combinatorially, each facet of an associahedron factors as a product of two smaller associahedra.
This observation is essential for understanding the operadic structure of the associahedron.
The second author characterized the recursive fiber product description of 2-associahedra: combinatorially, each facet in a 2-associahedron factors as a product of a smaller 2-associahedron with a fiber product of 2-associahedra over a 1-associahedron \cite{b:2-associahedra}.
This description was utilized by the second author and Carmeli \cite{bottman_carmeli} for understanding the relative 2-operad structure of 2-associahedra.
In this section, we extend this description to categorical $n$-associahedra via iterated fiber products\footnote{We expect this description to be an essential step in verifying that categorical $n$-associahedra form an example of a relative $n$-operad naturally viewed as an $\omega$-operad in the sense of Batanin \cite{batanin1998monoidal}.}, and we investigate the extent to which this recursive structure is realized geometrically by the velocity fan.

The recursive structure of the associahedron is fully realized in the geometry of the wonderful associahedral fan $\cF_n$:
the star of a ray in $\cF_n$ factors as a product of two smaller the wonderful associahedral fans.\footnote{It is perhaps an under-appreciated fact that it is impossible to realize this recursive structure via a polytopal realization of the associahedron.
More precisely, it is impossible to construct a sequence of polytopes $\{P_i\}$ such that $P_i$ realizes the $i$th associahedron and each facet of $P_i$ factors as a product $P_j \times P_k$ with $j,k <i$.
In fact, it is already impossible to do this in dimension 3 with polytopes $P_0,P_1,P_2,P_3$.
In recent work, Masuda--Thomas--Tonks--Vallette \cite{masuda2021diagonal} showed that each facet of Loday's associahedron factors as a product of weighted versions of smaller Loday associahedra.
This refines the above observation each facet of Loday's associahedron factors as a product of smaller polytopes which are normally equivalent to Loday associahedra.}
We will see in Theorem \ref{localvelocity} that this nice property partially extends to our velocity fans in an interesting way.
We begin with some preliminary results on the theory of fiber products of sets, posets, cone complexes, and fans.

The primary reference for fiber products of fans and cone complexes, of which we are aware, is the relatively recent work of Molcho \cite{molcho2021universal} on universal stacky semistable reduction building on work of Abramovich-Karu \cite{abramovich1997weak}.
Fiber products of polytopes have been investigated in \cite{buczynska2007geometry}\cite{sullivant2007toric}\cite{haase2021existence},
and pullbacks of diagrams of polyhedral objects appear in \cite{adiprasito2018log,adiprasito2018semistable}.

\subsection{Fiber products of sets, posets, cone complexes, and fans}
\label{localvelocitysubsection}

\

\begin{definition}
{\bf Fiber products of sets.}
Let $X_i$ for $1\leq i \leq n$ and $Y$ be sets with maps $f_i : X_i \rightarrow Y$, then the fiber product of the $X_i$ over $Y$, written
\begin{align}
\prod_{1\leq i \leq n}^Y X_i
\end{align}
is the set
\begin{align}
\{ (y,x_1, \dots, x_n)\in Y \times \prod_{1\leq i \leq n}X_i : \forall \,\, 1\leq i\leq n, f_i(x_i)= y \},
\end{align}
\null\hfill$\triangle$
\end{definition}

\begin{remark}
Note that the fiber product of a collection of empty sets over a set $X$ is equal to $X$.  This consideration is relevant for fiber products of $n$-associahedra.
\null\hfill$\triangle$
\end{remark}

We review that fiber products exist in the categories of posets, cone complexes, and fans.

\begin{definition}
Let $(P,\leq_P)$ and $(Q,\leq_Q)$ be partially ordered sets.
A morphism in the category of posets is a map $f:P\rightarrow Q$ such that if $x,y \in P$ and $x\leq_P y$ then $f(x) \leq_Q f(y)$.
\null\hfill$\triangle$
\end{definition}

\begin{definition}\label{fiberproductsofposets}
{\bf Fiber products of partially ordered sets.}
Let $(P_i, \leq_i)$ for $1\leq i \leq n$ and $(Q, \leq_Q)$ be posets with poset morphisms $f_i : P_i \rightarrow Q$, then the fiber product of the $P_i$ over $Q$, written
\begin{align}
\prod_{1\leq i \leq n}^Q P_i
\end{align}
is the fiber product of the underlying sets equipped with the following partial order:
\begin{align}
(q,p_1, \dots, p_n), (q',p_1', \dots, p_n') \in \prod_{1\leq i \leq n}^Q P_i
\end{align}
then
\begin{align}
(q,p_1, \dots, p_n) \leq (q',p_1', \dots, p_n')
\end{align}
if $p_i\leq p_i'$ for all $1\leq i \leq n$.
\null\hfill$\triangle$
\end{definition}

\noindent
Note that because the $f_i$ are morphisms in the definition above, the condition $p_i\leq p_i'$ ensures that $q\leq q'$.

\begin{proposition}[Fiber products of conical sets]
For $0\leq i \leq s$, let $\tau_i$ be a conical set.
Suppose that $f_i:\tau_i\rightarrow \tau_0$ are linear transformations.

Then the set theoretic fiber product of the $\tau_i$ over $\tau$ 
\begin{align}
\prod_{1\leq i\leq s}^{\tau}\tau_i 
\end{align}
is naturally equipped with the structure of a conical set.
\end{proposition}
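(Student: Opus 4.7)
The plan is to equip $\prod_{1 \leq i \leq s}^{\tau_0}\tau_i$ with componentwise addition and componentwise scalar multiplication inherited from the conical set structures of $\tau_0, \tau_1, \ldots, \tau_s$, and then verify the $\mathbb{R}_{\geq 0}$-semimodule axioms. Concretely, given two elements $(y, x_1, \ldots, x_s)$ and $(y', x_1', \ldots, x_s')$ in the fiber product, I would define
\begin{align}
(y, x_1, \ldots, x_s) + (y', x_1', \ldots, x_s') \coloneqq (y + y',\, x_1 + x_1',\, \ldots,\, x_s + x_s'),
\end{align}
and for $\lambda \in \mathbb{R}_{\geq 0}$,
\begin{align}
\lambda \cdot (y, x_1, \ldots, x_s) \coloneqq (\lambda y,\, \lambda x_1,\, \ldots,\, \lambda x_s).
\end{align}
The identity element will be $(0_{\tau_0}, 0_{\tau_1}, \ldots, 0_{\tau_s})$.

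The first step is to check that these operations land back in the fiber product, which is precisely the place the linearity hypothesis on the $f_i$ is used. For addition: since each $f_i$ is linear,
\begin{align}
f_i(x_i + x_i') = f_i(x_i) + f_i(x_i') = y + y',
\end{align}
so the tuple $(y + y', x_1 + x_1', \ldots, x_s + x_s')$ satisfies the fiber condition. For scalar multiplication: $f_i(\lambda x_i) = \lambda f_i(x_i) = \lambda y$, so again the result lies in $\prod_{1 \leq i \leq s}^{\tau_0}\tau_i$. The identity element trivially lies in the fiber product because each $f_i$ sends $0_{\tau_i}$ to $0_{\tau_0}$.

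The remaining step is to verify the five semimodule axioms from the definition of an $\mathbb{R}_{\geq 0}$-semimodule. Each of these axioms is an equality of tuples, and each coordinate equality follows from the corresponding axiom in the coordinate conical set ($\tau_0$ for the first coordinate, $\tau_i$ for the $(i{+}1)$-st). For instance, associativity of scalar multiplication $(\lambda\mu)\cdot(y,x_1,\ldots,x_s) = \lambda\cdot(\mu\cdot(y,x_1,\ldots,x_s))$ reduces coordinatewise to $(\lambda\mu)y = \lambda(\mu y)$ and $(\lambda\mu)x_i = \lambda(\mu x_i)$, which hold in $\tau_0$ and each $\tau_i$, respectively; and distributivity, commutativity, and associativity of addition, as well as the annihilation identities involving $0_{\mathbb{R}_{\geq 0}}$ and the zero elements, all reduce coordinatewise in the same way.

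There is no real obstacle here: the entire argument is essentially a formality once one observes that linearity of each $f_i$ is exactly what is needed for the componentwise operations to preserve the fiber condition, and the semimodule axioms for the product are coordinatewise consequences of the semimodule axioms for the factors. The only notational subtlety is keeping track of which coordinate lives in $\tau_0$ versus the $\tau_i$, but this introduces no genuine difficulty.
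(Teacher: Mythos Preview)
Your proof is correct and follows the same approach as the paper: both verify that the componentwise operations inherited from the product $\tau_0 \times \tau_1 \times \cdots \times \tau_s$ preserve the fiber condition, using linearity of the $f_i$. The paper is terser, simply checking closure under conical combinations $\lambda x + \mu y$ and leaving implicit that the semimodule axioms are inherited from the ambient product; you spell out the axioms explicitly, but this is the same argument.
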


\begin{proof}
Let ${\bf x} = \prod_{1\leq i\leq s}{\bf x}_i, \, {\bf y} = \prod_{1\leq i\leq s}{\bf y}_i$ be points in $\prod_{1\leq i\leq s}^{\tau}\tau_i$ and ${\bf z},{\bf w} \in \tau$ such that $f_i({\bf x}_i)={\bf z}$ and $f_i({\bf y}_i)={\bf w}$ for all $1\leq i \leq s$.
Let $\lambda, \mu \in \mathbb{R}_{\geq 0}$.
Then for each $i$ with $1\leq i\leq s$, we have
\begin{align}
f_i(\lambda{\bf x}_i+\mu{\bf y}_i)=\lambda f_i({\bf x}_i)+\mu f_i({\bf y}_i)=\lambda {\bf z}+\mu{\bf w}.
\end{align}
Therefore $\prod_{1\leq i\leq s}^{\tau}\tau_i$ is closed under taking conical combinations.
\end{proof}

In the special case of cones, fiber products of conical sets have the following geometric description.

\begin{definition}[Fiber products of cones]\label{fiberproductofcones}
For $0\leq i \leq s$, let $\tau_i$ be a cone in $\mathbb{R}^{m_i}$ having coordinates ${\bf x}^i=(x^i_1, \ldots, x^i_{m_i})$.
Suppose that for $0\leq i\leq s$, we have linear transformations $f_i:\mathbb{R}^{m_i}\rightarrow \mathbb{R}^{m_0}$ are such that $f_i(\tau_i) \subseteq \tau_0$ and $f_0$ is the identity map.
Let $T = (\tau_0, \tau_1, \ldots, \tau_s)$ and $f = (f_0, f_1, \ldots, f_s)$.
Let $V_f$ be the linear space in $\prod_{i=0}^s \mathbb{R}^{m_i}$ cut out by the equations $f_i({\bf x}^i) = f_j({\bf x}^j)$ for all $0\leq i, j\leq s$.

Then we define the \emph{fiber product of the $\tau_i$ over $\tau$} is the cone 
\begin{align}
\prod_{1\leq i\leq s}^{\tau}\tau_i \coloneqq \left(\prod_{i=0}^s \tau_i\right) \cap V_f.
\end{align}
\null\hfill$\triangle$
\end{definition}

The following Lemma characterizes of the faces of a cone which is obtained as the intersection of another cone with a linear space.
\begin{lemma}\label{facesofconescapV}
Let $C_1 \subset \mathbb{R}^n$ be a polyhedral cone and $V \subset \mathbb{R}^n$ be a linear space.
Suppose that $C_2 =C_1 \cap V$.
Let $\cF_1$ and $\cF_2$ be the faces of $C_1$ and $C_2$, respectively.
Then 
\begin{align}
\cF_2 = \{\tau\cap V: \tau \in \cF_1\}.
\end{align}
Moreover, there are bijections
\begin{align}
\cF_2 \, \longleftrightarrow \, \{\tau \cap V: \tau \in \cF_1,\, V\,\, {\rm \,\,intersects} \,\,\tau \,\, {\rm in\,\,its\,\, relative\,\, interior}\}
\end{align}
\begin{align}
\longleftrightarrow \, \{\tau \cap V: \tau \in \cF_1,\, \forall\, \tau' \in \cF_1,\, \tau'< \tau,\, \tau' \cap V \neq \tau \cap V \}.
\end{align}
\end{lemma}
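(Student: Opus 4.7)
The plan is to combine two standard structural facts about polyhedral cones: the \emph{exposing-functional} characterization of faces (a subset $\tau \subseteq C$ is a face of $C$ iff there exists linear $\phi \colon \bR^n \to \bR$ with $\phi \geq 0$ on $C$ and $\tau = \{x \in C : \phi(x) = 0\}$) and the \emph{relative-interior partition} $C = \bigsqcup_{\tau} \mathrm{relint}(\tau)$. From these, the set-theoretic claim $\cF_2 = \{\tau \cap V : \tau \in \cF_1\}$ reduces to two small arguments. For $\supseteq$, take $\tau \in \cF_1$ exposed by $\phi$; the same $\phi$ satisfies $\phi \geq 0$ on $C_2$ and $\tau \cap V = \{x \in C_2 : \phi(x) = 0\}$, so $\tau \cap V \in \cF_2$. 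For $\subseteq$, given $\sigma \in \cF_2$, pick $x \in \mathrm{relint}(\sigma)$, apply the partition of $C_1$ to produce the unique $\tau = \tau(\sigma) \in \cF_1$ with $x \in \mathrm{relint}(\tau)$, and note that connectedness of $\mathrm{relint}(\sigma)$ forces $\mathrm{relint}(\sigma) \subseteq \mathrm{relint}(\tau) \cap V$. The remaining task is then to show $\sigma = \tau \cap V$.

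This last equality is the one geometric input I expect to require care. To carry it out I would verify that $\sigma$ and $\tau \cap V$ are two faces of $C_2$ whose relative interiors intersect, which forces them to coincide. The auxiliary fact needed here is that if $V$ meets $\tau$ in its relative interior, then $\mathrm{aff}(\tau \cap V) = \mathrm{aff}(\tau) \cap V$, so that any $y \in \mathrm{relint}(\tau) \cap V$ admits an $\mathrm{aff}(\tau \cap V)$-neighborhood inside $\tau \cap V$ and hence belongs to $\mathrm{relint}(\tau \cap V)$. This identity of affine hulls is essentially where the proof happens; everything else is formal.

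The bijections then fall out directly. The map $\tau \mapsto \tau \cap V$ is a surjection $\cF_1 \twoheadrightarrow \cF_2$, and by the construction above its restriction to $\{\tau \in \cF_1 : V \cap \mathrm{relint}(\tau) \neq \emptyset\}$ is inverse to $\sigma \mapsto \tau(\sigma)$, producing the first bijection. The equivalence
\begin{equation*}
V \cap \mathrm{relint}(\tau) \neq \emptyset
\quad \Longleftrightarrow \quad
\forall\, \tau' < \tau,\ \tau' \cap V \neq \tau \cap V
\end{equation*}
is then a short direct argument: the $\Rightarrow$ direction is immediate because a point of $V \cap \mathrm{relint}(\tau)$ lies in $\tau \cap V$ but in no proper subface of $\tau$. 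For $\Leftarrow$, if $V \cap \mathrm{relint}(\tau) = \emptyset$ then $\tau \cap V \subseteq \bigcup_{\tau' < \tau} \tau'$; applying the relative-interior partition of $C_1$ to any point of $\mathrm{relint}(\tau \cap V)$ yields a proper $\tau' < \tau$ such that $\tau' \cap V$ is a face of $C_2$ containing a relative interior point of $\tau \cap V$ and contained in $\tau \cap V$, forcing $\tau' \cap V = \tau \cap V$. Composing with the first bijection yields the second.
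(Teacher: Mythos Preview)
Your argument is correct and substantially more detailed than what the paper offers: the paper's entire proof is the sentence ``This can be verified by appealing to polarity and linear programming,'' deferring to standard polyhedral folklore without elaboration. Your route through the exposing-functional characterization of faces and the relative-interior partition $C_1 = \bigsqcup_{\tau} \mathrm{relint}(\tau)$ is a clean and self-contained alternative, and the auxiliary identity $\mathrm{aff}(\tau \cap V) = \mathrm{aff}(\tau) \cap V$ (when $V$ meets $\mathrm{relint}(\tau)$) is exactly the right geometric input.

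One small remark: the clause ``connectedness of $\mathrm{relint}(\sigma)$ forces $\mathrm{relint}(\sigma) \subseteq \mathrm{relint}(\tau) \cap V$'' is not adequately justified as written, since the pieces $\mathrm{relint}(\tau)$ are not open in $C_1$ and connectedness alone does not confine a set to a single piece of such a partition. The inclusion is nonetheless true (once $\sigma = \tau \cap V$ is known, take $y \in \mathrm{relint}(\sigma)$ and $x \in V \cap \mathrm{relint}(\tau)$; since $y \in \mathrm{relint}(\tau \cap V)$ one can push slightly past $y$ along the ray from $x$ to stay in $\tau$, exhibiting $y$ as a proper convex combination of a point of $\tau$ with $x \in \mathrm{relint}(\tau)$), but more to the point you never use it: your next paragraph establishes $\sigma = \tau \cap V$ directly from the fact that both are faces of $C_2$ whose relative interiors contain $x$. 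You could simply drop the connectedness sentence. The handling of the two bijections is fine.
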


\begin{proof}
This can be verified by appealing to polarity and linear programming.
\end{proof}

\begin{lemma}\label{prodofconecomplexes}
For $1\leq i \leq s$, let $\cF_i$ be cone complexes.
Then $\cG=\prod_{1\leq i\leq s}\cF_i$ is a cone complex.
\end{lemma}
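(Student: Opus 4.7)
The plan is to take $\cG$ to be the collection of product conical sets $\tau_1\times\cdots\times\tau_s$ with $\tau_i\in\cF_i$, equipped with the componentwise $\mathbb{R}_{\geq 0}$-semimodule structure, sitting inside the ambient set $X_1\times\cdots\times X_s$. Then the verification splits into three independent steps: the closure of $\cG$ under taking faces, the description of pairwise intersections, and the identification of each element of $\cG$ as an abstract cone.

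The key structural fact needed is that a face of a product $\tau_1\times\cdots\times\tau_s$ is precisely a product of faces $F_1\times\cdots\times F_s$ with $F_i$ a face of $\tau_i$. First I would verify that any product of faces satisfies the two implications in Definition \ref{facedefinition}, which is immediate componentwise. The reverse direction is the nontrivial step. Given a face $F\subseteq\tau_1\times\cdots\times\tau_s$, I would set
\begin{align*}
F_i \coloneqq \{\,x_i \in \tau_i \:|\: (0_1,\dots,x_i,\dots,0_s) \in F\,\}.
\end{align*}
For any $(x_1,\dots,x_s)\in F$, the identity
\begin{align*}
(x_1,\dots,x_s) = (x_1,0,\dots,0) + (0,x_2,0,\dots,0) + \cdots + (0,\dots,0,x_s)
\end{align*}
together with face condition \eqref{facecond2} applied iteratively shows that each $(0,\dots,x_i,\dots,0)$ lies in $F$, hence $F\subseteq F_1\times\cdots\times F_s$. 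Conversely, if $(y_1,\dots,y_s)\in F_1\times\cdots\times F_s$ then each $(0,\dots,y_i,\dots,0)\in F$ and their sum, which lies in $F$ since $F$ is conical, is exactly $(y_1,\dots,y_s)$. A parallel two-line computation shows each $F_i$ inherits both face axioms from $F$.

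Given this description of faces, closure of $\cG$ under taking faces follows: a face of $\tau_1\times\cdots\times\tau_s$ is a product of faces $F_i\in\cF_i$ (using that each $\cF_i$ is itself closed under taking faces), hence lies in $\cG$. For intersections I would use the set-theoretic equality
\begin{align*}
(\tau_1\times\cdots\times\tau_s)\cap(\sigma_1\times\cdots\times\sigma_s) = (\tau_1\cap\sigma_1)\times\cdots\times(\tau_s\cap\sigma_s),
\end{align*}
and write each $\tau_i\cap\sigma_i$ as a union of common faces using condition (\ref{conecomdef2}) in Definition \ref{conecomdef} applied to $\cF_i$. Distributing the product over these unions and invoking the face description above expresses the intersection as a union of common faces of the two product conical sets, giving condition (\ref{conecomdef2}) for $\cG$.

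Finally, to see that each $\tau_1\times\cdots\times\tau_s$ is an abstract cone, I would choose linear isomorphisms $\varphi_i\colon\tau_i\xrightarrow{\sim}\tau_i'$ to closed polyhedral cones $\tau_i'\subseteq\mathbb{R}^{n_i}$, guaranteed by each $\cF_i$ being a cone complex. The componentwise map $\varphi_1\times\cdots\times\varphi_s$ is then a linear isomorphism from $\tau_1\times\cdots\times\tau_s$ onto $\tau_1'\times\cdots\times\tau_s'\subseteq\mathbb{R}^{n_1+\cdots+n_s}$, which is plainly a closed polyhedral cone. I do not expect any of these steps to be a serious obstacle; the face-description lemma is the only substantive point, and the argument above reduces it to a single application of the additive face axiom.
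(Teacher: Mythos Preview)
Your proposal is correct and provides exactly the kind of detailed verification the paper omits: the paper's proof consists solely of the sentence ``This is straightforward to check.'' Your argument---identifying faces of products as products of faces via the additive face axiom, then reducing both cone-complex conditions componentwise---is the natural way to carry this out, and there is nothing to compare against.
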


\begin{proof}
This is straightforward to check.
\end{proof}

\begin{definition}[Fiber products of cone complexes]
\label{fiberconecomplexes}
For $1\leq i \leq s$, let $\cF_i$ be cone complexes.
Let $\cF$ be a cone complex. 
Suppose that $f_i:\cF_i\rightarrow \cF$ are morphisms of cone complexes, i.e.\ each $f_i$ is a piecewise-linear transformation such that for each conical set $\tau_i \in \cF_i$ there exists some conical set $\tau \in \cF$ with $f_i(\tau_i)\subseteq \tau$.

Then we define the \emph{fiber product of the $\cF_i$ over $\cF$} as 
\begin{align}
\prod_{1\leq i\leq s}^{\cF}\cF_i \coloneqq \Bigl\{\prod_{1\leq i\leq s}^{\tau} \tau_i: \tau \in \cF, \tau_i \in \cF_i, f_i(\tau_i)\subseteq \tau \Bigr\}.
\end{align}
\null\hfill$\triangle$
\end{definition}

\begin{proposition}\label{conecomplexesclosedunderfibprodprop}
Cone complexes are closed under taking fiber products.
\end{proposition}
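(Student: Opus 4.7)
My plan is to verify the three defining properties of a cone complex, as laid out in Definition \ref{conecomdef} together with the abstract-cone requirement of Definition \ref{def:cone_complex}, for the collection $\cG \coloneqq \prod_{1\leq i\leq s}^{\cF}\cF_i$. Namely: (a) every element is an abstract cone, (b) each face of an element lies again in $\cG$, and (c) the intersection of two elements is a union of faces of both. Throughout, I will work with the realization provided by Definition \ref{fiberproductofcones}: after choosing linear isomorphisms $\tau_i \cong \wt\tau_i$ onto closed polyhedral cones $\wt\tau_i \subseteq \bR^{m_i}$, the fiber product $P \coloneqq \prod_{1\leq i\leq s}^{\tau}\tau_i$ is carried to $\bigl(\prod_{i=0}^s \wt\tau_i\bigr)\cap V_f$, where $V_f$ is the linear subspace cut out by the equations $x_0 = f_i(x_i)$ (with $f_0$ the identity). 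As the intersection of a polyhedral cone with a linear subspace is again a polyhedral cone, (a) is immediate.

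For (b), the faces of the product $\prod_{i=0}^s \wt\tau_i$ are themselves products of faces $\prod_{i=0}^s \sigma_i$ with $\sigma_i$ a face of $\wt\tau_i$, so Lemma \ref{facesofconescapV} identifies every face of $P$ with one of the form $\bigl(\prod_{i=0}^s \sigma_i\bigr)\cap V_f$. The key observation is that if $f : C_1 \to C_2$ is a linear map of polyhedral cones and $\sigma \subseteq C_2$ is a face, then $f^{-1}(\sigma)\cap C_1$ is a face of $C_1$ (a direct check using Definition \ref{facedefinition}). Consequently $\sigma_i' \coloneqq \sigma_i \cap f_i^{-1}(\sigma_0)$ is again a face of $\tau_i$, hence lies in $\cF_i$ by the cone-complex axiom for $\cF_i$. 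The face in question is unchanged if we replace $\sigma_i$ by $\sigma_i'$, since the compatibility constraint in $V_f$ already forces $f_i(x_i) \in \sigma_0$; so the face equals $\prod_{1\leq i\leq s}^{\sigma_0}\sigma_i'$, with $\sigma_0 \in \cF$ and $f_i(\sigma_i')\subseteq \sigma_0$ now holding by construction. This exhibits it as an element of $\cG$.

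For (c), let $P = \prod_{1\leq i\leq s}^{\tau}\tau_i$ and $P' = \prod_{1\leq i\leq s}^{\tau'}\tau_i'$, and consider a point $(y,x_1,\dots,x_s) \in P\cap P'$. Then $y \in \tau\cap \tau'$ and $x_i \in \tau_i\cap \tau_i'$ for every $i$. By the cone-complex axioms applied to $\cF$ and each $\cF_i$, these intersections decompose as unions of common faces $\sigma_0 \in \cF$ and $\sigma_i \in \cF_i$. The resulting fiber products $\prod^{\sigma_0}(\sigma_i\cap f_i^{-1}(\sigma_0))$ cover $P\cap P'$, and by step (b) each of them is simultaneously a face of $P$ and of $P'$, which establishes the intersection axiom.

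The principal subtlety, and the step I expect to require the most care, is (b): a generic tuple of faces $(\sigma_0,\sigma_1,\dots,\sigma_s)$ in the indexing data will not satisfy the containment $f_i(\sigma_i)\subseteq \sigma_0$ required by the definition of $\cG$. The remedy is the preimage-of-a-face lemma combined with the observation that the diagonal condition in $V_f$ makes the fiber product insensitive to replacing $\sigma_i$ by $\sigma_i\cap f_i^{-1}(\sigma_0)$. Once this reindexing is in place, the other two properties follow cleanly from the analogous properties on each factor.
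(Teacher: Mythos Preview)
Your proof is correct and follows essentially the same strategy as the paper: realize each fiber product as the intersection of a product of polyhedral cones with a linear subspace, then invoke Lemma~\ref{facesofconescapV} to control the faces. The paper packages this slightly differently by introducing the product cone complex $\cG = \cF \times \prod_i \cF_i$ and the map $\phi(\tau) = \tau \cap X(\cH)$, but the substance is the same. Your version is in fact more explicit about the one genuinely delicate point: a face of the product $\prod_{i=0}^s \sigma_i$ need not satisfy the containment $f_i(\sigma_i) \subseteq \sigma_0$ required to index an element of the fiber product, and you resolve this via the observation that $f_i^{-1}(\sigma_0)$ is a face of $\tau_i$ and that replacing $\sigma_i$ by $\sigma_i \cap f_i^{-1}(\sigma_0)$ leaves the intersection with $V_f$ unchanged. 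The paper's map $\phi$ performs this same reindexing implicitly.
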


\begin{proof}
Let $\cG=\cF \times \prod_{1\leq i\leq s}\cF_i$, and let $\cH=\prod_{1\leq i\leq s}^{\cF}\cF_i$ be a fiber product of cone complexes as in Definition \ref{fiberconecomplexes}.
By Lemma \ref{prodofconecomplexes}, $\cG$ is a cone complex.
We now verify the conditions of a cone complex for $\cH$.
Let $X(\cH)$ be the set of points in $\cH$.
Let
\begin{align}
\phi:\cG \rightarrow \cH
\end{align}
be the natural surjection with $\phi(\tau) = \tau \cap X(\cH)$ for each $\tau \in \cG$.
By piecewise-linearity of the $f_i$, if we take $\tau = (\tau_0, \tau_1, \ldots, \tau_s)$, then we can alternately describe $\phi(\tau) = \prod_{1\leq i \leq s}^{\tau_0} \tau_i$.
By the definition of a cone complex, there exists a linear isomorphism $f_{\tau}$ such that $f_{ \tau}({\tau})$ is a polyhedral cone in some Euclidean space.
We can take $f_\tau=(f_{\tau_0}, \ldots, f_{\tau_s})$ to be the tuple of such maps for the constituent conical sets. 
Thus, given a conical set $\tau \in \cG$, Definition \ref{fiberproductofcones} demonstrates that $f_{\tau}(\phi(\tau))=f_{\tau}(\tau) \cap V_f$ for the linear space $V_f$ described there, where $f = (id, f_{\tau_0}\circ f_1 \circ f_{\tau_1}^{-1},\ldots, f_{\tau_0}\circ f_s \circ f_{\tau_s}^{-1})$, and $id$ is the identity map. 

Let $\tau_1$ be a conical set in $\cH$.
Let $\tau_2$ be a face of $\tau_1$.
We wish to show that $\tau_2 \in \cH$.
Let $\tau'_1 \in \cG$ be a conical set such that $\phi(\tau'_1)=\tau_1$.
By Lemma \ref{facesofconescapV}, and the observation of the previous paragraph (noting that we can take the same linear space $V_f$ for both $f_{\tau'_1}(\tau'_1)$ and $f_{\tau'_2}(\tau'_2)$), we know that there exists some $\tau'_2 \in \cG$ which is a face of $\tau'_1$ such that $\phi(\tau'_2)=\tau_2$.

 Next let $\tau_1, \tau_2 \in \cH$.
 We wish to show that $\tau_3 = \tau_1 \cap \tau_2$ is a union of faces of both $\tau_1$ and $\tau_2$.
 Let $\tau'_1, \tau'_2 \in \cG$ be conical sets such that $
 \phi(\tau'_1)=\tau_1$ and $\phi(\tau'_2)=\tau_2$.
 The map $\phi$ commutes with taking unions and intersections, hence $\phi(\tau'_1 \cap \tau'_2) = \tau_1 \cap \tau_2 = \tau_3$.
 By Lemma \ref{prodofconecomplexes}, we see that $\tau'_1 \cap \tau'_2$ is a finite union of faces of both $\tau'_1$ and $\tau'_2$, and the desired statement now follows from Lemma \ref{facesofconescapV}. 

\end{proof}

\begin{definition}
Let $\cF$ and $\cG$ be cone complexes and $f$ a linear transformation such that for each conical set $\sigma \in \cF$ there exists some conical set $\tau \in \cG$ such that $f(\sigma)= \tau$, then $f$ is \emph{combinatorially weakly semistable}.
\null\hfill$\triangle$
\end{definition}

Molcho says that that a map of cone complexes is \emph{weakly semistable} if in addition to being combinatorially weakly semistable, the map also induces a surjection of the underlying lattices, and notes that this definition is a slight weakening of the notion of weak semistability described by Abramovich-Karu \cite{abramovich1997weak}.
We remark that the projection maps described in Definitions \ref{fibervelocityfans} and \ref{fiberproductmetricbracketingdef} do satisfy Molcho's lattice condition and thus are weakly semistable.

The following lemma is implicit in \cite{molcho2021universal} and was rediscovered by the authors.
We note that this version applies to fiber products of nonrational fans, which might be of interested to polyhedral geometers.

\begin{lemma}\label{faceposetfiberproduct}
Let $\cF_i$ for $1\leq i \leq s$ and $\cF$ be cone complexes as in Definition \ref{fiberconecomplexes}, and ${\overline f}_i:\cP(\cF_i)\rightarrow \cP(\cF)$ is the induced map on face posets.
If the $f_i$ are all combinatorially weakly semistable then the face poset of the fiber product of cone complexes is the fiber product of the face posets of the cone complexes: 
\begin{align}
\cP\,\Bigg(\, \prod_{1\leq i\leq s}^{\cF}\cF_i \, \Bigg) \, = \, \prod_{1\leq i\leq s}^{\cP(\cF)}\cP(\cF_i).
\end{align}
\end{lemma}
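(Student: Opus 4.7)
The plan is to construct an explicit bijection
\[
\Phi:\prod_{1\leq i\leq s}^{\cP(\cF)}\cP(\cF_i)\longrightarrow\cP\Bigl(\prod_{1\leq i\leq s}^{\cF}\cF_i\Bigr)
\]
sending a tuple $(\tau,\tau_1,\ldots,\tau_s)$ with $\overline{f}_i(\tau_i)=\tau$ for each $i$ (so by combinatorial weak semistability, $f_i(\tau_i)=\tau$ as conical sets in $\cF$) to the fiber product cone $\prod_{i}^{\tau}\tau_i\in\prod^{\cF}\cF_i$, and then to check that $\Phi$ and $\Phi^{-1}$ both preserve the partial orders in Definition \ref{fiberproductsofposets}.

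The crux is producing a canonical tuple representative of each $\sigma\in\prod^{\cF}\cF_i$. Given any presentation $\sigma=\prod^{\tau'}\tau'_i$ as in Definition \ref{fiberconecomplexes}, define $\tau_i\in\cF_i$ to be the unique face of $\tau'_i$ whose relative interior meets the projection of $\sigma$ to the $i$th factor; this is well defined because distinct cones of a cone complex have disjoint relative interiors. By combinatorial weak semistability, $f_i(\tau_i)$ is a single cell of $\cF$. For any point $(y,x_1,\ldots,x_s)$ in the relative interior of $\sigma$, each $x_i$ lies in the relative interior of $\tau_i$, and since the restriction of a linear map to the affine hull of its source is open onto the affine hull of its image, $y=f_i(x_i)$ lies in the relative interior of $f_i(\tau_i)$ for every $i$. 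Because a single point lies in the relative interior of at most one cell of $\cF$, we conclude $f_i(\tau_i)=f_j(\tau_j)$ for all $i,j$, and setting $\tau$ to this common value produces the canonical tuple $\Phi^{-1}(\sigma)=(\tau,\tau_1,\ldots,\tau_s)$. This simultaneously establishes that $\Phi$ is well defined and surjective; injectivity is immediate since projecting $\prod^{\tau}\tau_i$ to the $i$th factor recovers $\tau_i$, which then determines $\tau=f_i(\tau_i)$.

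For order preservation, suppose $(\tau,\tau_1,\ldots,\tau_s)\leq(\tau',\tau'_1,\ldots,\tau'_s)$ componentwise. The fiber product $\prod^{\tau'}\tau'_i$ is realized by Definition \ref{fiberproductofcones} as the intersection of $\tau'\times\prod\tau'_i$ with the linear subspace $V_f$, and $\prod^{\tau}\tau_i$ is the intersection of $V_f$ with the face $\tau\times\prod\tau_i$ of $\tau'\times\prod\tau'_i$. The canonical-tuple argument above verifies that $V_f$ meets the relative interior of this face, so Lemma \ref{facesofconescapV} identifies $\prod^{\tau}\tau_i$ as a face of $\prod^{\tau'}\tau'_i$. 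Conversely, if $\Phi(\tau,\tau_1,\ldots,\tau_s)$ is a face of $\Phi(\tau',\tau'_1,\ldots,\tau'_s)$, then Lemma \ref{facesofconescapV} produces a face $\tau''\times\prod\tau''_i\leq\tau'\times\prod\tau'_i$ meeting $V_f$ in its relative interior, and applying the canonical-tuple argument to this face yields $\tau_i=\tau''_i\leq\tau'_i$ for each $i$, as desired.

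The main obstacle is the canonical-tuple step, which is where combinatorial weak semistability plays its essential role: without it, $f_i(\tau_i)$ would generally only be a union of faces of cells of $\cF$ rather than a single cell, and the relative-interior argument used to force the $f_i(\tau_i)$ to coincide for distinct $i$ would break down, so neither well-definedness of $\Phi^{-1}$ nor the componentwise order structure would survive. A secondary point is confirming that the Euclidean description of fiber products in Definition \ref{fiberproductofcones} agrees with the set-theoretic one implicit in Definition \ref{fiberconecomplexes}, but this is precisely what was verified in the proof of Proposition \ref{conecomplexesclosedunderfibprodprop} and can be cited directly.
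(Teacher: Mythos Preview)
Your proof is correct and follows essentially the same approach as the paper: both arguments use Lemma~\ref{facesofconescapV} to identify cells of the fiber product with product cells $\tau\times\prod_i\tau_i$ whose relative interior meets $V_f$, and both show this happens precisely when $f_i(\tau_i)=\tau$ for all $i$. The paper establishes this last equivalence more directly by arguing that combinatorial weak semistability surjects ray generators of $\tau_i$ onto those of $\tau$ (so any interior point of $\tau$ lifts to interior points of each $\tau_i$), whereas you recover the canonical tuple by projecting $\sigma$ to each factor and invoking the fact that linear maps send relative interiors to relative interiors; these are minor reorderings of the same underlying idea. Your treatment is also more explicit than the paper's about order preservation in both directions, which the paper leaves largely implicit in its appeal to Lemma~\ref{facesofconescapV}.
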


\begin{proof}
We implicitly identify individual conical sets with cones.
By Lemma \ref{facesofconescapV}, there is a bijection between cones of the fiber product and cones in the product of the $\cF_i$ such that the linear space in question $V_f$ intersects their relative interior.
Thus, in order to determine the face poset of the fiber product, we should understand when $V_f$ intersects the relative interior of a cone in the product of the $\cF_i$.

 Let $\tau_i \in \cF_i$ for $1\leq i\leq s$, and $\tau \in \cF$ such that $f_i(\tau_i)\subseteq \tau$ for each $i$.
Suppose further that $f_i(\tau_i) = \tau$ for each $i$.
If this occurs then for any (some) point ${\bf v}$ in the relative interior of $\tau$ there exists a point ${\bf v}_i$ in the relative interior of $\tau_i$ such that $f_i({\bf v}_i) ={\bf v} $.
To see this, note that combinatorial weak semistability gives that $f_i$ surjects the rays of $\tau_i$ to the rays of $\tau$.
A point is in the relative interior of a cone if and only if it can be expressed as a positive combination of all of the ray generators.
The relative interior of $(\tau, \tau_1, \dots, \tau_s)$ equal to the product of the relative interiors of each of these cones, and the statement follows. 
 
Conversely, suppose there exist $1 \leq j,k \leq s$ with $f_j(\tau_j) \neq f_k(\tau_k)$, then the corresponding linear space $V_f$ does not intersect the relative interior of a cone in the product of the $\cF_i$ as, without loss of generality, $f_j(\tau_j)$ does not contain a point in the relative interior of $f_k(\tau_k)$. 
\end{proof}

The geometric description of fiber products of cones from Definition \ref{fiberproductofcones} extends to the category of fans equipped with morphisms which are globally linear, and this has interesting consequences for fans, which don't apply to general cone complexes.

\begin{definition}[Fiber products of fans]
\label{fiberfans}
For $1\leq i \leq s$, let $\cF_i$ be a fan in $\mathbb{R}^{m_i}$.
Suppose that $\cF_{i}$ has coordinates ${\bf x}^i = (x_1^i,\ldots, x_{m_i}^i)$.
Let $\cF$ be a fan in $\mathbb{R}^n$ and suppose that $f_i:\mathbb{R}^{m_i}\rightarrow \mathbb{R}^{n}$ are linear transformations such that for each cone $\tau_i \in \cF_i$ there exists some cone $\tau \in \cF$ such that $f_i(\tau_i)\subseteq \tau$.
Let $V_f$ be the linear space in $\prod_{i=1}^s \mathbb{R}^{m_i}$ cut out by the equations $f_i({\bf x}^i) = f_j({\bf x}^j)$ for all $1\leq i, j\leq s$.

Then we define the \emph{fiber product of the $\cF_i$ over $\cF$} as 
\begin{align}
\prod_{1\leq i\leq s}^{\cF}\cF_i \coloneqq \Bigl\{ \left(\prod_{i=1}^s \tau_i\right) \cap V_f: \tau_i \in \cF_i\Bigr\}. 
\end{align}
\null\hfill$\triangle$
\end{definition}

\begin{corollary}
\label{fibercomplete}
Let $\cF_i$ and $\cF$ be fans as in the statement of Definition \ref{fiberfans}.
If each $\cF_i$ is a complete fan, then
\begin{align}
\prod_{1\leq i \leq s}^{\cF} \cF_{i}
\end{align}
is a complete fan.
\end{corollary}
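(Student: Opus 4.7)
The plan is to verify the two conditions in Definition \ref{completedefinition}: that $\prod_{1\leq i \leq s}^{\cF}\cF_i$ is a fan (in the sense of Definition \ref{abstractfandef}), and that its support is a linear space.

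First I would argue that the fiber product is a fan. By Proposition \ref{conecomplexesclosedunderfibprodprop}, it is at least a cone complex (viewing each $\cF_i$ as a cone complex whose ambient space is $\mathbb{R}^{m_i}$); since every $\cF_i$ is finite and the fiber product has cardinality bounded by $\prod_{i=1}^s |\cF_i|$, the resulting cone complex is finite, and it lives in the ambient Euclidean space $\prod_{i=1}^s \mathbb{R}^{m_i}$. What is left to check for it to be a fan in the sense of Definition \ref{abstractfandef} is the strengthened intersection axiom~(\ref{absfandef2}): for any two cones $(\prod_i \tau_i) \cap V_f$ and $(\prod_i \tau_i') \cap V_f$ of the fiber product, their intersection equals $\left(\prod_i (\tau_i \cap \tau_i')\right) \cap V_f$. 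Since each $\cF_i$ is a fan, $\tau_i \cap \tau_i'$ is itself a face of both $\tau_i$ and $\tau_i'$, so $\prod_i(\tau_i \cap \tau_i')$ is a face of both $\prod_i \tau_i$ and $\prod_i \tau_i'$. Intersecting with the linear space $V_f$ and invoking Lemma \ref{facesofconescapV} yields a face of each of $(\prod_i \tau_i)\cap V_f$ and $(\prod_i \tau_i')\cap V_f$, as required.

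Next I would verify that the support is a linear space. Let $L_i \coloneqq \supp(\cF_i) \subseteq \mathbb{R}^{m_i}$; by hypothesis each $L_i$ is a linear subspace. I claim that
\begin{align}
\supp\!\left(\prod_{1\leq i\leq s}^{\cF}\cF_i\right) \,=\, \Bigl(\prod_{i=1}^s L_i\Bigr) \cap V_f.
\end{align}
The inclusion $\subseteq$ is immediate, since every cone of the fiber product is contained in $(\prod_i L_i) \cap V_f$. For the reverse inclusion, take a point $({\bf x}^1,\ldots,{\bf x}^s) \in (\prod_i L_i) \cap V_f$. For each $i$, completeness of $\cF_i$ produces a cone $\tau_i \in \cF_i$ with ${\bf x}^i \in \tau_i$; the compatibility requirement $f_i(\tau_i) \subseteq \tau$ for some common $\tau \in \cF$ is automatic because $f_i({\bf x}^i)$ is a single point (the $V_f$ condition) lying in the cone $\tau \in \cF$ that contains it. Thus $({\bf x}^1,\ldots,{\bf x}^s)$ lies in the cone $(\prod_i \tau_i) \cap V_f$ of the fiber product. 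Since $(\prod_i L_i)\cap V_f$ is an intersection of two linear subspaces of $\prod_{i=1}^s \mathbb{R}^{m_i}$, it is linear, and completeness follows.

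I do not expect any genuine obstacle here: once one commits to Definition \ref{completedefinition} (support is merely required to be a linear space, not all of ambient Euclidean space), the argument is essentially formal. The most delicate point is the compatibility bookkeeping when assembling the cones $\tau_i$ into a single cone of the fiber product, but this is handled automatically by the linear space $V_f$ forcing the images $f_i({\bf x}^i)$ to coincide in $\supp(\cF)$.
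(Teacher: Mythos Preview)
Your proposal is correct and follows the same idea as the paper's proof: the fiber product equals $\bigl(\prod_i \cF_i\bigr) \cap V_f$, the product of complete fans has a linear space as support, and intersecting with the linear space $V_f$ yields a linear space as support. The paper compresses this into a single sentence, while you spell out the fan axioms and the support computation in detail; one small caveat is that your invocation of Proposition~\ref{conecomplexesclosedunderfibprodprop} refers to the cone-complex fiber product of Definition~\ref{fiberconecomplexes} rather than the fan fiber product of Definition~\ref{fiberfans}, but your direct verification of the intersection axiom via Lemma~\ref{facesofconescapV} is what actually does the work, so this is harmless.
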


\begin{proof}
The fiber product above
is the intersection of $\prod_{i =1}^s \cF_{i} $ with a linear space, hence it is complete.
\end{proof}

An application of Corollary \ref{fibercomplete} is described in Remark \ref{completenessviafiberproduct}.

\begin{remark}
Fiber products of projective, i.e.\ polytopal, fans are projective.
This will not be utilized.
Furthermore, the fiber products of generalized permutahedra with respect to coordinate projection maps are again generalized permutahedra.
This was independently observed in the work of Sanchez \cite{sanchez2023derived}, and a corresponding statement for Bergman fans was observed by Francois-Hampe \cite{franccois2013universal}.
\null\hfill$\triangle$
\end{remark}

The following two lemmas are straightforward to verify.

\begin{lemma}\label{projectingredundantcoordinates}
Let $\cF$ be a fan in $\mathbb{R}^m$.
Suppose that that there exists some pair of indices $1 \leq i<j \leq m$ such that for every point ${\bf v} \in \cF$, we have ${\bf v}_i = {\bf v}_j$.
Let $\pi_i$ be the map which projects away from the $i$th coordinate.
Then $\pi_i:\mathbb{R}^m \rightarrow \mathbb{R}^{m-1}$ is a linear isomorphism from $\cF$ onto its image.
\end{lemma}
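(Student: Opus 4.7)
The plan is to show that $\pi_i$ restricted to the support of $\cF$ is an injective linear map, and then invoke Lemma \ref{faceposetisolem} (or argue directly) to conclude that $\pi_i(\cF)$ is a fan with the same face poset and that $\pi_i$ is a linear isomorphism onto its image.

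First I would observe that $\pi_i$ is evidently a linear map of Euclidean spaces, so its restriction to each cone $\tau \in \cF$ is linear. The key step is to check that $\pi_i$ is injective on $\on{supp}(\cF)$. Suppose ${\bf v}, {\bf w} \in \on{supp}(\cF)$ satisfy $\pi_i({\bf v}) = \pi_i({\bf w})$. Then ${\bf v}_k = {\bf w}_k$ for every $k \neq i$, and in particular ${\bf v}_j = {\bf w}_j$. By hypothesis, ${\bf v}_i = {\bf v}_j$ and ${\bf w}_i = {\bf w}_j$, so ${\bf v}_i = {\bf v}_j = {\bf w}_j = {\bf w}_i$, giving ${\bf v} = {\bf w}$.

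Having established injectivity, I would note that the two-sided inverse on the image is the linear map $\mathbb{R}^{m-1} \to \mathbb{R}^m$ that copies the $j$th coordinate into the $i$th slot (after appropriate reindexing), so $\pi_i$ restricted to each cone of $\cF$ is a linear isomorphism onto its image. Consequently the image of each cone is again a cone, the images of distinct cones remain internally disjoint because of global injectivity, and $\{\pi_i(\tau) : \tau \in \cF\}$ inherits the face-poset structure of $\cF$.

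There is essentially no obstacle here; the only subtlety is making sure one distinguishes between linearity of $\pi_i$ as a map of ambient Euclidean spaces and the resulting bijectivity onto the image as a map of fans, which requires precisely the hypothesis that ${\bf v}_i = {\bf v}_j$ holds on every point of $\cF$ (not merely on the ray generators). With that in hand the conclusion is immediate.
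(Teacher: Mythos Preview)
Your proposal is correct and is exactly the natural argument. The paper does not actually prove this lemma; it simply states that it (together with the subsequent Lemma~\ref{isofiberconecomplexlemma}) is ``straightforward to verify,'' and your write-up supplies precisely that straightforward verification.
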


\begin{lemma}\label{isofiberconecomplexlemma}
Let $\cF,\cG,\cF_i, \cG_i$ be cone complexes with $1\leq i \leq s$.
Suppose that we have piecewise-linear isomorphisms $f_i:\cF_i\rightarrow \cF, g_i:\cG_i \rightarrow \cG, \psi_i:\cF_i\rightarrow \cG_i, \psi:\cF\rightarrow \cG$.
Suppose that for each $i$ we have $g_i \circ \psi_i = \psi \circ f_i$.
Then the given maps induce a piecewise-linear isomorphism
\begin{align}
\prod_{1\leq i\leq s}^{\cF}\cF_i \cong \prod_{1\leq i\leq s}^{\cG}\cG_i.
\end{align}
\end{lemma}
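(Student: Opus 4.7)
The plan is to exhibit the obvious candidate isomorphism and verify the three required properties. Define
\begin{align}
\Psi:\prod_{1\leq i\leq s}^{\cF}\cF_i\longrightarrow\prod_{1\leq i\leq s}^{\cG}\cG_i,\qquad (y,x_1,\dots,x_s)\longmapsto(\psi(y),\psi_1(x_1),\dots,\psi_s(x_s)),
\end{align}
and its candidate inverse
\begin{align}
\Psi':\prod_{1\leq i\leq s}^{\cG}\cG_i\longrightarrow\prod_{1\leq i\leq s}^{\cF}\cF_i,\qquad (y',x_1',\dots,x_s')\longmapsto(\psi^{-1}(y'),\psi_1^{-1}(x_1'),\dots,\psi_s^{-1}(x_s')).
\end{align}
First I would check well-definedness of $\Psi$ on the underlying fiber product of sets: if $f_i(x_i)=y$ for all $i$, then $g_i(\psi_i(x_i))=\psi(f_i(x_i))=\psi(y)$ by the compatibility hypothesis, so the image tuple lies in the target fiber product. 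The same compatibility, rearranged as $f_i\circ\psi_i^{-1}=\psi^{-1}\circ g_i$ (obtained by pre- and post-composing $g_i\circ\psi_i=\psi\circ f_i$ with the appropriate inverses), shows that $\Psi'$ is also well-defined, and it is then immediate from the constructions that $\Psi'\circ\Psi$ and $\Psi\circ\Psi'$ are the identity on the underlying sets.

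Next I would verify that $\Psi$ is a morphism of cone complexes in the sense of Definition \ref{fiberconecomplexes}. Fix a conical set $\prod_{1\leq i\leq s}^{\tau}\tau_i$ in the left fiber product, where $\tau\in\cF$, $\tau_i\in\cF_i$, and $f_i(\tau_i)\subseteq\tau$. Since $\psi$ and the $\psi_i$ are piecewise-linear isomorphisms, $\psi(\tau)\in\cG$ and $\psi_i(\tau_i)\in\cG_i$; applying $\psi$ to $f_i(\tau_i)\subseteq\tau$ and using $g_i\circ\psi_i=\psi\circ f_i$ gives $g_i(\psi_i(\tau_i))\subseteq\psi(\tau)$, so $\prod_{1\leq i\leq s}^{\psi(\tau)}\psi_i(\tau_i)$ is a conical set of the right fiber product. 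A short set-theoretic check, using the bijectivity of $\psi$ and the $\psi_i$ together with the compatibility identity, shows that $\Psi$ restricts to a bijection
\begin{align}
\prod_{1\leq i\leq s}^{\tau}\tau_i\longrightarrow\prod_{1\leq i\leq s}^{\psi(\tau)}\psi_i(\tau_i).
\end{align}
On this conical set, $\Psi$ is the restriction of the linear map $\psi\times\psi_1\times\cdots\times\psi_s$ (each factor being linear on the relevant piece by piecewise-linearity), so $\Psi$ is linear there. Since $\Psi$ is defined by the same formula globally, restrictions to intersections automatically agree, and $\Psi$ is piecewise-linear.

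By the symmetric argument applied to $\Psi'$ (the hypotheses are symmetric in the roles of $\cF,\cF_i$ and $\cG,\cG_i$ once we invert $\psi,\psi_i$), the map $\Psi'$ is also piecewise-linear, and so $\Psi$ is a piecewise-linear isomorphism. There is no genuine obstacle here; the only subtle point is matching up the indexing, namely that the conical sets of the right fiber product arising as images $\prod^{\psi(\tau)}\psi_i(\tau_i)$ really do exhaust all conical sets of $\prod_{1\leq i\leq s}^{\cG}\cG_i$. This is handled by applying the same image-of-cone argument to $\Psi'$: every $\prod^{\sigma}\sigma_i$ on the right equals $\Psi\bigl(\prod^{\psi^{-1}(\sigma)}\psi_i^{-1}(\sigma_i)\bigr)$, which finishes the proof.
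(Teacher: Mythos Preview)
Your proof is correct and is exactly the natural argument one would write down. The paper itself does not give a proof of this lemma; it is introduced with the sentence ``The following two lemmas are straightforward to verify,'' so there is nothing to compare beyond noting that your write-up is a faithful elaboration of what the authors leave to the reader.
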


\subsection{Fiber products of \texorpdfstring{$n$}{n}-associahedra}\label{fiberproductsn-associahedrasubsection}

\

In this subsection we define fiber products of categorical $n$-associahedra and use these fiber products to describe the local structure of categorical $n$-associahedra.

Recall, from Definition \ref{restrictbracket}, we denote the $n$-associahedron $\cK(\cT|_{A})$ for some $k$-bracket $A$ as $\cK({A})$. 

\begin{definition}[Fiber products of categorical $n$-associahedra]
\label{fiberproductnassociahedra}
Let $\cT$ be a rooted plane tree and $\sC \in \fX(\cT)$.
We recursively define the iterated fiber product $\cK(\cT|_{\sC})$. For $A \in \sC^n$ essential\footnote{Recall from Definition \ref{essentialbrackets} that $A\in \sC^n$ is essential if and only if it is nontrivial, but for $k<n$, essential brackets may also be certain maximal brackets.}, let $\cK(\cT|_{\sC,\,{A}})=\cK({A})$.
Next, let $A$ be an essential $k$-bracket in $\sC^k$ for $k <n$.
We denote
\begin{align}
\cK(\cT|_{\sC,\,{A}}) \coloneqq \prod_{
\substack{
A'\in \sC^{k+1},
\\
A' \,\,\text{essential},
\\ \pi(A')=A}}^{\cK(A)}\cK(\cT|_{\sC,\,{A'}})
\end{align}
with the maps from the factors to the base defined presently.
Suppose by induction that we have defined maps $\Pi_{A'}:\cK(\cT|_{\sC,\,{A'}})\rightarrow \cK(A')$ for each $A'\in \sC^{k+1}$.
We compose with the map $\pi:\cK(A')\rightarrow \cK(A)$ to get the desired map $\pi\circ\Pi_{A'}:\cK(\cT|_{\sC,\,{A'}})\rightarrow \cK(A)$.
Finally, in order to extend the induction, we define $\Pi_{A}:\cK(\cT|_{\sC,\,{A}})\rightarrow \cK(A)$ to be the product of the maps $\pi\circ\Pi_{A'}$ over all $A' \in \sC^{k+1}$ with $A'$ essential, and $\pi(A') = A$.\footnote{We remark that the map $\Pi_{A}$ may not be a product of copies of the map $\pi^{n-k}$ as there may exists some $j$-bracket $A'$ with $j\geq k$ living over $A$ such that there exist no $j+1$ bracket which projects to $A'$.}

Let $A_{\bullet}$ be the fusion bracket for $\sC$.
We define

\begin{align}
\cK(\cT|_{\sC}) \coloneqq \cK(\cT|_{\sC,\,A_{\bullet}}).
\end{align}
\null\hfill$\triangle$
\end{definition}

We refer to elements of $\cK(\cT|_{\sC})$ as \emph{fiber products of $n$-bracketings}.
By repeated application of Definition \ref{fiberproductsofposets}, $\cK(\cT|_{\sC})$ is naturally equipped with the structure of a poset.

\begin{definition}
\label{facetterminology}
Let $\sC \in \fX(\cT)$, then we define the \emph{facet} associated to $\sC$ to be the interval $\cK(\cT)_{\sC} \coloneqq [\,\sC, \star]\subseteq \cK(\cT)$.
\null\hfill$\triangle$
\end{definition}

The following lemma describes the face poset of a facet of an $n$-associahedron.

\begin{lemma}\footnote{To make things look nicer in this section, we have written $\cT/\sC$ as $\cT/_{\sC}$.}
\label{facetposet}
Let $\sC \in \fX(\cT)$, then the corresponding facet of the $n$-associahedron admits a canonical isomorphism
\begin{align}
\cK(\cT)_{\sC} \cong \cK(\cT/_{\sC}) \times \cK(\cT|_{\sC}).
\end{align}
\end{lemma}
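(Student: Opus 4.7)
The plan is to construct an explicit poset isomorphism $\Phi\colon \cK(\cT)_{\sC} \to \cK(\cT/\sC) \times \cK(\cT|_\sC)$ together with its inverse, verifying well-definedness, monotonicity, and bijectivity in turn. The forward map sends $\sB \geq \sC$ to the pair $(\sB/\sC,\, \sB_{|\sC})$, where $\sB/\sC$ is the contraction from Definition \ref{quotientbracketingdef}, which is well-defined because $\sB$ and $\sC$ are compatible by assumption. The second component $\sB_{|\sC}$ is a tuple indexed by the essential brackets of $\sC$ weakly descending from the fusion bracket $A_\bullet$: for each essential $k$-bracket $A$, define the $k$-bracketing $\sB_{|A} \in \cK(A)$ by taking $(\sB_{|A})^\ell \coloneqq \{B \in \sB^\ell : B \subseteq \pi^{k-\ell}(A)\}$ for $0 \leq \ell \leq k$, equipped with the restricted extended height partial order from $\sB$.

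For the inverse, given a pair $(\sB',\,(\sB_A)_A)$ in the target, I would reconstruct $\sB \in \cK(\cT)_\sC$ by assembling two families of brackets: first, the pullbacks $\psi^{-1}(B)$ of brackets $B \in \sB'$ under the collision map $\psi = \psi^\cT_{\cT/\sC}$ from Definition \ref{def:collision_maps}; second, for each essential $k$-bracket $A$ of $\sC$ descending from $A_\bullet$, the brackets belonging to $\sB_A$ interpreted as brackets of $\cT$ via the inclusion $\cT|_A \hookrightarrow \cT$. The height partial order is determined uniquely by combining the height partial orders of $\sB'$ and the $\sB_A$, which is consistent because the fiber product compatibility ensures they agree on common subsets.

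The verification that $\Phi$ is well-defined requires two checks: that each $\sB_{|A}$ satisfies the $n$-bracketing axioms (which is straightforward since \textsc{(nested)}, \textsc{(partition)}, and \textsc{(height partial orders)} are all inherited by restriction to a bracket), and that the tuple $(\sB_{|A})_A$ satisfies the fiber product compatibility conditions. The latter reduces to checking that for each essential $(k+1)$-bracket $A'$ of $\sC$ over an essential $k$-bracket $A$, the truncation $\pi(\sB_{|A'})$ coincides with the portion of $\sB_{|A}$ living over the relevant subtree, which follows directly from the \textsc{($k$-bracketing projection)} axiom applied to $\sB$. The verification that $\Phi$ and $\Psi$ are mutual inverses is then bookkeeping, and monotonicity in both directions follows because both the contraction operation and the restriction operation are monotone with respect to inclusion of $n$-bracketings and extension of height partial orders.

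The main obstacle is tracking the recursive fiber product structure carefully at each level and verifying the compatibility conditions coherently across depths. A clean approach is to induct on $n$, with base case $n = 1$, where $\cK(\cT|_\sC)$ degenerates to a plain product $\prod_{A \in \sC^1 \text{ essential}} \cK(A)$ of smaller associahedra and the statement recovers the classical factorization of associahedral facets. For the inductive step, one passes to $\pi(\sC) \in \fX(\pi(\cT))$ and uses the inductive hypothesis on $\pi(\cT)$ combined with the observation that the $n$-level data of $\sB$ over each essential $n$-bracket of $\sC$ decouples and contributes the outermost layer of the iterated fiber product.
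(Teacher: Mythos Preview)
Your proposal is correct and follows essentially the same approach as the paper. The paper partitions the nontrivial brackets of $\sB$ into four cases according to their containment relation with the essential brackets of $\sC$, then assembles the ``large'' brackets into a sub-$n$-bracketing $\sB^+$ (mapped via $\psi$ to $\cK(\cT/\sC)$) and the ``small'' brackets into $\sB^-$ (mapped to the fiber product); your contraction $\sB/\sC$ and restriction tuple $(\sB_{|A})_A$ compute exactly the same data, and your unrolling of the iterated fiber product into a compatible tuple indexed by all essential brackets is the natural way to present an element of $\cK(\cT|_\sC)$. The paper's proof is terser and does not explicitly invoke induction on $n$ or spell out the inverse, whereas your organization via induction on depth is a mild repackaging that makes the fiber-product compatibility more transparent; both routes arrive at the same bijection.
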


\begin{figure}[ht]
\centering
\def\svgwidth{0.7\textwidth}
\begingroup%
  \makeatletter%
  \providecommand\color[2][]{%
    \errmessage{(Inkscape) Color is used for the text in Inkscape, but the package 'color.sty' is not loaded}%
    \renewcommand\color[2][]{}%
  }%
  \providecommand\transparent[1]{%
    \errmessage{(Inkscape) Transparency is used (non-zero) for the text in Inkscape, but the package 'transparent.sty' is not loaded}%
    \renewcommand\transparent[1]{}%
  }%
  \providecommand\rotatebox[2]{#2}%
  \newcommand*\fsize{\dimexpr\f@size pt\relax}%
  \newcommand*\lineheight[1]{\fontsize{\fsize}{#1\fsize}\selectfont}%
  \ifx\svgwidth\undefined%
    \setlength{\unitlength}{295.06012538bp}%
    \ifx\svgscale\undefined%
      \relax%
    \else%
      \setlength{\unitlength}{\unitlength * \real{\svgscale}}%
    \fi%
  \else%
    \setlength{\unitlength}{\svgwidth}%
  \fi%
  \global\let\svgwidth\undefined%
  \global\let\svgscale\undefined%
  \makeatother%
  \begin{picture}(1,0.61116653)%
    \lineheight{1}%
    \setlength\tabcolsep{0pt}%
    \put(0,0){\includegraphics[width=\unitlength,page=1]{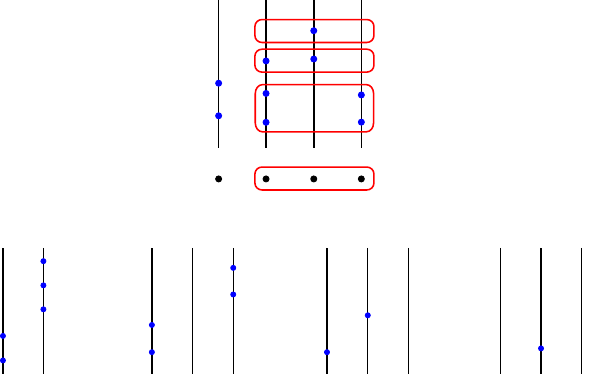}}%
    \put(0.12460948,0.09897314){\makebox(0,0)[lt]{\lineheight{1.25}\smash{\begin{tabular}[t]{l}$\times$\end{tabular}}}}%
    \put(0.40686451,0.09897314){\makebox(0,0)[lt]{\lineheight{1.25}\smash{\begin{tabular}[t]{l}$\times$\end{tabular}}}}%
    \put(0.69506398,0.09897314){\makebox(0,0)[lt]{\lineheight{1.25}\smash{\begin{tabular}[t]{l}$\times$\end{tabular}}}}%
    \put(0,0){\includegraphics[width=\unitlength,page=2]{localfiberproduct.pdf}}%
  \end{picture}%
\endgroup%

\caption{A collision $\sC$ in a 2-associahedron, and the corresponding factorization of interval $\cK(\cT)_{\sC}=[\sC,\star] \subseteq \cK(\cT)$ into a smaller 2-associahedron $\cK(\cT/_\sC)$ on the left times the fiber product of three 2-associahedra over a single 1-associahedron $\cK(\cT|_{\sC})$ on the right.
} 
\label{localfiberproductfig}
\end{figure}

\begin{proof}
We produce a poset isomorphism
\begin{align}
{\Psi}: \cK(\cT)_{\sC} \rightarrow \cK(\cT/_{\sC}) \times \cK(\cT|_{\sC}).
\end{align}

Let $\sB \in \cK(\cT)_{\sC}$, and let $A \in \sB^k$ be nontrivial, then one of following mutually exclusive cases holds:
\begin{enumerate}

\item\label{posetmap0}
There exists $A'\in \sC^k$ with $A' = A$.

\item\label{posetmap1} For every essential bracket $A'\in \sC^k$, we have that $A \cap A'=\emptyset$. 

\item\label{posetmap2} There exists some nontrivial $A'\in \sC^k$ such that $A' \subsetneq A$.

\item\label{posetmap3} 
There exists an essential bracket $A'$ in $\sC^k$, 
 such that $A \subsetneq A'$.
\end{enumerate}

Let $\sB^+$ be the collection of $k$ brackets $A$ in $\sB$ from cases (\ref{posetmap0}), (\ref{posetmap1}), and (\ref{posetmap2}) above for all $0\leq k \leq n$, and height partial order inherited from $\sB$.
Let $\sB^-$ be the set of $k$ brackets $A$ in $\sB$ from cases (\ref{posetmap0}) and (\ref{posetmap3}) above for all $0\leq k \leq n$, and height partial order inherited from $\sB$.
Then
\begin{enumerate}
\item The collections $\sB^+$ and $\sB^-$ are both $n$-bracketings.

\item There exists a canonical map $\Psi^+$ from $\sB^+$ to an $n$-bracketing in $\cK(\cT/_{\sC})$.

\item There exists a canonical map $\Psi^-$ from $\sB^-$ to a fiber product of $n$-bracketings in $\cK(\cT|_{\sC})$.
\end{enumerate} 

For verifying statement (1) above, one can check that the (\textsc{nested}) and (\textsc{partition}) properties are preserved for $\sB^+$ and $\sB^-$.
For statement (2), recall Definition \ref{quotientbracketingdef}.
Statement (3) follows from the (\textsc{partition}) property for $\sB^-$.

Let $D$ be the map which sends the $n$-bracketing $\sB$ to the ordered pair $(\sB^+,\sB^-)$ of $n$-bracketings, let $\wt{\Psi}\coloneqq\Psi^+\times \Psi^-$, and let $\Psi \coloneqq\wt{\Psi}\circ D: \cK(\cT)_{\sC} \rightarrow \cK(\cT/_{\sC}) \times \cK(\cT|_{\sC})$.
Then $\Psi$ is a poset isomorphism. 
\end{proof}

\begin{remark}
The bracketings $\sB^+$ and $\sB^-$ can alternately be defined using collisions:

\begin{align}
\sB^+ = \bigvee_{
\substack{
\wt \sC \,\in \,\fX(\cT),
\\
{\wt \sC}\leq \sB,
\\
{\sC \rightarrow \wt \sC \,\,\text{or}\,\,\sC \sim \wt \sC
}
}} {\wt \sC}
 \,\,\,\,\,\,\,\,\text{and}\,\,\,\,\,\,\,\, 
\sB^- = \bigvee_{
\substack{
\wt \sC \,\in \,\fX(\cT),
\\
{\wt \sC}\leq \sB,
\\
{\wt \sC \rightarrow \sC
}
}} {\wt \sC} .
\end{align}

This perspective is further developed in Lemma \ref{intervalatoms}.
\null\hfill$\triangle$
\end{remark}

\begin{remark}
By Lemma \ref{facetposet}, the iterated fiber product $\cK(\cT|_{\sC})$ may be considered as the \emph{restriction} of $\cK(\cT)$ to $\sC$, hence our choice of notation.
\null\hfill$\triangle$
\end{remark}

\begin{remark}
In order to extend the Lemma \ref{facetposet} to obtain a similar description of the full face lattice of an $n$-associahedron, we first observe what a ridge (facet of a facet) looks like.
A facet of $\cK(\cT/_{\sC}) \times \cK(\cT|_{\sC})$ must be either a facet of $\cK(\cT/_{\sC})$ or $\cK(\cT|_{\sC})$.
In the former case, we can apply Lemma \ref{facetposet}.
In the latter case, it can be shown that a facet of $\cK(\cT|_{\sC})$ factors as a product of two smaller fiber products of $n$-associahedra.
Thus, while a facet of an $n$-associahedron does not factor as a product of two smaller $n$-associahedra, a facet of a fiber product of $n$-associahedra factors as a product of two smaller fiber products of $n$-associahedra. 
This suggests that the category of fiber products of $n$-associahedra may be more well-behaved than the category of $n$-associahedra.
\null\hfill$\triangle$
\end{remark}

\subsection{Fiber products of velocity fans}

\

\begin{definition}[Fiber products of velocity fans]
\label{fibervelocityfans}
Let $\cT$ be a rooted plane tree and $\sC \in \fX(\cT)$.
We recursively define the iterated fiber product $\cF(\cT|_{\sC})$.
For $A \in \sC^n$ essential, let $\cF(\cT|_{\sC,\,{A}})=\cF({A})$.
Next, let $A$ be an essential $k$-bracket in $\sC^k$ for $k <n$.
We denote
\begin{align}
{\wt \cF}(\cT|_{\sC,\,{A}}) \coloneqq \prod_{
\substack{
A'\in \sC^{k+1},
\\
A' \text{essential},
\\ \pi(A')=A}}^{\cF(A)}\cF(\cT|_{\sC,\,{A'}})
\end{align}
We define maps from the factors to the base as in Definition \ref{fiberproductnassociahedra}, but replace the combinatorial maps $\pi$ with the corresponding geometric projections.
An element of ${\wt \cF}(\cT|_{\sC,\,{A}})$ can be viewed as a tuple of vectors.
The initial parts of the vectors in a tuple agree with the image in the base.
Lemma \ref{projectingredundantcoordinates} allows us to delete the initial parts of these vectors leaving the coordinates of the base $\cF(A)$, and we define the resulting collection as $\cF(\cT|_{\sC,\,{A}}).$\footnote{This choice to remove redundant coordinates does not seem strictly necessary, but has been done primarily for dimensionality reasons, e.g.\ as discussed in Remark \ref{completenessviafiberproduct}, the dimension of $\cF(\cT|_{\sC})$ will now be equal to the dimension of the ambient space.}

Let $A_{\bullet}$ be the fusion bracket for $\sC$.
We define
\begin{align}
\cF(\cT|_{\sC}) \coloneqq \cF(\cT|_{\sC,\,A_{\bullet}})
\end{align}
\null\hfill$\triangle$
\end{definition}

\begin{lemma}\label{fibervelocityfanfaceposetlemma}
The face poset of $\cF(\cT|_{\sC})$ is equal to $\cK(\cT|_{\sC})$.
\end{lemma}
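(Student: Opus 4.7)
The plan is to proceed by induction on the construction of $\cF(\cT|_{\sC})$ as outlined in Definition \ref{fibervelocityfans}, matching it step for step against the recursive construction of $\cK(\cT|_{\sC})$ in Definition \ref{fiberproductnassociahedra}. The main tool will be Lemma \ref{faceposetfiberproduct}, which states that the face poset of a fiber product of cone complexes is the fiber product of the face posets, provided the structure maps are combinatorially weakly semistable.

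The base case is when $A$ is an essential $n$-bracket of $\sC$: then $\cF(\cT|_{\sC, A}) = \cF(A)$ and $\cK(\cT|_{\sC, A}) = \cK(A)$, and these correspond by Theorem \ref{mainthmagain}. For the inductive step, fix an essential $k$-bracket $A$ with $k < n$ and assume the result for each essential $A' \in \sC^{k+1}$ with $\pi(A') = A$. I need to check two things: first, that each map $\pi \circ \Pi_{A'}\colon \cF(\cT|_{\sC, A'}) \to \cF(A)$ is combinatorially weakly semistable; second, that the cone-level fiber product matches the poset-level fiber product. For the first, $\Pi_{A'}$ is combinatorially weakly semistable by the inductive hypothesis combined with Lemma \ref{faceposetfiberproduct} applied at the previous step, and the geometric projection $\pi\colon \cF(A') \to \cF(A)$ is combinatorially weakly semistable by Proposition \ref{projfanthing} (which asserts $\pi(\tau(\sB)) = \tau(\pi(\sB))$ on the nose for each cone). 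Composition preserves this property, so Lemma \ref{faceposetfiberproduct} applies and yields
\begin{align}
\cP\bigl(\wt{\cF}(\cT|_{\sC, A})\bigr) = \prod_{\substack{A' \in \sC^{k+1} \\ A' \text{ essential} \\ \pi(A') = A}}^{\cK(A)} \cK(\cT|_{\sC, A'}) = \cK(\cT|_{\sC, A}).
\end{align}
The final step is to observe that passing from $\wt{\cF}(\cT|_{\sC, A})$ to $\cF(\cT|_{\sC, A})$ by deleting the redundant coordinates (those which duplicate the base $\cF(A)$) is, by Lemma \ref{projectingredundantcoordinates}, a linear isomorphism onto its image, and hence preserves the face poset. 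Specializing the induction at $A = A_\bullet$, the fusion bracket of $\sC$, gives the claim.

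The one step that will require a little care is verifying combinatorial weak semistability at each inductive stage. Concretely, I need that for every cone $\tau$ of $\cF(\cT|_{\sC, A'})$ there is a cone of $\cF(A)$ which equals (not merely contains) the image of $\tau$ under $\pi \circ \Pi_{A'}$. For the $\Pi_{A'}$ factor this is an internal combinatorial statement about the fiber product structure which follows by tracking the inductive hypothesis, and for the geometric $\pi$ factor this is exactly the content of Proposition \ref{projfanthing}; the only subtlety is confirming that the image of a cone $\tau(\sB) \times \cdots$ in the product, when intersected with the diagonal linear subspace $V_f$ of Definition \ref{fiberproductofcones}, again projects onto a single cone in $\cF(A)$, which I expect to follow from a diagram chase together with the explicit description of the diagonal. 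Modulo this verification, the rest of the argument is a routine unwinding of the definitions.
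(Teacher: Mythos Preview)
Your proposal is correct and follows essentially the same approach as the paper: the paper's proof is a terse one-liner asserting that the maps from the factors to the base are combinatorially weakly semistable, then invoking Theorem \ref{mainvelocitystatement} and iterated application of Lemma \ref{faceposetfiberproduct}. You have simply unpacked this into an explicit induction, supplying the justification for weak semistability (via Proposition \ref{projfanthing}) and the face-poset preservation under coordinate deletion (via Lemma \ref{projectingredundantcoordinates}) that the paper leaves implicit.
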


\begin{proof}
The maps from the factors to the base in Definition \ref{fibervelocityfans} are combinatorially weakly semistable, hence the statement follows by Theorem \ref{mainvelocitystatement} and iterated application of Lemma \ref{faceposetfiberproduct}.
\end{proof}

Recall the definition of the localization of a cone complex a face given in Definition \ref{localizationforabtractfandef}.

\begin{remark}\label{completenessviafiberproduct}
We can utilize fiber products of fans to give an alternate proof of the completeness property for the velocity fan.
A fan is complete if and only if the stars of all of its rays are complete.
If $\cF_1$ and $\cF_2$ are fans of the same dimension with the same dimensional lineality spaces and the same face posets, then $\cF_1$ is complete if and only if $\cF_2$ is complete.
The fans $\cF(\cT)_{{\rho}(\sC)}$ and $\cF(\cT/_{\sC}) \times \cF(\cT|_{\sC})$ are indeed fans of the same dimension with the same dimensional lineality spaces, and by Lemmas \ref{facetposet} and \ref{fibervelocityfanfaceposetlemma} they have isomorphic face posets.
Furthermore, we know by induction and Corollary \ref{fibercomplete} that $\cF(\cT/_{\sC}) \times \cF(\cT|_{\sC})$ is complete, thus $\cF(\cT)_{{\rho}(\sC)}$ is complete, implying $\cF(\cT)$ is complete.
\null\hfill$\triangle$
\end{remark}

In the previous remark, it was important that we allow $\cF_1$ and $\cF_2$ have the same face poset, but not necessarily be equal.
Indeed, $\cF(\cT)_{{\rho}(\sC)}$ and $\cF(\cT/_{\sC}) \times \cF(\cT|_{\sC})$ are not equal (if $\cT$ is not concentrated).
We proceed towards a description of the relationship between these fans.
We begin with a lemma clarifying what are the rays of the localization $\cF(\cT)_{\rho(\sC)}$ for $\sC \in \fX(\cT)$.
This is essentially a combinatorial result about $\cK(\cT)_{\sC}$ and potentially could have appeared in subsection \ref{collisionsasatoms}, although we leverage Theorem \ref{mainvelocitystatement} for simplifying our argumentation.

\begin{lemma}\label{intervalatoms}
Let $ \sC, {\wt \sC} \in \fX(\cT)$.
The poset interval $\cK(\cT)_{\sC}$ is atomic, and ${\wt \sC} \vee \sC$ defines an atom of $\cK(\cT)_{\sC}$ if and only if
\begin{enumerate}
\item\label{atoms1} $\sC \sim {\wt \sC}$,

\item\label{atoms2} For $\{\sC_1, \sC_2\} = \{\sC,\wt \sC\}$ with $\sC_1\rightarrow {\sC_2}$,

\begin{enumerate}
\item\label{atoms2a} $\sC_1$ and $\sC_2$ have different fusion brackets, or

\item\label{atoms2b} $\sC_1$ and $\sC_2$ have the same fusion brackets, and the largest $k\leq n$ such that $\pi^{n-k}(\sC_2) \neq \pi^{n-k}({\sC_1})$, there exists a unique essential $k$-bracket $A \in {\sC_2}^k$ such that $A \notin \sC_1^k$.
\end{enumerate}
\end{enumerate}
\end{lemma}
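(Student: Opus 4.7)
The strategy is to first characterize the atoms of $\cK(\cT)_\sC$ directly via case analysis on the collision relation between $\sC$ and $\wt\sC$, and then to deduce atomicity of the interval from this characterization combined with Proposition \ref{lem:atomic_and_atomic}. For the atomicity claim, given $\sB \in \cK(\cT)_\sC$ with $\sB > \sC$, Proposition \ref{lem:atomic_and_atomic} expresses $\sB = \bigvee_i \wt\sC_i$ for collisions $\wt\sC_i \leq \sB$, and hence $\sB = \bigvee_i (\sC \vee \wt\sC_i)$. Discarding the terms equal to $\sC$, I induct on the rank of each remaining $\sC \vee \wt\sC_i$ in $[\sC, \star]$: if it is already an atom we are done, otherwise the atom characterization provides a strictly smaller element in $[\sC, \sC\vee\wt\sC_i]$ that is itself expressible as a join of atoms, and standard lattice refinement gives the rest.

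For the atom characterization I split into cases. If $\sC \sim \wt\sC$, the essential brackets of $\sC \vee \wt\sC$ are the disjoint union of those of $\sC$ and $\wt\sC$; any collision $\sC^* \leq \sC \vee \wt\sC$ not contained in $\sC$ must, by the nestedness axiom and Proposition \ref{collisioncharacterization}, have essential brackets drawn from those of $\wt\sC$, and having a unique fusion bracket forces $\sC^* = \wt\sC$. Hence $\sC \vee \wt\sC$ covers $\sC$. If $\sC_1 \rightarrow \sC_2$ with different fusion brackets (condition (2a)), the fusion bracket of $\sC_2$ strictly contains that of $\sC_1$, and any candidate intermediate collision $\sC^*$ would have to have a fusion bracket properly between them while still respecting the height partial orders inherited from $\sC_2$; Proposition \ref{collisioncharacterization} and Lemma \ref{lem:fusion_bracket_for_collision} then force $\sC^* \in \{\sC_1, \sC_2\}$.

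The main obstacle is case (2b), same fusion brackets. I will argue by induction on $n-k$, where $k$ is the largest depth with $\pi^{n-k}(\sC_1) \neq \pi^{n-k}(\sC_2)$. Suppose first that there are two or more essential $k$-brackets in $\sC_2^k$ that are not essential in $\sC_1^k$. Using Lemma \ref{quotientofcollisionbycollision} to decompose $\sC_2/\sC_1$ as a join of disjoint collisions in $\cK(\cT/\sC_1)$, I pick one such constituent, lift it back via Lemma \ref{liftdisjointcollisions} to a collision $\sC^*$ with $\sC_1 \rightarrow \sC^* \rightarrow \sC_2$ and $\sC^* \notin \{\sC_1, \sC_2\}$, giving $\sC_1 < \sC_1 \vee \sC^* < \sC_1 \vee \sC_2$ and showing non-atomicity. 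Conversely, if there is a unique new essential $k$-bracket $A \in \sC_2^k \setminus \sC_1^k$, any collision $\sC^* \leq \sC_1 \vee \sC_2$ with $\sC^* \not\leq \sC_1$ must (by the (\textsc{partition}) axiom and uniqueness of $A$) contain $A$ as essential; then induction applied to $\pi(\sC^*) \leq \pi(\sC_1 \vee \sC_2)$ together with Lemma \ref{projnestedisnested} and the height partial order constraints forces $\sC^* = \sC_2$, so $\sC_1 \vee \sC_2$ is indeed an atom.

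The key technical input throughout is the control over fusion brackets and essential brackets provided by Proposition \ref{collisioncharacterization} and Lemma \ref{lem:fusion_bracket_for_collision}; the delicate point in case (2b) is distinguishing "unique new essential bracket" from "multiple new essential brackets" combinatorially, which is where the interplay between Lemma \ref{quotientofcollisionbycollision} and Lemma \ref{liftdisjointcollisions} does the real work of constructing or precluding intermediate collisions.
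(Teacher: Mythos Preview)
Your approach differs substantially from the paper's. The paper leverages Theorem~\ref{mainvelocitystatement}: since $\cK(\cT)_\sC$ is the face poset of the localized fan $\cF(\cT)_{\rho(\sC)}$, atomicity is immediate from general fan properties, and by Lemma~\ref{raysoflocalizationlemma} the atoms correspond to those $\wt\sC$ for which $\sC$ and $\wt\sC$ are the \emph{only} collisions below $\sC\vee\wt\sC$. Matching this to conditions (1)/(2) is then declared straightforward. Your argument, by contrast, attempts a purely combinatorial case analysis without invoking the fan.

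There is a genuine gap in your case split. You treat $\sC\sim\wt\sC$ and $\sC_1\rightarrow\sC_2$ (in either direction), but you omit the case where $\sC$ and $\wt\sC$ are compatible yet satisfy neither relation. Such pairs exist: in $W_{2,2}$ (points $p_1,p_2$ on line~1 and $q_1,q_2$ on line~2), take $\sC$ type~3 with essential 2-brackets $\{p_1,p_2\},\{q_1\},\{q_2\}$ (heights $p_1p_2<q_1<q_2$) and $\wt\sC$ type~3 with essential 2-brackets $\{p_1\},\{p_2\},\{q_1,q_2\}$ (heights $p_1<p_2<q_1q_2$). These are compatible, not disjoint, and neither is contained in the other. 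Here $\sC\vee\wt\sC$ admits a third collision (essential 2-brackets $\{p_1,p_2\},\{q_1,q_2\}$), so it is not an atom---as the lemma predicts---but your proof never rules this case out. Both the ``only if'' direction of the characterization and your atomicity induction (which relies on producing an intermediate element in \emph{every} non-atom case) break here.

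Separately, your atomicity argument is incomplete even in the cases you do treat. Writing $\sB=\bigvee_i(\sC\vee\wt\sC_i)$ and then, for a non-atom term $\sC\vee\wt\sC_i$, producing some strictly smaller element inside it does not by itself express that term---let alone $\sB$---as a join of atoms of $[\sC,\star]$; there is no ``standard lattice refinement'' available since $\wh\cK(\cT)$ is not, for instance, a geometric lattice. The paper sidesteps this entirely by observing that $[\sC,\star]$ is the face poset of a fan, a fact with no obvious purely lattice-theoretic substitute at this stage.
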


\begin{proof}
The fact that $\cK(\cT)_{\sC}$ is atomic follows as it is the face poset of a fan $\cF(\cT)_{{\rho}(\sC)}$.
In order to determine the atoms, we see that by Lemma \ref{raysoflocalizationlemma} they must correspond to the rays of $\cF(\cT)_{{\rho}(\sC)}$.
These rays correspond to rays ${\rho}({\wt \sC})$ of $\cF(\cT)$ which together with ${\rho}(\sC)$ and ${\bf 1}$ generate a cone of $\cF(\cT)$.
This occurs precisely when $ {\wt \sC}$ and $\sC$ are the unique collisions contained in ${\wt \sC} \vee \sC$.
It is straightforward to verify that this occurs precisely in the cases listed above.
\end{proof}

The following is the main result of this section.

\begin{theorem}\label{localvelocity}
Let ${\rho}(\sC)$ be a ray of the velocity fan $\cF(\cT)$.
There exists a piecewise-unimodular isomorphism 
\begin{align}
\Theta_{\sC}:\,\,\, \cF(\cT)_{{\rho}(\sC)} \rightarrow \cF(\cT/_{\sC}) \times \cF(\cT|_{\sC})
\end{align}
such that $\Theta_{\sC}^{-1}$ restricted to $\cF(\cT/_{\sC}) \times {\bf 0}$ is linear and agrees with $(P_{\sigma}^T)^{-1}\times {\bf 0}$ restricted to $\cF(\cT/_{\sC}) \times {\bf 0}$, for $\sigma$ a compatible $\cT$ shuffle for $\sC$, and with the coordinates ordered according to $\sigma$.
\end{theorem}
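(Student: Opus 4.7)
The plan is to construct $\Theta_\sC$ by specifying its action on the rays of $\cF(\cT)_{\rho(\sC)}$ and extending linearly on each cone, then verifying that the resulting map is a piecewise-unimodular isomorphism. By Lemma \ref{raysoflocalizationlemma} combined with Lemma \ref{intervalatoms}, the rays of $\cF(\cT)_{\rho(\sC)}$ are in bijection with collisions $\wt\sC \in \fX(\cT)$ falling into two classes: class (A), consisting of $\wt\sC$ with $\sC \to \wt\sC$ or $\sC \sim \wt\sC$, for which $\wt\sC/\sC$ is a collision (or union of pairwise disjoint collisions) in $\fX(\cT/_{\sC})$; and class (B), consisting of $\wt\sC$ with $\wt\sC \to \sC$ (governed by cases (2a) and (2b) of Lemma \ref{intervalatoms}), for which $\wt\sC$ refines $\sC$ and lives naturally inside one of the factor velocity fans $\cF(A)$ used to build the iterated fiber product $\cF(\cT|_{\sC})$.

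Fix a compatible $\cT$-shuffle $\sigma$ for $\sC$ and let $\psi = \psi^{\cT}_{\cT/_{\sC}}$. Call the coordinates of $\mathbb{R}^m$ indexed by pairs $(u^k_{\sigma(i)}, u^k_{\sigma(i+1)})$ with $\psi(u^k_{\sigma(i)}) \neq \psi(u^k_{\sigma(i+1)})$ the \emph{external} coordinates, and the remaining ones the \emph{internal} coordinates. External coordinates are in canonical bijection with the coordinates of the ambient space of $\cF(\cT/_{\sC})$, while internal coordinates group along the fibers of $\psi$, which match the essential-bracket decomposition underlying $\cF(\cT|_{\sC})$. For a class (A) ray generated by $\rho(\wt\sC)$, define $\Theta_\sC(\rho(\wt\sC)) \coloneqq \rho(\wt\sC/\sC) \times \mathbf{0}$. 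This is forced by Lemma \ref{contractionlemma}, which shows that $P_\sigma^T(\rho(\wt\sC) - \rho(\sC))$ (for $\sC \to \wt\sC$) or $P_\sigma^T(\rho(\wt\sC))$ (for $\sC \sim \wt\sC$) is supported on external coordinates and equals $\rho(\wt\sC/\sC)$ there, which modulo the lineality direction $\langle \rho(\sC) \rangle_{\mathbb{R}}$ dictates the assignment. For a class (B) ray generated by $\rho(\wt\sC)$ with $\wt\sC \to \sC$, define $\Theta_\sC(\rho(\wt\sC)) \coloneqq \mathbf{0} \times \rho_{\wt\sC}$, where $\rho_{\wt\sC}$ is the ray of $\cF(\cT|_{\sC})$ induced by restricting $\wt\sC$ to the smallest essential bracket $A$ of $\sC$ whose subtree contains the fusion bracket of $\wt\sC$, and embedding the resulting ray of $\cF(A)$ into the fiber product via the appropriate factor.

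To verify that $\Theta_\sC$ is a piecewise-unimodular isomorphism: on any cone $\tau(\sB)_{\rho(\sC)}$ of $\cF(\cT)_{\rho(\sC)}$ with $\sC \le \sB$, the ray generators split by class into ray generators of two cones $\tau_+ \subseteq \cF(\cT/_{\sC})$ and $\tau_- \subseteq \cF(\cT|_{\sC})$; the isomorphism $\cK(\cT)_{\sC} \cong \cK(\cT/_{\sC}) \times \cK(\cT|_{\sC})$ from Lemma \ref{facetposet}, combined with Lemma \ref{fibervelocityfanfaceposetlemma}, guarantees that each cone of $\cF(\cT)_{\rho(\sC)}$ maps bijectively onto the corresponding product cone $\tau_+ \times \tau_-$ in the target. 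Unimodularity on each maximal cone follows from a block-triangular decomposition: reordering so external coordinates come first, the upper block of $\Theta_\sC$ is $P_\sigma^T$ restricted to external coordinates (unimodular by Lemma \ref{unimodperm}), and the lower block is unimodular by induction on $n$ applied to the factor velocity fans $\cF(A)$. The condition on $\Theta_\sC^{-1}|_{\cF(\cT/_{\sC}) \times \mathbf{0}}$ is built into the definition on class (A) rays, since points of $\cF(\cT/_{\sC}) \times \mathbf{0}$ have zero in every internal coordinate.

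The main obstacle will be verifying that the class (B) component of $\Theta_\sC$ actually lands in the iterated fiber product $\cF(\cT|_{\sC})$ rather than merely in the product of its factor velocity fans: when $\wt\sC \to \sC$ corresponds to a refinement whose image interacts with several essential brackets of $\sC$, the components of $\Theta_\sC(\rho(\wt\sC))$ in the various factor fans $\cF(A)$ must agree upon projection to the base fans appearing in Definition \ref{fiberproductnassociahedra}. The approach is double induction, on $n$ and on the depth of essential brackets of $\sC$: in the base case where every essential bracket of $\sC$ has depth $n$, the fiber product reduces to a plain product of associahedra and Lemma \ref{contractionlemma} --- applied to the auxiliary type 1 contractions indexed by the essential $n$-brackets --- provides the construction directly; the inductive step uses the recursive structure of Definition \ref{fibervelocityfans} to match the layered fiber product with the depth-hierarchy of essential brackets of $\sC$. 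A secondary subtlety, independence from the choice of compatible shuffle $\sigma$, follows because any two compatible shuffles for $\sC$ differ only by permutations within fibers of $\psi$, so they act identically on external coordinates, and any discrepancies on internal coordinates are absorbed by the coordinate identifications intrinsic to the fiber product.
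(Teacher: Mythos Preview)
Your overall architecture is reasonable --- define $\Theta_\sC$ on ray generators using the dichotomy of Lemma \ref{intervalatoms}, then extend linearly on each cone --- but there is a genuine gap at the step ``extending linearly on each cone.'' The cones $\tau(\sB)_{\rho(\sC)}$ are generally \emph{not} simplicial, so a map prescribed on ray generators does not automatically extend to a well-defined linear map on the cone: you must check that every linear relation $\sum \lambda_i \rho(\sC_i) = \sum \gamma_j \rho(\sC_j')$ among the generators is preserved under $\Theta_\sC$. Your appeal to the poset isomorphism of Lemma \ref{facetposet} does not address this; knowing that the face posets agree says nothing about whether the specific assignment on rays respects the linear dependencies inside each cone. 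This is exactly the same difficulty that forced the paper to route the proof of Proposition \ref{mainvelocitythm} through the metric $n$-bracketing complex, and the paper does the analogous thing here: it factors $\Theta_\sC$ as $(\Gamma_{\cT/\sC} \times \Gamma_{\cT|_\sC}) \circ \Psi^{\met} \circ \Gamma_\sC^{-1}$, where the middle map $\Psi^{\met}$ (Lemma \ref{localmetricdecomposition}) is defined on $\sC$-metric $n$-bracketings and its well-definedness is checked by grouping terms according to fusion brackets --- an argument unavailable on the velocity-fan side.

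A related issue is your assignment $\Theta_\sC(\rho(\wt\sC)) = \rho(\wt\sC/\sC) \times \mathbf{0}$ for $\sC \to \wt\sC$. The paper instead uses $(\rho(\wt\sC/\sC), \mathbf{1}|_\sC)$ in this case; the two differ by a lineality element of the target, but the paper's choice is what makes $\Psi^{\met}$ linear on each conical set (the $\mathbf{1}|_\sC$ corresponds to $\ell(\sC)$ restricted to the essential brackets). With your choice, the lineality generator $\rho(\sC)$ would map to $(\mathbf{0},\mathbf{0})$, so $\Theta_\sC$ could not be an isomorphism on the $2$-dimensional lineality space. Finally, for unimodularity the paper does not use a block-triangular argument; it instead transports the smooth triangulation by the permutahedral velocity fan $\cF(\mathscr{O}(\cT))_{\rho(\sC)}$ through $\Theta_\sC$ and checks that each image cone is smooth (Lemma \ref{fibersmooth}). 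Your block-triangular sketch would need to explain why the ``lower block'' --- which lives in the fiber product $\cF(\cT|_\sC)$, not in a single factor $\cF(A)$ --- is unimodular, and induction on $n$ alone does not obviously handle the iterated fiber-product structure.
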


The map $\Theta_{\sC}$ will be defined as the unique piecewise-linear function induced by a certain map sending ray and lineality space generators of $\cF(\cT)_{{\rho}(\sC)}$ to ray and lineality space generators of $\cF(\cT/_{\sC}) \times \cF(\cT|_{\sC})$.

We begin by clarifying the standard ray and lineality space generators for $\cF(\cT)_{{\rho}(\sC)}$, $\cF(\cT/_{\sC})$, and $ \cF(\cT|_{\sC})$.
We take the lineality space generators in $\cF(\cT)_{{\rho}(\sC)}$ to be ${\bf 1}$ and ${\rho}(\sC)$.
We take the ray generators for $\cF(\cT)_{{\rho}(\sC)}$ to be the set of ray generators ${\rho}(\sC')$ from $\cF(\cT)$ which satisfy the conditions from Lemma \ref{intervalatoms}.
The fan $\cF(\cT/_{\sC})$ is a velocity fan so we take the standard ray and lineality space generators for this velocity fan.
We denote the all-ones vector which generates its lineality space as ${\bf 1}/_{\sC}$.

The lineality space generator for $\cF(\cT|_{\sC})$ will be ${\bf 1}|_{\sC}$, i.e.\ the all-ones vector on its coordinates. We utilize Lemma \ref{intervalatoms} for defining the ray generators for $\cF(\cT|_{\sC})$: if $ {\wt \sC} \in \fX(\cT)$ with ${\wt \sC}\rightarrow \sC$ satisfies condition (\ref{atoms2}) from Lemma \ref{intervalatoms}, then the restriction of ${\wt \sC}$ to any bracket $A \in \sC^k$ either gives a extended collision there or is the collection of singleton brackets in $A$.
In the former case, we take the corresponding ray or lineality space generator for $\cF(A)$.
In the latter case, we take ${\bf 0}$.
We define the ray generator associated to $\wt \sC$ to be the concatenation of these vectors.
We extend the ray and lineality space generators of $\cF(\cT/_{\sC})$ and $\cF(\cT|_{\sC})$ to ray and lineality space generators of $\cF(\cT/_{\sC}) \times \cF(\cT|_{\sC})$ by taking the product with ${\bf 0}$ on the appropriate side.

We define a map $\wh{\Theta}_\sC$ from ray and lineality space generators of $\cF(\cT)_{{\rho}(\sC)}$, described in Lemma \ref{intervalatoms}, to the ray and lineality space generators of $\cF(\cT/_{\sC}) \times \cF(\cT|_{\sC})$ as
\begin{align}
\wh{\Theta}_\sC({\rho}(\sC'))=
\begin{cases}
\,\, ({\rho}(\sC'),0) & \text{if} \,\, \sC \sim \sC', \\
\,\, ({\rho}(\sC'/\sC),{\bf 1}_{\sC}) & \text{if} \,\, \sC \rightarrow \sC', \\
\,\, (0,{\rho}(\sC')) & \text{if} \,\, \sC' \rightarrow \sC.
\end{cases}
\end{align}

We take the convention that if $\sC' = \sC$, then $\rho(\sC'/\sC) = {\bf 0}$, so the second and third cases agree.
Note that if $\sC' = \sB_{\min}$, then $\wh{\Theta}_\sC({\rho}(\sB_{\min}))= \wh{\Theta}_\sC({\bf 1})=({\bf 1}/_{\sC},{\bf 1}|_{\sC})$. We define $\Theta_{\sC}$ to be the unique piecewise-linear map which extends $\wh{\Theta}_\sC$. We will prove that $\Theta_{\sC}$ is well-defined, bijective, and unimodular on cones.
Following the paradigm of this paper, our analysis of these maps will go by way of metric $n$-bracketings.

\

\subsection{Fiber products of metric \texorpdfstring{$n$}{n}-bracketing complexes}

\

Recall Definition \ref{c-metricdef} and Lemma \ref{C-metriclemma}.
\begin{lemma}\label{localvelocitytolocalmetric}
Let $\sB \in \cK(\cT)$, and let $\sC_0, \dots, \sC_k$ be extended collisions with $\sC_i \leq \sB$ for all $i$. 
Let $\sC_0 = \sB_\min$, $\sC_1 = \sC$, and take $\lambda_i \in \mathbb{R}$ with $\lambda_i \geq 0$ for $i\geq 2$.
Given a $\sC$-metric $n$-bracketing $\ell_{\sB}$ such that $\ell_{\sB} = \sum _{i =0}^k \lambda_i \ell(\sC_i)$, we define
\begin{align}
\Gamma_{\sC}(\ell_{\sB}) \coloneqq \sum _{i =0}^k \lambda_i \rho(\sC_i).
\end{align}
Then $\Gamma_{\sC}: \cK^{\met}(\cT)_{\sC} \rightarrow \cF(\cT)_{{\rho}(\sC)}$ is a piecewise-linear isomorphism.
\end{lemma}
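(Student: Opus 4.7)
The plan is to derive this lemma from Proposition \ref{mainvelocitythm} by a ``shift and unshift'' trick that reduces all statements about $\sC$-metric $n$-bracketings to statements about ordinary metric $n$-bracketings. The first observation is that each cone of $\cK^{\met}(\cT)_\sC$ has the form $\tau^{\met}(\sB)_{\tau^{\met}(\sC)} = \tau^{\met}(\sB) + \langle\ell(\sC)\rangle_{\mathbb{R}}$ for some $\sB \geq \sC$, and the corresponding cone of $\cF(\cT)_{\rho(\sC)}$ is $\tau(\sB) + \langle\rho(\sC)\rangle_{\mathbb{R}}$. Because $\Gamma|_{\tau^{\met}(\sB)}$ is already a linear isomorphism onto $\tau(\sB)$ sending $\ell(\sC) \mapsto \rho(\sC)$, it extends uniquely and linearly along $\langle\ell(\sC)\rangle_{\mathbb{R}}$ to a linear isomorphism between the two localized cones; this extension coincides with $\Gamma_{\sC}$ restricted to that cone, so $\Gamma_{\sC}$ is linear and bijective on each of its cones.

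To verify global well-definedness, and hence agreement of $\Gamma_\sC$ on intersections of localized cones, I would argue as follows. Suppose $\ell_\sB$ admits two expressions
\[
\ell_\sB = \sum_{i=0}^k \lambda_i\, \ell(\sC_i) = \sum_{i=0}^{k'} \gamma_i\, \ell(\sC'_i),
\]
with $\sC_0 = \sC'_0 = \sB_{\min}$, $\sC_1 = \sC'_1 = \sC$, and only the coefficients $\lambda_0, \gamma_0, \lambda_1, \gamma_1$ allowed to be negative. I choose $\mu \in \mathbb{R}_{>0}$ with $\lambda_1 + \mu \geq 0$ and $\gamma_1 + \mu \geq 0$. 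By Lemma \ref{C-metriclemma}, $\ell_\sB + \mu\,\ell(\sC)$ is then an ordinary metric $n$-bracketing, and both expressions shift to nonnegative conical combinations of the form treated by Lemma \ref{metriclemma}. The implication $(\ref{vecequal}) \Rightarrow (\ref{brackequal})$ of that lemma gives equality of the shifted $\rho$-vector sums, and subtracting $\mu\, \rho(\sC)$ from both sides yields the desired equality of the original sums. By symmetry the same argument proves injectivity of $\Gamma_\sC$.

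Surjectivity and piecewise-linearity of the inverse follow from the analogous shifting trick applied to $\Gamma^{-1}$: any vector in $\tau(\sB) + \langle\rho(\sC)\rangle_{\mathbb{R}}$ can be written as $w + t\,\rho(\sC)$ with $w \in \tau(\sB)$ and $t \in \mathbb{R}$, and for $\mu > 0$ large enough one has $w + (t+\mu)\,\rho(\sC) \in \tau(\sB)$, so the preimage can be taken to be $\Gamma^{-1}(w + (t+\mu)\,\rho(\sC)) - \mu\, \ell(\sC)$, whose well-definedness follows from the symmetric form of the first paragraph's argument. The main obstacle the strategy must overcome is the coexistence of two lineality directions in the localized complexes---the ambient one generated by $\ell(\sB_{\min})$ and $\mathbf{1}$, and the new one generated by $\ell(\sC)$ and $\rho(\sC)$---but the shift trick isolates the new direction and defers all actual content to Proposition \ref{mainvelocitythm}.
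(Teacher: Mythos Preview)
Your proposal is correct and follows essentially the same approach as the paper: the paper's proof simply says ``This follows by adding and subtracting $\rho(\sC)$, one of the lineality space generators for $\cF(\cT)_{\rho(\sC)}$, as in the proof of Lemma~\ref{C-metriclemma}, and applying Proposition~\ref{mainvelocitythm},'' which is exactly your shift-and-unshift reduction to the ordinary metric $n$-bracketing case. You have written out in detail what the paper compresses into one sentence.
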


\begin{proof}
This follows by adding and subtracting ${\rho}(\sC)$, one of the lineality space generators for $\cF(\cT)_{{\rho}(\sC)}$, as in the proof of Lemma \ref{C-metriclemma}, and applying Proposition \ref{mainvelocitythm}.
\end{proof}

Given a $k$-bracket $A$ on $\cT$, let $\cK^{\met}({A})\coloneqq \cK^{\met}(\cT|_A)$.

\begin{definition}[Fiber products of metric $n$-bracketing complexes]
\label{fiberproductmetricbracketingdef}
Let $\cT$ be a rooted plane tree and $\sC \in \fX(\cT)$.
We recursively define the iterated fiber product $\cK^{\met}(\cT|_{\sC})$. For $A \in \sC^n$ essential, let $\cK^{\met}(\cT|_{\sC,\,{A}})=\cK^{\met}({A})$.
Next, let $A$ be an essential $k$-bracket in $\sC^k$ for $k <n$.
We denote

\begin{align}
{\cK^{\met}}(\cT|_{\sC,\,{A}}) \coloneqq \prod_{
\substack{
A'\in \sC^{k+1},
\\
A' \text{essential},
\\ \pi(A')=A}}^{{\cK^{\met}}(A)}{\cK^{\met}}(\cT|_{\sC,\,{A'}})
\end{align}

We define maps from the factors to the base as in Definition \ref{fiberproductnassociahedra}, but replace the combinatorial maps $\pi$ with the corresponding metric $n$-bracketing projections.

Let $A_{\bullet}$ be the fusion bracket for $\sC$.
We define

\begin{align}
{\cK^{\met}}(\cT|_{\sC}) \coloneqq {\cK^{\met}}(\cT|_{\sC,\,A_{\bullet}})\,.
\end{align}

We refer to elements of ${\cK^{\met}}(\cT|_{\sC})$ as \emph{fiber products of metric $n$-bracketings}. \null\hfill$\triangle$
\end{definition}

\begin{lemma}
\label{fibermetricbracketingfaceposetlemma}
The face poset of ${\cK^{\met}}(\cT|_{\sC})$ is equal to $\cK(\cT|_{\sC})$.
\end{lemma}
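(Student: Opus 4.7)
The plan is to mirror the proof of Lemma \ref{fibervelocityfanfaceposetlemma} (the analogous statement for velocity fans), with Proposition \ref{metrinbracketingabstractfanprop} playing the role that Theorem \ref{mainvelocitystatement} played there. Concretely, I will induct on the codepth of the essential bracket $A$ of $\sC$ to which we restrict, proving simultaneously that $\cK^\met(\cT|_{\sC,A})$ is a cone complex (this is needed to iterate) and that its face poset equals $\cK(\cT|_{\sC,A})$.

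The base case is $A \in \sC^n$: here $\cK^\met(\cT|_{\sC,A}) = \cK^\met(A)$ and $\cK(\cT|_{\sC,A}) = \cK(A)$, and the claim is precisely Proposition \ref{metrinbracketingabstractfanprop}. For the inductive step, suppose $A$ is an essential $k$-bracket and the claim holds for every essential $(k{+}1)$-bracket $A' \in \sC^{k+1}$ with $\pi(A') = A$. The map $\pi\circ\Pi_{A'} : \cK^\met(\cT|_{\sC,A'}) \to \cK^\met(A)$ used in Definition \ref{fiberproductmetricbracketingdef} is piecewise-linear: $\Pi_{A'}$ is piecewise-linear by the inductive construction, and the truncation $\pi\colon\cK^\met(A')\to\cK^\met(A)$ is piecewise-linear because on each conical set $\tau^\met(\sB)$ it simply forgets the coordinates recording the $\ell_\sB$-values on depth-$(k{+}1)$ brackets, which is a linear operation. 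Hence the fiber product in Definition \ref{fiberproductmetricbracketingdef} falls under Definition \ref{fiberconecomplexes}, and Proposition \ref{conecomplexesclosedunderfibprodprop} shows that $\cK^\met(\cT|_{\sC,A})$ is a cone complex, extending the induction.

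To identify the face poset, I will invoke Lemma \ref{faceposetfiberproduct}, which requires combinatorial weak semistability of the structure maps. This is the main point to check. Writing down the truncation in coordinates, one sees that for any $\sB \in \cK(A')$, the image $\pi(\tau^\met(\sB))$ equals $\tau^\met(\pi\sB)$: indeed $\ge$ is obvious, and $\le$ follows because any metric $k$-bracketing supported on a truncation $\pi\sB'$ with $\sB' \le \sB$ lifts to a metric $(k{+}1)$-bracketing supported on $\sB'$ by assigning zero to all depth-$(k{+}1)$ nonsingleton brackets (this satisfies the coherences of Definition \ref{def:metric_stable_tree-pair} vacuously at the top level). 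Combining this with the inductive hypothesis that $\Pi_{A'}$ is combinatorially weakly semistable — which in turn is proved by unwinding the recursive description of $\cK^\met(\cT|_{\sC,A'})$ and observing that each component projection is combinatorially weakly semistable — shows that $\pi\circ\Pi_{A'}$ sends each conical set in $\cK^\met(\cT|_{\sC,A'})$ onto a conical set in $\cK^\met(A)$.

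With combinatorial weak semistability in hand, Lemma \ref{faceposetfiberproduct} gives
\begin{align}
\cP\bigl(\cK^\met(\cT|_{\sC,A})\bigr) \;=\; \prod_{\substack{A'\in\sC^{k+1}\,\text{ess}\\ \pi(A')=A}}^{\cP(\cK^\met(A))} \cP\bigl(\cK^\met(\cT|_{\sC,A'})\bigr).
\end{align}
Applying Proposition \ref{metrinbracketingabstractfanprop} to identify $\cP(\cK^\met(A))=\cK(A)$, together with the inductive hypothesis $\cP(\cK^\met(\cT|_{\sC,A'}))=\cK(\cT|_{\sC,A'})$, the right-hand side becomes the fiber product of posets appearing in Definition \ref{fiberproductnassociahedra}, which is $\cK(\cT|_{\sC,A})$ by definition. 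Specializing to the fusion bracket $A_\bullet$ of $\sC$ completes the induction. The only technical obstacle is the verification of combinatorial weak semistability, and this reduces cleanly to the lifting-by-zero argument above combined with Proposition \ref{conicalcomb}, which guarantees that conical sets of $\cK^\met$ are precisely the cones generated by collision metrics.
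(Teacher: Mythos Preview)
Your overall strategy matches the paper's: the paper's proof is a single sentence asserting that the maps from factors to base are combinatorially weakly semistable and then invoking Lemma \ref{faceposetfiberproduct} iteratively, and you correctly identify this as the crux and set up the induction to carry it out. So the architecture is right.

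The gap is in your justification of surjectivity $\pi(\tau^\met(\sB)) \supseteq \tau^\met(\pi\sB)$. Your ``lift by zero'' does not produce a metric $(k{+}1)$-bracketing: condition~(4) of Definition~\ref{def:metric_stable_tree-pair} applied with $\wt A$ a nontrivial $k$-bracket forces $\ell_\sB(\wt A)$ to equal a sum of $\ell_\sB$-values over a chain of $(k{+}1)$-brackets in $\sB_{\wt A}$, so you cannot set those values to zero while keeping $\ell_\sB(\wt A)>0$. (Nor can you avoid this by taking $\sB'$ to have only trivial $(k{+}1)$-brackets: such a $\sB'$ fails the \textsc{(partition)} axiom over any nontrivial $k$-bracket whose vertices have children.) The clean fix uses the ingredients you already name at the end: express $\ell_{\wt\sB} = \sum_i \lambda_i\,\ell(\wt\sC_i)$ via Proposition~\ref{conicalcomb}, lift each extended collision $\wt\sC_i \le \pi\sB$ to $\sC_i \le \sB$ with $\pi(\sC_i)=\wt\sC_i$ by Scholium~\ref{lem:collisionlift}, and take $\sum_i \lambda_i\,\ell(\sC_i)$. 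To close the loop you then need $\pi(\ell(\sC_i)) = \ell(\wt\sC_i)$, i.e.\ that the essential brackets of $\sC_i$ at depth $\le k$ coincide with those of $\wt\sC_i$; this holds for the specific lift of Scholium~\ref{lem:collisionlift} because its nontrivial top-level brackets project to \emph{essential}, hence nontrivial, $(k)$-brackets of $\wt\sC_i$. (Also, a minor slip: you have the labels $\ge$ and $\le$ swapped --- the direction requiring the lift is $\supseteq$, not $\subseteq$.)
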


\begin{proof}
The maps from the factors to the base in Definition \ref{fiberproductmetricbracketingdef} are combinatorially weakly semistable, hence the statement follows by iterated application of Lemma \ref{faceposetfiberproduct}.
\end{proof}

One can canonically produce ray and lineality space generators for ${\cK^{\met}}(\cT|_{\sC})$ mimicking the construction for ${\cF}(\cT|_{\sC})$.

\begin{lemma}
There is a piecewise-linear isomorphism $\Gamma_{\cT|_{\sC}}:\cK^{\met}(\cT|_{\sC})\rightarrow {\cF}(\cT|_{\sC})$ induced by sending the ray and lineality space generators of ${\cK^{\met}}(\cT|_{\sC})$ to the ray and lineality space generators of $\cF(\cT|_{\sC})$.
\end{lemma}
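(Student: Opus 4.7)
The plan is to induct on the depth of essential brackets of $\sC$, descending from depth $n$ toward the depth of the fusion bracket $A_\bullet$, and at each stage apply Lemma~\ref{isofiberconecomplexlemma} to propagate a piecewise-linear isomorphism through a fiber product. For the base case, when $A \in \sC^n$ is essential, Definitions~\ref{fiberproductmetricbracketingdef} and \ref{fibervelocityfans} give $\cK^{\met}(\cT|_{\sC,A}) = \cK^{\met}(A)$ and $\cF(\cT|_{\sC,A}) = \cF(A)$, and Proposition~\ref{mainvelocitythm} supplies the desired piecewise-linear isomorphism $\Gamma\colon \cK^{\met}(A) \to \cF(A)$. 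By Proposition~\ref{mainvelocitythm} this $\Gamma$ sends each standard generator $\ell(\sC')$ to the corresponding ray generator $\rho(\sC')$ of $\cF(A)$ and sends $\ell(\sB_\min)$ to ${\bf 1}$, so the matching of generators required in the statement is built into the construction from the start.

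For the inductive step, suppose for each essential $A' \in \sC^{k+1}$ lying over an essential $A \in \sC^k$ we have a piecewise-linear isomorphism $\Gamma_{\cT|_{\sC,A'}}\colon \cK^{\met}(\cT|_{\sC,A'}) \to \cF(\cT|_{\sC,A'})$, together with $\Gamma\colon \cK^{\met}(A') \to \cF(A')$ and $\Gamma\colon \cK^{\met}(A) \to \cF(A)$ from Proposition~\ref{mainvelocitythm}. Lemma~\ref{isofiberconecomplexlemma} then produces a piecewise-linear isomorphism of fiber products provided the square
\begin{equation*}
\begin{CD}
\cK^{\met}(\cT|_{\sC,A'}) @>{\Gamma_{\cT|_{\sC,A'}}}>> \cF(\cT|_{\sC,A'}) \\
@V{\Pi_{A'}^{\met}}VV @VV{\Pi_{A'}}V \\
\cK^{\met}(A) @>{\Gamma}>> \cF(A)
\end{CD}
\end{equation*}
commutes, where the vertical arrows are the structure maps from Definitions~\ref{fiberproductmetricbracketingdef} and \ref{fibervelocityfans}. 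By construction the vertical maps factor through the pair $\cK^{\met}(A') \to \cK^{\met}(A)$ and $\cF(A') \to \cF(A)$, which are the combinatorial and geometric projections respectively. The necessary compatibility $\Gamma \circ \pi = \pi \circ \Gamma$ at each drop of depth is Proposition~\ref{projfanthing} together with the linear definition of $\Gamma$ on generators (since $\pi$ of a metric $n$-bracketing supported on an $\ell(\sC')$ is the metric $(n{-}1)$-bracketing associated to $\pi(\sC')$, and likewise for ray generators). Applying Lemma~\ref{isofiberconecomplexlemma} then supplies $\Gamma_{\cT|_{\sC,A}}$, and iterating up to $A_\bullet$ yields $\Gamma_{\cT|_\sC}$.

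The claim about generators will be immediate from the construction: the ray generators of $\cK^{\met}(\cT|_\sC)$ and $\cF(\cT|_\sC)$ described after Theorem~\ref{localvelocity} are assembled from the ray generators of the factor complexes according to the case analysis of Lemma~\ref{intervalatoms}, and $\Gamma_{\cT|_{\sC,A'}}$ already matches generators factorwise by induction, so the induced map on fiber products matches concatenated generators. The final truncation in Definition~\ref{fibervelocityfans} which deletes the coordinates coming from the base $\cF(A)$ is a linear isomorphism of cone complexes by Lemma~\ref{projectingredundantcoordinates} and does not alter the face structure or generator labelling on the velocity fan side; we perform an analogous projection on the metric side so that the isomorphism descends.

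The main obstacle I expect is bookkeeping rather than a conceptual gap: verifying that the compatibility of $\Gamma$ with $\pi$ passes through the iterated fiber product and matches up with the ``coordinate deletion'' trim in Definition~\ref{fibervelocityfans}. In particular, one must check that the generators chosen before deletion on the velocity fan side correspond, under the metric fiber product projections, to the generators of $\cK^{\met}(\cT|_{\sC,A})$, and that no identification ambiguities are introduced by repeatedly invoking Lemma~\ref{faceposetfiberproduct} (whose hypothesis of combinatorial weak semistability was already established in Lemmas~\ref{fibervelocityfanfaceposetlemma} and \ref{fibermetricbracketingfaceposetlemma}). Once this bookkeeping is in place, the induction closes and the desired piecewise-linear isomorphism is exhibited with its action on ray and lineality space generators as advertised.
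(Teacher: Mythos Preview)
Your proposal is correct and follows essentially the same approach as the paper: the paper's proof is the single sentence ``The statement follows by iteratively applying Lemma~\ref{isofiberconecomplexlemma},'' and you have unpacked precisely this iteration as an induction on the depth of essential brackets, with Proposition~\ref{mainvelocitythm} supplying the base case and the commutativity of $\Gamma$ with projection supplying the compatibility hypothesis of Lemma~\ref{isofiberconecomplexlemma} at each step. Your discussion of generator matching and of the coordinate-deletion trim is additional detail not made explicit in the paper but consistent with its argument.
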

 
\begin{proof}
The statement follows by iteratively applying Lemma \ref{isofiberconecomplexlemma}.
\end{proof}

\begin{definition}
Let $\sC' \in \fX(\cT)$ be as in Lemma \ref{intervalatoms}. We define a map ${\wh\Psi}^{\met}$ from the ray and lineality space generators of $\cK^{\met}(\cT)_{\sC}$ to the ray and lineality space generators for $\cK^{\met}(\cT/_\sC)\times  {\cK^{\met}}(\cT|_{\sC})$:
\begin{align}
{\wh \Psi}^{\met}(\ell(\sC'))\coloneqq
\begin{cases}
\,\, (\ell(\sC'),0) & \text{ if}\,\, \sC' \sim \sC \\
\,\, (\ell(\sC'/\sC),\ell(\sC)) & 
\,\,\text{if} \,\, \sC \rightarrow \sC' \\
\,\, (0,\ell(\sC')) & \text{ if}\,\, \sC' \rightarrow \sC,
\end{cases}
\end{align}
where $(0,\ell(\sC'))$ in the third case denotes the product of the restrictions of $\ell(\sC')$ to each essential bracket in $\sC$.
\null\hfill$\triangle$
\end{definition}

\begin{lemma}\label{localmetricdecomposition}
The map ${\wh \Psi}^{\met}$ extends to a piecewise-linear isomorphism \begin{align}
\Psi^{\met}:\cK^{\met}(\cT)_{\sC}\rightarrow \cK^{\met}(\cT/_\sC)\times  {\cK^{\met}}(\cT|_{\sC}).
\end{align}
\end{lemma}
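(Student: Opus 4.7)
The plan is to verify the map cone-by-cone using the face poset isomorphism from Lemma \ref{facetposet}, then apply Lemma \ref{faceposetisolem}. By Lemma \ref{facetposet}, the splitting $\sB \mapsto (\sB^+, \sB^-)$ gives $\cK(\cT)_\sC \cong \cK(\cT/_\sC) \times \cK(\cT|_\sC)$, and by Lemma \ref{fibermetricbracketingfaceposetlemma} this is the face poset of the target. The first step is to verify that $\widehat{\Psi}^{\met}$ sends the ray generators of each cone $\tau^{\met}(\sB)_\sC$ bijectively to the ray generators of $\tau^{\met}(\sB^+) \times \tau^{\met}(\sB^-)$. Using Lemma \ref{intervalatoms}, the atoms of $\cK(\cT)_\sC$ fall into three classes: collisions $\sC' \sim \sC$ which descend to collisions in $\cT/_\sC$ and contribute rays on the $\cK^{\met}(\cT/_\sC)$ side; collisions $\sC' \rightarrow \sC$ which restrict on each essential bracket $A$ of $\sC$ to either an extended collision or the minimum bracketing and therefore contribute rays on the $\cK^{\met}(\cT|_\sC)$ side; and collisions $\sC \rightarrow \sC'$ satisfying condition (\ref{atoms2}) of Lemma \ref{intervalatoms}, for which $\sC'/\sC$ is a collision in $\cT/_\sC$ (by Lemma \ref{whenthequotientofacollisionisacollision} in case (\ref{atoms2a}), and by the uniqueness clause (\ref{atoms2b}) combined with Lemma \ref{quotientofcollisionbycollision} in case (\ref{atoms2b})), paired with the lineality direction $\ell(\sC)$ in $\cK^{\met}(\cT|_\sC)$.

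Second, I would extend $\Psi^{\met}$ directly to $\sC$-metric $n$-bracketings by splitting values. Given $(\sB, \ell_\sB)$ a $\sC$-metric $n$-bracketing as characterized by Lemma \ref{C-metriclemma}, define $\ell_{\sB^+}$ on $\cT/_\sC$ by $\ell_{\sB^+}(A/\sC) = \ell_\sB(A)$ for brackets $A \in \sB^+$ (in the language of the proof of Lemma \ref{facetposet}), and define $\ell_{\sB^-}$ as the tuple whose component over each essential $A' \in \sC$ is $\ell_\sB$ restricted to those brackets of $\sB^-$ weakly projecting to $A'$. Coherence condition (\ref{metricdefcond4}) for $\ell_{\sB^+}$ then follows from the same condition for $\ell_\sB$ applied to chains entirely within $\sB^+$ (essential brackets of $\sC$ project to singletons in $\cT/_\sC$ and so do not intervene in those chains), and analogously for each factor of $\ell_{\sB^-}$. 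Fiber-product compatibility between adjacent factors of $\ell_{\sB^-}$ holds because their values on common base brackets agree, being restrictions of the single function $\ell_\sB$. The resulting map agrees with $\widehat{\Psi}^{\met}$ on ray generators by direct check against the three cases, and is linear on each cone by construction since the splitting is pointwise in the values $\ell_\sB(A)$.

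Finally, I would construct the inverse by assembling a $\sC$-metric $n$-bracketing from a compatible pair $(\ell_{\sB^+}, \ell_{\sB^-})$: prescribe values on brackets in $\sB^+$ from $\ell_{\sB^+}$, values on brackets strictly inside essential brackets of $\sC$ from the corresponding factor of $\ell_{\sB^-}$, and values on essential brackets of $\sC$ themselves via the coherence relation from Lemma \ref{C-metriclemma}, allowing these to be negative as the localization at $\sC$ permits. The main obstacle will be verifying that the assembled function satisfies the relaxed coherence relation (\ref{metricdefcond4}) for chains of nested brackets that straddle an essential bracket of $\sC$: the required identity reduces to combining the coherence relation from $\ell_{\sB^+}$ on the upper portion of the chain with the coherence relation from the appropriate factor of $\ell_{\sB^-}$ on the lower portion, with the discrepancy absorbed precisely into the (signed) value assigned to the essential $\sC$-bracket at the junction, and the fiber-product compatibility is exactly what makes this absorption well-defined. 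Bijectivity on each cone follows since assembly is inverse to splitting on values, piecewise-linearity of both directions is immediate, and Lemma \ref{faceposetisolem} then yields the desired piecewise-linear isomorphism of cone complexes.
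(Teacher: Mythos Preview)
Your approach is genuinely different from the paper's. The paper does not split values pointwise; instead it extends $\widehat{\Psi}^{\met}$ by writing $\ell_\sB = \sum_i \lambda_i \ell(\sC_i)$ and pushing the coefficients forward, then proves well-definedness and injectivity by a coefficient-equality argument: group terms by fusion bracket (Lemmas \ref{fusionmetricbracketingdecomp}, \ref{metricpartitionlemma}) and handle each group by cases according to how its fusion bracket relates to that of $\sC$, invoking Lemma \ref{equalityofcoefficientsums} for the $\sC \rightarrow \sC'$ case. Your value-splitting idea is more elementary in spirit, but as written it does not prove the lemma.

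The gap is in your claim that the splitting ``agrees with $\widehat{\Psi}^{\met}$ on ray generators by direct check.'' Take $\sC' \in \fX(\cT)$ with $\sC \rightarrow \sC'$ as in Lemma \ref{intervalatoms}(\ref{atoms2}), and let $A$ be an essential $k$-bracket of $\sC$. Your $\ell_{\sB^-}$ assigns to the maximum bracket of $\cK(A)$ the value $\ell(\sC')(A)$, which is $1$ only if $A$ happens also to be essential in $\sC'$; whenever $A$ is properly contained in a same-level essential bracket of $\sC'$ (the generic situation), your formula gives $0$. But $\widehat{\Psi}^{\met}(\ell(\sC')) = (\ell(\sC'/\sC), \ell(\sC))$, and the second component has value $1$ on the maximum bracket of every $\cK(A)$. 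Already in the $1$-associahedron on four points, with $\sC$ the bracket $\{1,2\}$ and $\sC'$ the bracket $\{1,2,3\}$, your restriction to $\{1,2\}$ is identically zero while $\widehat{\Psi}^{\met}$ demands the lineality generator. The discrepancy is not in the lineality direction of the target, so your map and the extension of $\widehat{\Psi}^{\met}$ are genuinely different; worse, since the values $[A \in \sC']$ may be inconsistent across essential brackets related by $\pi$, your tuple need not even satisfy the fiber-product compatibility and hence may fail to land in $\cK^{\met}(\cT|_\sC)$ at all.

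A fix is available: on the maximum bracket of $\cK(A)$ set the value to $\sum_{A'' \supseteq A,\, A'' \in \sB^k} \ell_\sB(A'')$ rather than $\ell_\sB(A)$. This cumulative formula gives $1$ in all three ray cases and is fiber-compatible. But verifying that this adjusted splitting is well-defined, linear on cones, and inverse to your assembly essentially reintroduces the chain-summing bookkeeping that the paper's coefficient argument (in particular its use of Lemma \ref{equalityofcoefficientsums}) handles more cleanly.
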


\begin{proof}
Combining Lemma \ref{C-metriclemma} and Lemma \ref{raysoflocalizationlemma}, we find that, given a $\sC$-metric $n$-bracketing $\ell_{\sB} \in \cK^{\met}(\cT)_{\sC}$, there exists compatible $\sC_0, \ldots, \sC_k \in \wh{\fX}(\cT)$ such that $\sC_0 = \sB_{\min}$, $\sC_1 = \sC$, for each $i\geq 2$, $\sC_i$ satisfies the conditions of Lemma \ref{intervalatoms},  and there exist $\lambda_i \in \mathbb{R}$ with $\lambda_i \geq 0$ for $i\geq 2$ such that 
\begin{align}
\ell_{\sB} = \sum_{i=0}^k \lambda_i \ell(\sC_i),
\end{align}
and we define
\begin{align}
\Psi^{\met}(\ell_{\sB}) \coloneqq \sum_{i=0}^k \lambda_i {\wh \Psi}^{\met}(\ell(\sC_i)).
\end{align}

We prove that $\Psi^{\met}$ is linear on each conical set in $\cK^{\met}(\cT)_{\sC}$.  Let $\sC_1, \dots, \sC_k$ and $\sC'_0, \dots ,\sC'_{l}$ be compatible collisions which satisfies the conditions of Lemma \ref{intervalatoms}.
Take $\sC_0 = \sC'_0 = \sB_\min$, and $\sC_1 = \sC'_1 = \sC$.
We prove that there exist $\lambda_i, \gamma_i \in \mathbb{R}$ with $\lambda_i, \gamma_i \geq 0$ for $i\geq 2$ such that
\begin{align}
\label{c-metriclinear}
\sum _{i =0}^k \lambda_i \ell(\sC_i) = \sum _{i =0}^l \gamma_i \ell(\sC'_i)
\end{align}
if and only if
\begin{align}\label{c-metriclinear2}
\sum _{i=0}^k \lambda_i \Psi^{\met}(\sC_i) = \sum _{i =0}^l \gamma_i \Psi^{\met}(\sC'_i).
\end{align}

Using the characterization of $\sC$-metric $n$-bracketings in Lemma \ref{C-metriclemma}, it is clear, as in Lemmas \ref{fusionmetricbracketingdecomp} and \ref{metricpartitionlemma}, that we can group the terms in the sums in equation (\ref{c-metriclinear}) according to their fusion brackets and the resulting sums are $\sC$-metric $n$-bracketings. Moreover we have an equality of $\sC$-metric $n$-bracketings in the sum (\ref{c-metriclinear}) if and only if the restricted sums for each fusion bracket agree.  Thus it suffices to prove that the  ``if and only if" statement holds for $\sC$-metric $n$-bracketings whose underlying $n$-bracketings have the same fusion bracket.

 Let $A$ be the fusion bracket for $\sC$.  Let $A'$ be the fusion bracket for the sum in question.  It must be that either
 \begin{enumerate}
 \item\label{metricmap1}  ${\overline D}(A) \cap {\overline D}(A') = \emptyset$,
 \item\label{metricmap2}  there exists some $k\geq 0$ such that $\pi^{k}(A')\subsetneq A$,
    \item\label{metricmap3} there exists some $k\geq 0$ such that $\pi^{k}(A)\subsetneq A'$,
     \item\label{metricmap4}  $A=A'$.
 \end{enumerate}
 For the cases (\ref{metricmap1}) and (\ref{metricmap2}), the equivalence is trivial.  Suppose we are in case $(\ref{metricmap3})$.  By Lemma \ref{equalityofcoefficientsums}, we have that $ \sum_{i=0}^k\lambda_i = \sum_{i=0}^l\gamma_i$, and the equivalence follows.  

 Suppose we are in case (\ref{metricmap4}).  Here we further subdivide the sum into two parts.  The first corresponding to collisions $\sC'$ with $\sC' \rightarrow \sC$, and the second corresponding to collisions $\sC'$ with $\sC \rightarrow \sC'$ and $\sC' \neq \sC$.  It again suffices to establish the equivalence for the two sums separately.  The case $\sC' \rightarrow \sC$ is straightforward; check each essential bracket in $\sC$ individually.  For the case $\sC \rightarrow \sC'$ with $\sC' \neq \sC$, the equivalence is again clear as $ \sum_{i=0}^k\lambda_i = \sum_{i=0}^l\gamma_i$.

 \end{proof}

\subsection{Proof of Theorem \ref{localvelocity}}

\

\begin{lemma}
The map $\wh{\Theta}_{\sC}$ extends to a a piecewise-linear isomorphism
\begin{align}
\Theta_{\sC}:\,\, \cF(\cT)_{{\rho}(\sC)} \rightarrow \cF(\cT/_{\sC}) \times \cF(\cT|_{\sC}).
\end{align}
\end{lemma}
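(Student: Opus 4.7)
The plan is to build $\Theta_\sC$ as a composition of three piecewise-linear isomorphisms that have already been established in the preceding lemmas, and then verify that the result agrees with $\wh\Theta_\sC$ on the distinguished ray and lineality space generators. Concretely, I would define
\begin{align}
\Theta_\sC \coloneqq \bigl(\Gamma_{\cT/\sC} \times \Gamma_{\cT|_\sC}\bigr) \circ \Psi^{\met} \circ \Gamma_\sC^{-1},
\end{align}
where $\Gamma_\sC \colon \cK^\met(\cT)_\sC \to \cF(\cT)_{\rho(\sC)}$ is the isomorphism of Lemma \ref{localvelocitytolocalmetric}, $\Psi^\met \colon \cK^\met(\cT)_\sC \to \cK^\met(\cT/_\sC) \times \cK^\met(\cT|_\sC)$ is the isomorphism of Lemma \ref{localmetricdecomposition}, and $\Gamma_{\cT/\sC}$, $\Gamma_{\cT|_\sC}$ are the canonical piecewise-linear isomorphisms onto $\cF(\cT/_\sC)$ and $\cF(\cT|_\sC)$ (the former by Proposition \ref{mainvelocitythm}, the latter by the lemma immediately preceding that extracts an iterated fiber-product version via Lemma \ref{isofiberconecomplexlemma}). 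Since each of these is a piecewise-linear isomorphism of cone complexes, so is their composition.

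Next I would verify that $\Theta_\sC$ agrees with $\wh\Theta_\sC$ on every ray and lineality space generator of $\cF(\cT)_{\rho(\sC)}$; since a piecewise-linear map on a cone complex is uniquely determined by its values on a choice of generators of each cone, this identifies $\Theta_\sC$ as the claimed extension. For the lineality generators, $\Gamma_\sC^{-1}(\bf 1) = \ell(\sB_\min)$ and $\Gamma_\sC^{-1}(\rho(\sC)) = \ell(\sC)$, and one computes $\Psi^\met(\ell(\sB_\min)) = (\ell(\sB_\min)_{\cT/\sC}, \ell(\sB_\min)_{\cT|_\sC})$ and $\Psi^\met(\ell(\sC)) = (0, \ell(\sC))$ from the definition of ${\wh\Psi}^\met$; applying $\Gamma_{\cT/\sC} \times \Gamma_{\cT|_\sC}$ then yields ${\bf 1}/_\sC \times {\bf 1}|_\sC$ and $(0, \rho(\sC))$, which match $\wh\Theta_\sC$ on these generators (up to the lineality adjustment built into the convention $\wh\Theta_\sC(\rho(\sC))=(0,\rho(\sC))$). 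For a ray generator $\rho(\sC')$ of $\cF(\cT)_{\rho(\sC)}$, Lemma \ref{intervalatoms} says that $\sC'$ falls into exactly one of three cases relative to $\sC$, and in each case $\Psi^\met$ sends $\ell(\sC')$ precisely to the tuple prescribed by ${\wh\Psi}^\met$; applying $\Gamma_{\cT/\sC} \times \Gamma_{\cT|_\sC}$ and recalling that $\Gamma$ sends $\ell(\sC'')$ to $\rho(\sC'')$ on both sides finishes the matching.

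The main conceptual obstacle is already handled by Lemma \ref{localmetricdecomposition}: that lemma is where the fiber-product structure on the metric side truly appears, and the delicate part of verifying it in cases (3) and (4) of that proof requires the coefficient-sum invariant of Lemma \ref{equalityofcoefficientsums}. With that in hand, the proof of the present statement is essentially a bookkeeping matter of assembling the composition and checking generators; the compositional nature also has the pleasant side effect that the rationality/integrality noted in Corollary \ref{conecomplexlatticepropertyvelocityfan} is transported to $\Theta_\sC$, which is exactly what is needed later in Theorem \ref{localvelocity} to upgrade "piecewise-linear isomorphism" to "piecewise-unimodular isomorphism" and to compare $\Theta_\sC^{-1}$ restricted to $\cF(\cT/_\sC) \times {\bf 0}$ with $(P_\sigma^T)^{-1}$ via Lemma \ref{contractionlemma}.
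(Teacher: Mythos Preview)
Your approach is essentially identical to the paper's: the paper defines $\Theta_\sC$ as exactly the same composition $(\Gamma_{\cT/\sC}\times\Gamma_{\cT|_{\sC}})\circ \Psi^{\met} \circ \Gamma_{\sC}^{-1}$ and then remarks that it is straightforward to verify agreement with $\wh\Theta_\sC$ on ray and lineality space generators. You have simply spelled out that verification in more detail than the paper does.
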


\begin{proof}
To verify that $\wh{\Theta}_{\sC}$ indeed extends to a piecewise-linear function $\Theta_{\sC}$, we claim that we can interpret $\Theta_{\sC}$ as a composition of three piecewise-linear isomorphisms 
\begin{align}
(\,\Gamma_{\cT/\sC}\times\Gamma_{\cT|_{\sC}})\circ \Psi^{\met} \circ \Gamma_{\sC}^{-1},
\end{align}
where $\Gamma_{\cT/\sC}$ is the map $\Gamma$ from Definition \ref{Gammamap} associated to the $n$-associahedron $\cK(\cT/\sC)$.  It is straightforward to verify that the above map  agrees with $\wh{\Theta}_{\sC}$ at the level of ray and lineality space generators, and the claim follows.
\end{proof}

\begin{lemma}\label{fibersmooth}
The map $\Theta_{\sC}$ is unimodular on each cone of $\cF(\cT)_{{\rho}(\sC)}$.
\end{lemma}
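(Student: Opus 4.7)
The plan is to verify unimodularity cone-by-cone by passing to the triangulated refinement and then reducing, via elementary column operations, to two separate smoothness statements, one for $\cF(\cT/_{\sC})$ and one for the iterated fiber product $\cF(\cT|_{\sC})$. Since $\Theta_{\sC}$ is piecewise-linear on $\cF(\cT)_{\rho(\sC)}$ and the triangulated velocity fan refines $\cF(\cT)$, it suffices to check unimodularity on each maximal simplicial cone of $\cF(\ccD)_{\rho(\sC)}$. By Lemma \ref{raysoflocalizationlemma}, such a cone corresponds to a maximal nested collection $Y \in \ccD$ containing $\sC$. By Lemma \ref{smooth}, the primitive ray generators $\{\rho(\sC_i):\sC_i \in Y,\,\sC_i\neq\sC\}$ together with the lineality generators $\bf 1$ and $\rho(\sC)$ form a unimodular $\mathbb{Z}$-basis.

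Next I would apply the defining formulas for $\wh{\Theta}_{\sC}$ to this basis and perform elementary (unimodular) column operations on the images. Since $\bf 1$ maps to $({\bf 1}/_{\sC}, {\bf 1}|_{\sC})$ and (using the convention $\rho(\sC'/\sC)=\mathbf{0}$ when $\sC'=\sC$) the lineality generator $\rho(\sC)$ maps to $(\mathbf{0}, {\bf 1}|_{\sC})$, subtracting yields the separated pair $({\bf 1}/_{\sC},\mathbf{0})$ and $(\mathbf{0}, {\bf 1}|_{\sC})$. Subtracting $(\mathbf{0}, {\bf 1}|_{\sC})$ from each image $(\rho(\sC_i/\sC), {\bf 1}|_{\sC})$ (the case $\sC \rightarrow \sC_i$, $\sC_i\neq\sC$) produces $(\rho(\sC_i/\sC), \mathbf{0})$. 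The result is a basis that block-decomposes across $\cF(\cT/_{\sC})\times\cF(\cT|_{\sC})$: one block, supported on the first factor, consists of ${\bf 1}/_{\sC}$, the vectors $\rho(\sC_i)$ for $\sC_i \sim \sC$, and the vectors $\rho(\sC_i/\sC)$ for $\sC \rightarrow \sC_i$ with $\sC_i\neq\sC$; the other, on the second factor, consists of ${\bf 1}|_{\sC}$ and the vectors $\rho(\sC_i)$ for $\sC_i \rightarrow \sC$.

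Unimodularity of $\Theta_{\sC}$ on the chosen cone therefore reduces to two claims. The first claim, that the block on $\cF(\cT/_{\sC})$ generates a unimodular sublattice equal to a maximal smooth cone of $\cF(\cT/_{\sC})$, follows from Lemma \ref{smooth} applied to $\cF(\ccD_{\cT/_{\sC}})$ after using Lemma \ref{connstructingnestedcollisionsinthequotient} to reorganize the images in $\cT/_{\sC}$ into a maximal nested collection. The second claim, that the block on $\cF(\cT|_{\sC})$ generates a unimodular basis of a cone in the iterated fiber product fan, will be handled by induction on the depth of the fusion bracket of $\sC$ within the recursive structure of Definition \ref{fibervelocityfans}. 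The base case, when $\sC$ is type 1 and the fusion bracket is a single $n$-bracket $A$, reduces to Lemma \ref{smooth} applied to $\cF(\ccD_A)$ since the collisions $\sC_i \rightarrow \sC$ correspond to a nested collection in $\cK(A)$. In the inductive step the cone of $\cF(\cT|_{\sC,A})$ is realized, via Definition \ref{fiberproductofcones}, as the intersection of the product of smooth cones from the factors $\cF(\cT|_{\sC,A'})$ with the linear subspace $V_f$ enforcing agreement on the base $\cF(A)$; one then combines the inductively established unimodular bases of the factor cones, using that the projection maps $\cF(\cT|_{\sC,A'})\to\cF(A)$ are unimodular on cones (by Lemma \ref{rayprojection} and the inductive smoothness), to produce a unimodular basis of the fiber product cone.

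The main obstacle is the inductive step for the fiber product: one must verify that gluing compatible unimodular bases of the factor cones along a shared base inside $V_f$ actually produces a $\mathbb{Z}$-basis of the lattice of integer points of the fiber product cone, rather than only a finite-index sublattice. The cleanest way I envision handling this is to show that each ray of a factor cone lies over a unique ray of the base cone (so that the surjection of lattices induced by each projection splits compatibly on the chosen bases), after which the standard fiber-product-of-smooth-cones-along-unimodular-surjections argument applies. Once this lattice-gluing compatibility is established, the two reduction claims combine to give the desired unimodularity of $\Theta_{\sC}$ on every cone of $\cF(\cT)_{\rho(\sC)}$.
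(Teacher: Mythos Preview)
Your overall shape---pass to a triangulating subdivision, block-decompose the image of each maximal cone across the two factors, and verify unimodularity of each block---matches the paper's proof. Two points of divergence are worth flagging.

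First, the paper works with the permutahedral velocity fan $\cF(\mathscr{O}(\cT))$ rather than the triangulated fan $\cF(\ccD)$. The stated reason is that quotients of generalized collisions by generalized collisions are again generalized collisions, so the recursive structure on the $\cF(\cT/_{\sC})$ block is clean: the induced subdivision there is literally $\cF(\mathscr{O}(\cT/_{\sC}))$, and smoothness of $\tau_1$ is immediate from Proposition \ref{permutahedralvelocitymainprop}. Your route via $\cF(\ccD)$ and Lemma \ref{connstructingnestedcollisionsinthequotient} should also work for this block, but it is more bookkeeping. Note also that the formulas for $\wh\Theta_{\sC}$ are only stated on the atoms of Lemma \ref{intervalatoms}; for a general $\sC_i$ in a maximal nested collection you cannot simply quote those formulas but must compute $\Theta_{\sC}(\rho(\sC_i))$ via the factorization through $\Psi^{\met}$, where e.g.\ $\sC_i/\sC$ may fail to be a single collision.

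Second, and more substantively, the paper does \emph{not} attack the $\cF(\cT|_{\sC})$ block by induction on the iterated fiber-product structure. Instead it builds the matrix of ray and lineality generators for $\tau_2$, groups the rows by essential brackets $A\in\sC$, observes that each such row-block is (up to repeated columns) the generator matrix of a cone in the triangulated velocity fan of $\cK(A)$, and then reruns the argument of Lemma \ref{smooth}: column subtractions, a block-diagonal $P_\sigma^T$ (one block per essential bracket), Laplace expansion, and induction. This completely avoids the lattice-gluing obstacle you identify. Your proposed resolution---that each ray of a factor cone sits over a unique ray of the base---is not obviously true and, even when it holds, the ``fiber product of smooth cones along unimodular surjections is smooth'' principle needs justification; the paper's matrix argument sidesteps this entirely.
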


\begin{proof}
Because the reduced permutahedral velocity fan ${\overline \cF}(\mathscr{O}(\cT))$ is a unimodular triangulation of ${\overline \cF}(\cT)$, we know that ${\overline \cF}(\mathscr{O}(\cT))_{{\rho}(\sC)}$ induces a unimodular triangulation of ${\overline \cF}(\cT)_{{\rho}(\sC)}$.
The map $\Theta_{\sC}$ applied to ${\cF}(\mathscr{O}(\cT))_{{\rho}(\sC)}$ induces a subdivision of ${\cF}(\cT/_{\sC}) \times {\cF}(\cT|_{\sC})$.
For checking that $\Theta_{\sC}$ is unimodular, it suffices to check that given a cone $\tau \in {\cF}(\mathscr{O}(\cT))_{{\rho}(\sC)}$, its image $\Theta_{\sC}(\tau)$ is unimodular.

We can see that $\Theta_{\sC}(\tau)$ factors as the product of two cones $\tau_1\times \tau_2$ in the subdivision of $\cF(\cT/_{\sC}) \times \cF(\cT|_{\sC})$.  The subdivision of $\cF(\cT/_{\sC})$ is equal to ${\cF}(\mathscr{O}(\cT/_{\sC}))$, hence $\tau_1$ is unimodular.  It remains to prove that $\tau_2$ is unimodular.
Construct a matrix whose columns correspond to ray and lineality space generators for $\tau_2$ as in Lemma \ref{smooth} and Proposition \ref{permutahedralvelocitymainprop}.
Order the columns corresponding to rays according to the order of their preimages in $\cF(\cT)$, and order the rows so that the entries corresponding to any particular essential $k$-bracket $A \in \sC^k$ form a consecutive set.
When we restrict to such a set of rows, we find the ray and lineality space generators of a cone in the triangulated velocity fan of the $k$-associahedron $\cK(A)$, although there may be repetitions of columns.  We can now proceed as in the argument for Lemma \ref{smooth} by first performing the corresponding column subtractions.
The appropriate replacement for the adjoint permutation is a block matrix with blocks give by $P_{\sigma}^T$ associated to the various $\cF(A)$ where $A$ ranges over all essential brackets in $\sC$.

We have chosen to work with the permutahedral velocity fan in this argument, rather than the triangulated velocity fan, because the recursive structure is less complicated: after quotienting a generalized collision by another generalized collision, the image is again a generalized collision.
With this observation in hand, we can perform the Laplace expansion along the desired row and apply induction.
\end{proof}

\begin{lemma}\label{linearpartof7.26lemma}
The function $\Theta_{\sC}$ is such that $\Theta_{\sC}^{-1}$ restricted to $\cF(\cT/_{\sC}) \times {\bf 0}$ is linear and agrees with $(P_{\sigma}^T)^{-1}\times {\bf 0}$ for $\sigma$ a compatible $\cT$-shuffle for $\sC$.  
\end{lemma}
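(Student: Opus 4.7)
The strategy is to verify the claimed identity on a generating set of $\cF(\cT/_{\sC}) \times \{\mathbf{0}\}$ --- namely the rays $\rho(\sC_0')$ for $\sC_0' \in \wh{\fX}(\cT/_{\sC})$ together with the lineality generator $\mathbf{1}/_{\sC}$ --- and then invoke linearity of $(P_\sigma^T)^{-1}$. The central mechanism is Lemma \ref{contractionlemma}, which states precisely that for $\sigma$ a compatible shuffle for $\sC$, the vector $P_\sigma^T(\rho(\sC') - \rho(\sC))$ (when $\sC \rightarrow \sC'$) or $P_\sigma^T(\rho(\sC'))$ (when $\sC \sim \sC'$) has entries equal to $\rho(\sC'/\sC)^k_j$ at the ``max positions'' $i = \max\{h : \psi^\cT_{\cT/\sC}(u^k_{\sigma(h)}) = u^k_j\}$ and zero elsewhere. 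Viewed dually, this is the statement that $(P_\sigma^T)^{-1}$ recovers a preimage in $\cF(\cT)_{\rho(\sC)}$ of the embedding of $\cF(\cT/_{\sC})$ into the ambient space of $\cF(\cT)$ along these max positions.

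To execute the argument, I first pick a ray generator $\rho(\sC_0')$ of $\cF(\cT/_{\sC})$. By Lemma \ref{preimagelemma} there is a unique $\sC' \in \wh{\fX}(\cT)$ satisfying $\sC'/\sC = \sC_0'$, and moreover $\sC \sim \sC'$ or $\sC \rightarrow \sC'$. Next, I compute $\Theta_\sC^{-1}(\rho(\sC_0'), \mathbf{0})$ directly from the definition of $\wh{\Theta}_\sC$: in the $\sC \sim \sC'$ case the preimage is simply $\rho(\sC')$, while in the $\sC \rightarrow \sC'$ case the preimage is $\rho(\sC') - \rho(\sC)$, since the $\mathbf{1}|_{\sC}$ contributions of $\wh{\Theta}_\sC(\rho(\sC'))$ and $\wh{\Theta}_\sC(\rho(\sC))$ cancel in the second coordinate. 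Then Lemma \ref{contractionlemma} asserts that $P_\sigma^T$ applied to this very vector produces the embedded version of $\rho(\sC_0')$, so $(P_\sigma^T)^{-1}(\rho(\sC_0'), \mathbf{0})$ returns the same element of $\cF(\cT)_{\rho(\sC)}$.

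The lineality direction $(\mathbf{1}/_{\sC}, \mathbf{0})$ is handled identically: since $\mathbf{1} \mapsto (\mathbf{1}/_{\sC}, \mathbf{1}|_{\sC})$ and $\rho(\sC) \mapsto (\mathbf{0}, \mathbf{1}|_{\sC})$ under $\Theta_\sC$, we have $\Theta_\sC(\mathbf{1} - \rho(\sC)) = (\mathbf{1}/_{\sC}, \mathbf{0})$, and Lemma \ref{contractionlemma} applied with $\sC' = \sB_\min$, using the convention $\sC \rightarrow \sB_\min$, verifies that $P_\sigma^T(\mathbf{1} - \rho(\sC))$ is the embedded lineality generator. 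Having agreement of $\Theta_\sC^{-1}$ and $(P_\sigma^T)^{-1}$ on all rays and on the lineality generator, linearity of $(P_\sigma^T)^{-1}$ forces the two maps to coincide on all of $\cF(\cT/_{\sC}) \times \{\mathbf{0}\}$, and in particular $\Theta_\sC^{-1}$ restricted to this subspace is itself linear.

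The main obstacle is not conceptual but notational: one must precisely align the max positions under $\sigma$ with the coordinates of $\cF(\cT/_{\sC})$ and identify the two all-ones vectors $\mathbf{1}_{\sC}$ (appearing in the definition of $\wh{\Theta}_\sC$) and $\mathbf{1}|_{\sC}$ (the lineality generator of $\cF(\cT|_{\sC})$) as the same object viewed from the two sides. Once these identifications are fixed and the appropriate case of Lemma \ref{contractionlemma} is invoked, the verification of the equality on generators is immediate, and the linearity conclusion follows formally.
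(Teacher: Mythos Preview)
Your proposal is correct and follows essentially the same approach as the paper's proof. Both arguments verify the claimed identity on ray and lineality space generators of $\cF(\cT/_{\sC}) \times \{\mathbf{0}\}$ by invoking Lemma \ref{contractionlemma}, and then conclude by the uniqueness of a piecewise-linear extension from generators (the paper phrases this as ``there is at most one piecewise-linear map which extends a given map on ray and lineality space generators of a fan,'' which is equivalent to your appeal to linearity of $(P_\sigma^T)^{-1}$).
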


\begin{proof}
In order to verify this statement, we note that at the level of rays, if $\sC'$ satisfied the conditions of Lemma \ref{intervalatoms} and $\sC \rightarrow \sC'$, then $\Theta_{\sC}({\rho}(\sC')-{\rho}(\sC))={\rho}(\sC'/\sC)$, and if $\sC \sim \sC'$, then $\Theta_{\sC}({\rho}(\sC'))= {\rho}(\sC'/\sC)$.
By Lemma \ref{contractionlemma}, $\Theta_{\sC}^{-1}$ restricted to ray and lineality space generators of $\cF(\cT/_{\sC}) \times {\bf 0}$ agrees with $(P_{\sigma}^T)^{-1}\times {\bf 0}\,$, with the coordinates ordered according to $\sigma$.  Because there is at most one piecewise-linear map which extends a given map on ray and lineality space generators of a fan, the two functions must agree.  
\end{proof}

\section{Concentrated \texorpdfstring{$n$}{n}-associahedra as generalized permutahedra}
\label{s:concentrated_realization}

We investigate a natural class of categorical $n$-associahedra which we call \emph{concentrated}.
These are precisely the $n$-associahedra associated to trees which admit no nontrivial $\cT$-shuffles.
This section serves two primary purposes, first to show how various results from earlier sections specialize nicely for concentrated $n$-associahedra, and second to provide a polytopal realization of concentrated $n$-associahedra advertising future work on projectivity for velocity fans.

We observe that, for a concentrated $n$-associahedron, the velocity fan is a coarsening of the braid arrangement, the triangulated velocity fan is the normal fan of a nestohedron, and the permutahedral velocity fan is always the braid arrangement.
We produce a combinatorially meaningful support function on the velocity fan of any concentrated $n$-associahedron which gives a polyhedral realization of this $n$-associahedron as a generalized permutahedron.
We reexpress these polyhedral realizations as positive Minkowski sums of standard simplices and we provide an explicit vertex presentation.  We observe that the simplices in Postnikov's Minkowski sum description of a nestohedron, specialized to the case of the triangulated concentrated $n$-associahedra, form a super set of the simplices in our Minkowski sum description of the concentrated $n$-associahedra.
We demonstrate that the constrainahedra of the second and third authors can be recovered as a Minkowski sum of concentrated $n$-associahedra ranging over all concentrated trees $\cT$ having a specified number of vertices at each depth.
This suggests a theory of \emph{hypershuffle products of generalized permutahedra} extending the shuffle products of Chapoton and Pilaud.

\begin{definition}
A depth $n$ tree $\cT$ is \emph{concentrated} if for every $k<n$, there exists a single $u \in V(\cT)$ of depth $k$ with $D(u) \neq \emptyset$.
An $n$-associahedron $\cK_\cT$ is \emph{concentrated} if $\cT$ is concentrated.
\null\hfill$\triangle$
\end{definition}

\begin{remark}
For 2-associahedra, a tree $\cT$ is concentrated if all points are concentrated on a single line in the corresponding arrangement.
The class of concentrated $n$-associahedra includes the 1-associahedra, which correspond to trees $\cT$ of depth 1, and multiplihedra, which correspond to concentrated trees of depth 2 with exactly two depth 1 vertices.
\null\hfill$\triangle$
\end{remark}

\begin{remark}
A tree $\cT$ is concentrated if and only if the identity permutation is the only $\cT$-shuffle.
\null\hfill$\triangle$
\end{remark}

\begin{lemma}\label{rho=zeta}
Let $\cK(\cT)$ be a concentrated $n$-associahedron, and let $\sC$ an extended collision in $\cK(\cT)$ so that $\mathfrak{S}(\sC) = \{e\}$, where $e$ is the identity permutation.
Then $\rho(\sC)=\zeta(\sC) = \omega_e(\sC)$.
\end{lemma}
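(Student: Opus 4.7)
The plan is to establish the chain of equalities by first observing a structural simplification in concentrated trees, then applying \Cref{permutationtransformationlemma1}, and finally using a consecutivity argument.

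First I would record the key structural observation: since $\cT$ is concentrated, for each $k$ with $1 \leq k \leq n$ there is a unique ``trunk'' vertex $u_{k-1}$ at depth $k-1$ with $D(u_{k-1}) \neq \emptyset$, and every depth-$k$ vertex is a child of $u_{k-1}$, so $V^k(\cT) = C(u_{k-1})$. Two immediate consequences: (i) the only $\cT$-shuffle is the identity $e$, because any nontrivial permutation at level $k$ would swap two children of $u_{k-1}$, violating the $\cT$-shuffle condition, and (ii) every $k$-bracket $A$ of $\cT$ has $A^k$ consecutive in $V^k(\cT)$, because $A^k = A^k \cap C(u_{k-1})$ is consecutive in $C(u_{k-1})$ by the definition of a $k$-bracket. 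Since $\mathfrak{S}(\sC) = \{e\}$ and compatible shuffles are $\cT$-shuffles, $e$ is the unique compatible shuffle for $\sC$ (when $\sC = \sB_{\min}$ this is trivial from $\mathfrak{S}(\sB_{\min}) = S_\cT = \{e\}$).

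Next I would establish $\rho(\sC) = \omega_e(\sC)$. Applying \Cref{permutationtransformationlemma1} with $\sigma = e$, the vector $P_e^T(\rho(\sC) - \mathbf{1}) + \mathbf{1}$ is a 0-1 vector whose $(k,i)$-entry is $1$ if and only if $u^k_i$ and $u^k_{i+1}$ lie in a common essential $k$-bracket of $\sC$. Since $P_e^T = I$, the left-hand side equals $\rho(\sC)$, and the characterization on the right-hand side is, by definition, $\omega_e(\sC)^k_i$. Hence $\rho(\sC) = \omega_e(\sC)$.

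Finally I would show $\zeta(\sC) = \omega_e(\sC)$. The inequality $\omega_e(\sC) \leq \zeta(\sC)$ coordinatewise is immediate from the definitions, since if $u^k_i$ and $u^k_{i+1}$ lie in a common essential bracket then $\zeta(\sC)^k_i = 1$ witnessed by $j = i+1$. For the reverse inequality, suppose $\zeta(\sC)^k_i = 1$, so there is an essential $k$-bracket $A \in \sC^k$ and an index $j > i$ with $u^k_i, u^k_j \in A^k$. By the consecutivity of $A^k$ in $V^k(\cT)$ noted above, $u^k_{i+1} \in A^k$ as well, so $u^k_i$ and $u^k_{i+1}$ lie in the common essential bracket $A$, giving $\omega_e(\sC)^k_i = 1$. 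Combining, $\rho(\sC) = \zeta(\sC) = \omega_e(\sC)$. The case $\sC = \sB_{\min}$ is subsumed by the convention that the essential brackets of $\sB_{\min}$ are the maximum $k$-brackets, which contain all of $V^k(\cT)$; hence all three vectors equal $\mathbf{1}$.

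There is no substantive obstacle here: the argument is bookkeeping around definitions, with the structural observation about concentrated trees doing the real work. The only point that requires a moment of care is verifying that $e$ is genuinely compatible (not merely a $\cT$-shuffle) for every $\sC$, which follows from the existence of compatible shuffles together with the uniqueness of $\cT$-shuffles in the concentrated case.
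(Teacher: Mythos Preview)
Your proposal is correct and follows essentially the same approach as the paper: both arguments hinge on the observation that in a concentrated tree every $k$-bracket has $A^k$ consecutive in $V^k(\cT)$, and then compare entries. The only difference is cosmetic: the paper asserts in one line that $\rho(\sC)$ is a 0-1 vector (since no nontrivial shuffles exist) and immediately gives the three-way equivalence of entries, whereas you route the equality $\rho(\sC)=\omega_e(\sC)$ through \Cref{permutationtransformationlemma1} with $\sigma=e$ and $P_e^T=I$, which is a perfectly clean way to make that step explicit.
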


\begin{proof}
Because $\cT$ admits no nontrivial shuffles, we know that $\rho(\sC)$ is a 0-1 vector.
A vertex $u^k_i$ belongs to a nontrivial $k$-bracket with another vertex $u^k_j$ where $i<j$, i.e.\ $\zeta(\sC)^k_i =1$, if and only if $u^k_i$ belongs to a nontrivial $k$-bracket with $u^k_{i+1}$, i.e.\ $\rho(\sC)^k_i=1$, if and only if $\omega_e(\sC)^k_i=1$.
\end{proof}

\begin{lemma}\label{concentratednnestohedral}
Let $\cK(\cT)$ be a concentrated $n$-associahedron.
The velocity fan of $\cK(\cT)$ is a coarsening of the braid arrangement, and the triangulated velocity fan of $\cK(\cT)$ is the normal fan of a nestohedron.
\end{lemma}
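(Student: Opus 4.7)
The plan is to reduce both assertions to the already-established statements about the shuffle and nestohedral atlases (Propositions \ref{localPLmapsprop} and \ref{nestomaps}) by showing that in the concentrated case the shuffle atlas collapses to a single chart on which the identifying map is literally the identity.

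First I would verify that a concentrated tree $\cT$ admits only the identity permutation $e$ as a $\cT$-shuffle, and moreover that $e$ is an admissible shuffle for every $n$-bracketing $\sB \in \cK(\cT)$, i.e.\ $\mathfrak{S}(\sB) = \{e\}$. The point is that in a concentrated tree, for each $k \geq 1$ all depth-$k$ vertices share a unique parent, namely the only depth-$(k-1)$ vertex that has descendants. Consequently the second bullet of the (\textsc{height partial orders}) axiom forces the extended height partial order on the singleton $k$-brackets of any $\sB$ to be a total order compatible with $<_{\cT}$, while the (\textsc{nested}) axiom rules out a forbidden triple $a<b<c$ with $u^k_{\sigma(a)}, u^k_{\sigma(c)} \in A^k$ and $u^k_{\sigma(b)} \notin A^k$. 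Checking Definition \ref{permdef} then yields $\mathfrak{S}(\sB) = \{e\}$, so the shuffle chart $U_e$ covers all of $\cK^{\met}(\cT)$ and $\Gamma(U_e) = \cF(\cT)$.

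Next I would apply Lemma \ref{rho=zeta} to deduce $\rho(\sC) = \omega_e(\sC)$ for every extended collision $\sC$. Since $\Gamma_{\omega_e}^{\rho} = \Gamma_{\omega_e} \circ \Gamma^{-1}$ sends each ray generator $\rho(\sC)$ of $\cF(\cT)$ to $\omega_e(\sC) = \rho(\sC)$ and is linear on each cone, it is the identity on every cone. Proposition \ref{localPLmapsprop} now says that the image of each cone of $\cF(\cT)$ under this map is a union of cones in the braid arrangement $\mathcal{A}_m$; because the map is the identity, this is precisely the statement that $\cF(\cT)$ coarsens $\mathcal{A}_m$.

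For the second assertion I would run the analogous argument through the triangulated velocity fan via Proposition \ref{nestomaps}. The key auxiliary point is that for a concentrated tree the obstruction set $V_e$ from Definition \ref{localbuildingsetdef} is empty: by definition it collects indices $e^k_i$ with $\pi(u^k_{e(i)}) \neq \pi(u^k_{e(i+1)})$, and concentration forces a common parent for all depth-$k$ vertices. Hence $\cF^{\circ}(\mathbb{B}_e) = \cF(\mathbb{B}_e)$ is the full nestohedral fan for the local $e$-collision building set. Proposition \ref{nestomaps} then provides a piecewise-linear isomorphism $\Gamma_{\omega_e}^{\rho}: \cF(\Delta_{\cT})|_{\Gamma(U_e)} \to \cF(\mathbb{B}_e)$; since $\Gamma(U_e) = \cF(\cT)$ and the map is the identity on ray generators, we conclude $\cF(\Delta_\cT) = \cF(\mathbb{B}_e)$, which by the theory recalled in Lemma \ref{smoothnestohedra} and the surrounding discussion is the normal fan of the nestohedron associated to $\mathbb{B}_e$.

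The main thing to pin down carefully is the first step, the collapse $\mathfrak{S}(\sB) = \{e\}$ for every $\sB$; once this is granted, both assertions follow essentially formally from atlas results already proved in the paper, and no new computation is required. I do not anticipate serious obstacles, only a careful bookkeeping of the (\textsc{height partial orders}) and (\textsc{nested}) axioms across the various depths.
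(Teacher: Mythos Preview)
Your proposal is correct and follows essentially the same approach as the paper: invoke Lemma \ref{rho=zeta} to see that the relevant atlas map is the identity, then read off both conclusions from the already-established atlas results. The only cosmetic difference is that for the first assertion the paper cites the $\zeta$-map (Theorem \ref{permutahedroidthm}) rather than the $\omega_e$-map (Proposition \ref{localPLmapsprop}), but since Lemma \ref{rho=zeta} gives $\rho(\sC)=\zeta(\sC)=\omega_e(\sC)$ these are interchangeable, and the paper itself remarks that ``there are various ways to see this''; your treatment of the second assertion via $V_e=\emptyset$ and Proposition \ref{nestomaps} matches the paper exactly.
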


\begin{proof}
There are various ways to see this.
For example, by Lemma \ref{rho=zeta}, we know that $\rho(\sB)=\zeta(\sB)$, hence the piecewise-linear map from Theorem \ref{permutahedroidthm} is the identity map.
This shows that each cone in the velocity fan is union of cones in the braid arrangement, i.e.\ the velocity fan is a coarsening of the braid arrangement.
The corresponding statement for the triangulated velocity fan follows as the nestohedral atlas for the triangulated velocity fan consists of a single chart corresponding to the identity permutation $e$, the set $V_{e} = \emptyset$, and the map $\Gamma_{\omega_e}^{\rho}$ described in Proposition \ref{nestomaps} is the identity map.
\end{proof}

We recall the definition of graph associahedra \cite{graphassoc,postnikovgp}, which form a special class of nestohedra.

\begin{definition}
Let $G=(V,E)$ be an undirected graph. A \emph{tube} is a subset of $B\subseteq V$ such that the induced subgraph $G[B]$ is connected.
The \emph {graphical building set} of $G$ is the collection of all tubes of $G$.
\null\hfill$\triangle$
\end{definition}

\begin{definition}
The \emph{graph associahedron} for a graph $G$ is the nestohedron associated to the graphical building set of $G$.
\null\hfill$\triangle$
\end{definition}

\begin{lemma}
The triangulated velocity fans of concentrated $n$-associahedra are the normal fans of certain graph associahedra.
\end{lemma}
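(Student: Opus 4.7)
The plan is to identify, for each concentrated rooted plane tree $\cT$, an explicit graph $G_\cT$ whose graphical building set is exactly $\mathbb{B}_e$. In the concentrated case the identity is the only $\cT$-shuffle, every depth-$k$ vertex has the common parent $s_{k-1}$ (the unique depth-$(k-1)$ vertex with descendants), and consequently $V_e = \emptyset$ in Definition~\ref{localbuildingsetdef}; hence $\mathbb{B}_e = W_e$, and by Proposition~\ref{nestomaps} the (reduced) triangulated velocity fan is already the nestohedral fan $\cF(\mathbb{B}_e)$. The lemma therefore reduces to showing that $\mathbb{B}_e$ is graphical.

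The graph $G_\cT$ I would take has vertex set $\bigsqcup_{k=1}^{n}\{e^k_i : 1 \leq i \leq t_k\}$, within-row edges $e^k_i \sim e^k_{i+1}$ for all $k$ and all $1 \leq i \leq t_k-1$, and a cross-depth edge $\{e^k_i, e^j_l\}$ for each $k < j$ precisely when $s_k \in \{u^k_i, u^k_{i+1}\}$ (that is, when the pair of consecutive depth-$k$ vertices straddles the unique spine vertex $s_k$). The heart of the proof is the equality between the tubes of $G_\cT$ and the indicator-vector subsets realized by $\omega_e(\sC)$ as $\sC$ ranges over $\wh\fX(\cT)$.

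For the inclusion ``tubes of $G_\cT \subseteq \mathbb{B}_e$'', I would rely on the building-set closure axiom (Proposition~\ref{localbuildingsetprop}): it suffices to realize every singleton and every edge of $G_\cT$ by an extended collision, after which Lemma~\ref{zetajoin} produces every larger connected subset as the support of a collision. Singletons are realized by minimal collisions of the corresponding type, within-row edges by type-$1$-like collisions merging three consecutive depth-$k$ vertices, and cross-depth edges $\{e^k_i, e^j_l\}$ by type-$3$-style collisions whose fusion bracket lives at depth $\leq k$ and whose essential brackets descend through the spine $s_k, s_{k+1}, \ldots, s_{j-1}$ to a nontrivial bracket at depth $j$ containing $u^j_l, u^j_{l+1}$. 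The converse ``$\omega_e(\sC)$ is always a tube of $G_\cT$'' would be established by induction on $n$, using Proposition~\ref{projfanthing} to reduce the depth-$<n$ part of $\omega_e(\sC)$ to the concentrated $(n-1)$-associahedron case, together with Lemma~\ref{lem:fusion_bracket_for_collision} to track how the unique fusion bracket propagates the essential-bracket chain down through the spine.

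The hard part will be showing that a tube cannot contain a cross-depth pair whose shallower coordinate $e^k_i$ is non-spine-adjacent, i.e.\ that such a pair cannot co-occur as $\omega_e(\sC)$ entries of a single collision. This ultimately rests on the concentrated structure: because only $s_k$ has depth-$(k+1)$ descendants, a non-spine-adjacent essential $k$-bracket is a ``dead end'' that cannot project any nontrivial bracket at a deeper depth, forcing the fusion bracket of $\sC$ to lie at depth $\leq k$ and the entire essential-bracket structure of $\sC$ to live in depths $\leq k$; hence no $\omega_e(\sC)$ entry can occur strictly below depth $k$, contradicting the existence of the cross-depth pair. Once this is in place, identifying $\mathbb{B}_e$ with the graphical building set of $G_\cT$ is immediate, and then the triangulated velocity fan is the normal fan of the graph associahedron of $G_\cT$ by the classical nestohedral realization.
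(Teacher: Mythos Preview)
Your setup and the graph $G_\cT$ match the paper's (modulo what looks like an indexing slip in the paper's edge condition), and your reduction via Lemma~\ref{rho=zeta} and Proposition~\ref{nestomaps} to the equality $\mathbb{B}_e=\{\text{tubes of }G_\cT\}$ is exactly what the paper does. The paper's proof is very terse --- it simply asserts that collisions ``are in obvious bijection with the tubes on $G(\cT)$'' --- so your more explicit treatment of the two inclusions is a genuine addition. Your forward inclusion (tubes $\subseteq\mathbb{B}_e$, by realizing singletons and edges and then invoking the building-set closure of Lemma~\ref{zetajoin}) is correct.

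The ``hard part'' paragraph, however, contains a real error. You claim that if $\omega_e(\sC)$ contains a non-spine-adjacent coordinate $e^k_i$ (meaning $s_k\notin\{u^k_i,u^k_{i+1}\}$), then ``the entire essential-bracket structure of $\sC$'' lives in depths $\leq k$, so no entry of $\omega_e(\sC)$ can occur at depth $>k$. This is false. Take $\cT$ with depth-$1$ vertices $u^1_1,\ldots,u^1_4$, spine vertex $s_1=u^1_2$, and three depth-$2$ children of $s_1$; let $\sC$ be the type-$3$ collision whose fusion bracket is the maximum $1$-bracket and whose nontrivial $2$-brackets are $\{u^2_1,u^2_2\}$ and $\{u^2_3\}$. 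Then $\omega_e(\sC)=\{e^1_1,e^1_2,e^1_3,e^2_1\}$, which contains the non-spine-adjacent $e^1_3$ together with the depth-$2$ entry $e^2_1$. What is true is only that the particular essential $k$-bracket witnessing $e^k_i$ is a dead end; other essential $k$-brackets in $\sC$ (here the same maximum $1$-bracket, which does contain $s_1$) may well continue to deeper levels.

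The correct connectedness argument for $\omega_e(\sC)$ is in fact simpler than your induction. Let $F$ be the fusion bracket at depth $d$; it contributes a within-row interval at depth $d$. Every essential bracket $A$ at depth $m>d$ with $|A^m|\geq 2$ contributes a within-row interval at depth $m$. Since $A^m\neq\emptyset$ and every depth-$m$ vertex descends through $s_{m-1},\ldots,s_d$, we have $s_d\in A^d=F^d$, so the depth-$d$ interval of $F$ contains a spine-adjacent coordinate $e^d_p$, and the cross-depth edge $\{e^d_p,e^m_q\}$ of $G_\cT$ connects $A$'s interval directly to $F$'s. Thus every contributing interval attaches to the fusion interval, and $\omega_e(\sC)$ is a tube.
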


\begin{proof}
Given concentrated tree $\cT$, we first construct the desired graph $G({\cT})$\footnote{The notation $G_{\cT}$ is used for the collision mixed graph.} (see Figure \ref{graphassociafig} for an example).
We identify the vertices of $G({\cT})$ with the coordinates $x_i^k$ in the velocity fan for $\cT$. Vertices of the same depth $x^{k}_i$ and $x^k_j$ are connected by an edge if $j = i+1$, and for $k > l$ vertices $x^k_i$ and $x^l_j$ are connected by an edge if the vertex $u^k_i$ is an descendant of $u^l_j$ or $u^l_{j-1}$ in $\cT$.
Graph associahedra are nestohedra, and triangulated concentrated $n$-associahedra are nestohedra by Lemma \ref{concentratednnestohedral}, where the building sets are described.
Therefore, one is left to verify that the blocks of the two building sets are the same.  Indeed, collisions, as presented by their 0-1 vectors, are in obvious bijection with the tubes on $G(\cT)$.
\end{proof}

\begin{figure}
\def\svgwidth{0.8\textwidth}
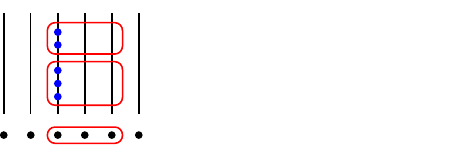
\centering
\caption{
\label{graphassociafig}
On the left: a collision in a concentrated 2-associahedron.  On the right: the corresponding graph and tube.}
\end{figure}

Recall, the permutahedral velocity fan defined in \ref{permutahedral_assoc} is a finer triangulation of the velocity fan.

\begin{lemma}
Permutahedral velocity fans for concentrated $n$-associahedra are braid arrangements.
\end{lemma}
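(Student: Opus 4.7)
The plan is to combine three observations: for a concentrated tree there is only one $\cT$-shuffle (so the shuffle atlas has a single chart covering everything), on this chart the maps $\rho$ and $\omega_e$ coincide, and the relevant nestohedral fan has no ``missing'' coordinate rays, so that the permutahedral refinement is identified directly with the braid arrangement.

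First, I would verify that $\cT$ being concentrated forces $S_\cT = \{e\}$, so every collision satisfies $\mathfrak{S}(\sC) = \{e\}$ and $U_e$ covers the whole metric $n$-bracketing complex. Next, I would observe that in a concentrated tree every pair of consecutive depth-$k$ vertices $u^k_i, u^k_{i+1}$ shares a common parent at depth $k-1$ (since by definition only one vertex at depth $k-1$ has children), and hence
\begin{align}
V_e = \bigl\{e^k_i : \pi(u^k_{e(i)}) \neq \pi(u^k_{e(i+1)})\bigr\} = \emptyset.
\end{align}
Consequently, in the statement of Proposition \ref{nestomaps}, $\cF^\circ(\mathbb{B}_e) = \cF(\mathbb{B}_e)$ is the full nestohedral fan on the local $e$-collision building set, and $\Gamma^{\rho}_{\omega_e}\colon \cF(\ccD) \to \cF(\mathbb{B}_e)$ is a piecewise-linear isomorphism defined on all of $\cF(\ccD)$.

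Second, I would apply Lemma \ref{rho=zeta}, which gives $\rho(\sC) = \omega_e(\sC)$ for every extended collision in a concentrated $n$-associahedron. Since both $\cF(\ccD)$ and $\cF(\mathbb{B}_e)$ are determined by their ray generators via their respective flag/nested combinatorics, and the isomorphism $\Gamma^{\rho}_{\omega_e}$ matches ray generators with their identical images, the map $\Gamma^{\rho}_{\omega_e}$ is literally the identity of $\mathbb{R}^m$ restricted to the relevant support. In particular, $\cF(\ccD) = \cF(\mathbb{B}_e)$ as fans, recovering the first half of Lemma \ref{concentratednnestohedral}.

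Finally, I would invoke Proposition \ref{permutahedralvelocitymainprop}: the permutahedral velocity fan $\overline{\cF}(\mathscr{O}(\cT))$ is a simplicial refinement of $\overline{\cF}(\ccD)$, and its image under $\Gamma^{\rho}_{\omega_e}$ is, by the remark immediately after Corollary \ref{omegasigmaunimodular}, the braid arrangement triangulation of $\cF^\circ(\mathbb{B}_e)$. Since $\Gamma^{\rho}_{\omega_e}$ is the identity and $\cF(\mathbb{B}_e)$ is a complete fan on $\mathbb{R}^m / \langle \mathbf{1}\rangle_{\mathbb{R}}$, the braid arrangement triangulation of $\cF(\mathbb{B}_e)$ coincides with the entire braid arrangement $\mathcal{A}_m$. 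Thus $\overline{\cF}(\mathscr{O}(\cT)) = \mathcal{A}_m$. There is no serious obstacle: the only substantive check is the bookkeeping that $V_e = \emptyset$ for concentrated trees, which isolates the step where ``missing stars'' in $\cF^\circ(\mathbb{B}_e)$ could otherwise prevent identification with the full braid fan.
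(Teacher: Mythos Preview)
Your argument is correct, but it takes a genuinely different route from the paper's own proof. The paper argues directly from the definition of the permutahedral velocity fan: for a concentrated tree, it observes that the ray generators $\rho(\sD)$ of generalized collisions $\sD \in \mathfrak{P}(\cT)$ are exactly the nonzero $0$--$1$ vectors in $\mathbb{R}^m$ (other than $\mathbf{1}$), and that the partial order $\leq_{\mathfrak{P}}$ on generalized collisions specializes to inclusion of supports. Since the braid arrangement is, by definition, the order complex of this inclusion poset, the identification is immediate.

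Your approach instead routes through the nestohedral atlas: you show $V_e = \emptyset$ so that $\cF^\circ(\mathbb{B}_e) = \cF(\mathbb{B}_e)$ is the full (complete) nestohedral fan, use $\rho = \omega_e$ to see that $\Gamma^\rho_{\omega_e}$ is the identity, and then invoke the remark after Corollary~\ref{omegasigmaunimodular} to conclude that the permutahedral refinement is the braid triangulation of a complete fan, hence the full braid arrangement. This works, but it is considerably more indirect: you are leaning on Proposition~\ref{nestomaps} and an unproved remark, whereas the paper's argument needs only the explicit description of $\rho(\sD)$ and the definition of $\mathscr{O}(\cT)$. The advantage of the paper's proof is economy; the advantage of yours is that it illustrates how the concentrated case sits as the degenerate single-chart instance of the general shuffle-atlas picture.
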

\begin{proof}
For a concentrated $n$-associahedron, the generalized collisions $\sD$ from Definition \ref{generalized_collision} have associated set of ray generators $\{\rho(\sD):\sD \in \mathfrak{P}(\cT)\}$ equal to the set of all 0-1 vectors in $\mathbb{R}^m$ different from ${\bf 1}$.
The partial order for generalized collisions described in Definition \ref{perm_partial} specializes to the usual inclusion order on the 0-1 vectors, and the result follows.
\end{proof}

\begin{lemma}
\label{coarsening}
The velocity fan of an $n$-associahedron $\cK(\cT)$ is a coarsening of the braid arrangement if and only if $\cK(\cT)$ is either concentrated or 1-dimensional.
\end{lemma}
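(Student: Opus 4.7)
The plan is as follows. The \emph{if} direction is essentially already in place: if $\cK(\cT)$ is 1-dimensional then Proposition \ref{braidonedimfan} identifies $\cF(\cT)$ with the braid arrangement $\cA_2$ (which trivially coarsens itself), and if $\cK(\cT)$ is concentrated then Lemma \ref{concentratednnestohedral} records that $\cF(\cT)$ coarsens $\cA_m$ outright.

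For the \emph{only if} direction I would argue the contrapositive: assuming $\cK(\cT)$ is neither concentrated nor 1-dimensional, I would exhibit a ray of $\cF(\cT)$ whose image in $\mathbb{T}^{m-1}$ is not a ray of $\cA_m$. By Definition \ref{defbraidarrangement}, the rays of $\cA_m$ modulo $\langle \mathbf{1} \rangle_{\mathbb{R}}$ are precisely the classes of the indicator vectors $\chi_F$ for nonempty proper $F \subsetneq [m]$, i.e.\ the classes of vectors of the form $\mu \chi_F + \lambda \mathbf{1}$ (for some $\mu > 0$, $\lambda \in \mathbb{R}$). The coordinates of any such vector take at most two distinct values, so it will suffice to produce a single collision $\sC \in \fX(\cT)$ whose ray generator $\rho(\sC)$ attains at least three distinct integer values among its coordinates.

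Since $\cT$ is not concentrated, I would select a depth $k_0 < n$ and two sibling vertices $u, u' \in V^{k_0}(\cT)$ (with common parent $p \in V^{k_0-1}(\cT)$), each possessing at least one child, and pick children $v \in C(u)$, $v' \in C(u')$. Because $v$ and $v'$ have different parents, the transposition swapping them is a valid $\cT$-shuffle. I would then define $\sC$ to have fusion bracket $A_{\bullet}$ at depth $k_0$ with $A_{\bullet}^{k_0} = \{u, u'\}$ (extended below by the singleton chain from $p$ to the root), together with depth-$(k_0+1)$ essential brackets $\{v\}$ and $\{v'\}$ (both with $A^{k_0} = \{u, u'\}$) equipped with the reversed height order $\{v'\} <_\sC \{v\}$, supplemented by further singleton brackets for any remaining children of $u, u'$ so that the \textsc{(partition)} condition holds. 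Lemma \ref{collisioncharacterization} then guarantees that $\sC$ is a collision: $A_{\bullet}$ is its unique fusion bracket (its projection is the singleton chain at $p$), and the constructed nontrivial brackets at depth $k_0+1$ are pairwise disjoint.

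A direct computation using Proposition \ref{treemap} produces $\rho(\sC)^{k_0+1}_i = 2$, where $i$ is the index of $v$ in $V^{k_0+1}(\cT)$, since the reversed height order places $\psi(v')$ immediately before $\psi(v)$ in $\cT/\sC$ and thus $d_{\cT/\sC}(\psi(v)) - d_{\cT/\sC}(\psi(v')) = 1$. Because $\cK(\cT)$ is not 1-dimensional, $m \geq 3$, and a short case analysis on the source of the extra dimension produces a second coordinate of $\rho(\sC)$ taking a value in $\{-1, 0\}$: from an adjacent depth-$(k_0+1)$ vertex outside the swap if $|V^{k_0+1}(\cT)| \geq 3$, from an unfused depth-$k_0$ vertex if $|V^{k_0}(\cT)| \geq 3$, or from deeper branching descended from $v, v'$ otherwise. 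Combined with the $1$'s appearing at fully-collided coordinates, this ensures that $\rho(\sC)$ attains at least three distinct integer values and hence is not a braid ray. The principal obstacle will be this final case analysis—verifying that in every not-concentrated non-1-dimensional configuration of $\cT$ one really does obtain a coordinate value distinct from both $1$ and $2$.
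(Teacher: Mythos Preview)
Your approach is essentially identical to the paper's: the \emph{if} direction invokes the same two prior results, and for the converse both you and the paper exhibit a collision whose ray generator attains at least three distinct values (you via the explicit swap $A_{v'} <_\sC A_v$, the paper via the terse observation that the two disjoint length-two paths from a common vertex yield a collision with entries $0,1,2$). The paper leaves the construction at the level of a one-sentence sketch, whereas you correctly identify the residual case analysis as the point requiring care; your enumerated cases are nearly exhaustive, though you should also allow for the possibility that the extra coordinate comes from a leaf at some depth $j < k_0$ (where $\rho(\sC)$ trivially vanishes), and you should either take $u,u'$ adjacent or enlarge $A_\bullet^{k_0}$ to an interval.
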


\begin{proof}
By Lemma \ref{concentratednnestohedral} and Proposition \ref{braidonedimfan}, we know that velocity fans of concentrated and 1-dimensional $n$-associahedra are coarsenings of the braid arrangement.  Conversely, suppose that $\cK(\cT)$ is neither concentrated nor 1-dimensional.
Then we can construct a collision $\sC$ such that $\rho(\sC)$ has three distinct values, and thus cannot be a ray of the braid arrangement.
To see this, $\cT$ must have a vertex with two disjoint paths of length at least two leaving from it.
We can use these paths to construct a collision $\sC$ such that $\rho(\sC)$ has a single entry 1, a single entry 2, and the nonempty set of remaining entries are all 0.
\end{proof}

\begin{remark}
These are not the only categorical $n$-associahedra which can be realized as generalized permutahedra.
For example, the 2-associahedron $W_{110}$ is a hexagon and thus can be realized as a generalized permutahedron although not by one whose normal fan is the velocity fan.  Asymptotically, the fraction of $n$-associahedra containing an octagonal face approaches 1, thus almost all $n$-associahedra cannot be realized as generalized permutahedra.
\end{remark}

\begin{remark}
If $\cK(\cT)$ is a concentrated $n$-associahedron and $\rho(\sC)$ is a ray of the velocity fan $\cF(\cT)$, then the piecewise unimodular isomorphism afforded by Theorem \ref{localvelocity} is the identity map.
\null\hfill$\triangle$
\end{remark}

\begin{remark}
Concentrated $n$-associahedra provide a natural generalization of the 2-associahedra where all the points are on the left-most line.
(In the notation of \cite{b:2-associahedra}, these are the 2-associahedra $W_{k0\cdots0}$.)
These have an important interpretation in symplectic geometry.
Recall that in \cite{b:realization}, the first author realized 2-associahedra as \emph{witch curves}, which are the domains for the pseudoholomorphic quilts whose counts define the structure maps in the symplectic $(A_\infty,2)$-category \textsf{Symp} \cite{abouzaid_bottman}.
\textsf{Symp} is intended to be an implementation of functoriality of the Fukaya category with respect to Lagrangian correspondences.
If we restrict ourselves to symplectic manifolds satisfying geometric conditions that are sufficient to exclude figure eight bubbling, it is possible to implement functoriality for the Fukaya category in a simpler fashion, using quilted disks with boundary marked points.
Such quilted disks are exactly what these left-concentrated 2-associahedra parametrize.
\null\hfill$\triangle$
\end{remark}

\subsection{Collision units for polytopes}

\

In this subsection we realize each concentrated $n$-associahedron as generalized permutahedron whose normal fan is the velocity fan.
We proceed by constructing a support function on the velocity fan of a concentrated $n$-associahedron.

Let $\cF$ be a complete fan in $\mathbb{R}^m$, and $h:\mathbb{R}^m \rightarrow \mathbb{R}$ a piecewise-linear function on $\cF$.
We say that we say that $h$ is \emph{strongly convex} if, for any $\mathbf{u},\mathbf{v} \in \mathbb{R}$ which do not live in a common chamber of $\cF$, we have
\begin{align}
\label{eq:convex_function}
h(\mathbf{u}+\mathbf{v})
<
h(\mathbf{u})+h(\mathbf{v}).
\end{align}

Given a polytope $P\subseteq \mathbb{R}^m$ (not necessarily full-dimensional), its \emph{support function} is $h_P:\mathbb{R}^m \rightarrow \mathbb{R}$, where for ${\bf v} \in \mathbb{R}^n$,
\begin{align*}
h_P(\mathbf{v})
\coloneqq
\min_{p \in P} \langle v, p\rangle.
\end{align*}
The support function $h_P$ is a piecewise-linear strongly convex function on the normal fan of $P$.  Conversely, any strongly convex piecewise-linear function $h$ on a complete fan $\cF$ is the support function of a polytope $P$ with normal fan $\cF$: every ray generator gives rise to a facet inequality, and generators of the lineality space give rise to equations.
We call a fan $\cF$ \emph{projective} if there exists a polytope $P$ whose normal fan is $\cF$.

\begin{definition}
Two polytopes are \emph{normally equivalent} if their normal fans coincide. 
\null\hfill$\triangle$
\end{definition}

For a concentrated tree $\cT$, we now introduce a collection $U (\cT)$ of combinatorial objects called \emph{collision units} which will be utilized for defining support functions and vertex coordinates.

\begin{definition}
\label{collision_unit}

A \emph{collision unit} $\bf{c}$ is a list of length $n$, with entries $\mathbf{c}_k$ such that
\begin{enumerate}
\item for each $k$,  either $\mathbf{c}_k=\varnothing$ or $\mathbf{c}_k= \{ u^k_i,u^k_j \}$, a pair of distinct vertices of depth $k$, 

\item $\bf{c}$ has at least one nonempty entry, and

\item if $\bf{c}$ has at least two nonempty entries, then each entry $ \{ u^k_i,u^k_j \}$ is of the form $\{u^k_{i},u^k_{i+1}\}$.
\null\hfill$\triangle$
\end{enumerate}
\end{definition}

\begin{definition}
For a given concentrated tree $\cT$, the collection $U(\cT)$ consists of all the collision units as defined above.
\null\hfill$\triangle$
\end{definition}

\begin{example}
In a concentrated 2-associahedron, collision units are lists $(\{ u^1_i,u^1_j\},\varnothing)$ (pairs of lines), or lists $(\varnothing, \{ u^2_i,u^2_j\})$ (pairs of points), or lists $(\{ u^1_i,u^1_{i+1}\}, \{ u^2_j,u^2_{j+1}\})$ (four-tuples of two lines and two points, where both the lines and the point are neighbors).
\null\hfill$\triangle$
\end{example}

\begin{remark}
Collision units are introduced for extending the classical realization of the associahedron as a generalized permutahedron: Shnider-Sternberg \cite{shnider1993quantum} describe the value of the support function on a ray of the wonderful associahedral fan corresponding to a single nontrivial 1-bracket $A$ as the number of pairs of points contained in that bracket. Loday \cite{Lodayassociahedron} 
provides the vertex description, which we generalize in \S\ref{vertices} also using collision units. 
\null\hfill$\triangle$
\end{remark}

Let $\bf{c}(\varnothing)$ denote the list having no nonempty entries.
By the definition above, $\bf{c}(\varnothing)$ is not a collision unit, but it arises naturally when erasing entries in collision units.

\begin{definition}
For a collision unit $\mathbf{c} \in U(\cT)$, its \emph{level} $l(\mathbf{c})$ is defined as the maximal index such that $\mathbf{c}_{l (\mathbf{c}) } \neq \varnothing$.
We let $\pi: U(\cT) \to U(\pi(\cT)) \cup \{\mathbf{c}(\varnothing)\}$, denote the map which erases the last entry of a collision unit $\bf{c}$.
\null\hfill$\triangle$
\end{definition}

\begin{definition}
\label{unitcontain}
Let $A = (A^1, \ldots, A^k)$ be a $k$-bracket.
We say that a collision unit $\mathbf{c}$ of level $k$ is \emph{contained} in $A$ if $\mathbf{c}_i \subseteq A^i$ for all $i \leq k$.
Let $U(A)$ denote the set of level $k$ collision units contained in $A$.
\null\hfill$\triangle$
\end{definition}

\begin{proposition}
\label{explicit-support}
For a $k$-bracket $A = (A^1, \ldots, A^k)$, the cardinality of the set $U(A)$ is equal to 
\begin{align}
\sum_{i=1}^k \binom{|A^i|}{2} + \prod_{i=1}^k |A^i| - \sum_{i=1}^k |A^i| + k -1 .
\end{align}
\end{proposition}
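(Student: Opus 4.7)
The plan is to partition $U(A)$ by the level $j$ of the collision unit, and then at each level further partition into units with exactly one nonempty entry (Case A) versus those with at least two (Case B). Before counting, I would record one combinatorial observation that does the real work: for each $i \leq k$, the set $A^i$ is a consecutive subset of $V^i(\cT)$, so it contains exactly $|A^i|-1$ pairs of consecutive vertices. This follows from clause (2) of Definition~\ref{def:k-bracket} together with the concentrated hypothesis on $\cT$, which forces a unique depth-$(i{-}1)$ vertex to carry all depth-$i$ children.

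Fix a level $j \in \{1,\ldots,k\}$. In Case A, the single nonempty entry is $\mathbf{c}_j$, and by clause (1) of Definition~\ref{collision_unit} it may be any unordered pair of distinct elements of $A^j$, contributing $\binom{|A^j|}{2}$ units. In Case B, clause (3) forces every nonempty entry to be a consecutive pair, so $\mathbf{c}_j$ has $|A^j|-1$ choices while each $\mathbf{c}_i$ for $i<j$ independently has $1 + (|A^i|-1) = |A^i|$ choices (empty, or one of the consecutive pairs). Subtracting the disallowed configuration where every $i<j$ entry is empty yields
\begin{align*}
(|A^j|-1)\Bigl(\textstyle\prod_{i=1}^{j-1}|A^i| \;-\; 1\Bigr)
\end{align*}
units at level $j$ in Case B.

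Summing Case A across all $j$ produces the term $\sum_{j=1}^k \binom{|A^j|}{2}$. For Case B, I would set $P_j := \prod_{i=1}^{j}|A^i|$ with $P_0=1$, and use the algebraic identity
\begin{align*}
(|A^j|-1)(P_{j-1}-1) \;=\; P_j - P_{j-1} - |A^j| + 1,
\end{align*}
after which $\sum_{j=1}^{k}(P_j - P_{j-1})$ telescopes to $P_k - 1 = \prod_{i=1}^k|A^i|-1$. Assembling the two contributions yields precisely
\begin{align*}
\sum_{i=1}^k \binom{|A^i|}{2} \;+\; \prod_{i=1}^k|A^i| \;-\; \sum_{i=1}^k |A^i| \;+\; k - 1,
\end{align*}
as claimed.

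The argument is essentially bookkeeping, so the only genuine care needed is in the two subtle setup points: (i) making sure the partition by level is exhaustive, i.e.\ that a ``level-$k$ collision unit contained in $A$'' in Definition~\ref{unitcontain} is naturally interpreted to allow any last-nonempty-position $j \leq k$ (since $A$ has no data above depth $k$), and (ii) invoking the consecutivity of $A^i$ so that the ``$|A^i|$ choices'' count in Case B is correct. Once these are in place, the telescoping identity closes the count and no further obstacle arises.
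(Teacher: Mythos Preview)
Your proof is correct. Both you and the paper split $U(A)$ into collision units with a single nonempty entry (contributing $\sum_i \binom{|A^i|}{2}$) versus those with at least two, so the overall architecture is the same. The difference lies in how the multi-entry piece is evaluated: you stratify by level $j$, write $(|A^j|-1)(P_{j-1}-1) = P_j - P_{j-1} - |A^j| + 1$, and telescope; the paper instead indexes by the full support set $I \subseteq \{1,\ldots,k\}$, obtaining $\sum_{|I|\geq 2}\prod_{i\in I}(|A^i|-1)$, and then closes the count via the expansion $\prod_i |A^i| = \prod_i\bigl(1+(|A^i|-1)\bigr) = \sum_I \prod_{i\in I}(|A^i|-1)$ after moving the $|I|\leq 1$ terms to the other side. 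Both routes are one-line identities, so neither buys a real advantage; your telescoping is perhaps marginally more self-contained, while the paper's subset expansion makes the combinatorial meaning of each term in the final formula slightly more transparent.

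Your caveat (i) is well placed: Definition~\ref{unitcontain} literally says ``level $k$,'' but the formula, the paper's own proof, and the later use in the support function all require $U(A)$ to contain collision units of every level $\leq k$. You are right to read it that way, and the paper's argument implicitly does the same.
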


\begin{proof}
The summand $\sum_{i=1}^k \binom{|A^i|}{2}$ counts the collision units in $A$ with exactly one nonempty entry, where there is no neighboring condition imposed on the entries.
The number of other collision units in $A$ is clearly equal to 
$$ \sum_{I \subseteq \{ 1, \ldots , k \}, |I| \geq 2 } \prod_{i \in I} (|A^i|-1).$$

We thus need to verify the equality
$$\sum_{I \subseteq \{ 1, \ldots , k \}, |I| \geq 2 } \prod_{i \in I} (|A^i|-1) = \prod_{i=1}^k |A^i| - \sum_{i=1}^k |A^i| + k -1.$$

Moving $-\sum_{i=1}^k |A^i| + k -1$ to the left, we obtain the sum over all the subsets including the empty one:
$$ \sum_{I \subseteq \{ 1, \ldots , k \} } \prod_{i \in I} (|A^i|-1) = \prod_{i=1}^k |A^i|,$$
and the formula above follows immediately from the multinomial theorem.
\end{proof}

We are now equipped to define the support function on the velocity fan. We first define it on metric $n$-bracketings, and then precompose with $\Gamma$, which is an isomorphism by Proposition \ref{mainvelocitythm}.

\begin{definition}
\label{supportfunction}
Define $h^{\met}_\cT:
\cK^{\met}(\cT)\rightarrow \mathbb{R}$ as
\begin{align}
\label{supfunction}
h^{\met}_\cT \bigl( \ell_{\sB}\bigr) \coloneqq 
\sum_{\substack{1 \leq k \leq n \\ A \in \sB^k}} \ell_{\sB}(A) | U(A)| ,
\end{align}
and define $h_\cT:
\cF(\cT)\rightarrow \mathbb{R}$ as $h_\cT = h^{\met}_\cT \circ \Gamma^{-1}.$
\null\hfill$\triangle$
\end{definition}

\begin{remark}
Prop \ref{explicit-support} allows us to write the definition of the support function by an explicit formula without referring to collision units.
However, the framework of collision units will be very useful for establishing strong convexity in the following subsection.
\null\hfill$\triangle$
\end{remark}

\begin{figure}[H]
\centering
\def\svgwidth{0.7\textwidth}
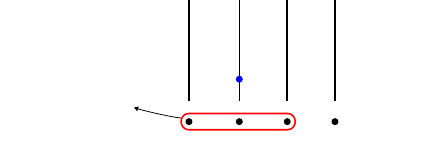
\caption{Here we depict a metric 2-bracketing and indicate the containment of collision units in individual brackets.
The value of $h_{\cT}$ is therefore equal to $1 \times 3 + 2 \times 3 = 9$.}
\end{figure}

\begin{lemma}
For a concentrated $n$-associahedron $\cK (\cT)$, the function $h_\cT$ is piecewise-liner on the velocity fan $\cF(\cT)$.
\end{lemma}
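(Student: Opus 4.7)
The function $h_\cT$ factors as $h^{\met}_\cT \circ \Gamma^{-1}$, where $\Gamma^{-1}\colon \cF(\cT)\to \cK^{\met}(\cT)$ is a piecewise-linear isomorphism by Proposition \ref{mainvelocitythm}. Since compositions of piecewise-linear maps are piecewise-linear, it suffices to show that $h^{\met}_\cT$ is piecewise-linear on $\cK^{\met}(\cT)$, i.e., linear on each conical set $\tau^{\met}(\sB)$ and consistent on intersections.

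Fix $\sB \in \cK(\cT)$. A point $\ell_\sB \in \tau^{\met}(\sB)$ is, by Definition \ref{def:metric_stable_tree-pair}, an assignment of real numbers $\{\ell_\sB(A)\}_A$ to the brackets $A$ of $\cT$, subject to the coherence relations \eqref{eq:coherences}, which are themselves \emph{linear} equations. The defining formula
\begin{align*}
h^{\met}_\cT(\ell_\sB) \;=\; \sum_{\substack{1\leq k \leq n \\ A \in \sB^k}} \ell_\sB(A)\,|U(A)|
\end{align*}
is a linear combination of these coordinate functions with constant coefficients $|U(A)|$, so its restriction to $\tau^{\met}(\sB)$ is linear.

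To see that the various local expressions of $h^{\met}_\cT$ glue into a single function on $\cK^{\met}(\cT)$, I would observe that if $\ell \in \tau^{\met}(\sB_1) \cap \tau^{\met}(\sB_2)$, then by condition (\ref{metricdefcond1}) of Definition \ref{def:metric_stable_tree-pair} we have $\ell(A)=0$ for any bracket $A$ not in the underlying $n$-bracketing of $\ell$. Consequently the sum over $\sB_1^k$ and the sum over $\sB_2^k$ both collapse to the sum over the brackets in the support of $\ell$, and yield the same number. Thus $h^{\met}_\cT$ is a well-defined piecewise-linear function on $\cK^{\met}(\cT)$, and pulling back along $\Gamma^{-1}$ yields a piecewise-linear function $h_\cT$ on $\cF(\cT)$.

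I do not foresee any substantive obstacle: the only subtlety is recognizing that linearity is not broken by the coherence relations, precisely because those relations are linear, and that well-definedness across conical sets is automatic once one recalls that $\ell(A)$ vanishes outside the support of $\ell$. All of this is immediate from the machinery set up in earlier sections; the concentrated hypothesis plays no role in this lemma and will only be needed when establishing strong convexity of $h_\cT$.
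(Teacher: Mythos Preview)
Your proof is correct and follows essentially the same approach as the paper: reduce via $\Gamma^{-1}$ to showing $h^{\met}_\cT$ is piecewise-linear on $\cK^{\met}(\cT)$, then observe that the defining formula is linear in the bracket weights $\ell_\sB(A)$ and that the expressions on overlapping conical sets agree because $\ell(A)=0$ off the support. Your remark that the concentrated hypothesis is not used here is also accurate.
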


\begin{proof}
By Proposition \ref{mainvelocitystatement},  it suffices to verify that $h^{\met}_\cT$ is piecewise-linear on $\cK^{\met}(\cT)$.  It is clear that the function $h^{\met}_\cT$ agrees on the intersection of any two conical sets in $\cK^{\met}(\cT)$, so it remains to verify that $h^{\met}_\cT$ is linear on each conical set in $\cK^{\met}(\cT)$.
By Definition \ref{def:ops_on_metric_n-bracketings}, the addition of metric $n$-bracketings with compatible underlying bracketings is obtained by unioning bracket sets and adding weights on the same bracket.
The above support function is defined as a sum of values, that are computed for individual $k$-brackets, regardless of the $n$-bracketing to which these $k$-brackets belong. The result follows. 
\end{proof}

Note that being defined individually on every bracket is a sufficient, but perhaps nonnecessary condition for a function to be linear on cones of the velocity fan.

\begin{remark}
Possible modifications of the support function include removing the condition of vertices being neighbors in the Definition \ref{collision_unit} of a collision unit, or assigning a positive integral weight to every individual collision unit and counting it with such a weight.
These modifications would give other integral functions, that are linear on the cones of the velocity fan in the concentrated case, and strictly convex.
The function $h_\cT$ described in this section is the minimal one having values in positive integers.
\null\hfill$\triangle$
\end{remark}

We now prove the following lemma.

\begin{lemma}
The function $h_\cT$, introduced in Definition \ref{supportfunction}, is strongly convex, that is, for ${\mathbf u}$ and ${\mathbf v}$ from different chambers of the velocity fan,
\begin{align}
h_\cT(\mathbf{u} + \mathbf{v})
<
h_\cT(\mathbf{u}) + h_\cT(\mathbf{v})
\end{align}
\end{lemma}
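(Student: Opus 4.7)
My plan is to exhibit $h_\cT$ as the support function of a Minkowski sum of standard simplices, one for each collision unit, and then to verify that this Minkowski sum has normal fan equal to $\cF(\cT)$.  First, interchanging the order of summation in \eqref{supfunction} gives
\begin{align*}
h^\met_\cT(\ell_\sB)
\;=\;
\sum_{\mathbf{c}\in U(\cT)} g_\mathbf{c}(\ell_\sB),
\qquad
g_\mathbf{c}(\ell_\sB)
\;\coloneqq\;
\sum_{A\in\sB\,:\,\mathbf{c}\in U(A)} \ell_\sB(A),
\end{align*}
which reduces the lemma to analyzing the individual $g_\mathbf{c}\circ\Gamma^{-1}$.

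The central step is to show that for each collision unit $\mathbf{c}$ there is a subset $S_\mathbf{c}$ of the coordinate set $\{(p,s)\}$ of $\mathbb{R}^m$ so that $g_\mathbf{c}\circ\Gamma^{-1}(\mathbf{v})=\min_{(p,s)\in S_\mathbf{c}} v^p_s$, making $g_\mathbf{c}\circ\Gamma^{-1}$ the support function of the standard simplex $\Delta_{S_\mathbf{c}}$. When $\mathbf{c}$ has a single non-empty entry $\mathbf{c}_k=\{u^k_i,u^k_j\}$, the natural candidate is $S_\mathbf{c}=\{(k,s):i\le s<j\}$: the inequality $\Gamma(\ell_\sB)^k_s\ge g_\mathbf{c}(\ell_\sB)$ on $S_\mathbf{c}$ is immediate from the consecutivity of $A^k$ in $k$-brackets, and the \textsc{(nested)} axiom, applied to the containment-minimal $A^*\in\sB^k$ with $\{u^k_i,u^k_j\}\subseteq A^{*k}$, produces a specific $s\in[i,j-1]$ at which no bracket of $\sB$ strictly contained in $A^*$ contains both $u^k_s$ and $u^k_{s+1}$, forcing equality $\Gamma(\ell_\sB)^k_s=g_\mathbf{c}(\ell_\sB)$. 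For collision units with several non-empty entries I would invoke the coherence relation \eqref{eq:coherences} to transfer weight across depths; the concentrated hypothesis is what makes this work cleanly, since at each depth $k<n$ only one vertex has descendants and hence the non-singleton $k$-brackets of $\sB$ organize themselves into a chain-like structure that allows the \textsc{(nested)} and \textsc{(partition)} arguments of the single-entry case to be applied level-by-level.

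Granting the min-formula, $h_\cT=\sum_\mathbf{c} h_{\Delta_{S_\mathbf{c}}}$ is the support function of the Minkowski sum $P_\cT\coloneqq\sum_{\mathbf{c}\in U(\cT)}\Delta_{S_\mathbf{c}}$, hence subadditive on $\mathbb{R}^m$. To promote weak to strict subadditivity across distinct chambers of $\cF(\cT)$, it suffices to check that $h_\cT$ bends at every wall: by Lemma \ref{kbracketinreducedwall} each wall is controlled by a $k$-bracket $A$ with $\cK(A)$ one-dimensional, of either three-leaf or two-leaf type (Lemma \ref{1dimtrees}), and in each case I would exhibit an explicit collision unit $\mathbf{c}$ built from $A$ whose $S_\mathbf{c}$ straddles the wall in such a way that the minimum-realizing coordinate switches as one crosses it, which is enough to make $h_{\Delta_{S_\mathbf{c}}}$ (and hence $h_\cT$) bend strictly. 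The main obstacle will be the min-formula for multi-entry collision units in the second step: the coherence relation must be applied simultaneously at several depths, and the ``spurious'' contributions to the various $\Gamma(\ell_\sB)^p_s$ must be controlled using the rigid structure imposed by the concentrated hypothesis.
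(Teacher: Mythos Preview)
Your route is genuinely different from the paper's and, if completed, would work; but it is considerably longer and two of its steps are only sketched.  The paper does not decompose $h_\cT$ into per-unit pieces at all.  Instead it uses that $\cF(\cT)$ coarsens the braid arrangement (Lemma~\ref{coarsening}) to reduce strong convexity to \emph{supermodularity} on $0$--$1$ vectors, and then proves
\[
h_\cT(\rho_S)+h_\cT(\rho_T)\;<\;h_\cT(\rho_{S\cup T})+h_\cT(\rho_{S\cap T})
\]
for incompatible ray generators by a short counting argument: every collision unit appearing on the left appears at least as often on the right, and by induction on~$n$ one constructs a unit appearing strictly more often on the right.  The Minkowski-sum description you are aiming for is proved in the paper \emph{afterwards} (Theorem~9.23), using the strong convexity already established rather than the other way around.

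In your proposal the main gaps are these.  First, for multi-entry collision units you have not specified $S_{\mathbf c}$; the correct choice (the support of $\rho(\sC(\mathbf c))$ in Definition~\ref{simplices}) can be strictly larger than the set of $\mathbf c$-coordinates, as Figure~\ref{fig:collision_unit_simplex} already shows, and establishing the pointwise identity $g_{\mathbf c}\circ\Gamma^{-1}=h_{\Delta_{S_{\mathbf c}}}$ in that generality requires more than ``invoke the coherence relation''---one must show that on each chamber $\tau(\sB)$ a fixed coordinate of $S_{\mathbf c}$ realises the minimum, which is not automatic since $\tau(\sB)$ is a union of braid cones.  Second, your strict-bending step leans on Lemma~\ref{kbracketinreducedwall}, which only classifies \emph{reduced} walls of the \emph{triangulated} fan; bringing this to bear on arbitrary walls of $\cF(\cT)$ would require the recursive normal-vector computation of \S\ref{wallssection}.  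The paper's supermodularity argument sidesteps both issues: it never needs the per-unit min-formula, and the strict inequality is obtained directly rather than wall-by-wall.
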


We need the following technical definition.

\begin{definition}
\label{c-coordinate}
Let $\bf{c}$ be a collision unit with $ \{ u^k_a,u^k_b \} \in \mathbf{c}$.
Say that $x^k_i$ is a $\bf{c}$-coordinate if $i \in [a,b)$.
\null\hfill$\triangle$
\end{definition}

Let $\sC \in \mathfrak{X}(\cT)$.
From Lemma \ref{rho=zeta}, we know that $\bf{c}$ is contained in some bracket of $\sC$ as in Definition \ref{unitcontain} whenever $\rho(\sC)^k_i = 1$ for all $\bf{c}$-coordinates $x^k_i$. 
 Since the velocity fan for a concentrated $n$-associahedron $\cK(\cT)$ is a refinement of the braid arrangement by Lemma \ref{coarsening}, it suffices to prove supermodularity of $h_\cT$, which we now recall (see \cite{ardila2020coxeter,Castillo-Liu}).
Let $\rho_S$ and $\rho_T$ be ray generators, that are indicator vectors for some sets $S$ and  $T$ which are subsets in the set of all the coordinates of the fan.  We denote by $\rho_{S \cup T}$ and $\rho_{ S \cap T}$ the indicator vectors of, respectively, union and intersection of these sets and note that $\rho_S + \rho_T = \rho_{S \cup T} + \rho_{S \cap T}$. 
For any rays $\rho_S$ and $\rho_T$, the vectors $\rho_{S \cup T}$ and $\rho_{S \cap T}$ belong to a common chamber of the braid arrangement, hence by Lemma \ref{coarsening}, they belong to a common chamber of the velocity fan. 

\begin{remark}
We emphasize that it is possible to have two collisions $\sC_1$ and $\sC_2$ with some nontrivial brackets $A_1 \in \sC_1^k$ and $A_2 \in \sC_2^k$ such that $A^k_1 \cap A^k_2 \neq \emptyset$, and $\rho_S = \rho(\sC_1)$, $\rho_T = \rho(\sC_2)$ with $S\cap T= \emptyset$.  For example, take the 1-associahedron, and $\sC_1$ and $\sC_2$ determined by the brackets $\{u_i,u_{i+1}\}$ and $\{u_{i+1},u_{i+2}\}$, respectively.
\null\hfill$\triangle$
 \end{remark}

In the proof of the Lemma below, we let $\sC(\rho)$ denote the collision corresponding to a ray generator $\rho \in \cF(\cT)$.

\begin{lemma}
For $\rho_S$ and $\rho_T$ ray generators from different chambers of the velocity fan,
\begin{align}
h_\cT(\rho_S) + h_\cT(\rho_T) < h_\cT(\rho_{S \cup T}) + h_\cT(\rho_{S \cap T})
\end{align}
\end{lemma}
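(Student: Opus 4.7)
The plan is to reformulate $h_\cT(\rho_S)$ as a set function counting collision units contained in $S$, and then to invoke the general fact that such ``contained-in'' counts are supermodular, with a witness collision unit supplying strict inequality whenever $S$ and $T$ fail to be a nested pair of tubes of the graph $G(\cT)$.

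\emph{First, I would prove the identity}
\begin{align*}
h_\cT(\rho_S)=|\{\mathbf{c}\in U(\cT):\mathbf{c}\subseteq S\}|,
\end{align*}
where by Lemma~\ref{rho=zeta} I identify the $0$-$1$ ray generator $\rho_S$ with its support $S\subseteq\bigsqcup_k\{x^k_i\}$, and where $\mathbf{c}\subseteq S$ means every $\mathbf{c}$-coordinate (Def.~\ref{c-coordinate}) lies in $S$. Expanding $h_\cT(\rho_S)=\sum_A|U(A)|$ over essential brackets $A$ of $\sC(\rho_S)$ (Def.~\ref{supportfunction}) and swapping the order of summation, the task reduces to showing that a level-$k$ collision unit $\mathbf{c}$ is contained in exactly one essential $k$-bracket of $\sC(\rho_S)$ precisely when $\mathbf{c}\subseteq S$. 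Membership follows from the defining characterization of $\rho(\sC)$ --- a pair of adjacent level-$k$ vertices sits in a common essential $k$-bracket iff the corresponding coordinate equals $1$ --- while uniqueness follows from the {\sc(nested)} axiom together with Prop.~\ref{collisioncharacterization}(2).

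\emph{Second,} for each fixed $\mathbf{c}$ a routine Boolean case-check on $[\mathbf{c}\subseteq S],[\mathbf{c}\subseteq T]\in\{0,1\}$ verifies
\begin{align*}
[\mathbf{c}\subseteq S]+[\mathbf{c}\subseteq T]\leq[\mathbf{c}\subseteq S\cup T]+[\mathbf{c}\subseteq S\cap T],
\end{align*}
with strict inequality exactly in the case $\mathbf{c}\subseteq S\cup T$, $\mathbf{c}\not\subseteq S$ and $\mathbf{c}\not\subseteq T$. Summing over $\mathbf{c}\in U(\cT)$ yields the weak supermodular bound, so it remains to furnish a single $\mathbf{c}$ realizing the strict case.

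\emph{The main obstacle} is the explicit construction of such a witness. By the nestohedral realization of the triangulated velocity fan (Lemma~\ref{concentratednnestohedral}) and the flag property of $\cK(\cT)$ (Lemma~\ref{flagcondition}), $\rho_S,\rho_T$ sit in different chambers precisely when $\{S,T\}$ fails to be a nested pair of tubes of $G(\cT)$ in the sense of Def.~\ref{nestedsetcomplexdef}. The two non-nested configurations are (A) $S\cap T\neq\emptyset$ with neither tube contained in the other, in which case $\sC(\rho_S)\vee\sC(\rho_T)=\star$ forces a pair of essential brackets $A_S\in\sC(\rho_S),A_T\in\sC(\rho_T)$ at a common level $k$ whose level-$k$ consecutive intervals $[a_S,b_S],[a_T,b_T]$ overlap without nesting; and (B) $S\cap T=\emptyset$ with $S\cup T$ itself a tube. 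In Case~(A), choosing such a minimal obstructing pair and WLOG $a_S<a_T\leq b_S<b_T$, the single-entry collision unit $\mathbf{c}_k=\{u^k_{a_S},u^k_{b_T}\}$ has $\mathbf{c}$-coordinates $[a_S,b_S-1]\cup[a_T,b_T-1]=[a_S,b_T-1]$, which lies in $S\cup T$ with $x^k_{a_S}\in S\setminus T$ and $x^k_{b_T-1}\in T\setminus S$ (minimality of the obstruction precludes extraneous overlapping brackets that would place the endpoints in $S\cap T$). In Case~(B), the connectivity of $S\cup T$ in $G(\cT)$ supplies an edge bridging $S$ and $T$: a same-level edge $x^k_i\sim x^k_{i+1}$ yields $\mathbf{c}_k=\{u^k_i,u^k_{i+2}\}$, while a cross-level edge $x^k_i\sim x^l_j$ with $k>l$ yields the two-entry unit $\mathbf{c}_k=\{u^k_i,u^k_{i+1}\},\mathbf{c}_l=\{u^l_j,u^l_{j+1}\}$. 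In every case the resulting $\mathbf{c}$ satisfies $\mathbf{c}\subseteq S\cup T$ with coordinates in both $S\setminus T$ and $T\setminus S$, producing the strict witness and completing the proof.
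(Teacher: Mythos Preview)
Your overall strategy is the same as the paper's: both sides count collision units, Boolean supermodularity gives the weak inequality, and one must exhibit a witness $\mathbf{c}$ with $\mathbf{c}\subseteq S\cup T$ but $\mathbf{c}\not\subseteq S$ and $\mathbf{c}\not\subseteq T$. Your first two steps are fine and match the paper.

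The gap is in your Case~(A). You assert that incompatibility of $\sC(\rho_S)$ and $\sC(\rho_T)$ forces essential brackets $A_S,A_T$ at a \emph{common level $k$} whose level-$k$ intervals overlap without nesting, and you then build a \emph{single-entry} witness $\mathbf{c}_k=\{u^k_{a_S},u^k_{b_T}\}$. But incompatibility can be ``cross-level'': one can have $A_S^k\subsetneq A_T^k$ while $A_T^{k-1}\subsetneq A_S^{k-1}$, so that at every individual level the intervals \emph{are} nested. Concretely, take the concentrated $2$-associahedron with three lines and three points on $u^1_1$ (coordinates $x^1_1,x^1_2,x^2_1,x^2_2$), and let
\[
S=\{x^1_1,x^1_2,x^2_1\},\qquad T=\{x^1_1,x^2_1,x^2_2\}.
\]
Here $S\cap T\neq\emptyset$ and neither contains the other, so you are in Case~(A). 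The offending brackets are $A_S=(\{u^1_1,u^1_2,u^1_3\},\{u^2_1,u^2_2\})$ and $A_T=(\{u^1_1,u^1_2\},\{u^2_1,u^2_2,u^2_3\})$; at level~$2$ one has $A_S^2\subsetneq A_T^2$, at level~$1$ the reverse. Since $S\setminus T=\{x^1_2\}$ lives entirely at level~$1$ and $T\setminus S=\{x^2_2\}$ entirely at level~$2$, no single-entry collision unit can have $\mathbf{c}$-coordinates in both, so your construction produces nothing.

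This is precisely the situation the paper handles with its ``second case'': it builds a \emph{two-entry} witness by taking a level-$(k-1)$ pair inside $\pi(A_S)\setminus\pi(A_T)$ together with a neighboring level-$k$ pair inside $A_T^k\setminus A_S^k$ (in the example above, $\mathbf{c}=(\{u^1_2,u^1_3\},\{u^2_2,u^2_3\})$ works). The paper organizes this via induction on $n$, reducing to the projections when $\pi(\sC_S),\pi(\sC_T)$ are already incompatible, and otherwise locating the cross-level obstruction at depth~$n$. Your Case~(B) construction is correct, but you need to augment Case~(A) to cover the cross-level phenomenon---either by the paper's inductive reduction or by directly building the two-entry unit as above.
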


\begin{proof}
Both sides count some collision units, once or twice.
We show that every collision unit counted on the left is also counted on the right at least as many times, and there exists a collision unit counted on the right that does not appear on the left.

Let $\bf{c}$ be a collision unit counted twice on the left.
This means that all $\bf{c}$-coordinates are 1 in both $\rho_S$ and $\rho_T$, thus also in $\rho_{S \cup T}$ and $\rho_{S \cap T}$, so $\bf{c}$ is also counted twice on the right.
Now let $\bf{c}$ be a collision unit counted at least once on the left.
This means that either all $\bf{c}$-coordinates are 1 in $\rho_S$, or in $\rho_T$, so also in $\rho_{S \cup T}$ , so $\bf{c}$ is also counted on the right.

We now construct a collision unit counted on the right but not on the left, working by induction. 
Case $n = 0$ is trivial. Assume the lemma for $(n-1)$-bracketings.
If $\pi(\sC(\rho_S))$ and $\pi(\sC(\rho_T))$ are incompatible, then in particular they are collisions in $\cK(\pi(\cT))$ (note that it is not true for general projections of collisions).
Then by induction there exists a collision unit in $\mathbf{c}^{-} \in U(\pi(\cT))$ counted only on the right of the inequality 
\begin{align}
h_{\pi(\cT)}(\pi(\rho_S)) + h_{\pi(\cT)}(\pi(\rho_T)) < h_{\pi(\cT)}(\pi(\rho_{S \cup T})) + h_{\pi(\cT)}(\pi(\rho_{S \cap T}))
\end{align}
We lift it to the desired collision unit in $\mathbf{c} \in U(\cT)$ by appending $\varnothing$.

Now let $\pi(\sC(\rho_S))$ and $\pi(\sC(\rho_T))$ be compatible.
Then there exist two incompatible $n$-brackets $A_S = (A^1_S, \ldots, A^n_S) \in \sC^n(\rho_S)$ and $A_T = (A^1_T, \ldots, A^n_T) \in \sC^n(\rho_T)$.
There are two possibilities: either $A^n_S$ and $A^n_T$ are not nested, or 
$A^n_S \subsetneq A^n_T$, while $\pi(A_T) \subsetneq \pi (A_S)$ (or vice versa).
In the first case, there exists a pair of depth $n$ vertices $u^n_a$ and $u^n_b$ with $u^n_a \in A^n_S$, $u^n_a \notin A^n_T$ and $u^n_b \in A^n_T$, $u^n_b \notin A^n_S$, and the desired collision unit consists of this pair and $\varnothing$ elsewhere.\footnote{Note that this argument is the reason why we needed to allow nonneighboring vertices in collision units with only one nonempty entry.}

Suppose we are in the second case so that $A^n_S \subsetneq A^n_T$, while $\pi(A_T) \subsetneq \pi (A_S)$, and take $\mathbf{c}^-$ some collision unit inside $U(\pi(A_S)) \sqcup U(\pi^2(A_S)) \ldots \sqcup U(\pi^{n-1}(A_S))$ but not in $U(\pi(A_T)) \sqcup U(\pi^2(A_T)) \ldots \sqcup U(\pi^{n-1}(A_T))$. 
Let $(u^n_a,u^n_{a+1})$ be a neighboring pair of points such that both are in $A^n_T$ and at least one of them is not in $A^n_S$.
Extending $\mathbf{c}^-$ by  sufficiently many $\varnothing$ and this pair, we obtain the desired collision unit $\mathbf{c}$.
\end{proof}

\begin{remark}
In the proof above we use the fact that for a concentrated $n$-associahedron, incompatibility of two collisions implies existence of two incompatible brackets.
This statement is false for a general $n$-associahedron.
\null\hfill$\triangle$
\end{remark}

\begin{figure}[H]
\centering
\def\svgwidth{0.75\textwidth}
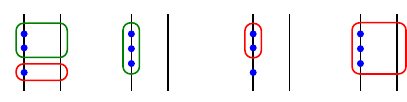
\caption{
This picture illustrates the procedure in the proof above.
We are in the case $A_1^2 \subsetneq A_2^2$, $\pi(A_2) \subsetneq \pi(A_1)$, so $\mathbf c^- = (\{u_1^1,u_2^1)\})$, $\mathbf c = (\{u_1^1,u_2^1\}, \{u_2^2,u_3^2\})$.
}
\end{figure}

We have thus proved the following.

\begin{theorem}
\label{polytopes}
Velocity fans for concentrated $n$-associahedra are projective.
\end{theorem}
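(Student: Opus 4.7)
The plan is to assemble the pieces that have already been established in the preceding subsection. Recall that a complete fan $\cF$ in $\bR^m$ is projective if and only if it admits a strongly convex piecewise-linear support function $h: |\cF| \to \bR$, in which case the polytope
\begin{align}
P_h \coloneqq \{x \in \bR^m : \langle v, x\rangle \geq h(v) \text{ for all } v \in |\cF|\}
\end{align}
has $\cF$ as its normal fan. So the goal reduces to showing that $h_\cT$ is strongly convex on $\cF(\cT)$.

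First I would observe that by Theorem \ref{mainthmagain}, $\cF(\cT)$ is a complete fan, and it has been shown that $h_\cT = h^{\met}_\cT \circ \Gamma^{-1}$ is piecewise-linear on $\cF(\cT)$. Next, by Lemma \ref{coarsening}, for a concentrated $\cT$ the velocity fan $\cF(\cT)$ is a coarsening of the braid arrangement, so every pair of adjacent chambers of $\cF(\cT)$ is separated by a hyperplane of the form $x_i = x_j$ and every ray generator can be taken to be the indicator vector $\rho_S$ of a subset $S$ of coordinates. Under these hypotheses, strong convexity of a piecewise-linear function on $\cF(\cT)$ is equivalent to the supermodular inequality
\begin{align}
h_\cT(\rho_S) + h_\cT(\rho_T) < h_\cT(\rho_{S\cup T}) + h_\cT(\rho_{S\cap T})
\end{align}
holding whenever $\rho_S$ and $\rho_T$ lie in distinct chambers of $\cF(\cT)$; this is the standard criterion for generalized permutahedra, and the equality $\rho_S + \rho_T = \rho_{S\cup T} + \rho_{S\cap T}$ combined with the fact that $\rho_{S\cup T}$ and $\rho_{S\cap T}$ automatically lie in a common chamber of $\cF(\cT)$ (since they lie in a common chamber of the braid arrangement) turns the strong-convexity inequality into precisely the supermodular one.

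The supermodular inequality is exactly the content of the lemma immediately preceding the theorem, which was proved by a bracket-wise counting argument on collision units. Therefore $h_\cT$ is strongly convex on $\cF(\cT)$, and the associated polytope $P_{h_\cT}$ has normal fan equal to $\cF(\cT)$, witnessing projectivity. Because $h_\cT$ takes integer values on collisions (each $|U(A)|$ is an integer and $\Gamma$ identifies integer metric $n$-bracketings with integer points of cones by Corollary \ref{conecomplexlatticepropertyvelocityfan}), the polytope $P_{h_\cT}$ is in fact an integral generalized permutahedron, and by Lemma \ref{fibervelocityfanfaceposetlemma} and Theorem \ref{mainthmagain} its face poset is $\cK(\cT)$.

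There is essentially no obstacle remaining at this point: all of the genuine content sits in the supermodularity lemma, which has already been carried out. The only subtlety worth flagging in the writeup is the reduction step that strong convexity on a braid-coarsening fan is witnessed by supermodularity on indicator vectors; this is well-known in the generalized permutahedron literature but deserves a one-line citation to, e.g., \cite{ardila2020coxeter} or \cite{Castillo-Liu}, both of which are already referenced in this section.
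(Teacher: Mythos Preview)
Your proposal is correct and follows essentially the same approach as the paper: the paper's proof is simply the one-line ``We have thus proved the following,'' since the piecewise-linearity lemma and the supermodularity lemma immediately preceding the theorem do all the work, and you have spelled out exactly that assembly. Your additional remarks on integrality and the face poset go a bit beyond the theorem as stated (and the citation of Lemma \ref{fibervelocityfanfaceposetlemma} is misplaced---that lemma concerns fiber products, whereas the face poset claim follows directly from Theorem \ref{mainthmagain}), but the core argument matches the paper's.
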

 
Given a concentrated rooted plane tree $\cT$, we let $P(\cT)$ denote the polytope determined by the support function $h_{\cT}$.

\begin{remark}
Loday's associahedron \cite{Lodayassociahedron} is a special case of $P(\cT)$, as is
Forcey's multiplihedron \cite{Forcey} (to be precise, a realization of the multiplihedron due to \cite{Ardilla-Doker} is a special case -- Forcey's realization is a projection of this realization).
\null\hfill$\triangle$
\end{remark}

\begin{figure}[H]
\centering
\def\svgwidth{0.75\textwidth}
\includegraphics[width=0.75\textwidth]{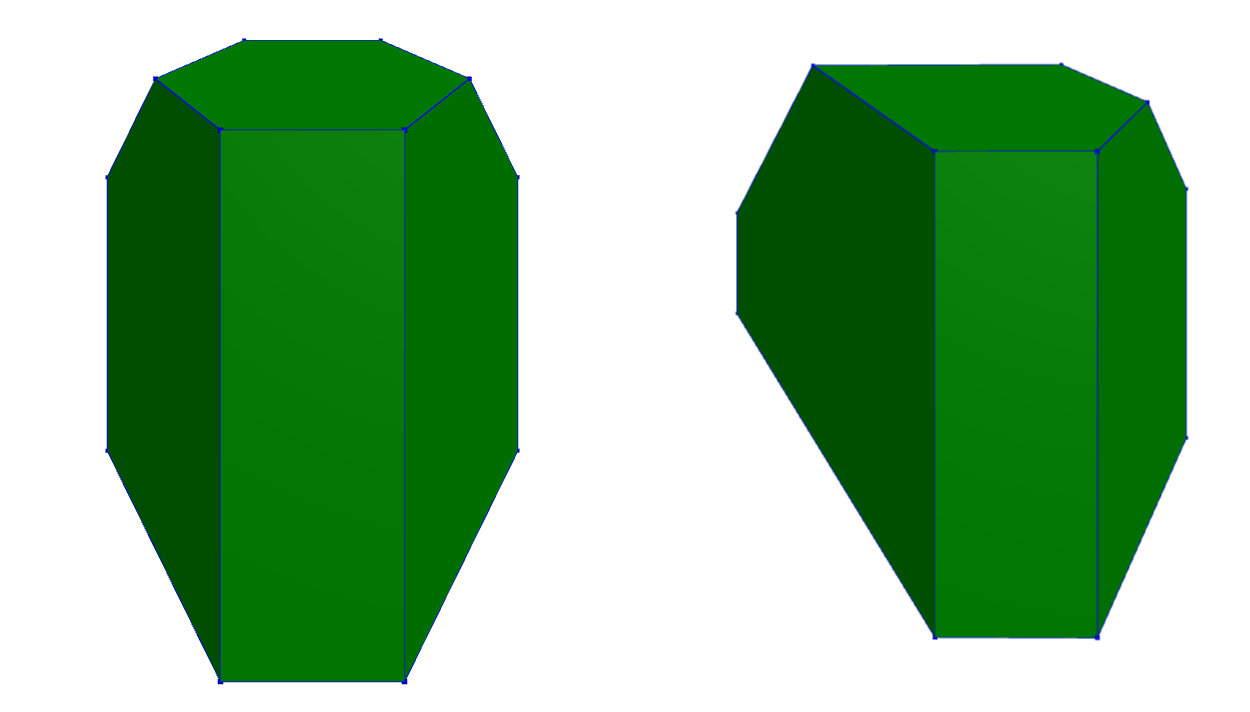}
\caption{
On the left: the polytope $P(3,0,0)$, which is also isomorphic to $P(0,2,0,0)$.  On the right: the polytope $P(2,0,0,0)$.
}
\end{figure}

 We will refine our description of this realization by expressing it as a Minkowski sums of simplices, and by providing vertex coordinates.

\subsection{Minkowski sums of simplices}

\

Any generalized permutahedron $P$ admits a description as a signed Minkowski sum of the standard simplices (see \cite{danilov2000cores,postnikovgp, Ardilla-Doker-Benedetti}):
\begin{align}
P = \sum_{S}\mathbf{y}_S(P) \Delta(S),
\end{align}

where $S$ ranges over all the subsets of the ground set $[n]$,  $\mathbf{y}_S(P) \in \mathbb{R}$, and $\Delta(S)$ denotes the convex hull of the standard basis vectors indicating elements of $S$.  The class of \emph{$Y$-generalized permutahedra}, also known as 
\emph{hypergraph polytopes}, are those generalized permutahedra for which the coefficients in the sum satisfy $\mathbf{y}_S(P) \geq 0$, i.e. they are positive Minkowski sums of simplices. 
 In this subsection, we describe concentrated $n$-associahedra as hypergraph polytopes: for a concentrated tree $\cT$, we associate a simplex to every collision unit and take the sum of all such simplices.

\begin{definition}
\label{simplices}
For a collision unit $\mathbf{c}$, let $A(\mathbf{c})$ be the smallest bracket containing $\mathbf{c}$, and let $\sC(\mathbf{c})$ be the smallest collision containing $A(\mathbf{c})$.
The set $S(\mathbf{c})$ is defined as the support of the ray of $\sC(\mathbf{c})$.
The simplex associated to $\mathbf{c}$ is thus $\Delta(S(\mathbf{c}))$.
\null\hfill$\triangle$
\end{definition}

Note that the smallest collision containing a bracket is well-defined in the concentrated case, but not in general.

\begin{figure}[ht]
\centering
\def\svgwidth{0.75\textwidth}
\begingroup%
  \makeatletter%
  \providecommand\color[2][]{%
    \errmessage{(Inkscape) Color is used for the text in Inkscape, but the package 'color.sty' is not loaded}%
    \renewcommand\color[2][]{}%
  }%
  \providecommand\transparent[1]{%
    \errmessage{(Inkscape) Transparency is used (non-zero) for the text in Inkscape, but the package 'transparent.sty' is not loaded}%
    \renewcommand\transparent[1]{}%
  }%
  \providecommand\rotatebox[2]{#2}%
  \newcommand*\fsize{\dimexpr\f@size pt\relax}%
  \newcommand*\lineheight[1]{\fontsize{\fsize}{#1\fsize}\selectfont}%
  \ifx\svgwidth\undefined%
    \setlength{\unitlength}{311.52477493bp}%
    \ifx\svgscale\undefined%
      \relax%
    \else%
      \setlength{\unitlength}{\unitlength * \real{\svgscale}}%
    \fi%
  \else%
    \setlength{\unitlength}{\svgwidth}%
  \fi%
  \global\let\svgwidth\undefined%
  \global\let\svgscale\undefined%
  \makeatother%
  \begin{picture}(1,0.40051698)%
    \lineheight{1}%
    \setlength\tabcolsep{0pt}%
    \put(0,0){\includegraphics[width=\unitlength,page=1]{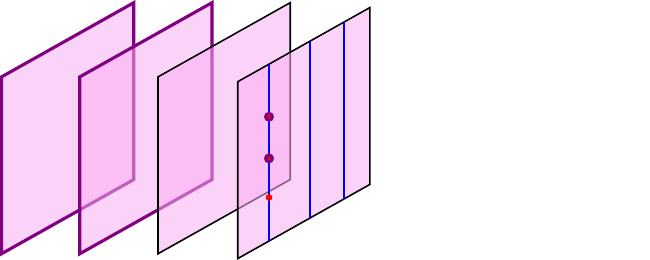}}%
    \put(0.12853595,0.38738083){\makebox(0,0)[lt]{\lineheight{1.25}\smash{\begin{tabular}[t]{l}$u_1^1$\end{tabular}}}}%
    \put(0.25933262,0.38738083){\makebox(0,0)[lt]{\lineheight{1.25}\smash{\begin{tabular}[t]{l}$u_2^1$\end{tabular}}}}%
    \put(0.37476413,0.22003185){\makebox(0,0)[lt]{\lineheight{1.25}\smash{\begin{tabular}[t]{l}$u_1^3$\end{tabular}}}}%
    \put(0.37476413,0.15369673){\makebox(0,0)[lt]{\lineheight{1.25}\smash{\begin{tabular}[t]{l}$u_2^3$\end{tabular}}}}%
    \put(0.63699865,0.20759445){\makebox(0,0)[lt]{\lineheight{1.25}\smash{\begin{tabular}[t]{l}$\rightsquigarrow \Delta(x_1^1,x_2^1,x_3^1,x_1^3)$\end{tabular}}}}%
  \end{picture}%
\endgroup%

\caption{
\label{fig:collision_unit_simplex}
Here we depict a collision unit $\mathbf{c} = (\{u^1_1,u^1_2\}, \varnothing, \{u^3_1, u^3_2\})$ in a concentrated 3-associahedron, and a simplex associated to it by the above construction.
} 
\end{figure}

\begin{theorem}
The realized concentrated $n$-associahedron $P(\cT)$ has $\mathbf{y}_S(P(\cT)) = 1$ when $S = S(\mathbf{c})$ for some collision unit, and 0 otherwise. 
\end{theorem}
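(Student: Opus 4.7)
The plan is to verify $P(\cT)=M$, where $M \coloneqq \sum_{\mathbf{c}\in U(\cT)}\Delta(S(\mathbf{c}))$, by comparing support functions. The statement implicitly asserts that the map $\mathbf{c}\mapsto S(\mathbf{c})$ is injective, which I would establish first: one can recover $l(\mathbf{c})$ as the deepest level carrying a nonzero coordinate in $S(\mathbf{c})$, and then recover each $\mathbf{c}_i$ from the consecutive block of coordinates of $S(\mathbf{c})$ at level $i$, using that multi-entry units have adjacent-pair entries per Def \ref{collision_unit}. Granted injectivity, $M$ realizes the stated assignment $\mathbf{y}_{S(\mathbf{c})}=1$, with all other $\mathbf{y}_S=0$.

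By Lemma \ref{coarsening} the velocity fan $\cF(\cT)$ coarsens the braid arrangement $\mathcal{A}_m$, and $M$ is a generalized permutahedron, so both $h_{P(\cT)}$ and $h_M$ are piecewise-linear on $\mathcal{A}_m$. Consequently, it suffices to check agreement on the indicator vectors $\rho_T$ for $T\subseteq[m]$, which generate the rays of $\mathcal{A}_m$. Using $h_{\Delta(S)}(\rho_T)=\min_{i\in S}(\rho_T)_i=[S\subseteq T]$,
\begin{align*}
h_M(\rho_T) \,=\, |\{\mathbf{c}\in U(\cT) : S(\mathbf{c})\subseteq T\}|,
\end{align*}
while the definition $h_{P(\cT)} = h_\cT = h^{\met}_\cT\circ\Gamma^{-1}$ with $\ell_\sB\coloneqq\Gamma^{-1}(\rho_T)$ gives, after interchanging the order of summation,
\begin{align*}
h_{P(\cT)}(\rho_T) \,=\, \sum_{\mathbf{c}\in U(\cT)}\sum_{A:\mathbf{c}\in U(A)}\ell_\sB(A).
\end{align*}

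The main obstacle will be proving the ultrametric-type identity
\begin{align*}
\sum_{A:\mathbf{c}\in U(A)}\ell_\sB(A) \,=\, \min_{i\in S(\mathbf{c})}(\rho_T)_i \,=\, [S(\mathbf{c})\subseteq T],
\end{align*}
which matches the two computations term-by-term. For a single-entry unit $\mathbf{c}$ with $\mathbf{c}_l=\{u^l_a,u^l_b\}$, the left-hand sum runs over the chain of $l$-brackets whose depth-$l$ component contains the whole interval $\{u^l_a,\dots,u^l_b\}$, and using the explicit formula $(\rho_T)^l_j=\sum_{A:u^l_j,u^l_{j+1}\in A^l}\ell_\sB(A)$ that follows from Lemma \ref{rho=zeta} and the linearity of $\Gamma$, the identity reduces to the standard laminar-family fact that the total weight along this chain equals $\min_{j\in[a,b-1]}(\rho_T)^l_j$, with the minimum attained at a $j$ for which $(u^l_j,u^l_{j+1})$ is first separated exactly at the smallest bracket containing $\mathbf{c}$. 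For a multi-entry unit with $\mathbf{c}_{i_r}=\{u^{i_r}_{j_r},u^{i_r}_{j_r+1}\}$ at levels $i_1<\cdots<i_s=l$, direct inspection of $\sC(\mathbf{c})$ (a collision with fusion at depth $i_1$ and essential brackets $\pi^{l-i}(A(\mathbf{c}))$ for $i_1\leq i\leq l$) yields $S(\mathbf{c})=\{x^{i_r}_{j_r}\}_{r=1}^s$, and the identity is then obtained by iteratively applying the coherence axiom Def \ref{def:metric_stable_tree-pair}(4) to propagate the single-entry identity between consecutive levels $i_r,i_{r+1}$. Once this identity is in place, $h_{P(\cT)}(\rho_T)=|\{\mathbf{c}:S(\mathbf{c})\subseteq T\}|=h_M(\rho_T)$ for every $T$, giving $P(\cT)=M$ and the claimed Minkowski decomposition.
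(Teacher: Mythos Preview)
Your overall strategy coincides with the paper's: both reduce to checking that the two support functions agree on every indicator vector $\rho_T$, and both compute $h_M(\rho_T)=|\{\mathbf{c}:S(\mathbf{c})\subseteq T\}|$. The paper then writes $\rho_T=\rho(\sC_1)+\cdots+\rho(\sC_r)$ as a sum of disjoint collision rays in one chamber and checks the biconditional ``$S(\mathbf{c})\subseteq T$ iff $\mathbf{c}$ lies in a bracket of some $\sC_i$'' in one stroke. Your term-by-term ultrametric identity is a legitimate reformulation of that same biconditional, and your single-entry argument (a genuine laminar-family fact) is fine.

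The gap is in the multi-entry case. Your claimed description $S(\mathbf{c})=\{x^{i_r}_{j_r}\}_{r=1}^s$ is false. The minimal bracket $A(\mathbf{c})$ at an intermediate level $i_r$ must contain not only the adjacent pair $\mathbf{c}_{i_r}$ but also the unique depth-$i_r$ ancestor of the deeper entries of $\mathbf{c}$; when that ancestor is not one of the two vertices in $\mathbf{c}_{i_r}$, the interval $A(\mathbf{c})^{i_r}$ is strictly longer than two, and $S(\mathbf{c})$ picks up extra level-$i_r$ coordinates. The paper's own Figure~\ref{fig:collision_unit_simplex} already exhibits this: for $\mathbf{c}=(\{u^1_1,u^1_2\},\varnothing,\{u^3_1,u^3_2\})$ one has $S(\mathbf{c})=\{x^1_1,x^1_2,x^1_3,x^3_1\}$, not $\{x^1_1,x^3_1\}$. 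This invalidates both your injectivity argument (you cannot read off $\mathbf{c}_i$ from the level-$i$ block of $S(\mathbf{c})$) and your ``propagate via the coherence axiom'' step, which was predicated on the wrong shape of $S(\mathbf{c})$. The identity you want is still true, but proving it requires exactly the content of the paper's biconditional rather than the coordinate-by-coordinate bookkeeping you outlined.
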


\begin{proof}
The polytope $P(\cT)$ is a generalized permutahedron.
The Minkowski sum of simplices as described above is also a generalized permutahedron.
To compare two generalized permutahedra, it suffices to compare their support functions on all the 0-1-vectors.
Any 0-1 vector $\rho$ can be written as a sum of 0-1 vectors $\rho_1+\ldots + \rho_r$, where $\rho_i$ are ray generators in the same chamber of the velocity fan.
One then observes that a simplex $\Delta(S(\mathbf{c}))$ is contained in the support of $\rho$ if and only if the collision unit $\mathbf{c}$ is contained in a bracket of a collision corresponding to one of the $\rho_i$.
\end{proof}

\begin{figure}[ht]
\centering
\def\svgwidth{0.65\textwidth}
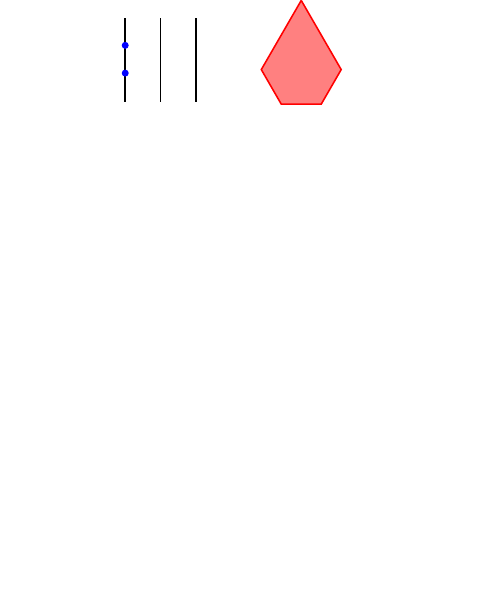
\caption{This is how the pentagon 
$\cK_{2,0,0}$ is obtained as a Minkowski sum of simplices that correspond to its six collision units.
\label{fig:minkowski_decomp}
}
\end{figure}

\begin{proposition}
The realized concentrated $n$-associahedron $P(\cT)$ is a Minkowski-summand of the nestohedron associated to $\cK(\Delta_{\cT})$.
\end{proposition}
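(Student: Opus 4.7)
The plan is to compare two positive Minkowski sum decompositions into standard simplices: the one for $P(\cT)$ from the previous theorem, and Postnikov's classical one for the relevant nestohedron.  First I would identify the nestohedron: combining Lemma \ref{concentratednnestohedral} with the nestohedral atlas construction of \S\ref{nestohedralatlassection}, the unique chart $\sigma=e$ has $V_e=\varnothing$ for concentrated $\cT$, so Definition \ref{localbuildingsetdef} gives the building set $\mathbb{B}_e = \{\on{supp}(\rho(\sC)) : \sC \in \wh{\fX}(\cT)\}$.  Postnikov's formula \cite{postnikovgp} then expresses the nestohedron as
\begin{align}
N(\cT) \;=\; \sum_{\sC \in \wh{\fX}(\cT)} \Delta(\on{supp}(\rho(\sC))).
\end{align}
By the previous theorem and Definition \ref{simplices}, $P(\cT) = \sum_{\mathbf{c} \in U(\cT)} \Delta(\on{supp}(\rho(\sC(\mathbf{c}))))$, so every simplex appearing in the decomposition of $P(\cT)$ also appears in the decomposition of $N(\cT)$.

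Next I would show that the map $\mathbf{c} \mapsto \sC(\mathbf{c})$ is injective on $U(\cT)$.  Once this is established, setting
\begin{align}
Q(\cT) \;=\; \sum_{\sC \in \wh{\fX}(\cT) \setminus \sC(U(\cT))} \Delta(\on{supp}(\rho(\sC)))
\end{align}
immediately yields $P(\cT) + Q(\cT) = N(\cT)$, exhibiting $P(\cT)$ as a Minkowski summand.  To prove injectivity, I would factor the map as $\mathbf{c} \mapsto A(\mathbf{c}) \mapsto \sC(\mathbf{c})$ and handle each factor separately.  For the first, I would exploit the concentrated structure: at each depth $i<n$ there is a unique $u^i_\ast \in V^i(\cT)$ with descendants, so the projection of any higher-depth vertex to level $i$ collapses to $\{u^i_\ast\}$.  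Given $A(\mathbf{c})$, this forces $\mathbf{c}_i = \varnothing$ when $A(\mathbf{c})^i = \{u^i_\ast\}$ and $\mathbf{c}_i = \{u^i_{\min A(\mathbf{c})^i},\, u^i_{\max A(\mathbf{c})^i}\}$ otherwise, since only that pair can extend the projections' span to match $A(\mathbf{c})^i$.  For the second factor, I would argue that the containment-minimal collision in $\wh{\fX}(\cT)$ containing a nontrivial bracket $A$ as an essential bracket has $A$ itself as its fusion bracket and is otherwise uniquely determined by the concentrated structure, so distinct $A$ yield distinct $\sC(\mathbf{c})$.

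The main obstacle will be the second factor: justifying that in the concentrated setting the smallest collision containing $A$ as an essential bracket is well-defined, with fusion bracket equal to $A$ and uniquely determined essential-bracket chain at higher depths.  This requires a careful case analysis using the partition and height-order axioms of Definition \ref{def:n-bracketings} together with Definition \ref{essentialbrackets}, verifying both existence and uniqueness of the minimal lifts above $A$.  Once this technical point is in hand, injectivity of $\mathbf{c} \mapsto \sC(\mathbf{c})$, and hence the Minkowski-summand conclusion, will follow by the argument outlined above.
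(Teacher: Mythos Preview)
Your overall strategy coincides with the paper's: compare the Minkowski-simplex decompositions of $P(\cT)$ and of the nestohedron, and observe that the simplices of the former are a subcollection of those of the latter.  The paper's entire proof is that one observation.  You are doing more work than necessary: the previous theorem already states that $\mathbf{y}_S(P(\cT))\in\{0,1\}$, with $\mathbf{y}_S=1$ exactly when $S=S(\mathbf{c})$ for \emph{some} collision unit.  Since every such $S$ is, by construction, the support of a ray $\rho(\sC(\mathbf{c}))$ and hence lies in the building set $\mathbb{B}_e$, the inclusion $\{S:\mathbf{y}_S(P(\cT))=1\}\subseteq\mathbb{B}_e$ is immediate, and the complementary sum exhibits $P(\cT)$ as a summand.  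No injectivity of $\mathbf{c}\mapsto\sC(\mathbf{c})$ is required once you cite the previous theorem as stated.

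Your injectivity argument, besides being superfluous here, has a technical slip in the first factor.  When $\mathbf{c}$ has at least two nonempty entries, Definition~\ref{collision_unit} forces each $\mathbf{c}_i$ to be a \emph{neighboring} pair.  At a level $i<l(\mathbf{c})$ where $\mathbf{c}_i=\{u^i_a,u^i_{a+1}\}$ and the distinguished vertex $u^i_\ast$ does not lie in $\{u^i_a,u^i_{a+1}\}$, the bracket $A(\mathbf{c})^i$ is the full interval from $u^i_\ast$ to the far end of $\{u^i_a,u^i_{a+1}\}$ and has size $\geq 3$; your formula $\mathbf{c}_i=\{u^i_{\min A(\mathbf{c})^i},u^i_{\max A(\mathbf{c})^i}\}$ would then produce a non-neighboring pair, contradicting the definition.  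The correct reconstruction places $\mathbf{c}_i$ as the neighboring pair at the endpoint of $A(\mathbf{c})^i$ opposite $u^i_\ast$.  Injectivity survives with this fix, but again the whole step can be bypassed.
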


\begin{proof}
The simplices in the Minkowski sum realization of a concentrated $n$-associahedron $\cK(\cT)$ are a subset of the simplices in the Minkowski sum realization of the nestohedron whose normal fan is, by Lemma \ref{concentratednnestohedral}, the triangulated velocity fan $\cF(\Delta_{\cT})$: indeed, the latter set has a simplex for every collision, while the first set has simplices only for collisions of the form $\sC = \sC(\mathbf{c})$.
\end{proof}

\begin{remark}
One can associate a simplex to {\em every} bracket just as in Definition \ref{simplices}, and sum these simplices.
By doing so, one would obtain another realization of $\cK(\cT)$ with the same fan, but with a nonminimal support function.
\null\hfill$\triangle$
\end{remark}

\subsection{Recovering the constrainahedra}

\

We now describe the precise relationship between families of concentrated $n$-associahedra, and constrainahedra.
Constrainahedra were envisioned by the second author, informally described in \cite{Tierney} and formally defined in \cite{bottman2022constrainahedra}.
When realized in terms of ``flappy trees'', the constrainahedra will be a key ingredient in equipping the Fukaya category with a monoidal $A_\infty$-structure, in the context of SYZ mirror symmetry.
(See \cite[\S5.4]{abouzaid_bottman}.)
In \cite{chapoton2022shuffles} a normally equivalent presentation was given as shuffle products of associahedra, that is, as Minkowski sums of Cartesian products of associahedra with zonotopes associated to complete bipartite graphs, thus giving explicit Minkowski coefficients.\footnote{In the recent work of Black-L\"utjeharms-Sanyal  \cite{black2024linear}, a rather different realization of constrainahedra is provided as pivot rule polytopes of products of simplices.}  For the purposes of this subsection, we take this Minkowski sum description as the starting point for defining constrainahedron.

\begin{definition}
\label{simplices_constr}
 Fix a sequence of nonnegative integers ${\bf t} = (t_1, \ldots, t_n)$. We define a polytope $C({\bf t})$ in the space with coordinates $x^k_i$ with $1\leq k \leq n$  and $1 \leq i \leq t_k$.
The polytope is defined as having Minkowski coordinates $\mathbf{y}_S(C({\bf t})) = 1$ when $S = \{(k,i)\ldots, (k,j) \}$ for some $ 1 \leq k \leq n$ and $1 \leq i < j \leq t_k$ (these correspond to associahedral summands) or when $S = \{(k,i),(k',j) \}$ for some $1 \leq k \neq k' \leq n$, $1 \leq i \leq t_k$, $1 \leq j \leq t_{k'}$ (these correspond to zonotopal summands), and $0$ otherwise.  We define a \emph{constrainahedron of profile ${\bf t}$} to be any polytope which is normally equivalent to $C({\bf t})$.
\null\hfill$\triangle$
\end{definition}

Combinatorially, faces of constrainahedra correspond to iterated collisions in a grid of orthogonal hyperplanes, as explained in \cite{bottman2022constrainahedra}.
Similarly to the velocity fan, a facet normal indicates which hyperplanes collided.

The value of the support function on such a ray is equal to the number of \emph{constrainahedral collision units} participating in this collision, where the definition of a collision unit is repeated verbatim with vertices of $\cT$ replaced by labels of hyperplanes.

\begin{definition}
Fix a sequence of nonnegative integers ${\bf t} = (t_1, \ldots, t_n)$.
A tree $\cT$ (and the corresponding $n$-associahedron) is said to have profile ${\bf t}$ if it has $t_k+1$ vertices of depth $k$. 
\null\hfill$\triangle$
\end{definition}

In the proof of the next theorem, we utilize the criterion for when the normal fan of a generalized permutahedron $P$ coarsens the normal fan of a generalized permutahedron $Q$.
The criterion combines the interpretation of chambers in the normal fan as domains of linearity for the support function, and the fact that face normals of generalized permutahedra are 0-1 vectors.

\begin{proposition}
\label{coarsening_criterion}
Let $P$ and $Q$ be generalized permutahedra with support functions $f_P$ and $f_Q$, respectively.  The normal fan of $P$ coarsens the normal fan of $Q$, when for any pair of 0-1-vectors $\mathbf{v}$ and $\mathbf{w}$ the strict inequality $f_P(\mathbf{v})+f_P(\mathbf{w}) < f_P(\mathbf{v} \lor \mathbf{w}) + f_P(\mathbf{v} \land \mathbf{w})$ implies the strict inequality $f_Q(\mathbf{v})+f_Q(\mathbf{w}) < f_Q(\mathbf{v} \lor \mathbf{w}) + f_Q(\mathbf{v} \land \mathbf{w})$.
\end{proposition}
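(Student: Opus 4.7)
The plan is to reduce the coarsening condition to a purely local wall-crossing condition on the braid arrangement, then translate this condition into the submodular-type inequality on $0$-$1$ vectors.

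First I would recall the standard structural facts about generalized permutahedra. Both normal fans $\cN(P)$ and $\cN(Q)$ coarsen the braid arrangement $\cA_m$, so each chamber of $\cN(P)$ (and of $\cN(Q)$) is a union of maximal chambers of $\cA_m$. Moreover the support function $f_P$ is globally piecewise-linear on $\cA_m$, and its maximal domains of linearity are precisely the chambers of $\cN(P)$; the same holds for $Q$. Consequently, $\cN(P)$ coarsens $\cN(Q)$ if and only if every two adjacent braid chambers that lie in a common chamber of $\cN(Q)$ also lie in a common chamber of $\cN(P)$. Since the dual graph of the braid arrangement (restricted to any fixed $\cN(Q)$-chamber) is connected, this wall-by-wall condition really does suffice to conclude the global coarsening.

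Next I would characterize when $f_P$ is linear across a single wall of $\cA_m$. Two adjacent braid chambers $\sigma_1,\sigma_2$ share a codimension-$1$ wall $\tau$; their ray generators (modulo the lineality space $\langle{\bf 1}\rangle_\bR$) are indicator vectors $\chi_F$ for flags of subsets of $[m]$, and passing from $\sigma_1$ to $\sigma_2$ amounts to replacing one ray $\chi_S$ by another $\chi_T$, where $S$ and $T$ are incomparable and the remaining rays $\chi_{S\cap T}$ and $\chi_{S\cup T}$ lie in $\tau$. The key identity is $\chi_S+\chi_T=\chi_{S\cup T}+\chi_{S\cap T}$. A piecewise-linear function on $\sigma_1\cup\sigma_2$ extends linearly across $\tau$ if and only if its values on the two generating sets agree on this linear relation, i.e.\ if and only if
\begin{align}
f_P(\chi_S)+f_P(\chi_T)=f_P(\chi_{S\cup T})+f_P(\chi_{S\cap T}).
\end{align}
Thus, strict inequality $f_P(\mathbf{v})+f_P(\mathbf{w})<f_P(\mathbf{v}\lor\mathbf{w})+f_P(\mathbf{v}\land\mathbf{w})$ on a pair of $0$-$1$ vectors $\mathbf{v}=\chi_S$, $\mathbf{w}=\chi_T$ is exactly the statement that $\sigma_1$ and $\sigma_2$ lie in different chambers of $\cN(P)$.

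Finally I would assemble the implication. Assume the hypothesis of the proposition. If $\sigma_1$ and $\sigma_2$ are adjacent braid chambers separated by $\cN(P)$, then by the characterization above the corresponding pair of $0$-$1$ vectors $\mathbf{v},\mathbf{w}$ satisfies the strict inequality for $f_P$; by hypothesis it also satisfies the strict inequality for $f_Q$, so $\sigma_1$ and $\sigma_2$ lie in different chambers of $\cN(Q)$. Contrapositively, any two braid chambers lying in a common $\cN(Q)$-chamber lie in a common $\cN(P)$-chamber (walking along a path of adjacent braid chambers inside the $\cN(Q)$-chamber and using the wall-by-wall statement). Hence every chamber of $\cN(Q)$ is contained in a chamber of $\cN(P)$, i.e.\ $\cN(P)$ coarsens $\cN(Q)$.

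The only step that requires a little care is the passage from the wall-by-wall condition to the global coarsening: one must know that any chamber of $\cN(Q)$ is connected in the braid-chamber adjacency graph, so that merging across each wall inside it can be iterated. This is standard, since chambers of generalized permutahedra are convex and convexity is preserved by the braid-chamber decomposition. Everything else is a direct translation between linearity of $f_P$ across a wall and the equality $f_P(\mathbf{v})+f_P(\mathbf{w})=f_P(\mathbf{v}\lor\mathbf{w})+f_P(\mathbf{v}\land\mathbf{w})$.
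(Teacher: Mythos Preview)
Your proposal is correct and follows exactly the approach the paper sketches: the paper does not give a formal proof of this proposition, only the one-sentence remark that ``the criterion combines the interpretation of chambers in the normal fan as domains of linearity for the support function, and the fact that face normals of generalized permutahedra are 0-1 vectors.'' Your argument is precisely the natural unpacking of that sentence, reducing coarsening to a wall-by-wall linearity check on the braid arrangement and translating linearity across a wall into the (super)modular equality/inequality on the corresponding pair of indicator vectors.
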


The following theorem demonstrates how we can recover constrainahedra in our setting.  The theorem is perhaps surprising as there are rays in constrainahedra which are not present in any of the concentrated $n$-associahedra of the same profile.

\begin{theorem}
Fix a tuple ${\bf t} = (t_1,\ldots, t_n)$ of nonnegative integers.  The polytope 
$$ C'({\bf t}) =\sum_\cT P(\cT),$$
 where $\cT$ ranges over all the concentrated trees of profile ${\bf t}$, is a constrainahedron of profile ${\bf t}$. 
\end{theorem}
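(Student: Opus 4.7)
The plan is to establish normal equivalence between $C'(\mathbf{t})$ and $C(\mathbf{t})$ by showing that their normal fans coincide. Both polytopes are presented as positive Minkowski sums of standard simplices: $C(\mathbf{t})$ directly by Definition \ref{simplices_constr}, and $C'(\mathbf{t}) = \sum_{\cT}\sum_{\mathbf{c} \in U(\cT)}\Delta(S(\mathbf{c}))$ by unpacking Definition \ref{simplices}. Since the normal fan of $\Delta(S)$ is generated by the hyperplanes $\{x_s = x_{s'} : s,s' \in S\}$, the normal fan of any Minkowski sum of standard simplices is generated by the union of these hyperplanes over the summands. It therefore depends only on the \emph{edge graph} on coordinates whose edges are the pairs $\{s,s'\}$ contained in some summand, and the proof reduces to showing that the edge graphs of $C(\mathbf{t})$ and $C'(\mathbf{t})$ coincide.

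The edge graph of $C(\mathbf{t})$ is immediately seen to be the complete graph: every same-depth pair $\{(k,a),(k,b)\}$ with $a < b$ sits inside the associahedral summand $\Delta(\{(k,a),\ldots,(k,b)\})$, and every different-depth pair is itself a zonotopal summand. For $C'(\mathbf{t})$ I would exhibit, for each pair of coordinates, a collision unit in some concentrated tree whose support $S(\mathbf{c})$ contains it. First, same-depth pairs $\{(k,a),(k,b)\}$ should be handled by the single-entry unit $\mathbf{c}$ at depth $k$ with vertex pair $\{u^k_a, u^k_{b+1}\}$; since $\mathbf{c}$ has a single nonempty entry, it is a valid collision unit in every concentrated tree of profile $\mathbf{t}$, and Lemma \ref{fundvellemma} should give $S(\mathbf{c}) = \{(k,a),(k,a{+}1),\ldots,(k,b)\}$. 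Next, different-depth pairs $\{(k,a),(k',b)\}$ with $k < k'$ should be handled by the two-entry unit with $\mathbf{c}_k = \{u^k_a, u^k_{a+1}\}$, $\mathbf{c}_{k'} = \{u^{k'}_b, u^{k'}_{b+1}\}$, and $\mathbf{c}_l = \varnothing$ elsewhere, whose enclosing collision has essential brackets only at depths $k$ and $k'$; the analogous ray computation should yield $S(\mathbf{c}) = \{(k,a),(k',b)\}$ exactly.

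The main obstacle will be showing that this two-entry unit is actually realized in some concentrated tree of profile $\mathbf{t}$. The bracket $A(\mathbf{c})$ requires that, at every depth $l$ with $k \leq l < k'$, the concentrated vertex $v_l$ (the unique depth-$l$ vertex with descendants in the tree) lie in $A(\mathbf{c})^l$; this is automatic at intermediate depths where $A(\mathbf{c})^l = \{v_l\}$, but at $l = k$ it translates into the nontrivial constraint $v_k \in \{u^k_a, u^k_{a+1}\}$. Because the concentrated vertex at each depth may be chosen independently when enumerating concentrated trees of profile $\mathbf{t}$, I would simply take $v_k = u^k_a$ (with arbitrary choices elsewhere) to produce a tree carrying $\mathbf{c}$. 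Finally, I would verify that three-or-more-entry units do not spoil the argument: they contribute higher-dimensional simplices $\Delta(S(\mathbf{c}))$, but every pair of coordinates in such a simplex is either same-depth or different-depth and is therefore already an edge, so these units add no new hyperplanes to the arrangement. Combining everything, the edge graphs of $C(\mathbf{t})$ and $C'(\mathbf{t})$ both equal the complete graph on coordinates, so the normal fans coincide and $C'(\mathbf{t})$ is a constrainahedron of profile $\mathbf{t}$.
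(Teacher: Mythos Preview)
Your reduction to edge graphs is incorrect, and this is a genuine gap that invalidates the argument. You claim that the normal fan of a Minkowski sum of standard simplices $\sum_S \Delta(S)$ is ``generated by'' the hyperplanes $\{x_s = x_{s'}\}$ for pairs $s,s'$ lying in a common summand, and hence depends only on the edge graph. This is false once any summand has dimension at least two. A clean counterexample already occurs in dimension three: the triangle $\Delta(\{1,2,3\})$ and the hexagon $\Delta(\{1,2\}) + \Delta(\{2,3\}) + \Delta(\{1,3\})$ share the complete edge graph on $\{1,2,3\}$, yet their normal fans are different (three maximal cones versus six). More to the point, the associahedron $K_r$ is a sum of interval simplices $\Delta([i,j])$, every pair of coordinates lies in some such interval, and yet its normal fan is strictly coarser than the braid arrangement. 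So your conclusion that the normal fans of $C(\mathbf t)$ and $C'(\mathbf t)$ both equal the braid arrangement is wrong on both sides; in particular $C(\mathbf t)$ itself does not have the braid arrangement as its normal fan unless every $t_k \leq 1$.

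The paper's proof instead observes that the Minkowski summands of $C(\mathbf t)$ form a \emph{subset} of those of $C'(\mathbf t)$, and then uses the supermodularity criterion of Proposition~\ref{coarsening_criterion} to show that each extra simplex $\Delta(S(\mathbf c))$ has its normal fan coarsened by that of $C(\mathbf t)$: whenever $S(\mathbf c)$ is contained in the support of $\mathbf v \vee \mathbf w$ but in neither $\mathbf v$ nor $\mathbf w$ alone, the constrainahedral collision unit $\mathbf c$ witnesses strict supermodularity for $C(\mathbf t)$. This is a statement about support functions, not about which hyperplanes appear, and it cannot be replaced by an edge-graph comparison.
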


\begin{proof}
We prove that $C'({\bf t})$ is normally equivalent to $C({\bf t})$. The simplices appearing in $C'({\bf t})$ are described by \ref{simplices}.
The simplices appearing in $C({\bf t})$, as described in \ref{simplices_constr}, are a subset of those; they can be realized as $\Delta(S(\mathbf{c}))$ for appropriately chosen $\cT$ and $\mathbf{c}$.
It suffices to show that the extra simplices added to obtain $C'(\mathbf{t})$ do not change the normal equivalence class, that is, that their normal fans are coarsenings of the normal fan of $C({\bf t})$.
In the notation of Proposition \ref{coarsening_criterion}, take $Q= C({\bf t})$, and take $P=\Delta(S(\mathbf{c}))$ one of those extra simplices, corresponding to a collision unit $\mathbf{c}$ as in \ref{simplices}. 
For a 0-1 vector $\mathbf{v}$, the value $f_P(\mathbf{v})$ is 1 if $S$ is included in the support set of $\mathbf{v}$, and 0 otherwise.
The strict inequality $f_P(\mathbf{v})+f_P(\mathbf{w}) < f_P(\mathbf{v} \lor \mathbf{w}) + f_P(\mathbf{v} \land \mathbf{w})$ means that $S$ is in the support of $\mathbf{v} \lor \mathbf{w}$, but not $\mathbf{v}$ or $\mathbf{w}$ independently.
Then the inequality $f_Q(\mathbf{v})+f_Q(\mathbf{w}) < f_Q(\mathbf{v} \lor \mathbf{w}) + f_Q(\mathbf{v} \land \mathbf{w})$ holds, because $\mathbf{c}$ viewed as a constrainahedral collision unit contributes only to the right-hand side, namely to the summand $f_Q(\mathbf{v} \lor \mathbf{w})$.  
\end{proof}

\begin{example}
In the setting of the previous proof, let ${\bf c}$ be the collision unit $(\{u^1_1,u^1_2\}, \varnothing, \{u^3_1, u^3_2\})$ from the $3$-associahedron from Figure \ref{fig:collision_unit_simplex} so that  $P = \Delta (x^1_1,x^1_2,x^1_3,x^3_1)$. The coordinates for the whole fan are $x^1_1$, $x^1_2$, $x^1_3$, $x^2_1$, $x^2_2$, $x^3_1$, $x^3_2$. The inequality $f_P(\mathbf{v})+f_P(\mathbf{w}) < f_P(\mathbf{v} \lor \mathbf{w}) + f_P(\mathbf{v} \land \mathbf{w})$ is strict, for example, for $\mathbf{v} = (1,1,1,1,0,0,0)$ and $\mathbf{w} = (1,0,0,0,0,1,0)$, as $S({\bf c})$ is only included in the support of $\mathbf{v} \lor \mathbf{w}$. Then for $Q = C(3,2,2)$ the inequality $f_Q(\mathbf{v})+f_Q(\mathbf{w}) < f_Q(\mathbf{v} \lor \mathbf{w}) + f_Q(\mathbf{v} \land \mathbf{w})$ is strict (namely it is $10+3 < 18+1$), and if we read $u^k_i$ as labels of hyperplanes for $C(3,2,2)$, then the same ${\bf c}$ contributes only to $f_Q(\mathbf{v} \lor \mathbf{w})$.
\null\hfill$\triangle$
\end{example}

The above construction indicates that concentrated $n$-associahedra can be obtained from Cartesian products of usual associahedra by Minkowski-adding certain {\em hypergraph polytopes} \cite{Benedetti-Bergeron-Machacek}.
Indeed, the simplices not from associahedra --- those of the form $\Delta(S({\bf c}))$ for ${\bf c}$ having more than one nonempty entry --- correspond to some hyperedges. 
Chapoton and Pilaud in \cite{chapoton2022shuffles} introduced an operation of taking \emph{shuffle products} of generalized permutahedra, which consists of taking Cartesian product and then adding a graphical zonotope associated to the complete bipartite graph.
Shuffle products of associahedra are constrainahedra.
The results of this section indicate that the operation of shuffling could be generalized to a certain operation of {\em hypershuffling}, to be described and studied elsewhere.

\subsection{Vertex presentation}
\label{vertices}

\

In this section we discuss explicit formulas for vertex coordinates, generalizing Loday's vertex description for associahedra \cite{Lodayassociahedron} and, a bit less directly, Forcey's vertex description for multiplihedra \cite{Forcey}. 

Consider a maximal $n$-bracketing $\sB$.
We now define what it means for a collision unit to {\em contribute} to a certain coordinate for the corresponding vertex.

\begin{definition}
\label{add}
Let $x^k_i$ be a coordinate in the velocity fan. Denote by $\sB_{x^k_i}$ the collection of $l$-brackets for all $l \geq k$ that contain $u^k_{i}$ and $u^k_{i+1}$ and do not contain any other bracket containing $u^k_{i}$ and $u^k_{i+1}$ (i.e.\ are containment-minimal among such).
A collision unit $\bf{c}$ \emph{contributes} to $x^k_i$ in $\sB$ if it is contained in some $l$-bracket $A \in \sB_{x^k_i}$ in the sense of \ref{unitcontain} and is not contained in  any other $l$-bracket $A' \subsetneq A$.
\null\hfill$\triangle$
\end{definition}

 We now provide an informal rephrasing of the definition above for the reader's convenience. Suppose that the consecutive affine spaces in our concentrated tree arrangement are identified one by one until we reach a single complete flag.  Then we can associate a maximal $n$-bracketing $\sB$ which encodes this process of identifying affine spaces, up to order of disjoint identifications.  A collision unit ${
 \bf c}$ contributes to $x^k_i$ in $\sB$ if the moment when the corresponding consecutive pair of spaces $L_i^k$ and $L_{i+1}^k$ are identified is the moment when all of the affine spaces present in ${
 \bf c}$ are contracted to a single flag of affine spaces.

 Denote by $X^k_i$ the set of collision units contributing to $x^k_i$.
 
\begin{definition}
The point ${\bf v}(\sB) \in \mathbb{R}^m$  corresponding to the maximal bracketing $\sB$ is defined as having the coordinates 
\begin{align}
\label{vertex_coord}
{\bf v}(\sB)^k_i = |X^k_i|
\end{align}
\null\hfill$\triangle$
\end{definition}

\begin{figure}[H]
\centering
\def\svgwidth{0.8\textwidth}
\begingroup%
  \makeatletter%
  \providecommand\color[2][]{%
    \errmessage{(Inkscape) Color is used for the text in Inkscape, but the package 'color.sty' is not loaded}%
    \renewcommand\color[2][]{}%
  }%
  \providecommand\transparent[1]{%
    \errmessage{(Inkscape) Transparency is used (non-zero) for the text in Inkscape, but the package 'transparent.sty' is not loaded}%
    \renewcommand\transparent[1]{}%
  }%
  \providecommand\rotatebox[2]{#2}%
  \newcommand*\fsize{\dimexpr\f@size pt\relax}%
  \newcommand*\lineheight[1]{\fontsize{\fsize}{#1\fsize}\selectfont}%
  \ifx\svgwidth\undefined%
    \setlength{\unitlength}{272.17327517bp}%
    \ifx\svgscale\undefined%
      \relax%
    \else%
      \setlength{\unitlength}{\unitlength * \real{\svgscale}}%
    \fi%
  \else%
    \setlength{\unitlength}{\svgwidth}%
  \fi%
  \global\let\svgwidth\undefined%
  \global\let\svgscale\undefined%
  \makeatother%
  \begin{picture}(1,0.38045829)%
    \lineheight{1}%
    \setlength\tabcolsep{0pt}%
    \put(0,0){\includegraphics[width=\unitlength,page=1]{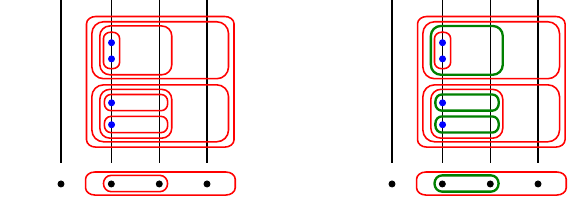}}%
    \put(0.57813983,0.22966858){\makebox(0,0)[lt]{\lineheight{1.25}\smash{\begin{tabular}[t]{l}$\sB_{x_2^1}=$\end{tabular}}}}%
    \put(0.13213479,0.00259055){\makebox(0,0)[lt]{\lineheight{1.25}\smash{\begin{tabular}[t]{l}$x_1^1=6$\end{tabular}}}}%
    \put(0.24048179,0.00259055){\makebox(0,0)[lt]{\lineheight{1.25}\smash{\begin{tabular}[t]{l}$x_2^1=2$\end{tabular}}}}%
    \put(0.34651028,0.00259055){\makebox(0,0)[lt]{\lineheight{1.25}\smash{\begin{tabular}[t]{l}$x_3^1=4$\end{tabular}}}}%
    \put(-0.00111228,0.17382207){\makebox(0,0)[lt]{\lineheight{1.25}\smash{\begin{tabular}[t]{l}$x_1^2=2$\end{tabular}}}}%
    \put(-0.00111228,0.23066426){\makebox(0,0)[lt]{\lineheight{1.25}\smash{\begin{tabular}[t]{l}$x_2^2=6$\end{tabular}}}}%
    \put(-0.00111228,0.28479012){\makebox(0,0)[lt]{\lineheight{1.25}\smash{\begin{tabular}[t]{l}$x_3^2=1$\end{tabular}}}}%
  \end{picture}%
\endgroup%

\caption{
Here is an illustration of the above formula for a particular maximal 2-bracketing in $\cK_{0,4,0,0}$.}
\end{figure}

We now prove the following theorem.

\begin{theorem}
The convex hull of the points ${\bf v}(\sB)$ for all maximal $n$-bracketings $\sB$ is the polytope $P(\cT)$.
\end{theorem}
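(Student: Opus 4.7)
The plan is to recognize each $\mathbf{v}(\sB)$ as the vertex of $P(\cT)$ dual to the maximal cone $\tau(\sB)$ of the normal fan. By Theorem \ref{polytopes} (strong convexity of $h_\cT$) the normal fan of $P(\cT)$ is the velocity fan $\cF(\cT)$, and by Lemma \ref{fulldimlemma} the maximal cones $\tau(\sB)$ are full-dimensional modulo $\langle \mathbf{1}\rangle_\mathbb{R}$. Standard support-function duality then asserts that each maximal cone contributes one vertex, uniquely characterized as the solution $v_\sB$ of the affine system
\begin{align*}
\langle \rho(\sC), v_\sB\rangle = h_\cT(\rho(\sC)) \qquad \text{for every extended collision } \sC \leq \sB.
\end{align*}
So it suffices to prove that $\mathbf{v}(\sB)$ satisfies this system.

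By Lemma \ref{rho=zeta}, $\rho(\sC)$ is the $0/1$-indicator of the pairs $(k,i)$ for which $u^k_i$ and $u^k_{i+1}$ lie in a common essential bracket of $\sC$, and by Definition \ref{supportfunction} together with Definition-Proposition \ref{defprop:metric_bracketing_associated_to_collision} we have $h_\cT(\rho(\sC)) = \sum_{A\text{ essential in }\sC} |U(A)|$. Thus what must be proved is the combinatorial identity
\begin{align*}
\sum_{(k,i)\,:\,\rho(\sC)^k_i = 1} |X^k_i| \;=\; \sum_{A\text{ essential in }\sC} |U(A)|.
\end{align*}
The heart of the argument is a bijection
\begin{align*}
\Phi:\; \bigl\{(A,\mathbf{c}) : A\text{ essential in }\sC,\ \mathbf{c}\in U(A)\bigr\} \;\xrightarrow{\ \sim\ }\; \bigl\{((k,i),\mathbf{c}) : \rho(\sC)^k_i = 1,\ \mathbf{c}\in X^k_i\bigr\}.
\end{align*}
Given $(A,\mathbf{c})$, since $\sC\leq\sB$ the bracket $A$ lies in $\sB$, and the refinement of $A$ by strictly smaller brackets of $\sB$ ``resolves'' $\mathbf{c}$ at a unique coordinate $(k,i)$. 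This is the analogue, for maximal $\sB$, of the classical fact that every pair of leaves in a rooted binary tree meets at a unique internal node, and is what ultimately makes the coordinate formula \eqref{vertex_coord} consistent. One then checks that $\{u^k_i,u^k_{i+1}\}\subseteq A$, which forces $\rho(\sC)^k_i = 1$. The inverse sends $((k,i),\mathbf{c})$ to $(A,\mathbf{c})$, where $A$ is the containment-maximal essential bracket of $\sC$ sitting inside the minimal bracket of $\sB$ realizing the contribution of $\mathbf{c}$ to $x^k_i$.

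The main obstacle I anticipate is the level mismatch: a level-$l(\mathbf{c})$ collision unit can contribute to a coordinate $(k,i)$ with $k<l(\mathbf{c})$, when the higher-level entries of $\mathbf{c}$ are ``absorbed'' by ambient brackets, and one must match the containment-minimality condition of Definition \ref{add} with the essentiality condition and the nested structure of $\sC$ inside $\sB$, especially when $A$ is a maximum bracket at some depth. Once $\Phi$ is constructed, summing proves the identity for all $\sC\leq\sB$; the case $\sC=\sB_{\min}$ reduces to the lineality relation $\langle \mathbf{1}, \mathbf{v}(\sB)\rangle = h_\cT(\mathbf{1})$, a direct global double-count of all collision units in $U(\cT)$. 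Together these identify $\mathbf{v}(\sB)$ with $v_\sB$, and since every vertex of $P(\cT)$ arises this way, $P(\cT) = \mathrm{conv}\{\mathbf{v}(\sB) : \sB\text{ maximal}\}$.
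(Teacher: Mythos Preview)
Your approach is correct and is essentially the paper's argument, with one economical shortcut and one slip in the description of $\Phi^{-1}$.

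The paper proves the theorem via three lemmas: a structure lemma for maximal $n$-bracketings (each nonsingleton bracket has either one ``thin'' child differing at a single lower level, or two children splitting at the top level, yielding a well-defined \emph{center} coordinate), an affine-hyperplane lemma ($\sum_{k,i}\mathbf v(\sB)^k_i = |U(\cT)|$, proved by showing every collision unit contributes to exactly one coordinate, namely the center of its minimal containing bracket), and a facet lemma showing $\langle\rho(\sC),\mathbf v(\sB)\rangle \geq h_\cT(\rho(\sC))$ for every $\sC$, with equality iff $\sC\leq\sB$. Your shortcut is to invoke Theorem~\ref{polytopes} first, so that the vertex $v_\sB$ is already known to exist and is characterized by the linear system on $\tau(\sB)$; then only the equality direction of the facet lemma (together with the hyperplane lemma, which is your $\sC=\sB_{\min}$ case) is needed. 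That is legitimate and slightly more efficient.

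Your bijection $\Phi$ \emph{is} exactly the paper's counting argument for the equality case: for $\sC\leq\sB$, a collision unit contributes to a coordinate in $\mathrm{supp}\,\rho(\sC)$ iff it lies in some essential bracket of $\sC$, and each such $\mathbf c$ lies in a unique essential $l(\mathbf c)$-bracket of $\sC$. The level-mismatch you worry about is resolved by the center construction: if $B$ is the minimal $l(\mathbf c)$-bracket of $\sB$ containing $\mathbf c$ and its center is $x^k_i$, then $u^k_i,u^k_{i+1}\in B^k$, and since the essential $l(\mathbf c)$-bracket $A$ of $\sC$ containing $\mathbf c$ also lies in $\sB$, nestedness plus minimality of $B$ force $B\subseteq A$, hence $u^k_i,u^k_{i+1}\in A^k$ and $\rho(\sC)^k_i=1$.

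The slip: in your description of $\Phi^{-1}$ you write ``$A$ is the containment-maximal essential bracket of $\sC$ sitting inside the minimal bracket of $\sB$.'' The containment goes the other way. If $A\subsetneq B$ then by minimality of $B$ one has $\mathbf c\notin U(A)$, so that cannot be the right $A$. The correct inverse is: $A$ is the (unique) essential $l(\mathbf c)$-bracket of $\sC$ containing $\mathbf c$; one has $B\subseteq A$, not $A\subseteq B$. With that correction your bijection is exactly the paper's.
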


\begin{proof}
The proof is provided by Lemmas \ref{v-brack}-\ref{faceineq}.
\end{proof}

\begin{figure}[H]
\centering
\def\svgwidth{0.6\textwidth}
\begingroup%
  \makeatletter%
  \providecommand\color[2][]{%
    \errmessage{(Inkscape) Color is used for the text in Inkscape, but the package 'color.sty' is not loaded}%
    \renewcommand\color[2][]{}%
  }%
  \providecommand\transparent[1]{%
    \errmessage{(Inkscape) Transparency is used (non-zero) for the text in Inkscape, but the package 'transparent.sty' is not loaded}%
    \renewcommand\transparent[1]{}%
  }%
  \providecommand\rotatebox[2]{#2}%
  \newcommand*\fsize{\dimexpr\f@size pt\relax}%
  \newcommand*\lineheight[1]{\fontsize{\fsize}{#1\fsize}\selectfont}%
  \ifx\svgwidth\undefined%
    \setlength{\unitlength}{281.69950374bp}%
    \ifx\svgscale\undefined%
      \relax%
    \else%
      \setlength{\unitlength}{\unitlength * \real{\svgscale}}%
    \fi%
  \else%
    \setlength{\unitlength}{\svgwidth}%
  \fi%
  \global\let\svgwidth\undefined%
  \global\let\svgscale\undefined%
  \makeatother%
  \begin{picture}(1,0.85517638)%
    \lineheight{1}%
    \setlength\tabcolsep{0pt}%
    \put(0,0){\includegraphics[width=\unitlength,page=1]{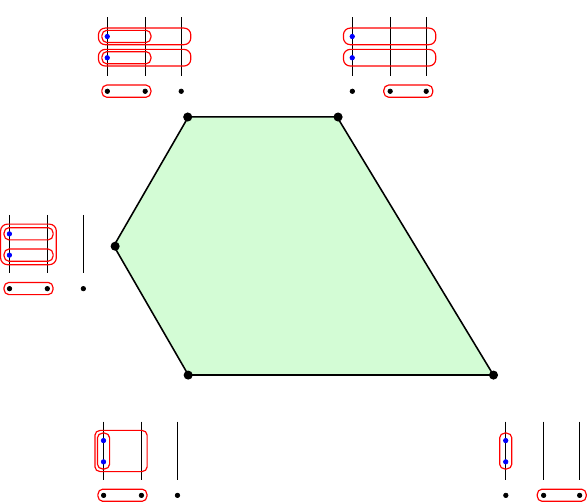}}%
    \put(0.18215878,0.84424941){\makebox(0,0)[lt]{\lineheight{1.25}\smash{\begin{tabular}[t]{l}$(1,2,3)$\end{tabular}}}}%
    \put(0.6024155,0.84424941){\makebox(0,0)[lt]{\lineheight{1.25}\smash{\begin{tabular}[t]{l}$(2,1,3)$\end{tabular}}}}%
    \put(0.01405603,0.50804399){\makebox(0,0)[lt]{\lineheight{1.25}\smash{\begin{tabular}[t]{l}$(1,3,2)$\end{tabular}}}}%
    \put(0.17441717,0.1519319){\makebox(0,0)[lt]{\lineheight{1.25}\smash{\begin{tabular}[t]{l}$(2,3,1)$\end{tabular}}}}%
    \put(0.86009912,0.1519319){\makebox(0,0)[lt]{\lineheight{1.25}\smash{\begin{tabular}[t]{l}$(4,1,1)$\end{tabular}}}}%
  \end{picture}%
\endgroup%

\caption{
The vertex coordinates for the pentagon $\cK_{2,0,0}$.
}
\end{figure}

We begin with a general lemma on the structure of maximal $n$-bracketings in a concentrated $n$-associahedra.
See also Remark \ref{rmk:max_2-bracketings_dimension_formula} for a connection with the second author's dimension formula for 2-bracketings.
 
\begin{definition}
Let $\sB$ be a maximal $n$-bracketing. For $A \in \sB^k$, we say that $A' \in \sB^k$ is its {\em child} if $A' \subsetneq A$ and there does not exist $A'' \in \sB^k$ with $A' \subsetneq A'' \subsetneq A$.
\null\hfill$\triangle$
\end{definition}

\begin{lemma}
\label{v-brack}
Let $\sB$ be a maximal $n$-bracketing in a concentrated $n$-associahedron.
Consider a nonsingleton bracket $A = (A^1, \ldots, A^k) \in \sB^k$ for some $k \leq n$.
Then one of the following holds:
\begin{enumerate}
\item $A$ has one child subbracket  $A_{\text{thin}} = (A^1_{\text{thin}}, \ldots , A^k_{\text{thin}})$, and $A^m_{\text{thin}} \neq A^m$ for exactly one $m<k$.
\item $A$ has two children subbrackets $A_1 = (A^1_1, \ldots, A^k_1)$ and $A_2 = (A^1_2, \ldots, A^k_2)$, and $A^i = A^i_1 = A^i_2 $ for all $i<k$ and $A^k = A^k_1 \bigsqcup A^k_2$.
\end{enumerate}

\end{lemma}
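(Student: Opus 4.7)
The strategy is to classify the covers $A' \lessdot A$ in the containment order on $\sB^k$, show that all covers must be of a single uniform type, and then extract the two cases of the lemma from this classification. For any cover $A'$, consider the auxiliary bracket $A'' = (A^0, \ldots, A^{k-1}, A'^k)$ that agrees with $A$ at all depths below $k$ and with $A'$ at depth $k$. Concentration of $\cT$ (which ensures that every depth-$j$ vertex is a child of the unique main vertex $v_{j-1}$ of depth $j-1$) makes $A''$ a valid $k$-bracket with $A' \subseteq A'' \subseteq A$. If $A'' \notin \{A', A\}$, then either $A'' \in \sB^k$, contradicting $A' \lessdot A$, or $A''$ can be adjoined (together with the cascade of its projections not already present) to produce a strictly larger $n$-bracketing, contradicting maximality of $\sB$. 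Hence every cover is either \emph{depth-$k$} (satisfying $\pi(A') = \pi(A)$ and $A'^k \subsetneq A^k$) or \emph{thin} (satisfying $A'^k = A^k$ and $\pi(A') \subsetneq \pi(A)$).

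The two types cannot coexist: if $A_1'$ is depth-$k$ and $A_2'$ is thin, then $A_1'^{k} \cap A_2'^{k} = A_1'^{k} \neq \emptyset$ while $\pi(A_1') = \pi(A) \supsetneq \pi(A_2')$, so $A_1'$ and $A_2'$ are incomparable, violating \textsc{(nested)}. In the all-depth-$k$ case, the \textsc{(partition)} axiom forces the maximal covers to partition $A^k$ into consecutive blocks; any three such blocks would admit a merger of two consecutive ones into a new valid depth-$k$ bracket absent from $\sB$, so there are exactly two blocks, giving Case (2) of the lemma. In the all-thin case, \textsc{(nested)} forces the thin cover $A_{\mathrm{thin}}$ to be unique, since two thin covers would intersect at depth $k$ and hence be nested, contradicting both being covers.

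The main obstacle is showing that this unique $A_{\mathrm{thin}}$ differs from $A$ at exactly one depth $m < k$. Let $M = \{j < k : A_{\mathrm{thin}}^j \neq A^j\}$ and suppose for contradiction that $|M| \geq 2$; pick $m_1 = \min M$. Define the interpolation $A^*$ with $A^{*j} = A^j$ for $j \leq m_1$ and $A^{*j} = A_{\mathrm{thin}}^j$ for $m_1 < j \leq k$. The concentration-based argument of the first step shows $A^*$ is a valid $k$-bracket, and a direct check of coordinates (using that $m_1 \in M$ and some $m_2 \in M$ with $m_2 > m_1$) gives $A_{\mathrm{thin}} \subsetneq A^* \subsetneq A$ strictly. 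The remaining technical work is to verify that adjoining $A^*$, together with the cascade of its projections $\pi^j(A^*)$ not already in $\sB$, preserves the \textsc{(nested)}, \textsc{(partition)}, and \textsc{(height partial orders)} axioms: the new brackets nest only with the brackets in the chain from $A_{\mathrm{thin}}$ up to $A$ and their projections, the partition property at $A$ is unaffected since $A_{\mathrm{thin}}$ still covers $A^k$, and the extended height partial order extends uniquely by placing each $\pi^j(A^*)$ between $\pi^j(A_{\mathrm{thin}})$ and $\pi^j(A)$. This contradicts maximality of $\sB$, forcing $|M| = 1$ and completing the proof.
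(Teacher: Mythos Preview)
Your proof is correct and rests on the same core technique as the paper's: constructing an intermediate bracket strictly between a child and $A$ to contradict maximality of $\sB$. In particular, your interpolating bracket $A^*$ is exactly the paper's $A_{\text{thick}}$ (with $i_1 = m_1$). The organizational difference is that you first classify every child as ``depth-$k$'' or ``thin'' via the auxiliary bracket $A''=(A^0,\ldots,A^{k-1},A'^k)$, then rule out mixed types and handle each pure case; the paper instead does a direct case split on the number of children ($\geq 3$, $=2$, $=1$) and reaches the same interpolation in each branch. Your preliminary classification has the mild advantage that, in the two-children case, $\pi(A_1')=\pi(A_2')=\pi(A)$ is already in hand, whereas the paper appeals to \textsc{(partition)} somewhat loosely for $A^k=A_1^k\sqcup A_2^k$.

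One small point: when you adjoin $A''$ or $A^*$, you speak of the ``cascade of projections'', but to preserve the first clause of \textsc{(partition)} you must also adjoin suitable $(k{+}1),\ldots,n$-brackets lying \emph{over} the new $k$-bracket (for instance by widening the brackets already sitting over $A'$ or $A_{\text{thin}}$, which is unproblematic since $A''^{\,k}=A'^{\,k}$ and $A^{*k}=A_{\text{thin}}^k$). The paper's proof leaves this same step implicit.
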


\begin{proof}
 For a $k$-bracket $A$, let us consider its children and rule out all the possibilities except for the ones listed above.

\begin{itemize}
\item If $A$ has three or more children, then there exists $\sB' > \sB$ obtained by adding a bracket that joins two or more of these children.
\item Suppose $A$ has two children, $A_1 = (A^1_1, \ldots, A^k_1)$ and $A_2 = (A^1_2, \ldots, A^k_2)$.
Then $A^k = A^k_1 \bigsqcup A^k_2$ by {\sc (partition)} in \ref{partition}.
If $A^i \neq A^i_1$ for some $i<k$, then there exists $\sB' > \sB$, obtained by adding a bracket that joins $A_1$ and $A_2$ but is still smaller than $A$ on depth $i$.
\item Suppose $A$ has one child, $A_{\text{thin}} = (A^1_{\text{thin}}, \ldots , A^k_{\text{thin}})$.
Then $A^k_{\text{thin}} = A^k$ by {\sc (partition)} in \ref{partition}.
Suppose there are two indices $i_1,i_2<k$ such that $A^{i_j}_{\text{thin}} \neq A^{i_j}$.
Then there exists $\sB' > \sB$ obtained by adding a bracket $A_{\text{thick}}$, where $A^i_{\text{thick}} = A^i_{\text{thin}}$ for all $i$ except for $i_1$, and $A^{i_1}_{\text{thick}} = A^{i_1}$.
\end{itemize}
\end{proof}

\begin{remark}
\label{rmk:max_2-bracketings_dimension_formula}
In \cite[Lemma 3.5(a)]{b:2-associahedra}, the second author proved a dimension formula for tree-pairs, which are equivalent data to 2-bracketings.
In the notation of that paper, this formula reads:
\begin{align}
\label{eq:tree-pair_dimension_formula}
d(2T)
=&
\sum_{\substack{\alpha \in V^1_\comp(T_b)
\incom(\alpha) = (\beta)}} \bigl(\#\incom(\beta) - 2\bigr)
+ \sum_{\alpha \in V^{\geq 2}_\comp(T_b)} \left(\Bigl(\sum_{\beta \in \incom(\alpha)} \#\incom(\beta)\Bigr)-1\right)
\\
&+ \sum_{\rho \in (T_s)_\inte} \bigl(\#\incom(\rho) - 2\bigr).
\nonumber
\end{align}
A tree-pair corresponds to a maximum 2-bracketing exactly when $d(2T) = 0$.
Each of the summands in \eqref{eq:tree-pair_dimension_formula} is nonnegative, so if $d(2T) = 0$, each of these summands must be zero.
The first two sums cannot both be empty, hence there must be at least one zero summand in either the first or second sum.
This zero summand corresponds to either case (1) or (2) in Lemma \ref{v-brack}, depending whether it appears in the second or first sum, respectively.
\null\hfill$\triangle$
\end{remark}

Basing on the cases and the notation of Lemma \ref{v-brack}, we define centers of brackets.

\begin{definition}
\label{centers}
Let $A$ be a $k$-bracket as in case (1) in \ref{v-brack}.
Then its {\em center} is the coordinate $x^r_i$ such that, of the pair $u^r_i$ and $u^r_{i+1}$, both are in $A^r$, but only one is in $A^{r}_{\text{thin}}$.  Let $A$ be a $k$-bracket as in case (2) in \ref{v-brack}.
Then its {\em center} is the coordinate $x^k_i$, where the pair $u^k_i$ and $u^k_{i+1}$ has $u^k_i \in A^k_1$ and $u^k_{i+1} \in A^k_2$.
\null\hfill$\triangle$
\end{definition}

\begin{lemma}
The point ${\bf v}(\sB)$ for any maximal $n$-bracketing $\sB \in \cK(\cT)$ belongs to the affine hyperplane, where the sum of all coordinates equals $ |U(\cT)|$.
\end{lemma}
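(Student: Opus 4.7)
The key observation is that the desired equality $\sum_{k,i} {\bf v}(\sB)^k_i = |U(\cT)|$ amounts to showing that the sets $X^k_i$ partition the collection $U(\cT)$, i.e.\ that each collision unit ${\bf c} \in U(\cT)$ contributes to exactly one coordinate $x^k_i$ in the sense of Definition \ref{add}. I will exhibit this partition explicitly by defining a function $\phi: U(\cT) \to \{x^k_i\}$ with $\phi^{-1}(x^k_i) = X^k_i$ for every coordinate; taking cardinalities then gives the result.

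The construction of $\phi$ proceeds in two steps. First, using the classification of Lemma \ref{v-brack} together with Definition \ref{centers}, I will check that every nonsingleton bracket $A \in \sB$ has a unique well-defined center coordinate $c(A)$, and that the resulting map $c$ is a bijection between the set of nonsingleton brackets of $\sB$ and the coordinates $\{x^k_i\}$. Injectivity follows from a direct case-by-case analysis: two distinct nonsingleton brackets of $\sB$ having the same center would force a subbracket configuration that is excluded by the dichotomy in Lemma \ref{v-brack}. Bijectivity then reduces to matching cardinalities — a maximal $n$-bracketing has precisely $m = \sum_k t_k$ nonsingleton brackets, which I will verify by induction: removing a containment-minimal nonsingleton bracket from $\sB$ produces a maximal bracketing in a strictly smaller concentrated $n$-associahedron whose dimension has dropped by exactly one.

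Second, given a collision unit ${\bf c}$, I will show there is a unique containment-minimal bracket $A({\bf c}) \in \sB$ with ${\bf c} \subseteq A({\bf c})$ in the sense of Definition \ref{unitcontain}. Existence is immediate since the maximum bracket contains every unit and the collection is finite. For uniqueness, two containment-minimal candidates at the same depth are forced to coincide by the \textsc{(nested)} axiom for $n$-bracketings applied at the deepest nonempty level of ${\bf c}$. The case of minimal containing brackets at \emph{different} depths is handled by passing to truncations and invoking Lemma \ref{v-brack} for the intermediate levels. I then set $\phi({\bf c}) \coloneqq c(A({\bf c}))$, and verify that $\phi^{-1}(x^k_i) = X^k_i$ by directly comparing with Definition \ref{add}: the collection $\sB_{x^k_i}$ consists precisely of the minimal brackets of $\sB$ containing the pair $(u^k_i, u^k_{i+1})$, which by Step 1 is a single bracket, namely $c^{-1}(x^k_i)$, and the ``not contained in any $A' \subsetneq A$'' clause in Definition \ref{add} is exactly the condition that $A({\bf c}) = c^{-1}(x^k_i)$.

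The main obstacle I expect is the uniqueness assertion in the second step, because collision units carry data at several depths simultaneously and the notion of containment-minimality must be reconciled across brackets of different depths. The payoff of doing this carefully is that the partition $U(\cT) = \bigsqcup_{k,i} X^k_i$ then reflects, combinatorially, the same hierarchical ``where does this collision unit first appear'' principle that underlies the support function of Definition \ref{supportfunction}, and in fact this will dovetail nicely with the Minkowski-sum-of-simplices description in the subsequent subsection.
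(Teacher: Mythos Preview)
Your approach is essentially the same as the paper's: both argue that the sets $X^k_i$ partition $U(\cT)$ by sending each collision unit $\mathbf{c}$ to the center of the unique containment-minimal bracket of $\sB$ containing $\mathbf{c}$. The paper dispatches this in three sentences, invoking only the \textsc{(nested)} axiom for uniqueness and then asserting that $\mathbf{c}$ contributes to the center of that bracket; your Step~1 (that the center map from nonsingleton brackets to coordinates is a bijection) makes explicit a fact the paper leaves implicit, and is a reasonable addition.

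One small point: your concern in Step~2 about containment-minimal brackets at \emph{different depths} is a non-issue. By Definition~\ref{unitcontain}, a collision unit $\mathbf{c}$ of level $l$ is only ``contained'' in $l$-brackets, so all candidates for $A(\mathbf{c})$ live at depth $l$; the \textsc{(nested)} axiom then gives uniqueness directly, exactly as the paper argues, and no passage to truncations is needed.
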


\begin{proof}
We show that the sum of all coordinates counts every collision unit once.
If a collision unit $\mathbf{c}$ is contained in two different $k$-brackets of the $n$-bracketing $\sB$, then these brackets intersect, and by the {\sc (nested)} condition, one of these brackets contains another.
Therefore, for every $\mathbf{c}$ there exists a unique containment-minimal bracket of $\sB$ in which $\mathbf{c}$ is contained.
The collision unit $\mathbf{c}$ then contributes to the center of this bracket.
\end{proof}

Now let $\sC \in \mathfrak{X}(\cT)$.
Denote by $U(\sC)$ the set of collision units contained in all brackets of $\sC$, and denote by $S$ the support of the 0-1 vector $\rho(\sC)$.

\begin{lemma}
\label{faceineq}
For all maximal $n$-bracketings $\sB$ it holds that
\begin{align}
\sum_{x^k_i \in S} {\bf v}(\sB)^k_i \geq |U(\sC)| 
\end{align}
with equality if and only if $\sC\leq \sB$.
\end{lemma}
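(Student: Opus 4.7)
My plan is to identify the point $\mathbf{v}(\sB)$ with the unique vertex of $P(\cT)$ whose normal cone is $\tau(\sB)$, using the Minkowski decomposition $P(\cT)=\sum_{\mathbf{c}\in U(\cT)}\Delta(S(\mathbf{c}))$ established in the preceding theorem. Standard Minkowski-sum theory then gives that this vertex equals $v^{*}(\sB)=\sum_{\mathbf{c}}e_{j^{*}(\mathbf{c},\sB)}$, where $j^{*}(\mathbf{c},\sB)\in S(\mathbf{c})$ is the unique coordinate minimizing $\rho$ on the relative interior of $\tau(\sB)$. Writing $H(\mathbf{c},\sB)$ for the containment-minimal bracket of $\sB$ containing $\mathbf{c}$, the definition of $\mathbf{v}(\sB)$ together with the identity $\sum_{k,i}\mathbf{v}(\sB)^{k}_{i}=|U(\cT)|$ shows that each unit contributes a single basis vector to precisely the coordinate $c(H(\mathbf{c},\sB))$, so $\mathbf{v}(\sB)=\sum_{\mathbf{c}}e_{c(H(\mathbf{c},\sB))}$. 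The task reduces to proving $c(H(\mathbf{c},\sB))=j^{*}(\mathbf{c},\sB)$ for every unit $\mathbf{c}$.

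This identification has two parts. First, I show $c(H(\mathbf{c},\sB))\in S(\mathbf{c})$. By Lemma \ref{v-brack}, the center $c(H)=x^{k}_{i}$ corresponds to the unique pair $u^{k}_{i},u^{k}_{i+1}$ in $H^{k}$ that either separates the two children of $H$ (type 2 case) or straddles the thin axis (type 1 case). Because $H$ is containment-minimal in $\sB$ among brackets containing $\mathbf{c}$, the smaller bracket $A(\mathbf{c})\subseteq H$ cannot fit in any child of $H$, which forces $A(\mathbf{c})^{k}$ to contain both $u^{k}_{i}$ and $u^{k}_{i+1}$. Since $A(\mathbf{c})$ is an essential bracket of $\sC(\mathbf{c})$, this places $x^{k}_{i}$ in $S(\mathbf{c})=\mathrm{supp}(\rho(\sC(\mathbf{c})))$. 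Second, I show that $c(H(\mathbf{c},\sB))$ is the minimizer. Writing $\rho=\lambda_{0}\mathbf{1}+\sum_{\sC'\leq\sB}\lambda_{\sC'}\rho(\sC')$ with $\lambda_{\sC'}>0$ for each atomic $\sC'$ below $\sB$, I check that among coordinates in $S(\mathbf{c})$, only $c(H)$ corresponds to a pair \emph{not} contained in any bracket of $\sB$ strictly smaller than $H$; the remaining coordinates in $S(\mathbf{c})$ correspond to pairs contained in $H$'s children in $\sB$ (again by Lemma \ref{v-brack}), and these sub-brackets contribute additional positive $\lambda_{\sC'}$-terms to $\rho_{j'}$ for $j'\neq c(H)$, making $c(H)$ the strict minimizer.

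Given the identification $\mathbf{v}(\sB)=v^{*}(\sB)$, the lemma follows from standard polytope theory: for the vertex $v^{*}(\sB)$ of $P(\cT)$ with normal cone $\tau(\sB)$, one has $\langle v^{*}(\sB),\rho(\sC)\rangle\geq h_{P(\cT)}(\rho(\sC))=h_{\cT}(\rho(\sC))=|U(\sC)|$, with equality iff $\rho(\sC)\in\tau(\sB)$ iff $\sC\leq\sB$; the strict inequality in the complementary case is forced by the strong convexity of $h_{\cT}$ already established. The main obstacle is the combinatorial argument in the second paragraph---particularly the verification that $c(H(\mathbf{c},\sB))$ is the unique minimizer of $\rho$ on $S(\mathbf{c})$---which rests on a careful case analysis of which strict sub-brackets of $H$ in $\sB$ cover which coordinates in $S(\mathbf{c})$, organized according to whether $H$ is of type 1 or type 2 and, in the type 1 case, how the depth of the thin axis relates to the level of $\mathbf{c}$.
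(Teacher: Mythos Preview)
Your approach is correct in outline but takes a genuinely different route from the paper. The paper argues directly by counting: it shows (i) every unit in $U(\sC)$ contributes to some coordinate in $S$, yielding the inequality; (ii) if $\sC\leq\sB$ then nothing outside $U(\sC)$ can contribute to $S$ without producing a bracket incompatible with $\sC$; and (iii) if $\sC$ is incompatible with $\sB$ (equivalently $\sC\nleq\sB$, since $\sB$ is maximal) one explicitly constructs a unit $\mathbf c\notin U(\sC)$ contributing to some coordinate in $S$, forcing strict inequality. No polytope theory is invoked.

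You instead identify $\mathbf v(\sB)$ with the Minkowski-sum vertex $v^{*}(\sB)=\sum_{\mathbf c}e_{j^{*}(\mathbf c,\sB)}$ and then read off the inequality from $\langle v^{*}(\sB),\rho(\sC)\rangle\geq h_{\cT}(\rho(\sC))=|U(\sC)|$. This is a legitimate alternative, and once $\mathbf v(\sB)=v^{*}(\sB)$ is known the lemma is immediate. What this buys you is a conceptual explanation of the formula for $\mathbf v(\sB)$; what it costs is the extra machinery and the verification $c(H(\mathbf c,\sB))=j^{*}(\mathbf c,\sB)$. That verification is precisely where the combinatorics lives, and it is not lighter than the paper's argument: you must still analyse, bracket by bracket, which pairs $(u^{r}_{m},u^{r}_{m+1})$ with $x^{r}_{m}\in S(\mathbf c)$ lie in proper sub-brackets of $H$ in $\sB$. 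Two points deserve more care than your sketch gives them. First, in the type-1 case the center sits at depth $r<l(\mathbf c)$, and showing $c(H)\in S(\mathbf c)$ requires knowing $|H^{r}\setminus A^{r}_{\text{thin}}|=1$ (so that the unique boundary pair coincides with $\mathbf c_{r}$); this follows from maximality of $\sB$ but is not stated in Lemma~\ref{v-brack}. Second, for the strict-minimizer claim you need, for every $x^{r'}_{m}\in S(\mathbf c)\setminus\{c(H)\}$, a collision $\sC'\leq\sB$ with $\rho(\sC')^{r'}_{m}=1$ and $\rho(\sC')_{c(H)}=0$; exhibiting such $\sC'$ (via the projection $\pi^{\,l-r}(A_{\text{thin}})$ or a child $A_{1},A_{2}$) is the same case analysis the paper carries out in its construction of the extra unit. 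So the two proofs are doing equivalent work at the combinatorial core; yours wraps it in a polytopal interpretation, the paper's stays elementary.
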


\begin{proof}
We show that the collision units of $U(\sC)$ always contributes to one of the coordinates in the left-hand side of above expression, and in case when $\sB$ refines $\sC$, there are no other collision units contributing to these coordinates.

A collision unit $\mathbf{c}$ can only contribute to one of $\mathbf{c}$-coordinates in the sense of \ref{c-coordinate}.
The left-hand side of the above expression lists all of $\mathbf{c}$-coordinates for collision units $\mathbf{c} \in U(\sC)$.
Summing the values of the corresponding coordinates counts all the collision units contributing to these coordinates, and we have shown that in the very least these are all the collision units of $U(\sC)$, thus proving the inequality.

We now prove the equation. Assume first $\sC \leq \sB$. We need to see, that collision units from $U(\cT) \backslash U(\sC)$ do not contribute to the coordinates listed on the left-hand side.
Assume, to the contrary, there exists such a collision unit $\mathbf{c} \notin U(\sC)$.  Any bracket to which it may contribute, as in Definition \ref{add}, intersects nontrivially with a bracket of $\sC$, which cannot happen in an $n$-bracketing.

Assume now that $\sC$ is not compatible with $\sB$; we construct a collision unit $\mathbf{c} \notin U(\sC)$ contributing to one of the coordinates in $S$ for $\mathbf{v}(\sB)$. Let $A \in \sB^k$ be a $k$-bracket incompatible with some $k$-bracket in $\sC^k$; such an $A$ necessarily exists because $\cT$ is concentrated. Let $A' \in \sB^k$ be the smallest bracket having center $x^r_i \in S$ such that $A \subseteq A'$ ($A'$ may or may not coincide with $A$). There are two cases to consider. 
First assume that there exists $\mathbf{c'} \in U(A) \sqcup U(\pi(A)) \sqcup U(\pi^2(A)) \ldots \sqcup U(\pi^{k-1}(A))$ which is not in $U(\sC)$ and has $\mathbf{c}'_{r} = \varnothing$. Then the collision unit $\mathbf{c}$, which is obtained by adding $\{u^r_i,u^r_{i+1}\}$ as the $r$th entry to $\mathbf{c}'$, is not in $U(\sC)$ and is contained in $A'$, thus contributing to $x^r_i$ on the right-hand side. 
Now assume that all collision units which are in $U(A) \sqcup U(\pi(A)) \sqcup U(\pi^2(A)) \ldots \sqcup U(\pi^{k-1}(A))$, but not in $U(\sC)$, have nonempty $r$th entry. This implies that there exists a collision unit $\mathbf{c'} \in U(A) \sqcup U(\pi(A)) \sqcup U(\pi^2(A)) \ldots \sqcup U(\pi^{k-1}(A))$ with $\mathbf{c'} \notin U(\sC)$, which has only $r$th entry nonempty so that $\mathbf{c}'_r = \{u^r_j,u^r_l\}$. We either have $j<l<i$ or $i+1 < j <l$, and we assume the former order (the argument for the latter order is the similar).  The desired collision unit $\mathbf{c}$ is obtained from $\mathbf{c'}$ by replacing $\{ u^r_j,u^r_l \}$ with $ \{ u^r_j,u^r_{i+1} \}$.
\end{proof}

\bibliographystyle{alpha}
\small
\bibliography{biblio}

\end{document}